\theoremstyle{plain}
\newtheorem{theorem}{Theorem}[section]
\newtheorem{lemma}[theorem]{Lemma}
\newtheorem{corollary}[theorem]{Corollary}
\theoremstyle{definition}
\newtheorem{assumption}[theorem]{Assumption}
\theoremstyle{remark}
\newtheorem{remark}[theorem]{Remark}
\numberwithin{equation}{section}
\begin{document}

\title[Navier--Stokes equations in a curved thin domain, Part I]{Navier--Stokes equations in a curved thin domain, Part I: uniform estimates for the Stokes operator}

\author[T.-H. Miura]{Tatsu-Hiko Miura}
\address{Department of Mathematics, Kyoto University, Kitashirakawa Oiwake-cho, Sakyo-ku, Kyoto 606-8502, Japan}
\email{t.miura@math.kyoto-u.ac.jp}

\subjclass[2010]{Primary: 76D07; Secondary: 35Q30, 76D05, 76A20}

\keywords{Stokes operator, curved thin domain, slip boundary conditions, uniform Korn inequality, uniform a priori estimate}

\begin{abstract}
  In the series of this paper and the forthcoming papers \cites{Miu_NSCTD_02,Miu_NSCTD_03} we study the Navier--Stokes equations in a three-dimensional curved thin domain around a given closed surface under Navier's slip boundary conditions.
  We focus on the study of the Stokes operator for the curved thin domain in this paper.
  The uniform norm equivalence for the Stokes operator and a uniform difference estimate for the Stokes and Laplace operators are established in which constants are independent of the thickness of the curved thin domain.
  To prove these results we show a uniform Korn inequality and a uniform a priori estimate for the vector Laplace operator on the curved thin domain based on a careful analysis of vector fields and surface quantities on the boundary.
  We also present examples of curved thin domains and vector fields for which the uniform Korn inequality is not valid but a standard Korn inequality holds with a constant that blows up as the thickness of a thin domain tends to zero.
\end{abstract}

\maketitle

\section{Introduction} \label{S:Intro}
\subsection{Problem and main results} \label{SS:Int_Pro}
Let $\Gamma$ be a closed surface in $\mathbb{R}^3$ with unit outward normal vector field $n$.
Also, let $g_0$ and $g_1$ be functions on $\Gamma$ satisfying
\begin{align*}
  g := g_1-g_0 \geq c \quad\text{on}\quad \Gamma
\end{align*}
with some constant $c>0$.
For a sufficiently small $\varepsilon>0$ we define a curved thin domain $\Omega_\varepsilon$ in $\mathbb{R}^3$ with small thickness of order $\varepsilon$ by
\begin{align} \label{E:CTD_Intro}
  \Omega_\varepsilon := \{y+rn(y) \mid y\in\Gamma,\,\varepsilon g_0(y)<r<\varepsilon g_1(y)\}
\end{align}
and write $\Gamma_\varepsilon:=\Gamma_\varepsilon^0\cup\Gamma_\varepsilon^1$ and $n_\varepsilon$ for the boundary of $\Omega_\varepsilon$ and its unit outward normal vector field, where $\Gamma_\varepsilon^0$ and $\Gamma_\varepsilon^1$ are the inner and outer boundaries given by
\begin{align*}
  \Gamma_\varepsilon^i := \{y+\varepsilon g_i(y)n(y) \mid y\in\Gamma\}, \quad i=0,1.
\end{align*}
In the series of this paper and the forthcoming papers \cites{Miu_NSCTD_02,Miu_NSCTD_03} we consider the Navier--Stokes equations with Navier's slip boundary conditions
\begin{align} \label{E:NS_CTD}
  \left\{
  \begin{alignedat}{3}
    \partial_tu^\varepsilon+(u^\varepsilon\cdot\nabla)u^\varepsilon-\nu\Delta u^\varepsilon+\nabla p^\varepsilon &= f^\varepsilon &\quad &\text{in} &\quad &\Omega_\varepsilon\times(0,\infty), \\
    \mathrm{div}\,u^\varepsilon &= 0 &\quad &\text{in} &\quad &\Omega_\varepsilon\times(0,\infty), \\
    u^\varepsilon \cdot n_\varepsilon &= 0 &\quad &\text{on} &\quad &\Gamma_\varepsilon\times(0,\infty), \\
    [\sigma(u^\varepsilon,p^\varepsilon)]_{\mathrm{tan}}+\gamma_\varepsilon u^\varepsilon &= 0 &\quad &\text{on} &\quad &\Gamma_\varepsilon\times(0,\infty), \\
    u^\varepsilon|_{t=0} &= u_0^\varepsilon &\quad &\text{in} &\quad &\Omega_\varepsilon.
  \end{alignedat}
  \right.
\end{align}
Here $\nu>0$ is the viscosity coefficient independent of $\varepsilon$ and $\gamma_\varepsilon\geq0$ is the friction coefficient on $\Gamma_\varepsilon$ given by
\begin{align} \label{E:Def_Fric}
  \gamma_\varepsilon := \gamma_\varepsilon^i \quad\text{on}\quad \Gamma_\varepsilon^i,\,i=0,1,
\end{align}
where $\gamma_\varepsilon^0$ and $\gamma_\varepsilon^1$ are nonnegative constants depending on $\varepsilon$.
Also, we denote by
\begin{align*}
  \sigma(u^\varepsilon,p^\varepsilon) := 2\nu D(u^\varepsilon)-p^\varepsilon I_3, \quad [\sigma(u^\varepsilon,p^\varepsilon)n_\varepsilon]_{\mathrm{tan}} := P_\varepsilon[\sigma(u^\varepsilon,p^\varepsilon)n_\varepsilon]
\end{align*}
the stress tensor and the tangential component of the stress vector on $\Gamma_\varepsilon$, where $I_3$ is the $3\times 3$ identity matrix, $n_\varepsilon\otimes n_\varepsilon$ is the tensor product of $n_\varepsilon$ with itself, and
\begin{align*}
  D(u^\varepsilon) := \frac{\nabla u^\varepsilon+(\nabla u^\varepsilon)^T}{2}, \quad P_\varepsilon := I_3-n_\varepsilon\otimes n_\varepsilon
\end{align*}
are the strain rate tensor and the orthogonal projection onto the tangent plane of $\Gamma_\varepsilon$.
Note that $[\sigma(u^\varepsilon,p^\varepsilon)n_\varepsilon]_{\mathrm{tan}}=2\nu P_\varepsilon D(u^\varepsilon)n_\varepsilon$ is independent of the pressure $p^\varepsilon$ and the slip boundary conditions can be expressed as
\begin{align} \label{E:Slip_Intro}
  u^\varepsilon\cdot n_\varepsilon = 0, \quad 2\nu P_\varepsilon D(u^\varepsilon)n_\varepsilon+\gamma_\varepsilon u^\varepsilon = 0 \quad\text{on}\quad \Gamma_\varepsilon.
\end{align}
Hereafter we mainly refer to \eqref{E:Slip_Intro} as the slip boundary conditions.

The aims of our study are to establish the global-in-time existence of a strong solution to \eqref{E:NS_CTD} for large data and to study the behavior of the strong solution as $\varepsilon\to0$.
In this paper, however, we focus on the study of the Stokes operator $A_\varepsilon$ associated with the Stokes problem in $\Omega_\varepsilon$ under the slip boundary conditions
\begin{align} \label{E:Stokes_CTD}
  \left\{
  \begin{alignedat}{3}
    -\nu\Delta u+\nabla p &= f &\quad &\text{in} &\quad &\Omega_\varepsilon, \\
    \mathrm{div}\,u &= 0 &\quad &\text{in} &\quad &\Omega_\varepsilon, \\
    u\cdot n_\varepsilon &= 0 &\quad &\text{on} &\quad &\Gamma_\varepsilon, \\
    2\nu P_\varepsilon D(u)n_\varepsilon+\gamma_\varepsilon u &= 0 &\quad &\text{on} &\quad &\Gamma_\varepsilon
  \end{alignedat}
  \right.
\end{align}
and provide fundamental results on $A_\varepsilon$ for the aims of our study.
The goal of this paper is to show the uniform norm equivalence for $A_\varepsilon$ and its square root
\begin{align} \label{E:NE_Intro}
  c^{-1}\|u\|_{H^k(\Omega_\varepsilon)} \leq \|A_\varepsilon^{k/2} u\|_{L^2(\Omega_\varepsilon)} \leq c\|u\|_{H^k(\Omega_\varepsilon)}, \quad u\in D(A_\varepsilon^{k/2}),\, k=1,2
\end{align}
and the uniform difference estimate for $A_\varepsilon$ and $-\nu\Delta$ of the form
\begin{align} \label{E:Diff_Intro}
  \|A_\varepsilon u+\nu\Delta u\|_{L^2(\Omega_\varepsilon)} \leq c\|u\|_{H^1(\Omega_\varepsilon)}, \quad u\in D(A_\varepsilon)
\end{align}
with a constant $c>0$ independent of $\varepsilon$ (see Section \ref{S:Main} for the precise statements).

The estimates \eqref{E:NE_Intro} and \eqref{E:Diff_Intro} play a fundamental role in the second part \cite{Miu_NSCTD_02} of our study.
In \cite{Miu_NSCTD_02} we prove the global existence of a strong solution $u^\varepsilon$ to \eqref{E:NS_CTD} for large data $u_0^\varepsilon$ and $f^\varepsilon$ such that
\begin{align*}
  \|u_0^\varepsilon\|_{H^1(\Omega_\varepsilon)},\, \|f^\varepsilon\|_{L^\infty(0,\infty;L^2(\Omega_\varepsilon))} = O(\varepsilon^{-1/2})
\end{align*}
when $\varepsilon$ is sufficiently small.
We also derive estimates for $u^\varepsilon$ with constants explicitly depending on $\varepsilon$ which are essential for the last paper \cite{Miu_NSCTD_03}.
To get the global existence we show that the $L^2(\Omega_\varepsilon)$-norm of $A_\varepsilon^{1/2}u^\varepsilon$ is bounded uniformly in time by a standard energy method.
A key tool for the proof is a good estimate for the trilinear term
\begin{align*}
  \bigl((u\cdot\nabla)u,A_\varepsilon u\bigr)_{L^2(\Omega_\varepsilon)}, \quad u\in D(A_\varepsilon)
\end{align*}
which implies a differential inequality in time for the $L^2(\Omega_\varepsilon)$-norm of $A_\varepsilon^{1/2}u^\varepsilon$ similar to the one for the two-dimensional Navier--Stokes equations.
To derive that estimate we require \eqref{E:NE_Intro} and \eqref{E:Diff_Intro}.
Note that we have the $H^1(\Omega_\varepsilon)$-norm of $u$, not its $H^2(\Omega_\varepsilon)$-norm, in the right-hand side of \eqref{E:Diff_Intro}.
This fact is important in order to get a good estimate for the trilinear term.

Let us also mention the last part \cite{Miu_NSCTD_03} of our study.
We consider the thin-film limit for \eqref{E:NS_CTD} and study the behavior of the strong solution $u^\varepsilon$ as $\varepsilon\to0$ in \cite{Miu_NSCTD_03}.
Using the results of this paper and \cite{Miu_NSCTD_02} we show that the average in the thin direction of $u^\varepsilon$ converges on $\Gamma$ as $\varepsilon\to0$.
Moreover, we derive limit equations on $\Gamma$ for \eqref{E:NS_CTD} by characterizing the limit of the average of $u^\varepsilon$ as a solution to the limit equations.
When the thickness of $\Omega_\varepsilon$ is $\varepsilon$ (i.e. $g\equiv1$) and there is no friction between the fluid and the boundary $\Gamma_\varepsilon$ (i.e. $\gamma_\varepsilon=0$), the limit equations derived in \cite{Miu_NSCTD_03} agree with the Navier--Stokes equations on a Riemannian manifold
\begin{align} \label{E:SNS_Intro}
  \partial_tv+\overline{\nabla}_vv-\nu\{\Delta_Bv+\mathrm{Ric}(v)\}+\nabla_\Gamma q = f, \quad \mathrm{div}_\Gamma v = 0 \quad\text{on}\quad \Gamma\times(0,\infty)
\end{align}
introduced in \cites{EbMa70,MitYa02,Ta92} and studied in many works (see e.g. \cites{ChaCzu13,DinMit04,KheMis12,KohWen18,MitTa01,Nag99,Pi17,Prie94}).
In the above, $\overline{\nabla}_vv$ is the covariant derivative of $v$ along itself, $\mathrm{Ric}$ is the Ricci curvature of $\Gamma$, and $\Delta_B$, $\nabla_\Gamma$, and $\mathrm{div}_\Gamma$ are the Bochner Laplacian, the tangential gradient, and the surface divergence on $\Gamma$ (see \cite{Miu_NSCTD_03} for details).
We emphasize that the last paper \cite{Miu_NSCTD_03} provides the first result on a rigorous derivation of the surface Navier--Stokes equations on a general closed surface in $\mathbb{R}^3$ by the thin-film limit and that for \cite{Miu_NSCTD_03} the results of this paper and \cite{Miu_NSCTD_02} are essential.

\subsection{Ideas of the proofs} \label{SS:Int_Ide}
Let us explain the ideas of the proofs of \eqref{E:NE_Intro} and \eqref{E:Diff_Intro} (see Section \ref{S:Pf_Main} for details).
Since the bilinear form for \eqref{E:Stokes_CTD} is of the form
\begin{align*}
  a_\varepsilon(u_1,u_2) = 2\nu\bigl(D(u_1),D(u_2)\bigr)_{L^2(\Omega_\varepsilon)}+\sum_{i=0,1}\gamma_\varepsilon^i(u_1,u_2)_{L^2(\Gamma_\varepsilon^i)}
\end{align*}
due to the slip boundary conditions (see Lemma \ref{L:IbP_St}), we show that $a_\varepsilon$ is bounded and coercive uniformly in $\varepsilon$ on an appropriate function space on $\Omega_\varepsilon$ in order to get \eqref{E:NE_Intro} with $k=1$ (see Theorem \ref{T:Uni_aeps}).
To this end, we use the trace inequality
\begin{align*}
  \|\varphi\|_{L^2(\Gamma_\varepsilon^i)} \leq c\varepsilon^{-1/2}\|\varphi\|_{H^1(\Omega_\varepsilon)}, \quad \varphi\in H^1(\Omega_\varepsilon),\, i=0,1
\end{align*}
with a constant $c>0$ independent of $\varepsilon$, which follows from a more precise inequality given in Lemma \ref{L:Poincare}, and the uniform Korn inequality
\begin{align} \label{E:Korn_Intro}
  \|u\|_{H^1(\Omega_\varepsilon)} \leq c\|D(u)\|_{L^2(\Omega_\varepsilon)}
\end{align}
for $u\in H^1(\Omega_\varepsilon)^3$ satisfying the impermeable boundary condition
\begin{align} \label{E:Imp_Intro}
  u\cdot n_\varepsilon = 0 \quad\text{on}\quad \Gamma_\varepsilon
\end{align}
and
\begin{align} \label{E:KOK_Intro}
  |(u,\bar{v})_{L^2(\Omega_\varepsilon)}| \leq \beta\|u\|_{L^2(\Omega_\varepsilon)}\|\bar{v}\|_{L^2(\Omega_\varepsilon)}
\end{align}
with a constant $\beta\in[0,1)$ independent of $\varepsilon$ for every Killing vector field $v$ on $\Gamma$ (see Section \ref{S:Main}) that satisfies
\begin{align} \label{E:KilG_Intro}
  v\cdot\nabla_\Gamma g = 0 \quad\text{on}\quad \Gamma,
\end{align}
where $\bar{v}$ is the constant extension of $v$ in the normal direction of $\Gamma$ and $\nabla_\Gamma$ is the tangential gradient on $\Gamma$ (see Section \ref{SS:Pre_Surf}).
We prove \eqref{E:Korn_Intro} under the conditions \eqref{E:Imp_Intro} and \eqref{E:KOK_Intro} in Lemma \ref{L:Korn_H1}.
Moreover, we observe in Lemma \ref{L:Korn_Rg} that, if every Killing vector field on $\Gamma$ satisfying \eqref{E:KilG_Intro} is the restriction on $\Gamma$ of an infinitesimal rigid displacement of $\mathbb{R}^3$, i.e. a vector filed on $\mathbb{R}^3$ of the form
\begin{align} \label{E:ID_Intro}
  w(x) = a\times x+b, \quad x\in\mathbb{R}^3
\end{align}
with $a,b\in\mathbb{R}^3$, then \eqref{E:Korn_Intro} holds under the conditions \eqref{E:Imp_Intro} and, instead of \eqref{E:KOK_Intro},
\begin{align} \label{E:KOI_Intro}
  |(u,w)_{L^2(\Omega_\varepsilon)}| \leq \beta\|u\|_{L^2(\Omega_\varepsilon)}\|w\|_{L^2(\Omega_\varepsilon)}
\end{align}
for every vector field $w$ of the form \eqref{E:ID_Intro} satisfying
\begin{align} \label{E:IDG_Intro}
  w|_\Gamma\cdot n = w|_\Gamma\cdot\nabla_\Gamma g = 0 \quad\text{on}\quad \Gamma,
\end{align}
where $\beta\in[0,1)$ is again a constant independent of $\varepsilon$.
The proof of \eqref{E:Korn_Intro} consists of two steps.
First we estimate the $L^2(\Omega_\varepsilon)$-norm of $\nabla u$ to show
\begin{align} \label{E:KGrad_Intro}
  \|\nabla u\|_{L^2(\Omega_\varepsilon)}^2 \leq 4\|D(u)\|_{L^2(\Omega_\varepsilon)}^2+c\|u\|_{L^2(\Omega_\varepsilon)}^2
\end{align}
in Lemma \ref{L:Korn_Grad}.
To this end, we apply integration by parts twice to get
\begin{align*}
  \|\nabla u\|_{L^2(\Omega_\varepsilon)}^2 \leq 2\|D(u)\|_{L^2(\Omega_\varepsilon)}^2-\int_{\Gamma_\varepsilon}(u\cdot\nabla)u\cdot n_\varepsilon\,d\mathcal{H}^2
\end{align*}
and estimate the last term by reducing the order of the derivatives of $u$ on $\Gamma_\varepsilon$ with the aid of \eqref{E:Imp_Intro} and interpolating integrals over the inner and outer boundaries $\Gamma_\varepsilon^0$ and $\Gamma_\varepsilon^1$.
Next for a given $\alpha>0$ we prove the uniform $L^2(\Omega_\varepsilon)$-estimate
\begin{align} \label{E:KUL2_Intro}
  \|u\|_{L^2(\Omega_\varepsilon)}^2 \leq \alpha\|\nabla u\|_{L^2(\Omega_\varepsilon)}+c\|D(u)\|_{L^2(\Omega_\varepsilon)}^2
\end{align}
in Lemma \ref{L:Korn_U} by contradiction as in the case of a flat thin domain studied in \cite{HoSe10}.
We transform $\Omega_\varepsilon$ into a domain with fixed thickness and show that a sequence of vector fields failing to satisfy \eqref{E:KUL2_Intro} converges to the constant extension of a Killing vector field on $\Gamma$ satisfying \eqref{E:KilG_Intro} as $\varepsilon\to0$.
Then we take that Killing vector field in \eqref{E:KOK_Intro} or \eqref{E:KOI_Intro}, send $\varepsilon\to0$, and use $\beta<1$ to get a contradiction.
Note that both steps are based on a careful analysis of surface quantities of $\Gamma_\varepsilon$.

To establish \eqref{E:Diff_Intro} we follow the idea of the works \cites{Ho08,Ho10} on a flat thin domain.
Using the slip boundary conditions \eqref{E:Slip_Intro} we derive the integration by parts formula
\begin{multline*}
  \int_{\Omega_\varepsilon}\mathrm{curl}\,\mathrm{curl}\,u\cdot\Phi\,dx = -\int_{\Omega_\varepsilon}\mathrm{curl}\,G(u)\cdot\Phi\,dx+\int_{\Omega_\varepsilon}\{\mathrm{curl}\,u+G(u)\}\cdot\mathrm{curl}\,\Phi\,dx
\end{multline*}
for $\Phi\in L^2(\Omega_\varepsilon)^3$ with $\mathrm{curl}\,\Phi\in L^2(\Omega_\varepsilon)^3$, where $G(u)$ is a vector field on $\Omega_\varepsilon$ whose $H^1(\Omega_\varepsilon)$-norm is uniformly bounded by that of $u$ (see Lemmas \ref{L:G_Bound} and \ref{L:IbP_Curl}).
Then we combine this formula and the Helmholtz--Leray decomposition for $-\nu\Delta u$ on $\Omega_\varepsilon$ to get \eqref{E:Diff_Intro}.
Here the uniform estimate for $G(u)$ plays an important role, but its proof involves a complicated calculations of surface quantities of $\Gamma_\varepsilon^0$ and $\Gamma_\varepsilon^1$ since we construct $G(u)$ by interpolating surface quantities of $\Gamma_\varepsilon^0$ and those of $\Gamma_\varepsilon^1$.

To prove \eqref{E:NE_Intro} with $k=2$ we employ \eqref{E:Diff_Intro} and the uniform a priori estimate for the vector Laplace operator
\begin{align} \label{E:UA_Intro}
  \|u\|_{H^2(\Omega_\varepsilon)} \leq c\left(\|\Delta u\|_{L^2(\Omega_\varepsilon)}+\|u\|_{H^1(\Omega_\varepsilon)}\right)
\end{align}
for $u\in H^2(\Omega_\varepsilon)^3$ satisfying \eqref{E:Slip_Intro} (see Lemma \ref{L:Lap_Apri}).
The proof of \eqref{E:UA_Intro} proceeds as in that of \eqref{E:KGrad_Intro}, but calculations are more involved.
We first show that the above $u$ is approximated by $H^3$ vector fields on $\Omega_\varepsilon$ satisfying \eqref{E:Slip_Intro} to assume $u\in H^3(\Omega_\varepsilon)^3$ (see Lemma \ref{L:NSl_Approx}).
Then we carry out integration by parts twice to get (see Appendix \ref{S:Ap_Vec} for notations)
\begin{align*}
  \|\nabla^2u\|_{L^2(\Omega_\varepsilon)}^2 = \|\Delta u\|_{L^2(\Omega_\varepsilon)}^2+\int_{\Gamma_\varepsilon}\nabla u:\{(n_\varepsilon\cdot\nabla)\nabla u-n_\varepsilon\otimes\Delta u\}\,d\mathcal{H}^2.
\end{align*}
Thus we intend to show the uniform estimate for the last term
\begin{multline} \label{E:BD_Intro}
  \left|\int_{\Gamma_\varepsilon}\nabla u:\{(n_\varepsilon\cdot\nabla)\nabla u-n_\varepsilon\otimes\Delta u\}\,d\mathcal{H}^2\right| \\
  \leq c\left(\|u\|_{H^1(\Omega_\varepsilon)}^2+\|u\|_{H^1(\Omega_\varepsilon)}\|\nabla^2u\|_{L^2(\Omega_\varepsilon)}\right).
\end{multline}
To this end, we first reduce the second order derivatives of $u$ on $\Gamma_\varepsilon$ to the first order ones by using \eqref{E:Slip_Intro}.
In this step we make use of formulas for the covariant derivatives of tangential vector fields on $\Gamma_\varepsilon$ given in Appendix \ref{S:Ap_Cov} to perform calculations on $\Gamma_\varepsilon$ without a change of variables.
Then we interpolate integrals of $u$ and its first order derivatives over the inner and outer boundaries $\Gamma_\varepsilon^0$ and $\Gamma_\varepsilon^1$ to get \eqref{E:BD_Intro}.
For this purpose, we apply estimates for the differences between surface quantities of $\Gamma_\varepsilon^0$ and those of $\Gamma_\varepsilon^1$ given in Section \ref{SS:Pre_Dom}.
However, the proofs of those estimates involve complicated calculations of differential geometry of surfaces (see Appendix \ref{S:Ap_Proof}).

\subsection{Literature overview} \label{SS:Int_Lit}
The study of the Navier--Stokes equations in thin domains has a long history.
A main subject is to prove the global existence of a strong solution for large data depending on the smallness of the thickness of a thin domain, since a thin domain in $\mathbb{R}^3$ with very small thickness is almost two-dimensional.
It is also important to study the behavior of a solution as the thickness of a thin domain tends to zero in order to understand the dependence of a solution on the thin and other directions.
Raugel and Sell \cite{RaSe93} first studied the Navier--Stokes equations in a thin product domain $Q\times(0,\varepsilon)$ in $\mathbb{R}^3$ with a rectangle $Q$ and a sufficiently small $\varepsilon>0$ under the purely periodic or mixed Dirichlet-periodic boundary conditions and obtained the global existence of a strong solution.
Temam and Ziane \cite{TeZi96} generalized the result of \cite{RaSe93} to a thin product domain $\omega\times(0,\varepsilon)$ in $\mathbb{R}^3$ around a bounded domain $\omega$ in $\mathbb{R}^2$ under combinations of the Dirichlet, periodic, and Hodge boundary conditions.
They also proved that the average in the thin direction of a solution to the original equations under suitable boundary conditions converges towards a solution to the two-dimensional Navier--Stokes equations in $\omega$ as $\varepsilon\to0$.
For further results on the Navier--Stokes equations in three-dimensional thin product domains we refer to \cites{Hu07,If99,IfRa01,MoTeZi97,Mo99} and the references cited therein.

Thin product domains appearing in the above cited papers are flat in the sense that they shrink to domains in $\mathbb{R}^2$ as $\varepsilon\to0$ and their top and bottom boundaries are flat, but in physical problems we frequently encounter nonflat thin domains (see \cite{Ra95} for examples of them).
Temam and Ziane \cite{TeZi97} first dealt with a nonflat thin domain in the study of the Navier--Stokes equations.
Under the Hodge boundary conditions they proved the global existence of a strong solution to the Navier--Stokes equations in a thin spherical shell
\begin{align*}
  \{x\in\mathbb{R}^3 \mid a < |x| < a+\varepsilon a\}, \quad a>0
\end{align*}
and the convergence of its average towards a solution of limit equations on a sphere as $\varepsilon\to0$.
Iftimie, Raugel, and Sell \cite{IfRaSe07} considered a flat thin domain with a nonflat top boundary
\begin{align*}
  \{(x',x_3)\in\mathbb{R}^3 \mid x'\in(0,1)^2,\,0<x_3<\varepsilon g(x')\}, \quad g\colon(0,1)^2\to\mathbb{R}
\end{align*}
under the horizontally periodic and vertically slip boundary conditions and obtained the global existence of a strong solution.
They also compared the strong solution with a solution to limit equations in $(0,1)^2$.
Hoang \cites{Ho10,Ho13} and Hoang and Sell \cite{HoSe10} generalized the existence result of \cite{IfRaSe07} to a flat thin domain with nonflat top and bottom boundaries (in \cite{Ho13} two-phase flows were studied).

Let us also mention the slip boundary conditions \eqref{E:Slip_Intro} and the Stokes problem \eqref{E:Stokes_CTD}.
The slip boundary conditions introduced by Navier \cite{Na1823} state that the fluid slips on the boundary with velocity proportional to the tangential component of the stress vector.
These conditions are considered as an appropriate model for flows with free boundaries and for flows past chemically reacting walls in which the usual no-slip boundary condition is not valid (see \cite{Ve87}).
They also arise in the study of the atmosphere and ocean dynamics \cites{LiTeWa92a,LiTeWa92b,LiTeWa95} and in the homogenization of the no-slip boundary condition on a rough boundary \cites{Hi16,JaMi01}.
The Stokes problem \eqref{E:Stokes_CTD} under the slip boundary conditions for a general bounded domain in $\mathbb{R}^3$ was first studied by Solonnikov and \v{S}\v{c}adilov \cite{SoSc73} in the $L^2$-setting.
Beir\~{a}o da Veiga \cite{Be04} considered the generalized system for \eqref{E:Stokes_CTD} and proved the $H^2$-regularity estimate for a solution.
The $L^p$-theory for \eqref{E:Stokes_CTD} in a bounded domain in $\mathbb{R}^3$ were established by Amrouche and Rejaiba \cite{AmRe14}.
Note that the main results \eqref{E:NE_Intro} and \eqref{E:Diff_Intro} of this paper are not covered by \cites{AmRe14,Be04,SoSc73} since we show that the constant $c$ in these estimates does not depend on the thickness of the curved thin domain.

In this paper and the forthcoming papers \cites{Miu_NSCTD_02,Miu_NSCTD_03} we deal with the curved thin domain $\Omega_\varepsilon$ of the form \eqref{E:CTD_Intro} which degenerates into the closed surface $\Gamma$ as $\varepsilon\to0$.
Curved thin domains around hypersurfaces and lower dimensional manifolds were considered in the study of eigenvalues of the Laplace operator \cites{JiKu16,Kr14,Sch96,Yac18} and of reaction-diffusion equations \cites{PrRiRy02,PrRy03,Yan90}.
The series of our works gives the first study of the Navier--Stokes equations in a curved thin domain in $\mathbb{R}^3$ whose limit set is a general closed surface.
Our aim is not just to generalize the shape of a thin domain, but to provide the first result on a rigorous derivation of the surface Navier--Stokes equations \eqref{E:SNS_Intro} by the thin-film limit.

Although the main purpose of this paper is to present preliminary results for the study of \eqref{E:NS_CTD}, we show new results on the uniform Korn inequality \eqref{E:Korn_Intro}.
Korn's inequality is a basic tool in the theory of linear elasticity and fluid mechanics and has been studied in various contexts (see \cite{Hor95} and the references cited therein).
The uniform Korn inequality \eqref{E:Korn_Intro} in a curved thin domain in $\mathbb{R}^k$ with $k\geq2$ around a closed hypersurface was first given by Lewicka and M\"{u}ller \cite{LeMu11}.
In \cite{LeMu11}*{Theorem 2.2} they proved \eqref{E:Korn_Intro} under the conditions \eqref{E:Imp_Intro} and \eqref{E:KOK_Intro} (see also \cite{LeMu11}*{Theorem 2.1} for other conditions).
Their proof was based on a uniform Korn inequality in a thin cylinder and Korn's inequality on a hypersurface for which Killing vector fields on the hypersurface play a fundamental role.
In this paper we present another proof of \eqref{E:Korn_Intro} under the same conditions by following the idea of the work \cite{HoSe10} on a flat thin domain.
Moreover, we prove \eqref{E:Korn_Intro} by imposing \eqref{E:Imp_Intro} and the new condition \eqref{E:KOI_Intro} under the assumption that every Killing vector field on $\Gamma$ satisfying \eqref{E:KilG_Intro} is the restriction on $\Gamma$ of an infinitesimal rigid displacement of $\mathbb{R}^3$.
This assumption is valid for many kinds of closed surfaces in $\mathbb{R}^3$ (see Remark \ref{R:Killing}).
In particular, we can use \eqref{E:KOI_Intro} instead of \eqref{E:KOK_Intro} for curved thin domains around the unit sphere in $\mathbb{R}^3$.
We also note that we take a vector field $w$ defined on $\mathbb{R}^3$ itself in \eqref{E:KOI_Intro}, not the constant extension of a vector field on $\Gamma$ as in \eqref{E:KOK_Intro}.
This fact is crucial in order to relate the Stokes operator $A_\varepsilon$ properly to the Stokes problem \eqref{E:Stokes_CTD} (see Remark \ref{R:Cond_A3}).
In Section \ref{SS:Diff_Korn} we further show that the conditions \eqref{E:KOK_Intro} and \eqref{E:KOI_Intro} are more strict than the condition for a standard Korn inequality related to the axial symmetry of a domain by giving examples of both axially symmetric and not axially symmetric curved thin domains.

Besides the new results on the uniform Korn inequality, we give new techniques for the analysis of vector fields on embedded surfaces in $\mathbb{R}^3$ (or on higher dimensional hypersurfaces in Euclidean spaces) such as the boundary of a domain.
In the proof of \eqref{E:BD_Intro} we need to compute the second order derivatives of $u\in H^3(\Omega_\varepsilon)^3$ on $\Gamma_\varepsilon$ to reduce the order of the derivatives.
To carry out such calculations we usually take a local coordinate system of $\Gamma_\varepsilon$ or transform a part of $\Gamma_\varepsilon$ into the boundary of a half-space, but here these choices will result in too complicated calculations which we can hardly complete.
Instead we use a local orthonormal frame for the tangent bundle of $\Gamma_\varepsilon$ and formulas for the covariant derivatives of tangential vector fields on $\Gamma_\varepsilon$ given in Appendix \ref{S:Ap_Cov} to work without a change of variables.
The most important tool is the Gauss formula
\begin{align*}
  (X\cdot\nabla)Y = \overline{\nabla}_X^\varepsilon Y+(W_\varepsilon X\cdot Y)n_\varepsilon \quad\text{on}\quad \Gamma_\varepsilon
\end{align*}
for tangential vector fields $X$ and $Y$ on $\Gamma_\varepsilon$, which expresses the directional derivative $(X\cdot\nabla)Y$ in $\mathbb{R}^3$ in terms of the covariant derivative $\overline{\nabla}_X^\varepsilon Y$ on $\Gamma_\varepsilon$ and the second fundamental form $(W_\varepsilon X\cdot Y)n_\varepsilon$ of $\Gamma_\varepsilon$ (see Lemma \ref{L:Gauss}).
It enables us to apply formulas of differential geometry to quantities on $\Gamma_\varepsilon$ expressed in a fixed coordinate system of $\mathbb{R}^3$ and to write resulting expressions in the same coordinate system.
Our method is useful to deduce properties of functions on a domain from their behavior on the boundary since it avoids a change of variables.
It also provides an easy and understandable way to compute vector fields on surfaces without introducing local coordinate systems and differential forms.
We expect that our method will be applicable to other problems involving complicated calculations of vector fields on surfaces, especially to partial differential equations for vector fields on stationary or moving surfaces such as the surface Navier--Stokes and Stokes equations (see e.g. \cites{JaOlRe18,KoLiGi17,OlQuReYu18,Reus20}).

\subsection{Organization of this paper} \label{SS:Int_Org}
The rest of this paper is organized as follows.
In Section \ref{S:Main} we provide the main results of this paper.
Notations and basic results on a closed surface and a curved thin domain are presented in Section \ref{S:Pre}.
Section \ref{S:Fund} gives fundamental inequalities and formulas for functions on the curved thin domain and its boundary.
In Section \ref{S:UniKorn} we establish the uniform Korn inequality \eqref{E:Korn_Intro} and compare it with a standard Korn inequality.
We also derive the uniform a priori estimate for the vector Laplace operator \eqref{E:UA_Intro} in Section \ref{S:UniLap}.
Using the results of Sections \ref{S:Fund}--\ref{S:UniLap} we prove our main results in Section \ref{S:Pf_Main}.
Appendix \ref{S:Ap_Vec} fixes notations on vectors and matrices.
Some auxiliary results related to the closed surface are shown in Appendix \ref{S:Ap_RCS}.
In Appendix \ref{S:Ap_Proof} we provide the proofs of lemmas in Section \ref{S:Pre} and Lemmas \ref{L:CoV_Fixed}, \ref{L:KAux_Du}, and \ref{L:G_Bound} involving elementary but long calculations of differential geometry of surfaces.
Appendix \ref{S:Ap_Cov} presents formulas for the covariant derivatives of tangential vector fields on the closed surface used in Section \ref{S:UniLap}.
In Appendix \ref{S:Ap_Rig} we show some properties of infinitesimal rigid displacements of $\mathbb{R}^3$ related to the axial symmetry of the closed surface and the curved thin domain.

Most results of this paper were obtained in the doctoral thesis of the author \cite{Miu_DT}.
In this paper, however, we newly prove the uniform Korn inequality \eqref{E:Korn_Intro} under the condition \eqref{E:KOI_Intro} and give Appendix \ref{S:Ap_Rig} to study properties of infinitesimal rigid displacements of $\mathbb{R}^3$ related to the axial symmetry of a closed surface and a curved thin domain.
By these new results we can add the condition (A3) in Assumption \ref{Assump_2} to consider some curved thin domains excluded in \cite{Miu_DT}.
The most important example of a curved thin domain newly included in this paper is the thin spherical shell
\begin{align*}
  \Omega_\varepsilon = \{x\in\mathbb{R}^3 \mid 1<|x|<1+\varepsilon\}
\end{align*}
under the perfect slip boundary conditions \eqref{E:Slip_Intro} with $\gamma_\varepsilon=0$.
This kind of curved thin domain was studied by Temam and Ziane \cite{TeZi97} under different boundary conditions (see Remark \ref{R:Ex_Assump}).
We also add Section \ref{SS:Diff_Korn} in which we discuss the difference between the uniform Korn inequality and a standard Korn inequality.

\section{Main results} \label{S:Main}
In this section we present the main results of this paper.
The proofs of theorems in this section will be given in Section \ref{S:Pf_Main}.

To state the main results we first fix some notations (see also Section \ref{S:Pre}).
Let $\Gamma$ be a two-dimensional closed (i.e. compact and without boundary), connected, and oriented surface in $\mathbb{R}^3$ with unit outward normal vector field $n$ and $g_0,g_1\in C^4(\Gamma)$.
We assume that $\Gamma$ is of class $C^5$ and there exists a constant $c>0$ such that
\begin{align} \label{E:G_Inf}
  g := g_1-g_0 \geq c \quad\text{on}\quad \Gamma.
\end{align}
Note that we do not assume $g_0\leq0$ or $g_1\geq0$ on $\Gamma$.
For a sufficiently small $\varepsilon\in(0,1]$ let $\Omega_\varepsilon$ be the curved thin domain in $\mathbb{R}^3$ of the form \eqref{E:CTD_Intro} and
\begin{align*}
  L_\sigma^2(\Omega_\varepsilon) := \{u\in L^2(\Omega_\varepsilon)^3 \mid \text{$\mathrm{div}\,u=0$ in $\Omega_\varepsilon$, $u\cdot n_\varepsilon=0$ on $\Gamma_\varepsilon$}\}
\end{align*}
the standard $L^2$-solenoidal space on $\Omega_\varepsilon$.
By integration by parts we observe that the bilinear form for the Stokes probelm \eqref{E:Stokes_CTD} is given by
\begin{align} \label{E:Def_aeps}
  a_\varepsilon(u_1,u_2) := 2\nu\bigl(D(u_1),D(u_2)\bigr)_{L^2(\Omega_\varepsilon)}+\sum_{i=0,1}\gamma_\varepsilon^i(u_1,u_2)_{L^2(\Gamma_\varepsilon^i)}
\end{align}
for $u_1,u_2\in H^1(\Omega_\varepsilon)^3$ (see Lemma \ref{L:IbP_St}).
Here
\begin{align*}
  D(u) := \frac{\nabla u+(\nabla u)^T}{2} \quad\text{on}\quad \Omega_\varepsilon
\end{align*}
is the strain rate tensor for a vector field $u$ on $\Omega_\varepsilon$ and $\gamma_\varepsilon^0$ and $\gamma_\varepsilon^1$ are the friction coefficients appearing in \eqref{E:Def_Fric}.
Clearly, $a_\varepsilon$ is symmetric.
To make it uniformly in $\varepsilon$ bounded and coercive on an appropriate function space, we define function spaces and impose assumptions on $\gamma_\varepsilon^0$, $\gamma_\varepsilon^1$, and $\Gamma$.
Let
\begin{align} \label{E:Def_R}
  \mathcal{R} := \{w(x)=a\times x+b,\,x\in\mathbb{R}^3 \mid a,b\in\mathbb{R}^3,\,\text{$w|_\Gamma\cdot n=0$ on $\Gamma$}\}
\end{align}
be the space of all infinitesimal rigid displacements of $\mathbb{R}^3$ whose restrictions on $\Gamma$ are tangential.
Note that $\mathcal{R}$ is of finite dimension and that $\mathcal{R}\neq\{0\}$ if and only if $\Gamma$ is axially symmetric, i.e. invariant under a rotation by any angle around some line (see Lemma \ref{L:IR_Surf}).
Let $\nabla_\Gamma$ the tangential gradient operator on $\Gamma$ (see Section \ref{SS:Pre_Surf} for its definition).
We define subspaces of $\mathcal{R}$ by
\begin{align} \label{E:Def_Rg}
  \begin{aligned}
    \mathcal{R}_i &:= \{w\in\mathcal{R} \mid \text{$w|_\Gamma\cdot\nabla_\Gamma g_i=0$ on $\Gamma$}\}, \quad i=0,1, \\
    \mathcal{R}_g &:= \{w\in\mathcal{R} \mid \text{$w|_\Gamma\cdot\nabla_\Gamma g=0$ on $\Gamma$}\} \quad (g=g_1-g_0).
  \end{aligned}
\end{align}
Note that $\mathcal{R}_0\cap\mathcal{R}_1\subset\mathcal{R}_g$.
It turns out (see Lemmas \ref{L:CTD_AS} and \ref{L:CTD_Rg}) that $\Omega_\varepsilon$ is axially symmetric around the same line for all $\varepsilon\in(0,1]$ if $\mathcal{R}_0\cap\mathcal{R}_1\neq\{0\}$, while $\Omega_\varepsilon$ is not axially symmetric around any line for all $\varepsilon>0$ sufficiently small if $\mathcal{R}_g=\{0\}$.
Next we define the surface strain rate tensor $D_\Gamma(v)$ by
\begin{align} \label{E:Def_SSR}
  D_\Gamma(v) := P(\nabla_\Gamma v)_SP \quad\text{on}\quad \Gamma
\end{align}
for a (not necessarily tangential) vector field $v$ on $\Gamma$, where
\begin{align*}
  P := I_3-n\otimes n, \quad (\nabla_\Gamma v)_S := \frac{\nabla_\Gamma v+(\nabla_\Gamma v)^T}{2}
\end{align*}
are the orthogonal projection onto the tangent plane of $\Gamma$ and the symmetric part of the tangential gradient matrix of $v$ (see Section \ref{SS:Pre_Surf} for details).
Then we set
\begin{align} \label{E:Def_Kil}
  \begin{aligned}
    \mathcal{K}(\Gamma) &:= \{v \in H^1(\Gamma)^3 \mid \text{$v\cdot n=0$, $D_\Gamma(v)=0$ on $\Gamma$}\}, \\
    \mathcal{K}_g(\Gamma) &:= \{v\in\mathcal{K}(\Gamma) \mid \text{$v\cdot\nabla_\Gamma g=0$ on $\Gamma$}\}.
  \end{aligned}
\end{align}
If $\Gamma$ is of class $C^4$, then $v\in\mathcal{K}(\Gamma)$ is in fact of class $C^1$ (see Lemma \ref{L:Kil_Reg}) and
\begin{align*}
  \overline{\nabla}_Xv\cdot Y+X\cdot\overline{\nabla}_Yv = 0 \quad\text{on}\quad \Gamma
\end{align*}
for all tangential vector fields $X$ and $Y$ on $\Gamma$, where $\overline{\nabla}_Xv:=P(X\cdot\nabla_\Gamma)v$ denotes the covariant derivative of $v$ along $X$.
Such a vector field generates a one-parameter group of isometries of $\Gamma$ and is called a Killing vector field on $\Gamma$.
It is known that $\mathcal{K}(\Gamma)$ is a Lie algebra of dimension at most three.
For details of Killing vector fields we refer to \cites{Jo11,Pe06}.

\begin{remark} \label{R:Killing}
  For $w(x)=a\times x+b$, $x\in\mathbb{R}^3$ with $a,b\in\mathbb{R}^3$, direct calculations show that $D_\Gamma(w)=0$ on $\Gamma$.
  Hence $w$ is Killing on $\Gamma$ if it tangential on $\Gamma$, i.e.
  \begin{align*}
    \mathcal{R}|_\Gamma := \{w|_\Gamma\mid w\in\mathcal{R}\} \subset \mathcal{K}(\Gamma).
  \end{align*}
  The set $\mathcal{R}|_\Gamma$ represents the extrinsic infinitesimal symmetry of the embedded surface $\Gamma$, while $\mathcal{K}(\Gamma)$ describes the intrinsic one of the abstract Riemannian manifold $\Gamma$.
  It is known that $\mathcal{R}|_\Gamma=\mathcal{K}(\Gamma)$ if $\Gamma$ is a surface of revolution (see also Lemma \ref{L:IR_Kil}).
  The same relation holds if $\Gamma$ is closed and convex since any isometry between two closed and convex surfaces in $\mathbb{R}^3$ is a motion in $\mathbb{R}^3$ (a rotation and a translation) or a motion and a reflection by the Cohn-Vossen theorem (see \cite{Sp79}).
  However, it is not known whether $\mathcal{R}|_\Gamma$ agrees with $\mathcal{K}(\Gamma)$ for a general (nonconvex and not axially symmetric) closed surface.
  In particular, the existence of a closed surface in $\mathbb{R}^3$ that is not axially symmetric but admits a nontrivial Killing vector field, i.e. $\mathcal{R}=\{0\}$ but $\mathcal{K}(\Gamma)\neq\{0\}$, is an open problem.
\end{remark}

We make the following assumptions on the friction coefficients $\gamma_\varepsilon^0$ and $\gamma_\varepsilon^1$, the closed surface $\Gamma$, and the functions $g_0$ and $g_1$ (see also Remarks \ref{R:Ex_Assump} and \ref{R:Cond_A3}).

\begin{assumption} \label{Assump_1}
  There exists a constant $c>0$ such that
  \begin{align} \label{E:Fric_Upper}
    \gamma_\varepsilon^0 \leq c\varepsilon, \quad \gamma_\varepsilon^1 \leq c\varepsilon
  \end{align}
  for all $\varepsilon\in(0,1]$.
\end{assumption}

\begin{assumption} \label{Assump_2}
  Either of the following conditions is satisfied:
  \begin{itemize}
    \item[(A1)] There exists a constant $c>0$ such that
    \begin{align*}
      \gamma_\varepsilon^0 \geq c\varepsilon \quad\text{for all}\quad \varepsilon\in(0,1] \quad\text{or}\quad \gamma_\varepsilon^1 \geq c\varepsilon \quad\text{for all}\quad \varepsilon\in(0,1].
    \end{align*}
    \item[(A2)] The space $\mathcal{K}_g(\Gamma)$ contains only a trivial vector field, i.e. $\mathcal{K}_g(\Gamma)=\{0\}$.
    \item[(A3)] The relations
    \begin{align*}
      \mathcal{R}_g=\mathcal{R}_0\cap\mathcal{R}_1, \quad \mathcal{R}_g|_\Gamma:=\{w|_\Gamma\mid w\in\mathcal{R}_g\}=\mathcal{K}_g(\Gamma)
    \end{align*}
    hold and $\gamma_\varepsilon^0=\gamma_\varepsilon^1=0$ for all $\varepsilon\in(0,1]$.
  \end{itemize}
\end{assumption}

These assumptions are imposed only in this section and Section \ref{S:Pf_Main}.
Under Assumptions \ref{Assump_1} and \ref{Assump_2} we define subspaces of $L^2(\Omega_\varepsilon)^3$ and $H^1(\Omega_\varepsilon)^3$ by
\begin{align} \label{E:Def_Heps}
  \begin{aligned}
    \mathcal{H}_\varepsilon &:=
    \begin{cases}
      L_\sigma^2(\Omega_\varepsilon) &\text{if the condition (A1) or (A2) is satisfied}, \\
      L_\sigma^2(\Omega_\varepsilon)\cap\mathcal{R}_g^\perp &\text{if the condition (A3) is satisfied},
  \end{cases} \\
  \mathcal{V}_\varepsilon &:= \mathcal{H}_\varepsilon\cap H^1(\Omega_\varepsilon)^3,
    \end{aligned}
\end{align}
where $\mathcal{R}_g^\perp$ is the orthogonal complement of $\mathcal{R}_g$ in $L^2(\Omega_\varepsilon)^3$.
Here we consider vector fields in $\mathcal{R}_g$ defined on the whole space $\mathbb{R}^3$ as elements of $L^2(\Omega_\varepsilon)^3$ just by restricting them on $\overline{\Omega}_\varepsilon$.
Note that $\mathcal{R}_0\cap\mathcal{R}_1\subset L_\sigma^2(\Omega_\varepsilon)$ by Lemma \ref{L:IR_Sole} and thus $\mathcal{R}_g\subset L_\sigma^2(\Omega_\varepsilon)$ under the condition (A3).
Also, $\mathcal{H}_\varepsilon$ and $\mathcal{V}_\varepsilon$ are closed in $L^2(\Omega_\varepsilon)^3$ and $H^1(\Omega_\varepsilon)^3$.
By $\mathbb{P}_\varepsilon$ we denote the orthogonal projection from $L^2(\Omega_\varepsilon)^3$ onto $\mathcal{H}_\varepsilon$.
Note that $\mathbb{P}_\varepsilon$ may be slightly different from the standard Helmholtz--Leray projection from $L^2(\Omega_\varepsilon)^3$ onto $L_\sigma^2(\Omega_\varepsilon)$ under the condition (A3).

Now let us present the main results of this paper.
The first result is the uniform boundedness and coerciveness of the bilinear form $a_\varepsilon$ given by \eqref{E:Def_aeps} on $\mathcal{V}_\varepsilon$.

\begin{theorem} \label{T:Uni_aeps}
  Under Assumptions \ref{Assump_1} and \ref{Assump_2}, there exist constants $\varepsilon_0\in(0,1]$ and $c>0$ such that
  \begin{align} \label{E:Uni_aeps}
    c^{-1}\|u\|_{H^1(\Omega_\varepsilon)}^2 \leq a_\varepsilon(u,u) \leq c\|u\|_{H^1(\Omega_\varepsilon)}^2
  \end{align}
  for all $\varepsilon\in(0,\varepsilon_0]$ and $u\in\mathcal{V}_\varepsilon$.
\end{theorem}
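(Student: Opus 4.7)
The plan is to split the theorem into the easy upper bound and the more substantial lower bound, and for the lower bound to proceed by a case analysis on Assumption \ref{Assump_2}.

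For the upper bound, since $|D(u)|\leq|\nabla u|$ pointwise, the strain term is controlled by $2\nu\|u\|_{H^1(\Omega_\varepsilon)}^2$. For each friction term I would invoke the thin-domain trace inequality
\begin{align*}
\|u\|_{L^2(\Gamma_\varepsilon^i)}^2 \leq c\varepsilon^{-1}\|u\|_{H^1(\Omega_\varepsilon)}^2, \quad i=0,1,
\end{align*}
which follows from the Poincar\'e-type estimate in Lemma \ref{L:Poincare}, and combine it with the bound $\gamma_\varepsilon^i\leq c\varepsilon$ from Assumption \ref{Assump_1} to obtain $\gamma_\varepsilon^i\|u\|_{L^2(\Gamma_\varepsilon^i)}^2\leq c\|u\|_{H^1(\Omega_\varepsilon)}^2$.

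For the lower bound under (A2), $\mathcal{K}_g(\Gamma)=\{0\}$, so the condition \eqref{E:KOK_Intro} appearing in the uniform Korn inequality of Lemma \ref{L:Korn_H1} is vacuous for every $u\in\mathcal{V}_\varepsilon$; hence $\|u\|_{H^1(\Omega_\varepsilon)}\leq c\|D(u)\|_{L^2(\Omega_\varepsilon)}$ and therefore $a_\varepsilon(u,u)\geq 2\nu\|D(u)\|_{L^2(\Omega_\varepsilon)}^2\geq c^{-1}\|u\|_{H^1(\Omega_\varepsilon)}^2$. Under (A3), $u\in\mathcal{R}_g^\perp$ by definition of $\mathcal{V}_\varepsilon$, so \eqref{E:KOI_Intro} holds trivially with $\beta=0$ for every $w\in\mathcal{R}_g$; combined with the identification $\mathcal{R}_g|_\Gamma=\mathcal{K}_g(\Gamma)$ built into (A3), Lemma \ref{L:Korn_Rg} applies and yields the same Korn bound, and the friction terms are absent under (A3) but are not needed.

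The case (A1) is the delicate one, because no $L^2$-orthogonality to the Killing directions is imposed and Korn's inequality cannot be applied to $u$ directly. I would argue by contradiction: suppose $\varepsilon_k\to 0$ and $u_k\in\mathcal{V}_{\varepsilon_k}$ satisfy $\|u_k\|_{H^1(\Omega_{\varepsilon_k})}=1$ and $a_{\varepsilon_k}(u_k,u_k)\to 0$. Then $\|D(u_k)\|_{L^2(\Omega_{\varepsilon_k})}\to 0$ and, assuming $\gamma_{\varepsilon_k}^0\geq c\varepsilon_k$ (the other case being symmetric), $\varepsilon_k\|u_k\|_{L^2(\Gamma_{\varepsilon_k}^0)}^2\to 0$. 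Rescaling $\Omega_{\varepsilon_k}$ to a reference domain of unit thickness exactly as in the proof of \eqref{E:KUL2_Intro} in Lemma \ref{L:Korn_U} (following \cite{HoSe10}), and combining \eqref{E:KGrad_Intro} with Rellich compactness, I would extract a strong $L^2$-limit $u_*$ of the rescaled sequence and identify it with the constant normal extension $\bar{v}$ of some $v\in\mathcal{K}_g(\Gamma)$. Passing the rescaled friction bound to the limit would then force the trace of $\bar{v}$ on the inner boundary of the reference domain, which is essentially $v$ itself, to vanish, so $v\equiv 0$ on $\Gamma$ and $\bar{v}\equiv 0$. But \eqref{E:KGrad_Intro} together with the normalization $\|u_k\|_{H^1(\Omega_{\varepsilon_k})}=1$ forces the $L^2$-norm of the limit to stay bounded away from zero, a contradiction.

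The main obstacle is this contradiction step in case (A1): one has to verify that the rescaling preserves the correct scaling of the surface measure on $\Gamma_{\varepsilon_k}^0$ so that the vanishing of the friction term genuinely forces the trace of the limit to vanish on $\Gamma$, and one has to ensure strong enough convergence of the rescaled sequence on the reference domain to conclude from $D(u_*)=0$ that $u_*=\bar{v}$ with $v\in\mathcal{K}_g(\Gamma)$. Both points rely on the careful control of surface quantities of $\Gamma_\varepsilon^i$ prepared in Section \ref{SS:Pre_Dom}.
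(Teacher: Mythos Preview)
Your treatment of the upper bound and of the lower bound under (A2) and (A3) matches the paper's proof exactly. The difference is in case (A1), where the paper avoids the compactness/contradiction argument you sketch and gives a short direct proof instead. Assuming without loss of generality $\gamma_\varepsilon^0\geq c\varepsilon$, the paper applies the Poincar\'e-type inequality \eqref{E:Poin_Dom} with $i=0$ and $p=2$ to get
\[
\|u\|_{L^2(\Omega_\varepsilon)}^2 \leq c\bigl(\varepsilon\|u\|_{L^2(\Gamma_\varepsilon^0)}^2+\varepsilon^2\|\nabla u\|_{L^2(\Omega_\varepsilon)}^2\bigr),
\]
then uses $\varepsilon\leq c^{-1}\gamma_\varepsilon^0$ together with the Korn-type gradient estimate \eqref{E:Korn_Grad} to bound the right-hand side by $c_1a_\varepsilon(u,u)+c_2\varepsilon^2\|u\|_{L^2(\Omega_\varepsilon)}^2$; the last term is absorbed for $\varepsilon$ below an explicit threshold, and one more application of \eqref{E:Korn_Grad} controls $\|\nabla u\|_{L^2(\Omega_\varepsilon)}^2$. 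No rescaling, no limit passage, and no identification of a Killing field is needed in this case.

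Your contradiction route for (A1) should also succeed once the scalings are handled carefully: under your normalization $\|u_k\|_{H^1(\Omega_{\varepsilon_k})}=1$ the rescaled fields satisfy $\|U_k\|_{L^2(\Omega_1)}\sim\varepsilon_k^{-1/2}$, so a further renormalization on $\Omega_1$ is required before extracting the limit; after that, the friction bound $\varepsilon_k\|u_k\|_{L^2(\Gamma_{\varepsilon_k}^0)}^2\to0$ does transfer to the trace on $\Gamma_1^0$ and forces the limiting Killing field to vanish. But this reproduces the machinery of Lemma~\ref{L:Korn_U} in a situation where it is not needed: the friction lower bound already provides a uniform trace control, and \eqref{E:Poin_Dom} converts trace control directly into an $L^2(\Omega_\varepsilon)$ bound. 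The paper's argument buys simplicity and an explicit $\varepsilon_0$ in case (A1); your approach has the conceptual appeal of a unified scheme across all three cases (pass to a limiting Killing field, then kill it), at the cost of redoing the compactness step and the scaling bookkeeping you flag as the main obstacle.
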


Throughout this section we fix the constant $\varepsilon_0$ given in Theorem \ref{T:Uni_aeps} and take $\varepsilon\in(0,\varepsilon_0]$.
By Theorem \ref{T:Uni_aeps} the bilinear form $a_\varepsilon$ is bounded, coercive, and symmetric on the Hilbert space $\mathcal{V}_\varepsilon$.
Hence by the Lax--Milgram theorem there exists a bounded linear operator $A_\varepsilon$ from $\mathcal{V}_\varepsilon$ into its dual space $\mathcal{V}_\varepsilon'$ such that
\begin{align*}
  {}_{\mathcal{V}_\varepsilon'}\langle A_\varepsilon u_1,u_2\rangle_{\mathcal{V}_\varepsilon} = a_\varepsilon(u_1,u_2), \quad u_1,u_2\in \mathcal{V}_\varepsilon,
\end{align*}
where ${}_{\mathcal{V}_\varepsilon'}\langle\cdot,\cdot\rangle_{\mathcal{V}_\varepsilon}$ is the duality product between $\mathcal{V}_\varepsilon'$ and $\mathcal{V}_\varepsilon$.
We consider $A_\varepsilon$ as an unbounded operator on $\mathcal{H}_\varepsilon$ with domain
\begin{align*}
  D(A_\varepsilon) = \{u\in\mathcal{V}_\varepsilon \mid A_\varepsilon u\in\mathcal{H}_\varepsilon\}.
\end{align*}
Then the Lax--Milgram theory implies that $A_\varepsilon$ is a positive self-adjoint operator on $\mathcal{H}_\varepsilon$ and thus its square root $A_\varepsilon^{1/2}$ is well-defined on $D(A_\varepsilon^{1/2})=\mathcal{V}_\varepsilon$.
Moreover,
\begin{align} \label{E:L2in_Ahalf}
  (A_\varepsilon u_1,u_2)_{L^2(\Omega_\varepsilon)} = (A_\varepsilon^{1/2}u_1,A_\varepsilon^{1/2} u_2)_{L^2(\Omega_\varepsilon)}
\end{align}
for all $u_1\in D(A_\varepsilon)$ and $u_2\in \mathcal{V}_\varepsilon$, and
\begin{align} \label{E:L2nor_Ahalf}
  \|A_\varepsilon^{1/2}u\|_{L^2(\Omega_\varepsilon)}^2 = a_\varepsilon(u,u) = 2\nu\|D(u)\|_{L^2(\Omega_\varepsilon)}^2+\gamma_\varepsilon^0\|u\|_{L^2(\Gamma_\varepsilon^0)}^2+\gamma_\varepsilon^1\|u\|_{L^2(\Gamma_\varepsilon^1)}^2
\end{align}
for all $u\in \mathcal{V}_\varepsilon$ (see e.g. \cites{BoFa13,So01} for details).
From a regularity result for a solution to the Stokes problem \eqref{E:Stokes_CTD} (see \cites{AmRe14,Be04,SoSc73}) it also follows that
\begin{align} \label{E:Dom_St}
  D(A_\varepsilon) = \{u\in \mathcal{V}_\varepsilon\cap H^2(\Omega_\varepsilon)^3 \mid \text{$2\nu P_\varepsilon D(u)n_\varepsilon+\gamma_\varepsilon u=0$ on $\Gamma_\varepsilon$}\}
\end{align}
and $A_\varepsilon u=-\nu\mathbb{P}_\varepsilon\Delta u$ for $u\in D(A_\varepsilon)$.
We call $A_\varepsilon$ the Stokes operator associated with \eqref{E:Stokes_CTD} or the Stokes operator for $\Omega_\varepsilon$ under the slip boundary conditions.

Let us give basic inequalities for $A_\varepsilon^{1/2}$ with constants independent of $\varepsilon$.

\begin{lemma} \label{L:Stokes_H1}
  Under Assumptions \ref{Assump_1} and \ref{Assump_2}, let $\varepsilon_0$ be the constant given in Theorem \ref{T:Uni_aeps}.
  There exists a constant $c>0$ such that
  \begin{align} \label{E:Stokes_H1}
    c^{-1}\|u\|_{H^1(\Omega_\varepsilon)} \leq \|A_\varepsilon^{1/2}u\|_{L^2(\Omega_\varepsilon)} \leq c\|u\|_{H^1(\Omega_\varepsilon)}
  \end{align}
  for all $\varepsilon\in(0,\varepsilon_0]$ and $u\in \mathcal{V}_\varepsilon$.
  Moreover, if $u\in D(A_\varepsilon)$, then we have
  \begin{align} \label{E:Stokes_Po}
    \|A_\varepsilon^{1/2}u\|_{L^2(\Omega_\varepsilon)} \leq c\|A_\varepsilon u\|_{L^2(\Omega_\varepsilon)}.
  \end{align}
\end{lemma}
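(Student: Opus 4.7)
The plan is to reduce both inequalities to Theorem \ref{T:Uni_aeps} together with the operator-theoretic identities \eqref{E:L2in_Ahalf} and \eqref{E:L2nor_Ahalf}. Since $A_\varepsilon$ is constructed via the Lax--Milgram theorem applied to the bilinear form $a_\varepsilon$ on $\mathcal{V}_\varepsilon$, all the constants built into these identities are intrinsic to that construction and introduce no $\varepsilon$-dependence; the only place where uniformity in $\varepsilon$ must be tracked is in the norm equivalence between $a_\varepsilon(u,u)$ and $\|u\|_{H^1(\Omega_\varepsilon)}^2$, which is exactly what Theorem \ref{T:Uni_aeps} provides.

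For \eqref{E:Stokes_H1}, I would simply use \eqref{E:L2nor_Ahalf} to write $\|A_\varepsilon^{1/2}u\|_{L^2(\Omega_\varepsilon)}^2 = a_\varepsilon(u,u)$ and then apply the two-sided bound in \eqref{E:Uni_aeps} of Theorem \ref{T:Uni_aeps}. Taking square roots yields \eqref{E:Stokes_H1} with a constant $c$ independent of $\varepsilon\in(0,\varepsilon_0]$.

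For \eqref{E:Stokes_Po}, I would first take $u_1=u_2=u\in D(A_\varepsilon)\subset\mathcal{V}_\varepsilon$ in \eqref{E:L2in_Ahalf} to get
\begin{equation*}
  \|A_\varepsilon^{1/2}u\|_{L^2(\Omega_\varepsilon)}^2 = (A_\varepsilon u,u)_{L^2(\Omega_\varepsilon)} \leq \|A_\varepsilon u\|_{L^2(\Omega_\varepsilon)}\|u\|_{L^2(\Omega_\varepsilon)}
\end{equation*}
by Cauchy--Schwarz. Then, using the trivial bound $\|u\|_{L^2(\Omega_\varepsilon)}\leq\|u\|_{H^1(\Omega_\varepsilon)}$ and the already-established left half of \eqref{E:Stokes_H1}, I can control $\|u\|_{L^2(\Omega_\varepsilon)}$ by a uniform constant times $\|A_\varepsilon^{1/2}u\|_{L^2(\Omega_\varepsilon)}$. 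Substituting and dividing by $\|A_\varepsilon^{1/2}u\|_{L^2(\Omega_\varepsilon)}$ (handling the trivial case $u=0$ separately) produces \eqref{E:Stokes_Po} with an $\varepsilon$-independent constant.

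There is no real obstacle here: the lemma is essentially a formal consequence of Theorem \ref{T:Uni_aeps}, and the only point worth being careful about is confirming that the Lax--Milgram/spectral identities \eqref{E:L2in_Ahalf} and \eqref{E:L2nor_Ahalf} carry no hidden $\varepsilon$-dependence, so that uniformity in \eqref{E:Stokes_H1} and \eqref{E:Stokes_Po} is inherited directly from the uniform coercivity and boundedness of $a_\varepsilon$ on $\mathcal{V}_\varepsilon$.
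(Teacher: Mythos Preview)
Your proposal is correct and follows essentially the same approach as the paper: both derive \eqref{E:Stokes_H1} directly from \eqref{E:L2nor_Ahalf} and \eqref{E:Uni_aeps}, and both obtain \eqref{E:Stokes_Po} by combining \eqref{E:L2in_Ahalf} with Cauchy--Schwarz, the bound $\|u\|_{L^2(\Omega_\varepsilon)}\leq\|u\|_{H^1(\Omega_\varepsilon)}$, and the already-established \eqref{E:Stokes_H1}.
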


\begin{proof}
  The inequality \eqref{E:Stokes_H1} is an immediate consequence of \eqref{E:Uni_aeps} and \eqref{E:L2nor_Ahalf}.
  To prove \eqref{E:Stokes_Po} for $u\in D(A_\varepsilon)$ we see by \eqref{E:L2in_Ahalf} and H\"{o}lder's inequality that
  \begin{align*}
    \|A_\varepsilon^{1/2}u\|_{L^2(\Omega_\varepsilon)}^2 = (u,A_\varepsilon u)_{L^2(\Omega_\varepsilon)} \leq \|u\|_{L^2(\Omega_\varepsilon)}\|A_\varepsilon u\|_{L^2(\Omega_\varepsilon)}.
  \end{align*}
  By this inequality, $\|u\|_{L^2(\Omega_\varepsilon)}\leq \|u\|_{H^1(\Omega_\varepsilon)}$, and \eqref{E:Stokes_H1} we get \eqref{E:Stokes_Po}.
\end{proof}

Since $A_\varepsilon=-\nu\mathbb{P}_\varepsilon\Delta$ on $\mathcal{H}_\varepsilon$ and $\mathbb{P}_\varepsilon$ is the orthogonal projection from $L^2(\Omega_\varepsilon)^3$ onto $\mathcal{H}_\varepsilon$, we easily observe that
\begin{align*}
  \|A_\varepsilon u+\nu\Delta u\|_{L^2(\Omega_\varepsilon)} = \nu\|\Delta u-\mathbb{P}_\varepsilon\Delta u\|_{L^2(\Omega_\varepsilon)} \leq \nu\|\Delta u\|_{L^2(\Omega_\varepsilon)} \leq c\|u\|_{H^2(\Omega_\varepsilon)}
\end{align*}
for all $u\in D(A_\varepsilon)$ with a constant $c>0$ independent of $\varepsilon$.
The next theorem shows that the right-hand side of the above inequality can be replaced by the $H^1(\Omega_\varepsilon)$-norm of $u$ under the slip boundary conditions \eqref{E:Slip_Intro}.

\begin{theorem} \label{T:Comp_Sto_Lap}
  Under Assumptions \ref{Assump_1} and \ref{Assump_2}, let $\varepsilon_0$ be the constant given in Theorem \ref{T:Uni_aeps}.
  There exists a constant $c>0$ such that
  \begin{align} \label{E:Comp_Sto_Lap}
    \|A_\varepsilon u+\nu\Delta u\|_{L^2(\Omega_\varepsilon)} \leq c\|u\|_{H^1(\Omega_\varepsilon)}
  \end{align}
  for all $\varepsilon\in(0,\varepsilon_0]$ and $u\in D(A_\varepsilon)$.
\end{theorem}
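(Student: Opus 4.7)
The plan is to compute $\|\Phi\|_{L^2(\Omega_\varepsilon)}^2$ for $\Phi:=A_\varepsilon u+\nu\Delta u$ by pairing $\Phi$ with itself, exploiting the identity $A_\varepsilon u=-\nu\mathbb{P}_\varepsilon\Delta u$ and the integration by parts formula of Lemma~\ref{L:IbP_Curl}. Since $\Phi=\nu(I-\mathbb{P}_\varepsilon)\Delta u\in\mathcal{H}_\varepsilon^\perp$ while $\mathbb{P}_\varepsilon\Delta u\in\mathcal{H}_\varepsilon$, orthogonality gives
\[
\|\Phi\|_{L^2(\Omega_\varepsilon)}^2 = \nu(\Delta u,\Phi)_{L^2(\Omega_\varepsilon)} = -\nu(\mathrm{curl}\,\mathrm{curl}\,u,\Phi)_{L^2(\Omega_\varepsilon)},
\]
where the second equality uses $\Delta u=-\mathrm{curl}\,\mathrm{curl}\,u$, valid since $\mathrm{div}\,u=0$.

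Applying Lemma~\ref{L:IbP_Curl} to the right-hand side, Cauchy--Schwarz together with the uniform bound $\|G(u)\|_{H^1(\Omega_\varepsilon)}\leq c\|u\|_{H^1(\Omega_\varepsilon)}$ from Lemma~\ref{L:G_Bound} immediately yields
\[
|\nu(\mathrm{curl}\,G(u),\Phi)_{L^2(\Omega_\varepsilon)}| \leq c\|u\|_{H^1(\Omega_\varepsilon)}\|\Phi\|_{L^2(\Omega_\varepsilon)}.
\]
The remaining contribution is $-\nu(\mathrm{curl}\,u+G(u),\mathrm{curl}\,\Phi)_{L^2(\Omega_\varepsilon)}$, and here the structure of $\mathcal{H}_\varepsilon^\perp$ is decisive. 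Under conditions (A1) or (A2), $\mathcal{H}_\varepsilon=L_\sigma^2(\Omega_\varepsilon)$, so by Helmholtz--Leray $\mathcal{H}_\varepsilon^\perp$ consists only of gradient fields; hence $\mathrm{curl}\,\Phi=0$ and this term vanishes.

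Case (A3) is the delicate one. There $\mathcal{H}_\varepsilon=L_\sigma^2(\Omega_\varepsilon)\cap\mathcal{R}_g^\perp$ and, using $\mathcal{R}_g\subset L_\sigma^2(\Omega_\varepsilon)$, one has the orthogonal decomposition $\mathcal{H}_\varepsilon^\perp=\mathcal{R}_g\oplus\{\nabla p:p\in H^1(\Omega_\varepsilon)\}$. Writing $\Phi=\nu w+\nu\nabla p$ with $w(x)=a\times x+b\in\mathcal{R}_g$ gives $\mathrm{curl}\,\Phi=2\nu a$, a constant vector, so the remaining term equals $2\nu^2 a\cdot\int_{\Omega_\varepsilon}(\mathrm{curl}\,u+G(u))\,dx$. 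H\"older bounds this by $c\nu^2\varepsilon^{1/2}|a|\,\|u\|_{H^1(\Omega_\varepsilon)}$, and the equivalence of norms on the finite-dimensional space $\mathcal{R}_g$ together with the scaling of $\|\cdot\|_{L^2(\Omega_\varepsilon)}$ on $\mathcal{R}_g$ (which is of order $\varepsilon^{1/2}$ as $\varepsilon\to0$) gives $|a|\leq c\varepsilon^{-1/2}\|w\|_{L^2(\Omega_\varepsilon)}\leq c\varepsilon^{-1/2}\nu^{-1}\|\Phi\|_{L^2(\Omega_\varepsilon)}$. The two powers of $\varepsilon^{\pm1/2}$ cancel, producing once more a bound of the form $c\|u\|_{H^1(\Omega_\varepsilon)}\|\Phi\|_{L^2(\Omega_\varepsilon)}$.

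Summing the two contributions gives $\|\Phi\|_{L^2(\Omega_\varepsilon)}^2\leq c\|u\|_{H^1(\Omega_\varepsilon)}\|\Phi\|_{L^2(\Omega_\varepsilon)}$, from which \eqref{E:Comp_Sto_Lap} follows on dividing by $\|\Phi\|_{L^2(\Omega_\varepsilon)}$. The main obstacle I expect is case (A3): one must verify that the scaling constant in $|a|\leq c\varepsilon^{-1/2}\|w\|_{L^2(\Omega_\varepsilon)}$ is genuinely independent of $\varepsilon$ on the finite-dimensional space $\mathcal{R}_g$, and that Lemma~\ref{L:IbP_Curl} with the $H^1$-in-$H^1$-out bound on $G(u)$ really does close the estimate without any appeal to $H^2$-regularity of $u$---a key feature for the nonlinear estimates used in the companion paper.
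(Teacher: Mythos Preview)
Your argument is correct, including the scaling estimate in case (A3): since $\mathcal{R}_g$ is a fixed finite-dimensional space and $\|w\|_{L^2(\Omega_\varepsilon)}^2=\varepsilon\int_\Gamma g\,|w|^2\,d\mathcal{H}^2+O(\varepsilon^2)$ for $w(x)=a\times x+b$, the equivalence $|a|\leq c\varepsilon^{-1/2}\|w\|_{L^2(\Omega_\varepsilon)}$ does hold with $c$ independent of $\varepsilon$ for $\varepsilon$ small.

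However, in case (A3) you are doing unnecessary work. The paper observes that the $\mathcal{R}_g$-component $w$ of $\Phi$ is in fact \emph{zero}. Indeed, for $w\in\mathcal{R}_g\subset L_\sigma^2(\Omega_\varepsilon)$ one has $(\mathbb{L}_\varepsilon\Delta u,w)_{L^2(\Omega_\varepsilon)}=(\Delta u,w)_{L^2(\Omega_\varepsilon)}$, and the integration-by-parts formula of Lemma~\ref{L:IbP_St} (with $\mathrm{div}\,u=0$) gives
\[
(\Delta u,w)_{L^2(\Omega_\varepsilon)}=-2\bigl(D(u),D(w)\bigr)_{L^2(\Omega_\varepsilon)}+2(D(u)n_\varepsilon,w)_{L^2(\Gamma_\varepsilon)}=0,
\]
since $D(w)\equiv0$ for an infinitesimal rigid displacement and, under (A3), $\gamma_\varepsilon=0$ so $P_\varepsilon D(u)n_\varepsilon=0$ on $\Gamma_\varepsilon$ while $w$ is tangential there by Lemma~\ref{L:IR_CTD}. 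Hence $\mathbb{L}_\varepsilon\Delta u\in\mathcal{H}_\varepsilon$, so $\mathbb{P}_\varepsilon\Delta u=\mathbb{L}_\varepsilon\Delta u$ and $\Phi$ is a pure gradient in all three cases. This unifies the argument: one always has $\mathrm{curl}\,\Phi=0$, and only the $\mathrm{curl}\,G(u)$ term from Lemma~\ref{L:IbP_Curl} survives. Your scaling route works, but the paper's route explains \emph{why} condition (A3) requires $\gamma_\varepsilon=0$: precisely so that this orthogonality holds.
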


The inequality \eqref{E:Comp_Sto_Lap} is useful to derive a good estimate for the trilinear term
\begin{align*}
  \bigl((u\cdot\nabla)u,A_\varepsilon u\bigr)_{L^2(\Omega_\varepsilon)}, \quad u\in D(A_\varepsilon),
\end{align*}
which is essential for the proof of the global existence of a strong solution to the Navier--Stokes equations \eqref{E:NS_CTD}.
For details, we refer to \cite{Miu_NSCTD_02}.

Finally, we present the uniform norm equivalence for the Stokes operator $A_\varepsilon$.

\begin{theorem} \label{T:Stokes_H2}
  Under Assumptions \ref{Assump_1} and \ref{Assump_2}, let $\varepsilon_0$ be the constant given in Theorem \ref{T:Uni_aeps}.
  There exists a constant $c>0$ such that
  \begin{align} \label{E:Stokes_H2}
    c^{-1}\|u\|_{H^2(\Omega_\varepsilon)} \leq \|A_\varepsilon u\|_{L^2(\Omega_\varepsilon)} \leq c\|u\|_{H^2(\Omega_\varepsilon)}
  \end{align}
  for all $\varepsilon\in(0,\varepsilon_0]$ and $u\in D(A_\varepsilon)$.
\end{theorem}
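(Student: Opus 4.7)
The plan is to obtain the upper bound essentially for free from the definition of $A_\varepsilon$, and to deduce the lower bound by combining the previously established difference estimate (Theorem \ref{T:Comp_Sto_Lap}), the uniform a priori estimate for the vector Laplace operator \eqref{E:UA_Intro}, and the uniform Poincaré-type estimate \eqref{E:Stokes_Po}.

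For the upper bound, I would use the characterization of the domain \eqref{E:Dom_St} together with the identity $A_\varepsilon u = -\nu \mathbb{P}_\varepsilon \Delta u$. Since $\mathbb{P}_\varepsilon$ is the orthogonal projection from $L^2(\Omega_\varepsilon)^3$ onto $\mathcal{H}_\varepsilon$, one immediately has
\[
\|A_\varepsilon u\|_{L^2(\Omega_\varepsilon)} = \nu\|\mathbb{P}_\varepsilon \Delta u\|_{L^2(\Omega_\varepsilon)} \leq \nu\|\Delta u\|_{L^2(\Omega_\varepsilon)} \leq c\|u\|_{H^2(\Omega_\varepsilon)}
\]
with $c$ independent of $\varepsilon$.

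For the lower bound, I would first write $\nu\Delta u = -A_\varepsilon u + (A_\varepsilon u + \nu\Delta u)$ and apply the triangle inequality to get
\[
\nu\|\Delta u\|_{L^2(\Omega_\varepsilon)} \leq \|A_\varepsilon u\|_{L^2(\Omega_\varepsilon)} + \|A_\varepsilon u + \nu\Delta u\|_{L^2(\Omega_\varepsilon)}.
\]
The second term on the right is controlled by $c\|u\|_{H^1(\Omega_\varepsilon)}$ thanks to Theorem \ref{T:Comp_Sto_Lap}. Since $u \in D(A_\varepsilon)$ belongs to $H^2(\Omega_\varepsilon)^3$ and satisfies the slip boundary conditions \eqref{E:Slip_Intro} by \eqref{E:Dom_St}, the uniform a priori estimate \eqref{E:UA_Intro} for the vector Laplace operator applies, yielding
\[
\|u\|_{H^2(\Omega_\varepsilon)} \leq c\bigl(\|\Delta u\|_{L^2(\Omega_\varepsilon)} + \|u\|_{H^1(\Omega_\varepsilon)}\bigr) \leq c\bigl(\|A_\varepsilon u\|_{L^2(\Omega_\varepsilon)} + \|u\|_{H^1(\Omega_\varepsilon)}\bigr).
\]
To close the estimate I would absorb the $H^1$-norm on the right using Lemma \ref{L:Stokes_H1}: the equivalence \eqref{E:Stokes_H1} gives $\|u\|_{H^1(\Omega_\varepsilon)} \leq c\|A_\varepsilon^{1/2} u\|_{L^2(\Omega_\varepsilon)}$, and then \eqref{E:Stokes_Po} provides $\|A_\varepsilon^{1/2} u\|_{L^2(\Omega_\varepsilon)} \leq c\|A_\varepsilon u\|_{L^2(\Omega_\varepsilon)}$, so altogether $\|u\|_{H^1(\Omega_\varepsilon)} \leq c\|A_\varepsilon u\|_{L^2(\Omega_\varepsilon)}$. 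Substituting back produces the desired uniform bound $\|u\|_{H^2(\Omega_\varepsilon)} \leq c\|A_\varepsilon u\|_{L^2(\Omega_\varepsilon)}$.

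No real obstacle is expected at this stage: all the hard work has been done earlier in the paper. The main content of the theorem is the synthesis of three independently proven uniform estimates — the difference estimate between $A_\varepsilon$ and $-\nu\Delta$, the $H^2$-regularity estimate for the vector Laplacian under slip boundary conditions, and the uniform coercivity of $a_\varepsilon$ which underlies \eqref{E:Stokes_Po}. One point that must be checked carefully is that each of those results applies in the same range $\varepsilon \in (0,\varepsilon_0]$ with $\varepsilon_0$ furnished by Theorem \ref{T:Uni_aeps}, and that they hold under the present Assumptions \ref{Assump_1}--\ref{Assump_2}, which is precisely the setting in which they were stated.
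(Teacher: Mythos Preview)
Your proof is correct and follows essentially the same route as the paper's: the lower bound is obtained exactly as you describe, by combining Lemma \ref{L:Lap_Apri} (your \eqref{E:UA_Intro}) with Theorem \ref{T:Comp_Sto_Lap} and then absorbing the $H^1$-term via \eqref{E:Stokes_H1} and \eqref{E:Stokes_Po}. The only cosmetic difference is in the upper bound, where you invoke the contraction property of $\mathbb{P}_\varepsilon$ directly while the paper instead writes $\|A_\varepsilon u\|_{L^2}\leq \|A_\varepsilon u+\nu\Delta u\|_{L^2}+\nu\|\Delta u\|_{L^2}$ and applies \eqref{E:Comp_Sto_Lap} once more; both are one-line arguments yielding the same conclusion.
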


As a consequence of Lemma \ref{L:Stokes_H1} and Theorem \ref{T:Stokes_H2} we obtain an interpolation inequality for a vector field in $D(A_\varepsilon)$.

\begin{corollary} \label{C:St_Inter}
  Under Assumptions \ref{Assump_1} and \ref{Assump_2}, let $\varepsilon_0$ be the constant given in Theorem \ref{T:Uni_aeps}.
  Then there exists a constant $c>0$ such that
  \begin{align} \label{E:St_Inter}
    \|u\|_{H^1(\Omega_\varepsilon)} \leq c\|u\|_{L^2(\Omega_\varepsilon)}^{1/2}\|u\|_{H^2(\Omega_\varepsilon)}^{1/2}
  \end{align}
  for all $\varepsilon\in(0,\varepsilon_0]$ and $u\in D(A_\varepsilon)$.
\end{corollary}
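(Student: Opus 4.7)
The plan is to chain together the two norm equivalences already established for $A_\varepsilon^{1/2}$ and $A_\varepsilon$ via the self-adjointness identity \eqref{E:L2in_Ahalf}. The key observation is that for $u\in D(A_\varepsilon)$, the quantity $\|A_\varepsilon^{1/2}u\|_{L^2(\Omega_\varepsilon)}^2$ interpolates between $\|u\|_{L^2(\Omega_\varepsilon)}$ and $\|A_\varepsilon u\|_{L^2(\Omega_\varepsilon)}$ by Cauchy--Schwarz, and each endpoint is already controlled by a Sobolev norm with an $\varepsilon$-independent constant.

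Concretely, I would first invoke \eqref{E:L2in_Ahalf} with $u_1=u_2=u$, which yields
\[
\|A_\varepsilon^{1/2}u\|_{L^2(\Omega_\varepsilon)}^2 = (A_\varepsilon u, u)_{L^2(\Omega_\varepsilon)}.
\]
Applying H\"older's inequality to the right-hand side and taking square roots gives
\[
\|A_\varepsilon^{1/2}u\|_{L^2(\Omega_\varepsilon)} \leq \|A_\varepsilon u\|_{L^2(\Omega_\varepsilon)}^{1/2}\|u\|_{L^2(\Omega_\varepsilon)}^{1/2}.
\]

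Next I would bound the left-hand side from below and the $A_\varepsilon u$ factor from above using the main estimates of this section. The lower bound in Lemma \ref{L:Stokes_H1} gives $\|u\|_{H^1(\Omega_\varepsilon)}\leq c\|A_\varepsilon^{1/2}u\|_{L^2(\Omega_\varepsilon)}$, while the upper bound in Theorem \ref{T:Stokes_H2} gives $\|A_\varepsilon u\|_{L^2(\Omega_\varepsilon)}\leq c\|u\|_{H^2(\Omega_\varepsilon)}$, both with constants independent of $\varepsilon\in(0,\varepsilon_0]$. Combining these with the interpolation above produces
\[
\|u\|_{H^1(\Omega_\varepsilon)} \leq c\|A_\varepsilon u\|_{L^2(\Omega_\varepsilon)}^{1/2}\|u\|_{L^2(\Omega_\varepsilon)}^{1/2} \leq c\|u\|_{H^2(\Omega_\varepsilon)}^{1/2}\|u\|_{L^2(\Omega_\varepsilon)}^{1/2},
\]
which is \eqref{E:St_Inter}.

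There is no real obstacle here; the entire content of the corollary is the uniformity of the constant in $\varepsilon$, and that uniformity has already been absorbed into Lemma \ref{L:Stokes_H1} and Theorem \ref{T:Stokes_H2}. The only thing to watch is that we apply \eqref{E:L2in_Ahalf} in its stated form (which requires $u_1\in D(A_\varepsilon)$ and $u_2\in \mathcal{V}_\varepsilon$), and since $D(A_\varepsilon)\subset\mathcal{V}_\varepsilon$ by definition we may legitimately take $u_1=u_2=u$ for $u\in D(A_\varepsilon)$.
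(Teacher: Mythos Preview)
Your proof is correct and follows essentially the same route as the paper: use \eqref{E:L2in_Ahalf} with $u_1=u_2=u$, apply Cauchy--Schwarz to $(A_\varepsilon u,u)_{L^2(\Omega_\varepsilon)}$, and then invoke the uniform norm equivalences \eqref{E:Stokes_H1} and \eqref{E:Stokes_H2}. The only cosmetic difference is that the paper keeps the squared form $\|u\|_{H^1}^2\leq c\|u\|_{L^2}\|u\|_{H^2}$ before taking roots, whereas you take roots first; the content is identical.
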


\begin{proof}
  Let $u\in D(A_\varepsilon)$.
  From \eqref{E:L2in_Ahalf} and \eqref{E:Stokes_H1} it follows that
  \begin{align*}
    \|u\|_{H^1(\Omega_\varepsilon)}^2 \leq c\|A_\varepsilon^{1/2}u\|_{L^2(\Omega_\varepsilon)}^2 = c(A_\varepsilon u,u)_{L^2(\Omega_\varepsilon)} \leq c\|A_\varepsilon u\|_{L^2(\Omega_\varepsilon)}\|u\|_{L^2(\Omega_\varepsilon)}.
  \end{align*}
  Applying \eqref{E:Stokes_H2} to the right-hand side of this inequality we get
  \begin{align*}
    \|u\|_{H^1(\Omega_\varepsilon)}^2 \leq c\|u\|_{L^2(\Omega_\varepsilon)}\|u\|_{H^2(\Omega_\varepsilon)}.
  \end{align*}
  Hence \eqref{E:St_Inter} is valid.
\end{proof}

We conclude this section with two remarks on Assumption \ref{Assump_2}.

\begin{remark} \label{R:Ex_Assump}
  The conditions of Assumption \ref{Assump_2} are valid in the following cases:
  \begin{itemize}
    \item[(A1)] When at least one of $\gamma_\varepsilon^0$ and $\gamma_\varepsilon^1$ is bounded from below by $\varepsilon$, we may consider any closed surface $\Gamma$.
    In this case, however, the perfect slip (i.e. $\gamma_\varepsilon=0$) of the fluid on the boundary $\Gamma_\varepsilon$ is not allowed.
    \item[(A2)] It is known (see e.g. \cite{Sh_18pre}*{Proposition 2.2}) that there exists no nontrivial Killing vector field on $\Gamma$ (i.e. $\mathcal{K}(\Gamma)=\{0\}$) if the genus of $\Gamma$ is greater than one.
    In this case $\mathcal{K}_g(\Gamma)=\{0\}$ for any $g=g_1-g_0$ and we may take arbitrary nonnegative $\gamma_\varepsilon^0$ and $\gamma_\varepsilon^1$ (bounded above by $\varepsilon$).
    Note that, if $\mathcal{K}_g(\Gamma)=\{0\}$, then $\mathcal{R}_g=\{0\}$ and the curved thin domain $\Omega_\varepsilon$ is not axially symmetric around any line for all $\varepsilon>0$ sufficiently small (see Lemma \ref{L:CTD_Rg}).
    \item[(A3)] As mentioned in Remark \ref{R:Killing}, if $\Gamma$ is a surface of revolution or it is closed and convex then $\mathcal{R}|_\Gamma=\mathcal{K}(\Gamma)$ and thus $\mathcal{R}_g|_\Gamma=\mathcal{K}_g(\Gamma)$ for any $g=g_1-g_0$.
    Also, the relation $\mathcal{R}_0\cap\mathcal{R}_1=\mathcal{R}_g$ holds if, for example, $g_0$ or $g_1$ is constant.
    In this case we only consider the perfect slip boundary conditions
    \begin{align} \label{E:Per_Slip}
      u\cdot n_\varepsilon = 0, \quad 2\nu P_\varepsilon D(u)n_\varepsilon = 0 \quad\text{on}\quad \Gamma_\varepsilon.
    \end{align}
    A typical but important example of this case is the thin spherical shell
    \begin{align*}
      \Omega_\varepsilon = \{x\in\mathbb{R}^3 \mid 1<|x|<1+\varepsilon\} \quad (\Gamma = S^2,\,g_0 \equiv 0,\,g_1 \equiv 1)
    \end{align*}
    around the unit sphere $S^2$ in $\mathbb{R}^3$ considered by Temam and Ziane \cite{TeZi97} under the Hodge boundary conditions
    \begin{align} \label{E:Hodge_BC}
      u\cdot n_\varepsilon = 0, \quad \mathrm{curl}\,u\times n_\varepsilon = 0 \quad\text{on}\quad \Gamma_\varepsilon.
    \end{align}
    Note that, if $u\cdot n_\varepsilon=0$ on $\Gamma_\varepsilon$, then (see \cite{MitMon09}*{Section 2} and Lemma \ref{L:Diff_SH})
    \begin{align*}
      2P_\varepsilon D(u)n_\varepsilon-\mathrm{curl}\,u\times n_\varepsilon = 2W_\varepsilon u \quad\text{on}\quad \Gamma_\varepsilon,
    \end{align*}
    where $W_\varepsilon$ is the Weingarten map (or the shape operator) of $\Gamma_\varepsilon$ representing the curvatures of $\Gamma_\varepsilon$ (see Section \ref{SS:Pre_Dom} for its definition).
    Hence the perfect slip boundary conditions \eqref{E:Per_Slip} are different from the Hodge boundary conditions \eqref{E:Hodge_BC} by the curvatures of the boundary.
  \end{itemize}
  We also note that, if $\Gamma=\mathbb{T}^2$ is the flat torus, then
  \begin{align*}
    \mathcal{R}_i &= \{(a_1,a_2,0)^T\in\mathbb{R}^2\times\{0\} \mid \text{$a_1\partial_1g_i+a_2\partial_2g_i=0$ on $\mathbb{T}^2$}\}, \quad i=0,1, \\
    \mathcal{R}_g &= \mathcal{K}_g(\Gamma) = \{(a_1,a_2,0)^T\in\mathbb{R}^2\times\{0\} \mid \text{$a_1\partial_1g+a_2\partial_2g=0$ on $\mathbb{T}^2$}\}
  \end{align*}
  and the conditions (A2) and (A3) were imposed in \cite{Ho10} and \cites{HoSe10,IfRaSe07}, respectively, which studied the Naiver--Stokes equations in a flat thin domain around $\Gamma=\mathbb{T}^2$.
\end{remark}

\begin{remark} \label{R:Cond_A3}
  For a function $\eta$ on $\Gamma$ let $\bar{\eta}$ be its constant extension in the normal direction of $\Gamma$ (see Section \ref{SS:Pre_Surf} for the precise definition) and
  \begin{align*}
    \overline{\mathcal{K}}_g(\Gamma) := \{\bar{v} \mid v\in\mathcal{K}_g(\Gamma)\}, \quad \mathbb{H}_\varepsilon := L_\sigma^2(\Omega_\varepsilon)\cap\overline{\mathcal{K}}_g(\Gamma)^\perp, \quad \mathbb{V}_\varepsilon := \mathbb{H}_\varepsilon\cap H^1(\Omega_\varepsilon)^3.
  \end{align*}
  Then by the uniform Korn inequality given in Lemma \ref{L:Korn_H1} we see that the bilinear form $a_\varepsilon$ is uniformly coercive on $\mathbb{V}_\varepsilon$ even if Assumption \ref{Assump_2} is not imposed.
  Since we can also show that $a_\varepsilon$ is uniformly bounded on $\mathbb{V}_\varepsilon$ under Assumption \ref{Assump_1} as in Theorem \ref{T:Uni_aeps}, we obtain a bounded linear operator $\mathbb{A}_\varepsilon$ from $\mathbb{V}_\varepsilon$ into its dual space induced by $a_\varepsilon$.
  This $\mathbb{A}_\varepsilon$, however, is not properly related to the Stokes problem \eqref{E:Stokes_CTD}.
  To see this, let $u\in\mathbb{V}_\varepsilon$ such that $f:=\mathbb{A}_\varepsilon u\in\mathbb{H}_\varepsilon$.
  Then
  \begin{align} \label{E:Wrong_Weak}
    a_\varepsilon(u,\varphi) = (f,\varphi)_{L^2(\Omega_\varepsilon)} \quad\text{for all}\quad \varphi \in \mathbb{V}_\varepsilon.
  \end{align}
  If \eqref{E:Wrong_Weak} was valid for all $\varphi\in L_\sigma^2(\Omega_\varepsilon)\cap H^1(\Omega_\varepsilon)^3$ then we could recover the Stokes problem \eqref{E:Stokes_CTD} from \eqref{E:Wrong_Weak} by a standard argument (see \cites{CoFo88,BoFa13,So01,Te79}), but we cannot verify it because of the condition $\varphi\in\overline{\mathcal{K}}_g(\Gamma)^\perp$ for the test function $\varphi$.
  Indeed, let $\varphi\in L_\sigma^2(\Omega_\varepsilon)\cap H^1(\Omega_\varepsilon)^3$ and assume that it can be decomposed into $\varphi=\Phi+\bar{v}$ with some $\Phi\in\mathbb{V}_\varepsilon$ and $\bar{v}\in\overline{\mathcal{K}}_g(\Gamma)$ (this is possible if $\overline{\mathcal{K}}_g(\Gamma)\subset L_\sigma^2(\Omega_\varepsilon)$, but such a relation is not always valid since $\bar{v}\in\overline{\mathcal{K}}_g(\Gamma)$ does not satisfy $\bar{v}\cdot n_\varepsilon=0$ on $\Gamma_\varepsilon$ in general).
  Then since \eqref{E:Wrong_Weak} is valid for $\Phi\in\mathbb{V}_\varepsilon$ and $(f,\bar{v})_{L^2(\Omega_\varepsilon)}=0$ by $f\in\mathbb{H}_\varepsilon$, to verify \eqref{E:Wrong_Weak} for $\varphi=\Phi+\bar{v}$ we need to show that
  \begin{align*}
    a_\varepsilon(u,\bar{v}) = 2\nu\bigl(D(u),D(\bar{v})\bigr)_{L^2(\Omega_\varepsilon)}+\gamma_\varepsilon^0(u,\bar{v})_{L^2(\Gamma_\varepsilon^0)}+\gamma_\varepsilon^1(u,\bar{v})_{L^2(\Gamma_\varepsilon^1)}
  \end{align*}
  vanishes.
  However, the second and third terms on the right-hand side do not vanish unless $\gamma_\varepsilon^0=\gamma_\varepsilon^1=0$.
  The first term also does not vanish in general, since for the constant extension $\bar{v}$ of a vector field $v$ on $\Gamma$ we observe by \eqref{E:ConDer_Dom} that
  \begin{align*}
    D(\bar{v})(x) = \frac{1}{2}\Bigl[\{I_3-rW(y)\}^{-1}\nabla_\Gamma v(y)+\{\nabla_\Gamma v(y)\}^T\{I_3-rW(y)\}^{-1}\Bigr]
  \end{align*}
  for $x=y+rn(y)\in\Omega_\varepsilon$ with $y\in\Gamma$ and $r\in(\varepsilon g_0(y),\varepsilon g_1(y))$, where $W$ is the Weingarten map of $\Gamma$ (see Section \ref{SS:Pre_Surf}), and $D(\bar{v})$ does not vanish on $\Omega_\varepsilon$ just by $D_\Gamma(v)=0$ on $\Gamma$ (even if $\Gamma=S^2$ and $\bar{v}(x)=e_3\times(x/|x|)$ is the constant extension of $v(y)=e_3\times y\in\mathcal{K}(S^2)$ with $e_3=(0,0,1)^T$).
  Thus we fail to show \eqref{E:Wrong_Weak} for $\varphi\in L_\sigma^2(\Omega_\varepsilon)\cap H^1(\Omega_\varepsilon)^3$ and it is not clear whether $u$ is a solution to the Stokes problem \eqref{E:Stokes_CTD} with $f=\mathbb{A}_\varepsilon u$.
  This observation implies that the operator $\mathbb{A}_\varepsilon$ is not appropriate for the study of the Navier--Stokes equations \eqref{E:NS_CTD}.

  The above problem does not occur if we impose Assumption \ref{Assump_2} and consider the bilinear form $a_\varepsilon$ on the function space $\mathcal{V}_\varepsilon$ given by \eqref{E:Def_Heps}.
  In this case, for $u\in D(A_\varepsilon)$ and $f:=A_\varepsilon u\in\mathcal{H}_\varepsilon$ we a priori have
  \begin{align} \label{E:Corr_Weak}
    a_\varepsilon(u,\varphi) = (f,\varphi)_{L^2(\Omega_\varepsilon)} \quad\text{for all}\quad \varphi \in \mathcal{V}_\varepsilon.
  \end{align}
  Under the condition (A1) or (A2) we have $\mathcal{V}_\varepsilon=L_\sigma^2(\Omega_\varepsilon)\cap H^1(\Omega_\varepsilon)^3$ and thus \eqref{E:Corr_Weak} implies that $u$ is indeed a solution to the Stokes problem \eqref{E:Stokes_CTD} with $f=A_\varepsilon u$.
  When we impose the condition (A3), $\mathcal{V}_\varepsilon$ may be smaller than $L_\sigma^2(\Omega_\varepsilon)\cap H^1(\Omega_\varepsilon)^3$.
  In this case, however, since $\mathcal{R}_g=\mathcal{R}_0\cap\mathcal{R}_1$ is of finite dimension and contained in $L_\sigma^2(\Omega_\varepsilon)$ by Lemma \ref{L:IR_Sole}, each $\varphi\in L_\sigma^2(\Omega_\varepsilon)\cap H^1(\Omega_\varepsilon)^3$ can be decomposed into $\varphi=\Phi+w$ with some $\Phi\in\mathcal{V}_\varepsilon$ and $w\in\mathcal{R}_g$.
  Then \eqref{E:Corr_Weak} holds for $\Phi\in\mathcal{V}_\varepsilon$ and, since $w\in\mathcal{R}_g$ is of the form $w(x)=a\times x+b$, $x\in\mathbb{R}^3$ with $a,b\in\mathbb{R}^3$, we easily get $D(w)=0$ on $\mathbb{R}^3$.
  From this fact and $\gamma_\varepsilon^0=\gamma_\varepsilon^1=0$ by the condition (A3) it follows that
  \begin{align*}
    a_\varepsilon(u,w) = 2\nu\bigl(D(u),D(w)\bigr)_{L^2(\Omega_\varepsilon)}+\gamma_\varepsilon^0(u,w)_{L^2(\Gamma_\varepsilon^0)}+\gamma_\varepsilon^1(u,w)_{L^2(\Gamma_\varepsilon^1)} = 0.
  \end{align*}
  Thus \eqref{E:Corr_Weak} is also valid for all $\varphi\in L_\sigma^2(\Omega_\varepsilon)\cap H^1(\Omega_\varepsilon)^3$ under the condition (A3).
\end{remark}

\section{Preliminaries} \label{S:Pre}
We fix notations on a closed surface and a curved thin domain and give their basic properties.
Notations on vectors and matrices are given in Appendix \ref{S:Ap_Vec}.

Some lemmas in this section are proved just by calculations involving differential geometry of surfaces.
We provide the proofs of them in Appendix \ref{S:Ap_Proof} to avoid making this section too long.
Also, some results in this section are not used in the following sections but essential for the second and third parts \cites{Miu_NSCTD_02,Miu_NSCTD_03} of our study.
We include them here since they easily follow from other results used in this paper or we can prove them just by a few discussions along with the proofs of the other results.

Throughout this paper we denote by $c$ a general positive constant independent of the parameter $\varepsilon$.
Also, we fix a coordinate system of $\mathbb{R}^3$ and write $x_i$, $i=1,2,3$ for the $i$-th component of a point $x\in\mathbb{R}^3$ under this coordinate system.

\subsection{Closed surface} \label{SS:Pre_Surf}
Let $\Gamma$ be a two-dimensional closed, connected, and oriented surface in $\mathbb{R}^3$.
We assume that $\Gamma$ is of class $C^\ell$ with $\ell\geq 2$.
By $n$ and $d$ we denote the unit outward normal vector field of $\Gamma$ and the signed distance function from $\Gamma$ increasing in the direction of $n$.
Also, let $\kappa_1$ and $\kappa_2$ be the principal curvatures of $\Gamma$.
From the $C^\ell$-regularity of $\Gamma$ it follows that $n\in C^{\ell-1}(\Gamma)^3$ and $\kappa_1,\kappa_2\in C^{\ell-2}(\Gamma)$.
In particular, $\kappa_1$ and $\kappa_2$ are bounded on the compact set $\Gamma$.
Hence we can take a tubular neighborhood
\begin{align*}
  N := \{x\in\mathbb{R}^3\mid \mathrm{dist}(x,\Gamma)<\delta\}, \quad \delta > 0
\end{align*}
of $\Gamma$ such that for each $x\in N$ there exists a unique point $\pi(x)\in\Gamma$ satisfying
\begin{align} \label{E:Nor_Coord}
  x = \pi(x)+d(x)n(\pi(x)), \quad \nabla d(x) = n(\pi(x)).
\end{align}
Moreover, $d$ and $\pi$ are of class $C^\ell$ and $C^{\ell-1}$ on $\overline{N}$ (see \cite{GiTr01}*{Section 14.6} for details).
By the boundedness of $\kappa_1$ and $\kappa_2$ we also have
\begin{align} \label{E:Curv_Bound}
  c^{-1} \leq 1-r\kappa_i(y) \leq c \quad\text{for all}\quad y\in\Gamma,\,r\in(-\delta,\delta),\,i=1,2
\end{align}
if we take $\delta>0$ sufficiently small.

Let us define differential operators on $\Gamma$.
For $y\in\Gamma$ we set
\begin{align*}
  P(y) := I_3-n(y)\otimes n(y), \quad Q(y) := n(y)\otimes n(y).
\end{align*}
The matrices $P$ and $Q$ are the orthogonal projections onto the tangent plane and the normal direction of $\Gamma$ and satisfy $|P|=2$, $|Q|=1$, and
\begin{gather*}
  I_3 = P+Q, \quad PQ = QP = 0, \quad P^T = P^2 = P, \quad Q^T = Q^2 = Q, \\
  |a|^2 = |Pa|^2+|Qa|^2, \quad |Pa| \leq |a|, \quad Pa\cdot n = 0, \quad a\in\mathbb{R}^3
\end{gather*}
on $\Gamma$.
Also, $P,Q\in C^{\ell-1}(\Gamma)^{3\times3}$ by the $C^\ell$-regularity of $\Gamma$.
For $\eta\in C^1(\Gamma)$ we define the tangential gradient and the tangential derivatives of $\eta$ as
\begin{align} \label{E:Def_TGr}
  \nabla_\Gamma\eta(y) := P(y)\nabla\tilde{\eta}(y), \quad \underline{D}_i\eta(y) := \sum_{j=1}^3P_{ij}(y)\partial_j\tilde{\eta}(y), \quad y\in\Gamma,\,i=1,2,3
\end{align}
so that $\nabla_\Gamma\eta=(\underline{D}_1\eta,\underline{D}_2\eta,\underline{D}_3\eta)^T$.
Here $\tilde{\eta}$ is a $C^1$-extension of $\eta$ to $N$ with $\tilde{\eta}|_\Gamma=\eta$.
Since $P^2=P$ and $n\cdot Pa=0$ on $\Gamma$ for $a\in\mathbb{R}^3$ we have
\begin{align} \label{E:P_TGr}
  P\nabla_\Gamma\eta = \nabla_\Gamma\eta, \quad n\cdot\nabla_\Gamma\eta = 0 \quad\text{on}\quad \Gamma.
\end{align}
Note that $\nabla_\Gamma\eta$ given by \eqref{E:Def_TGr} agrees with the gradient of a function on a Riemannian manifold expressed under a local coordinate system (see Lemma \ref{L:TGr_DG}).
Hence the values of $\nabla_\Gamma\eta$ and $\underline{D}_i\eta$ are independent of the choice of an extension $\tilde{\eta}$.
In particular, the constant extension $\bar{\eta}:=\eta\circ\pi$ of $\eta$ in the normal direction of $\Gamma$ satisfies
\begin{align} \label{E:ConDer_Surf}
  \nabla\bar{\eta}(y) = \nabla_\Gamma\eta(y), \quad \partial_i\bar{\eta}(y) = \underline{D}_i\eta(y), \quad y\in\Gamma,\,i=1,2,3
\end{align}
since $\nabla\pi(y)=P(y)$ for $y\in\Gamma$ by \eqref{E:Nor_Coord} and $d(y)=0$.
In what follows, the notation $\bar{\eta}$ with an overline always stands for the constant extension of a function $\eta$ on $\Gamma$ in the normal direction of $\Gamma$.
The tangential Hessian matrix of $\eta\in C^2(\Gamma)$ and the Laplace--Beltrami operator are given by
\begin{align*}
  \nabla_\Gamma^2\eta := (\underline{D}_i\underline{D}_j\eta)_{i,j}, \quad \Delta_\Gamma\eta := \mathrm{tr}[\nabla_\Gamma^2\eta] = \sum_{i=1}^3\underline{D}_i^2\eta \quad\text{on}\quad \Gamma.
\end{align*}
Note that $\nabla_\Gamma^2\eta$ is not symmetric in general (see Lemma \ref{L:TD_Exc}).
For a (not necessarily tangential) vector field $v=(v_1,v_2,v_3)^T\in C^1(\Gamma)^3$ we define the tangential gradient matrix and the surface divergence of $v$ by
\begin{align} \label{E:Def_DivG}
  \nabla_\Gamma v :=
  \begin{pmatrix}
    \underline{D}_1v_1 & \underline{D}_1v_2 & \underline{D}_1v_3 \\
    \underline{D}_2v_1 & \underline{D}_2v_2 & \underline{D}_2v_3 \\
    \underline{D}_3v_1 & \underline{D}_3v_2 & \underline{D}_3v_3
  \end{pmatrix}, \quad
  \mathrm{div}_\Gamma v := \mathrm{tr}[\nabla_\Gamma v] = \sum_{i=1}^3\underline{D}_iv_i \quad\text{on}\quad \Gamma
\end{align}
and the surface strain rate tensor for $v$ by
\begin{align*}
  D_\Gamma(v) := P(\nabla_\Gamma v)_SP \quad\text{on}\quad \Gamma, \quad (\nabla_\Gamma v)_S = \frac{\nabla_\Gamma v+(\nabla_\Gamma v)^T}{2}.
\end{align*}
Also, for $v\in C^1(\Gamma)^3$ and $\eta\in C(\Gamma)^3$ we set
\begin{align*}
  (\eta\cdot\nabla_\Gamma)v :=
  \begin{pmatrix}
    \eta\cdot\nabla_\Gamma v_1 \\
    \eta\cdot\nabla_\Gamma v_2 \\
    \eta\cdot\nabla_\Gamma v_3
  \end{pmatrix}
   = (\nabla_\Gamma v)^T\eta \quad\text{on}\quad \Gamma.
\end{align*}
Note that for any $C^1$-extension $\tilde{v}$ of $v$ to $N$ with $\tilde{v}|_\Gamma=v$ we have
\begin{align} \label{E:Tgrad_Surf}
  \nabla_\Gamma v = P\nabla\tilde{v}, \quad (\eta\cdot\nabla_\Gamma)v = [(P\eta)\cdot\nabla]\tilde{v} \quad\text{on}\quad \Gamma.
\end{align}
Next we define the Weingarten map $W$ and (twice) the mean curvature $H$ of $\Gamma$ by
\begin{align*}
  W := -\nabla_\Gamma n, \quad H := \mathrm{tr}[W] = -\mathrm{div}_\Gamma n \quad\text{on}\quad \Gamma.
\end{align*}
Note that $W$ and $H$ are of class $C^{\ell-2}$ and thus bounded on $\Gamma$.

\begin{lemma} \label{L:Form_W}
  The Weingarten map $W$ is symmetric and
  \begin{align} \label{E:Form_W}
    Wn = 0, \quad PW = WP = W \quad\text{on}\quad \Gamma.
  \end{align}
  Also, if $v\in C^1(\Gamma)^3$ is tangential, i.e. $v\cdot n=0$ on $\Gamma$, then
  \begin{align} \label{E:Grad_W}
    (\nabla_\Gamma v)n = Wv, \quad \nabla_\Gamma v = P(\nabla_\Gamma v)P+(Wv)\otimes n \quad\text{on}\quad \Gamma.
  \end{align}
\end{lemma}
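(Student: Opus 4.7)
My plan is to exploit the canonical extension of the normal field via the signed distance function, namely $n(y) = \nabla d(y)$ for $y \in \Gamma$ extended as $\tilde n = \nabla d$ on the tubular neighborhood $N$. Under this extension, the eikonal identity $|\nabla d|^2 \equiv 1$ on $N$ and the symmetry of the Hessian $\nabla^2 d$ give the two ingredients needed for the entire statement. All other assertions will follow by linear-algebraic manipulation of $P = I_3 - n \otimes n$.

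For the first claim I would proceed as follows. The identity $Wn = 0$ is immediate: since $|n|^2 = 1$ on $\Gamma$, applying $\underline{D}_i$ yields $\sum_j (\underline{D}_i n_j)\, n_j = \tfrac{1}{2}\underline{D}_i(|n|^2) = 0$, so $(\nabla_\Gamma n)\,n = 0$ and hence $Wn = 0$. For symmetry, using the extension $\tilde n = \nabla d$ I compute $\underline{D}_i n_j - \underline{D}_j n_i = \sum_k(P_{ik}\partial_k\partial_j d - P_{jk}\partial_k\partial_i d)$ and expand $P_{ik} = \delta_{ik} - n_i n_k$; the $\delta$-terms cancel by the symmetry $\partial_i\partial_j d = \partial_j\partial_i d$, and the remaining terms are multiples of $\sum_k n_k \partial_k\partial_j d = \tfrac{1}{2}\partial_j(|\nabla d|^2) = 0$, proving $W^T = W$. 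The identity $WP = W$ is then a direct consequence of $Wn = 0$ and $P = I_3 - n\otimes n$, while $PW = W$ follows either from symmetry of $W$ (so that $PW = (WP)^T = W^T = W$) or, alternatively, directly from $P\nabla_\Gamma v = \nabla_\Gamma v$ applied to $v = n$.

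For the second claim, the relation $(\nabla_\Gamma v)\,n = Wv$ comes from differentiating the constraint $v \cdot n = 0$ tangentially: $0 = \underline{D}_i(v\cdot n) = \sum_j(\underline{D}_i v_j)\,n_j + \sum_j v_j(\underline{D}_i n_j)$, which rearranges to $[(\nabla_\Gamma v)n]_i = -\sum_j v_j \underline{D}_i n_j = (Wv)_i$ using the definition of $W$ and its symmetry. For the decomposition, I would use $I_3 = P + n \otimes n$ and write $\nabla_\Gamma v = (\nabla_\Gamma v)P + (\nabla_\Gamma v)(n\otimes n)$. The second summand simplifies by the just-proved identity: $[(\nabla_\Gamma v)(n\otimes n)]_{ij} = [(\nabla_\Gamma v)n]_i\, n_j = (Wv)_i\, n_j = [(Wv)\otimes n]_{ij}$. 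Finally, the observation that each column of $\nabla_\Gamma v$ is a tangential gradient, together with $P^2 = P$, yields $P(\nabla_\Gamma v) = \nabla_\Gamma v$, so that $(\nabla_\Gamma v)P = P(\nabla_\Gamma v)P$, completing the formula.

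There is no serious obstacle in this argument; the only subtle point is the symmetry of $W$, which cannot be read off from the definition $W = -\nabla_\Gamma n$ alone but requires committing to a specific extension of $n$ off $\Gamma$. Choosing $\tilde n = \nabla d$ is the natural and efficient choice, as it simultaneously supplies the symmetric Hessian and the unit-gradient identity that make the cross terms vanish. Once symmetry of $W$ is established, everything else is a short computation with the projections $P$ and $n \otimes n$.
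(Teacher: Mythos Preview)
Your proposal is correct and follows essentially the same approach as the paper: both use the extension $\tilde n = \nabla d$ so that $W = -\nabla^2 d$ on $\Gamma$ is symmetric as a Hessian, obtain $Wn=0$ from $|n|^2=1$, and derive \eqref{E:Grad_W} by tangentially differentiating $v\cdot n=0$ together with the decomposition $I_3 = P + n\otimes n$ and $P\nabla_\Gamma v = \nabla_\Gamma v$. The only cosmetic difference is that the paper gets symmetry in one line via the identity $\nabla_\Gamma n = \nabla\bar n$ on $\Gamma$ (so the projection never has to be expanded), whereas you write out $\underline{D}_in_j - \underline{D}_jn_i$ and use $|\nabla d|^2\equiv 1$ to kill the cross terms; also, the appeal to symmetry of $W$ in your derivation of $(\nabla_\Gamma v)n = Wv$ is unnecessary, since $-\sum_j v_j\,\underline{D}_i n_j = \sum_j W_{ij}v_j = (Wv)_i$ directly.
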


\begin{proof}
  By \eqref{E:Nor_Coord}, \eqref{E:ConDer_Surf}, and $|n|^2=1$ on $\Gamma$ we have
  \begin{align*}
    W = -\nabla\bar{n} = -\nabla^2d, \quad Wn = -(\nabla_\Gamma n)n = -\frac{1}{2}\nabla_\Gamma(|n|^2) = 0 \quad\text{on}\quad \Gamma.
  \end{align*}
  Hence $W$ is symmetric and the equalities \eqref{E:Form_W} are valid.

  Let $v\in C^1(\Gamma)^3$ satisfy $v\cdot n=0$ on $\Gamma$.
  Then
  \begin{align*}
    (\nabla_\Gamma v)n = \nabla_\Gamma(v\cdot n)-(\nabla_\Gamma n)v = Wv \quad\text{on}\quad \Gamma.
  \end{align*}
  Thus the first equality of \eqref{E:Grad_W} holds.
  Also, by $I_3=P+Q$ on $\Gamma$ and \eqref{E:P_TGr} we have
  \begin{align*}
    \nabla_\Gamma v = (\nabla_\Gamma v)P+(\nabla_\Gamma v)Q = P(\nabla_\Gamma v)P+\{(\nabla_\Gamma v)n\}\otimes n \quad\text{on}\quad \Gamma.
  \end{align*}
  Hence the second equality of \eqref{E:Grad_W} follows from the first one.
\end{proof}

By \eqref{E:Form_W} we see that $W$ has the eigenvalue zero associated with the eigenvector $n$.
Its other eigenvalues are the principal curvatures $\kappa_1$ and $\kappa_2$ and thus $H=\kappa_1+\kappa_2$ on $\Gamma$ (see e.g. \cites{GiTr01,Lee18}).

The Weingarten map $W$ appears when we exchange the tangential derivatives and compute the gradient of the constant extension of a function on $\Gamma$.

\begin{lemma} \label{L:TD_Exc}
  For $\eta\in C^2(\Gamma)$ we have
  \begin{align} \label{E:TD_Exc}
    \underline{D}_i\underline{D}_j\eta-\underline{D}_j\underline{D}_i\eta = [W\nabla_\Gamma\eta]_in_j-[W\nabla_\Gamma\eta]_jn_i \quad\text{on}\quad \Gamma,\,i,j=1,2,3.
  \end{align}
  Here $[W\nabla_\Gamma\eta]_i$ is the $i$-th component of the vector field $W\nabla_\Gamma\eta$ for $i=1,2,3$.
\end{lemma}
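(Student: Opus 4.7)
The plan is to extend $\eta$ and the projection matrix $P$ to the tubular neighborhood $N$, apply the definition \eqref{E:Def_TGr} of $\underline{D}_i$ twice, and cancel the symmetric-in-$(i,j)$ parts. First I would fix a $C^2$-extension $\tilde\eta$ of $\eta$ to $N$ and extend $P$ to $N$ by setting $\bar P(x) := I_3 - \nabla d(x)\otimes\nabla d(x)$, which agrees with $P$ on $\Gamma$ by \eqref{E:Nor_Coord}. Then $\xi_j(x) := \sum_k\bar P_{jk}(x)\partial_k\tilde\eta(x)$ is a $C^1$-extension of $\underline{D}_j\eta$ to $N$, and the definition gives
\begin{align*}
  \underline{D}_i\underline{D}_j\eta(y) = \sum_l P_{il}(y)\partial_l\xi_j(y), \quad y\in\Gamma.
\end{align*}

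Next I would apply the Leibniz rule, splitting the above into a term involving the Hessian of $\tilde\eta$ and a term involving the derivatives of $\bar P_{jk}$. The Hessian term equals $\sum_{l,k} P_{il}P_{jk}\partial_l\partial_k\tilde\eta$ on $\Gamma$, which is symmetric in $i,j$ (relabel $l\leftrightarrow k$ and use the equality of mixed partials) and therefore cancels in the commutator. For the remaining term I would compute $\partial_l\bar P_{jk}$ on $\Gamma$ using $\nabla\bar n|_\Gamma=-W$ (from $W=-\nabla_\Gamma n$ and \eqref{E:ConDer_Surf}), obtaining
\begin{align*}
  \partial_l\bar P_{jk}\big|_\Gamma = -(\partial_l\bar n_j)n_k - n_j(\partial_l\bar n_k) = W_{lj}n_k + n_j W_{lk}.
\end{align*}

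Substituting this and contracting with $P_{il}$, I would use the identities of Lemma \ref{L:Form_W}: $W^T=W$, $PW=W$, and $W(I_3-P)=0$ (the last giving $W\nabla\tilde\eta=W\nabla_\Gamma\eta$ on $\Gamma$). The first sum collapses to $W_{ij}(n\cdot\nabla\tilde\eta)$ (from $\sum_l P_{il}W_{lj}=W_{ij}$) and the second to $n_j[W\nabla_\Gamma\eta]_i$, yielding
\begin{align*}
  \underline{D}_i\underline{D}_j\eta = W_{ij}(n\cdot\nabla\tilde\eta) + n_j[W\nabla_\Gamma\eta]_i + (\text{symmetric in }i,j) \quad\text{on}\quad \Gamma.
\end{align*}
Subtracting the corresponding expression with $i$ and $j$ exchanged, the scalar term $W_{ij}(n\cdot\nabla\tilde\eta)$ drops out by the symmetry of $W$ and the symmetric remainder cancels, leaving exactly \eqref{E:TD_Exc}.

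The only mildly delicate point is the regularity bookkeeping: the extension $\bar P$ inherits only the regularity of $d$, so for $\Gamma\in C^2$ the computation above must be read as an identity of continuous functions on $\Gamma$ obtained by passing to the trace; alternatively one may first argue the identity for smoother $\Gamma$ and $\eta$ and conclude by approximation, since the identity is pointwise and involves only first-order tangential data of $\eta$ on the right-hand side.
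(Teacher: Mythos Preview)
Your proposal is correct and follows essentially the same approach as the paper: both extend $P$ to $N$ as $I_3-\nabla d\otimes\nabla d$, use this to build a $C^1$-extension of $\underline{D}_j\eta$, expand $\underline{D}_i\underline{D}_j\eta$ by the Leibniz rule, and identify the symmetric Hessian piece and the symmetric $W_{ij}(n\cdot\nabla\tilde\eta)$ piece that cancel in the commutator, leaving the asserted $n_j[W\nabla_\Gamma\eta]_i-n_i[W\nabla_\Gamma\eta]_j$. Your regularity caveat is unnecessary here, since for $\Gamma\in C^2$ the signed distance $d$ is $C^2$ on $\overline N$, so $\bar P$ is $C^1$ and $\partial_l\bar P_{jk}|_\Gamma$ is well-defined and continuous; no approximation step is needed.
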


\begin{lemma} \label{L:Wein}
  The matrix
  \begin{align*}
    I_3-d(x)\overline{W}(x) = I_3-rW(y)
  \end{align*}
  is invertible for all $x=y+rn(y)\in N$ with $y\in\Gamma$ and $r\in(-\delta,\delta)$.
  Moreover,
  \begin{align} \label{E:WReso_P}
    \{I_3-rW(y)\}^{-1}P(y) = P(y)\{I_3-rW(y)\}^{-1}
  \end{align}
  for all $y\in\Gamma$ and $r\in(-\delta,\delta)$ and there exists a constant $c>0$ such that
  \begin{gather}
    c^{-1}|a| \leq \bigl|\{I_3-rW(y)\}^ka\bigr| \leq c|a|, \quad k=\pm1, \label{E:Wein_Bound} \\
    \bigl|I_3-\{I_3-rW(y)\}^{-1}\bigr| \leq c|r| \label{E:Wein_Diff}
  \end{gather}
  for all $y\in\Gamma$, $r\in(-\delta,\delta)$, and $a\in\mathbb{R}^3$.
\end{lemma}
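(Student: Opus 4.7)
The plan is to exploit the fact that the Weingarten map $W(y)$ is symmetric with $W(y)n(y)=0$ (Lemma \ref{L:Form_W}) and hence diagonalizable in an orthonormal basis of $\mathbb{R}^3$. Fix $y\in\Gamma$ and choose an orthonormal basis $\{n(y),\tau_1(y),\tau_2(y)\}$ of eigenvectors of $W(y)$; the restriction of $W(y)$ to the tangent plane is the shape operator with eigenvalues $\kappa_1(y),\kappa_2(y)$, so
\begin{equation*}
  W(y) = \kappa_1(y)\,\tau_1(y)\otimes\tau_1(y)+\kappa_2(y)\,\tau_2(y)\otimes\tau_2(y).
\end{equation*}
Consequently $I_3-rW(y)$ is diagonal in this basis with eigenvalues $1$, $1-r\kappa_1(y)$, $1-r\kappa_2(y)$. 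By \eqref{E:Curv_Bound} these three eigenvalues are bounded below in absolute value by $c^{-1}>0$ uniformly in $y\in\Gamma$ and $r\in(-\delta,\delta)$, which gives invertibility, together with the explicit formula
\begin{equation*}
  \{I_3-rW(y)\}^{-1} = n(y)\otimes n(y)+\sum_{i=1}^{2}\frac{1}{1-r\kappa_i(y)}\,\tau_i(y)\otimes\tau_i(y).
\end{equation*}

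Since $P(y)=\tau_1(y)\otimes\tau_1(y)+\tau_2(y)\otimes\tau_2(y)$ is simultaneously diagonal in the very same basis, $P(y)$ commutes with both $I_3-rW(y)$ and its inverse, which establishes \eqref{E:WReso_P}. The two-sided bound in \eqref{E:Wein_Bound} is then immediate: the operator norm of $\{I_3-rW(y)\}^{k}$ with $k=\pm1$ is the maximum of $\{1,|1-r\kappa_i(y)|^{k}\}_{i=1,2}$, and each of these numbers is controlled above and below by a constant independent of $y$ and $r$ thanks to \eqref{E:Curv_Bound} and the boundedness of $\kappa_1,\kappa_2$ on the compact surface $\Gamma$.

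For \eqref{E:Wein_Diff} I would use the algebraic identity
\begin{equation*}
  I_3-\{I_3-rW(y)\}^{-1} = \{I_3-rW(y)\}^{-1}\bigl[\{I_3-rW(y)\}-I_3\bigr] = -r\{I_3-rW(y)\}^{-1}W(y),
\end{equation*}
which follows by multiplying on the left by $I_3-rW(y)$; combining it with \eqref{E:Wein_Bound} and the uniform bound on $|W(y)|$ on the compact surface $\Gamma$ yields the desired estimate. I do not anticipate any genuine obstacle in this proof: once the common eigenbasis is in hand every claim reduces to elementary linear algebra and a single application of \eqref{E:Curv_Bound}. The only point requiring a brief comment is uniformity of the constants, which is ensured by the compactness of $\Gamma$ together with the $C^{\ell-2}$-regularity of the principal curvatures and of $W$.
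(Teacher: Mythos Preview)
Your proof is correct and follows essentially the same approach as the paper: both diagonalize $I_3-rW(y)$ in the orthonormal eigenbasis $\{n(y),\tau_1(y),\tau_2(y)\}$ of the symmetric map $W(y)$ and then read off invertibility, the commutation \eqref{E:WReso_P}, and the bounds from the eigenvalues $1,1-r\kappa_1(y),1-r\kappa_2(y)$ via \eqref{E:Curv_Bound}. The only cosmetic difference is that for \eqref{E:Wein_Diff} the paper computes $|I_3-(I_3-rW)^{-1}|^2=\sum_i|r\kappa_i(1-r\kappa_i)^{-1}|^2$ directly in the eigenbasis, whereas you use the resolvent identity $I_3-(I_3-rW)^{-1}=-r(I_3-rW)^{-1}W$; both arguments are equally short and yield the same constant.
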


\begin{lemma} \label{L:Pi_Der}
  For all $x\in N$ we have
  \begin{align} \label{E:Pi_Der}
    \nabla\pi(x) &= \left\{I_3-d(x)\overline{W}(x)\right\}^{-1}\overline{P}(x).
  \end{align}
  Let $\eta\in C^1(\Gamma)$.
  Then its constant extension $\bar{\eta}=\eta\circ\pi$ satisfies
  \begin{align} \label{E:ConDer_Dom}
    \nabla\bar{\eta}(x) = \left\{I_3-d(x)\overline{W}(x)\right\}^{-1}\overline{\nabla_\Gamma\eta}(x),\quad x\in N
  \end{align}
  and there exists a constant $c>0$ independent of $\eta$ such that
  \begin{gather}
    c^{-1}\left|\overline{\nabla_\Gamma\eta}(x)\right| \leq |\nabla\bar{\eta}(x)| \leq c\left|\overline{\nabla_\Gamma\eta}(x)\right|, \label{E:ConDer_Bound} \\
    \left|\nabla\bar{\eta}(x)-\overline{\nabla_\Gamma\eta}(x)\right| \leq c\left|d(x)\overline{\nabla_\Gamma\eta}(x)\right| \label{E:ConDer_Diff}
  \end{gather}
  for all $x\in N$.
  If $\Gamma$ is of class $C^3$ and $\eta\in C^2(\Gamma)$, then we have
  \begin{align} \label{E:Con_Hess}
    |\nabla^2\bar{\eta}(x)| \leq c\left(\left|\overline{\nabla_\Gamma\eta}(x)\right|+\left|\overline{\nabla_\Gamma^2\eta}(x)\right|\right), \quad x\in N.
  \end{align}
\end{lemma}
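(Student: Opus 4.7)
The plan is to derive all four assertions from the normal-coordinate identity $x=\pi(x)+d(x)\bar n(x)$, using only the chain rule, the relation $\nabla d=\bar n$, and Lemmas \ref{L:Form_W} and \ref{L:Wein}. I would carry out the four steps in the order in which they are stated.

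For \eqref{E:Pi_Der}, I would differentiate this identity to get
\[
I_3 = \nabla\pi+(\nabla d)\otimes\bar n+d\,\nabla\bar n = \nabla\pi+\overline{Q}+d\,\nabla\bar n,
\]
so $\nabla\pi=\overline{P}-d\,\nabla\bar n$. Since $|\bar n|^2\equiv 1$ one has $(\nabla\bar n)\bar n=0$, and $\overline{P}\bar n=0$, hence $(\nabla\pi)\bar n=0$; using $I_3=\overline{P}+\overline{Q}$ this is equivalent to $\nabla\pi=(\nabla\pi)\overline{P}$. Applying the chain rule to $\bar n=n\circ\pi$ with any $C^1$ extension $\tilde n$ of $n$ gives $\nabla\bar n=(\nabla\pi)(\nabla\tilde n)\circ\pi$. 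Inserting $\overline{P}$ from the left on $(\nabla\tilde n)\circ\pi$ and recognising that $P(\pi(x))(\nabla\tilde n)(\pi(x)) = (P\nabla\tilde n)(\pi(x)) = -W(\pi(x)) = -\overline{W}(x)$ is extension-independent (Section \ref{SS:Pre_Surf}), I would obtain $\nabla\bar n=-(\nabla\pi)\overline{W}$. Substitution yields $\nabla\pi(I_3-d\overline{W})=\overline{P}$; inverting by Lemma \ref{L:Wein} and commuting via \eqref{E:WReso_P} produces \eqref{E:Pi_Der}.

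For \eqref{E:ConDer_Dom}, I would apply the chain rule to $\bar\eta=\eta\circ\pi$, obtaining $\nabla\bar\eta(x)=\nabla\pi(x)(\nabla\tilde\eta)(\pi(x))$ as a matrix--vector product, and then use the same projector trick: $P(\pi(x))(\nabla\tilde\eta)(\pi(x))=\nabla_\Gamma\eta(\pi(x))=\overline{\nabla_\Gamma\eta}(x)$, combined with \eqref{E:Pi_Der}, gives \eqref{E:ConDer_Dom} immediately. The two-sided bound \eqref{E:ConDer_Bound} then follows by applying \eqref{E:Wein_Bound} to the vector $\overline{\nabla_\Gamma\eta}(x)$, and \eqref{E:ConDer_Diff} by writing $\nabla\bar\eta-\overline{\nabla_\Gamma\eta}=\bigl(\{I_3-d\overline{W}\}^{-1}-I_3\bigr)\overline{\nabla_\Gamma\eta}$ and invoking \eqref{E:Wein_Diff}. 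For the Hessian estimate \eqref{E:Con_Hess} I would differentiate \eqref{E:ConDer_Dom} once more: the assumption that $\Gamma$ is $C^3$ yields $\overline{W}\in C^1(\overline N)^{3\times 3}$ with $\nabla\overline{W}$ uniformly bounded on $N$, and together with $\nabla d=\bar n$ this gives a uniform bound on $\nabla\bigl(\{I_3-d\overline{W}\}^{-1}\bigr)$; applying \eqref{E:ConDer_Dom} componentwise to each $\underline D_j\eta\in C^1(\Gamma)$ in place of $\eta$ controls $|\nabla\overline{\nabla_\Gamma\eta}|$ by a constant multiple of $|\overline{\nabla_\Gamma^2\eta}|$, and the product rule then yields \eqref{E:Con_Hess}.

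The main obstacle I expect is the apparently circular computation of $\nabla\bar n$: one has to justify inserting the projector $\overline{P}$ before applying the tangential identity $P\nabla\tilde n=-W$, and this insertion depends on the nontrivial orthogonality $(\nabla\pi)\bar n=0$ established through $\overline{P}\bar n=0$ and $(\nabla\bar n)\bar n=0$. Once that single step is secured, everything else reduces to bookkeeping with Lemmas \ref{L:Form_W} and \ref{L:Wein}.
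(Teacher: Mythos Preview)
Your proposal is correct and follows essentially the same approach as the paper: both derive $\nabla\pi(I_3-d\overline W)=\overline P$ from the normal-coordinate identity, then obtain \eqref{E:ConDer_Dom}--\eqref{E:Con_Hess} by the chain rule, Lemma~\ref{L:Wein}, and one further differentiation. The only difference is that the paper avoids your projector-insertion step by writing $\bar n(x)=\bar n(\pi(x))$ so that the chain rule lands on $\Gamma$ and \eqref{E:ConDer_Surf} gives $\nabla\bar n(\pi(x))=-\overline W(x)$ directly; your route via $(\nabla\pi)\bar n=0$ reaches the same equation with one extra line.
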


Lemmas \ref{L:TD_Exc}--\ref{L:Pi_Der} are proved in Appendix \ref{S:Ap_Proof}.
Note that
\begin{align} \label{E:Nor_Grad}
  \nabla\bar{n}(x) = -\left\{I_3-d(x)\overline{W}(x)\right\}^{-1}\overline{W}(x), \quad x\in N
\end{align}
by \eqref{E:ConDer_Dom} and $W=-\nabla_\Gamma n$ on $\Gamma$.

Let us define the Sobolev spaces on $\Gamma$.
To this end, we give an integration by parts formula for the tangential derivatives of functions on $\Gamma$ (see also \cite{DzEl13}*{Theorem 2.10} and \cite{GiTr01}*{Lemma 16.1}).

\begin{lemma} \label{L:IbP_TD}
  For $v\in C^1(\Gamma)^3$ we have
  \begin{align} \label{E:IbP_DivG}
    \int_\Gamma\mathrm{div}_\Gamma v\,d\mathcal{H}^2 = -\int_\Gamma(v\cdot n)H\,d\mathcal{H}^2,
  \end{align}
  where $\mathcal{H}^2$ is the two-dimensional Hausdorff measure.
  Moreover,
  \begin{align} \label{E:IbP_TD}
    \int_\Gamma(\eta\underline{D}_i\xi+\xi\underline{D}_i\eta)\,d\mathcal{H}^2 = -\int_\Gamma \eta\xi Hn_i\,d\mathcal{H}^2
  \end{align}
  for $\eta,\xi\in C^1(\Gamma)$ and $i=1,2,3$.
\end{lemma}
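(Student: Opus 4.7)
The plan is to first prove \eqref{E:IbP_DivG} and then derive \eqref{E:IbP_TD} as an immediate corollary. For the second statement I would apply \eqref{E:IbP_DivG} to the vector field $v = \eta\xi e_i$, where $e_i$ denotes the $i$-th standard basis vector of $\mathbb{R}^3$. Then the product rule for $\underline{D}_i$ (which is immediate from the product rule for $\partial_j$ applied to any extension) gives $\mathrm{div}_\Gamma v = \underline{D}_i(\eta\xi) = \eta\underline{D}_i\xi + \xi\underline{D}_i\eta$, while $v\cdot n = \eta\xi\,n_i$, and \eqref{E:IbP_DivG} delivers \eqref{E:IbP_TD} at once.

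To establish \eqref{E:IbP_DivG}, I would split $v = Pv + (v\cdot n)n$ and use linearity of $\mathrm{div}_\Gamma$ to write
\begin{align*}
\mathrm{div}_\Gamma v = \mathrm{div}_\Gamma(Pv) + \mathrm{div}_\Gamma\bigl((v\cdot n)n\bigr).
\end{align*}
The normal part is handled by direct computation: the product rule gives
\begin{align*}
\mathrm{div}_\Gamma\bigl((v\cdot n)n\bigr) = \nabla_\Gamma(v\cdot n)\cdot n + (v\cdot n)\,\mathrm{div}_\Gamma n = -(v\cdot n)H,
\end{align*}
because $\nabla_\Gamma(v\cdot n)\cdot n = 0$ by \eqref{E:P_TGr} and $\mathrm{div}_\Gamma n = -H$ by definition of $H$.

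The only nontrivial step is therefore to show
\begin{align*}
\int_\Gamma \mathrm{div}_\Gamma(Pv)\,d\mathcal{H}^2 = 0
\end{align*}
for the tangential $C^1$ field $Pv$ on the closed oriented surface $\Gamma$. Since, by Lemma \ref{L:TGr_DG} referred to earlier in the text, $\mathrm{div}_\Gamma$ applied to a tangential field coincides with the intrinsic Riemannian divergence, this vanishing is the classical divergence theorem on the compact boundaryless manifold $\Gamma$. If one prefers to stay within the paper's ambient-coordinate philosophy and avoid invoking Stokes' theorem on a manifold, the same identity can be obtained by a partition-of-unity reduction to local $C^1$ parametrizations and the Euclidean divergence theorem in the parameter domain, or by applying the Euclidean divergence theorem to the constant extension $\overline{Pv}$ on a thin symmetric tubular shell $\{|d| < \delta'\}$, using \eqref{E:Nor_Coord}, \eqref{E:ConDer_Dom}, and the Jacobian expansion for the tubular change of variables, and then sending $\delta' \to 0$; in either variant the boundary contributions from the two sides of $\Gamma$ cancel because $\Gamma$ is boundaryless.

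The main (really, only) obstacle is this closed-manifold divergence identity; everything else is algebraic bookkeeping with \eqref{E:P_TGr} and the product rule. I would expect the author to cite or briefly justify this via the Riemannian formulation, since the coincidence between $\mathrm{div}_\Gamma$ and the Levi-Civita divergence is already recorded in Lemma \ref{L:TGr_DG}.
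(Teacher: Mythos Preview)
Your proposal is correct and matches the paper's proof essentially line for line: decompose $v=Pv+(v\cdot n)n$, use the divergence theorem on the closed manifold for the tangential part, compute the normal part via \eqref{E:P_TGr} and $\mathrm{div}_\Gamma n=-H$, and then obtain \eqref{E:IbP_TD} by taking $v=\eta\xi e_i$. The only slip is your reference: the coincidence of $\mathrm{div}_\Gamma$ on tangential fields with the Riemannian divergence is Lemma~\ref{L:DivG_DG}, not Lemma~\ref{L:TGr_DG}.
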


\begin{proof}
  First note that, when $X\in C^1(\Gamma)^3$ is tangential on $\Gamma$, its surface divergence defined by \eqref{E:Def_DivG} agrees with the divergence on a Riemannian manifold expressed under a local coordinate system (see Lemma \ref{L:DivG_DG}).
  Hence, noting that $\Gamma$ is closed, by a standard localization argument we have the surface divergence theorem
  \begin{align} \label{Pf_ITD:SD_Thm}
    \int_\Gamma\mathrm{div}_\Gamma X\,d\mathcal{H}^2 = 0.
  \end{align}
  Let $v\in C^1(\Gamma)^3$.
  By the decomposition $v=Pv+(v\cdot n)n$ we have
  \begin{align*}
    \int_\Gamma\mathrm{div}_\Gamma v\,d\mathcal{H}^2 = \int_\Gamma\mathrm{div}_\Gamma(Pv)\,d\mathcal{H}^2+\int_\Gamma\mathrm{div}_\Gamma[(v\cdot n)n]\,d\mathcal{H}^2.
  \end{align*}
  Since $Pv$ is tangential on $\Gamma$, the first term on the right-hand side vanishes by \eqref{Pf_ITD:SD_Thm} with $X=Pv$.
  Moreover, from \eqref{E:P_TGr} and the definition of $H$ it follows that
  \begin{align*}
    \mathrm{div}_\Gamma[(v\cdot n)n] = [\nabla_\Gamma(v\cdot n)]\cdot n+(v\cdot n)\mathrm{div}_\Gamma n = -(v\cdot n)H \quad\text{on}\quad \Gamma.
  \end{align*}
  Hence \eqref{E:IbP_DivG} follows.
  We also get \eqref{E:IbP_TD} by setting $v=\eta\xi e_i$ for $\eta,\xi\in C^1(\Gamma)$ and $i=1,2,3$ in \eqref{E:IbP_DivG}, where $\{e_1,e_2,e_3\}$ is the standard basis of $\mathbb{R}^3$.
\end{proof}

Based on \eqref{E:IbP_TD}, for $p\in[1,\infty]$ and $i=1,2,3$ we say that $\eta\in L^p(\Gamma)$ has the $i$-th weak tangential derivative if there exists $\eta_i\in L^p(\Gamma)$ such that
\begin{align} \label{E:Def_WTD}
  \int_\Gamma \eta_i\xi\,d\mathcal{H}^2 = -\int_\Gamma \eta(\underline{D}_i\xi+\xi Hn_i)\,d\mathcal{H}^2
\end{align}
for all $\xi\in C^1(\Gamma)$.
In this case we write $\underline{D}_i\eta=\eta_i$ and define the Sobolev space
\begin{align*}
  W^{1,p}(\Gamma) &:= \{\eta \in L^p(\Gamma) \mid \text{$\underline{D}_i\eta\in L^p(\Gamma)$ for all $i=1,2,3$}\}, \\
  \|\eta\|_{W^{1,p}(\Gamma)} &:=
  \begin{cases}
    \left(\|\eta\|_{L^p(\Gamma)}^p+\|\nabla_\Gamma\eta\|_{L^p(\Gamma)}^p\right)^{1/p} &\text{if}\quad p\in[1,\infty), \\
    \|\eta\|_{L^\infty(\Gamma)}+\|\nabla_\Gamma\eta\|_{L^\infty(\Gamma)} &\text{if}\quad p=\infty.
  \end{cases}
\end{align*}
Here $\nabla_\Gamma\eta:=(\underline{D}_1\eta,\underline{D}_2\eta,\underline{D}_3\eta)^T$ is the weak tangential gradient of $\eta\in W^{1,p}(\Gamma)$.
This notation is consistent with \eqref{E:Def_TGr} for a $C^1$ function on $\Gamma$.
Also, by \eqref{E:Def_WTD},
\begin{align} \label{E:IbP_WTGr}
  \int_\Gamma\nabla_\Gamma\eta\cdot v\,d\mathcal{H}^2 = -\int_\Gamma\eta\{\mathrm{div}_\Gamma v+(v\cdot n)H\}\,d\mathcal{H}^2
\end{align}
for $\eta\in W^{1,p}(\Gamma)$ and $v\in C^1(\Gamma)^3$.
We also define the second order Sobolev space
\begin{align*}
  W^{2,p}(\Gamma) &:= \{\eta \in W^{1,p}(\Gamma) \mid \text{$\underline{D}_i\underline{D}_j\eta\in L^p(\Gamma)$ for all $i,j=1,2,3$}\}, \\
  \|\eta\|_{W^{2,p}(\Gamma)} &:=
  \begin{cases}
    \left(\|\eta\|_{W^{1,p}(\Gamma)}^p+\|\nabla_\Gamma^2\eta\|_{L^p(\Gamma)}^p\right)^{1/p} &\text{if}\quad p\in[1,\infty), \\
    \|\eta\|_{W^{1,\infty}(\Gamma)}+\|\nabla_\Gamma^2\eta\|_{L^\infty(\Gamma)} &\text{if}\quad p=\infty
  \end{cases}
\end{align*}
and the higher order Sobolev space $W^{m,p}(\Gamma)$ with $m\geq 2$ similarly, and write
\begin{align*}
  W^{0,p}(\Gamma):=L^p(\Gamma), \quad H^m(\Gamma):=W^{m,2}(\Gamma), \quad p\in[1,\infty], \, m \geq 0.
\end{align*}
Here $\nabla_\Gamma^2\eta:=(\underline{D}_i\underline{D}_j\eta)_{i,j}$ for $\eta\in W^{2,p}(\Gamma)$.
Note that $W^{m,p}(\Gamma)$ is a Banach space.
Since $\Gamma$ is of class $C^\ell$, a function in $W^{m,p}(\Gamma)$ is approximated by $C^\ell$ functions on $\Gamma$ when $m\leq \ell$ and $p\neq\infty$.

\begin{lemma} \label{L:Wmp_Appr}
  Let $m=0,1,\dots,\ell$ and $p\in[1,\infty)$.
  Then $C^\ell(\Gamma)$ is dense in $W^{m,p}(\Gamma)$.
\end{lemma}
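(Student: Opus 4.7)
The plan is to prove this by the standard strategy of a finite partition of unity plus local mollification in charts, adapted to the $C^\ell$ setting. Because $\Gamma$ is only of class $C^\ell$, the approximants will live in $C^\ell(\Gamma)$ rather than $C^\infty(\Gamma)$, and the range $m \leq \ell$ is precisely what is needed for chain-rule arguments with $C^\ell$ coordinate changes to remain valid.

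First I would choose a finite open cover $\{U_\alpha\}_{\alpha=1}^N$ of the compact surface $\Gamma$ such that each $U_\alpha$ is the image of a $C^\ell$-parametrization $\varphi_\alpha : V_\alpha \to U_\alpha$ with $V_\alpha \subset \mathbb{R}^2$ open and bounded, together with a $C^\ell$-partition of unity $\{\chi_\alpha\}$ subordinate to $\{U_\alpha\}$. Writing $\eta = \sum_\alpha \chi_\alpha\eta$ for $\eta \in W^{m,p}(\Gamma)$, the Leibniz rule (which holds for the weak tangential derivatives defined by \eqref{E:Def_WTD}), together with the boundedness of all derivatives of $\chi_\alpha$ up to order $\ell$, reduces the problem to approximating each piece $\chi_\alpha\eta$, whose support is compactly contained in $U_\alpha$.

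Next I would translate the localized problem to $V_\alpha$ by pullback. Using the formulas of Appendix \ref{S:Ap_Cov} to express $\underline{D}_i$ in a local frame (the coefficients involve the chart and the first fundamental form, both of class $C^{\ell-1}$), one shows that $\chi_\alpha\eta \in W^{m,p}(\Gamma)$ if and only if the pullback $(\chi_\alpha\eta)\circ\varphi_\alpha$ lies in $W^{m,p}(V_\alpha)$, with equivalent norms; this is where the hypothesis $m \leq \ell$ is used, since the change of variables introduces factors that must be differentiated up to $m$ times. Having compact support inside $V_\alpha$, the pullback can be approximated by its convolution $\rho_\delta * ((\chi_\alpha\eta)\circ\varphi_\alpha)$ with a standard mollifier, yielding $C^\infty_c(V_\alpha)$ functions converging in $W^{m,p}(V_\alpha)$. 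Pushing these forward by $\varphi_\alpha$ and extending by zero produces $C^\ell(\Gamma)$ approximants of $\chi_\alpha\eta$, and summing over $\alpha$ completes the proof.

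The main obstacle is the equivalence of norms in the previous paragraph: one must confirm that the paper's definition of $W^{m,p}(\Gamma)$ via ambient tangential derivatives $\underline{D}_i$ and the weak identity \eqref{E:Def_WTD} is the same as the usual intrinsic definition via local charts. Concretely, for $\eta$ supported in $U_\alpha$ one has to verify that an ambient weak tangential derivative $\underline{D}_i\eta$ exists in the sense of \eqref{E:Def_WTD} if and only if $\eta \circ \varphi_\alpha$ has ordinary weak partial derivatives on $V_\alpha$, and that the two are related by the chain-rule identity $\nabla_\Gamma\eta = P(\nabla\varphi_\alpha^{-1})^T \nabla(\eta\circ\varphi_\alpha)$ expected from \eqref{E:Def_TGr}. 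Once this identification (and its higher-order iterates up to $m=\ell$) is established, the patching step is routine; the delicate point is that the top-order identity requires exactly $C^\ell$ charts, matching the assumed regularity of $\Gamma$.
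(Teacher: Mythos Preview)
Your approach is essentially the same as the paper's: localize by a partition of unity, pull back to a chart, establish the chart-to-surface $W^{m,p}$ norm equivalence, mollify in the chart, and push forward (the paper carries this out explicitly for $m=2$ and refers the other cases to the same argument). One small correction: the relevant local formulas are not in Appendix~\ref{S:Ap_Cov} but in Appendix~\ref{S:Ap_RCS}, specifically Lemmas~\ref{L:TGr_DG} and~\ref{L:Lp_Loc}, which give the identity \eqref{E:TGr_DG} and the $L^p$/$W^{1,p}$ equivalence you need; the paper then iterates this to second order via the explicit formula \eqref{Pf_WA:D2_EF}.
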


By Lemma \ref{L:Wmp_Appr} we can apply the equalities and inequalities for functions in $C^1(\Gamma)$ or $C^2(\Gamma)$ given in this subsection to those in $W^{1,p}(\Gamma)$ or $W^{2,p}(\Gamma)$ with $p\in[1,\infty)$.
Lemma \ref{L:Wmp_Appr} is shown by standard localization and mollification arguments.
We give its proof in Appendix \ref{S:Ap_Proof} for the completeness.

Let $\mathcal{X}(\Gamma)$ be a function space on $\Gamma$ such as $C^m(\Gamma)$ and $W^{m,p}(\Gamma)$.
We define the space of all tangential vector fields on $\Gamma$ whose components belong to $\mathcal{X}(\Gamma)$ by
\begin{align*}
  \mathcal{X}(\Gamma,T\Gamma) := \{v\in\mathcal{X}(\Gamma)^3 \mid \text{$v\cdot n=0$ on $\Gamma$}\}.
\end{align*}
Then $W^{m,p}(\Gamma,T\Gamma)$ is a closed subspace of $W^{m,p}(\Gamma)^3$ for $m\geq0$ and $p\in[1,\infty]$.
Also, for $v\in W^{1,p}(\Gamma,T\Gamma)$ with $p\in[1,\infty]$ we have
\begin{align} \label{E:IbP_WDivG_T}
  \int_\Gamma\mathrm{div}_\Gamma v\,d\mathcal{H}^2 = -\int_\Gamma(v\cdot n)H\,d\mathcal{H}^2 = 0
\end{align}
by \eqref{E:Def_WTD} with $\xi\equiv1$ (note that $\nabla_\Gamma\xi=0$ on $\Gamma$ if $\xi$ is constant).
When $m\leq\ell-1$ and $p\neq\infty$, an element of $W^{m,p}(\Gamma,T\Gamma)$ is approximated by $C^{\ell-1}$ tangential vector fields on $\Gamma$.

\begin{lemma} \label{L:Wmp_Tan_Appr}
  Let $m=0,1,\dots,\ell-1$ and $p\in[1,\infty)$.
  Then $C^{\ell-1}(\Gamma,T\Gamma)$ is dense in $W^{m,p}(\Gamma,T\Gamma)$ with respect to the norm $\|\cdot\|_{W^{m,p}(\Gamma)}$.
\end{lemma}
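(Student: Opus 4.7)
The plan is to reduce to Lemma \ref{L:Wmp_Appr} by first approximating componentwise in $W^{m,p}(\Gamma)^3$ and then restoring tangentiality through the projection $P = I_3 - n\otimes n$.

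Let $v \in W^{m,p}(\Gamma,T\Gamma)$. Since $v \in W^{m,p}(\Gamma)^3$, applying Lemma \ref{L:Wmp_Appr} componentwise yields a sequence $\{v_k\}_{k=1}^\infty \subset C^\ell(\Gamma)^3$ with $v_k \to v$ in $W^{m,p}(\Gamma)^3$. These $v_k$ need not be tangential, so I would set
\begin{align*}
  w_k := Pv_k = v_k - (v_k\cdot n)n, \quad k \geq 1.
\end{align*}
Because $\Gamma$ is of class $C^\ell$, we have $n \in C^{\ell-1}(\Gamma)^3$, and hence $P \in C^{\ell-1}(\Gamma)^{3\times 3}$. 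Therefore $w_k \in C^{\ell-1}(\Gamma)^3$ and $w_k\cdot n = 0$ on $\Gamma$ by $P^Tn = Pn = 0$, so $w_k \in C^{\ell-1}(\Gamma,T\Gamma)$.

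It remains to verify $w_k \to v$ in $W^{m,p}(\Gamma)^3$. Since $v\cdot n = 0$ on $\Gamma$, we have $Pv = v$, so
\begin{align*}
  w_k - v = P(v_k - v) = (v_k - v) - \bigl((v_k-v)\cdot n\bigr)n.
\end{align*}
The key point is that multiplication by a matrix whose entries lie in $C^{\ell-1}(\Gamma)$ is a bounded operator on $W^{m,p}(\Gamma)^3$ for every $0 \leq m \leq \ell - 1$ and $p \in [1,\infty)$. This follows from the Leibniz rule for weak tangential derivatives: applying it iteratively to the product $P(v_k-v)$ produces terms of the form (derivatives of entries of $P$ up to order $m$) times (derivatives of $v_k - v$ up to order $m$), and the former are bounded on $\Gamma$ since $P \in C^{\ell-1}$ and $m \leq \ell-1$. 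Thus there is a constant $C$ depending only on $\Gamma$ with
\begin{align*}
  \|w_k - v\|_{W^{m,p}(\Gamma)} \leq C\|v_k - v\|_{W^{m,p}(\Gamma)} \to 0,
\end{align*}
which completes the approximation.

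The only mild obstacle is justifying the Leibniz rule for products of a $C^{\ell-1}$ function with a $W^{m,p}$ function in the sense of the weak tangential derivatives defined by \eqref{E:Def_WTD}; this is, however, standard and follows by first verifying the identity for $C^\ell$ test functions via \eqref{E:IbP_TD}, then passing to the limit using the density supplied by Lemma \ref{L:Wmp_Appr}. Everything else is bookkeeping, and the constraint $m \leq \ell - 1$ enters in exactly the expected way, namely to ensure that $P$ is differentiable enough to preserve $W^{m,p}$-regularity under pointwise multiplication.
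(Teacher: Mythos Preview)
Your proof is correct and follows essentially the same approach as the paper: approximate componentwise via Lemma \ref{L:Wmp_Appr}, project with $P$, and use the $C^{\ell-1}$-regularity of $P$ together with the Leibniz rule to control the $W^{m,p}$-norm of the difference. The paper's proof is more terse about the multiplier bound, but the argument is identical in structure.
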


\begin{proof}
  Let $v\in W^{m,p}(\Gamma,T\Gamma)\subset W^{m,p}(\Gamma)^3$.
  By Lemma \ref{L:Wmp_Appr} we can take a sequence $\{\tilde{v}_k\}_{k=1}^\infty$ in $C^\ell(\Gamma)^3$ that converges to $v$ strongly in $W^{m,p}(\Gamma)^3$.
  For each $k\in\mathbb{N}$ let $v_k:=P\tilde{v}_k$ on $\Gamma$.
  Then $v_k\in C^{\ell-1}(\Gamma,T\Gamma)$ since $P$ is of class $C^{\ell-1}$ on $\Gamma$.
  Moreover, since $v$ is tangential on $\Gamma$, we have $v-v_k=P(v-\tilde{v}_k)$ on $\Gamma$ and thus
  \begin{align*}
    \|v-v_k\|_{W^{m,p}(\Gamma)} \leq c\|v-\tilde{v}_k\|_{W^{m,p}(\Gamma)} \to 0 \quad\text{as}\quad k\to\infty
  \end{align*}
  by the $C^{\ell-1}$-regularity of $P$ on $\Gamma$ and the strong convergence of $\{\tilde{v}_k\}_{k=1}^\infty$ to $v$ in $W^{m,p}(\Gamma)^3$.
  Hence the claim is valid.
\end{proof}

\subsection{Curved thin domain} \label{SS:Pre_Dom}
From now on, we assume that the closed surface $\Gamma$ is of class $C^5$.
Let $g_0,g_1\in C^4(\Gamma)$ such that $g:=g_1-g_0$ satisfies \eqref{E:G_Inf}.
For $\varepsilon\in(0,1]$ we define a curved thin domain $\Omega_\varepsilon$ in $\mathbb{R}^3$ by \eqref{E:CTD_Intro}, i.e.
\begin{align*}
  \Omega_\varepsilon := \{y+rn(y) \mid y\in\Gamma,\,\varepsilon g_0(y) < r < \varepsilon g_1(y)\}.
\end{align*}
Since $g_0$ and $g_1$ are bounded on $\Gamma$, there exists $\tilde{\varepsilon}\in(0,1]$ such that $\tilde{\varepsilon}|g_i|<\delta$ on $\Gamma$ for $i=0,1$, where $\delta>0$ is the radius of the tubular neighborhood $N$ of $\Gamma$ given in Section \ref{SS:Pre_Surf}.
Hence $\overline{\Omega}_\varepsilon\subset N$ and the lemmas in Section \ref{SS:Pre_Surf} are applicable in $\overline{\Omega}_\varepsilon$ for all $\varepsilon\in(0,\tilde{\varepsilon}]$.
In what follows, we assume $\tilde{\varepsilon}=1$ by replacing $g_i$ with $\tilde{\varepsilon} g_i$ for $i=0,1$.

Let $\Gamma_\varepsilon^0$ and $\Gamma_\varepsilon^1$ be the inner and outer boundaries of $\Omega_\varepsilon$ defined as
\begin{align*}
  \Gamma_\varepsilon^i:=\{y+\varepsilon g_i(y)n(y) \mid y\in\Gamma\}, \quad i=0,1.
\end{align*}
Then the whole boundary of $\Omega_\varepsilon$ is given by $\Gamma_\varepsilon:=\Gamma_\varepsilon^0\cup\Gamma_\varepsilon^1$.
Note that $\Gamma_\varepsilon$ is of class $C^4$ by the $C^5$-regularity of $\Gamma$ and $g_0,g_1\in C^4(\Gamma)$.
We use this fact in the proof of a uniform a priori estimate for the vector Laplace operator (see Section \ref{S:UniLap}).

Let us give surface quantities on $\Gamma_\varepsilon$.
We define vector fields $\tau_\varepsilon^i$ and $n_\varepsilon^i$ on $\Gamma$ as
\begin{align}
  \tau_\varepsilon^i(y) &:= \{I_3-\varepsilon g_i(y)W(y)\}^{-1}\nabla_\Gamma g_i(y), \label{E:Def_NB_Aux}\\
  n_\varepsilon^i(y) &:= (-1)^{i+1}\frac{n(y)-\varepsilon\tau_\varepsilon^i(y)}{\sqrt{1+\varepsilon^2|\tau_\varepsilon^i(y)|^2}} \label{E:Def_NB}
\end{align}
for $y\in\Gamma$ and $i=0,1$.
Note that $\tau_\varepsilon^i$ is tangential on $\Gamma$ by \eqref{E:P_TGr}, \eqref{E:WReso_P}, and $Pa\cdot n=0$ on $\Gamma$ for $a\in\mathbb{R}^3$.
Also, $\tau_\varepsilon^i$ and $n_\varepsilon^i$ are bounded on $\Gamma$ uniformly in $\varepsilon$ along with their first and second order tangential derivatives.

\begin{lemma} \label{L:NB_Aux}
  There exists a constant $c>0$ independent of $\varepsilon$ such that
  \begin{gather}
    |\tau_\varepsilon^i(y)| \leq c, \quad |\underline{D}_k\tau_\varepsilon^i(y)| \leq c, \quad |\underline{D}_l\underline{D}_k\tau_\varepsilon^i(y)| \leq c, \label{E:Tau_Bound} \\
    |\tau_\varepsilon^i(y)-\nabla_\Gamma g_i(y)| \leq c\varepsilon, \quad |\nabla_\Gamma\tau_\varepsilon^i(y)-\nabla_\Gamma^2g_i(y)| \leq c\varepsilon \label{E:Tau_Diff}
  \end{gather}
  for all $y\in\Gamma$, $i=0,1$, and $k,l=1,2,3$.
  We also have
  \begin{gather}
    |n_\varepsilon^i(y)| = 1, \quad |\underline{D}_kn_\varepsilon^i(y)| \leq c, \quad |\underline{D}_l\underline{D}_kn_\varepsilon^i(y)| \leq c, \label{E:N_Bound} \\
    |n_\varepsilon^0(y)+n_\varepsilon^1(y)| \leq c\varepsilon, \quad |\nabla_\Gamma n_\varepsilon^0(y)+\nabla_\Gamma n_\varepsilon^1(y)| \leq c\varepsilon \label{E:N_Diff}
  \end{gather}
  for all $y\in\Gamma$, $i=0,1$, and $k,l=1,2,3$.
\end{lemma}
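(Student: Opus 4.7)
The plan is to obtain all four groups of estimates by direct calculation from the defining formulas \eqref{E:Def_NB_Aux} and \eqref{E:Def_NB}, relying on the resolvent estimates \eqref{E:Wein_Bound} and \eqref{E:Wein_Diff} for $\{I_3-rW\}^{-1}$ together with the $C^5$-regularity of $\Gamma$ (which gives $n\in C^4$, $W\in C^3$) and the $C^4$-regularity of $g_0,g_1$. I would split the proof into two stages, first settling everything for $\tau_\varepsilon^i$, then deducing the statements for $n_\varepsilon^i$ from those.

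For \eqref{E:Tau_Bound}, the zeroth-order bound is immediate from \eqref{E:Wein_Bound} with $k=-1$ applied to $\nabla_\Gamma g_i$. For the first and second tangential derivatives, I would differentiate the identity $A_\varepsilon^i\tau_\varepsilon^i=\nabla_\Gamma g_i$ with $A_\varepsilon^i:=I_3-\varepsilon g_iW$, using the general formula $\underline{D}_k(A^{-1})=-A^{-1}(\underline{D}_kA)A^{-1}$, which produces $\underline{D}_k\tau_\varepsilon^i$ as a polynomial expression in $(A_\varepsilon^i)^{-1}$, $\underline{D}_kg_i$, $\underline{D}_kW$, $\nabla_\Gamma g_i$, $\nabla_\Gamma^2 g_i$ and (for second derivatives) one more round of the same. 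Every factor is uniformly bounded on $\Gamma$ independently of $\varepsilon$ by \eqref{E:Wein_Bound} and the regularity assumptions. For \eqref{E:Tau_Diff}, I would rewrite $\tau_\varepsilon^i-\nabla_\Gamma g_i=\bigl((A_\varepsilon^i)^{-1}-I_3\bigr)\nabla_\Gamma g_i$ and apply \eqref{E:Wein_Diff} with $r=\varepsilon g_i(y)$; for $\nabla_\Gamma\tau_\varepsilon^i-\nabla_\Gamma^2 g_i$ I would differentiate this identity and split into a main term $\bigl((A_\varepsilon^i)^{-1}-I_3\bigr)\nabla_\Gamma^2 g_i$ (controlled by \eqref{E:Wein_Diff}) plus a remainder carrying an explicit $\varepsilon$-factor from differentiating $A_\varepsilon^i$.

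The identity $|n_\varepsilon^i|=1$ is built into the normalization. The derivative bounds in \eqref{E:N_Bound} come from the quotient rule applied to \eqref{E:Def_NB}: the denominator $\sqrt{1+\varepsilon^2|\tau_\varepsilon^i|^2}$ is bounded away from zero and above uniformly in $\varepsilon$, and its tangential derivatives inherit uniform bounds from \eqref{E:Tau_Bound}, while the numerator $n-\varepsilon\tau_\varepsilon^i$ has uniformly bounded derivatives by the regularity of $n$ and \eqref{E:Tau_Bound}.

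The step I expect to require the most care is the cancellation in \eqref{E:N_Diff}. I would write
\begin{align*}
n_\varepsilon^0+n_\varepsilon^1 &= n\left(\frac{1}{\sqrt{1+\varepsilon^2|\tau_\varepsilon^1|^2}}-\frac{1}{\sqrt{1+\varepsilon^2|\tau_\varepsilon^0|^2}}\right) \\
&\quad +\varepsilon\left(\frac{\tau_\varepsilon^0}{\sqrt{1+\varepsilon^2|\tau_\varepsilon^0|^2}}-\frac{\tau_\varepsilon^1}{\sqrt{1+\varepsilon^2|\tau_\varepsilon^1|^2}}\right),
\end{align*}
so that the leading-order $\pm n$ cancels by design of the sign factor $(-1)^{i+1}$. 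The first bracket is $O(\varepsilon^2)$ by the mean value theorem applied to $s\mapsto(1+\varepsilon^2 s)^{-1/2}$ combined with \eqref{E:Tau_Bound}, and the second bracket contributes $O(\varepsilon)$ since it is $\varepsilon$ times a uniformly bounded quantity by \eqref{E:Tau_Bound}. For the tangential gradient estimate I would differentiate the above splitting and again use \eqref{E:Tau_Bound} to control each resulting factor; every term picks up at least one power of $\varepsilon$ either from the explicit $\varepsilon$ in the numerator or from the $\varepsilon^2$ in the denominator-difference. No serious obstacle is expected beyond bookkeeping, but the sign convention $(-1)^{i+1}$ in \eqref{E:Def_NB} is essential and must be tracked carefully throughout.
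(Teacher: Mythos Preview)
Your proposal is correct and follows essentially the same route as the paper: differentiate the resolvent identity for $R_\varepsilon^i:=(I_3-\varepsilon g_iW)^{-1}$ to get $|\underline{D}_kR_\varepsilon^i|\leq c\varepsilon$ and iterate, then for \eqref{E:N_Diff} use exactly your splitting $n_\varepsilon^0+n_\varepsilon^1=\varphi_\varepsilon n+\varepsilon\tau_\varepsilon$ with $|\varphi_\varepsilon|,|\nabla_\Gamma\varphi_\varepsilon|\leq c\varepsilon^2$ via the mean value theorem for $(1+s)^{-1/2}$. The paper's presentation differs only in notation and in recording the intermediate bound $|\underline{D}_kR_\varepsilon^i|\leq c\varepsilon$ explicitly before assembling the estimates for $\tau_\varepsilon^i$.
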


We present the proof of Lemmas \ref{L:NB_Aux} in Appendix \ref{S:Ap_Proof}.

Let $n_\varepsilon$ be the unit outward normal vector field of $\Gamma_\varepsilon$.
For $i=0,1$ the direction of $n_\varepsilon$ on $\Gamma_\varepsilon^i$ is the same as that of $(-1)^{i+1}\bar{n}$ since the signed distance function $d$ from $\Gamma$ increases in the direction of $n$.

\begin{lemma} \label{L:Nor_Bo}
  The unit outward normal vector field $n_\varepsilon$ of $\Gamma_\varepsilon$ is given by
  \begin{align} \label{E:Nor_Bo}
    n_\varepsilon(x) = \bar{n}_\varepsilon^i(x), \quad x\in\Gamma_\varepsilon^i,\,i=0,1.
  \end{align}
  Here $\bar{n}_\varepsilon^i=n_\varepsilon^i\circ\pi$ is the constant extension of the vector field $n_\varepsilon^i$ given by \eqref{E:Def_NB}.
\end{lemma}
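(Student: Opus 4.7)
The plan is to realize each boundary piece $\Gamma_\varepsilon^i$ as a level set of a scalar function on the tubular neighborhood $N$ and then compute the gradient of that defining function using the formulas already established for $\nabla d$ and $\nabla\bar{\eta}$ in Lemma \ref{L:Pi_Der}.

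First, I would observe that the description of $\Omega_\varepsilon$ in \eqref{E:CTD_Intro} is equivalent to
\begin{align*}
  \Omega_\varepsilon = \{x\in N \mid \varepsilon\overline{g_0}(x) < d(x) < \varepsilon\overline{g_1}(x)\},
\end{align*}
since the coordinates $(y,r)=(\pi(x),d(x))$ parametrize $N$ bijectively via \eqref{E:Nor_Coord}, and the constant extensions satisfy $\overline{g_i}(x)=g_i(\pi(x))$. Consequently $\Gamma_\varepsilon^i$ is the zero set of $F_i(x):=(-1)^{i+1}\bigl(d(x)-\varepsilon\overline{g_i}(x)\bigr)$, with $F_i>0$ on the complement of $\overline{\Omega}_\varepsilon$ on the $\Gamma_\varepsilon^i$-side. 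Hence the (outward) unit normal of $\Gamma_\varepsilon^i$ is $n_\varepsilon=\nabla F_i/|\nabla F_i|$.

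Next I would compute $\nabla F_i$ on $\Gamma_\varepsilon^i$. By \eqref{E:Nor_Coord}, $\nabla d=\bar{n}$, and by \eqref{E:ConDer_Dom},
\begin{align*}
  \nabla\overline{g_i}(x) = \bigl\{I_3-d(x)\overline{W}(x)\bigr\}^{-1}\overline{\nabla_\Gamma g_i}(x), \quad x\in N.
\end{align*}
For a point $x\in\Gamma_\varepsilon^i$ the signed distance satisfies $d(x)=\varepsilon g_i(\pi(x))$, so evaluating at $x$ we obtain
\begin{align*}
  \nabla\overline{g_i}(x) = \overline{\bigl\{I_3-\varepsilon g_iW\bigr\}^{-1}\nabla_\Gamma g_i}(x) = \overline{\tau_\varepsilon^i}(x),
\end{align*}
using the definition \eqref{E:Def_NB_Aux} of $\tau_\varepsilon^i$. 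Therefore on $\Gamma_\varepsilon^i$,
\begin{align*}
  \nabla F_i(x) = (-1)^{i+1}\bigl(\bar{n}(x)-\varepsilon\overline{\tau_\varepsilon^i}(x)\bigr).
\end{align*}

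Finally, since $\tau_\varepsilon^i$ is tangential on $\Gamma$ (it lies in the range of $P$ by \eqref{E:WReso_P}), the constant extension $\overline{\tau_\varepsilon^i}$ is orthogonal to $\bar{n}$ pointwise in $N$. Combined with $|\bar{n}|=1$ this gives
\begin{align*}
  |\nabla F_i(x)| = \sqrt{1+\varepsilon^2|\overline{\tau_\varepsilon^i}(x)|^2}.
\end{align*}
Dividing and comparing with \eqref{E:Def_NB} yields $n_\varepsilon(x)=\overline{n_\varepsilon^i}(x)$ on $\Gamma_\varepsilon^i$, which is \eqref{E:Nor_Bo}.

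The only genuinely delicate point is the sign/orientation bookkeeping for the two boundary components, which is why I would introduce the sign factor $(-1)^{i+1}$ in $F_i$ at the outset so that the outward direction is automatic; the rest is a clean application of Lemma \ref{L:Pi_Der} together with the identity $d(x)=\varepsilon g_i(\pi(x))$ on $\Gamma_\varepsilon^i$.
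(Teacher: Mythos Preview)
Your proof is correct. It differs from the paper's route: the paper defers to Lemma \ref{L:Para_Nor}, where $\Gamma_h$ is parametrized locally via $\mu_h(s)=\mu(s)+h(\mu(s))n(\mu(s))$ and one checks directly that $\bar{n}_h$ is orthogonal to the tangent vectors $\partial_{s_k}\mu_h$, $k=1,2$; the sign is then fixed separately by the observation that $n_\varepsilon$ points in the direction of $(-1)^{i+1}\bar{n}$. Your level-set argument bypasses local charts entirely and reads off both the direction and the orientation from the single computation $\nabla F_i=(-1)^{i+1}(\bar{n}-\varepsilon\overline{\tau_\varepsilon^i})$, using only \eqref{E:Nor_Coord} and \eqref{E:ConDer_Dom}. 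This is arguably cleaner and more in keeping with the extrinsic viewpoint the paper adopts elsewhere; the paper's parametrized proof, on the other hand, fits naturally with the setup of Lemma \ref{L:CoV_Para}, where the same local parametrization $\mu_h$ is reused to derive the change-of-variables formula \eqref{E:CoV_Surf}.
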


\begin{proof}
  We observe in Lemma \ref{L:Para_Nor} that, if we define
  \begin{align} \label{Pf_NB:Vec}
    \tau_h(y) := \{I_3-h(y)W(y)\}^{-1}\nabla_\Gamma h(y), \quad n_h(y) := \frac{n(y)-\tau_h(y)}{\sqrt{1+|\tau_h(y)|^2}}
  \end{align}
  for $y\in\Gamma$ and $h\in C^1(\Gamma)$ satisfying $|h|<\delta$ on $\Gamma$, then the constant extension of $n_h$ is a unit normal vector field of the parametrized surface
  \begin{align} \label{Pf_NB:Para}
    \Gamma_h := \{y+h(y)n(y) \mid y\in\Gamma\}.
  \end{align}
  Setting $h=\varepsilon g_i$ in Lemma \ref{L:Para_Nor} and noting that the direction of $n_\varepsilon$ on $\Gamma_\varepsilon^i$ is the same as that of $(-1)^{i+1}\bar{n}$ for $i=0,1$ we obtain \eqref{E:Nor_Bo}.
\end{proof}

As in Section \ref{SS:Pre_Surf} we set $P_\varepsilon:=I_3-n_\varepsilon\otimes n_\varepsilon$ and $Q_\varepsilon:=n_\varepsilon\otimes n_\varepsilon$ on $\Gamma_\varepsilon$ and define the tangential gradient and the tangential derivatives of $\varphi\in C^1(\Gamma_\varepsilon)$ by
\begin{align*}
  \nabla_{\Gamma_\varepsilon}\varphi := P_\varepsilon\nabla\tilde{\varphi}, \quad \underline{D}_i^\varepsilon\varphi := \sum_{j=1}^3[P_\varepsilon]_{ij}\partial_j\tilde{\varphi} \quad\text{on}\quad \Gamma_\varepsilon,\, i=1,2,3,
\end{align*}
where $\tilde{\varphi}$ is any $C^1$-extension of $\varphi$ to an open neighborhood of $\Gamma_\varepsilon$ with $\tilde{\varphi}|_{\Gamma_\varepsilon}=\varphi$.
For $u=(u_1,u_2,u_3)^T\in C^1(\Gamma_\varepsilon)^3$ we define the tangential gradient matrix and the surface divergence of $u$ by
\begin{align*}
  \nabla_{\Gamma_\varepsilon}u :=
  \begin{pmatrix}
    \underline{D}_1^\varepsilon u_1 & \underline{D}_1^\varepsilon u_2 & \underline{D}_1^\varepsilon u_3 \\
    \underline{D}_2^\varepsilon u_1 & \underline{D}_2^\varepsilon u_2 & \underline{D}_2^\varepsilon u_3 \\
    \underline{D}_3^\varepsilon u_1 & \underline{D}_3^\varepsilon u_2 & \underline{D}_3^\varepsilon u_3
  \end{pmatrix}, \quad
  \mathrm{div}_{\Gamma_\varepsilon}u := \mathrm{tr}[\nabla_{\Gamma_\varepsilon}u] = \sum_{i=1}^3\underline{D}_i^\varepsilon u_i \quad\text{on}\quad \Gamma_\varepsilon.
\end{align*}
Also, for $u\in C^1(\Gamma_\varepsilon)^3$ and $\varphi\in C(\Gamma_\varepsilon)^3$ we write
\begin{align*}
  (\varphi\cdot\nabla_{\Gamma_\varepsilon})u :=
  \begin{pmatrix}
    \varphi\cdot\nabla_{\Gamma_\varepsilon}u_1 \\
    \varphi\cdot\nabla_{\Gamma_\varepsilon}u_2 \\
    \varphi\cdot\nabla_{\Gamma_\varepsilon}u_3
  \end{pmatrix}
  = (\nabla_{\Gamma_\varepsilon}u)^T\varphi \quad\text{on}\quad \Gamma_\varepsilon.
\end{align*}
Note that, as in the case of $\Gamma$, we have
\begin{align} \label{E:Tgrad_Bo}
  \nabla_{\Gamma_\varepsilon}u = P_\varepsilon\nabla\tilde{u}, \quad (\varphi\cdot\nabla_{\Gamma_\varepsilon})u = [(P_\varepsilon\varphi)\cdot\nabla]\tilde{u} \quad\text{on}\quad \Gamma_\varepsilon
\end{align}
for any $C^1$-extension $\tilde{u}$ of $u$ to an open neighborhood of $\Gamma_\varepsilon$ with $\tilde{u}|_{\Gamma_\varepsilon}=u$.
We also define the Weingarten map $W_\varepsilon$ and (twice) the mean curvature $H_\varepsilon$ of $\Gamma_\varepsilon$ as
\begin{align*}
  W_\varepsilon := -\nabla_{\Gamma_\varepsilon}n_\varepsilon, \quad H_\varepsilon := \mathrm{tr}[W_\varepsilon] = -\mathrm{div}_{\Gamma_\varepsilon}n_\varepsilon \quad\text{on}\quad \Gamma_\varepsilon.
\end{align*}
Then by Lemma \ref{L:Form_W} we have
\begin{align} \label{E:PW_Bo}
  W_\varepsilon^T = P_\varepsilon W_\varepsilon = W_\varepsilon P_\varepsilon = W_\varepsilon \quad\text{on}\quad \Gamma_\varepsilon.
\end{align}
The weak tangential derivatives of functions on $\Gamma_\varepsilon$ and the Sobolev spaces $W^{m,p}(\Gamma_\varepsilon)$ for $m\in\mathbb{N}$ and $p\in[1,\infty]$ are also defined as in Section \ref{SS:Pre_Surf}.

By the expression \eqref{E:Def_NB} of the unit outward normal $n_\varepsilon$ to $\Gamma_\varepsilon$, we can compare the surface quantities of $\Gamma_\varepsilon$ with those of $\Gamma$.

\begin{lemma} \label{L:Comp_Nor}
  There exists a constant $c>0$ independent of $\varepsilon$ such that
  \begin{gather}
    \left|n_\varepsilon(x)-(-1)^{i+1}\left\{\bar{n}(x)-\varepsilon\overline{\nabla_\Gamma g_i}(x)\right\}\right| \leq c\varepsilon^2, \label{E:Comp_N} \\
    \left|P_\varepsilon(x)-\overline{P}(x)\right| \leq c\varepsilon, \quad \left|Q_\varepsilon(x)-\overline{Q}(x)\right| \leq c\varepsilon, \label{E:Comp_P} \\
    \left|W_\varepsilon(x)-(-1)^{i+1}\overline{W}(x)\right| \leq c\varepsilon, \quad \left|H_\varepsilon(x)-(-1)^{i+1}\overline{H}(x)\right| \leq c\varepsilon, \label{E:Comp_W} \\
    \left|\underline{D}_j^\varepsilon W_\varepsilon(x)-(-1)^{i+1}\overline{\underline{D}_jW}(x)\right| \leq c\varepsilon \label{E:Comp_DW}
  \end{gather}
  for all $x\in\Gamma_\varepsilon^i$, $i=0,1$, and $j=1,2,3$.
\end{lemma}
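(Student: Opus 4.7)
The plan is to use the explicit formula $n_\varepsilon = \bar{n}_\varepsilon^i$ on $\Gamma_\varepsilon^i$ from Lemma \ref{L:Nor_Bo} together with the definition \eqref{E:Def_NB} of $n_\varepsilon^i$, and to push each estimate through a careful Taylor expansion in $\varepsilon$ using the uniform bounds of Lemma \ref{L:NB_Aux}. For \eqref{E:Comp_N}, I would first note that $(1+\varepsilon^2|\tau_\varepsilon^i|^2)^{-1/2} = 1 + O(\varepsilon^2)$ uniformly on $\Gamma$ by the bound \eqref{E:Tau_Bound}; since $|n - \varepsilon \tau_\varepsilon^i| \leq 1 + c\varepsilon$, this yields $n_\varepsilon^i = (-1)^{i+1}(n - \varepsilon \tau_\varepsilon^i) + O(\varepsilon^2)$ on $\Gamma$. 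Replacing $\tau_\varepsilon^i$ by $\nabla_\Gamma g_i$ via \eqref{E:Tau_Diff} costs an additional $c\varepsilon \cdot \varepsilon = c\varepsilon^2$, and taking constant extensions along normals delivers \eqref{E:Comp_N} on $\Gamma_\varepsilon^i$.

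For \eqref{E:Comp_P}, I would write $P_\varepsilon - \bar{P} = \bar{n}\otimes\bar{n} - n_\varepsilon \otimes n_\varepsilon$ and note that since $((-1)^{i+1})^2 = 1$ we have $\bar{n}\otimes\bar{n} = [(-1)^{i+1}\bar{n}]\otimes[(-1)^{i+1}\bar{n}]$. The elementary identity $a\otimes a - b\otimes b = (a-b)\otimes a + b\otimes(a-b)$ applied with $a = (-1)^{i+1}\bar{n}$ and $b = n_\varepsilon$, combined with the $O(\varepsilon)$-consequence of \eqref{E:Comp_N} and the fact that $|\bar{n}| = |n_\varepsilon| = 1$, gives the estimate; the $Q_\varepsilon$-bound follows from $Q_\varepsilon - \bar{Q} = -(P_\varepsilon - \bar{P})$.

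For \eqref{E:Comp_W}, I would express $W_\varepsilon = -\nabla_{\Gamma_\varepsilon} n_\varepsilon = -P_\varepsilon \nabla \bar{n}_\varepsilon^i$ on $\Gamma_\varepsilon^i$, and apply \eqref{E:ConDer_Dom} componentwise to obtain $\nabla \bar{n}_\varepsilon^i = \{I_3 - d\,\overline{W}\}^{-1}\,\overline{\nabla_\Gamma n_\varepsilon^i}$. On $\Gamma_\varepsilon^i$ one has $d = \varepsilon \bar{g}_i$, so the resolvent equals $I_3 + O(\varepsilon)$ by \eqref{E:Wein_Diff}. Differentiating the formula \eqref{E:Def_NB} directly on $\Gamma$ and using \eqref{E:Tau_Diff} and \eqref{E:N_Diff} yields $\nabla_\Gamma n_\varepsilon^i = (-1)^{i+1}\nabla_\Gamma n + O(\varepsilon) = -(-1)^{i+1} W + O(\varepsilon)$. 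Multiplying the three factors and invoking $\overline{P}\,\overline{W} = \overline{W}$ from \eqref{E:Form_W} gives $W_\varepsilon = (-1)^{i+1}\overline{W} + O(\varepsilon)$, and taking traces produces the $H_\varepsilon$ bound.

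For \eqref{E:Comp_DW}, the same factorization is differentiated once more along $\Gamma_\varepsilon$. The relevant tools are the second-derivative bound \eqref{E:Con_Hess} for the full gradient of a constant extension, the second-order bounds $|\underline{D}_l \underline{D}_k \tau_\varepsilon^i|, |\underline{D}_l \underline{D}_k n_\varepsilon^i| \leq c$ from Lemma \ref{L:NB_Aux}, and the $C^2$ regularity of $W$ on $\Gamma$ furnished by $\Gamma \in C^5$. I expect the main obstacle to be the bookkeeping here: when one differentiates the triple product $P_\varepsilon \cdot \{I_3 - \varepsilon \bar{g}_i \overline{W}\}^{-1} \cdot \overline{\nabla_\Gamma n_\varepsilon^i}$, the derivative can land on any of the factors, and one has to verify that every remainder term genuinely carries a factor of $\varepsilon$. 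This rests on the sharp $O(\varepsilon)$ differences \eqref{E:Tau_Diff} and \eqref{E:N_Diff} rather than a naive Taylor expansion of \eqref{E:Def_NB}, because otherwise a derivative falling on the $\tau_\varepsilon^i$-dependent prefactor could degrade the order of the estimate.
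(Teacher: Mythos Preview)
Your plan is correct and matches the paper's approach closely. The paper organizes the same computation by introducing named auxiliary quantities: it sets $\varphi_\varepsilon^i:=(1+\varepsilon^2|\bar\tau_\varepsilon^i|^2)^{-1/2}-1$ and $\Phi_\varepsilon^i:=\bar n_\varepsilon^i-(-1)^{i+1}\bar n$, proves $|\partial_x^\alpha\Phi_\varepsilon^i|\le c\varepsilon$ for $|\alpha|\le 2$, and for \eqref{E:Comp_DW} writes an explicit extension $\widetilde W_\varepsilon^i=(-1)^{i+1}\overline W+E_\varepsilon^i+F_\varepsilon^i+G_\varepsilon^i$ with each error term and its first derivatives shown to be $O(\varepsilon)$ separately; this is exactly the ``bookkeeping'' you describe, just made explicit. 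One small correction: your citations of \eqref{E:N_Diff} are misplaced, since that inequality concerns $n_\varepsilon^0+n_\varepsilon^1$ and $\nabla_\Gamma n_\varepsilon^0+\nabla_\Gamma n_\varepsilon^1$; what you actually need for \eqref{E:Comp_W} and \eqref{E:Comp_DW} is that $n_\varepsilon^i-(-1)^{i+1}n$ together with its first and second tangential derivatives is $O(\varepsilon)$, and this follows directly from differentiating \eqref{E:Def_NB} and invoking only the bounds in \eqref{E:Tau_Bound}.
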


From Lemma \ref{L:Comp_Nor} it follows that $W_\varepsilon$, $H_\varepsilon$, and $\underline{D}_j^\varepsilon W_\varepsilon$ with $j=1,2,3$ are uniformly bounded in $\varepsilon$ on $\Gamma_\varepsilon$ (note that $|P_\varepsilon|=2$ and $|Q_\varepsilon|=1$ on $\Gamma_\varepsilon$).
Moreover, we can compare the surface quantities of the inner and outer boundaries.

\begin{lemma} \label{L:Diff_SQ_IO}
  There exists a constant $c>0$ independent of $\varepsilon$ such that
  \begin{align}
    |F_\varepsilon(y+\varepsilon g_1(y)n(y))-F_\varepsilon(y+\varepsilon g_0(y)n(y))| &\leq c\varepsilon, \label{E:Diff_PQ_IO} \\
    |G_\varepsilon(y+\varepsilon g_1(y)n(y))+G_\varepsilon(y+\varepsilon g_0(y)n(y))| &\leq c\varepsilon \label{E:Diff_WH_IO}
  \end{align}
  for all $y\in\Gamma$, where $F_\varepsilon=P_\varepsilon,Q_\varepsilon$ and $G_\varepsilon=W_\varepsilon,H_\varepsilon,\underline{D}_j^\varepsilon W_\varepsilon$ with $j=1,2,3$.
\end{lemma}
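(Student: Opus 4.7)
The plan is to derive \eqref{E:Diff_PQ_IO} and \eqref{E:Diff_WH_IO} as direct consequences of Lemma \ref{L:Comp_Nor}, exploiting the fact that the two boundary points $x_0 := y+\varepsilon g_0(y)n(y)\in\Gamma_\varepsilon^0$ and $x_1 := y+\varepsilon g_1(y)n(y)\in\Gamma_\varepsilon^1$ sit on the same normal fiber through $y\in\Gamma$. Consequently $\pi(x_0)=\pi(x_1)=y$, so the constant extension $\bar S = S\circ\pi$ of any surface quantity $S$ on $\Gamma$ satisfies $\bar S(x_0) = \bar S(x_1) = S(y)$. This is the single observation that makes the proof essentially immediate.

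For \eqref{E:Diff_PQ_IO} with $F_\varepsilon = P_\varepsilon$ (and identically for $Q_\varepsilon$), I would fix $y\in\Gamma$ and insert $\overline{P}(x_0)=\overline{P}(x_1)=P(y)$ via the triangle inequality, then apply the first estimate of \eqref{E:Comp_P} at each boundary point to obtain
\begin{align*}
  |P_\varepsilon(x_1)-P_\varepsilon(x_0)|
  \leq |P_\varepsilon(x_1)-\overline{P}(x_1)| + |\overline{P}(x_0)-P_\varepsilon(x_0)|
  \leq 2c\varepsilon,
\end{align*}
which is the desired bound (and the $Q_\varepsilon$ case is identical).

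For \eqref{E:Diff_WH_IO} with $G_\varepsilon = W_\varepsilon$ (and analogously for $H_\varepsilon$ and $\underline{D}_j^\varepsilon W_\varepsilon$ using \eqref{E:Comp_W} and \eqref{E:Comp_DW}), the point is that the sign factor $(-1)^{i+1}$ in \eqref{E:Comp_W} flips between $i=0$ and $i=1$, which is why the sum—rather than the difference—appears on the left-hand side. Namely, \eqref{E:Comp_W} gives $|W_\varepsilon(x_0)+\overline{W}(x_0)|\leq c\varepsilon$ and $|W_\varepsilon(x_1)-\overline{W}(x_1)|\leq c\varepsilon$, and since $\overline{W}(x_0)=\overline{W}(x_1)=W(y)$ these two contributions cancel in
\begin{align*}
  |W_\varepsilon(x_1)+W_\varepsilon(x_0)|
  \leq |W_\varepsilon(x_1)-\overline{W}(x_1)| + |W_\varepsilon(x_0)+\overline{W}(x_0)|
  \leq 2c\varepsilon,
\end{align*}
and the arguments for $H_\varepsilon$ and $\underline{D}_j^\varepsilon W_\varepsilon$ proceed verbatim with \eqref{E:Comp_W} and \eqref{E:Comp_DW} in place of the first inequality of \eqref{E:Comp_W}.

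There is no real obstacle here: all the analytical content is already packaged inside Lemma \ref{L:Comp_Nor} (whose proof, by contrast, involves the genuine differential-geometric work of expanding $n_\varepsilon^i$ in $\varepsilon$ and is carried out in Appendix \ref{S:Ap_Proof}). The only thing to be careful about is tracking the sign factor $(-1)^{i+1}$ correctly, which naturally partitions the quantities into those that satisfy a difference estimate (the projections $P_\varepsilon,Q_\varepsilon$, which are quadratic in $n_\varepsilon$ and hence sign-insensitive) and those that satisfy a sum estimate (the curvature quantities, which are linear in $n_\varepsilon$).
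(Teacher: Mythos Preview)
Your proposal is correct and follows essentially the same approach as the paper: the paper's proof also inserts the constant extension (e.g.\ $\overline{P}(x_0)=\overline{P}(x_1)=P(y)$) via the triangle inequality and then applies \eqref{E:Comp_P}--\eqref{E:Comp_DW} at each boundary point, with the sign $(-1)^{i+1}$ accounting for the sum versus difference in exactly the way you describe.
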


The proofs of Lemmas \ref{L:Comp_Nor} and \ref{L:Diff_SQ_IO} are given in Appendix \ref{S:Ap_Proof}.

Next we give a change of variables formula for an integral over $\Omega_\varepsilon$.
For functions $\varphi$ on $\Omega_\varepsilon$ and $\eta$ on $\Gamma_\varepsilon^i$, $i=0,1$ we use the notations
\begin{alignat}{2}
  \varphi^\sharp(y,r) &:= \varphi(y+rn(y)), &\quad &y\in\Gamma,\,r\in(\varepsilon g_0(y),\varepsilon g_1(y)), \label{E:Pull_Dom} \\
  \eta_i^\sharp(y) &:= \eta(y+\varepsilon g_i(y)n(y)), &\quad &y\in\Gamma. \label{E:Pull_Bo}
\end{alignat}
Let $J=J(y,r)$ be a function given by
\begin{align} \label{E:Def_Jac}
  J(y,r) := \mathrm{det}[I_3-rW(y)] = \{1-r\kappa_1(y)\}\{1-r\kappa_2(y)\}
\end{align}
for $y\in\Gamma$ and $r\in(-\delta,\delta)$.
By \eqref{E:Curv_Bound} and $\kappa_1,\kappa_2\in C^3(\Gamma)$ we have
\begin{gather}
  c^{-1} \leq J(y,r) \leq c, \quad \left|\frac{\partial J}{\partial r}(y,r)\right| \leq c, \label{E:Jac_Bound_01} \\
  |J(y,r)-1| \leq c|r| \label{E:Jac_Diff_01}
\end{gather}
for all $y\in\Gamma$ and $r\in(-\delta,\delta)$.
Note that $J$ is the Jacobian appearing in the change of variables formula
\begin{align} \label{E:CoV_Dom}
  \int_{\Omega_\varepsilon}\varphi(x)\,dx = \int_\Gamma\int_{\varepsilon g_0(y)}^{\varepsilon g_1(y)}\varphi(y+rn(y))J(y,r)\,dr\,d\mathcal{H}^2(y)
\end{align}
for a function $\varphi$ on $\Omega_\varepsilon$ (see e.g. \cite{GiTr01}*{Section 14.6}).
The formula \eqref{E:CoV_Dom} can be seen as a co-area formula.
From \eqref{E:Jac_Bound_01} and \eqref{E:CoV_Dom} it immediately follows that
\begin{align} \label{E:CoV_Equiv}
  c^{-1}\|\varphi\|_{L^p(\Omega_\varepsilon)}^p \leq \int_\Gamma\int_{\varepsilon g_0(y)}^{\varepsilon g_1(y)}|\varphi^\sharp(y,r)|^p\,dr\,d\mathcal{H}^2(y) \leq c\|\varphi\|_{L^p(\Omega_\varepsilon)}^p
\end{align}
for $\varphi\in L^p(\Omega_\varepsilon)$, $p\in[1,\infty)$.
In the sequel we frequently use this inequality and the following estimates for the constant extension $\bar{\eta}=\eta\circ\pi$ of a function $\eta$ on $\Gamma$.

\begin{lemma} \label{L:Con_Lp_W1p}
  For $p\in[1,\infty)$ we have $\eta\in L^p(\Gamma)$ if and only if $\bar{\eta}\in L^p(\Omega_\varepsilon)$, and there exists a constant $c>0$ independent of $\varepsilon$ and $\eta$ such that
  \begin{align} \label{E:Con_Lp}
    c^{-1}\varepsilon^{1/p}\|\eta\|_{L^p(\Gamma)} \leq \|\bar{\eta}\|_{L^p(\Omega_\varepsilon)} \leq c\varepsilon^{1/p}\|\eta\|_{L^p(\Gamma)}.
  \end{align}
  Moreover, $\eta\in W^{1,p}(\Gamma)$ if and only if $\bar{\eta}\in W^{1,p}(\Omega_\varepsilon)$ and we have
  \begin{align} \label{E:Con_W1p}
    c^{-1}\varepsilon^{1/p}\|\eta\|_{W^{1,p}(\Gamma)} \leq \|\bar{\eta}\|_{W^{1,p}(\Omega_\varepsilon)} \leq c\varepsilon^{1/p}\|\eta\|_{W^{1,p}(\Gamma)}.
  \end{align}
\end{lemma}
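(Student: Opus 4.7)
The plan is to reduce both estimates to the co-area formula \eqref{E:CoV_Dom} combined with the pointwise comparison \eqref{E:ConDer_Bound} between $\nabla\bar\eta$ and $\overline{\nabla_\Gamma\eta}$.

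First I would handle the $L^p$-equivalence. Since the constant extension $\bar\eta = \eta\circ\pi$ satisfies $\bar\eta^\sharp(y,r) = \eta(y)$ for every $y\in\Gamma$ and $r\in(\varepsilon g_0(y),\varepsilon g_1(y))$ (because $\pi(y+rn(y))=y$ by \eqref{E:Nor_Coord}), the equivalence \eqref{E:CoV_Equiv} applied to $\bar\eta$ gives
\begin{align*}
c^{-1}\|\bar\eta\|_{L^p(\Omega_\varepsilon)}^p
 \leq \int_\Gamma \varepsilon g(y)\,|\eta(y)|^p\,d\mathcal{H}^2(y)
 \leq c\|\bar\eta\|_{L^p(\Omega_\varepsilon)}^p.
\end{align*}
Using the two-sided bound on $g=g_1-g_0$ (positive by \eqref{E:G_Inf} and bounded above since $g_0,g_1\in C^4(\Gamma)$ and $\Gamma$ is compact), we can pull the factor of $\varepsilon$ out and obtain \eqref{E:Con_Lp}. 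In particular, $\eta\in L^p(\Gamma)\Leftrightarrow\bar\eta\in L^p(\Omega_\varepsilon)$.

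Next I would prove the $W^{1,p}$-equivalence. By \eqref{E:ConDer_Dom} for $\eta\in C^1(\Gamma)$ we have $\nabla\bar\eta = \{I_3-d\overline{W}\}^{-1}\overline{\nabla_\Gamma\eta}$ in $N$, and by \eqref{E:ConDer_Bound} each component of $\nabla\bar\eta$ is comparable in modulus to the corresponding component of $\overline{\nabla_\Gamma\eta}$ up to constants independent of $\varepsilon$. Applying the already-established $L^p$-equivalence to the three components of $\overline{\nabla_\Gamma\eta}$ then yields
\begin{align*}
c^{-1}\varepsilon^{1/p}\|\nabla_\Gamma\eta\|_{L^p(\Gamma)}
 \leq \|\nabla\bar\eta\|_{L^p(\Omega_\varepsilon)}
 \leq c\varepsilon^{1/p}\|\nabla_\Gamma\eta\|_{L^p(\Gamma)},
\end{align*}
and combining with \eqref{E:Con_Lp} gives \eqref{E:Con_W1p} for smooth $\eta$.

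The only real subtlety is the \emph{iff} characterization for $W^{1,p}$ functions, since at this stage the identity \eqref{E:ConDer_Dom} has only been established for $C^1$ functions. For $\eta\in W^{1,p}(\Gamma)$, I would take an approximating sequence $\{\eta_k\}\subset C^\ell(\Gamma)$ with $\eta_k\to\eta$ in $W^{1,p}(\Gamma)$ (Lemma \ref{L:Wmp_Appr}). The estimate \eqref{E:Con_W1p} applied to $\eta_k-\eta_m$ makes $\{\bar\eta_k\}$ Cauchy in $W^{1,p}(\Omega_\varepsilon)$; since $\bar\eta_k\to\bar\eta$ in $L^p(\Omega_\varepsilon)$ by the first part, one identifies the limit in $W^{1,p}(\Omega_\varepsilon)$ as $\bar\eta$, extends the formula $\nabla\bar\eta=\{I_3-d\overline{W}\}^{-1}\overline{\nabla_\Gamma\eta}$ to the $W^{1,p}$-setting, and deduces \eqref{E:Con_W1p} by passing to the limit. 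For the converse direction one assumes $\bar\eta\in W^{1,p}(\Omega_\varepsilon)$, fixes $\xi\in C^1(\Gamma)$, and uses the divergence theorem in $\Omega_\varepsilon$ together with \eqref{E:ConDer_Dom} applied to $\xi$ to verify the weak tangential derivative identity \eqref{E:Def_WTD} for $\eta$; the $L^p$-bounds on $\nabla\bar\eta$ then transfer to $L^p$-bounds on $\nabla_\Gamma\eta$ via \eqref{E:ConDer_Bound}. The main (mild) obstacle is this last bookkeeping step verifying that the weak-derivative structures on $\Gamma$ and on $\Omega_\varepsilon$ are genuinely compatible; everything else is a direct application of \eqref{E:CoV_Equiv}, \eqref{E:ConDer_Dom} and \eqref{E:ConDer_Bound}.
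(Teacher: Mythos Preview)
Your proposal is correct and follows essentially the same route as the paper: reduce to \eqref{E:CoV_Equiv} via $\bar\eta^\sharp(y,r)=\eta(y)$ for the $L^p$-part, then combine \eqref{E:ConDer_Bound} with the $L^p$-equivalence for the gradient. The paper's proof is in fact terser than yours---it does not spell out the density argument for the $W^{1,p}$ ``if and only if'' and simply asserts the gradient estimate ``similarly''---so your additional bookkeeping via Lemma~\ref{L:Wmp_Appr} is a reasonable elaboration rather than a departure. One minor wording point: \eqref{E:ConDer_Bound} compares the \emph{magnitudes} of $\nabla\bar\eta$ and $\overline{\nabla_\Gamma\eta}$, not the individual components, but that is all you need.
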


\begin{proof}
  Since $\bar{\eta}^\sharp(y,r)=\eta(y)$ for $y\in\Gamma$ and $r\in(\varepsilon g_0(y),\varepsilon g_1(y))$,
  \begin{align*}
    \int_\Gamma\int_{\varepsilon g_0(y)}^{\varepsilon g_1(y)}|\bar{\eta}^\sharp(y,r)|^p\,dr\,d\mathcal{H}^2(y) = \varepsilon\int_\Gamma g(y)|\eta(y)|^p\,d\mathcal{H}^2(y).
  \end{align*}
  By this equality, \eqref{E:G_Inf}, and \eqref{E:CoV_Equiv} we get \eqref{E:Con_Lp}.
  Similarly, we have
  \begin{align*}
    c^{-1}\varepsilon^{1/p}\|\nabla_\Gamma\eta\|_{L^p(\Gamma)} \leq \|\nabla\bar{\eta}\|_{L^p(\Omega_\varepsilon)} \leq c\varepsilon^{1/p}\|\nabla_\Gamma\eta\|_{L^p(\Gamma)}
  \end{align*}
  by \eqref{E:G_Inf}, \eqref{E:ConDer_Bound}, and \eqref{E:CoV_Equiv}.
  Combining this with \eqref{E:Con_Lp} we obtain \eqref{E:Con_W1p}.
\end{proof}

We also give a change of variables formula for an integral over $\Gamma_\varepsilon^i$, $i=0,1$.

\begin{lemma} \label{L:CoV_Surf}
  For $\varphi\in L^1(\Gamma_\varepsilon^i)$, $i=0,1$ let $\varphi_i^\sharp$ be given by \eqref{E:Pull_Bo}.
  Then
  \begin{align} \label{E:CoV_Surf}
    \int_{\Gamma_\varepsilon^i}\varphi(x)\,d\mathcal{H}^2(x) = \int_\Gamma \varphi_i^\sharp(y)J(y,\varepsilon g_i(y))\sqrt{1+\varepsilon^2|\tau_\varepsilon^i(y)|^2}\,d\mathcal{H}^2(y)
  \end{align}
  with $\tau_\varepsilon^i$ given by \eqref{E:Def_NB_Aux}.
  Moreover, if $\varphi\in L^p(\Gamma_\varepsilon^i)$, $p\in[1,\infty)$, then $\varphi_i^\sharp\in L^p(\Gamma)$ and
  \begin{align} \label{E:Lp_CoV_Surf}
    c^{-1}\|\varphi\|_{L^p(\Gamma_\varepsilon^i)} \leq \|\varphi_i^\sharp\|_{L^p(\Gamma)} \leq c\|\varphi\|_{L^p(\Gamma_\varepsilon^i)},
  \end{align}
  where $c>0$ is a constant independent of $\varepsilon$ and $\varphi$.
\end{lemma}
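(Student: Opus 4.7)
The plan is to view $\Gamma_\varepsilon^i$ as the image of $\Gamma$ under the $C^4$-diffeomorphism $\Phi_i(y) := y + \varepsilon g_i(y) n(y)$ (which is injective and regular for small $\varepsilon$, with inverse $\pi|_{\Gamma_\varepsilon^i}$ by \eqref{E:Nor_Coord}) and to compute the pull-back of the surface measure $d\mathcal{H}^2$ on $\Gamma_\varepsilon^i$. Once I identify this pull-back as the weight $J(y,\varepsilon g_i(y))\sqrt{1+\varepsilon^2|\tau_\varepsilon^i(y)|^2}\,d\mathcal{H}^2(y)$, the formula \eqref{E:CoV_Surf} is just the area formula for a smooth bijection between two $C^1$-surfaces embedded in $\mathbb{R}^3$. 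The second assertion \eqref{E:Lp_CoV_Surf} then follows by substituting $|\varphi|^p$ into \eqref{E:CoV_Surf} and observing that the weight is bounded above and below by positive constants independent of $\varepsilon$, thanks to \eqref{E:Jac_Bound_01} and the bound $|\tau_\varepsilon^i|\leq c$ from \eqref{E:Tau_Bound}.

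To identify the weight at an arbitrary base point $y_0\in\Gamma$, I would work in a local orthonormal frame $\{e_1,e_2\}$ for $T_{y_0}\Gamma$ that diagonalizes the Weingarten map, $W(y_0)e_\alpha=\kappa_\alpha e_\alpha$ for $\alpha=1,2$, chosen so that $\{e_1,e_2,n(y_0)\}$ is positively oriented. Using a local parametrization of $\Gamma$ whose coordinate basis at $y_0$ coincides with $(e_1,e_2)$, and differentiating $\Phi_i$ via $\partial_{e_\alpha} n=-We_\alpha=-\kappa_\alpha e_\alpha$ and $\partial_{e_\alpha} g_i = a_\alpha$ with $a_\alpha := \nabla_\Gamma g_i(y_0)\cdot e_\alpha$, the tangent vectors of $\Phi_i$ at $y_0$ read
\[
\partial_{e_\alpha}\Phi_i(y_0) = \bigl(1-\varepsilon g_i(y_0)\kappa_\alpha\bigr)e_\alpha + \varepsilon a_\alpha n(y_0).
\]
Taking the cross product and expanding on the orthonormal basis $\{e_1,e_2,n\}$ via $e_1\times e_2=n$, $e_1\times n=-e_2$, $n\times e_2=-e_1$, the normal component equals $J_i := (1-\varepsilon g_i\kappa_1)(1-\varepsilon g_i\kappa_2) = J(y_0,\varepsilon g_i(y_0))$ by \eqref{E:Def_Jac} (which is positive by \eqref{E:Curv_Bound}), and the tangential components have size $\varepsilon a_\alpha(1-\varepsilon g_i\kappa_\beta)$ with $\beta\neq\alpha$.

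Squaring the norm and using that, in the same diagonalizing frame, $\tau_\varepsilon^i(y_0)=(I-\varepsilon g_i W)^{-1}\nabla_\Gamma g_i(y_0)$ has components $a_\alpha/(1-\varepsilon g_i\kappa_\alpha)$ by \eqref{E:Def_NB_Aux}, a one-line algebraic check confirms that the squared norm factors as $J_i^2\bigl(1+\varepsilon^2|\tau_\varepsilon^i(y_0)|^2\bigr)$. Taking the positive square root yields the claimed surface-area weight on $\Gamma_\varepsilon^i$, and since $y_0$ was arbitrary this gives \eqref{E:CoV_Surf}; then \eqref{E:Lp_CoV_Surf} follows as described. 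I do not foresee a genuine obstacle: the argument is essentially local and reduces to a one-point computation in an orthonormal diagonalizing frame. The only care needed is the orientation convention ensuring that the normal component of the cross product comes out as $+J_i$ rather than $-J_i$, and the verification that $\Phi_i$ is globally a diffeomorphism, which is already guaranteed by $\overline{\Omega}_\varepsilon\subset N$ and the unique-projection property of the tubular neighborhood.
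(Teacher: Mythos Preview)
Your argument is correct. The pointwise computation in a principal-curvature frame is clean: the cross product of $\partial_{e_\alpha}\Phi_i$ yields normal component $J_i$ and tangential components $-\varepsilon a_\alpha(1-\varepsilon g_i\kappa_\beta)$, and the identity $|v_1\times v_2|^2=J_i^2(1+\varepsilon^2|\tau_\varepsilon^i|^2)$ follows exactly from $J_i^2/(1-\varepsilon g_i\kappa_\alpha)^2=(1-\varepsilon g_i\kappa_\beta)^2$. The passage to \eqref{E:Lp_CoV_Surf} via \eqref{E:Jac_Bound_01} and \eqref{E:Tau_Bound} is the same as in the paper.

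The route, however, differs from the paper's. The paper deduces \eqref{E:CoV_Surf} from a general formula for the parametrized surface $\Gamma_h=\{y+h(y)n(y)\}$ (Lemma~\ref{L:CoV_Para}), whose proof compares the Riemannian metrics $\theta$ and $\theta_h$ in an arbitrary local chart: it first establishes $(1-|\nabla_{\Gamma_h}\bar h|^2)\det\theta_h=J(\mu,h^\flat)^2\det\theta$ via a determinant identity, and then separately computes $1-|\nabla_{\Gamma_h}\bar h|^2=(1+|\tau_h|^2)^{-1}$ using \eqref{E:ConDer_Dom}. Your approach bypasses both intermediate steps by exploiting the symmetry of $W$ to pick a diagonalizing orthonormal frame at each point, which turns the area element into a direct $3\times3$ cross-product calculation. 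This is more elementary and transparent for the surface case; the paper's metric-comparison argument is coordinate-free in the sense that it never invokes the eigenbasis of $W$, and it packages the result for general $h$ rather than the specific $\varepsilon g_i$, but at the cost of a longer computation.
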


\begin{proof}
  In Lemma \ref{L:CoV_Para} we show the change of variables formula
  \begin{align*}
    \int_{\Gamma_h}\varphi(x)\,d\mathcal{H}^2(x) = \int_\Gamma\varphi_h^\sharp(y)J(y,h(y))\sqrt{1+|\tau_h(y)|^2}\,d\mathcal{H}^2(y)
  \end{align*}
  for $\varphi\in L^1(\Gamma_h)$, where $\tau_h$ and $\Gamma_h$ are the vector field on $\Gamma$ and the parametrized surface given by \eqref{Pf_NB:Vec} and \eqref{Pf_NB:Para} with $h\in C^1(\Gamma)$ satisfying $|h|<\delta$ on $\Gamma$ and
  \begin{align*}
    \varphi_h^\sharp(y) := \varphi(y+h(y)n(y)) \quad y\in\Gamma.
  \end{align*}
  Setting $h=\varepsilon g_i$, $i=0,1$ in the above formula we obtain \eqref{E:CoV_Surf}.
  Also, \eqref{E:Lp_CoV_Surf} follows from the formula \eqref{E:CoV_Surf} and the inequalities \eqref{E:Tau_Bound} and \eqref{E:Jac_Bound_01}.
\end{proof}

\section{Fundamental inequalities and formulas} \label{S:Fund}
Let us give fundamental inequalities and formulas for functions on the curved thin domain $\Omega_\varepsilon$ and its boundary $\Gamma_\varepsilon$.
For a function $\varphi$ on $\Omega_\varepsilon$ and $x\in\Omega_\varepsilon$ let
\begin{align} \label{E:Def_NorDer}
  \partial_n\varphi(x) := (\bar{n}(x)\cdot\nabla)\varphi(x) = \frac{d}{dr}\bigl(\varphi(y+rn(y))\bigr)\Big|_{r=d(x)} \quad (y=\pi(x)\in\Gamma)
\end{align}
be the derivative of $\varphi$ in the normal direction of $\Gamma$.
Note that
\begin{align} \label{E:NorDer_Con}
  \partial_n\bar{\eta}(x) = (\bar{n}(x)\cdot\nabla)\bar{\eta}(x) = 0, \quad x\in \Omega_\varepsilon
\end{align}
for the constant extension $\bar{\eta}=\eta\circ\pi$ of $\eta\in C^1(\Gamma)$.

First we show Poincar\'{e} and trace type inequalities on $\Omega_\varepsilon$.

\begin{lemma} \label{L:Poincare}
  There exists a constant $c>0$ such that
  \begin{align}
    \|\varphi\|_{L^p(\Omega_\varepsilon)} &\leq c\left(\varepsilon^{1/p}\|\varphi\|_{L^p(\Gamma_\varepsilon^i)}+\varepsilon\|\partial_n\varphi\|_{L^p(\Omega_\varepsilon)}\right), \label{E:Poin_Dom} \\
    \|\varphi\|_{L^p(\Gamma_\varepsilon^i)} &\leq c\left(\varepsilon^{-1/p}\|\varphi\|_{L^p(\Omega_\varepsilon)}+\varepsilon^{1-1/p}\|\partial_n\varphi\|_{L^p(\Omega_\varepsilon)}\right) \label{E:Poin_Bo}
  \end{align}
  for all $\varepsilon\in(0,1]$, $\varphi\in W^{1,p}(\Omega_\varepsilon)$ with $p\in[1,\infty)$, and $i=0,1$.
\end{lemma}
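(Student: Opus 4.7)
The plan is to reduce both inequalities to one-dimensional Poincaré/trace estimates along each normal fiber, then integrate over $\Gamma$ via the change-of-variables formulas from Section \ref{SS:Pre_Dom}. I would first prove the statement for $\varphi\in C^1(\overline{\Omega}_\varepsilon)$ and extend by density, since $C^1(\overline{\Omega}_\varepsilon)$ is dense in $W^{1,p}(\Omega_\varepsilon)$ for $p\in[1,\infty)$.

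Fix $y\in\Gamma$ and set $I(y):=(\varepsilon g_0(y),\varepsilon g_1(y))$, whose length $\varepsilon g(y)$ satisfies $c\varepsilon\le \varepsilon g(y)\le c'\varepsilon$ by \eqref{E:G_Inf} and the boundedness of $g$. For any $r_0,r_1\in\overline{I(y)}$, the chain rule combined with \eqref{E:Def_NorDer} gives
\[
\varphi(y+r_0 n(y))=\varphi(y+r_1 n(y))+\int_{r_1}^{r_0}\partial_n\varphi(y+sn(y))\,ds,
\]
and Hölder's inequality applied to an interval of length at most $\varepsilon g(y)$ yields
\[
|\varphi(y+r_0 n(y))|^p\leq c_p\Bigl(|\varphi(y+r_1 n(y))|^p+(\varepsilon g(y))^{p-1}\int_{I(y)}|\partial_n\varphi(y+s n(y))|^p\,ds\Bigr).
\]

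For \eqref{E:Poin_Dom} I fix $r_1=\varepsilon g_i(y)$ and integrate in $r_0\in I(y)$, producing a factor $\varepsilon g(y)$ in front of the first term and $(\varepsilon g(y))^{p}$ in front of the second. Integrating in $y\in\Gamma$ and invoking \eqref{E:CoV_Equiv} on the left, \eqref{E:Lp_CoV_Surf} on the boundary term (the Jacobian $J(y,\varepsilon g_i(y))\sqrt{1+\varepsilon^2|\tau_\varepsilon^i|^2}$ is bounded above and below by \eqref{E:Jac_Bound_01} and \eqref{E:Tau_Bound}), and \eqref{E:CoV_Equiv} again on the $\partial_n\varphi$ term, I obtain
\[
\|\varphi\|_{L^p(\Omega_\varepsilon)}^p \leq c\bigl(\varepsilon\|\varphi\|_{L^p(\Gamma_\varepsilon^i)}^p+\varepsilon^p\|\partial_n\varphi\|_{L^p(\Omega_\varepsilon)}^p\bigr),
\]
and taking $p$-th roots gives \eqref{E:Poin_Dom}. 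For \eqref{E:Poin_Bo} I instead fix $r_0=\varepsilon g_i(y)$ in the same pointwise identity, integrate in $r_1\in I(y)$, divide by $\varepsilon g(y)\geq c\varepsilon$ to isolate $|\varphi(y+\varepsilon g_i(y)n(y))|^p$, and then integrate in $y\in\Gamma$ using the same change-of-variables bounds to get
\[
\|\varphi\|_{L^p(\Gamma_\varepsilon^i)}^p \leq c\bigl(\varepsilon^{-1}\|\varphi\|_{L^p(\Omega_\varepsilon)}^p+\varepsilon^{p-1}\|\partial_n\varphi\|_{L^p(\Omega_\varepsilon)}^p\bigr),
\]
which is \eqref{E:Poin_Bo}.

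There is no real obstacle; the only thing to be careful about is bookkeeping the $\varepsilon$-powers, which come uniformly from three sources: the length $\varepsilon g(y)$ of the fiber, the Hölder exponent $p-1$ on that interval, and the Jacobian constants in \eqref{E:CoV_Dom} and \eqref{E:CoV_Surf}, all of which are controlled independently of $\varepsilon$ by the results of Section \ref{SS:Pre_Dom}. This is essentially the classical proof on flat thin strips, transported to the curved setting by the normal-coordinate map $(y,r)\mapsto y+rn(y)$.
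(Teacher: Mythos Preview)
Your proposal is correct and follows essentially the same argument as the paper: the fundamental theorem of calculus along each normal fiber, H\"older on the interval of length $\varepsilon g(y)$, then integration over $\Gamma$ using \eqref{E:CoV_Equiv} and \eqref{E:Lp_CoV_Surf}. The only cosmetic difference is that the paper works directly with $\varphi\in W^{1,p}(\Omega_\varepsilon)$ rather than passing through a $C^1$ density argument, but the steps and the $\varepsilon$-bookkeeping are identical.
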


\begin{proof}
  We prove \eqref{E:Poin_Dom} and \eqref{E:Poin_Bo} for $i=0$.
  The proofs for $i=1$ are the same.
  We use the notations \eqref{E:Pull_Dom} and \eqref{E:Pull_Bo}.
  For $y\in\Gamma$ and $r\in(\varepsilon g_0(y),\varepsilon g_1(y))$, since
  \begin{align*}
    \varphi^\sharp(y,\varepsilon g_0(y)) = \varphi_0^\sharp(y), \quad \frac{\partial\varphi^\sharp}{\partial r}(y,r) = (\partial_n\varphi)^\sharp(y,r),
  \end{align*}
  where the second equality follows from \eqref{E:Def_NorDer}, we have
  \begin{align} \label{Pf_P:FTC}
    \varphi^\sharp(y,r) = \varphi_0^\sharp(y)+\int_{\varepsilon g_0(y)}^r(\partial_n\varphi)^\sharp(y,\tilde{r})\,d\tilde{r}.
  \end{align}
  From \eqref{Pf_P:FTC} and H\"{o}lder's inequality it follows that
  \begin{align*}
    |\varphi^\sharp(y,r)| &\leq |\varphi_0^\sharp(y)|+\int_{\varepsilon g_0(y)}^{\varepsilon g_1(y)}|(\partial_n\varphi)^\sharp(y,\tilde{r})|\,d\tilde{r} \\
    &\leq |\varphi_0^\sharp(y)|+c\varepsilon^{1-1/p}\left(\int_{\varepsilon g_0(y)}^{\varepsilon g_1(y)}|(\partial_n\varphi)^\sharp(y,\tilde{r})|^p\,d\tilde{r}\right)^{1/p}.
  \end{align*}
  Noting that the right-hand side is independent of $r$, we integrate the $p$-th power of both sides of this inequality with respect to $r$ to obtain
  \begin{align} \label{Pf_P:Int_r}
    \int_{\varepsilon g_0(y)}^{\varepsilon g_1(y)}|\varphi^\sharp(y,r)|^p\,dr \leq c\left(\varepsilon|\varphi_0^\sharp(y)|^p+\varepsilon^p\int_{\varepsilon g_0(y)}^{\varepsilon g_1(y)}|(\partial_n\varphi)^\sharp(y,\tilde{r})|^p\,d\tilde{r}\right)
  \end{align}
  for all $y\in\Gamma$.
  We integrate both sides with respect to $y$ and then use \eqref{E:CoV_Equiv} to get
  \begin{align*}
    \|\varphi\|_{L^p(\Omega_\varepsilon)}^p \leq c\left(\varepsilon\|\varphi_0^\sharp\|_{L^p(\Gamma)}^p+\varepsilon^p\|\partial_n\varphi\|_{L^p(\Omega_\varepsilon)}^p\right).
  \end{align*}
  Applying \eqref{E:Lp_CoV_Surf} to the first term on the right-hand side we obtain \eqref{E:Poin_Dom}.

  Next let us prove \eqref{E:Poin_Bo}.
  As in the proof of \eqref{Pf_P:Int_r}, we use \eqref{Pf_P:FTC} to get
  \begin{align*}
    g(y)|\varphi_0^\sharp(y)|^p \leq c\left(\varepsilon^{-1}\int_{\varepsilon g_0(y)}^{\varepsilon g_1(y)}|\varphi^\sharp(y,r)|^p\,dr+\varepsilon^{p-1}\int_{\varepsilon g_0(y)}^{\varepsilon g_1(y)}|(\partial_n\varphi)^\sharp(y,\tilde{r})|^p\,d\tilde{r}\right)
  \end{align*}
  for all $y\in\Gamma$.
  Here the function $g(y)$ on the left-hand side comes from the integration with respect to $r$.
  Integrating both sides of the above inequality with respect to $y$ and using \eqref{E:G_Inf} and \eqref{E:CoV_Equiv} we obtain
  \begin{align*}
    \|\varphi_0^\sharp\|_{L^p(\Gamma)}^p \leq c\left(\varepsilon^{-1}\|\varphi\|_{L^p(\Omega_\varepsilon)}^p+\varepsilon^{p-1}\|\partial_n\varphi\|_{L^p(\Omega_\varepsilon)}^p\right).
  \end{align*}
  We apply \eqref{E:Lp_CoV_Surf} to the left-hand side of this inequality to get \eqref{E:Poin_Bo}.
\end{proof}

Next we present two results related to the impermeable boundary condition
\begin{align} \label{E:Bo_Imp}
  u\cdot n_\varepsilon = 0 \quad\text{on}\quad \Gamma_\varepsilon
\end{align}
for a vector field $u\colon\Omega_\varepsilon\to\mathbb{R}^3$.

\begin{lemma} \label{L:Exp_Bo}
  For $i=0,1$ let $u\in C(\Gamma_\varepsilon^i)^3$ satisfy \eqref{E:Bo_Imp} on $\Gamma_\varepsilon^i$.
  Then
  \begin{align} \label{E:Exp_Bo}
    u\cdot\bar{n} = \varepsilon u\cdot\bar{\tau}_\varepsilon^i, \quad |u\cdot\bar{n}| \leq c\varepsilon|u| \quad\text{on}\quad \Gamma_\varepsilon^i,
  \end{align}
  where $\tau_\varepsilon^i$ is given by \eqref{E:Def_NB_Aux} and $c>0$ is a constant independent of $\varepsilon$ and $u$.
\end{lemma}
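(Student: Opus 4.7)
The plan is to read off both assertions directly from the explicit formula for $n_\varepsilon$ given in Lemma \ref{L:Nor_Bo}, using the uniform bound on $\tau_\varepsilon^i$ from Lemma \ref{L:NB_Aux}.

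First I would invoke Lemma \ref{L:Nor_Bo} together with the definition \eqref{E:Def_NB} to write, on $\Gamma_\varepsilon^i$,
\begin{equation*}
  n_\varepsilon = \bar{n}_\varepsilon^i = (-1)^{i+1}\frac{\bar{n}-\varepsilon\bar{\tau}_\varepsilon^i}{\sqrt{1+\varepsilon^2|\bar{\tau}_\varepsilon^i|^2}}.
\end{equation*}
Taking the inner product with $u$ and using the impermeability condition $u\cdot n_\varepsilon = 0$, the denominator being strictly positive, I obtain
\begin{equation*}
  u\cdot\bar{n} - \varepsilon\, u\cdot\bar{\tau}_\varepsilon^i = 0 \quad\text{on}\quad \Gamma_\varepsilon^i,
\end{equation*}
which is precisely the first equality in \eqref{E:Exp_Bo}.

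For the second inequality I would apply the Cauchy--Schwarz inequality to the right-hand side of the just-established identity, obtaining $|u\cdot\bar{n}| \leq \varepsilon |u|\,|\bar{\tau}_\varepsilon^i|$, and then use the first estimate in \eqref{E:Tau_Bound} of Lemma \ref{L:NB_Aux}, which guarantees $|\tau_\varepsilon^i| \leq c$ on $\Gamma$ uniformly in $\varepsilon$, hence $|\bar{\tau}_\varepsilon^i| \leq c$ on $\overline{\Omega}_\varepsilon$ by definition of the constant extension.

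There is no real obstacle here: the lemma is essentially a geometric transcription of the fact that $n_\varepsilon^i$ differs from $\pm n$ by a tangential correction of order $\varepsilon$, so the component of a vector tangent to $\Gamma_\varepsilon^i$ along $n$ is automatically of order $\varepsilon$. The only point to take minor care of is keeping the factor $(-1)^{i+1}$ straight when cancelling it from the equation $u\cdot n_\varepsilon=0$, and noting that the positive normalizing scalar $\sqrt{1+\varepsilon^2|\bar{\tau}_\varepsilon^i|^2}$ may be divided out without affecting the zero.
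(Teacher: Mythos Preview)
Your proof is correct and follows exactly the same route as the paper: the paper's proof simply cites \eqref{E:Def_NB}, \eqref{E:Nor_Bo}, and \eqref{E:Bo_Imp} for the first equality and then \eqref{E:Tau_Bound} for the second inequality, which is precisely what you unpack. Your additional remarks about cancelling the sign $(-1)^{i+1}$ and dividing by the positive normalizing factor are the only details hidden behind the paper's terse citation.
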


\begin{proof}
  The first equality of \eqref{E:Exp_Bo} follows from \eqref{E:Def_NB}, \eqref{E:Nor_Bo}, and \eqref{E:Bo_Imp} on $\Gamma_\varepsilon^i$.
  Also, we get the second inequality of \eqref{E:Exp_Bo} by the first one and \eqref{E:Tau_Bound}.
\end{proof}

\begin{lemma} \label{L:Int_UgUn}
  There exists a constant $c>0$ such that
  \begin{align} \label{E:Int_UgUn}
    \left|\int_{\Gamma_\varepsilon}(u\cdot\nabla)u\cdot n_\varepsilon\,d\mathcal{H}^2\right| \leq c\left(\|u\|_{L^2(\Omega_\varepsilon)}^2+\|u\|_{L^2(\Omega_\varepsilon)}\|\nabla u\|_{L^2(\Omega_\varepsilon)}\right)
  \end{align}
  for all $\varepsilon\in(0,1]$ and $u\in C^1(\overline{\Omega}_\varepsilon)^3\cup H^2(\Omega_\varepsilon)^3$ satisfying \eqref{E:Bo_Imp}.
\end{lemma}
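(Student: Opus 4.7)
The plan has two ingredients. The impermeability $u\cdot n_\varepsilon=0$ is first used to reduce the integrand — which naively involves $\nabla u$ — to a purely quadratic form in $u$ with a uniformly bounded coefficient. Then the near-antisymmetry of $W_\varepsilon$ between the inner and outer boundaries, captured in \eqref{E:Comp_W}, is exploited via the divergence theorem to convert the resulting surface integral into a volume integral of the correct size, plus a small boundary error.

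Concretely, differentiating $u\cdot n_\varepsilon = 0$ tangentially on $\Gamma_\varepsilon$ in the direction of $u$, using $W_\varepsilon=-\nabla_{\Gamma_\varepsilon}n_\varepsilon$ together with its symmetry from \eqref{E:PW_Bo} and the compatibility \eqref{E:Tgrad_Bo} (valid because $u$ is tangent to $\Gamma_\varepsilon$), I obtain the pointwise identity
\[
(u\cdot\nabla)u\cdot n_\varepsilon \;=\; W_\varepsilon u\cdot u \qquad\text{on } \Gamma_\varepsilon.
\]
Setting $\Phi:=\overline W u\cdot u$ on $\Omega_\varepsilon$ and applying the divergence theorem to $\Phi\bar n$ gives $\int_{\Omega_\varepsilon}\mathrm{div}(\Phi\bar n)\,dx = \int_{\Gamma_\varepsilon}\Phi(\bar n\cdot n_\varepsilon)\,d\mathcal{H}^2$. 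The explicit formulas \eqref{E:Def_NB}--\eqref{E:Nor_Bo} yield $\bar n\cdot n_\varepsilon=(-1)^{i+1}(1+\varepsilon^2|\overline{\tau_\varepsilon^i}|^2)^{-1/2}=(-1)^{i+1}+O(\varepsilon^2)$ on $\Gamma_\varepsilon^i$, while \eqref{E:Comp_W} gives $W_\varepsilon=(-1)^{i+1}\overline W+O(\varepsilon)$ there. Combining these yields
\[
\int_{\Gamma_\varepsilon} W_\varepsilon u\cdot u\,d\mathcal{H}^2 \;=\; \int_{\Omega_\varepsilon}\mathrm{div}(\Phi\bar n)\,dx \;+\; R, \quad |R|\leq c\varepsilon\|u\|_{L^2(\Gamma_\varepsilon)}^2.
\]
Because $\partial_n\overline W=0$ by \eqref{E:NorDer_Con} and $\mathrm{div}\,\bar n$ is uniformly bounded by \eqref{E:Nor_Grad}, one has $\mathrm{div}(\Phi\bar n) = 2\,\overline W u\cdot\partial_n u + (\overline W u\cdot u)\,\mathrm{div}\,\bar n$ with pointwise bound $c(|u|^2+|u||\nabla u|)$, and Cauchy--Schwarz produces exactly the target estimate $c(\|u\|_{L^2(\Omega_\varepsilon)}^2 + \|u\|_{L^2(\Omega_\varepsilon)}\|\nabla u\|_{L^2(\Omega_\varepsilon)})$ for the main term.

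The remaining obstacle — and the one nontrivial point — is to control the error $\varepsilon\|u\|_{L^2(\Gamma_\varepsilon)}^2$. Squaring the additive trace estimate \eqref{E:Poin_Bo} produces an $\varepsilon^2\|\nabla u\|^2$ contribution that cannot be absorbed into $\|u\|\|\nabla u\|$, so instead I would derive a multiplicative trace inequality
\[
\|u\|_{L^2(\Gamma_\varepsilon^i)}^2 \leq c\varepsilon^{-1}\|u\|_{L^2(\Omega_\varepsilon)}^2 + c\|u\|_{L^2(\Omega_\varepsilon)}\|\partial_n u\|_{L^2(\Omega_\varepsilon)},
\]
by starting from $|u(y,\varepsilon g_i(y))|^2 = |u(y,r)|^2 - 2\int_{\varepsilon g_i(y)}^{r}u\cdot\partial_n u\,ds$, averaging in $r$ over $(\varepsilon g_0(y),\varepsilon g_1(y))$, applying Cauchy--Schwarz to the cross term, and then integrating over $\Gamma$ and using the uniform lower bound $\varepsilon g\geq c\varepsilon$ from \eqref{E:G_Inf}. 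This is a refinement of the argument for \eqref{E:Poin_Bo}. Multiplying by $\varepsilon$ then gives $c(\|u\|_{L^2(\Omega_\varepsilon)}^2+\|u\|_{L^2(\Omega_\varepsilon)}\|\nabla u\|_{L^2(\Omega_\varepsilon)})$ and closes the $C^1$ case; the $H^2$ case follows by approximating $u$ by $C^1(\overline{\Omega}_\varepsilon)^3$ vector fields satisfying $u\cdot n_\varepsilon=0$ and passing to the limit on both sides.
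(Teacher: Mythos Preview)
Your argument is correct. Both approaches open with the same pointwise identity $(u\cdot\nabla)u\cdot n_\varepsilon = W_\varepsilon u\cdot u$ on $\Gamma_\varepsilon$, but then diverge in how they pass to a volume integral.

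The paper builds a linear interpolant $F(y,r)$ of the two boundary values $W_\varepsilon|_{\Gamma_\varepsilon^0}$ and $W_\varepsilon|_{\Gamma_\varepsilon^1}$ (weighted by the Jacobian and surface factors), so that $\partial_r\bigl(u^\sharp\cdot F u^\sharp\,J\bigr)$ is bounded by $|u^\sharp|^2+|u^\sharp||(\nabla u)^\sharp|$ thanks to the cancellation $|W_{\varepsilon,1}^\sharp+W_{\varepsilon,0}^\sharp|\leq c\varepsilon$ from \eqref{E:Diff_WH_IO}. One application of the fundamental theorem of calculus in $r$ then gives the estimate directly, with no residual boundary term. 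You instead replace $W_\varepsilon$ by $(-1)^{i+1}\overline W$, apply the divergence theorem to $(\overline W u\cdot u)\bar n$ to produce the volume term, and are left with an $O(\varepsilon)\|u\|_{L^2(\Gamma_\varepsilon)}^2$ error that you then absorb by a multiplicative trace inequality. That trace inequality is in fact established later in the paper as Lemma~\ref{L:Trace_L2}, by exactly the squaring-and-averaging argument you outline, so your route is self-contained within the paper's toolkit. The paper's interpolation device is a bit more economical here (no separate trace step), and it is reused in the much longer proof of Lemma~\ref{L:Lap_Apri}; your approach is more geometric and perhaps conceptually simpler for this particular lemma.

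One small remark: the density step you invoke at the end is unnecessary. Once the integrand is reduced to $W_\varepsilon u\cdot u$, the remaining computations (divergence theorem for $\Phi\bar n\in H^1(\Omega_\varepsilon)^3$, trace of $u$ on $\Gamma_\varepsilon$) require only $u\in H^1(\Omega_\varepsilon)^3$, which both $C^1(\overline{\Omega}_\varepsilon)^3$ and $H^2(\Omega_\varepsilon)^3$ supply directly.
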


Note that the second order derivatives of $u$ do not appear in the right-hand side of \eqref{E:Int_UgUn}.
The condition $u\in H^2(\Omega_\varepsilon)^3$ is just required to ensure $\nabla u|_{\Gamma_\varepsilon}\in L^2(\Gamma_\varepsilon)^{3\times 3}$.

\begin{proof}
  Noting that $u$ is tangential on $\Gamma_\varepsilon$ by \eqref{E:Bo_Imp}, we use \eqref{E:Tgrad_Bo} to get
  \begin{align*}
    (u\cdot\nabla)u\cdot n_\varepsilon = (u\cdot\nabla_{\Gamma_\varepsilon})u\cdot n_\varepsilon = u\cdot\nabla_{\Gamma_\varepsilon}(u\cdot n_\varepsilon)-u\cdot(u\cdot\nabla_{\Gamma_\varepsilon})n_\varepsilon \quad\text{on}\quad \Gamma_\varepsilon.
  \end{align*}
  The first term on the right-hand side vanishes by \eqref{E:Bo_Imp} (note that the tangential gradient on $\Gamma_\varepsilon$ depends only on the values of a function on $\Gamma_\varepsilon$).
  Also,
  \begin{align*}
    (u\cdot\nabla_{\Gamma_\varepsilon})n_\varepsilon = (\nabla_{\Gamma_\varepsilon}n_\varepsilon)^Tu = -W_\varepsilon^Tu = -W_\varepsilon u \quad\text{on}\quad \Gamma_\varepsilon
  \end{align*}
  by $-\nabla_{\Gamma_\varepsilon}n_\varepsilon=W_\varepsilon=W_\varepsilon^T$.
  Hence $(u\cdot\nabla)u\cdot n_\varepsilon=u\cdot W_\varepsilon u$ on $\Gamma_\varepsilon$ and
  \begin{align} \label{Pf_IU:Bd_Int}
    \int_{\Gamma_\varepsilon}(u\cdot\nabla)u\cdot n_\varepsilon\,d\mathcal{H}^2 = \int_{\Gamma_\varepsilon}u\cdot W_\varepsilon u\,d\mathcal{H}^2 = \sum_{i=0,1}\int_{\Gamma_\varepsilon^i}u\cdot W_\varepsilon u\,d\mathcal{H}^2.
  \end{align}
  To estimate the right-hand side we interpolate the integrals over the inner and outer boundaries $\Gamma_\varepsilon^0$ and $\Gamma_\varepsilon^1$ to produce an integral over $\Omega_\varepsilon$.
  Let
  \begin{align} \label{Pf_IU:Def_Aux}
    \begin{aligned}
      F_i(y) &:= \sqrt{1+\varepsilon^2|\tau_\varepsilon^i(y)|^2}\,W_{\varepsilon,i}^\sharp(y), \quad i=0,1, \\
      F(y,r) &:= \frac{1}{\varepsilon g(y)}\bigl\{\bigl(r-\varepsilon g_0(y)\bigr)F_1(y)-\bigl(\varepsilon g_1(y)-r\bigr)F_0(y)\bigr\}, \\
      \varphi(y,r) &:= u^\sharp(y,r)\cdot F(y,r)u^\sharp(y,r)J(y,r)
    \end{aligned}
  \end{align}
  for $y\in\Gamma$ and $r\in[\varepsilon g_0(y),\varepsilon g_1(y)]$ with $\tau_\varepsilon^i$, $i=0,1$ and $J$ given by \eqref{E:Def_NB_Aux} and \eqref{E:Def_Jac}.
  Here and in what follows we use the notations \eqref{E:Pull_Dom} and \eqref{E:Pull_Bo} and sometimes suppress the arguments $y$ and $r$.
  By \eqref{Pf_IU:Def_Aux} we see that
  \begin{align*}
    [u\cdot W_\varepsilon u]_i^\sharp(y)\sqrt{1+\varepsilon^2|\tau_\varepsilon^i(y)|^2}J(y,\varepsilon g_i(y)) = (-1)^{i+1}\varphi(y,\varepsilon g_i(y)), \quad y\in\Gamma,\,i=0,1.
  \end{align*}
  From this relation and \eqref{E:CoV_Surf} we deduce that
  \begin{align} \label{Pf_IU:Bd_FTC}
    \begin{aligned}
      \sum_{i=0,1}\int_{\Gamma_\varepsilon^i}[u\cdot W_\varepsilon u](x)\,d\mathcal{H}^2(x) &= \int_\Gamma\{\varphi(y,\varepsilon g_1(y))-\varphi(y,\varepsilon g_0(y))\}\,d\mathcal{H}^2(y) \\
      &= \int_\Gamma\int_{\varepsilon g_0(y)}^{\varepsilon g_1(y)}\frac{\partial\varphi}{\partial r}(y,r)\,dr\,d\mathcal{H}^2(y).
    \end{aligned}
  \end{align}
  To estimate the integrand on the last line we use \eqref{E:Jac_Bound_01} to get
  \begin{align} \label{PF_IU:Dphi_1}
    \left|\frac{\partial\varphi}{\partial r}\right| \leq c\left\{\left(|F|+\left|\frac{\partial F}{\partial r}\right|\right)|u^\sharp|^2+|F||u^\sharp||(\nabla u)^\sharp|\right\}.
  \end{align}
  By \eqref{E:Tau_Bound} and the uniform boundedness in $\varepsilon$ of $W_\varepsilon$ on $\Gamma_\varepsilon$ we observe that $F_0$ and $F_1$ are bounded on $\Gamma$ uniformly in $\varepsilon$.
  Thus we have
  \begin{align} \label{Pf_IU:Est_F}
    |F(y,r)| \leq \frac{c}{\varepsilon g(y)}\bigl\{\bigl(r-\varepsilon g_0(y)\bigr)+\bigl(\varepsilon g_1(y)-r\bigr)\} = c
  \end{align}
  for $y\in\Gamma$ and $r\in[\varepsilon g_0(y),\varepsilon g_1(y)]$.
  Also, by $\partial F/\partial r=(\varepsilon g)^{-1}(F_1+F_0)$ and \eqref{Pf_IU:Def_Aux},
  \begin{align} \label{Pf_IU:DF_1}
    \left|\frac{\partial F}{\partial r}\right| \leq c\varepsilon^{-1}\left(|W_{\varepsilon,1}^\sharp+W_{\varepsilon,0}^\sharp|+\sum_{i=0,1}\left(\sqrt{1+\varepsilon^2|\tau_\varepsilon^i|^2}-1\right)|W_{\varepsilon,i}^\sharp|\right).
  \end{align}
  By the mean value theorem for the function $\sqrt{1+s}$, $s\geq0$ and \eqref{E:Tau_Bound} we have
  \begin{align} \label{Pf_IU:Sqrt}
    0 \leq \sqrt{1+\varepsilon^2|\tau_\varepsilon^i(y)|^2}-1 \leq \frac{\varepsilon^2}{2}|\tau_\varepsilon^i(y)|^2 \leq c\varepsilon^2, \quad y\in\Gamma.
  \end{align}
  We apply this inequality, \eqref{E:Diff_WH_IO} with $G_\varepsilon=W_\varepsilon$, and the uniform boundedness in $\varepsilon$ of $W_\varepsilon$ on $\Gamma_\varepsilon$ to the right-hand side of \eqref{Pf_IU:DF_1} to obtain
  \begin{align} \label{Pf_IU:Est_DF}
    \left|\frac{\partial F}{\partial r}(y,r)\right| \leq c \quad\text{for all}\quad y\in\Gamma,\,r\in[\varepsilon g_0(y),\varepsilon g_1(y)].
  \end{align}
  From \eqref{PF_IU:Dphi_1}, \eqref{Pf_IU:Est_F}, and \eqref{Pf_IU:Est_DF} we deduce that
  \begin{align} \label{Pf_IU:Est_Dphi}
    \left|\frac{\partial\varphi}{\partial r}(y,r)\right| \leq c\Bigl(|u^\sharp(y,r)|^2+\Bigl[|u^\sharp||(\nabla u)^\sharp|\Bigr](y,r)\Bigr)
  \end{align}
  for all $y\in\Gamma$ and $r\in[\varepsilon g_0(y),\varepsilon g_1(y)]$.
  By \eqref{Pf_IU:Bd_Int}, \eqref{Pf_IU:Bd_FTC}, and \eqref{Pf_IU:Est_Dphi} we have
  \begin{align*}
    \begin{aligned}
      \left|\int_{\Gamma_\varepsilon}(u\cdot\nabla)u\cdot n_\varepsilon\,d\mathcal{H}^2\right| &\leq c\int_\Gamma\int_{\varepsilon g_0}^{\varepsilon g_1}\Bigl(|u^\sharp|^2+|u^\sharp||(\nabla u)^\sharp|\Bigr)\,dr\,d\mathcal{H}^2
    \end{aligned}
  \end{align*}
  and applying \eqref{E:CoV_Equiv} and Hold\"{o}r's inequality to the right-hand side we get \eqref{E:Int_UgUn}.
\end{proof}

Finally, we derive two formulas on $\Gamma_\varepsilon$ from the slip boundary conditions
\begin{align} \label{E:Bo_Slip}
  u\cdot n_\varepsilon = 0, \quad 2\nu P_\varepsilon D(u)n_\varepsilon+\gamma_\varepsilon u = 0 \quad\text{on}\quad \Gamma_\varepsilon
\end{align}
which are crucial for the proofs of a uniform a priori estimate for the vector Laplace operator on $\Omega_\varepsilon$ (see Lemma \ref{L:Lap_Apri}) and the uniform difference estimate for the Stokes and Laplace operators \eqref{E:Comp_Sto_Lap}.

\begin{lemma} \label{L:NSl}
  For $i=0,1$ let $u\in C^2(\overline{\Omega}_\varepsilon)^3$ satisfy \eqref{E:Bo_Slip} on $\Gamma_\varepsilon^i$.
  Then
  \begin{gather}
    P_\varepsilon(n_\varepsilon\cdot\nabla)u = -W_\varepsilon u-\frac{\gamma_\varepsilon}{\nu}u \quad\text{on}\quad \Gamma_\varepsilon^i, \label{E:NSl_ND} \\
    n_\varepsilon\times\mathrm{curl}\,u = -n_\varepsilon\times\left\{n_\varepsilon\times\left(2W_\varepsilon u+\frac{\gamma_\varepsilon}{\nu}u\right)\right\} \quad\text{on}\quad \Gamma_\varepsilon^i. \label{E:NSl_Curl}
  \end{gather}
\end{lemma}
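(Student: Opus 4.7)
The plan is to reduce both formulas to pointwise algebraic consequences of the slip boundary conditions \eqref{E:Bo_Slip} after decomposing derivatives of $u$ on $\Gamma_\varepsilon^i$ into tangential and normal parts. With the convention $(\nabla u)_{kj}=\partial_k u_j$ used in the paper, one sees $(\nabla u)^Tn_\varepsilon=(n_\varepsilon\cdot\nabla)u$, so
\[
2D(u)n_\varepsilon = (\nabla u)n_\varepsilon+(n_\varepsilon\cdot\nabla)u \quad\text{on}\quad \Gamma_\varepsilon^i.
\]
The crucial observation is the pointwise identity $P_\varepsilon(\nabla u)n_\varepsilon = W_\varepsilon u$ on $\Gamma_\varepsilon^i$. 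To establish it, I would fix a tangential vector $\tau$ at a point of $\Gamma_\varepsilon^i$ and use the smooth extension $\bar n_\varepsilon^i=n_\varepsilon^i\circ\pi$ to write
\[
\tau\cdot(\nabla u)n_\varepsilon = \sum_{k,j}\tau_k(\partial_k u_j)(n_\varepsilon)_j = (\tau\cdot\nabla)(u\cdot\bar n_\varepsilon^i)-u\cdot(\tau\cdot\nabla)\bar n_\varepsilon^i.
\]
Since $u\cdot n_\varepsilon=0$ on $\Gamma_\varepsilon^i$ and $\tau$ is tangent to $\Gamma_\varepsilon^i$, the first term vanishes, while the second equals $-u\cdot(\tau\cdot\nabla_{\Gamma_\varepsilon})n_\varepsilon = u\cdot W_\varepsilon\tau = \tau\cdot W_\varepsilon u$, using $-\nabla_{\Gamma_\varepsilon}n_\varepsilon=W_\varepsilon$ and the symmetry of $W_\varepsilon$ from \eqref{E:PW_Bo}. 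As $W_\varepsilon u$ is tangential, this proves the claim.

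Substituting into the slip boundary condition $2\nu P_\varepsilon D(u)n_\varepsilon+\gamma_\varepsilon u=0$ gives
\[
0 = \nu W_\varepsilon u + \nu P_\varepsilon(n_\varepsilon\cdot\nabla)u + \gamma_\varepsilon u,
\]
which rearranges to \eqref{E:NSl_ND}. For \eqref{E:NSl_Curl} I would apply the standard pointwise vector identity $a\times\mathrm{curl}\,b=(\nabla b)a-(a\cdot\nabla)b$ (an $\epsilon\epsilon$-contraction) with $a=n_\varepsilon$ and $b=u$, giving $n_\varepsilon\times\mathrm{curl}\,u=(\nabla u)n_\varepsilon-(n_\varepsilon\cdot\nabla)u$. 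Since the left-hand side is automatically tangential, applying $P_\varepsilon$ and using the two identities above yields
\[
n_\varepsilon\times\mathrm{curl}\,u = W_\varepsilon u-\Bigl(-W_\varepsilon u-\frac{\gamma_\varepsilon}{\nu}u\Bigr) = 2W_\varepsilon u+\frac{\gamma_\varepsilon}{\nu}u.
\]
The BAC-CAB identity $n_\varepsilon\times(n_\varepsilon\times v)=(n_\varepsilon\cdot v)n_\varepsilon-v$ (valid since $|n_\varepsilon|=1$) rewrites any tangential vector $v$ as $-n_\varepsilon\times(n_\varepsilon\times v)=P_\varepsilon v$; applied to $v=2W_\varepsilon u+(\gamma_\varepsilon/\nu)u$, which is tangential by \eqref{E:PW_Bo} and $u\cdot n_\varepsilon=0$, this recasts the right-hand side in the form stated in \eqref{E:NSl_Curl}.

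The computations are elementary and the only nontrivial step is the identity $P_\varepsilon(\nabla u)n_\varepsilon=W_\varepsilon u$, which extracts the tangential part of a normal derivative of $u$ purely from the impermeability condition via differentiation of $u\cdot n_\varepsilon\equiv0$ along $\Gamma_\varepsilon^i$. Everything else follows directly from the slip boundary conditions and the symmetry of $W_\varepsilon$, with no pressure or second-order information needed.
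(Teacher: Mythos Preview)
Your proof is correct and follows essentially the same route as the paper. The paper derives $(\nabla_{\Gamma_\varepsilon}u)n_\varepsilon=W_\varepsilon u$ by taking the tangential gradient of $u\cdot n_\varepsilon=0$ (which is exactly your identity $P_\varepsilon(\nabla u)n_\varepsilon=W_\varepsilon u$ written in tangential-gradient notation), combines it with the slip condition to obtain \eqref{E:NSl_ND}, and then uses the same pointwise identity $n_\varepsilon\times\mathrm{curl}\,u=(\nabla u)n_\varepsilon-(\nabla u)^Tn_\varepsilon$ together with the BAC-CAB formula to get \eqref{E:NSl_Curl}.
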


\begin{proof}
  Taking the tangential gradient of $u\cdot n_\varepsilon=0$ on $\Gamma_\varepsilon^i$ we have
  \begin{align} \label{Pf_PSl:Gu_N}
    (\nabla_{\Gamma_\varepsilon}u)n_\varepsilon = -(\nabla_{\Gamma_\varepsilon}n_\varepsilon)u = W_\varepsilon u \quad\text{on}\quad \Gamma_\varepsilon^i.
  \end{align}
  From this equality and \eqref{E:Tgrad_Bo} it follows that
  \begin{align*}
    2P_\varepsilon D(u)n_\varepsilon &= P_\varepsilon\{(\nabla u)n_\varepsilon+(\nabla u)^Tn_\varepsilon\} = (\nabla_{\Gamma_\varepsilon}u)n_\varepsilon+P_\varepsilon(n_\varepsilon\cdot\nabla)u \\
    &= W_\varepsilon u+P_\varepsilon(n_\varepsilon\cdot\nabla)u
  \end{align*}
  on $\Gamma_\varepsilon^i$.
  By this equality and the second equality of \eqref{E:Bo_Slip} we observe that
  \begin{align*}
    P_\varepsilon(n_\varepsilon\cdot\nabla)u = 2P_\varepsilon D(u)n_\varepsilon-W_\varepsilon u = -\frac{\gamma_\varepsilon}{\nu}u-W_\varepsilon u \quad\text{on}\quad \Gamma_\varepsilon^i.
  \end{align*}
  Thus \eqref{E:NSl_ND} is valid.
  To prove \eqref{E:NSl_Curl} we observe that the vector field $n_\varepsilon\times\mathrm{curl}\,u$ is tangential on $\Gamma_\varepsilon^i$.
  By this fact, \eqref{E:Tgrad_Bo}, \eqref{E:NSl_ND}, and \eqref{Pf_PSl:Gu_N} we have
  \begin{align*}
    n_\varepsilon\times\mathrm{curl}\,u &= P_\varepsilon(n_\varepsilon\times\mathrm{curl}\,u) = P_\varepsilon\{(\nabla u)n_\varepsilon-(\nabla u)^Tn_\varepsilon\} \\
    &= (\nabla_{\Gamma_\varepsilon}u)n_\varepsilon-P_\varepsilon(n_\varepsilon\cdot\nabla)u = 2W_\varepsilon u+\frac{\gamma_\varepsilon}{\nu}u
  \end{align*}
  on $\Gamma_\varepsilon^i$.
  Noting that $n_\varepsilon\cdot u=n_\varepsilon\cdot W_\varepsilon u=0$ and $|n_\varepsilon|^2=1$ on $\Gamma_\varepsilon^i$, we conclude by
  \begin{align*}
    a\times(a\times b) = (a\cdot b)a-|a|^2b, \quad a,b\in\mathbb{R}^3
  \end{align*}
  with $a=n_\varepsilon$ and $b=2W_\varepsilon u+\nu^{-1}\gamma_\varepsilon u$ and the above equality that \eqref{E:NSl_Curl} is valid.
\end{proof}

\section{Korn's inequality on a curved thin domain} \label{S:UniKorn}
In this section we establish a uniform Korn inequality on $\Omega_\varepsilon$ that is essential for the uniform coerciveness of the bilinear form $a_\varepsilon$ given by \eqref{E:Def_aeps}.
We also compare it with a standard Korn inequality for simple examples of $\Omega_\varepsilon$.

\subsection{Uniform Korn inequality on a curved thin domain} \label{SS:Pr_UniKorn}
Let
\begin{align*}
  D(u) = (\nabla u)_S = \frac{\nabla u+(\nabla u)^T}{2}
\end{align*}
be the strain rate tensor of a vector field $u$ on $\Omega_\varepsilon$.
The goal of this subsection is to show the uniform Korn inequality
\begin{align*}
  \|u\|_{H^1(\Omega_\varepsilon)} \leq c\|D(u)\|_{L^2(\Omega_\varepsilon)}
\end{align*}
with a constant $c>0$ independent of $\varepsilon$ under suitable assumptions on $u$.
First we give an $L^2$-estimate for the gradient matrix of a vector field on $\Omega_\varepsilon$.

\begin{lemma} \label{L:Korn_Grad}
  There exists a constant $c_{K,1}>0$ independent of $\varepsilon$ such that
  \begin{align} \label{E:Korn_Grad}
    \|\nabla u\|_{L^2(\Omega_\varepsilon)}^2 \leq 4\|D(u)\|_{L^2(\Omega_\varepsilon)}^2+c_{K,1}\|u\|_{L^2(\Omega_\varepsilon)}^2
  \end{align}
  for all $\varepsilon\in(0,1]$ and $u\in H^1(\Omega_\varepsilon)^3$ satisfying \eqref{E:Bo_Imp}.
\end{lemma}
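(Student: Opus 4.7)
The plan is to follow the strategy hinted at in the introduction: derive the pointwise identity $|\nabla u|^2 = 2|D(u)|^2 - \partial_iu_j\,\partial_ju_i$, integrate it, and convert the cross-term $\partial_iu_j\,\partial_ju_i$ into a boundary integral and a nonnegative divergence term via two integrations by parts. Concretely, I would first establish the identity
\begin{align*}
  \int_{\Omega_\varepsilon}\partial_iu_j\,\partial_ju_i\,dx
  = \int_{\Gamma_\varepsilon}(u\cdot\nabla)u\cdot n_\varepsilon\,d\mathcal{H}^2
    -\int_{\Gamma_\varepsilon}(u\cdot n_\varepsilon)\,\mathrm{div}\,u\,d\mathcal{H}^2
    +\int_{\Omega_\varepsilon}(\mathrm{div}\,u)^2\,dx
\end{align*}
using $\partial_iu_j\,\partial_ju_i=\partial_j(u_i\partial_iu_j)-\partial_i(u_i\,\mathrm{div}\,u)+(\mathrm{div}\,u)^2$ and the divergence theorem, with indices summed. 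The condition \eqref{E:Bo_Imp} kills the middle boundary term.

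Combining with $|\nabla u|^2=2|D(u)|^2-\partial_iu_j\,\partial_ju_i$ and dropping the nonnegative term $\int_{\Omega_\varepsilon}(\mathrm{div}\,u)^2\,dx$ gives
\begin{align*}
  \|\nabla u\|_{L^2(\Omega_\varepsilon)}^2
  \leq 2\|D(u)\|_{L^2(\Omega_\varepsilon)}^2
       -\int_{\Gamma_\varepsilon}(u\cdot\nabla)u\cdot n_\varepsilon\,d\mathcal{H}^2.
\end{align*}
Now apply Lemma~\ref{L:Int_UgUn} to bound the boundary integral by $c(\|u\|_{L^2(\Omega_\varepsilon)}^2+\|u\|_{L^2(\Omega_\varepsilon)}\|\nabla u\|_{L^2(\Omega_\varepsilon)})$, then use Young's inequality with a carefully chosen weight to absorb the mixed term: $c\,\|u\|\|\nabla u\|\leq \tfrac{1}{2}\|\nabla u\|^2+\tfrac{c^2}{2}\|u\|^2$. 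Rearranging yields exactly the coefficient $4$ in front of $\|D(u)\|_{L^2(\Omega_\varepsilon)}^2$ stated in \eqref{E:Korn_Grad}, with $c_{K,1}$ absorbing the $L^2$ constants.

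The main subtlety is justifying the two integrations by parts for a general $u\in H^1(\Omega_\varepsilon)^3$, since as written they formally require $H^2$ regularity; moreover Lemma~\ref{L:Int_UgUn} is stated only for $u\in C^1(\overline{\Omega}_\varepsilon)^3\cup H^2(\Omega_\varepsilon)^3$. I would handle this by approximation: first carry out the entire argument for $u\in C^2(\overline{\Omega}_\varepsilon)^3$ (or $H^2$) satisfying $u\cdot n_\varepsilon=0$, and then use density of such vector fields in $\{u\in H^1(\Omega_\varepsilon)^3\mid u\cdot n_\varepsilon=0\text{ on }\Gamma_\varepsilon\}$ to pass to the limit—the inequality involves only $H^1$ quantities on both sides, so the passage is routine once a suitable approximation lemma is in hand. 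Since $\Gamma_\varepsilon$ is of class $C^4$, such density follows by standard localization, straightening of the boundary, and mollification while preserving the linear constraint $u\cdot n_\varepsilon=0$ (or equivalently, by applying the trace theorem and solving a tangentiality-preserving extension problem). This density step is the one place where care is needed; everything else is bookkeeping built on Lemma~\ref{L:Int_UgUn}, whose proof already encapsulated the geometric work of interpolating between $\Gamma_\varepsilon^0$ and $\Gamma_\varepsilon^1$ needed to make the boundary term uniform in $\varepsilon$.
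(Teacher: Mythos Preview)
Your proposal is correct and follows essentially the same route as the paper: the same pointwise identity $|\nabla u|^2=2|D(u)|^2-\nabla u:(\nabla u)^T$, the same integration-by-parts identity for $\int_{\Omega_\varepsilon}\nabla u:(\nabla u)^T\,dx$, the same reduction via \eqref{E:Bo_Imp} and nonnegativity of $(\mathrm{div}\,u)^2$, the same application of Lemma~\ref{L:Int_UgUn}, and the same Young's inequality to absorb $\tfrac12\|\nabla u\|^2$. The density step you flag is exactly what the paper isolates as Lemma~\ref{L:H1N0_Dense}, proved by writing $u=(u\cdot\tilde n)\tilde n + w$ for a smooth extension $\tilde n$ of $n_\varepsilon$ and approximating the two pieces separately.
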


Let us prove an auxiliary density result.

\begin{lemma} \label{L:H1N0_Dense}
  Let $u\in H^1(\Omega_\varepsilon)^3$ satisfy \eqref{E:Bo_Imp}.
  Then there exists a sequence $\{u_k\}_{k=1}^\infty$ in $C^2(\overline{\Omega}_\varepsilon)^3$ such that $u_k$ satisfies \eqref{E:Bo_Imp} for each $k\in\mathbb{N}$ and
  \begin{align*}
    \lim_{k\to\infty}\|u-u_k\|_{H^1(\Omega_\varepsilon)} = 0.
  \end{align*}
\end{lemma}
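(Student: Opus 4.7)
The plan is to use a partition of unity to reduce to local pieces, flatten the boundary by a $C^4$ change of coordinates, and convert the geometric boundary condition $u\cdot n_\varepsilon=0$ into a flat scalar condition amenable to classical half-space density results. Since $\Gamma$ is of class $C^5$ and $g_0,g_1\in C^4(\Gamma)$, the boundary $\Gamma_\varepsilon$ is of class $C^4$, so I cover $\overline{\Omega}_\varepsilon$ by finitely many open sets $U_0,U_1,\dots,U_m$ with $\overline{U_0}\subset\Omega_\varepsilon$ and choose, in rotated coordinates adapted to each $U_j$ with $j\geq 1$, a $C^4$ function $\psi_j$ such that $\Gamma_\varepsilon\cap U_j=\{x_3=\psi_j(x_1,x_2)\}$ and $\Omega_\varepsilon\cap U_j=U_j\cap\{x_3>\psi_j(x_1,x_2)\}$. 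Then $\Psi_j(x):=(x_1,x_2,x_3-\psi_j(x_1,x_2))$ is a $C^4$-diffeomorphism from $U_j$ onto an open box $V_j$ satisfying $\Psi_j(\Omega_\varepsilon\cap U_j)=V_j^+:=V_j\cap\{y_3>0\}$ and $\Psi_j(\Gamma_\varepsilon\cap U_j)=V_j\cap\{y_3=0\}$. Fix a $C^\infty$ partition of unity $\{\zeta_j\}_{j=0}^m$ subordinate to $\{U_j\}$ and decompose $u=\sum_{j=0}^m\zeta_j u$.

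For each $j\geq 1$ I use the twisted pullback of vector fields defined by
\begin{align*}
  \tilde v_i(y) &:= v_i(\Psi_j^{-1}(y)), \quad i=1,2,\\
  \tilde v_3(y) &:= v_3(\Psi_j^{-1}(y))-\sum_{i=1,2}v_i(\Psi_j^{-1}(y))\,\partial_i\psi_j(y_1,y_2).
\end{align*}
Because $n_\varepsilon$ on $\Gamma_\varepsilon\cap U_j$ is parallel to $(-\partial_1\psi_j,-\partial_2\psi_j,1)$, the condition $v\cdot n_\varepsilon=0$ on $\Gamma_\varepsilon\cap U_j$ is equivalent to the trace of $\tilde v_3$ vanishing on $V_j\cap\{y_3=0\}$. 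By the chain rule and the $C^4$ regularity of $\psi_j$, this map and its inverse are bounded linear isomorphisms both at the $H^1$ level and at the $C^2$ level for vector fields on the corresponding closed domains. Applied to $\zeta_j u$ (extended by zero beyond $U_j$), it yields $\tilde w_j\in H^1(V_j^+)^3$ with compact support in $\overline{V_j^+}$ and vanishing trace of $\tilde w_{j,3}$ on $\{y_3=0\}$.

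Classical half-space density then finishes each piece component by component. The tangential parts $\tilde w_{j,1},\tilde w_{j,2}$ are approximated in $H^1(V_j^+)$ by $C^\infty$ functions compactly supported in $\overline{V_j^+}$ via even reflection across $\{y_3=0\}$ followed by mollification, while $\tilde w_{j,3}\in H^1_0(V_j^+)$ (its trace vanishes) is approximated by $C_c^\infty(V_j^+)$ functions. Pushing these approximations forward under the inverse twisted pullback produces $u_j^{(k)}\in C^2(\overline{\Omega}_\varepsilon)^3$ with $\mathrm{supp}\,u_j^{(k)}\subset U_j$, $u_j^{(k)}\cdot n_\varepsilon=0$ on $\Gamma_\varepsilon$, and $u_j^{(k)}\to\zeta_j u$ in $H^1(\Omega_\varepsilon)^3$ as $k\to\infty$. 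For the interior piece, $\zeta_0 u$ has compact support in $\Omega_\varepsilon$ and is approximated by $C_c^\infty(\Omega_\varepsilon)^3$ via standard mollification. Summing, $u_k:=\sum_{j=0}^m u_j^{(k)}$ is the required sequence.

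The main technical obstacle is verifying that the twisted pullback simultaneously intertwines the $H^1$ and the $C^2$ structures while translating the geometric condition $v\cdot n_\varepsilon=0$ into the flat scalar condition $\tilde v_3=0$; this rests on careful chain-rule bookkeeping made possible by $\psi_j\in C^4$. Once that is in place, the remainder is standard approximation theory on the half-space, and the assembly through the partition of unity is routine since each $u_j^{(k)}$ is supported inside $U_j$.
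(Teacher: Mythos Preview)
Your proof is correct and follows the standard localization--flattening route for density in spaces of $H^1$ vector fields with vanishing normal trace. The paper takes a different and markedly shorter path that avoids partitions of unity and boundary charts altogether: it constructs a single global $C^2$ vector field $\tilde n$ on $\overline{\Omega}_\varepsilon$ with $\tilde n=n_\varepsilon$ on $\Gamma_\varepsilon$, by linearly interpolating in the normal coordinate $d(x)$ between the constant extensions $\bar n_\varepsilon^0$ and $\bar n_\varepsilon^1$ (this is where the explicit thin-domain structure is used). Then $u\cdot\tilde n\in H_0^1(\Omega_\varepsilon)$ is approximated by $\varphi_k\in C_c^\infty(\Omega_\varepsilon)$, the remainder $w:=u-(u\cdot\tilde n)\tilde n$ by $w_k\in C^\infty(\overline{\Omega}_\varepsilon)^3$, and the approximant $u_k:=\varphi_k\tilde n+w_k-(w_k\cdot\tilde n)\tilde n\in C^2(\overline{\Omega}_\varepsilon)^3$ satisfies $u_k\cdot n_\varepsilon=\varphi_k=0$ on $\Gamma_\varepsilon$ by construction. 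Your argument has the advantage of applying verbatim to any bounded $C^4$ domain; the paper's approach trades that generality for brevity, since the global extension of the normal is available for free in the curved-thin-domain geometry.
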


\begin{proof}
  We follow the idea of the proof of \cite{BoFa13}*{Theorem IV.4.7}, but here it is not necessary to localize a vector field on $\Omega_\varepsilon$.
  For $x\in N$ we define
  \begin{align*}
    \tilde{n}(x) := \frac{1}{\varepsilon \bar{g}(x)}\bigl\{\bigl(d(x)-\varepsilon\bar{g}_0(x)\bigr)\bar{n}_\varepsilon^1(x)+\bigl(\varepsilon\bar{g}_1(x)-d(x)\bigr)\bar{n}_\varepsilon^0(x)\bigr\},
  \end{align*}
  where $n_\varepsilon^0$ and $n_\varepsilon^1$ are given by \eqref{E:Def_NB} and $\bar{\eta}=\eta\circ\pi$ denotes the constant extension of a function $\eta$ on $\Gamma$.
  Then $\tilde{n}\in C^2(N)^3$ by the regularity of $\Gamma$, $g_0$, and $g_1$.
  Moreover, $\tilde{n}=n_\varepsilon$ on $\Gamma_\varepsilon$ by Lemma \ref{L:Nor_Bo}.
  Hence if $u\in H^1(\Omega_\varepsilon)^3$ satisfies \eqref{E:Bo_Imp}, then we have $u\cdot\tilde{n}\in H_0^1(\Omega_\varepsilon)$ and $w:=u-(u\cdot\tilde{n})\tilde{n}\in H^1(\Omega_\varepsilon)^3$.
  Since $\Gamma_\varepsilon$ is of class $C^4$, there exist sequences $\{\varphi_k\}_{k=1}^\infty$ in $C_c^\infty(\Omega_\varepsilon)$ and $\{w_k\}_{k=1}^\infty$ in $C^\infty(\overline{\Omega}_\varepsilon)^3$ such that
  \begin{align*}
    \lim_{k\to\infty}\|u\cdot\tilde{n}-\varphi_k\|_{H^1(\Omega_\varepsilon)} = \lim_{k\to\infty}\|w-w_k\|_{H^1(\Omega_\varepsilon)} = 0.
  \end{align*}
  Here $C_c^\infty(\Omega_\varepsilon)$ is the space of all smooth and compactly supported functions on $\Omega_\varepsilon$.
  Therefore, setting $u_k:=\varphi_k\tilde{n}+w_k-(w_k\cdot\tilde{n})\tilde{n}\in C^2(\overline{\Omega}_\varepsilon)^3$ we see that
  \begin{align*}
    u_k\cdot n_\varepsilon = u_k\cdot\tilde{n} = \varphi_k = 0 \quad\text{on}\quad \Gamma_\varepsilon
  \end{align*}
  for each $k\in\mathbb{N}$ and (note that $u=(u\cdot\tilde{n})\tilde{n}+w$ and $w\cdot\tilde{n}=0$ in $\Omega_\varepsilon$)
  \begin{align*}
    \|u-u_k\|_{H^1(\Omega_\varepsilon)} &= \|(u\cdot\tilde{n}-\varphi_k)\tilde{n}+(w-w_k)-\{(w-w_k)\cdot\tilde{n}\}\tilde{n}\|_{H^1(\Omega_\varepsilon)} \\
    &\leq c_\varepsilon\left(\|u\cdot\tilde{n}-\varphi_k\|_{H^1(\Omega_\varepsilon)}+\|w-w_k\|_{H^1(\Omega_\varepsilon)}\right) \to 0
  \end{align*}
  as $k\to\infty$ (here $c_\varepsilon>0$ may depend on $\varepsilon$ but is independent of $k$).
\end{proof}

\begin{proof}[Proof of Lemma \ref{L:Korn_Grad}]
  By Lemma \ref{L:H1N0_Dense} and a density argument it is sufficient to show \eqref{E:Korn_Grad} for all $u\in C^2(\overline{\Omega}_\varepsilon)^3$ satisfying \eqref{E:Bo_Imp}.
  Then by the $C^2$-regularity of $u$ on $\overline{\Omega}_\varepsilon$ we can perform integration by parts twice to get
  \begin{align*}
    \int_{\Omega_\varepsilon}\nabla u:(\nabla u)^T\,dx = \int_{\Omega_\varepsilon}(\mathrm{div}\,u)^2\,dx+\int_{\Gamma_\varepsilon}\{(u\cdot\nabla)u\cdot n_\varepsilon-(u\cdot n_\varepsilon)\mathrm{div}\,u\}\,d\mathcal{H}^2.
  \end{align*}
  Since $(\mathrm{div}\,u)^2\geq0$ in $\Omega_\varepsilon$ and $u\cdot n_\varepsilon=0$ on $\Gamma_\varepsilon$, the above equality implies
  \begin{align*}
    \int_{\Omega_\varepsilon}\nabla u:(\nabla u)^T\,dx \geq \int_{\Gamma_\varepsilon}(u\cdot\nabla)u\cdot n_\varepsilon\,d\mathcal{H}^2.
  \end{align*}
  From this inequality and $|\nabla u|^2=2|D(u)|^2-\nabla u:(\nabla u)^T$ in $\Omega_\varepsilon$ we deduce that
  \begin{align*}
    \|\nabla u\|_{L^2(\Omega_\varepsilon)}^2 \leq 2\|D(u)\|_{L^2(\Omega_\varepsilon)}^2-\int_{\Gamma_\varepsilon}(u\cdot\nabla)u\cdot n_\varepsilon\,d\mathcal{H}^2.
  \end{align*}
  Noting that $u\in C^2(\overline{\Omega}_\varepsilon)^3$ satisfies \eqref{E:Bo_Imp}, we apply \eqref{E:Int_UgUn} to the last term to obtain
  \begin{align*}
    \|\nabla u\|_{L^2(\Omega_\varepsilon)}^2 &\leq 2\|D(u)\|_{L^2(\Omega_\varepsilon)}^2+c\left(\|u\|_{L^2(\Omega_\varepsilon)}^2+\|u\|_{L^2(\Omega_\varepsilon)}\|\nabla u\|_{L^2(\Omega_\varepsilon)}\right) \\
    &\leq 2\|D(u)\|_{L^2(\Omega_\varepsilon)}^2+c\|u\|_{L^2(\Omega_\varepsilon)}^2+\frac{1}{2}\|\nabla u\|_{L^2(\Omega_\varepsilon)}^2.
  \end{align*}
  Hence \eqref{E:Korn_Grad} follows.
\end{proof}

Next we show a uniform $L^2$-estimate for a vector field on $\Omega_\varepsilon$ by the $L^2$-norms of the gradient matrix and the strain rate tensor on $\Omega_\varepsilon$.
Recall that for a function $\eta$ on $\Gamma$ we denote by $\bar{\eta}=\eta\circ\pi$ its constant extension in the normal direction of $\Gamma$.

\begin{lemma} \label{L:Korn_U}
  For given $\alpha>0$ and $\beta\in[0,1)$ there exist constants
  \begin{align*}
    \varepsilon_K = \varepsilon_K(\alpha,\beta)\in(0,1], \quad c_{K,2} = c_{K,2}(\alpha,\beta) > 0
  \end{align*}
  independent of $\varepsilon$ such that
  \begin{align} \label{E:Korn_U}
    \|u\|_{L^2(\Omega_\varepsilon)}^2 \leq \alpha\|\nabla u\|_{L^2(\Omega_\varepsilon)}^2+c_{K,2}\|D(u)\|_{L^2(\Omega_\varepsilon)}^2
  \end{align}
  for all $\varepsilon\in(0,\varepsilon_K]$ and $u\in H^1(\Omega_\varepsilon)^3$ satisfying \eqref{E:Bo_Imp} and
  \begin{align} \label{E:Korn_Orth}
    \left|(u,\bar{v})_{L^2(\Omega_\varepsilon)}\right| \leq \beta\|u\|_{L^2(\Omega_\varepsilon)}\|\bar{v}\|_{L^2(\Omega_\varepsilon)} \quad\text{for all}\quad v\in\mathcal{K}_g(\Gamma).
  \end{align}
  Here $\mathcal{K}_g(\Gamma)$ is the function space on $\Gamma$ given by \eqref{E:Def_Kil}.
\end{lemma}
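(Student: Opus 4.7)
The plan is to argue by contradiction, following the strategy of Hoang--Sell \cite{HoSe10} for flat thin domains. Suppose \eqref{E:Korn_U} fails for some fixed $\alpha>0$ and $\beta\in[0,1)$; then there exist sequences $\varepsilon_k\downarrow 0$, $C_k\uparrow\infty$, and $u_k\in H^1(\Omega_{\varepsilon_k})^3$ satisfying \eqref{E:Bo_Imp} and \eqref{E:Korn_Orth}, normalized so that $\|u_k\|_{L^2(\Omega_{\varepsilon_k})}=1$ and $\alpha\|\nabla u_k\|_{L^2(\Omega_{\varepsilon_k})}^2+C_k\|D(u_k)\|_{L^2(\Omega_{\varepsilon_k})}^2<1$. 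Thus $\|\nabla u_k\|_{L^2(\Omega_{\varepsilon_k})}$ is uniformly bounded while $\|D(u_k)\|_{L^2(\Omega_{\varepsilon_k})}\to 0$.

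To pass to a limit I would flatten $\Omega_{\varepsilon_k}$ via the diffeomorphism $\Psi_{\varepsilon_k}(y,s):=y+\varepsilon_k(g_0(y)+sg(y))n(y)$ from $\Gamma\times(0,1)$ and set $v_k(y,s):=\varepsilon_k^{1/2}u_k(\Psi_{\varepsilon_k}(y,s))$. The change of variables formula \eqref{E:CoV_Equiv} together with the Jacobian control \eqref{E:Jac_Bound_01} makes $\|v_k\|_{L^2(\Gamma\times(0,1))}$ uniformly comparable to $1$. The chain rule produces $\|\nabla_y v_k\|_{L^2}\leq c\|\nabla u_k\|_{L^2(\Omega_{\varepsilon_k})}=O(1)$, while $\partial_s v_k=\varepsilon_k^{3/2}g(y)(\partial_n u_k)\circ\Psi_{\varepsilon_k}$ gives $\|\partial_s v_k\|_{L^2}=O(\varepsilon_k)$. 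Rellich--Kondrachov then extracts a subsequence with $v_k\to v_\infty$ strongly in $L^2(\Gamma\times(0,1))$, where $v_\infty$ is independent of $s$ and hence identifiable with a vector field on $\Gamma$.

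The key identification is $v_\infty\in\mathcal{K}_g(\Gamma)$. Under $\Psi_{\varepsilon_k}$ the tangential-tangential block of $D(u_k)$ pulls back to $D_\Gamma(v_k)$ plus Weingarten corrections of order $\varepsilon_k$ coming from \eqref{E:ConDer_Dom}, so $\|D(u_k)\|_{L^2}\to 0$ together with weak lower semicontinuity forces $D_\Gamma(v_\infty)=0$. The impermeable condition \eqref{E:Bo_Imp}, rewritten via Lemma \ref{L:Exp_Bo} as $u_k\cdot\bar n=\varepsilon_k u_k\cdot\bar\tau_{\varepsilon_k}^i$ on $\Gamma_{\varepsilon_k}^i$, yields $v_\infty\cdot n=0$ on $\Gamma$ in the limit. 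Subtracting these relations at $s=0$ and $s=1$ and dividing by $\varepsilon_k$ gives $\varepsilon_k^{-1}(v_k(y,1)-v_k(y,0))\cdot n=v_k(y,1)\cdot\tau_{\varepsilon_k}^1-v_k(y,0)\cdot\tau_{\varepsilon_k}^0$; the right-hand side tends to $v_\infty\cdot\nabla_\Gamma g$ by \eqref{E:Tau_Diff}, while the left-hand side vanishes since the normal-normal component of $\|D(u_k)\|_{L^2}\to 0$ kills the weak-$L^2$ limit of $\varepsilon_k^{-1}\partial_s v_k\cdot n$. Hence $v_\infty\cdot\nabla_\Gamma g=0$, so $v_\infty\in\mathcal{K}_g(\Gamma)$.

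Finally, take $v=v_\infty$ in \eqref{E:Korn_Orth}. By \eqref{E:Con_Lp} the pairing $\varepsilon_k^{-1/2}(u_k,\bar v_\infty)_{L^2(\Omega_{\varepsilon_k})}$ converges to an explicit positive multiple of $\|v_\infty\|_{L^2(\Gamma)}^2$, and the right-hand side of \eqref{E:Korn_Orth} likewise converges to $\beta$ times the same quantity; the preserved normalization forces $\|v_\infty\|_{L^2(\Gamma)}>0$, contradicting $\beta<1$. The main obstacle is the bookkeeping of rescalings across the change of variables: the normal component of $\nabla u_k$ picks up factors of $\varepsilon_k^{-1}$ under the pullback, so verifying that $\|D(u_k)\|_{L^2}\to 0$ simultaneously enforces $s$-independence of $v_\infty$, the Killing condition $D_\Gamma(v_\infty)=0$, and the orthogonality $v_\infty\cdot\nabla_\Gamma g=0$ requires carefully separating the tangential and normal blocks of the strain tensor via Lemma \ref{L:Form_W} and the curvature estimates of Section \ref{SS:Pre_Dom}.
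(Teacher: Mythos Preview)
Your proposal is correct and follows essentially the same contradiction-and-compactness strategy as the paper: the paper pulls back to the fixed domain $\Omega_1$ via the map $\Phi_\varepsilon$ of Lemma~\ref{L:CoV_Fixed} rather than to $\Gamma\times(0,1)$, and the ``bookkeeping of rescalings'' you identify as the main obstacle is exactly the content of Lemmas~\ref{L:CoV_Fixed} and~\ref{L:KAux_Du}, which separate the tangential and normal blocks of $\nabla u$ and $D(u)$ after rescaling. One small slip in your final step: with your normalization $\|u_k\|_{L^2(\Omega_{\varepsilon_k})}=1$, after scaling by $\varepsilon_k^{-1/2}$ the right-hand side of \eqref{E:Korn_Orth} tends to $\beta\|g^{1/2}v_\infty\|_{L^2(\Gamma)}$, not to $\beta$ times its square; the contradiction still closes once you observe that the normalization passes to the limit as $\|g^{1/2}v_\infty\|_{L^2(\Gamma)}^2=1$.
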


To prove Lemma \ref{L:Korn_U} we transform integrals over $\Omega_\varepsilon$ into those over the domain $\Omega_1$ with fixed thickness by using the following lemmas (note that we assume $\overline{\Omega}_1\subset N$ by scaling $g_0$ and $g_1$).

\begin{lemma} \label{L:CoV_Fixed}
  For $\varepsilon\in(0,1]$ let
  \begin{align} \label{E:Def_Bij}
    \Phi_\varepsilon(X) := \pi(X)+\varepsilon d(X)\bar{n}(X), \quad X \in \Omega_1.
  \end{align}
  Then $\Phi_\varepsilon$ is a bijection from $\Omega_1$ onto $\Omega_\varepsilon$ and for a function $\varphi$ on $\Omega_\varepsilon$ we have
  \begin{align} \label{E:CoV_Fixed}
    \int_{\Omega_\varepsilon}\varphi(x)\,dx = \varepsilon\int_{\Omega_1}\xi(X)J(\pi(X),d(X))^{-1}J(\pi(X),\varepsilon d(X))\,dX,
  \end{align}
  where $\xi:=\varphi\circ\Phi_\varepsilon$ on $\Omega_1$ and $J$ is given by \eqref{E:Def_Jac}.
  Moreover, if $\varphi\in L^2(\Omega_\varepsilon)$, then $\xi\in L^2(\Omega_1)$ and there exist constants $c_1,c_2>0$ independent of $\varepsilon$ and $\varphi$ such that
  \begin{align} \label{E:L2_Fixed}
    c_1\varepsilon^{-1}\|\varphi\|_{L^2(\Omega_\varepsilon)}^2 \leq \|\xi\|_{L^2(\Omega_1)}^2 \leq c_2\varepsilon^{-1}\|\varphi\|_{L^2(\Omega_\varepsilon)}^2.
  \end{align}
  If in addition $\varphi\in H^1(\Omega_\varepsilon)$, then $\xi\in H^1(\Omega_1)$ and
  \begin{align} \label{E:H1_Fixed}
    \varepsilon^{-1}\|\nabla\varphi\|_{L^2(\Omega_\varepsilon)}^2 \geq c\left(\left\|\overline{P}\nabla\xi\right\|_{L^2(\Omega_1)}^2+\varepsilon^{-2}\|\partial_n\xi\|_{L^2(\Omega_1)}^2\right),
  \end{align}
  where $\partial_n\xi=(\bar{n}\cdot\nabla)\xi$ on $\Omega_1$ and $c>0$ is a constant independent of $\varepsilon$ and $\varphi$.
\end{lemma}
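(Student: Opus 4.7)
\medskip

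The plan is to handle the four assertions in the natural order, with the bulk of the work concentrated in the $H^1$ estimate where the anisotropic scaling between tangential and normal directions must be accounted for carefully.

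First, for the bijectivity of $\Phi_\varepsilon$, I would use the normal-coordinate parametrization. Every $X\in\Omega_1$ can be written uniquely as $X = y + sn(y)$ with $y = \pi(X)\in\Gamma$ and $s = d(X)\in(g_0(y),g_1(y))$, and then $\Phi_\varepsilon(X) = y+\varepsilon s\,n(y)$ lies in $\Omega_\varepsilon$ since $\varepsilon s\in(\varepsilon g_0(y),\varepsilon g_1(y))$. Conversely, given $x = y + rn(y)\in\Omega_\varepsilon$, the preimage $y + (r/\varepsilon)n(y)$ lies in $\Omega_1$ and maps to $x$. Injectivity then follows from uniqueness of the normal-coordinate decomposition in $N$.

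For the change-of-variables formula \eqref{E:CoV_Fixed}, I would start from the co-area formula \eqref{E:CoV_Dom} on $\Omega_\varepsilon$, perform the one-dimensional substitution $r = \varepsilon s$ to obtain
\begin{equation*}
  \int_{\Omega_\varepsilon}\varphi\,dx = \varepsilon\int_\Gamma\int_{g_0(y)}^{g_1(y)}\xi(y+sn(y))\,J(y,\varepsilon s)\,ds\,d\mathcal{H}^2(y),
\end{equation*}
and then invert \eqref{E:CoV_Dom} applied to $\Omega_1$ (with Jacobian $J(\pi(X),d(X))$) to rewrite the right-hand side as an integral over $\Omega_1$; the factor $J(\pi(X),d(X))^{-1}$ appears precisely as the reciprocal of the $\Omega_1$-Jacobian. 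The $L^2$ equivalence \eqref{E:L2_Fixed} is then immediate from this formula applied to $|\varphi|^2$ together with the two-sided bound \eqref{E:Jac_Bound_01} on $J$.

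The main work is the $H^1$ estimate \eqref{E:H1_Fixed}, and the essential point is the anisotropic chain rule for $\Phi_\varepsilon$. Using \eqref{E:Pi_Der}, \eqref{E:Nor_Grad} and the identity $\nabla(d\bar n) = \overline Q + d\nabla\bar n$, I would compute
\begin{equation*}
  \nabla\Phi_\varepsilon = \{I_3-d\overline W\}^{-1}\overline P + \varepsilon\,\overline Q - \varepsilon d\,\{I_3-d\overline W\}^{-1}\overline W.
\end{equation*}
Multiplying on the left by $\overline P$ (and using \eqref{E:Form_W}, \eqref{E:WReso_P} and $\overline P\,\overline Q = 0$) yields $\overline P\,\nabla\Phi_\varepsilon = \{I_3-d\overline W\}^{-1}(\overline P - \varepsilon d\,\overline W)$, which is a uniformly bounded matrix by \eqref{E:Wein_Bound}. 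Multiplying on the left by $\bar n^T$, and using $\bar n^T\overline P = 0$, $\overline W\bar n=0$ (so $\bar n$ is a left eigenvector of $\{I_3-d\overline W\}^{-1}$ with eigenvalue $1$), and $\bar n^T\overline Q = \bar n^T$, I obtain the clean identity $\bar n^T\nabla\Phi_\varepsilon = \varepsilon\,\bar n^T$. Applied via the chain rule $\nabla\xi = (\nabla\Phi_\varepsilon)(\nabla\varphi\circ\Phi_\varepsilon)$, this gives
\begin{equation*}
  |\overline P\nabla\xi(X)| \leq c\,|\nabla\varphi(\Phi_\varepsilon(X))|, \qquad \partial_n\xi(X) = \varepsilon\,(\partial_n\varphi)(\Phi_\varepsilon(X)),
\end{equation*}
so that $|\overline P\nabla\xi|^2 + \varepsilon^{-2}|\partial_n\xi|^2 \leq c\,|\nabla\varphi\circ\Phi_\varepsilon|^2$ pointwise on $\Omega_1$. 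Integrating and applying \eqref{E:CoV_Fixed} (with \eqref{E:Jac_Bound_01}) to $|\nabla\varphi|^2$ converts the right-hand side into $c\varepsilon^{-1}\|\nabla\varphi\|_{L^2(\Omega_\varepsilon)}^2$, which is exactly \eqref{E:H1_Fixed}. The only delicate step is the algebraic computation of $\overline P\nabla\Phi_\varepsilon$ and $\bar n^T\nabla\Phi_\varepsilon$; once one notices that the Weingarten-map identities \eqref{E:Form_W}--\eqref{E:WReso_P} decouple the tangential and normal blocks of $\nabla\Phi_\varepsilon$, the normal direction cleanly picks up the single factor of $\varepsilon$ that justifies the $\varepsilon^{-2}$ weight in \eqref{E:H1_Fixed}.
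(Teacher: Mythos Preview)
Your proposal is correct and follows the same overall strategy as the paper—exploiting the tangential/normal decomposition of the diffeomorphism's gradient via the Weingarten identities \eqref{E:Form_W}--\eqref{E:WReso_P}—but with two tactical differences worth noting.

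First, for the change of variables formula \eqref{E:CoV_Fixed}, the paper computes $\nabla\Phi_\varepsilon$ explicitly as $\{I_3-d\overline W\}^{-1}\{I_3-\varepsilon d\overline W\}\overline P + \varepsilon\overline Q$ (which is your expression after collecting terms), diagonalizes it in the eigenbasis $\{\tau_1,\tau_2,\bar n\}$ of $\overline W$, and reads off the Jacobian determinant directly. Your double application of the co-area formula \eqref{E:CoV_Dom} with a one-dimensional substitution $r=\varepsilon s$ is more elementary and avoids the eigenvalue computation, at the cost of relying on \eqref{E:CoV_Dom} as a black box.

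Second, for the $H^1$ estimate, the paper works with the \emph{inverse} map: it computes $\nabla\Phi_\varepsilon^{-1}(\Phi_\varepsilon(X)) = \Lambda_\varepsilon\overline P + \varepsilon^{-1}\overline Q$ with $\Lambda_\varepsilon := \{I_3-\varepsilon d\overline W\}^{-1}\{I_3-d\overline W\}$, expresses $(\nabla\varphi)\circ\Phi_\varepsilon$ in terms of $\nabla\xi$, and then bounds $|(\nabla\varphi)\circ\Phi_\varepsilon|^2$ from \emph{below} by $c|\overline P\nabla\xi|^2+\varepsilon^{-2}|\partial_n\xi|^2$ using orthogonality of the tangential and normal parts. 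You instead work with $\nabla\Phi_\varepsilon$ itself, express $\nabla\xi$ in terms of $(\nabla\varphi)\circ\Phi_\varepsilon$, and bound each piece from \emph{above}. Your route is slightly cleaner since it avoids writing down $\Phi_\varepsilon^{-1}$ and its gradient, and the exact identity $\partial_n\xi = \varepsilon(\partial_n\varphi)\circ\Phi_\varepsilon$ drops out immediately; the paper's route, by contrast, makes the orthogonal splitting of $(\nabla\varphi)\circ\Phi_\varepsilon$ explicit, which is reused verbatim in the proof of the companion Lemma~\ref{L:KAux_Du}.
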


\begin{lemma} \label{L:KAux_Du}
  For $\varepsilon\in(0,1]$ let $\Phi_\varepsilon\colon\Omega_1\to\Omega_\varepsilon$ be the bijection given by \eqref{E:Def_Bij}.
  Also, let $u\in H^1(\Omega_\varepsilon)^3$.
  Then $U:=u\circ\Phi_\varepsilon\in H^1(\Omega_1)^3$ and the inequality \eqref{E:H1_Fixed} holds with $\varphi$ and $\xi$ replaced by $u$ and $U$, respectively.
  Moreover,
  \begin{align} \label{E:KAux_Du}
    \varepsilon^{-1}\|D(u)\|_{L^2(\Omega_\varepsilon)}^2 \geq c\left(\left\|\overline{P}F_\varepsilon(U)_S\overline{P}\right\|_{L^2(\Omega_1)}^2+\varepsilon^{-2}\|\partial_n(U\cdot\bar{n})\|_{L^2(\Omega_1)}^2\right),
  \end{align}
  where $F_\varepsilon(U)_S=\{F_\varepsilon(U)+F_\varepsilon(U)^T\}/2$ is the symmetric part of
  \begin{align} \label{E:KAux_Matrix}
    F_\varepsilon(U) := \Bigl(I_3-\varepsilon d\overline{W}\Bigr)^{-1}\Bigl(I_3-d\overline{W}\Bigr)\nabla U \quad\text{on}\quad \Omega_1
  \end{align}
  and $c>0$ is a constant independent of $\varepsilon$ and $u$.
\end{lemma}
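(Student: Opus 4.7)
\textbf{Proof proposal for Lemma \ref{L:KAux_Du}.}
The first claim — that $U\in H^1(\Omega_1)^3$ together with the analogue of \eqref{E:H1_Fixed} for the pair $(u,U)$ — is an immediate consequence of Lemma \ref{L:CoV_Fixed} applied to each scalar component $u_j$ with $\xi=U_j$, followed by summation in $j$. The identities $\|\nabla u\|_{L^2(\Omega_\varepsilon)}^2=\sum_j\|\nabla u_j\|_{L^2(\Omega_\varepsilon)}^2$, $\|\overline{P}\nabla U\|_{L^2(\Omega_1)}^2=\sum_j\|\overline{P}\nabla U_j\|_{L^2(\Omega_1)}^2$, and $\|\partial_n U\|_{L^2(\Omega_1)}^2=\sum_j\|\partial_n U_j\|_{L^2(\Omega_1)}^2$ (all for Frobenius/Euclidean norms) turn the componentwise inequalities into the vector-valued statement.

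The second and main claim \eqref{E:KAux_Du} is approached by first expressing $\nabla u$ explicitly in terms of $\nabla U$ and then extracting the tangent--tangent and normal--normal blocks of $D(u)$. Differentiating $\Phi_\varepsilon = \pi + \varepsilon d\,\bar n$ using \eqref{E:Pi_Der}, \eqref{E:Nor_Grad}, and $\nabla d=\bar n$, and pulling $\overline{P}$ through $\{I_3-d\overline{W}\}^{-1}$ via \eqref{E:WReso_P}, yields
\begin{align*}
  \nabla\Phi_\varepsilon \;=\; \overline{P}\,\{I_3-d\overline{W}\}^{-1}(I_3-\varepsilon d\,\overline{W}) \,+\, \varepsilon\,\overline{Q}.
\end{align*}
Using $\overline{W}\bar n=0$, $\overline{P}\,\overline{W}=\overline{W}\,\overline{P}=\overline{W}$, and $\overline{P}\,\overline{Q}=0$, one checks by direct multiplication that
\begin{align*}
  (\nabla\Phi_\varepsilon)^{-1} \;=\; \varepsilon^{-1}\overline{Q} \,+\, \overline{P}\,(I_3-\varepsilon d\,\overline{W})^{-1}(I_3-d\,\overline{W}).
\end{align*}
The chain rule $\nabla u=(\nabla\Phi_\varepsilon)^{-1}\nabla U$ then produces, with $F_\varepsilon(U)$ as in \eqref{E:KAux_Matrix},
\begin{align*}
  \nabla u \;=\; \varepsilon^{-1}\,\bar n\otimes\partial_n U \,+\, \overline{P}\,F_\varepsilon(U).
\end{align*}

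Symmetrising and decomposing $D(u)=(\overline{P}+\overline{Q})\,D(u)\,(\overline{P}+\overline{Q})$ into four Frobenius-orthogonal blocks, the sandwich $\overline{P}(\cdot)\overline{P}$ annihilates both $\bar n\otimes\partial_n U$ and $\partial_n U\otimes\bar n$ (since $\overline{P}\bar n=0$) and leaves precisely $\overline{P}\,F_\varepsilon(U)_S\,\overline{P}$. For the $\overline{Q}(\cdot)\overline{Q}$ block, the contributions involving $F_\varepsilon(U)$ vanish via $\overline{Q}\,\overline{P}=0$, while a short direct computation reduces the remaining term to $\varepsilon^{-1}(\partial_n U\cdot\bar n)\,\overline{Q}$. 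The elementary but essential simplification $\partial_n U\cdot\bar n=\partial_n(U\cdot\bar n)$ — a consequence of $\partial_n\bar n=0$ from \eqref{E:NorDer_Con} — converts this into $\varepsilon^{-1}\partial_n(U\cdot\bar n)\,\overline{Q}$, which has squared Frobenius norm $\varepsilon^{-2}|\partial_n(U\cdot\bar n)|^2$. Keeping only the tangent--tangent and normal--normal blocks in the orthogonal decomposition gives the pointwise bound
\begin{align*}
  |D(u)|^2 \;\geq\; \bigl|\overline{P}\,F_\varepsilon(U)_S\,\overline{P}\bigr|^2 \,+\, \varepsilon^{-2}\bigl|\partial_n(U\cdot\bar n)\bigr|^2.
\end{align*}
Integrating over $\Omega_\varepsilon$, changing variables by \eqref{E:CoV_Fixed}, and using the two-sided bound on the Jacobian factor coming from \eqref{E:Jac_Bound_01} then delivers \eqref{E:KAux_Du}.

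The main obstacle is the clean inversion of $\nabla\Phi_\varepsilon$ and the verification that $\nabla u$ splits into a purely normal part of order $\varepsilon^{-1}$ and a tangent part controlled by $F_\varepsilon(U)$; once the algebra is organised around the identities $\overline{W}\bar n=0$, $\overline{P}\,\overline{W}=\overline{W}$, and \eqref{E:WReso_P}, the subsequent block decomposition of $D(u)$ and the extraction of the scalar quantity $\partial_n(U\cdot\bar n)$ from $\overline{Q}D(u)\overline{Q}$ become bookkeeping. It is worth noting that the reduction $\partial_n U\cdot\bar n=\partial_n(U\cdot\bar n)$ is exactly what is needed to pair with the $\bar n$-component of the impermeable boundary condition in the intended downstream application (see Lemma \ref{L:Korn_U}).
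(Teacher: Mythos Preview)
Your proof is correct and follows essentially the same route as the paper: both compute $(\nabla u)\circ\Phi_\varepsilon=\varepsilon^{-1}\bar n\otimes\partial_nU+\overline{P}F_\varepsilon(U)$ via the explicit inverse of $\nabla\Phi_\varepsilon$, then extract the tangent--tangent and normal--normal pieces of $D(u)$ and drop the mixed terms. The only cosmetic difference is that the paper packages the pointwise norm identity into an auxiliary lemma (Lemma~\ref{L:Norm_Mat}), whereas you obtain the same inequality directly from the Frobenius-orthogonality of the four $\overline{P}/\overline{Q}$ blocks.
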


We give the proofs of Lemmas \ref{L:CoV_Fixed} and \ref{L:KAux_Du} in Appendix \ref{S:Ap_Proof}.

\begin{proof}[Proof of Lemma \ref{L:Korn_U}]
  Following the idea of the proof of \cite{HoSe10}*{Lemma 4.14} we prove \eqref{E:Korn_U} by contradiction.
  Assume to the contrary that there exist a sequence $\{\varepsilon_k\}_{k=1}^\infty$ of positive numbers with $\lim_{k\to\infty}\varepsilon_k=0$ and vector fields $u_k\in H^1(\Omega_{\varepsilon_k})^3$ satisfying \eqref{E:Bo_Imp} on $\Gamma_{\varepsilon_k}$, \eqref{E:Korn_Orth}, and
  \begin{align} \label{PF_KU:Ineq_Contra}
    \|u_k\|_{L^2(\Omega_{\varepsilon_k})}^2 > \alpha\|\nabla u_k\|_{L^2(\Omega_{\varepsilon_k})}^2+k\|D(u_k)\|_{L^2(\Omega_{\varepsilon_k})}^2, \quad k\in\mathbb{N}.
  \end{align}
  For each $k\in\mathbb{N}$ let $\Phi_{\varepsilon_k}$ be the bijection from $\Omega_1$ onto $\Omega_{\varepsilon_k}$ given by \eqref{E:Def_Bij} and
  \begin{align*}
    U_k := u_k\circ\Phi_{\varepsilon_k} \in H^1(\Omega_1)^3.
  \end{align*}
  Also, let $F_{\varepsilon_k}(U_k)$ be the matrix given by \eqref{E:KAux_Matrix}.
  Dividing both sides of \eqref{PF_KU:Ineq_Contra} by $\varepsilon_k$ and using \eqref{E:L2_Fixed}--\eqref{E:KAux_Du} we get
  \begin{multline*}
    \|U_k\|_{L^2(\Omega_1)}^2 > c\alpha\left(\left\|\overline{P}\nabla U_k\right\|_{L^2(\Omega_1)}^2+\varepsilon_k^{-2}\|\partial_nU_k\|_{L^2(\Omega_1)}^2\right) \\
    +ck\left(\left\|\overline{P}F_{\varepsilon_k}(U_k)_S\overline{P}\right\|_{L^2(\Omega_1)}^2+\varepsilon_k^{-2}\|\partial_n(U_k\cdot\bar{n})\|_{L^2(\Omega_1)}^2\right).
  \end{multline*}
  Since $\|U_k\|_{L^2(\Omega_1)}>0$, we may assume
  \begin{align} \label{Pf_KU:L2_Uk}
    \|U_k\|_{L^2(\Omega_1)} = 1, \quad k\in\mathbb{N}
  \end{align}
  by replacing $U_k$ with $U_k/\|U_k\|_{L^2(\Omega_1)}$.
  Then
  \begin{align}
    \left\|\overline{P}\nabla U_k\right\|_{L^2(\Omega_1)}^2+\varepsilon_k^{-2}\|\partial_nU_k\|_{L^2(\Omega_1)}^2 &< c\alpha^{-1}, \label{Pf_KU:L2_Grad_Uk} \\
    \left\|\overline{P}F_{\varepsilon_k}(U_k)_S\overline{P}\right\|_{L^2(\Omega_1)}^2+\varepsilon_k^{-2}\|\partial_n(U_k\cdot\bar{n})\|_{L^2(\Omega_1)}^2 &< ck^{-1} \label{Pf_KU:L2_DUk}
  \end{align}
  and $\{U_k\}_{k=1}^\infty$ is bounded in $H^1(\Omega_1)^3$ by \eqref{Pf_KU:L2_Uk}, \eqref{Pf_KU:L2_Grad_Uk}, and
  \begin{align*}
    |\nabla U_k|^2 = \left|\overline{P}\nabla U_k\right|^2+\left|\overline{Q}\nabla U_k\right|^2, \quad \left|\overline{Q}\nabla U_k\right| = |\bar{n}\otimes\partial_nU_k| = |\partial_nU_k| \quad\text{in}\quad \Omega_1.
  \end{align*}
  Hence there exists $U\in H^1(\Omega_1)^3$ such that (up to a subsequence) $\{U_k\}_{k=1}^\infty$ converges to $U$ weakly in $H^1(\Omega_1)^3$.
  By the compact embedding $H^1(\Omega_1)\hookrightarrow L^2(\Omega_1)$ and \eqref{Pf_KU:L2_Uk} it also converges to $U$ strongly in $L^2(\Omega_1)^3$ and
  \begin{align} \label{Pf_KU:L2_Limit}
    \|U\|_{L^2(\Omega_1)} = \lim_{k\to\infty}\|U_k\|_{L^2(\Omega_1)} = 1.
  \end{align}
  Our goal is to show $U=0$ in $\Omega_1$, which contradicts with \eqref{Pf_KU:L2_Limit}.
  Since
  \begin{align} \label{Pf_KU:Lim_NDer}
    \lim_{k\to\infty}\|\partial_nU_k\|_{L^2(\Omega_1)} = 0
  \end{align}
  by \eqref{Pf_KU:L2_Grad_Uk} and $\{U_k\}_{k=1}^\infty$ converges to $U$ weakly in $H^1(\Omega_1)^3$, it follows that $\partial_nU=0$ in $\Omega_1$, i.e. $U$ is independent of the normal direction of $\Gamma$.
  Hence setting
  \begin{align*}
    v(y) := U(y+g_0(y)n(y)), \quad y\in\Gamma
  \end{align*}
  we can consider $U$ as the constant extension of $v$, i.e. $U=\bar{v}$ in $\Omega_1$.
  Moreover, from \eqref{E:Poin_Bo} with $\varepsilon=1$ and $\partial_n\bar{v}=0$ in $\Omega_1$ we deduce that
  \begin{align*}
    \|U_k-\bar{v}\|_{L^2(\Gamma_1)} \leq c\left(\|U_k-\bar{v}\|_{L^2(\Omega_1)}+\|\partial_nU_k\|_{L^2(\Omega_1)}\right), \quad k\in\mathbb{N}.
  \end{align*}
  Thus, by the strong convergence of $\{U_k\}_{k=1}^\infty$ to $\bar{v}=U$ in $L^2(\Omega_1)^3$ and \eqref{Pf_KU:Lim_NDer},
  \begin{align} \label{Pf_KU:Lim_Bo}
    \lim_{k\to\infty}\|U_k-\bar{v}\|_{L^2(\Gamma_1)} = 0.
  \end{align}
  Let us show $v\in\mathcal{K}_g(\Gamma)$.
  Since $\bar{v}=U\in H^1(\Omega_1)^3$, we have $v\in H^1(\Gamma)^3$ by Lemma \ref{L:Con_Lp_W1p}.
  For each $k\in\mathbb{N}$, since $u_k$ satisfies \eqref{E:Bo_Imp} on $\Gamma_{\varepsilon_k}$ we can use \eqref{E:Exp_Bo} to get
  \begin{align*}
    |u_k\cdot\bar{n}|\leq c\varepsilon_k|u_k| \quad\text{on}\quad \Gamma_{\varepsilon_k}, \quad\text{i.e.}\quad |U_k\cdot\bar{n}|\leq c\varepsilon_k|U_k| \quad\text{on}\quad \Gamma_1.
  \end{align*}
  By this inequality, \eqref{E:Poin_Bo} with $\varepsilon=1$, and the boundedness of $\{U_k\}_{k=1}^\infty$ in $H^1(\Omega_1)^3$,
  \begin{align} \label{Pf_KU:Lim_NC}
    \|U_k\cdot\bar{n}\|_{L^2(\Gamma_1)} \leq c\varepsilon_k\|U_k\|_{L^2(\Gamma_1)} \leq c\varepsilon_k\|U_k\|_{H^1(\Omega_1)} \to 0 \quad\text{as}\quad k\to\infty.
  \end{align}
  Combining this with \eqref{Pf_KU:Lim_Bo} we get $\bar{v}\cdot\bar{n}=0$ on $\Gamma_1$ and thus $v\cdot n=0$ on $\Gamma$.
  Next we show that $D_\Gamma(v)=P(\nabla_\Gamma v)_SP$ vanishes on $\Gamma$.
  Since
  \begin{align*}
    \left|\left\{I_3-\varepsilon_kd(X)\overline{W}(X)\right\}^{-1}-I_3\right| \leq c\varepsilon_k|d(X)| \leq c\varepsilon_k \to 0 \quad\text{as}\quad k\to\infty
  \end{align*}
  uniformly in $X\in\Omega_1$ by \eqref{E:Wein_Diff} and $\{U_k\}_{k=1}^\infty$ converges to $U=\bar{v}$ weakly in $H^1(\Omega_1)^3$, for the matrix $F_{\varepsilon_k}(U_k)$ of the form \eqref{E:KAux_Matrix} we have
  \begin{align*}
    \lim_{k\to\infty}F_{\varepsilon_k}(U_k) = \Bigl(I_3-d\overline{W}\Bigr)\nabla\bar{v} = \overline{\nabla_\Gamma v} \quad\text{weakly in}\quad L^2(\Omega_1)^{3\times3}.
  \end{align*}
  Here the last equality follows from \eqref{E:ConDer_Dom}.
  Thus we get
  \begin{align*}
    \lim_{k\to\infty}\overline{P}F_{\varepsilon_k}(U_k)_S\overline{P} = \overline{P}\Bigl(\overline{\nabla_\Gamma v}\Bigr)_S\overline{P} = \overline{D_\Gamma(v)} \quad\text{weakly in}\quad L^2(\Omega_1)^{3\times3}.
  \end{align*}
  Moreover, the inequality \eqref{Pf_KU:L2_DUk} yields
  \begin{align*}
    \lim_{k\to\infty}\left\|\overline{P}F_{\varepsilon_k}(U_k)_S\overline{P}\right\|_{L^2(\Omega_1)} = 0.
  \end{align*}
  Therefore,
  \begin{align*}
    \overline{D_\Gamma(v)} = 0 \quad\text{in}\quad \Omega_1, \quad\text{i.e.}\quad D_\Gamma(v) = 0 \quad\text{on}\quad \Gamma.
  \end{align*}
  To prove $v\in\mathcal{K}_g(\Gamma)$ it remains to verify $v\cdot\nabla_\Gamma g=0$ on $\Gamma$.
  In what follows, we use the notations \eqref{E:Pull_Dom} and \eqref{E:Pull_Bo} (with $\varepsilon=1$).
  For each $k\in\mathbb{N}$, since $u_k$ satisfies \eqref{E:Bo_Imp} on $\Gamma_{\varepsilon_k}$, it follows from \eqref{E:Exp_Bo} that
  \begin{align*}
    u_k\cdot\bar{\tau}_{\varepsilon_k}^i = \varepsilon_k^{-1}u_k\cdot\bar{n} \quad\text{on}\quad \Gamma_{\varepsilon_k}^i,\,i=0,1.
  \end{align*}
  This equality yields $U_k\cdot\bar{\tau}_{\varepsilon_k}^i=\varepsilon_k^{-1}U_k\cdot\bar{n}$ on $\Gamma_1^i$, $i=0,1$, or equivalently,
  \begin{align*}
    U_{k,i}^\sharp(y)\cdot\tau_{\varepsilon_k}^i(y) = \varepsilon_k^{-1}U_{k,i}^\sharp(y)\cdot n(y), \quad y\in\Gamma,\,i=0,1.
  \end{align*}
  Hence
  \begin{align} \label{Pf_KU:L2_Diff}
    \|U_{k,1}^\sharp\cdot\tau_{\varepsilon_k}^1-U_{k,0}^\sharp\cdot\tau_{\varepsilon_k}^0\|_{L^2(\Gamma)} \leq \varepsilon_k^{-1}\|U_{k,1}^\sharp\cdot n-U_{k,0}^\sharp\cdot n\|_{L^2(\Gamma)}.
  \end{align}
  Moreover, since $\bar{n}^\sharp(y,r)=n(y)$ for $y\in\Gamma$ and $r\in(g_0(y),g_1(y))$,
  \begin{align*}
    (U_{k,1}^\sharp\cdot n)(y)-(U_{k,0}^\sharp\cdot n)(y) &= \int_{g_0(y)}^{g_1(y)}\frac{\partial}{\partial r}\Bigl((U_k\cdot\bar{n})^\sharp(y,r)\Bigr)\,dr \\
    &= \int_{g_0(y)}^{g_1(y)}[\partial_n(U_k\cdot\bar{n})]^\sharp(y,r)\,dr.
  \end{align*}
  By this equality, H\"{o}lder's inequality, \eqref{E:CoV_Equiv}, and \eqref{Pf_KU:L2_DUk},
  \begin{align*}
    \|U_{k,1}^\sharp\cdot n-U_{k,0}^\sharp\cdot n\|_{L^2(\Gamma)}^2 &= \int_\Gamma\left(\int_{g_0(y)}^{g_1(y)}[\partial_n(U_k\cdot\bar{n})]^\sharp(y,r)\,dr\right)^2d\mathcal{H}^2(y) \\
    &\leq c\|\partial_n(U_k\cdot\bar{n})\|_{L^2(\Omega_1)}^2 \leq c\varepsilon_k^2k^{-1}.
  \end{align*}
  Applying this inequality to the right-hand side of \eqref{Pf_KU:L2_Diff} we get
  \begin{align} \label{Pf_KU:Lim_Diff}
    \|U_{k,1}^\sharp\cdot\tau_{\varepsilon_k}^1-U_{k,0}^\sharp\cdot\tau_{\varepsilon_k}^0\|_{L^2(\Gamma)} \leq ck^{-1/2} \to 0 \quad\text{as}\quad k\to\infty.
  \end{align}
  Also, by \eqref{E:Tau_Bound}, \eqref{E:Tau_Diff}, and \eqref{E:Lp_CoV_Surf},
  \begin{align*}
    \|U_{k,i}^\sharp\cdot\tau_{\varepsilon_k}^i-v\cdot\nabla_\Gamma g_i\|_{L^2(\Gamma)} &\leq \|(U_{k,i}^\sharp-v)\cdot\tau_{\varepsilon_k}^i\|_{L^2(\Gamma)}+\|v\cdot(\tau_{\varepsilon_k}^i-\nabla_\Gamma g_i)\|_{L^2(\Gamma)} \\
    &\leq c\left(\|U_{k,i}^\sharp-v\|_{L^2(\Gamma)}+\varepsilon_k\|v\|_{L^2(\Gamma)}\right) \\
    &\leq c\left(\|U_k-\bar{v}\|_{L^2(\Gamma_1^i)}+\varepsilon_k\|v\|_{L^2(\Gamma)}\right).
  \end{align*}
  Since the right-hand side tends to zero as $k\to\infty$ by \eqref{Pf_KU:Lim_Bo},
  \begin{align*}
    \lim_{k\to\infty}\|U_{k,i}^\sharp\cdot\tau_{\varepsilon_k}^i-v\cdot\nabla_\Gamma g_i\|_{L^2(\Gamma)} = 0, \quad i=0,1.
  \end{align*}
  This equality and \eqref{Pf_KU:Lim_Diff} imply
  \begin{align*}
     \|v\cdot\nabla_\Gamma g\|_{L^2(\Gamma)} = \|v\cdot\nabla_\Gamma g_1-v\cdot\nabla_\Gamma g_0\|_{L^2(\Gamma)} = 0.
   \end{align*}
  Hence $v\cdot\nabla_\Gamma g=0$ on $\Gamma$ and we obtain $v\in\mathcal{K}_g(\Gamma)$.

  Now we recall that $u_k\in H^1(\Omega_{\varepsilon_k})^3$ satisfies \eqref{E:Korn_Orth}.
  Then since $v\in\mathcal{K}_g(\Gamma)$,
  \begin{align} \label{Pf_KU:Fin_Ineq}
    |(u_k,\bar{v})_{L^2(\Omega_{\varepsilon_k})}| \leq \beta\|u_k\|_{L^2(\Omega_{\varepsilon_k})}\|\bar{v}\|_{L^2(\Omega_{\varepsilon_k})}, \quad k\in\mathbb{N}
  \end{align}
  with $\beta\in[0,1)$.
  We express this inequality in terms of $U_k$ and send $k\to\infty$.
  Let
  \begin{align} \label{Pf_Ku:Phik}
    \varphi_k(X):=J(\pi(X),d(X))^{-1}J(\pi(X),\varepsilon_kd(X)), \quad X\in \Omega_1.
  \end{align}
  Then by \eqref{E:CoV_Fixed} and $U_k=u_k\circ\Phi_{\varepsilon_k}$ on $\Omega_1$ we have
  \begin{gather*}
    (u_k,\bar{v})_{L^2(\Omega_{\varepsilon_k})} = \varepsilon_k\int_{\Omega_1}U_k\cdot(\bar{v}\circ\Phi_{\varepsilon_k})\varphi_k\,dX.
  \end{gather*}
  Here $\bar{v}\circ\Phi_{\varepsilon_k}=\bar{v}$ in $\Omega_1$ since $\pi\circ\Phi_{\varepsilon_k}=\pi$ in $\Omega_1$ by \eqref{E:Def_Bij}.
  Moreover, since
  \begin{align*}
    \varphi_k(X)-J(\pi(X),d(X))^{-1} = J(\pi(X),d(X))^{-1}\{J(\pi(X),\varepsilon_kd(X))-1\}
  \end{align*}
  for $X\in\Omega_1$, we observe by \eqref{E:Jac_Bound_01}, \eqref{E:Jac_Diff_01}, and $|d|\leq c$ in $\Omega_1$ that
  \begin{align} \label{Pf_Ku:Conv_Phik}
    |\varphi_k(X)-J(\pi(X),d(X))^{-1}| \leq c\varepsilon_k|d(X)| \leq c\varepsilon_k \to 0 \quad\text{as}\quad k\to\infty
  \end{align}
  uniformly in $X\in\Omega_1$.
  From these facts and the strong convergence of $\{U_k\}_{k=1}^\infty$ to $U=\bar{v}$ in $L^2(\Omega_1)^3$ we deduce that
  \begin{align*}
    \lim_{k\to\infty}\varepsilon_k^{-1}(u_k,\bar{v})_{L^2(\Omega_{\varepsilon_k})} = \lim_{k\to\infty}\int_{\Omega_1}(U_k\cdot\bar{v})\varphi_k\,dX = \int_{\Omega_1}|\bar{v}|^2J(\pi(\cdot),d(\cdot))^{-1}\,dX.
  \end{align*}
  By \eqref{E:CoV_Dom} with $\varepsilon=1$ the last term is of the form
  \begin{align*}
    \int_\Gamma\int_{g_0(y)}^{g_1(y)}|v(y)|^2J(y,r)^{-1}J(y,r)\,dr\,d\mathcal{H}^2(y) = \int_\Gamma g(y)|v(y)|^2\,d\mathcal{H}^2(y).
  \end{align*}
  Therefore,
  \begin{align} \label{PF_KU:Fin_Lim_1}
    \lim_{k\to\infty}\varepsilon_k^{-1}(u_k,\bar{v})_{L^2(\Omega_{\varepsilon_k})} = \|g^{1/2}v\|_{L^2(\Gamma)}^2.
  \end{align}
  By the same arguments we have
  \begin{align} \label{Pf_KU:Fin_Lim_2}
    \lim_{k\to\infty}\varepsilon_k^{-1}\|u_k\|_{L^2(\Omega_{\varepsilon_k})}^2 = \lim_{k\to\infty}\varepsilon_k^{-1}\|\bar{v}\|_{L^2(\Omega_{\varepsilon_k})}^2 = \|g^{1/2}v\|_{L^2(\Gamma)}^2.
  \end{align}
  We divide both sides of \eqref{Pf_KU:Fin_Ineq} by $\varepsilon_k$, send $k\to\infty$, and use \eqref{PF_KU:Fin_Lim_1}--\eqref{Pf_KU:Fin_Lim_2} to get
  \begin{align*}
    \|g^{1/2}v\|_{L^2(\Gamma)}^2 \leq \beta\|g^{1/2}v\|_{L^2(\Gamma)}^2.
  \end{align*}
  By this inequality, $\beta<1$, and \eqref{E:G_Inf} we obtain $v=0$ on $\Gamma$ and thus $U=\bar{v}=0$ in $\Omega_1$, which contradicts with \eqref{Pf_KU:L2_Limit}. Hence \eqref{E:Korn_U} is valid.
\end{proof}

Combining Lemmas \ref{L:Korn_Grad} and \ref{L:Korn_U} we obtain the uniform Korn inequality on $\Omega_\varepsilon$.

\begin{lemma} \label{L:Korn_H1}
  For $\beta\in[0,1)$ there exist $\varepsilon_{K,\beta}\in(0,1]$ and $c_{K,\beta}>0$ such that
  \begin{align} \label{E:Korn_H1}
    \|u\|_{H^1(\Omega_\varepsilon)}^2 \leq c_{K,\beta}\|D(u)\|_{L^2(\Omega_\varepsilon)}^2
  \end{align}
  for all $\varepsilon\in(0,\varepsilon_{K,\beta}]$ and $u\in H^1(\Omega_\varepsilon)^3$ satisfying \eqref{E:Bo_Imp} and \eqref{E:Korn_Orth}.
\end{lemma}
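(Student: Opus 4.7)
The plan is a direct combination of the two preceding lemmas, where the only subtlety is to choose the free parameter $\alpha$ in Lemma \ref{L:Korn_U} small enough so that the $\|\nabla u\|_{L^2(\Omega_\varepsilon)}^2$ term produced by Lemma \ref{L:Korn_U} can be absorbed into the corresponding term from Lemma \ref{L:Korn_Grad}.

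First I would fix $\alpha := 1/(2c_{K,1})$, where $c_{K,1}$ is the constant from Lemma \ref{L:Korn_Grad} (which depends only on $\Gamma$ and $g_0,g_1$, not on $\beta$). With this choice, Lemma \ref{L:Korn_U} furnishes constants $\varepsilon_{K,\beta} := \varepsilon_K(\alpha,\beta) \in (0,1]$ and $c_{K,2} := c_{K,2}(\alpha,\beta) > 0$ such that
\begin{align*}
  \|u\|_{L^2(\Omega_\varepsilon)}^2 \leq \frac{1}{2c_{K,1}}\|\nabla u\|_{L^2(\Omega_\varepsilon)}^2 + c_{K,2}\|D(u)\|_{L^2(\Omega_\varepsilon)}^2
\end{align*}
for every $\varepsilon \in (0,\varepsilon_{K,\beta}]$ and every $u \in H^1(\Omega_\varepsilon)^3$ satisfying the impermeable boundary condition \eqref{E:Bo_Imp} and the orthogonality condition \eqref{E:Korn_Orth}.

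Next I would plug this $L^2$-estimate into the gradient estimate \eqref{E:Korn_Grad} from Lemma \ref{L:Korn_Grad}, which is valid for all such $u$ and all $\varepsilon \in (0,1]$. This gives
\begin{align*}
  \|\nabla u\|_{L^2(\Omega_\varepsilon)}^2 &\leq 4\|D(u)\|_{L^2(\Omega_\varepsilon)}^2 + c_{K,1}\|u\|_{L^2(\Omega_\varepsilon)}^2 \\
  &\leq (4 + c_{K,1}c_{K,2})\|D(u)\|_{L^2(\Omega_\varepsilon)}^2 + \tfrac{1}{2}\|\nabla u\|_{L^2(\Omega_\varepsilon)}^2,
\end{align*}
and absorbing the last term to the left-hand side yields $\|\nabla u\|_{L^2(\Omega_\varepsilon)}^2 \leq 2(4 + c_{K,1}c_{K,2})\|D(u)\|_{L^2(\Omega_\varepsilon)}^2$. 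Feeding this back into the $L^2$-estimate controls $\|u\|_{L^2(\Omega_\varepsilon)}^2$ by $\|D(u)\|_{L^2(\Omega_\varepsilon)}^2$ as well, so adding the two bounds produces
\begin{align*}
  \|u\|_{H^1(\Omega_\varepsilon)}^2 \leq c_{K,\beta}\|D(u)\|_{L^2(\Omega_\varepsilon)}^2
\end{align*}
with $c_{K,\beta}$ depending only on $c_{K,1}$ and $c_{K,2}(\alpha,\beta)$, as claimed.

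There is essentially no obstacle beyond bookkeeping here: both the order in which the lemmas are applied and the absorption argument are forced once one observes that $c_{K,1}$ is independent of $\beta$, so $\alpha$ may be chosen without circular dependence on $c_{K,2}$. The real work has already been done in Lemma \ref{L:Korn_U}, whose proof by contradiction is where the Killing-field hypothesis \eqref{E:Korn_Orth} is exploited; at the present stage the argument is purely algebraic.
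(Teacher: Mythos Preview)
Your proof is correct and follows essentially the same approach as the paper: the paper also sets $\alpha:=1/(2c_{K,1})$, substitutes \eqref{E:Korn_U} into \eqref{E:Korn_Grad}, absorbs the $\tfrac12\|\nabla u\|_{L^2(\Omega_\varepsilon)}^2$ term, and then feeds the resulting gradient bound back into \eqref{E:Korn_U} to control $\|u\|_{L^2(\Omega_\varepsilon)}^2$. Your closing observation that the substantive work lies in Lemma~\ref{L:Korn_U} is also in line with the paper's exposition.
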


\begin{proof}
  Let $c_{K,1}>0$ be the constant given in Lemma \ref{L:Korn_Grad}.
  Also, let $\varepsilon_K\in(0,1]$ and $c_{K,2}>0$ be the constants given in Lemma \ref{L:Korn_U} with $\alpha:=1/2c_{K,1}$.
  For $\varepsilon\in(0,\varepsilon_K]$ let $u\in H^1(\Omega_\varepsilon)^3$ satisfy \eqref{E:Bo_Imp} and \eqref{E:Korn_Orth}.
  By \eqref{E:Korn_Grad} and \eqref{E:Korn_U} we have
  \begin{align*}
    \|\nabla u\|_{L^2(\Omega_\varepsilon)}^2 \leq (4+c_{K,1}c_{K,2})\|D(u)\|_{L^2(\Omega_\varepsilon)}^2+c_{K,1}\alpha\|\nabla u\|_{L^2(\Omega_\varepsilon)}^2.
  \end{align*}
  Since $\alpha=1/2c_{K,1}$, the above inequality implies that
  \begin{align} \label{Pf_KH:K_Grad}
    \|\nabla u\|_{L^2(\Omega_\varepsilon)}^2 \leq c_{\beta,1}\|D(u)\|_{L^2(\Omega_\varepsilon)}^2, \quad c_{\beta,1} := 2(4+c_{K,1}c_{K,2}).
  \end{align}
  From this inequality and \eqref{E:Korn_U} we further deduce that
  \begin{align} \label{Pf_KH:K_U}
    \|u\|_{L^2(\Omega_\varepsilon)}^2 \leq c_{\beta,2}\|D(u)\|_{L^2(\Omega_\varepsilon)}^2, \quad c_{\beta,2} := 2(2c_{K,1}^{-1}+c_{K,2}).
  \end{align}
  By \eqref{Pf_KH:K_Grad} and \eqref{Pf_KH:K_U} we get \eqref{E:Korn_H1} with $\varepsilon_{K,\beta}:=\varepsilon_K$ and $c_{K,\beta}:=c_{\beta,1}+c_{\beta,2}$.
\end{proof}

Let $\mathcal{R}_g$ be the space of infinitesimal rigid displacements of $\mathbb{R}^3$ given by \eqref{E:Def_Rg}:
\begin{align*}
  \mathcal{R}_g = \{w(x)=a\times x+b,\,x\in\mathbb{R}^3 \mid \text{$a,b\in\mathbb{R}^3$, $w|_\Gamma\cdot n=w|_\Gamma\cdot \nabla_\Gamma g=0$ on $\Gamma$}\}.
\end{align*}
We show that \eqref{E:Korn_H1} holds with $\mathcal{K}_g(\Gamma)$ in the condition \eqref{E:Korn_Orth} replaced by $\mathcal{R}_g$ if $\mathcal{K}_g(\Gamma)$ agrees with $\mathcal{R}_g|_\Gamma:=\{w|_\Gamma \mid w\in\mathcal{R}_g\}$ (see also Remark \ref{R:Killing}).

\begin{lemma} \label{L:Korn_Rg}
  Suppose that $\mathcal{R}_g|_\Gamma=\mathcal{K}_g(\Gamma)$.
  Then for $\beta\in[0,1)$ there exist constants $\varepsilon_{K,\beta}\in(0,1]$ and $c_{K,\beta}>0$ such that the inequality \eqref{E:Korn_H1} holds for all $\varepsilon\in(0,\varepsilon_{K,\beta}]$ and $u\in H^1(\Omega_\varepsilon)^3$ satisfying \eqref{E:Bo_Imp} and
  \begin{align} \label{E:KoRg_Orth}
    \left|(u,w)_{L^2(\Omega_\varepsilon)}\right| \leq \beta\|u\|_{L^2(\Omega_\varepsilon)}\|w\|_{L^2(\Omega_\varepsilon)} \quad\text{for all}\quad w\in\mathcal{R}_g.
  \end{align}
\end{lemma}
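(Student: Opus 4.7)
The plan is to deduce Lemma \ref{L:Korn_Rg} from Lemma \ref{L:Korn_H1} by showing that, under the hypothesis $\mathcal{R}_g|_\Gamma=\mathcal{K}_g(\Gamma)$, the condition \eqref{E:KoRg_Orth} forces \eqref{E:Korn_Orth} to hold with a slightly larger constant $\beta'\in[\beta,1)$ as soon as $\varepsilon$ is small enough. Since $\mathcal{R}_g$ is finite-dimensional and the restriction map $\mathcal{R}_g\ni w\mapsto w|_\Gamma\in\mathcal{K}_g(\Gamma)$ is linear, and moreover injective (if $w(x)=a\times x+b$ vanishes on $\Gamma$ then $a,b$ must vanish because $\Gamma$ contains three non-collinear points), the hypothesis promotes this map to a linear isomorphism between finite-dimensional spaces. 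Hence there is a constant $C>0$, depending only on $\Gamma$, such that every $w\in\mathcal{R}_g$ of the form $w(x)=a\times x+b$ satisfies $|a|+|b|\leq C\|w|_\Gamma\|_{L^2(\Gamma)}$.

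Given $v\in\mathcal{K}_g(\Gamma)$, let $w\in\mathcal{R}_g$ be the unique preimage with $w|_\Gamma=v$, and write $w(x)=a\times x+b$. For $x\in\Omega_\varepsilon$ with $y=\pi(x)$ and $r=d(x)$ we have $|r|\leq c\varepsilon$ and
\begin{align*}
  w(x)-\bar{v}(x) = w(y+rn(y))-w(y) = r\bigl(a\times n(y)\bigr),
\end{align*}
so $|w(x)-\bar{v}(x)|\leq c\varepsilon|a|$ pointwise on $\Omega_\varepsilon$. Using $|\Omega_\varepsilon|\leq c\varepsilon$ together with the lower bound $\|\bar{v}\|_{L^2(\Omega_\varepsilon)}\geq c\varepsilon^{1/2}\|v\|_{L^2(\Gamma)}\geq cC^{-1}\varepsilon^{1/2}|a|$ from \eqref{E:Con_Lp}, I would conclude
\begin{align*}
  \|w-\bar{v}\|_{L^2(\Omega_\varepsilon)} \leq c\varepsilon^{3/2}|a| \leq c\varepsilon\|\bar{v}\|_{L^2(\Omega_\varepsilon)}, \quad \|w\|_{L^2(\Omega_\varepsilon)} \leq (1+c\varepsilon)\|\bar{v}\|_{L^2(\Omega_\varepsilon)},
\end{align*}
with $c>0$ independent of $\varepsilon$ and $v$.

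Now for $u\in H^1(\Omega_\varepsilon)^3$ satisfying \eqref{E:Bo_Imp} and \eqref{E:KoRg_Orth}, splitting $\bar{v}=w+(\bar{v}-w)$ and applying \eqref{E:KoRg_Orth} to $w\in\mathcal{R}_g$ gives
\begin{align*}
  |(u,\bar{v})_{L^2(\Omega_\varepsilon)}| &\leq \beta\|u\|_{L^2(\Omega_\varepsilon)}\|w\|_{L^2(\Omega_\varepsilon)}+\|u\|_{L^2(\Omega_\varepsilon)}\|\bar{v}-w\|_{L^2(\Omega_\varepsilon)} \\
  &\leq \bigl(\beta+c\varepsilon\bigr)\|u\|_{L^2(\Omega_\varepsilon)}\|\bar{v}\|_{L^2(\Omega_\varepsilon)}.
\end{align*}
Fixing $\beta':=(1+\beta)/2\in[0,1)$ and choosing $\varepsilon$ so small that $\beta+c\varepsilon\leq\beta'$, I obtain \eqref{E:Korn_Orth} with $\beta$ replaced by $\beta'$. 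Applying Lemma \ref{L:Korn_H1} with this $\beta'$ then yields \eqref{E:Korn_H1} with $\varepsilon_{K,\beta}:=\min\{\varepsilon_{K,\beta'},(\beta'-\beta)/c\}$ and $c_{K,\beta}:=c_{K,\beta'}$. The case $\mathcal{K}_g(\Gamma)=\{0\}$ is trivial since both \eqref{E:Korn_Orth} and \eqref{E:KoRg_Orth} are vacuous, and Lemma \ref{L:Korn_H1} applies directly.

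The one delicate point will be verifying that the constants in the inequalities $|a|+|b|\leq C\|v\|_{L^2(\Gamma)}$ and $\|\bar{v}\|_{L^2(\Omega_\varepsilon)}\geq c\varepsilon^{1/2}\|v\|_{L^2(\Gamma)}$ combine to give a single $\varepsilon$-independent constant $c$ in the bound $\|w-\bar{v}\|_{L^2(\Omega_\varepsilon)}\leq c\varepsilon\|\bar{v}\|_{L^2(\Omega_\varepsilon)}$; this uses essentially only the finite dimensionality of $\mathcal{R}_g$ and the lower bound \eqref{E:G_Inf} on $g$, so no serious obstacle is anticipated.
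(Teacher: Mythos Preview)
Your argument is correct and takes a genuinely different route from the paper. The paper's proof re-runs the contradiction argument behind Lemma~\ref{L:Korn_U}: it assumes \eqref{E:Korn_U} fails under \eqref{E:KoRg_Orth}, passes to the fixed domain $\Omega_1$, extracts a limit $v\in\mathcal{K}_g(\Gamma)$, lifts it to $w\in\mathcal{R}_g$ via the hypothesis, and then uses the explicit form $w(x)=a\times x+b$ to show $w\circ\Phi_{\varepsilon_k}\to w\circ\pi$ uniformly and derive the contradiction. You instead observe that the hypothesis makes \eqref{E:KoRg_Orth} a quantitative perturbation of \eqref{E:Korn_Orth}: the pointwise identity $w-\bar v=d(a\times\bar n)$ together with the norm equivalence on the finite-dimensional space $\mathcal{R}_g$ yields $\|w-\bar v\|_{L^2(\Omega_\varepsilon)}\le c\varepsilon\|\bar v\|_{L^2(\Omega_\varepsilon)}$, so \eqref{E:KoRg_Orth} implies \eqref{E:Korn_Orth} with $\beta'=(1+\beta)/2$ for small $\varepsilon$, and Lemma~\ref{L:Korn_H1} applies directly. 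Your approach is shorter and avoids repeating the compactness machinery; the paper's approach keeps the structure of Lemma~\ref{L:Korn_U} intact and localizes the new ingredient to a single uniform-convergence step. One small cosmetic point: in your final displayed estimate the coefficient is really $\beta(1+c\varepsilon)+c\varepsilon$, but since $\beta<1$ this is still of the form $\beta+c'\varepsilon$, so the conclusion is unaffected.
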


Note that the vector field $w\in\mathcal{R}_g$ in \eqref{E:KoRg_Orth} has an explicit form $w(x)=a\times x+b$ for $x\in\mathbb{R}^3$, which is essential for the proof of Lemma \ref{L:Korn_Rg}.

\begin{proof}
  The proof is the same as that of Lemma \ref{L:Korn_H1} if we show that the statement of Lemma \ref{L:Korn_U} is still valid under the condition \eqref{E:KoRg_Orth} instead of \eqref{E:Korn_Orth}.
  Assume to the contrary that there exist a sequence $\{\varepsilon_k\}_{k=1}^\infty$ of positive numbers convergent to zero and vector fields $u_k\in H^1(\Omega_{\varepsilon_k})^3$, $k\in\mathbb{N}$ satisfying \eqref{E:Bo_Imp} on $\Gamma_{\varepsilon_k}$, \eqref{PF_KU:Ineq_Contra}, and \eqref{E:KoRg_Orth}.
  Let $\Phi_{\varepsilon_k}$ be the bijection from $\Omega_1$ onto $\Omega_{\varepsilon_k}$ given by \eqref{E:Def_Bij} and
  \begin{align*}
    U_k := u_k\circ\Phi_{\varepsilon_k} \in H^1(\Omega_1)^3.
  \end{align*}
  Then, after replacing $U_k$ with $U_k/\|U_k\|_{L^2(\Omega_1)}$, we can show as in the proof of Lemma \ref{L:Korn_U} that $\{U_k\}_{k=1}^\infty$ converges (up to a subsequence) strongly in $L^2(\Omega_1)^3$ to the constant extension $\bar{v}$ of some $v\in\mathcal{K}_g(\Gamma)$ and
  \begin{align} \label{Pf_KRg:Lim_L2}
    \|\bar{v}\|_{L^2(\Omega_1)} = \lim_{k\to\infty}\|U_k\|_{L^2(\Omega_1)} =1.
  \end{align}
  Now we can take $w\in\mathcal{R}_g$ such that $w|_\Gamma=v$ on $\Gamma$ by the assumption $\mathcal{R}_g|_\Gamma=\mathcal{K}_g(\Gamma)$.
  Then since $u_k$ satisfies \eqref{E:KoRg_Orth} and $w\in\mathcal{R}_g$,
  \begin{align} \label{Pf_KRg:Orth}
    \left|(u_k,w)_{L^2(\Omega_{\varepsilon_k})}\right| \leq \beta\|u_k\|_{L^2(\Omega_{\varepsilon_k})}\|w\|_{L^2(\Omega_{\varepsilon_k})}, \quad k\in\mathbb{N}.
  \end{align}
  Let $\varphi_k$ be the function on $\Omega_1$ given by \eqref{Pf_Ku:Phik}.
  Then by \eqref{E:CoV_Fixed} we get
  \begin{align*}
    (u_k,w)_{L^2(\Omega_{\varepsilon_k})} = \varepsilon_k\int_{\Omega_1}U_k\cdot(w\circ\Phi_{\varepsilon_k})\varphi_k\,dX.
  \end{align*}
  Since $w\in\mathcal{R}_g$ is of the form $w(x)=a\times x+b$ with $a,b\in\mathbb{R}^3$,
  \begin{align*}
    w(\Phi_{\varepsilon_k}(X)) = a\times\{\pi(X)+\varepsilon_kd(X)\bar{n}(X)\}+b = w(\pi(X))+\varepsilon_kd(X)\{a\times\bar{n}(X)\}
  \end{align*}
  for $X\in\Omega_1$.
  Hence by $|d|\leq c$ and $|\bar{n}|=1$ in $\Omega_1$ we have
  \begin{align*}
    |w\circ\Phi_{\varepsilon_k}-w\circ\pi| = \varepsilon_k|d(a\times\bar{n})| \leq c\varepsilon_k \to 0 \quad\text{as}\quad k\to\infty
  \end{align*}
  uniformly on $\Omega_1$.
  By this fact, \eqref{Pf_Ku:Conv_Phik}, and the strong convergence of $\{U_k\}_{k=1}^\infty$ to $\bar{v}$ in $L^2(\Omega_1)^3$ we observe that
  \begin{align*}
    \lim_{k\to\infty}\varepsilon_k^{-1}(u_k,w)_{L^2(\Omega_{\varepsilon_k})} = \int_{\Omega_1}\bar{v}\cdot(w\circ\pi)J(\pi(\cdot),d(\cdot))^{-1}\,dX.
  \end{align*}
  To the right-hand side we further apply \eqref{E:CoV_Dom} with $\varepsilon=1$,
  \begin{align*}
    \pi(y+rn(y)) = y, \quad y\in\Gamma, \, r\in(g_0(y),g_1(y)),
  \end{align*}
  and $w|_\Gamma=v$ on $\Gamma$ to obtain
  \begin{align*}
    \lim_{k\to\infty}\varepsilon_k^{-1}(u_k,w)_{L^2(\Omega_{\varepsilon_k})} = \int_\Gamma g(y)v(y)\cdot w(y)\,d\mathcal{H}^2(y) = \|g^{1/2}v\|_{L^2(\Gamma)}^2.
  \end{align*}
  In the same way we can show that
  \begin{align*}
    \lim_{k\to\infty}\varepsilon_k^{-1}\|u_k\|_{L^2(\Omega_{\varepsilon_k})}^2 = \lim_{k\to\infty}\varepsilon_k^{-1}\|w\|_{L^2(\Omega_{\varepsilon_k})}^2 = \|g^{1/2}v\|_{L^2(\Gamma)}^2.
  \end{align*}
  Thus, as in the last part of the proof of Lemma \ref{L:Korn_U}, we can derive $v=0$ on $\Gamma$ by dividing both sides of \eqref{Pf_KRg:Orth} by $\varepsilon_k$, sending $k\to\infty$, and using the above equalities, $\beta<1$, and \eqref{E:G_Inf}.
  This implies $\bar{v}=0$ on $\Omega_1$, which contradicts with \eqref{Pf_KRg:Lim_L2}.
  Hence the statement of Lemma \ref{L:Korn_U} holds under the condition \eqref{E:KoRg_Orth} instead of \eqref{E:Korn_Orth}.
\end{proof}

\begin{remark} \label{R:Korn_H1}
  The uniform Korn inequality \eqref{E:Korn_H1} was first established by Lewicka and M\"{u}ller \cite{LeMu11}*{Theorem 2.2} under the condition \eqref{E:Korn_Orth}.
  They combined a uniform Korn inequality on a thin cylinder and Korn's inequality on a surface to prove \eqref{E:Korn_H1}.
  In Lemma \ref{L:Korn_H1} we gave a more direct proof of \eqref{E:Korn_H1} under the same condition.

  The condition \eqref{E:KoRg_Orth} under the assumption $\mathcal{R}_g|_\Gamma=\mathcal{K}_g(\Gamma)$ is a new condition for the uniform Korn inequality \eqref{E:Korn_H1}.
  Note that we take a vector field $w\in\mathcal{R}_g$ defined on $\mathbb{R}^3$ itself in \eqref{E:KoRg_Orth}, not its restriction on $\Gamma$ as in \cite{LeMu11}.
  Due to this fact, Lemma \ref{L:Korn_Rg} under the assumption $\mathcal{R}_g=\mathcal{K}_g(\Gamma)$ gives an improvement of \cite{LeMu11}*{Theorem 2.3} which shows Korn's inequality with a constant of order $\varepsilon^{-1}$.

  As we mentioned in Remark \ref{R:Killing}, we have $\mathcal{R}|_\Gamma=\mathcal{K}(\Gamma)$ and thus $\mathcal{R}_g|_\Gamma=\mathcal{K}_g(\Gamma)$ for any $g$ if $\Gamma$ is axially symmetric or it is closed and convex.
  In particular, Lemma \ref{L:Korn_Rg} is applicable for curved thin domains around the unit sphere $S^2$ in $\mathbb{R}^3$.
\end{remark}

\subsection{Difference between the uniform and standard Korn inequalities} \label{SS:Diff_Korn}
In this subsection we discuss the difference between the uniform Korn inequality \eqref{E:Korn_H1} and a standard Korn inequality related to the axial symmetry of a domain.

For a fixed $\varepsilon\in(0,1]$ let
\begin{align*}
  \mathcal{R}_\varepsilon := \{w(x)=a\times x+b,\,x\in\mathbb{R}^3 \mid \text{$a,b\in\mathbb{R}^3$, $w|_{\Gamma_\varepsilon}\cdot n_\varepsilon=0$ on $\Gamma_\varepsilon$}\}.
\end{align*}
The set $\mathcal{R}_\varepsilon$ stands for the axial symmetry of $\Omega_\varepsilon$, i.e. $\mathcal{R}_\varepsilon\neq\{0\}$ if and only if $\Omega_\varepsilon$ is axially symmetric around some line (see Lemma \ref{L:IR_Surf}).
It appears in the following standard Korn inequality with a constant depending on a domain (see also \cites{AmRe14,Be04,SoSc73}).

\begin{lemma} \label{L:Korn_Reps}
  For fixed $\varepsilon\in(0,1]$ and $\beta\in[0,1)$ there exists a constant $c_\varepsilon>0$ depending on $\varepsilon$ (and $\beta$) such that
  \begin{align} \label{E:Korn_Reps}
    \|u\|_{H^1(\Omega_\varepsilon)} \leq c_\varepsilon\|D(u)\|_{L^2(\Omega_\varepsilon)}
  \end{align}
  for all $u\in H^1(\Omega_\varepsilon)^3$ satisfying \eqref{E:Bo_Imp} and
  \begin{align} \label{E:Reps_Orth}
    \left|(u,w)_{L^2(\Omega_\varepsilon)}\right| \leq \beta\|u\|_{L^2(\Omega_\varepsilon)}\|w\|_{L^2(\Omega_\varepsilon)} \quad\text{for all}\quad w\in\mathcal{R}_\varepsilon.
  \end{align}
\end{lemma}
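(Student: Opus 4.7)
Since $\varepsilon$ is fixed and $\Omega_\varepsilon$ is a bounded domain with a $C^4$ boundary, everything is available from standard Korn theory in a bounded Lipschitz domain; the only genuinely thin-domain-specific feature, the uniformity in $\varepsilon$, is explicitly absent here. My plan is to derive \eqref{E:Korn_Reps} by combining the classical Korn's second inequality on $\Omega_\varepsilon$ with a compactness-contradiction argument, using the orthogonality hypothesis \eqref{E:Reps_Orth} only at the very end to kill the limit.

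Concretely, the first step is to recall Korn's second inequality on the bounded Lipschitz domain $\Omega_\varepsilon$: there is a constant $C_\varepsilon>0$ such that
\begin{align*}
  \|\nabla u\|_{L^2(\Omega_\varepsilon)} \leq C_\varepsilon\left(\|u\|_{L^2(\Omega_\varepsilon)}+\|D(u)\|_{L^2(\Omega_\varepsilon)}\right)
\end{align*}
for every $u\in H^1(\Omega_\varepsilon)^3$. Given this, proving \eqref{E:Korn_Reps} reduces to establishing an $L^2$-estimate of the form $\|u\|_{L^2(\Omega_\varepsilon)} \leq c_\varepsilon\|D(u)\|_{L^2(\Omega_\varepsilon)}$ under the hypotheses \eqref{E:Bo_Imp} and \eqref{E:Reps_Orth}. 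So the second step is to prove this $L^2$-estimate by contradiction: assume there is a sequence $\{u_k\}_{k=1}^\infty$ in $H^1(\Omega_\varepsilon)^3$ satisfying \eqref{E:Bo_Imp} and \eqref{E:Reps_Orth}, normalized so that $\|u_k\|_{L^2(\Omega_\varepsilon)}=1$, with $\|D(u_k)\|_{L^2(\Omega_\varepsilon)}\to 0$.

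Korn's second inequality then forces $\{u_k\}$ to be bounded in $H^1(\Omega_\varepsilon)^3$, so after extracting a subsequence we may assume $u_k\to u$ weakly in $H^1(\Omega_\varepsilon)^3$ and strongly in $L^2(\Omega_\varepsilon)^3$ for some $u\in H^1(\Omega_\varepsilon)^3$ with $\|u\|_{L^2(\Omega_\varepsilon)}=1$. Weak lower semicontinuity gives $\|D(u)\|_{L^2(\Omega_\varepsilon)}=0$, hence $D(u)=0$ in $\Omega_\varepsilon$. A classical result (Liouville-type characterization of $H^1$ vector fields with vanishing symmetric gradient on a connected domain) then forces $u$ to be an infinitesimal rigid displacement, i.e.\ $u(x)=a\times x+b$ for some $a,b\in\mathbb{R}^3$. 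Continuity of the trace map $H^1(\Omega_\varepsilon)\to L^2(\Gamma_\varepsilon)$ passes the boundary condition $u_k\cdot n_\varepsilon=0$ on $\Gamma_\varepsilon$ to the limit, so $u|_{\Gamma_\varepsilon}\cdot n_\varepsilon=0$ and thus $u\in\mathcal{R}_\varepsilon$.

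To finish, I invoke \eqref{E:Reps_Orth} with the specific test vector field $w:=u\in\mathcal{R}_\varepsilon$, which gives
\begin{align*}
  \bigl|(u_k,u)_{L^2(\Omega_\varepsilon)}\bigr| \leq \beta\|u_k\|_{L^2(\Omega_\varepsilon)}\|u\|_{L^2(\Omega_\varepsilon)}
\end{align*}
for every $k$. Passing to the limit via the $L^2$ strong convergence of $u_k$ to $u$ yields $\|u\|_{L^2(\Omega_\varepsilon)}^2\leq\beta\|u\|_{L^2(\Omega_\varepsilon)}^2$, and since $\beta<1$ this forces $u=0$, contradicting $\|u\|_{L^2(\Omega_\varepsilon)}=1$. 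I do not anticipate a serious technical obstacle: the only points requiring care are checking that the limiting vector field does lie in $\mathcal{R}_\varepsilon$ (trace continuity plus the Liouville-type characterization) and that the orthogonality constraint, which is a family of inequalities indexed by $w\in\mathcal{R}_\varepsilon$, is used only for the single element $w=u$ so no uniformity over $\mathcal{R}_\varepsilon$ is needed.
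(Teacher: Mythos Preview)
Your proposal is correct and follows essentially the same contradiction-and-compactness route as the paper's proof: reduce to an $L^2$ bound via a Korn-type inequality, extract a weakly $H^1$-convergent subsequence, identify the limit as an element of $\mathcal{R}_\varepsilon$, and derive a contradiction from \eqref{E:Reps_Orth} with $w=u$ and $\beta<1$. The only cosmetic difference is that the paper invokes its own uniform-in-$\varepsilon$ estimate \eqref{E:Korn_Grad} (which already assumes \eqref{E:Bo_Imp}) in place of the classical Korn's second inequality, and uses its trace inequality \eqref{E:Trace_L2} to upgrade to strong $L^2(\Gamma_\varepsilon)$ convergence of traces; since $\varepsilon$ is fixed, your appeal to the standard Korn inequality and trace continuity is equally valid.
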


\begin{proof}
  The proof is much easier than those of Lemmas \ref{L:Korn_H1} and \ref{L:Korn_Rg} since we fix $\varepsilon$ and do not use a change of variables.
  By \eqref{E:Korn_Grad} it is sufficient to show that
  \begin{align} \label{Pf_KRe:Goal}
    \|u\|_{L^2(\Omega_\varepsilon)} \leq c_\varepsilon\|D(u)\|_{L^2(\Omega_\varepsilon)}
  \end{align}
  for all $u\in H^1(\Omega_\varepsilon)^3$ satisfying \eqref{E:Bo_Imp} and \eqref{E:Reps_Orth}.
  Assume to the contrary that there exist vector fields $u_k\in H^1(\Omega_\varepsilon)^3$, $k\in\mathbb{N}$ satisfying \eqref{E:Bo_Imp}, \eqref{E:Reps_Orth}, and
  \begin{align} \label{Pf_KRe:Cont}
    \|u_k\|_{L^2(\Omega_\varepsilon)} > k\|D(u_k)\|_{L^2(\Omega_\varepsilon)}.
  \end{align}
  Replacing $u_k$ with $u_k/\|u_k\|_{L^2(\Omega_\varepsilon)}$ we may assume that
  \begin{align} \label{Pf_KRe:Uk_L2}
    \|u_k\|_{L^2(\Omega_\varepsilon)} = 1, \quad \|D(u_k)\|_{L^2(\Omega_\varepsilon)} < k^{-1}, \quad k\in\mathbb{N}.
  \end{align}
  By \eqref{E:Korn_Grad}, \eqref{Pf_KRe:Uk_L2}, and the compact embedding $H^1(\Omega_\varepsilon)\hookrightarrow L^2(\Omega_\varepsilon)$ we observe that (up to a subsequence) $\{u_k\}_{k=1}^\infty$ converges to some $u\in H^1(\Omega_\varepsilon)^3$ strongly in $L^2(\Omega_\varepsilon)^3$ and weakly in $H^1(\Omega_\varepsilon)^3$.
  Moreover, $\{u_k|_{\Gamma_\varepsilon}\}_{k=1}^\infty$ converges to $u|_{\Gamma_\varepsilon}$ strongly in $L^2(\Gamma_\varepsilon)^3$ by the trace inequality \eqref{E:Trace_L2} given below (note that we fix $\varepsilon$).
  By these facts, \eqref{Pf_KRe:Uk_L2}, and the fact that $u_k$ satisfies \eqref{E:Bo_Imp} for each $k\in\mathbb{N}$ we have
  \begin{align} \label{Pf_KRe:Limit}
    \|u\|_{L^2(\Omega_\varepsilon)} = 1, \quad D(u) = 0 \quad\text{in}\quad \Omega_\varepsilon, \quad u\cdot n_\varepsilon = 0 \quad\text{on}\quad \Gamma_\varepsilon.
  \end{align}
  The second equality of \eqref{Pf_KRe:Limit} implies that $u$ is of the form $u(x)=a\times x+b$ with some $a,b\in\mathbb{R}^3$.
  Hence by the third equality of \eqref{Pf_KRe:Limit} we get $u\in\mathcal{R}_\varepsilon$ and thus
  \begin{align*}
    \left|(u_k,u)_{L^2(\Omega_\varepsilon)}\right| \leq \beta\|u_k\|_{L^2(\Omega_\varepsilon)}\|u\|_{L^2(\Omega_\varepsilon)}
  \end{align*}
  for each $k\in\mathbb{N}$ since $u_k$ satisfies \eqref{E:Reps_Orth}.
  We send $k\to\infty$ in this inequality and use the strong convergence of $\{u_k\}_{k=1}^\infty$ to $u$ in $L^2(\Omega_\varepsilon)^3$ and \eqref{Pf_KRe:Limit} to obtain
  \begin{align*}
    1 = \|u\|_{L^2(\Omega_\varepsilon)}^2 \leq \beta\|u\|_{L^2(\Omega_\varepsilon)}^2 = \beta,
  \end{align*}
  which contradicts with $\beta\in[0,1)$.
  Therefore, the inequality \eqref{Pf_KRe:Goal} is valid.
\end{proof}

Let us give a trace inequality for $\Omega_\varepsilon$ used in the above proof.

\begin{lemma} \label{L:Trace_L2}
  There exists a constant $c>0$ such that
  \begin{align} \label{E:Trace_L2}
    \|\varphi\|_{L^2(\Gamma_\varepsilon^i)} \leq c\left(\varepsilon^{-1/2}\|\varphi\|_{L^2(\Omega_\varepsilon)}+\|\varphi\|_{L^2(\Omega_\varepsilon)}^{1/2}\|\partial_n\varphi\|_{L^2(\Omega_\varepsilon)}^{1/2}\right)
  \end{align}
  for all $\varepsilon\in(0,1]$ and $\varphi\in H^1(\Omega_\varepsilon)$, where $\partial_n\varphi=(\bar{n}\cdot\nabla)\varphi$ is the derivative of $\varphi$ in the normal direction of $\Gamma$.
\end{lemma}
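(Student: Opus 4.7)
The plan is to reduce everything to the pulled-back picture on $\Gamma \times (\varepsilon g_0, \varepsilon g_1)$ and derive a pointwise-in-$y$ identity for $|\varphi^\sharp(y,\varepsilon g_i(y))|^2$ via the fundamental theorem of calculus in the normal variable $r$, then integrate and use the change of variables formulas \eqref{E:CoV_Equiv} and \eqref{E:Lp_CoV_Surf}. By a standard density argument (since $\Gamma_\varepsilon$ is of class $C^4$) we may assume $\varphi \in C^1(\overline{\Omega}_\varepsilon)$.

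First, using the notation \eqref{E:Pull_Dom}-\eqref{E:Pull_Bo} and the identity $\partial_r \varphi^\sharp = (\partial_n\varphi)^\sharp$, I would write, for any $y \in \Gamma$ and $r \in (\varepsilon g_0(y), \varepsilon g_1(y))$,
\begin{align*}
  |\varphi_i^\sharp(y)|^2 = |\varphi^\sharp(y,r)|^2 + \int_r^{\varepsilon g_i(y)} 2\,\varphi^\sharp(y,\tilde r)\,(\partial_n\varphi)^\sharp(y,\tilde r)\,d\tilde r,
\end{align*}
so that
\begin{align*}
  |\varphi_i^\sharp(y)|^2 \leq |\varphi^\sharp(y,r)|^2 + 2\int_{\varepsilon g_0(y)}^{\varepsilon g_1(y)} |\varphi^\sharp(y,\tilde r)|\,|(\partial_n\varphi)^\sharp(y,\tilde r)|\,d\tilde r.
\end{align*}
The last term is independent of $r$, so integrating the inequality in $r$ over $(\varepsilon g_0(y),\varepsilon g_1(y))$, dividing by $\varepsilon g(y)$, and using \eqref{E:G_Inf} yields
\begin{align*}
  |\varphi_i^\sharp(y)|^2 \leq \frac{c}{\varepsilon}\int_{\varepsilon g_0(y)}^{\varepsilon g_1(y)} |\varphi^\sharp(y,r)|^2\,dr + 2\int_{\varepsilon g_0(y)}^{\varepsilon g_1(y)} |\varphi^\sharp|\,|(\partial_n\varphi)^\sharp|\,(y,\tilde r)\,d\tilde r.
\end{align*}

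Next I would integrate this pointwise inequality in $y$ over $\Gamma$, apply \eqref{E:CoV_Equiv} to both resulting volume integrals, and estimate the mixed term by Cauchy--Schwarz. This produces
\begin{align*}
  \|\varphi_i^\sharp\|_{L^2(\Gamma)}^2 \leq c\left(\varepsilon^{-1}\|\varphi\|_{L^2(\Omega_\varepsilon)}^2 + \|\varphi\|_{L^2(\Omega_\varepsilon)}\|\partial_n\varphi\|_{L^2(\Omega_\varepsilon)}\right).
\end{align*}
Combining with the boundary change-of-variables estimate \eqref{E:Lp_CoV_Surf} (which gives $\|\varphi\|_{L^2(\Gamma_\varepsilon^i)} \leq c\|\varphi_i^\sharp\|_{L^2(\Gamma)}$) and taking square roots via $\sqrt{a+b}\leq\sqrt{a}+\sqrt{b}$ yields exactly \eqref{E:Trace_L2}.

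There is essentially no obstacle here: the argument is a standard one-dimensional trace estimate fiberwise in the normal direction, and all the $\varepsilon$-dependence is made explicit by the uniform bounds on the Jacobian $J$ from \eqref{E:Jac_Bound_01} and on $g$ from \eqref{E:G_Inf}. The only point worth care is that we must use the sharp form $\partial_r|\varphi^\sharp|^2 = 2\varphi^\sharp (\partial_n\varphi)^\sharp$ rather than $|\partial_r \varphi^\sharp|$ alone; the latter would merely reproduce Lemma \ref{L:Poincare}, whereas the former introduces the product that yields the improved interpolation term $\|\varphi\|_{L^2(\Omega_\varepsilon)}^{1/2}\|\partial_n\varphi\|_{L^2(\Omega_\varepsilon)}^{1/2}$ in place of $\varepsilon^{1/2}\|\partial_n\varphi\|_{L^2(\Omega_\varepsilon)}$.
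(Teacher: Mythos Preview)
Your proposal is correct and follows essentially the same route as the paper's proof: apply the fundamental theorem of calculus to $|\varphi^\sharp(y,\cdot)|^2$ in the normal variable, bound the resulting integral, average in $r$, integrate over $\Gamma$, and use \eqref{E:CoV_Equiv}, \eqref{E:Lp_CoV_Surf}, and Cauchy--Schwarz. The paper's proof is almost verbatim the same, including the remark that one must differentiate $|\varphi^\sharp|^2$ rather than $\varphi^\sharp$ itself to get the interpolation term.
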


\begin{proof}
  We use the notations \eqref{E:Pull_Dom}--\eqref{E:Pull_Bo}.
  The proof is almost the same as that of \eqref{E:Poin_Bo}.
  For $y\in\Gamma$ and $r\in(\varepsilon g_0(y),\varepsilon g_1(y))$ we employ the formula
  \begin{align*}
    |\varphi_i^\sharp(y)|^2 &= |\varphi^\sharp(y,r)|^2+\int_r^{\varepsilon g_i(y)}\frac{\partial}{\partial\tilde{r}}\Bigl(|\varphi^\sharp(y,\tilde{r})|^2\Bigr)d\tilde{r} \\
    &= |\varphi^\sharp(y,r)|^2+2\int_r^{\varepsilon g_i(y)}\varphi^\sharp(y,\tilde{r})(\partial_n\varphi)^\sharp(y,\tilde{r})d\tilde{r}
  \end{align*}
  instead of \eqref{Pf_P:FTC} to get
  \begin{align*}
    |\varphi_i^\sharp(y)|^2 \leq |\varphi^\sharp(y,r)|^2+2\int_{\varepsilon g_0(y)}^{\varepsilon g_1(y)}|\varphi^\sharp(y,\tilde{r})||(\partial_n\varphi)^\sharp(y,\tilde{r})|\,d\tilde{r}.
  \end{align*}
  We divide both sides by $\varepsilon$, integrate the resulting inequality with respect to $y$ and $r$, and then apply \eqref{E:G_Inf}, \eqref{E:CoV_Equiv}, \eqref{E:Lp_CoV_Surf}, and H\"{o}lder's inequality to obtain
  \begin{align*}
    \|\varphi\|_{L^2(\Gamma_\varepsilon^i)}^2 &\leq c\left(\varepsilon^{-1}\|\varphi\|_{L^2(\Omega_\varepsilon)}^2+\int_{\Omega_\varepsilon}|\varphi||\partial_n\varphi|\,dx\right) \\
    &\leq c\left(\varepsilon^{-1}\|\varphi\|_{L^2(\Omega_\varepsilon)}^2+\|\varphi\|_{L^2(\Omega_\varepsilon)}\|\partial_n\varphi\|_{L^2(\Omega_\varepsilon)}\right).
  \end{align*}
  Hence \eqref{E:Trace_L2} follows.
\end{proof}

The constant $c_\varepsilon$ in \eqref{E:Korn_Reps} may blow up as $\varepsilon\to0$ (see \cite{HoSe10}*{Corollary 4.11} for the case of a flat thin domain).
To see this, we use the following vector field which was introduced in \cite{LeMu11}*{Section 4} as a counterexample to \eqref{E:Korn_H1}.

\begin{lemma} \label{L:ExLM}
  Suppose that $\mathcal{K}_g(\Gamma)\neq\{0\}$.
  Let $v\in\mathcal{K}_g(\Gamma)$, $v\not\equiv0$ and
  \begin{align} \label{E:Def_ExLM}
    v^\varepsilon(x) := \left\{I_3-d(x)\overline{W}(x)\right\}\bar{v}(x)+\varepsilon\left\{\bar{v}(x)\cdot\overline{\nabla_\Gamma g_0}(x)\right\}\bar{n}(x), \quad x\in N.
  \end{align}
  Then $v^\varepsilon$ satisfies \eqref{E:Bo_Imp} for all $\varepsilon\in(0,1]$.
  Moreover, there exist constants $c_v^1,c_v^2>0$ and $\varepsilon_v\in(0,1]$ depending on $v$ such that
  \begin{align} \label{E:Est_ExLM}
    \|v^\varepsilon\|_{H^1(\Omega_\varepsilon)} \geq c_v^1\varepsilon^{1/2}, \quad \|D(v^\varepsilon)\|_{L^2(\Omega_\varepsilon)} \leq c_v^2\varepsilon^{3/2}
  \end{align}
  for all $\varepsilon\in(0,\varepsilon_v]$.
\end{lemma}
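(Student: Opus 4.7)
The plan is to verify the three assertions in order. For the impermeable boundary condition, I fix $i\in\{0,1\}$ and evaluate $v^\varepsilon$ at $x=y+\varepsilon g_i(y)n(y)\in\Gamma_\varepsilon^i$, where $d(x)=\varepsilon g_i(y)$, $\bar v(x)=v(y)$, $\overline W(x)=W(y)$, and $\bar n(x)=n(y)$. By Lemma \ref{L:Nor_Bo} and \eqref{E:Def_NB} it suffices to check $v^\varepsilon(x)\cdot\bigl(n(y)-\varepsilon\tau_\varepsilon^i(y)\bigr)=0$. Since $v\cdot n=0$ and $Wn=0$, I get $\{I_3-\varepsilon g_iW\}v\cdot n=0$, so only the $\varepsilon$-correction contributes to $v^\varepsilon(x)\cdot n(y)$, yielding $\varepsilon\,v\cdot\nabla_\Gamma g_0$. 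For the $\tau_\varepsilon^i$-pairing, tangentiality of $\tau_\varepsilon^i$ kills the $\bar n$-summand, while the symmetry $W^T=W$ and the defining identity $\{I_3-\varepsilon g_iW\}\tau_\varepsilon^i=\nabla_\Gamma g_i$ give $\{I_3-\varepsilon g_iW\}v\cdot\tau_\varepsilon^i=v\cdot\nabla_\Gamma g_i$. The total reads $\varepsilon\,v\cdot(\nabla_\Gamma g_0-\nabla_\Gamma g_i)$, which vanishes trivially when $i=0$ and by $v\cdot\nabla_\Gamma g=0$ (which holds since $v\in\mathcal{K}_g(\Gamma)$) when $i=1$.

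For the lower bound in \eqref{E:Est_ExLM}, I observe that $|v^\varepsilon-\bar v|\le c\varepsilon|\bar v|$ on $\Omega_\varepsilon$, using $|d|\le c\varepsilon$ there and the uniform boundedness of $\overline W$ and $\overline{\nabla_\Gamma g_0}$. Since $v\not\equiv 0$, \eqref{E:Con_Lp} gives $\|\bar v\|_{L^2(\Omega_\varepsilon)}\ge c\varepsilon^{1/2}\|v\|_{L^2(\Gamma)}>0$. Choosing $\varepsilon_v$ small enough that $c\varepsilon_v\le 1/2$, the reverse triangle inequality then yields $\|v^\varepsilon\|_{H^1(\Omega_\varepsilon)}\ge\|v^\varepsilon\|_{L^2(\Omega_\varepsilon)}\ge c_v^1\,\varepsilon^{1/2}$.

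The main obstacle is the upper bound in \eqref{E:Est_ExLM}. I aim to establish the pointwise estimate $|D(v^\varepsilon)(x)|\le c_v\varepsilon$ on $\Omega_\varepsilon$; together with $|\Omega_\varepsilon|\le c\varepsilon$ this yields $\|D(v^\varepsilon)\|_{L^2(\Omega_\varepsilon)}^2\le (c_v^2)^2\varepsilon^3$ as required. Differentiating \eqref{E:Def_ExLM} and using $\nabla d=\bar n$,
\begin{align*}
\nabla v^\varepsilon=\nabla\bar v-\bar n\otimes\overline{Wv}-d\,\nabla(\overline W\bar v)+\varepsilon\,\nabla\bigl[(\bar v\cdot\overline{\nabla_\Gamma g_0})\bar n\bigr].
\end{align*}
The last two terms are $O(\varepsilon)$ on $\Omega_\varepsilon$, because $|d|\le c\varepsilon$ and the constantly extended surface quantities together with their tangential derivatives are uniformly bounded in $\varepsilon$ (and depend only on $v$). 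For the principal part, \eqref{E:ConDer_Dom} and \eqref{E:Wein_Diff} give $\nabla\bar v=\overline{\nabla_\Gamma v}+O(d)$, and invoking \eqref{E:Grad_W} for the tangential field $v$ together with the Killing condition $D_\Gamma(v)=0$ yields
\begin{align*}
(\nabla_\Gamma v)_S=D_\Gamma(v)+\tfrac12\bigl[n\otimes(Wv)+(Wv)\otimes n\bigr]=\bigl[n\otimes(Wv)\bigr]_S \quad\text{on}\quad\Gamma.
\end{align*}
Symmetrizing therefore gives $(\nabla\bar v)_S-[\bar n\otimes\overline{Wv}]_S=O(d)=O(\varepsilon)$, so the two principal terms cancel up to $O(\varepsilon)$, proving $D(v^\varepsilon)=O(\varepsilon)$ pointwise on $\Omega_\varepsilon$. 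The delicate point is bookkeeping these cancellations: the $-d\,\overline W\bar v$ factor in \eqref{E:Def_ExLM} is tailored precisely so that its gradient produces the $-\bar n\otimes\overline{Wv}$ term needed to cancel the normal-tangent component of $\nabla\bar v$ revealed by Killing's equation, which is the structural reason the construction works.
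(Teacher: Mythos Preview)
Your proof is correct. The verification of the impermeable boundary condition and the upper bound on $\|D(v^\varepsilon)\|_{L^2(\Omega_\varepsilon)}$ match the paper's proof essentially line by line: the paper also differentiates \eqref{E:Def_ExLM}, isolates the principal part $\overline{\nabla_\Gamma v}-\bar n\otimes(\overline W\bar v)$, and uses \eqref{E:Grad_W} together with $D_\Gamma(v)=0$ to see that its symmetric part vanishes, leaving only the $O(\varepsilon)$ remainder.

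The one genuine difference is in the lower bound. You bound $\|v^\varepsilon\|_{H^1(\Omega_\varepsilon)}$ from below by $\|v^\varepsilon\|_{L^2(\Omega_\varepsilon)}$ and compare the latter to $\|\bar v\|_{L^2(\Omega_\varepsilon)}\ge c\varepsilon^{1/2}\|v\|_{L^2(\Gamma)}$ via \eqref{E:Con_Lp}; this is short and entirely sufficient for the statement. The paper instead bounds $\|\nabla v^\varepsilon\|_{L^2(\Omega_\varepsilon)}$ from below by $c\varepsilon^{1/2}\|\nabla_\Gamma v-n\otimes(Wv)\|_{L^2(\Gamma)}$ and then argues separately that this surface norm is strictly positive when $v\not\equiv0$ (observing that $\nabla_\Gamma v=n\otimes(Wv)$ would force $\nabla_\Gamma v=0$, hence $v$ constant, hence $v\equiv0$ on the closed surface). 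Your route avoids that extra step; the paper's route additionally shows that already the gradient part of the $H^1$ norm is of order $\varepsilon^{1/2}$, which is slightly sharper information but not needed for the lemma as stated.
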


The results of Lemma \ref{L:ExLM} were originally shown in \cite{LeMu11}*{Section 4}.
Here we give different proofs of them in our notations.

\begin{proof}
  Let $\tau_\varepsilon^0$ and $\tau_\varepsilon^1$ be the vector fields on $\Gamma$ given by \eqref{E:Def_NB_Aux}.
  Since $W$ is symmetric and $v$, $Wv$, $\tau_\varepsilon^0$, and $\tau_\varepsilon^1$ are tangential on $\Gamma$,
  \begin{align*}
    v^\varepsilon(x)\cdot\bar{\tau}_\varepsilon^i(x) &= \{I_3-\varepsilon g_i(y)W(y)\}v(y)\cdot\{I_3-\varepsilon g_i(y)W(y)\}^{-1}\nabla_\Gamma g_i(y) \\
    &= v(y)\cdot\nabla_\Gamma g_i(y)
  \end{align*}
  for all $x=y+\varepsilon g_i(y)n(y)\in\Gamma_\varepsilon^i$ with $y\in\Gamma$ and $i=0,1$.
  On the other hand,
  \begin{align*}
    v(y)\cdot\nabla_\Gamma g_0(y) = v(y)\cdot\nabla_\Gamma g_1(y),\quad y\in\Gamma
  \end{align*}
  by $v\in\mathcal{K}_g(\Gamma)$ and $g=g_1-g_0$ on $\Gamma$ and thus
  \begin{align*}
    v^\varepsilon(x)\cdot\bar{n}(x) = \varepsilon\{v(y)\cdot\nabla_\Gamma g_0(y)\} = \varepsilon\{v(y)\cdot\nabla_\Gamma g_i(y)\} = \varepsilon\{v^\varepsilon(x)\cdot\bar{\tau}_\varepsilon^i(x)\}
  \end{align*}
  for all $x=y+\varepsilon g_i(y)n(y)\in\Gamma_\varepsilon^i$ with $y\in\Gamma$ and $i=0,1$.
  From this equality, \eqref{E:Def_NB}, and Lemma \ref{L:Nor_Bo} it follows that $v^\varepsilon$ satisfies \eqref{E:Bo_Imp}.

  Next let us prove \eqref{E:Est_ExLM}.
  We differentiate both sides of \eqref{E:Def_ExLM}.
  Then
  \begin{align} \label{Pf_ELM:Grad_Ve}
    \begin{aligned}
      \nabla v^\varepsilon &= \nabla\bar{v}-\nabla d\otimes\Bigl(\overline{W}\bar{v}\Bigr)-d\nabla\Bigl(\overline{W}\bar{v}\Bigr)+\varepsilon\nabla\Bigl[\Bigl(\bar{v}\cdot\overline{\nabla_\Gamma g_0}\Bigr)\bar{n}\Bigr] \\
      &= \overline{\nabla_\Gamma v}-\bar{n}\otimes\Bigl(\overline{W}\bar{v}\Bigr)+R_\varepsilon
    \end{aligned}
  \end{align}
  in $N$ by $\nabla d=\bar{n}$ in $N$, where
  \begin{align*}
    R_\varepsilon := \Bigl(\nabla{v}-\overline{\nabla_\Gamma v}\Bigr)-d\nabla\Bigl(\overline{W}\bar{v}\Bigr)+\varepsilon\nabla\Bigl[\Bigl(\bar{v}\cdot\overline{\nabla_\Gamma g_0}\Bigr)\bar{n}\Bigr] \quad\text{in}\quad N.
  \end{align*}
  Noting that $n$, $W$, and $\nabla_\Gamma g_0$ are bounded on $\Gamma$ along with their first order derivatives, we apply \eqref{E:ConDer_Bound}, \eqref{E:ConDer_Diff}, and $|d|\leq c\varepsilon$ in $\Omega_\varepsilon$ to $R_\varepsilon$ to get
  \begin{align} \label{Pf_ELM:Est_Re}
    \left|\nabla v^\varepsilon-\Bigl\{\overline{\nabla_\Gamma v}-\bar{n}\otimes\Bigl(\overline{W}\bar{v}\Bigr)\Bigr\}\right| = |R_\varepsilon| \leq c\varepsilon\left(|\bar{v}|+\left|\overline{\nabla_\Gamma v}\right|\right) \quad\text{in}\quad \Omega_\varepsilon
  \end{align}
  with some constant $c>0$ independent of $\varepsilon$.
  Hence
  \begin{align*}
    \|\nabla v^\varepsilon\|_{L^2(\Omega_\varepsilon)} &\geq \left\|\overline{\nabla_\Gamma v}-\bar{n}\otimes\Bigl(\overline{W}\bar{v}\Bigr)\right\|_{L^2(\Omega_\varepsilon)}-\|R_\varepsilon\|_{L^2(\Omega_\varepsilon)} \\
    &\geq \left\|\overline{\nabla_\Gamma v}-\bar{n}\otimes\Bigl(\overline{W}\bar{v}\Bigr)\right\|_{L^2(\Omega_\varepsilon)}-c\varepsilon\left(\|\bar{v}\|_{L^2(\Omega_\varepsilon)}+\left\|\overline{\nabla_\Gamma v}\right\|_{L^2(\Omega_\varepsilon)}\right)
  \end{align*}
  and, applying \eqref{E:Con_Lp} to the last line we further get
  \begin{align} \label{Pf_ELM:L2_Grad}
    \|\nabla v^\varepsilon\|_{L^2(\Omega_\varepsilon)} \geq c\varepsilon^{1/2}\left(\|\nabla_\Gamma v-n\otimes(Wv)\|_{L^2(\Gamma)}-\varepsilon\|v\|_{H^1(\Gamma)}\right).
  \end{align}
  On the other hand, by \eqref{E:Grad_W} and \eqref{Pf_ELM:Grad_Ve} we have
  \begin{align*}
    \nabla v^\varepsilon = \overline{P}\Bigl(\overline{\nabla_\Gamma v}\Bigr)\overline{P}+\Bigl(\overline{W}\bar{v}\Bigr)\otimes\bar{n}-\bar{n}\otimes\Bigl(\overline{W}\bar{v}\Bigr)+R_\varepsilon \quad\text{in}\quad N.
  \end{align*}
  From this equality and $D_\Gamma(v)=0$ on $\Gamma$ by $v\in\mathcal{K}_g(\Gamma)$ it follows that
  \begin{align*}
    D(v^\varepsilon) = \overline{D_\Gamma(v)}+(R_\varepsilon)_S = (R_\varepsilon)_S \quad\text{in}\quad N,
  \end{align*}
  where $(R_\varepsilon)_S=(R_\varepsilon+R_\varepsilon^T)/2$ is the symmetric part of $R_\varepsilon$, and thus
  \begin{align*}
    |D(v^\varepsilon)| \leq |R_\varepsilon| \leq c\varepsilon\left(|\bar{v}|+\left|\overline{\nabla_\Gamma v}\right|\right) \quad\text{in}\quad \Omega_\varepsilon
  \end{align*}
  by \eqref{Pf_ELM:Est_Re}.
  Hence we observe by \eqref{E:Con_Lp} that
  \begin{align} \label{Pf_ELM:L2_Str}
    \|D(v^\varepsilon)\|_{L^2(\Omega_\varepsilon)} \leq c\varepsilon\left(\|\bar{v}\|_{L^2(\Omega_\varepsilon)}+\left\|\overline{\nabla_\Gamma v}\right\|_{L^2(\Omega_\varepsilon)}\right) \leq c\varepsilon^{3/2}\|v\|_{H^1(\Gamma)}.
  \end{align}
  Now we claim $\|\nabla_\Gamma v-n\otimes(Wv)\|_{L^2(\Gamma)}>0$ if $v\not\equiv0$.
  Indeed, if
  \begin{align*}
    \nabla_\Gamma v-n\otimes(Wv) = 0, \quad\text{i.e.}\quad \underline{D}_iv_j-n_i[Wv]_j = 0 \quad\text{on}\quad \Gamma,\,i,j=1,2,3,
  \end{align*}
  then $\nabla_\Gamma v_j=[Wv]_jn$ on $\Gamma$, but this implies $\nabla_\Gamma v_j=0$ on $\Gamma$ since $\nabla_\Gamma v_j$ is tangential on $\Gamma$.
  Thus $v=(v_1,v_2,v_3)^T$ is constant on $\Gamma$.
  However, since $v$ is tangential on the closed surface $\Gamma$, it must identically vanish on $\Gamma$.
  Hence the claim is valid and we can take a constant $\varepsilon_v\in(0,1]$ so that
  \begin{align} \label{Pf_ELM:LB}
    \|\nabla_\Gamma v-n\otimes(Wv)\|_{L^2(\Gamma)}-\varepsilon\|v\|_{H^1(\Gamma)} \geq \frac{1}{2}\|\nabla_\Gamma v-n\otimes(Wv)\|_{L^2(\Gamma)} > 0
  \end{align}
  for all $\varepsilon\in(0,\varepsilon_v]$ by $v\not\equiv0$.
  Therefore, we deduce from \eqref{Pf_ELM:L2_Grad}--\eqref{Pf_ELM:LB} that
  \begin{align*}
    \|v^\varepsilon\|_{H^1(\Omega_\varepsilon)} \geq \|\nabla v^\varepsilon\|_{L^2(\Omega_\varepsilon)} \geq c_v^1\varepsilon^{1/2}, \quad \|D(v^\varepsilon)\|_{L^2(\Omega_\varepsilon)} \leq c_v^2\varepsilon^{3/2}
  \end{align*}
  for all $\varepsilon\in(0,\varepsilon_v]$, where $c_v^1,c_v^2>0$ are constants depending on $v$ but independent of $\varepsilon$, and the inequalities \eqref{E:Est_ExLM} are valid.
\end{proof}

From Lemma \ref{L:ExLM} it immediately follows that $c_\varepsilon$ blows up if $\mathcal{K}_g(\Gamma)\neq\{0\}$ and $\Omega_\varepsilon$ is not axially symmetric for all $\varepsilon\in(0,1]$ sufficiently small.
Let us give an example.

\begin{lemma} \label{L:ExBl_NAS}
  Let $\Gamma=S^2$ be the unit sphere in $\mathbb{R}^3$ and
  \begin{align*}
    g_0(y) = y_3, \quad g_1(y) = y_2+2, \quad y = (y_1,y_2,y_3)\in S^2.
  \end{align*}
  Then for all $\varepsilon\in(0,1]$ the curved thin domain
  \begin{align*}
    \Omega_\varepsilon = \{ry \mid y\in S^2,\, 1+\varepsilon y_3 < r < 1+\varepsilon(y_2+2)\}
  \end{align*}
  is not axially symmetric around any line.
  Also, there exist constants $c_b>0$ and $\varepsilon_b\in(0,1]$ such that the constant $c_\varepsilon$ given in Lemma \ref{L:Korn_Reps} with any $\beta\in[0,1)$ satisfies
  \begin{align} \label{E:ExBl}
    c_\varepsilon \geq c_b\varepsilon^{-1}
  \end{align}
  for all $\varepsilon\in(0,\varepsilon_b]$ and thus $c_\varepsilon\to\infty$ as $\varepsilon\to0$.
\end{lemma}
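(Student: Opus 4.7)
The lemma decomposes into two independent claims: first, that $\Omega_\varepsilon$ admits no nontrivial rotational symmetry for every $\varepsilon\in(0,1]$; and second, that the Korn constant $c_\varepsilon$ for such $\Omega_\varepsilon$ must blow up at rate $\varepsilon^{-1}$.

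For the symmetry claim, by Lemma~\ref{L:IR_Surf} applied to $\Omega_\varepsilon$ it suffices to show triviality of
\[
\mathcal{R}_\varepsilon:=\{w(x)=a\times x+b : a,b\in\mathbb{R}^3,\ w\cdot n_\varepsilon=0 \text{ on }\Gamma_\varepsilon\}.
\]
The plan is a direct computation exploiting that both boundaries are level sets of explicit polynomials. Writing $x=(1+\varepsilon y_3)y$ on $\Gamma_\varepsilon^0$ and $x=(1+\varepsilon(y_2+2))y$ on $\Gamma_\varepsilon^1$, one checks that $\Gamma_\varepsilon^0=\{F_0=0\}$ with $F_0(x)=|x|^2-|x|-\varepsilon x_3$ and $\Gamma_\varepsilon^1=\{F_1=0\}$ with $F_1(x)=|x|^2-(1+2\varepsilon)|x|-\varepsilon x_2$, so that $\nabla F_0(x)=(1+2\varepsilon y_3)y-\varepsilon e_3$ and $\nabla F_1(x)=(1+2\varepsilon+2\varepsilon y_2)y-\varepsilon e_2$ on the respective surfaces. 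Using $(a\times y)\cdot y=0$, the condition $w\cdot\nabla F_0=0$ becomes a polynomial identity in $y\in S^2$ in the six unknowns $a_j,b_j$. Testing this identity at $y=\pm e_1$ yields $b_3=0$ and $b_1=-\varepsilon a_2$; at $y=e_2$ yields $b_2=\varepsilon a_1$; at $y=(e_1+e_3)/\sqrt{2}$ then forces $b_1=a_2=0$; and at $y=(e_2+e_3)/\sqrt{2}$ forces $b_2=a_1=0$. Hence $w$ reduces to $a_3 e_3\times x$. The analogous identity from $w\cdot\nabla F_1=0$ reads $-\varepsilon a_3(1+\varepsilon(y_2+2))y_1=0$ for every $y\in S^2$, forcing $a_3=0$. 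Thus $\mathcal{R}_\varepsilon=\{0\}$.

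For the blow-up claim, the triviality of $\mathcal{R}_\varepsilon$ makes condition \eqref{E:Reps_Orth} vacuous, so \eqref{E:Korn_Reps} applies to every $u\in H^1(\Omega_\varepsilon)^3$ with $u\cdot n_\varepsilon=0$. I will then test \eqref{E:Korn_Reps} against the Lewicka--M\"uller counterexample of Lemma~\ref{L:ExLM}, which needs a nonzero element of $\mathcal{K}_g(S^2)$. For $g(y)=y_2+2-y_3$ one has $\nabla_\Gamma g(y)=(0,1,-1)^T-(y_2-y_3)y$, and $(a\times y)\cdot\nabla_\Gamma g(y)=(a\times y)\cdot(0,1,-1)^T$. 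A short calculation shows this vanishes for all $y\in S^2$ iff $a$ is a scalar multiple of $(0,1,-1)^T$, so $v(y):=(0,1,-1)^T\times y$ is a nonzero element of $\mathcal{K}_g(S^2)$. Lemma~\ref{L:ExLM} applied to this $v$ produces $v^\varepsilon$ satisfying \eqref{E:Bo_Imp} with
\[
\|v^\varepsilon\|_{H^1(\Omega_\varepsilon)}\ge c_v^1\varepsilon^{1/2},\qquad \|D(v^\varepsilon)\|_{L^2(\Omega_\varepsilon)}\le c_v^2\varepsilon^{3/2}
\]
for $\varepsilon\in(0,\varepsilon_v]$. Substituting $v^\varepsilon$ into \eqref{E:Korn_Reps} gives $c_v^1\varepsilon^{1/2}\le c_\varepsilon c_v^2\varepsilon^{3/2}$, i.e.\ $c_\varepsilon\ge(c_v^1/c_v^2)\varepsilon^{-1}$; setting $c_b:=c_v^1/c_v^2$ and $\varepsilon_b:=\varepsilon_v$ yields \eqref{E:ExBl}.

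The main obstacle is the triviality of $\mathcal{R}_\varepsilon$ for \emph{every} $\varepsilon\in(0,1]$, rather than merely in the asymptotic regime $\varepsilon\to 0$ supplied by Lemma~\ref{L:CTD_Rg}; the elementary point-testing argument above handles this by exploiting the explicit level-set description of the two boundaries. The rest is a direct application of Lemma~\ref{L:ExLM}, noting only that once $\mathcal{R}_\varepsilon=\{0\}$ the orthogonality hypothesis in \eqref{E:Korn_Reps} is automatic and the counterexample field can be inserted without modification.
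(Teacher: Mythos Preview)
Your proof is correct, and the second part (identifying $v_0(y)=(e_2-e_3)\times y\in\mathcal{K}_g(S^2)$ and applying Lemma~\ref{L:ExLM}) matches the paper's argument essentially verbatim.

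The first part differs in approach. The paper argues geometrically: using spherical coordinates it writes $\Gamma_\varepsilon^0$ as a surface of revolution around the $x_3$-axis and $\Gamma_\varepsilon^1$ as one around the $x_2$-axis, then invokes the fact (Remark~\ref{R:IR_Kil}) that a non-spherical surface of revolution is axially symmetric around exactly one line, so the two boundary components share no common axis and $\mathcal{R}_\varepsilon=\{0\}$. Your approach instead writes each boundary as an explicit level set $\{F_i=0\}$, computes $\nabla F_i$, and eliminates the six unknowns in $w(x)=a\times x+b$ by testing $w\cdot\nabla F_i=0$ at well-chosen points. Your route is more elementary and self-contained (it avoids the structure theory behind Remark~\ref{R:IR_Kil}), at the cost of being more computational and somewhat opaque about \emph{why} the symmetry fails. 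The paper's route is shorter and more conceptual, making clear that the obstruction is simply that the inner and outer shells rotate about different axes.
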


\begin{proof}
  First note that
  \begin{align*}
    g(y) = g_1(y)-g_0(y) = y_2-y_3+2 \geq 2-\sqrt{2}, \quad y\in S^2.
  \end{align*}
  Using the spherical coordinate system
  \begin{align*}
    S^2 = \{(\sin\vartheta_1\cos\vartheta_2,\sin\vartheta_1\sin\vartheta_2,\cos\vartheta_1) \mid \vartheta_1\in[0,\pi],\,\vartheta_2\in[0,2\pi]\}
  \end{align*}
  we can express the inner boundary of $\Omega_\varepsilon$ as
  \begin{gather*}
    \Gamma_\varepsilon^0 = \{(\varphi(\vartheta_1)\cos\vartheta_2,\varphi(\vartheta_1)\sin\vartheta_2,\psi(\vartheta_1)) \mid \vartheta_1\in[0,\pi],\,\vartheta_2\in[0,2\pi]\}, \\
    \varphi(\vartheta_1) := (1+\varepsilon\cos\vartheta_1)\sin\vartheta_1, \quad \psi(\vartheta_1) := (1+\varepsilon\cos\vartheta_1)\cos\vartheta_1.
  \end{gather*}
  Thus $\Gamma_\varepsilon^0$ is axially symmetric around the $x_3$-axis.
  Since $\Gamma_\varepsilon^0$ is not a sphere, it is not axially symmetric around other lines (see Remark \ref{R:IR_Kil}).
  Similarly, we see that the outer boundary $\Gamma_\varepsilon^1$ is axially symmetric only around the $x_2$-axis.
  Hence $\Omega_\varepsilon$ is not axially symmetric around any line, i.e. $\mathcal{R}_\varepsilon=\{0\}$ for all $\varepsilon\in(0,1]$ (see Lemma \ref{L:IR_Surf}).

  Next let us prove \eqref{E:ExBl}.
  Since $\Gamma=S^2$, we have
  \begin{align} \label{Pf_EBN:RK}
    \mathcal{R} = \{w(x)=a\times x,\, x\in\mathbb{R}^3 \mid a\in\mathbb{R}^3\}, \quad \mathcal{K}(S^2) = \mathcal{R}|_{S^2} = \{w|_{S^2} \mid w\in\mathcal{R}\}.
  \end{align}
  Let $v\in\mathcal{K}(S^2)$ be of the form $v(y)=a\times y$, $y\in S^2$ with $a\in\mathbb{R}^3$.
  Then since
  \begin{align*}
    \nabla_\Gamma g(y) = P(y)(e_2-e_3), \quad y\in S^2,
  \end{align*}
  where $\{e_1,e_2,e_3\}$ is the standard basis of $\mathbb{R}^3$, and $v$ is tangential on $S^2$,
  \begin{align*}
    v(y)\cdot\nabla_\Gamma g(y) = v(y)\cdot(e_2-e_3) = \{(e_2-e_3)\times a\}\cdot y, \quad y\in S^2.
  \end{align*}
  Hence $v\cdot\nabla_\Gamma g=0$ on $S^2$ if and only if $a=\alpha(e_2-e_3)$ with $\alpha\in\mathbb{R}$, i.e.
  \begin{align*}
    \mathcal{K}_g(S^2) = \{\alpha v_0 \mid \alpha\in\mathbb{R}\} \neq \{0\}, \quad v_0(y) := (e_2-e_3)\times y, \quad y\in S^2.
  \end{align*}
  Let $v^\varepsilon$ be the vector field defined by \eqref{E:Def_ExLM} with $v=v_0$.
  Also, let $c_v^1$, $c_v^2$, and $\varepsilon_v$ be the constants given in Lemma \ref{L:ExLM} and $\varepsilon_b:=\varepsilon_v$.
  Then $v^\varepsilon$ satisfies \eqref{E:Bo_Imp} and \eqref{E:Est_ExLM} for all $\varepsilon\in(0,\varepsilon_b]$ by Lemma \ref{L:ExLM}.
  Moreover, since $\mathcal{R}_\varepsilon=\{0\}$, the condition \eqref{E:Reps_Orth} with any $\beta\in[0,1)$ is automatically satisfied.
  Hence we can apply \eqref{E:Korn_Reps} to $v^\varepsilon$ and use \eqref{E:Est_ExLM} to obtain
  \begin{align*}
    c_v^1\varepsilon^{1/2} \leq \|v^\varepsilon\|_{H^1(\Omega_\varepsilon)} \leq c_\varepsilon\|D(v^\varepsilon)\|_{L^2(\Omega_\varepsilon)} \leq c_\varepsilon c_v^2\varepsilon^{3/2}
  \end{align*}
  for all $\varepsilon\in(0,\varepsilon_b]$.
  This inequality yields \eqref{E:ExBl} with $c_b:=c_v^1/c_v^2$.
\end{proof}

By Lemmas \ref{L:ExLM} and \ref{L:ExBl_NAS} we observe that the axial asymmetry of a curved thin domain is not sufficient for the uniform Korn inequality \eqref{E:Korn_H1}.
Next we give an example of an axially symmetric curved thin domain for which $c_\varepsilon$ blows up.

\begin{lemma} \label{L:ExBl_AS}
  Let $\Gamma=S^2$ be the unit sphere in $\mathbb{R}^3$ and
  \begin{align*}
    g_0(y) = y_3^2, \quad g_1(y) = y_3^2+1, \quad y=(y_1,y_2,y_3)\in S^2.
  \end{align*}
  Then for all $\varepsilon\in(0,1]$ the curved thin domain
  \begin{align} \label{E:EBA_CTD}
    \Omega_\varepsilon = \{ry \mid y\in S^2,\, 1+\varepsilon y_3^2 < r < 1+\varepsilon(y_3^2+1)\}
  \end{align}
  is axially symmetric only around the $x_3$-axis.
  Moreover, there exist constants $c_b>0$ and $\varepsilon_b\in(0,1]$ such that the constant $c_\varepsilon$ given in Lemma \ref{L:Korn_Reps} with any $\beta\in[0,1)$ satisfies \eqref{E:ExBl} for all $\varepsilon\in(0,\varepsilon_b]$ and thus $c_\varepsilon\to\infty$ as $\varepsilon\to0$.
\end{lemma}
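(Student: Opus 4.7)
The overall plan is the same as for Lemma \ref{L:ExBl_NAS}: pick a Killing vector field $v_0$ on $S^2$ belonging to $\mathcal{K}_g(S^2)$, form its extension $v^\varepsilon$ via \eqref{E:Def_ExLM}, use Lemma \ref{L:ExLM} to bound $\|v^\varepsilon\|_{H^1(\Omega_\varepsilon)}$ from below and $\|D(v^\varepsilon)\|_{L^2(\Omega_\varepsilon)}$ from above, and then invoke \eqref{E:Korn_Reps}. The new ingredient compared to Lemma \ref{L:ExBl_NAS} is that $\Omega_\varepsilon$ is now axially symmetric, so $\mathcal{R}_\varepsilon\neq\{0\}$ and the orthogonality condition \eqref{E:Reps_Orth} is no longer automatic; the crux is therefore to choose $v_0$ so that $v^\varepsilon$ is $L^2(\Omega_\varepsilon)$-orthogonal to every $w\in\mathcal{R}_\varepsilon$.

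For the preliminary symmetry analysis, note that $g_0$ and $g_1$ depend on $y\in S^2$ only through $y_3^2$, so both $\Gamma_\varepsilon^0$ and $\Gamma_\varepsilon^1$ are surfaces of revolution about the $x_3$-axis, making $\Omega_\varepsilon$ invariant under all such rotations. Conversely, $\Gamma_\varepsilon^0=\{(1+\varepsilon y_3^2)y : y\in S^2\}$ is a closed smooth surface that is not a sphere, and a closed $C^2$ surface in $\mathbb{R}^3$ admitting two distinct axes of rotational symmetry must be a sphere (the two rotation groups generate a dense subgroup of $SO(3)$ fixing the common centroid), so the $x_3$-axis is the unique axis of symmetry of $\Omega_\varepsilon$. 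By Lemma \ref{L:IR_Surf} this yields
\begin{equation*}
  \mathcal{R}_\varepsilon = \{w(x)=\alpha\,e_3\times x,\,x\in\mathbb{R}^3 \mid \alpha\in\mathbb{R}\}.
\end{equation*}
Since $g=g_1-g_0\equiv 1$ gives $\nabla_\Gamma g=0$, we have $\mathcal{K}_g(S^2)=\mathcal{K}(S^2)$, which is the three-dimensional space of restrictions to $S^2$ of the vector fields $y\mapsto a\times y$, $a\in\mathbb{R}^3$.

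Now take $v_0(y):=e_1\times y=(0,-y_3,y_2)\in\mathcal{K}_g(S^2)$ and let $v^\varepsilon$ be defined by \eqref{E:Def_ExLM} with $v=v_0$. By Lemma \ref{L:ExLM} it satisfies \eqref{E:Bo_Imp} and the estimates \eqref{E:Est_ExLM} for $\varepsilon\in(0,\varepsilon_v]$. To verify that $(v^\varepsilon,w)_{L^2(\Omega_\varepsilon)}=0$ for $w(x)=e_3\times x$, observe that on $S^2$ the Weingarten map is $W=-P$, so $\overline{W}(x)\bar{v}_0(x)=-\bar{v}_0(x)$ and, using $d(x)=|x|-1$ on $N$,
\begin{equation*}
  \bigl\{I_3-d(x)\overline{W}(x)\bigr\}\bar{v}_0(x) = \bigl(1+d(x)\bigr)\bar{v}_0(x) = |x|\,\bar{v}_0(x) \quad\text{in}\quad N.
\end{equation*}
The summand in \eqref{E:Def_ExLM} parallel to $\bar{n}(x)=x/|x|$ is orthogonal to $w$ because $(e_3\times x)\cdot x=0$, so with $\bar{v}_0(x)=e_1\times(x/|x|)$ the pointwise product reduces to
\begin{equation*}
  v^\varepsilon(x)\cdot w(x) = |x|\,\bar{v}_0(x)\cdot(e_3\times x) = -x_1x_3.
\end{equation*}
Passing to spherical coordinates $x=ry$ with $y\in S^2$, $r\in(1+\varepsilon y_3^2,1+\varepsilon(y_3^2+1))$, and $dx=r^2\,dr\,d\mathcal{H}^2(y)$, one gets
\begin{equation*}
  (v^\varepsilon,w)_{L^2(\Omega_\varepsilon)} = -\int_{S^2} y_1 y_3\left(\int_{1+\varepsilon y_3^2}^{1+\varepsilon(y_3^2+1)} r^4\,dr\right)d\mathcal{H}^2(y) = 0,
\end{equation*}
since the inner radial integral depends only on $y_3^2$ while $y_1 y_3$ is odd under $y_1\mapsto -y_1$. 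Thus \eqref{E:Reps_Orth} holds for $v^\varepsilon$ with any $\beta\in[0,1)$, and combining \eqref{E:Korn_Reps} with \eqref{E:Est_ExLM} yields $c_\varepsilon\geq(c_v^1/c_v^2)\varepsilon^{-1}$ for all $\varepsilon\in(0,\varepsilon_v]$, which is exactly \eqref{E:ExBl} with $c_b:=c_v^1/c_v^2$ and $\varepsilon_b:=\varepsilon_v$.

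The only real obstacle is securing the orthogonality $(v^\varepsilon,w)_{L^2(\Omega_\varepsilon)}=0$, and it is resolved by exploiting two matching symmetries: the $L^2(S^2)$-orthogonality of $e_1\times y$ to the generator $e_3\times y$ of $\mathcal{R}_\varepsilon|_{S^2}$, together with the rotational invariance of $\Omega_\varepsilon$ about the $x_3$-axis (which is precisely what makes the $r$-integration limits depend only on $y_3^2$). A less symmetric choice of $v_0$ would force one to subtract the $\mathcal{R}_\varepsilon$-projection of $v^\varepsilon$ and to verify that this correction does not spoil the lower bound \eqref{E:Est_ExLM}; the present choice bypasses this complication entirely.
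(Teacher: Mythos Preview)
Your proof is correct and follows essentially the same approach as the paper: the same choice $v_0(y)=e_1\times y$, the same explicit reduction $v^\varepsilon(x)\cdot(e_3\times x)=-x_1x_3$ via $W=-P$ on $S^2$, and the same vanishing-by-symmetry of the resulting integral (the paper uses $\int_0^{2\pi}\cos\vartheta_2\,d\vartheta_2=0$ in spherical coordinates, which is equivalent to your parity argument in $y_1$). The uniqueness of the $x_3$-axis is also handled the same way, via the fact that a non-spherical closed surface admits at most one axis of full rotational symmetry.
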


\begin{proof}
  Since $g=g_1-g_0=1$ on $S^2$, we have $\nabla_\Gamma g=0$ on $S^2$ and
  \begin{gather} \label{Pf_EBA:RgKg}
    \mathcal{R}_g = \mathcal{R} = \{w(x) = a\times x,\, x\in\mathbb{R}^3 \mid a\in\mathbb{R}^3\}, \quad \mathcal{K}_g(S^2) = \mathcal{K}(S^2) = \mathcal{R}|_{S^2}.
  \end{gather}
  Also, as in the first part of the proof of Lemma \ref{L:ExBl_NAS} we observe that the inner and outer boundaries of $\Omega_\varepsilon$ are axially symmetric only around the $x_3$-axis.
  Hence $\Omega_\varepsilon$ is axially symmetric only around the $x_3$-axis and
  \begin{align} \label{Pf_EBA:Reps}
    \mathcal{R}_\varepsilon = \{\alpha w_3 \mid \alpha\in\mathbb{R}\}, \quad w_3(x) = e_3\times x, \quad x\in\mathbb{R}^3
  \end{align}
  for all $\varepsilon\in(0,1]$ by Lemma \ref{L:IR_Surf}, where $\{e_1,e_2,e_3\}$ is the standard basis of $\mathbb{R}^3$.

  Let us prove \eqref{E:ExBl}.
  We use the vector field $v^\varepsilon$ of the form \eqref{E:Def_ExLM}.
  Let
  \begin{align*}
    w_1(x) := e_1\times x, \quad x\in\mathbb{R}^3, \quad v_1 := w_1|_{S^2} \, (\neq0),
  \end{align*}
  and $v^\varepsilon$ be the vector field defined by \eqref{E:Def_ExLM} with $v=v_1$.
  Then $v^\varepsilon$ satisfies \eqref{E:Bo_Imp} and \eqref{E:Est_ExLM} by Lemma \ref{L:ExLM} since $v_1\in\mathcal{K}_g(S^2)$ by \eqref{Pf_EBA:RgKg}.
  Let us show that $v^\varepsilon$ satisfies the condition \eqref{E:Reps_Orth} for \eqref{E:Korn_Reps}.
  Since $v_1$ is tangential on $S^2$ and $g_0(y)=y_3^2$ for $y\in S^2$,
  \begin{gather*}
    n(y) = y, \quad W(y) = -\nabla_\Gamma n(y) = -P(y), \quad W(y)v_1(y) = -v_1(y), \\
    \nabla_\Gamma g_0(y) = 2y_3P(y)e_3, \quad v_1(y)\cdot\nabla_\Gamma g_0(y) = 2y_3v_1(y)\cdot e_3 = 2y_2y_3
  \end{gather*}
  for $y\in S^2$.
  Also, the signed distance function from $S^2$ and the constant extension of a function $\eta$ on $S^2$ are given by
  \begin{align*}
    d(x) = |x|-1, \quad \bar{\eta}(x) = \eta\left(\frac{x}{|x|}\right), \quad x\in N.
  \end{align*}
  By these formulas the vector field $v^\varepsilon$ is of the form
  \begin{align} \label{Pf_EBA:Ve}
    v^\varepsilon(x) = |x|\left(e_1\times\frac{x}{|x|}\right)+2\varepsilon\frac{x_2x_3}{|x|^2}\frac{x}{|x|} = w_1(x)+2\varepsilon\frac{x_2x_3}{|x|^2}\frac{x}{|x|}, \quad x\in N.
  \end{align}
  From this equality and $w_3(x)=e_3\times x$ it follows that
  \begin{align*}
    v^\varepsilon(x)\cdot w_3(x) = w_1(x)\cdot w_3(x) = -x_1x_3, \quad x\in N.
  \end{align*}
  Moreover, using the spherical coordinate system
  \begin{gather*}
    x_1 = r\sin\vartheta_1\cos\vartheta_2, \quad x_2 = r\sin\vartheta_1\sin\vartheta_2, \quad x_3 = r\cos\vartheta_1, \\
    \vartheta_1 \in [0,\pi], \quad \vartheta_2 \in [0,2\pi], \quad r \in \bigr(1+\varepsilon\cos^2\vartheta_1,1+\varepsilon(\cos^2\vartheta_1+1)\bigr)
  \end{gather*}
  for $\Omega_\varepsilon$ we have
  \begin{multline*}
    \int_{\Omega_\varepsilon}x_1x_3\,dx \\
    = \left(\int_0^{2\pi}\cos\vartheta_2\,d\vartheta_2\right)\left\{\int_0^\pi\left(\int_{1+\varepsilon\cos^2\vartheta_1}^{1+\varepsilon(\cos^2\vartheta_1+1)}r^4dr\right)\sin^2\vartheta_1\cos\vartheta_1d\vartheta_1\right\} = 0.
  \end{multline*}
  Thus $(v^\varepsilon,w_3)_{L^2(\Omega_\varepsilon)}=0$ and $v^\varepsilon$ satisfies \eqref{E:Reps_Orth} with any $\beta\in[0,1)$ by \eqref{Pf_EBA:Reps}.
  Since $v^\varepsilon$ also satisfies \eqref{E:Bo_Imp}, we can apply \eqref{E:Korn_Reps} to $v^\varepsilon$.
  Hence we get \eqref{E:ExBl} by \eqref{E:Korn_Reps} and \eqref{E:Est_ExLM} as in the proof of Lemma \ref{L:ExBl_NAS}.
\end{proof}

As we observed in the above proof, the vector field $v^\varepsilon$ of the form \eqref{Pf_EBA:Ve} satisfies the condition \eqref{E:Reps_Orth} for the standard Korn inequality \eqref{E:Korn_Reps}, but the uniform Korn inequality \eqref{E:Korn_H1} is not valid for $v^\varepsilon$.
Let us directly show that $v^\varepsilon$ does not satisfy the condition \eqref{E:Korn_Orth} or \eqref{E:KoRg_Orth} with any $\beta\in[0,1)$ for \eqref{E:Korn_H1}.

\begin{lemma} \label{L:ExBl_Con}
  Let $\Omega_\varepsilon$ and $v^\varepsilon$ be the curved thin domain and the vector field of the form \eqref{E:EBA_CTD} and \eqref{Pf_EBA:Ve}, respectively.
  Then for each $\beta\in[0,1)$ there exists a constant $\varepsilon_\beta\in(0,1]$ such that $v^\varepsilon$ does not satisfy \eqref{E:Korn_Orth} or \eqref{E:KoRg_Orth} for all $\varepsilon\in(0,\varepsilon_\beta]$.
\end{lemma}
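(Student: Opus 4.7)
The plan is, for each of the two candidate orthogonality conditions, to plug in the most natural test element and show that the ratio of the bilinear pairing to the product of norms actually tends to $1$ as $\varepsilon\to0$, hence eventually exceeds any fixed $\beta\in[0,1)$.

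For condition \eqref{E:KoRg_Orth}, I would test against $w:=w_1\in\mathcal{R}_g=\mathcal{R}$ (which lies in $\mathcal{R}_g$ because $g\equiv1$, so $\nabla_\Gamma g=0$, and because $w_1|_{S^2}\cdot n=(e_1\times y)\cdot y=0$). Writing $v^\varepsilon$ in the form obtained in \eqref{Pf_EBA:Ve}, namely $v^\varepsilon(x)=w_1(x)+\varepsilon h(x)\,\bar n(x)$ with $h(x)=2x_2x_3/|x|^2$, the key orthogonality $w_1(x)\cdot x=(e_1\times x)\cdot x=0$ yields pointwise $w_1\cdot\bar n=0$. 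Consequently $(v^\varepsilon-w_1)\perp w_1$ pointwise, and therefore
\begin{align*}
(v^\varepsilon,w_1)_{L^2(\Omega_\varepsilon)}=\|w_1\|_{L^2(\Omega_\varepsilon)}^2,\qquad \|v^\varepsilon\|_{L^2(\Omega_\varepsilon)}^2=\|w_1\|_{L^2(\Omega_\varepsilon)}^2+\varepsilon^2\|h\bar n\|_{L^2(\Omega_\varepsilon)}^2.
\end{align*}
Since $|\Omega_\varepsilon|\sim\varepsilon$ and $h$ is bounded, $\varepsilon^2\|h\bar n\|_{L^2(\Omega_\varepsilon)}^2\leq C\varepsilon^3$, whereas the computation in Lemma \ref{L:ExBl_AS} shows $\|w_1\|_{L^2(\Omega_\varepsilon)}^2=\int_{\Omega_\varepsilon}(x_2^2+x_3^2)\,dx\geq c\varepsilon$ for some $c>0$ (e.g.\ by the spherical coordinate computation on $\Omega_\varepsilon$, in which $x_2^2+x_3^2$ is integrable and non-degenerate away from the poles). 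Hence
\begin{align*}
\frac{|(v^\varepsilon,w_1)_{L^2(\Omega_\varepsilon)}|}{\|v^\varepsilon\|_{L^2(\Omega_\varepsilon)}\|w_1\|_{L^2(\Omega_\varepsilon)}}=\frac{\|w_1\|_{L^2(\Omega_\varepsilon)}}{\sqrt{\|w_1\|_{L^2(\Omega_\varepsilon)}^2+\varepsilon^2\|h\bar n\|_{L^2(\Omega_\varepsilon)}^2}}=\frac{1}{\sqrt{1+O(\varepsilon^2)}}\longrightarrow 1.
\end{align*}
Choosing $\varepsilon_\beta\in(0,1]$ so small that this ratio exceeds $\beta$ for $\varepsilon\in(0,\varepsilon_\beta]$ produces the desired failure of \eqref{E:KoRg_Orth}.

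For condition \eqref{E:Korn_Orth}, I would test against $v:=v_1=w_1|_{S^2}\in\mathcal{K}_g(S^2)$ (cf.\ \eqref{Pf_EBA:RgKg}), whose constant extension is $\bar v_1(x)=e_1\times(x/|x|)=w_1(x)/|x|$ on $N$. The argument is structurally identical: since $\bar v_1$ is tangential to $S^2$, $\bar v_1\cdot\bar n=0$ on $N$, so the $\varepsilon h\bar n$-part of $v^\varepsilon$ contributes nothing to $(v^\varepsilon,\bar v_1)_{L^2(\Omega_\varepsilon)}=\int_{\Omega_\varepsilon}|w_1|^2/|x|\,dx$. On $\Omega_\varepsilon\subset N$ we have $|x|=1+O(\varepsilon)$ uniformly, so
\begin{align*}
(v^\varepsilon,\bar v_1)_{L^2(\Omega_\varepsilon)}=\|w_1\|_{L^2(\Omega_\varepsilon)}^2\bigl(1+O(\varepsilon)\bigr),\quad \|\bar v_1\|_{L^2(\Omega_\varepsilon)}^2=\|w_1\|_{L^2(\Omega_\varepsilon)}^2\bigl(1+O(\varepsilon)\bigr),
\end{align*}
together with $\|v^\varepsilon\|_{L^2(\Omega_\varepsilon)}^2=\|w_1\|_{L^2(\Omega_\varepsilon)}^2(1+O(\varepsilon^2))$ from the Pythagorean decomposition above. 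The ratio $|(v^\varepsilon,\bar v_1)|/(\|v^\varepsilon\|\|\bar v_1\|)$ therefore again tends to $1$ as $\varepsilon\to0$, and the same conclusion follows.

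The only quantitative ingredients needed are the two-sided bound $\|w_1\|_{L^2(\Omega_\varepsilon)}^2\sim\varepsilon$ and the uniform estimates $|h|\leq C$, $\bigl||x|-1\bigr|\leq C\varepsilon$ on $\Omega_\varepsilon$; all of these follow directly from the explicit form of $\Omega_\varepsilon$ given in \eqref{E:EBA_CTD} and the spherical coordinate computation already employed in the proof of Lemma \ref{L:ExBl_AS}. No step is genuinely difficult; the modest point to keep in mind is to use the pointwise orthogonality $w_1\cdot x=0$, $\bar v_1\cdot\bar n=0$ to eliminate the $\varepsilon$-correction from the pairings, which is what forces the ratios to converge to $1$ and not to some smaller limit.
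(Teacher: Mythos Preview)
Your proposal is correct and follows essentially the same approach as the paper: test against $w_1\in\mathcal{R}_g$ and $\bar v_1$ with $v_1=w_1|_{S^2}\in\mathcal{K}_g(S^2)$, exploit the pointwise orthogonality $w_1\cdot(x/|x|)=0$ to obtain the Pythagorean decomposition $\|v^\varepsilon\|^2=\|w_1\|^2+\varepsilon^2\|h\bar n\|^2$ and the exact pairing $(v^\varepsilon,w_1)=\|w_1\|^2$, and then use $\|w_1\|_{L^2(\Omega_\varepsilon)}^2\sim\varepsilon$ together with $|x|=1+O(\varepsilon)$ on $\Omega_\varepsilon$ to drive the ratios to $1$. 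The paper carries out the same computation with slightly more explicit one-sided bounds (e.g.\ $(v^\varepsilon,\bar v_1)\geq(1+2\varepsilon)^{-1}\|w_1\|^2$ and $\|\bar v_1\|^2\leq\|w_1\|^2$) rather than your $1+O(\varepsilon)$ phrasing, but the substance is identical.
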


\begin{proof}
  Let $v^\varepsilon$ be the vector field of the form \eqref{Pf_EBA:Ve}, i.e.
  \begin{align*}
    v^\varepsilon(x) = w_1(x)+\varepsilon u(x), \quad w_1(x) = e_1\times x, \quad u(x) = \frac{2x_2x_3}{|x|^2}\frac{x}{|x|}, \quad x\in N.
  \end{align*}
  By \eqref{Pf_EBA:RgKg} we have $w_1\in\mathcal{R}_g$ and $v_1:=w_1|_{S^2}\in\mathcal{K}_g(S^2)$.
  Since $w_1\cdot u=0$ in $N$,
  \begin{align} \label{Pf_EBC:VeW}
    (v^\varepsilon,w_1)_{L^2(\Omega_\varepsilon)} = \|w_1\|_{L^2(\Omega_\varepsilon)}^2, \quad \|v^\varepsilon\|_{L^2(\Omega_\varepsilon)}^2 = \|w_1\|_{L^2(\Omega_\varepsilon)}^2+\varepsilon^2\|u\|_{L^2(\Omega_\varepsilon)}^2.
  \end{align}
  Also, by $\bar{v}_1(x)=v_1(x/|x|)=|x|^{-1}w_1(x)$ for $x\in N$,
  \begin{align} \label{Pf_EBC:VeV_1}
    (v^\varepsilon,\bar{v}_1)_{L^2(\Omega_\varepsilon)} = \int_{\Omega_\varepsilon}\frac{|w_1(x)|^2}{|x|}\,dx, \quad \|\bar{v}_1\|_{L^2(\Omega_\varepsilon)}^2 = \int_{\Omega_\varepsilon}\frac{|w_1(x)|^2}{|x|^2}\,dx.
  \end{align}
  Since $w_1\not\equiv0$, the first equalities of \eqref{Pf_EBC:VeW} and \eqref{Pf_EBC:VeV_1} show that $v^\varepsilon$ does not satisfy \eqref{E:Korn_Orth} or \eqref{E:KoRg_Orth} with $\beta=0$ for all $\varepsilon\in(0,1]$.
  Now let $\beta\in(0,1)$.
  Since
  \begin{align} \label{Pf_EBC:Width}
    1 \leq |x| = r \leq 1+2\varepsilon, \quad x = ry \in \Omega_\varepsilon,\, y\in S^2,\, r\in\bigl(1+\varepsilon y_3^2,1+\varepsilon(y_3^2+1)\bigr),
  \end{align}
  we deduce from \eqref{Pf_EBC:VeV_1} and \eqref{Pf_EBC:Width} that
  \begin{align} \label{Pf_EBC:VeV_2}
    (v^\varepsilon,\bar{v}_1)_{L^2(\Omega_\varepsilon)} \geq \frac{1}{1+2\varepsilon}\|w_1\|_{L^2(\Omega_\varepsilon)}^2, \quad \|\bar{v}_1\|_{L^2(\Omega_\varepsilon)}^2 \leq \|w_1\|_{L^2(\Omega_\varepsilon)}^2.
  \end{align}
  Also, by the change of variables and \eqref{Pf_EBC:Width} we see that
  \begin{align*}
    \|w_1\|_{L^2(\Omega_\varepsilon)}^2 &= \int_{S^2}\int_{1+\varepsilon y_3^2}^{1+\varepsilon(y_3^2+1)}(y_2^2+y_3^2)r^4\,dr\,d\mathcal{H}^2(y) \geq c_1\varepsilon, \\
    \|u\|_{L^2(\Omega_\varepsilon)}^2 &= \int_{S^2}\int_{1+\varepsilon y_3^2}^{1+\varepsilon(y_3^2+1)}4y_2^2y_3^2r^2\,dr\,d\mathcal{H}^2(y) \leq c_2\varepsilon
  \end{align*}
  with constants $c_1,c_2>0$ independent of $\varepsilon$ and thus setting $c_3:=c_2/c_1$ we get
  \begin{align*}
    \|u\|_{L^2(\Omega_\varepsilon)}^2 \leq c_3\|w_1\|_{L^2(\Omega_\varepsilon)}^2.
  \end{align*}
 This inequality and the second equality of \eqref{Pf_EBC:VeW} imply that
 \begin{align} \label{Pf_EBC:L2_Ve}
   \|v^\varepsilon\|_{L^2(\Omega_\varepsilon)} = \left(\|w_1\|_{L^2(\Omega_\varepsilon)}^2+\varepsilon^2\|u\|_{L^2(\Omega_\varepsilon)}^2\right)^{1/2} \leq (1+c_3\varepsilon^2)^{1/2}\|w_1\|_{L^2(\Omega_\varepsilon)}.
 \end{align}
 Now since $\beta\in(0,1)$ there exists a constant $\varepsilon_\beta\in(0,1]$ such that
 \begin{align} \label{Pf_EBC:Con}
   \beta(1+c_3\varepsilon^2)^{1/2}(1+2\varepsilon) < 1, \quad\text{i.e.}\quad \beta(1+c_3\varepsilon^2)^{1/2} < \frac{1}{1+2\varepsilon} < 1
 \end{align}
 for all $\varepsilon\in(0,\varepsilon_\beta]$.
 Then by \eqref{Pf_EBC:VeW}, \eqref{Pf_EBC:L2_Ve}, and \eqref{Pf_EBC:Con} we get
 \begin{align*}
   \beta\|v^\varepsilon\|_{L^2(\Omega_\varepsilon)}\|w_1\|_{L^2(\Omega_\varepsilon)} \leq \beta(1+c_3\varepsilon^2)^{1/2}\|w_1\|_{L^2(\Omega_\varepsilon)}^2 < \|w_1\|_{L^2(\Omega_\varepsilon)}^2 = (v^\varepsilon, w_1)_{L^2(\Omega_\varepsilon)}
 \end{align*}
 and, by \eqref{Pf_EBC:VeV_2}, \eqref{Pf_EBC:L2_Ve}, and \eqref{Pf_EBC:Con},
 \begin{align*}
   \beta\|v^\varepsilon\|_{L^2(\Omega_\varepsilon)}\|\bar{v}_1\|_{L^2(\Omega_\varepsilon)} &\leq \beta(1+c_3\varepsilon^2)^{1/2}\|w_1\|_{L^2(\Omega_\varepsilon)}^2 \\
   &< \frac{1}{1+2\varepsilon}\|w_1\|_{L^2(\Omega_\varepsilon)}^2 \leq (v^\varepsilon,\bar{v}_1)_{L^2(\Omega_\varepsilon)}
 \end{align*}
 for all $\varepsilon\in(0,\varepsilon_\beta]$.
 Hence $v^\varepsilon$ fails to satisfy \eqref{E:Korn_Orth} with $v=v_1\in\mathcal{K}_g(S^2)$ and \eqref{E:KoRg_Orth} with $w=w_1\in\mathcal{R}_g$ for all $\varepsilon\in(0,\varepsilon_\beta]$.
\end{proof}

Lemmas \ref{L:ExBl_AS} and \ref{L:ExBl_Con} show that the conditions \eqref{E:Korn_Orth} and \eqref{E:KoRg_Orth} for the uniform Korn inequality \eqref{E:Korn_H1} can be more strict than the condition \eqref{E:Reps_Orth} for the standard Korn inequality \eqref{E:Korn_Reps} even if a curved thin domain is axially symmetric.
Note that it may also happen that the condition \eqref{E:KoRg_Orth} is the same as \eqref{E:Reps_Orth}.
For example, if
\begin{align*}
  \Omega_\varepsilon = \{x \in \mathbb{R}^3 \mid 1<|x|<1+\varepsilon\} \quad (\Gamma=S^2,\, g_0 \equiv 0,\, g_1 \equiv 1)
\end{align*}
is a thin spherical shell, then
\begin{align*}
  \mathcal{R}_g = \mathcal{R}_\varepsilon = \mathcal{R} = \{w(x) = a\times x,\, x\in\mathbb{R}^3 \mid a\in\mathbb{R}^3\}
\end{align*}
for all $\varepsilon\in(0,1]$ and thus the condition \eqref{E:KoRg_Orth} is the same as \eqref{E:Reps_Orth}.

\begin{remark} \label{R:Com_LM}
  The authors of \cite{LeMu11} constructed the vector field of the form \eqref{E:Def_ExLM} and proved \eqref{E:Est_ExLM} under the assumption $\mathcal{K}_g(\Gamma)\neq\{0\}$ to show that the uniform Korn inequality \eqref{E:Korn_H1} fails to hold without the condition \eqref{E:Korn_Orth}.
  Based on that result they mentioned at the end of \cite{LeMu11}*{Section 4} that the constant $c_\varepsilon$ in \eqref{E:Korn_Reps} blows up as $\varepsilon\to0$ even if the limit surface $\Gamma$ is not axially symmetric.
  Indeed, if $\Gamma$ is not axially symmetric, then $\Omega_\varepsilon$ is also not axially symmetric for all $\varepsilon\in(0,1]$ sufficiently small (see Lemma \ref{L:CTD_Rg}) and we get \eqref{E:ExBl} as in the proof of Lemma \ref{L:ExBl_NAS}.
  However, as we mentioned in Remark \ref{R:Killing}, it is not known whether there exists a closed surface in $\mathbb{R}^3$ that is not axially symmetric but admits a nontrivial Killing vector field, i.e. $\mathcal{R}=\{0\}$ but $\mathcal{K}(\Gamma)\neq\{0\}$.
  To avoid this problem, we presented the concrete examples of curved thin domains around the unit sphere in $\mathbb{R}^3$ for which the relations \eqref{Pf_EBN:RK} hold and $c_\varepsilon$ blows up as $\varepsilon\to0$ in Lemmas \ref{L:ExBl_NAS} and \ref{L:ExBl_AS}.
  Note that the comment at the end of \cite{LeMu11}*{Section 4} is valid for curved thin domains in $\mathbb{R}^2$, since every smooth closed curve in $\mathbb{R}^2$ has a nontrivial tangential vector field of constant length as a nontrivial Killing vector field.
\end{remark}

\section{Uniform a priori estimate for the vector Laplace operator} \label{S:UniLap}
The purpose of this section is to prove the following uniform a priori estimate for the vector Laplace operator on $\Omega_\varepsilon$ under the slip boundary conditions \eqref{E:Bo_Slip}.

\begin{lemma} \label{L:Lap_Apri}
  Suppose that the inequalities \eqref{E:Fric_Upper} are valid for all $\varepsilon\in(0,1]$.
  Then there exists a constant $c>0$ independent of $\varepsilon$ such that
  \begin{align} \label{E:Lap_Apri}
    \|u\|_{H^2(\Omega_\varepsilon)} \leq c\left(\|\Delta u\|_{L^2(\Omega_\varepsilon)}+\|u\|_{H^1(\Omega_\varepsilon)}\right)
  \end{align}
  for all $\varepsilon\in(0,1]$ and $u\in H^2(\Omega_\varepsilon)^3$ satisfying \eqref{E:Bo_Slip}.
\end{lemma}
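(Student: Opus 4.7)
The plan is to establish the componentwise identity
\begin{equation*}
  \|\nabla^2 u\|_{L^2(\Omega_\varepsilon)}^2 = \|\Delta u\|_{L^2(\Omega_\varepsilon)}^2 + \int_{\Gamma_\varepsilon} \nabla u : \bigl\{(n_\varepsilon\cdot\nabla)\nabla u - n_\varepsilon\otimes\Delta u\bigr\}\,d\mathcal{H}^2,
\end{equation*}
which comes from integrating $\int_{\Omega_\varepsilon}(\partial_i\partial_j u_k)^2\,dx$ by parts twice and summing over $k$, and then to absorb the boundary term via the uniform estimate
\begin{equation*}
  \left|\int_{\Gamma_\varepsilon} \nabla u : \bigl\{(n_\varepsilon\cdot\nabla)\nabla u - n_\varepsilon\otimes\Delta u\bigr\}\,d\mathcal{H}^2\right| \leq c\bigl(\|u\|_{H^1(\Omega_\varepsilon)}^2 + \|u\|_{H^1(\Omega_\varepsilon)}\|\nabla^2 u\|_{L^2(\Omega_\varepsilon)}\bigr)
\end{equation*}
with $c>0$ independent of $\varepsilon$. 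Combining these two with Young's inequality produces \eqref{E:Lap_Apri}. Since the identity requires the trace of $\nabla^2 u$ on $\Gamma_\varepsilon$ in $L^2$, I would first approximate $u \in H^2(\Omega_\varepsilon)^3$ satisfying \eqref{E:Bo_Slip} by $H^3$ vector fields still satisfying the same slip boundary conditions (the approximation result announced in the introduction as Lemma~\ref{L:NSl_Approx}, proved by localizing near $\Gamma_\varepsilon$, straightening the boundary, mollifying tangentially, and re-enforcing the slip conditions on the normal component), then pass to the limit after the boundary estimate.

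The heart of the argument is the boundary estimate. On $\Gamma_\varepsilon$ the slip boundary conditions supply, through \eqref{E:NSl_ND} of Lemma~\ref{L:NSl}, the identity
\begin{equation*}
  P_\varepsilon(n_\varepsilon\cdot\nabla)u = -W_\varepsilon u - \frac{\gamma_\varepsilon}{\nu}u,
\end{equation*}
which pins down the tangential part of the normal derivative of $u$ purely in terms of $u|_{\Gamma_\varepsilon}$ and the geometry of $\Gamma_\varepsilon$. Differentiating this identity tangentially by means of the Gauss formula and the covariant derivative machinery of Appendix~\ref{S:Ap_Cov}, I can express $P_\varepsilon(n_\varepsilon\cdot\nabla)\nabla u$ on $\Gamma_\varepsilon$ as a linear combination of $\nabla u|_{\Gamma_\varepsilon}$, $u|_{\Gamma_\varepsilon}$, and uniformly bounded surface quantities ($W_\varepsilon$, $H_\varepsilon$, $\underline{D}_j^\varepsilon W_\varepsilon$, all controlled by Lemma~\ref{L:Comp_Nor}), without resorting to a change of variables. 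The normal part is handled by decomposing $I_3 = P_\varepsilon+Q_\varepsilon$ in the integrand and using $u\cdot n_\varepsilon=0$ together with its tangential derivatives to keep only first-order information on $\Gamma_\varepsilon$. The piece $n_\varepsilon\otimes\Delta u$ is reduced analogously by writing $\Delta u = \mathrm{div}(\nabla u)$ on $\Gamma_\varepsilon$ and reapplying the Gauss formula and slip identity.

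After this reduction the boundary integrand is a bilinear form in $(u|_{\Gamma_\varepsilon},\nabla u|_{\Gamma_\varepsilon})$ with coefficients bounded uniformly in $\varepsilon$. The crude trace bound from \eqref{E:Trace_L2} is insufficient because it generates $\varepsilon^{-1/2}$ factors that destroy uniformity. The remedy is the same interpolation trick used in Lemma~\ref{L:Int_UgUn}: pull back the integrals over $\Gamma_\varepsilon^0$ and $\Gamma_\varepsilon^1$ to $\Gamma$ via \eqref{E:Pull_Bo} and \eqref{E:CoV_Surf}, interpolate linearly in the normal variable $r$ between the two boundary values of the integrand, and rewrite the combined integral as a radial integral of $\partial/\partial r$ over $\Omega_\varepsilon$. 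The near-cancellations recorded in \eqref{E:Diff_PQ_IO}--\eqref{E:Diff_WH_IO}, together with the bound $|n_\varepsilon^0+n_\varepsilon^1|\leq c\varepsilon$ from \eqref{E:N_Diff}, then absorb the $\varepsilon^{-1}$ produced by the linear interpolation and yield the target estimate.

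The main obstacle will be the bookkeeping. After the slip-induced substitutions, the integrand splits into several structurally different terms — quadratic in $u$, quadratic in tangential $\nabla u$, cross terms like $W_\varepsilon u\cdot\nabla u$, and curvature-derivative contributions involving $\underline{D}_j^\varepsilon W_\varepsilon$ — and each must be matched against the appropriate cancellation in Lemma~\ref{L:Diff_SQ_IO} so that no residual $\varepsilon^{-1}$ factor survives. The friction contribution $\nu^{-1}\gamma_\varepsilon u$ is benign under the hypothesis $\gamma_\varepsilon^i\leq c\varepsilon$ of \eqref{E:Fric_Upper}, since the explicit $\varepsilon$ compensates the one coming from the interpolation. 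The only term that can legitimately pair with $\nabla^2 u$ in $L^2(\Omega_\varepsilon)$ arises from differentiating the linear interpolant in $r$, producing the mixed product $\|u\|_{H^1(\Omega_\varepsilon)}\|\nabla^2 u\|_{L^2(\Omega_\varepsilon)}$ on the right-hand side, which is then absorbed into the left-hand side by Young's inequality.
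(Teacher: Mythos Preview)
Your plan matches the paper's proof essentially step for step: the same integration-by-parts identity, the $H^3$ approximation via Lemma~\ref{L:NSl_Approx}, the reduction of the boundary integrand using \eqref{E:NSl_ND} together with the Gauss formula and covariant-derivative calculus of Appendix~\ref{S:Ap_Cov}, and the inner/outer boundary interpolation driven by the cancellations of Lemma~\ref{L:Diff_SQ_IO}, closed off by Young's inequality. The only deviation is your suggested proof of Lemma~\ref{L:NSl_Approx} by tangential mollification; the paper instead obtains the $H^3$ approximants as solutions of the auxiliary elliptic problem \eqref{E:EE_LE} with smooth data (Lemma~\ref{L:EE_Reg}), which automatically enforces the slip conditions without any hand correction.
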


First we give an approximation result for a vector field in $H^2(\Omega_\varepsilon)^3$ satisfying the slip boundary conditions \eqref{E:Bo_Slip}.
To this end, we consider the problem
\begin{align} \label{E:EE_LE}
  \begin{cases}
    -\nu\{\Delta u+\nabla(\mathrm{div}\,u)\}+u = f &\text{in}\quad \Omega_\varepsilon, \\
    u\cdot n_\varepsilon = 0, \quad 2\nu P_\varepsilon D(u)n_\varepsilon+\gamma_\varepsilon u = 0 &\text{on}\quad \Gamma_\varepsilon
  \end{cases}
\end{align}
for a given data $f\colon\Omega_\varepsilon\to\mathbb{R}^3$.
The bilinear form for \eqref{E:EE_LE} is given by
\begin{align*}
  \tilde{a}_\varepsilon(u_1,u_2) := 2\nu\bigl(D(u_1),D(u_2)\bigr)_{L^2(\Omega_\varepsilon)}+(u_1,u_2)_{L^2(\Omega_\varepsilon)}+\sum_{i=0,1}\gamma_\varepsilon^i(u_1,u_2)_{L^2(\Gamma_\varepsilon^i)}
\end{align*}
for $u_1,u_2\in H^1(\Omega_\varepsilon)^3$ (see Lemma \ref{L:IbP_St}).
We denote by $\langle\cdot,\cdot\rangle_{\Omega_\varepsilon}$ the duality product between $H^{-1}(\Omega_\varepsilon)$ and $H^1(\Omega_\varepsilon)$ and define
\begin{align*}
  H_{n,0}^1(\Omega_\varepsilon) := \{u \in H^1(\Omega_\varepsilon)^3 \mid \text{$u\cdot n_\varepsilon=0$ on $\Gamma_\varepsilon$}\}.
\end{align*}
Note that $H_{n,0}^1(\Omega_\varepsilon)$ is closed in $H^1(\Omega_\varepsilon)^3$ and thus a Hilbert space.

\begin{lemma} \label{L:EE_Reg}
  For $\varepsilon\in(0,1]$ let $f\in H^{-1}(\Omega_\varepsilon)^3$.
  Suppose that the inequalities \eqref{E:Fric_Upper} are valid.
  Then there exists a unique weak solution $u\in H_{n,0}^1(\Omega_\varepsilon)$ to \eqref{E:EE_LE} in the sense that
  \begin{align*}
    \tilde{a}_\varepsilon(u,\Phi) = \langle f,\Phi\rangle_{\Omega_\varepsilon} \quad\text{for all}\quad \Phi\in H_{n,0}^1(\Omega_\varepsilon).
  \end{align*}
  Moreover, if $f\in L^2(\Omega_\varepsilon)^3$, then $u\in H^2(\Omega_\varepsilon)^3$ and it satisfies \eqref{E:EE_LE} a.e. in $\Omega_\varepsilon$ and on $\Gamma_\varepsilon$, and there exists a constant $c_\varepsilon>0$ depending on $\varepsilon$ such that
  \begin{align} \label{E:EE_Reg}
    \|u\|_{H^2(\Omega_\varepsilon)} \leq c_\varepsilon\|f\|_{L^2(\Omega_\varepsilon)}.
  \end{align}
  If in addition $f\in H^1(\Omega_\varepsilon)^3$ then $u\in H^3(\Omega_\varepsilon)^3$.
\end{lemma}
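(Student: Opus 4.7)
The plan is to combine Lax--Milgram on the Hilbert space $H_{n,0}^1(\Omega_\varepsilon)$ with classical elliptic regularity for the Lamé-type operator $-2\nu\,\mathrm{div}\,D(\cdot)+\mathrm{id}$ (obtained by rewriting $-\nu\{\Delta u+\nabla(\mathrm{div}\,u)\}=-2\nu\,\mathrm{div}\,D(u)$) under the mixed slip boundary conditions. The parameter $\varepsilon$ is fixed throughout, so all constants are allowed to depend on $\varepsilon$.

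First I would check continuity and coercivity of $\tilde{a}_\varepsilon$ on $H_{n,0}^1(\Omega_\varepsilon)$. Continuity is immediate for the two volume terms, and for the boundary terms one uses \eqref{E:Fric_Upper} together with the trace inequality \eqref{E:Trace_L2} (with $\varepsilon$ fixed). For coercivity, any $u\in H_{n,0}^1(\Omega_\varepsilon)$ satisfies \eqref{E:Bo_Imp}, so Lemma \ref{L:Korn_Grad} gives
\begin{align*}
  \|\nabla u\|_{L^2(\Omega_\varepsilon)}^2 \leq 4\|D(u)\|_{L^2(\Omega_\varepsilon)}^2+c_{K,1}\|u\|_{L^2(\Omega_\varepsilon)}^2,
\end{align*}
and since $\tilde{a}_\varepsilon(u,u)\geq 2\nu\|D(u)\|_{L^2(\Omega_\varepsilon)}^2+\|u\|_{L^2(\Omega_\varepsilon)}^2$, we get $\|u\|_{H^1(\Omega_\varepsilon)}^2\leq c\,\tilde{a}_\varepsilon(u,u)$ after combining the two bounds. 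The Lax--Milgram theorem then produces a unique weak solution for every $f\in H^{-1}(\Omega_\varepsilon)^3$.

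Second, for $f\in L^2(\Omega_\varepsilon)^3$ I would extract the strong form of \eqref{E:EE_LE}. Testing the weak formulation against $\Phi\in C_c^\infty(\Omega_\varepsilon)^3$ and using $-\{\Delta+\nabla\,\mathrm{div}\}=-2\,\mathrm{div}\,D$ gives $-\nu\{\Delta u+\nabla(\mathrm{div}\,u)\}+u=f$ distributionally. Once $H^2$-regularity is established this holds a.e.; the impermeability is built into the test space, and the tangential stress condition is recovered by testing against arbitrary tangential $\Phi\in H_{n,0}^1(\Omega_\varepsilon)$ via the same integration by parts used to derive the bilinear form in Lemma \ref{L:IbP_St} (the divergence contribution vanishes on tangential test fields by the impermeability, so the boundary trace reads $2\nu P_\varepsilon D(u)n_\varepsilon+\gamma_\varepsilon u=0$).

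Third, for the interior and boundary regularity \eqref{E:EE_Reg} and the $H^3$ upgrade, I would invoke classical elliptic regularity. The operator $-2\nu\,\mathrm{div}\,D+\mathrm{id}$ is strongly elliptic in the Agmon--Douglis--Nirenberg sense, and the mixed boundary conditions $u\cdot n_\varepsilon=0$ together with $2\nu P_\varepsilon D(u)n_\varepsilon+\gamma_\varepsilon u=0$ form a complementing (Lopatinskii--Shapiro) system; this is the same symbol calculation carried out in \cites{AmRe14,Be04,SoSc73} for the Stokes problem, where the pressure does not enter the principal part of the boundary operator and the extra $\nabla(\mathrm{div}\,u)$ term here is harmless for exactly the same reason. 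Since $\Gamma_\varepsilon$ is of class $C^4$, the standard localization, boundary flattening and difference-quotient/shift argument yields $u\in H^2(\Omega_\varepsilon)^3$ with \eqref{E:EE_Reg} when $f\in L^2(\Omega_\varepsilon)^3$, and bootstrapping once more gives $u\in H^3(\Omega_\varepsilon)^3$ when $f\in H^1(\Omega_\varepsilon)^3$. The main technical point is the verification of the complementing condition for the slip boundary operator, but this is classical and can simply be cited from \cites{AmRe14,Be04,SoSc73}.
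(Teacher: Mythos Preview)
Your proposal is correct and follows essentially the same route as the paper: coercivity via Lemma~\ref{L:Korn_Grad} plus the built-in $L^2$ term, boundedness via the friction bound \eqref{E:Fric_Upper} and a trace estimate, then Lax--Milgram, and finally citation of \cites{AmRe14,Be04,SoSc73} for $H^2$-regularity and a bootstrap for $H^3$. The only cosmetic differences are that the paper invokes \eqref{E:Poin_Bo} rather than \eqref{E:Trace_L2} for the boundary term (either works here since $\varepsilon$ is fixed), and it phrases the regularity step as ``localization and difference quotients as in \cites{Be04,SoSc73}'' rather than via the ADN/Lopatinskii--Shapiro framework you mention; your explicit recovery of the strong form and boundary condition is a useful addition that the paper leaves implicit.
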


Note that it does not matter how the constant $c_\varepsilon$ in \eqref{E:EE_Reg} depends on $\varepsilon$ since we apply Lemma \ref{L:EE_Reg} just for approximation of a vector field on $\Omega_\varepsilon$ (see Lemma \ref{L:NSl_Approx}).

\begin{proof}
  Let $u\in H_{n,0}^1(\Omega_\varepsilon)$.
  By the definition of $\tilde{a}_\varepsilon$ it is obvious that
  \begin{align*}
    \|u\|_{L^2(\Omega_\varepsilon)}^2 \leq \tilde{a}_\varepsilon(u,u).
  \end{align*}
  Also, the inequality \eqref{E:Korn_Grad} implies that
  \begin{align*}
    \|\nabla u\|_{L^2(\Omega_\varepsilon)}^2 \leq c\left(\|D(u)\|_{L^2(\Omega_\varepsilon)}^2+\|u\|_{L^2(\Omega_\varepsilon)}^2\right) \leq c\tilde{a}_\varepsilon(u,u)
  \end{align*}
  with a constant $c>0$ independent of $\varepsilon$.
  Hence
  \begin{align} \label{Pf_ER:Ta_Coer}
    \|u\|_{H^1(\Omega_\varepsilon)}^2 \leq c\tilde{a}_\varepsilon(u,u).
  \end{align}
  On the other hand, the inequalities \eqref{E:Fric_Upper} and \eqref{E:Poin_Bo} yield
  \begin{align*}
    \gamma_\varepsilon^i\|u\|_{L^2(\Gamma_\varepsilon^i)}^2 \leq c\varepsilon\left(\varepsilon^{-1}\|u\|_{L^2(\Omega_\varepsilon)}^2+\varepsilon\|\partial_nu\|_{L^2(\Omega_\varepsilon)}^2\right) \leq c\|u\|_{H^1(\Omega_\varepsilon)}
  \end{align*}
  for $i=0,1$.
  From this inequality and $|D(u)|\leq|\nabla u|$ in $\Omega_\varepsilon$ it follows that
  \begin{align} \label{Pf_ER:Ta_Bound}
    \tilde{a}_\varepsilon(u,u) \leq c\|u\|_{H^1(\Omega_\varepsilon)}^2.
  \end{align}
  By \eqref{Pf_ER:Ta_Coer} and \eqref{Pf_ER:Ta_Bound} the bilinear form $\tilde{a}_\varepsilon$ is bounded and coercive on $H_{n,0}^1(\Omega_\varepsilon)$.
  Thus the Lax--Milgram theorem implies the existence and uniqueness of a weak solution $u\in H_{n,0}^1(\Omega_\varepsilon)$ to \eqref{E:EE_LE}.

  The proof of the $H^2$-regularity of $u$ with \eqref{E:EE_Reg} for $f\in L^2(\Omega_\varepsilon)^3$ is carried out by a standard localization argument and a method of the difference quotient.
  Here we omit it since it is the same as the proofs of \cite{Be04}*{Theorem 1.2} and \cite{SoSc73}*{Theorem 2} which established the $H^2$-regularity of a weak solution to the Stokes problem in a general bounded domain under the slip boundary conditions.

  The $H^3$-regularity of $u$ for $f\in H^1(\Omega_\varepsilon)^3$ is proved by induction and a localization argument as in the case of a general second order elliptic equation.
  For details, we refer to \cite{Ev10}*{Section 6.3, Theorem 5}.
  (Note that the $C^4$-regularity of the boundary $\Gamma_\varepsilon$ is required for the $H^3$-regularity of $u$, see Section \ref{SS:Pre_Dom} and \cites{Be04,SoSc73}.)
\end{proof}

Based on Lemma \ref{L:EE_Reg} we show that a vector field in $H^2(\Omega_\varepsilon)^3$ is approximated by those in $H^3(\Omega_\varepsilon)^3$ under the slip boundary conditions \eqref{E:Bo_Slip}.

\begin{lemma} \label{L:NSl_Approx}
  For $\varepsilon\in(0,1]$ let $u\in H^2(\Omega_\varepsilon)^3$ satisfy \eqref{E:Bo_Slip} and suppose that the inequalities \eqref{E:Fric_Upper} are valid.
  Then there exists a sequence $\{u_k\}_{k=1}^\infty$ in $H^3(\Omega_\varepsilon)^3$ such that $u_k$ satisfies \eqref{E:Bo_Slip} for each $k\in\mathbb{N}$ and
  \begin{align*}
    \lim_{k\to\infty}\|u-u_k\|_{H^2(\Omega_\varepsilon)}=0.
  \end{align*}
\end{lemma}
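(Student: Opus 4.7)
The plan is to invert the elliptic boundary value problem \eqref{E:EE_LE} against smooth right-hand sides and harvest the $H^3$-regularity granted by the last assertion of Lemma \ref{L:EE_Reg}. Concretely, given $u\in H^2(\Omega_\varepsilon)^3$ satisfying \eqref{E:Bo_Slip}, I would first exhibit $u$ as the unique weak solution to \eqref{E:EE_LE} in $H_{n,0}^1(\Omega_\varepsilon)$ with right-hand side
\[
f := -\nu\{\Delta u+\nabla(\mathrm{div}\,u)\}+u \in L^2(\Omega_\varepsilon)^3.
\]
Then I would approximate $f$ in $L^2(\Omega_\varepsilon)^3$ by smoother data and lift the approximation to $H^2$-convergent approximations of $u$ via the a priori estimate \eqref{E:EE_Reg}.

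To carry out the first step, for $\Phi\in H_{n,0}^1(\Omega_\varepsilon)$ one integration by parts gives
\[
2\nu\bigl(D(u),D(\Phi)\bigr)_{L^2(\Omega_\varepsilon)} = -\nu\bigl(\Delta u+\nabla(\mathrm{div}\,u),\Phi\bigr)_{L^2(\Omega_\varepsilon)}+2\nu\int_{\Gamma_\varepsilon}D(u)n_\varepsilon\cdot\Phi\,d\mathcal{H}^2.
\]
Because $\Phi\cdot n_\varepsilon=0$ on $\Gamma_\varepsilon$, the tangential part alone contributes to the boundary integral, so $D(u)n_\varepsilon\cdot\Phi=P_\varepsilon D(u)n_\varepsilon\cdot\Phi$; the second slip condition in \eqref{E:Bo_Slip} converts this into $-(\gamma_\varepsilon/2\nu)u\cdot\Phi$, which upon adding $(u,\Phi)_{L^2(\Omega_\varepsilon)}$ produces precisely $\tilde{a}_\varepsilon(u,\Phi)=(f,\Phi)_{L^2(\Omega_\varepsilon)}$. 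Since $u\in H_{n,0}^1(\Omega_\varepsilon)$, the uniqueness clause of Lemma \ref{L:EE_Reg} identifies $u$ as \emph{the} weak solution with data $f$.

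For the approximation, I would pick any sequence $\{f_k\}_{k=1}^\infty\subset C_c^\infty(\Omega_\varepsilon)^3$ with $f_k\to f$ in $L^2(\Omega_\varepsilon)^3$, which exists by density. Since $f_k\in H^1(\Omega_\varepsilon)^3$, Lemma \ref{L:EE_Reg} supplies a unique weak solution $u_k\in H_{n,0}^1(\Omega_\varepsilon)$ to \eqref{E:EE_LE} with data $f_k$ that automatically belongs to $H^3(\Omega_\varepsilon)^3$ and satisfies \eqref{E:Bo_Slip} a.e.\ on $\Gamma_\varepsilon$. Linearity and uniqueness identify $u-u_k$ with the weak (and hence $H^2$-strong) solution associated with $f-f_k$, so \eqref{E:EE_Reg} yields
\[
\|u-u_k\|_{H^2(\Omega_\varepsilon)} \leq c_\varepsilon\|f-f_k\|_{L^2(\Omega_\varepsilon)} \longrightarrow 0
\]
as $k\to\infty$, which is the desired conclusion.

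The only delicate step is the weak-formulation verification, where the full strength of the second condition in \eqref{E:Bo_Slip} is indispensable to absorb the boundary term produced by $D(u)$ into the bilinear form $\tilde{a}_\varepsilon$; nothing else requires ingenuity, and the $\varepsilon$-dependence of the constant $c_\varepsilon$ in \eqref{E:EE_Reg} is harmless because $\varepsilon$ is fixed throughout the argument.
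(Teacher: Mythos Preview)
Your proposal is correct and follows exactly the same approach as the paper: define $f:=-\nu\{\Delta u+\nabla(\mathrm{div}\,u)\}+u$, approximate it in $L^2$ by $f_k\in C_c^\infty(\Omega_\varepsilon)^3$, solve \eqref{E:EE_LE} with data $f_k$ to obtain $u_k\in H^3(\Omega_\varepsilon)^3$ satisfying \eqref{E:Bo_Slip}, and conclude via \eqref{E:EE_Reg}. The only difference is that you spell out the integration-by-parts verification that $u$ itself is the weak solution with data $f$ (essentially an application of Lemma~\ref{L:IbP_St}), which the paper leaves implicit in the sentence ``since $u-u_k$ is a unique weak solution to \eqref{E:EE_LE} with data $f-f_k$.''
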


\begin{proof}
  Let $u\in H^2(\Omega_\varepsilon)^3$ satisfy \eqref{E:Bo_Slip} and
  \begin{align*}
    f := -\nu\{\Delta u+\nabla(\mathrm{div}\,u)\}+u \in L^2(\Omega_\varepsilon)^3.
  \end{align*}
  Then we can take a sequence $\{f_k\}_{k=1}^\infty$ in $C_c^\infty(\Omega_\varepsilon)^3$ that converges to $f$ strongly in $L^2(\Omega_\varepsilon)^3$.
  For each $k\in\mathbb{N}$ let $u_k$ be a unique weak solution to \eqref{E:EE_LE} with data $f_k\in C_c^\infty(\Omega_\varepsilon)^3$.
  Then $u_k\in H^3(\Omega_\varepsilon)^3$ and it satisfies \eqref{E:Bo_Slip} by Lemma \ref{L:EE_Reg}.
  Moreover, since $u-u_k$ is a unique weak solution to \eqref{E:EE_LE} with data $f-f_k$,
  \begin{align*}
    \|u-u_k\|_{H^2(\Omega_\varepsilon)} \leq c_\varepsilon\|f-f_k\|_{L^2(\Omega_\varepsilon)} \to 0 \quad\text{as}\quad k\to\infty
  \end{align*}
  by \eqref{E:EE_Reg} and the strong convergence of $\{f_k\}_{k=1}^\infty$ to $f$ in $L^2(\Omega_\varepsilon)^3$ (note that the constant $c_\varepsilon$ does not depend on $k$).
\end{proof}

Now let us prove Lemma \ref{L:Lap_Apri}.
As in Section \ref{SS:Pre_Surf}, for a function space $\mathcal{X}(\Gamma_\varepsilon)$ on $\Gamma_\varepsilon$ we denote the space of all tangential vector fields on $\Gamma_\varepsilon$ of class $\mathcal{X}$ by
\begin{align*}
  \mathcal{X}(\Gamma_\varepsilon,T\Gamma_\varepsilon) := \{u\in \mathcal{X}(\Gamma_\varepsilon)^3 \mid \text{$u\cdot n_\varepsilon=0$ on $\Gamma_\varepsilon$}\}.
\end{align*}
For $u\in H^1(\Gamma_\varepsilon,T\Gamma_\varepsilon)$ and $v\in L^2(\Gamma_\varepsilon,T\Gamma_\varepsilon)$ we define the covariant derivative
\begin{align*}
  \overline{\nabla}_v^\varepsilon u := P_\varepsilon(v\cdot\nabla)\tilde{u} = P_\varepsilon(v\cdot\nabla_{\Gamma_\varepsilon})u \quad\text{on}\quad \Gamma_\varepsilon,
\end{align*}
where $\tilde{u}$ is any $H^1$-extension of $u$ to an open neighborhood of $\Gamma_\varepsilon$ with $\tilde{u}|_{\Gamma_\varepsilon}=u$.
We use the formulas for the covariant derivatives given in Appendix \ref{S:Ap_Cov}.

\begin{proof}[Proof of Lemma \ref{L:Lap_Apri}]
  Let $u\in H^2(\Omega_\varepsilon)^3$ satisfy \eqref{E:Bo_Slip}.
  Since
  \begin{align*}
    \|u\|_{H^2(\Omega_\varepsilon)}^2 = \|u\|_{H^1(\Omega_\varepsilon)}^2+\|\nabla^2u\|_{L^2(\Omega_\varepsilon)}^2,
  \end{align*}
  it is sufficient for \eqref{E:Lap_Apri} to show that
  \begin{align} \label{Pf_LA:Goal}
    \|\nabla^2u\|_{L^2(\Omega_\varepsilon)}^2 \leq c\left(\|\Delta u\|_{L^2(\Omega_\varepsilon)}^2+\|u\|_{H^1(\Omega_\varepsilon)}^2\right).
  \end{align}
  Moreover, by Lemma \ref{L:NSl_Approx} and a density argument we may assume that $u$ belongs to $H^3(\Omega_\varepsilon)^3$ and satisfies \eqref{E:Bo_Slip}, and thus $u\in H^2(\Gamma_\varepsilon,T\Gamma_\varepsilon)$.
  By $u\in H^3(\Omega_\varepsilon)^3$ we can carry out integration by parts twice to get
  \begin{align} \label{Pf_LA:IbP}
    \|\nabla^2u\|_{L^2(\Omega_\varepsilon)}^2 = \|\Delta u\|_{L^2(\Omega_\varepsilon)}^2+\int_{\Gamma_\varepsilon}\nabla u:\{(n_\varepsilon\cdot\nabla)\nabla u-n_\varepsilon\otimes\Delta u\}\,d\mathcal{H}^2.
  \end{align}
  Here $(n_\varepsilon\cdot\nabla)\nabla u$ denotes a $3\times3$ matrix whose $(i,j)$-entry is given by
  \begin{align*}
    [(n_\varepsilon\cdot\nabla)\nabla u]_{ij} := n_\varepsilon\cdot\nabla(\partial_iu_j), \quad i,j=1,2,3.
  \end{align*}
  Let us estimate the boundary integral in \eqref{Pf_LA:IbP}.
  Our goal is to show that
  \begin{multline} \label{Pf_LA:BD}
    \left|\int_{\Gamma_\varepsilon}\nabla u:\{(n_\varepsilon\cdot\nabla)\nabla u-n_\varepsilon\otimes\Delta u\}\,d\mathcal{H}^2\right| \\
    \leq c\left(\|u\|_{H^1(\Omega_\varepsilon)}^2+\|u\|_{H^1(\Omega_\varepsilon)}\|\nabla^2u\|_{L^2(\Omega_\varepsilon)}\right)
  \end{multline}
  with a constant $c>0$ independent of $\varepsilon$.
  Since $u$ satisfies \eqref{E:Bo_Slip}, we have
  \begin{align} \label{Pf_LA:ND_Bo}
    (\nabla u)^Tn_\varepsilon = (n_\varepsilon\cdot\nabla)u = -W_\varepsilon u-\tilde{\gamma}_\varepsilon u+\xi_\varepsilon n_\varepsilon \quad\text{on}\quad \Gamma_\varepsilon
  \end{align}
  by \eqref{E:NSl_ND} (note that $u$ and $W_\varepsilon u$ are tangential on $\Gamma_\varepsilon$), where
  \begin{align*}
    \tilde{\gamma}_\varepsilon := \frac{\gamma_\varepsilon}{\nu}, \quad \xi_\varepsilon := (n_\varepsilon\cdot\nabla)u\cdot n_\varepsilon = \nabla u: Q_\varepsilon.
  \end{align*}
  The first step for \eqref{Pf_LA:BD} is to prove
  \begin{align} \label{Pf_LA:Int_Sum}
    \int_{\Gamma_\varepsilon}\nabla u:\{(n_\varepsilon\cdot\nabla)\nabla u-n_\varepsilon\otimes\Delta u\}\,d\mathcal{H}^2 = \sum_{k=1}^4\int_{\Gamma_\varepsilon}\varphi_k\,d\mathcal{H}^2,
  \end{align}
  where
  \begin{align} \label{Pf_LA:Def_Phi}
    \begin{aligned}
      \varphi_1 &:= -2\{\nabla_{\Gamma_\varepsilon}W_\varepsilon\cdot u+(\nabla u)W_\varepsilon+\tilde{\gamma_\varepsilon}\nabla u\}:P_\varepsilon(\nabla u)P_\varepsilon,\\
      \varphi_2 &:= W_\varepsilon\nabla u:(\nabla u)P_\varepsilon \\
      &\qquad\qquad -2(u\cdot\mathrm{div}_{\Gamma_\varepsilon}W_\varepsilon+2\nabla u:W_\varepsilon)(\nabla u:Q_\varepsilon)+H_\varepsilon(\nabla u:Q_\varepsilon)^2, \\
      \varphi_3 &:= -(W_\varepsilon^3u-H_\varepsilon W_\varepsilon^2u)\cdot u, \\
      \varphi_4 &:= -\tilde{\gamma}_\varepsilon(2W_\varepsilon^2u-2H_\varepsilon W_\varepsilon u-\tilde{\gamma}_\varepsilon H_\varepsilon u)\cdot u.
    \end{aligned}
  \end{align}
  In \eqref{Pf_LA:Def_Phi} we used the notation $\nabla_{\Gamma_\varepsilon}W_\varepsilon\cdot u$ for the $3\times 3$ matrix with $(i,j)$-entry
  \begin{align} \label{Pf_LA:DW_u_Bo}
    [\nabla_{\Gamma_\varepsilon}W_\varepsilon\cdot u]_{ij} := \sum_{k=1}^3(\underline{D}_i^\varepsilon[W_\varepsilon]_{jk})u_k, \quad i,j=1,2,3
  \end{align}
  and the notation $\mathrm{div}_{\Gamma_\varepsilon}W_\varepsilon$ for the vector field with $j$-th component
  \begin{align} \label{Pf_LA:Div_W}
    [\mathrm{div}_{\Gamma_\varepsilon}W_\varepsilon]_j := \sum_{i=1}^3\underline{D}_i^\varepsilon[W_\varepsilon]_{ij}, \quad j=1,2,3.
  \end{align}
  Using a partition of unity on $\Gamma_\varepsilon$ we may assume that $u|_{\Gamma_\varepsilon}$ is compactly supported in a relatively open subset $O$ of $\Gamma_\varepsilon$ on which we can take a local orthonormal frame $\{\tau_1,\tau_2\}$ (see Appendix \ref{S:Ap_Cov}).
  Since $\{\tau_1,\tau_2,n_\varepsilon\}$ is an orthonormal basis of $\mathbb{R}^3$,
  \begin{align} \label{Pf_LA:Decom_Intg}
    \begin{aligned}
      \nabla u: \{(n_\varepsilon\cdot\nabla)\nabla u-n_\varepsilon\otimes\Delta u\} &= (\nabla u)^T: [\{(n_\varepsilon\cdot\nabla)\nabla u\}^T-\Delta u\otimes n_\varepsilon] \\
      &= \eta_1+\eta_2+\eta_3
    \end{aligned}
  \end{align}
  on $O$, where
  \begin{align}
    \eta_i &:= (\nabla u)^T\tau_i\cdot[\{(n_\varepsilon\cdot\nabla)\nabla u\}^T\tau_i-(\Delta u\otimes n_\varepsilon)\tau_i], \quad i=1,2, \label{Pf_LA:Def_etai}\\
    \eta_3 &:= (\nabla u)^Tn_\varepsilon\cdot[\{(n_\varepsilon\cdot\nabla)\nabla u\}^Tn_\varepsilon-(\Delta u\otimes n_\varepsilon)n_\varepsilon]. \label{Pf_LA:Def_eta3}
  \end{align}
  In what follows, we carry out calculations on $O$.
  By \eqref{E:Gauss} and $\tau_i\cdot n_\varepsilon=0$ we have
  \begin{align} \label{Pf_LA:etai_1}
    \begin{aligned}
      (\nabla u)^T\tau_i &= (\tau_i\cdot\nabla)u = \overline{\nabla}_i^\varepsilon u+(W_\varepsilon u\cdot\tau_i)n_\varepsilon, \\
      (\Delta u\otimes n_\varepsilon)\tau_i &= (\tau_i\cdot n_\varepsilon)\Delta u = 0,
    \end{aligned}
  \end{align}
  where $\overline{\nabla}_i^\varepsilon:=\overline{\nabla}_{\tau_i}^\varepsilon$, $i=1,2$.
  For $j=1,2,3$ let $\tau_i^j$ and $n_\varepsilon^j$ be the $j$-th components of $\tau_i$ and $n_\varepsilon$.
  Then the $j$-th component of $\{(n_\varepsilon\cdot\nabla)\nabla u\}^T\tau_i$ is of the form
  \begin{align*}
    \sum_{k,l=1}^3n_\varepsilon^k(\partial_k\partial_lu_j)\tau_i^l &= \sum_{k=1}^3n_\varepsilon^k(\tau_i\cdot\nabla)(\partial_ku_j) = \sum_{k=1}^3n_\varepsilon^k(\tau_i\cdot\nabla_{\Gamma_\varepsilon})(\partial_ku_j) \\
    &= \sum_{k=1}^3\{(\tau_i\cdot\nabla_{\Gamma_\varepsilon})(n_\varepsilon^k\partial_ku_j)-(\tau_i\cdot\nabla_{\Gamma_\varepsilon}n_\varepsilon^k)\partial_ku_j\} \\
    &= (\tau_i\cdot\nabla_{\Gamma_\varepsilon})\{(n_\varepsilon\cdot\nabla)u_j\}-\{(\tau_i\cdot\nabla_{\Gamma_\varepsilon})n_\varepsilon\cdot\nabla\}u_j
  \end{align*}
  by \eqref{E:Tgrad_Bo} and $P_\varepsilon\tau_i=\tau_i$ (also note that the tangential derivatives depend only on the values of functions on $\Gamma_\varepsilon$).
  Hence
  \begin{align*}
    \{(n_\varepsilon\cdot\nabla)\nabla u\}^T\tau_i = (\tau_i\cdot\nabla_{\Gamma_\varepsilon})\{(n_\varepsilon\cdot\nabla)u\}-\{(\tau_i\cdot\nabla_{\Gamma_\varepsilon})n_\varepsilon\cdot\nabla\}u.
  \end{align*}
  By \eqref{Pf_LA:ND_Bo}, \eqref{E:Gauss}, $-\nabla_{\Gamma_\varepsilon}n_\varepsilon=W_\varepsilon=W_\varepsilon^T$, and
  \begin{align} \label{Pf_LA:Direc_TauN}
    (\tau_i\cdot\nabla_{\Gamma_\varepsilon})n_\varepsilon = (\nabla_{\Gamma_\varepsilon}n_\varepsilon)^T\tau_i = -W_\varepsilon\tau_i,
  \end{align}
  we further observe that
  \begin{multline} \label{Pf_LA:etai_2}
    \{(n_\varepsilon\cdot\nabla)\nabla u\}^T\tau_i =-\overline{\nabla}_i^\varepsilon(W_\varepsilon u)-\tilde{\gamma}_\varepsilon\overline{\nabla}_i^\varepsilon u+\overline{\nabla}_{W_\varepsilon\tau_i}^\varepsilon u-\xi_\varepsilon W_\varepsilon\tau_i \\
    +\{(-\tilde{\gamma}_\varepsilon W_\varepsilon u+\nabla_{\Gamma_\varepsilon}\xi_\varepsilon)\cdot\tau_i\}n_\varepsilon.
  \end{multline}
  Note that the first four terms on the right-hand side of \eqref{Pf_LA:etai_2} are tangential on $\Gamma_\varepsilon$.
  From \eqref{Pf_LA:Def_etai}, \eqref{Pf_LA:etai_1}, and \eqref{Pf_LA:etai_2} we deduce that
  \begin{multline*}
    \eta_i = -\left\{\overline{\nabla}_i^\varepsilon(W_\varepsilon u)+\tilde{\gamma}_\varepsilon\overline{\nabla}_i^\varepsilon u-\overline{\nabla}_{W_\varepsilon\tau_i}^\varepsilon u+\xi_\varepsilon W_\varepsilon\tau_i\right\}\cdot\overline{\nabla}_i^\varepsilon u \\
    +(W_\varepsilon u\cdot\tau_i)\{(-\tilde{\gamma}_\varepsilon W_\varepsilon u+\nabla_{\Gamma_\varepsilon}\xi_\varepsilon)\cdot\tau_i\}, \quad i=1,2.
  \end{multline*}
  Since $W_\varepsilon u$ and $\nabla_{\Gamma_\varepsilon}\xi_\varepsilon$ are tangential on $\Gamma_\varepsilon$ and $\{\tau_1,\tau_2\}$ is an orthonormal basis of the tangent plane of $\Gamma_\varepsilon$, by the above equality and \eqref{E:Wtr_Cov}--\eqref{E:Winn_Cov} we obtain
  \begin{multline} \label{Pf_LA:Sum_etai}
    \eta_1+\eta_2 = -\{\nabla_{\Gamma_\varepsilon}(W_\varepsilon u)+\tilde{\gamma}_\varepsilon\nabla_{\Gamma_\varepsilon}u-W_\varepsilon\nabla_{\Gamma_\varepsilon}u\}:(\nabla_{\Gamma_\varepsilon}u)P_\varepsilon \\
    -\xi_\varepsilon(\nabla_{\Gamma_\varepsilon}u:W_\varepsilon)+W_\varepsilon u\cdot(-\tilde{\gamma}_\varepsilon W_\varepsilon u+\nabla_{\Gamma_\varepsilon}\xi_\varepsilon).
  \end{multline}
  To calculate $\eta_3$ we see that the $j$-th component of $\{(n_\varepsilon\cdot\nabla)\nabla u\}^Tn_\varepsilon$ is of the form
  \begin{align*}
    \sum_{k,l=1}^3n_\varepsilon^k(\partial_k\partial_lu_j)n_\varepsilon^l &= \mathrm{tr}[Q_\varepsilon\nabla^2u_j] = \mathrm{tr}[\nabla^2u_j]-\mathrm{tr}[P_\varepsilon\nabla^2u_j] \\
    &= \Delta u_j-\sum_{i=1,2}P_\varepsilon(\nabla^2u_j)\tau_i\cdot\tau_i-P_\varepsilon(\nabla^2u)n_\varepsilon\cdot n_\varepsilon \\
    &= \Delta u_j-\sum_{i=1,2}\{(\tau_i\cdot\nabla)\nabla u_j\}\cdot\tau_i
  \end{align*}
  for $j=1,2,3$ by $P_\varepsilon^T=P_\varepsilon$, $P_\varepsilon\tau_i=\tau_i$, and $P_\varepsilon n_\varepsilon=0$.
  From this equality and
  \begin{align*}
    \{(\tau_i\cdot\nabla)\nabla u_j\}\cdot\tau_i &= \{(\tau_i\cdot\nabla_{\Gamma_\varepsilon})\nabla u_j\}\cdot\tau_i \\
    &= (\tau_i\cdot\nabla_{\Gamma_\varepsilon})(\nabla u_j\cdot\tau_i)-\nabla u_j\cdot(\tau_i\cdot\nabla_{\Gamma_\varepsilon})\tau_i \\
    &= (\tau_i\cdot\nabla_{\Gamma_\varepsilon})\{(\tau_i\cdot\nabla)u_j\}-\{(\tau_i\cdot\nabla_{\Gamma_\varepsilon})\tau_i\cdot\nabla\}u_j
  \end{align*}
  by \eqref{E:Tgrad_Bo} and $P_\varepsilon\tau_i=\tau_i$ we deduce that
  \begin{align*}
    \{(n_\varepsilon\cdot\nabla)\nabla u\}^Tn_\varepsilon = \Delta u-\sum_{i=1,2}[(\tau_i\cdot\nabla_{\Gamma_\varepsilon})\{(\tau_i\cdot\nabla)u\}-\{(\tau_i\cdot\nabla_{\Gamma_\varepsilon})\tau_i\cdot\nabla\}u].
  \end{align*}
  Moreover, since $(\Delta u\otimes n_\varepsilon)n_\varepsilon=(n_\varepsilon\cdot n_\varepsilon)\Delta u=\Delta u$, it follows that
  \begin{multline} \label{Pf_LA:eta3_1}
    \{(n_\varepsilon\cdot\nabla)\nabla u\}^Tn_\varepsilon-(\Delta u\otimes n_\varepsilon)n_\varepsilon \\
    = -\sum_{i=1,2}[(\tau_i\cdot\nabla_{\Gamma_\varepsilon})\{(\tau_i\cdot\nabla)u\}-\{(\tau_i\cdot\nabla_{\Gamma_\varepsilon})\tau_i\cdot\nabla\}u].
  \end{multline}
  By \eqref{Pf_LA:ND_Bo}, \eqref{Pf_LA:Direc_TauN}, and \eqref{E:Gauss} we also observe that
  \begin{align*}
    (\tau_i\cdot\nabla_{\Gamma_\varepsilon})\{(\tau_i\cdot\nabla)u\} &= (\tau_i\cdot\nabla_{\Gamma_\varepsilon})\left\{\overline{\nabla}_i^\varepsilon u+(W_\varepsilon u\cdot\tau_i)n_\varepsilon\right\} \\
    &= \overline{\nabla}_i^\varepsilon\overline{\nabla}_i^\varepsilon u-(W_\varepsilon u\cdot\tau_i)W_\varepsilon\tau_i \\
    &\qquad\qquad +\left\{W_\varepsilon \overline{\nabla}_i^\varepsilon u\cdot\tau_i+\tau_i\cdot\nabla_{\Gamma_\varepsilon}(W_\varepsilon u\cdot\tau_i)\right\}n_\varepsilon
  \end{align*}
  and
  \begin{align*}
    \{(\tau_i\cdot\nabla_{\Gamma_\varepsilon})\tau_i\cdot\nabla\}u &= \left[\left\{\overline{\nabla}_i^\varepsilon\tau_i+(W_\varepsilon\tau_i\cdot\tau_i)n_\varepsilon\right\}\cdot\nabla\right]u \\
    &= \overline{\nabla}_{\overline{\nabla}_i^\varepsilon\tau_i}^\varepsilon u-(W_\varepsilon\tau_i\cdot\tau_i)(W_\varepsilon u+\tilde{\gamma}_\varepsilon u) \\
    &\qquad\qquad +\left(W_\varepsilon u\cdot\overline{\nabla}_i^\varepsilon \tau_i+\xi_\varepsilon W_\varepsilon\tau_i\cdot\tau_i\right)n_\varepsilon.
  \end{align*}
  We substitute these expressions for \eqref{Pf_LA:eta3_1} and use \eqref{E:MC_Local}, \eqref{E:Wtr_Cov},
  \begin{align*}
    \sum_{i=1,2}\left\{\tau_i\cdot\nabla_{\Gamma_\varepsilon}(W_\varepsilon u\cdot\tau_i)- W_\varepsilon u\cdot\overline{\nabla}_i^\varepsilon\tau_i\right\} = \sum_{i=1,2}\overline{\nabla}_i^\varepsilon(W_\varepsilon u)\cdot\tau_i = \mathrm{div}_{\Gamma_\varepsilon}(W_\varepsilon u)
  \end{align*}
  by \eqref{E:RiCo_Met} and \eqref{E:Sdiv_Cov}, and
  \begin{align*}
    \sum_{i=1,2}(W_\varepsilon u\cdot\tau_i)W_\varepsilon\tau_i = W_\varepsilon\sum_{i=1,2}(W_\varepsilon u\cdot\tau_i)\tau_i = W_\varepsilon^2 u
  \end{align*}
  by the facts that $W_\varepsilon u$ is tangential on $\Gamma_\varepsilon$ and that $\{\tau_1,\tau_2\}$ is an orthonormal basis of the tangent plane of $\Gamma_\varepsilon$.
  Then we have
  \begin{multline} \label{Pf_LA:eta3_2}
    \{(n_\varepsilon\cdot\nabla)\nabla u\}^Tn_\varepsilon-(\Delta u\otimes n_\varepsilon)n_\varepsilon \\
    = -\sum_{i=1,2}\left(\overline{\nabla}_i^\varepsilon\overline{\nabla}_i^\varepsilon u-\overline{\nabla}_{\overline{\nabla}_i^\varepsilon\tau_i}^\varepsilon u\right)+W_\varepsilon^2u-H_\varepsilon W_\varepsilon u-\tilde{\gamma}_\varepsilon H_\varepsilon u \\
    -\{\nabla_{\Gamma_\varepsilon}u:W_\varepsilon+\mathrm{div}_{\Gamma_\varepsilon}(W_\varepsilon u)-\xi_\varepsilon H_\varepsilon\}n_\varepsilon.
  \end{multline}
  Hence by \eqref{Pf_LA:ND_Bo}, \eqref{Pf_LA:Def_eta3}, and \eqref{Pf_LA:eta3_2} we get
  \begin{multline} \label{Pf_LA:eta3_Inn}
    \eta_3 = \sum_{i=1,2}\left(\overline{\nabla}_i^\varepsilon\overline{\nabla}_i^\varepsilon u-\overline{\nabla}_{\overline{\nabla}_i^\varepsilon\tau_i}^\varepsilon u\right)\cdot(W_\varepsilon u+\tilde{\gamma}_\varepsilon u) \\
    -(W_\varepsilon^2u-H_\varepsilon W_\varepsilon u-\tilde{\gamma}_\varepsilon H_\varepsilon u)\cdot(W_\varepsilon u+\tilde{\gamma}_\varepsilon u) \\
    -\xi_\varepsilon\{\nabla_{\Gamma_\varepsilon}u:W_\varepsilon+\mathrm{div}_{\Gamma_\varepsilon}(W_\varepsilon u)-\xi_\varepsilon H_\varepsilon\}.
  \end{multline}
  Now we observe by \eqref{E:Tgrad_Bo} and direct calculations that
  \begin{align*}
    \nabla_{\Gamma_\varepsilon}u:(\nabla_{\Gamma_\varepsilon}u)P_\varepsilon = P_\varepsilon(\nabla u):P_\varepsilon(\nabla u)P_\varepsilon = \nabla u:P_\varepsilon^TP_\varepsilon(\nabla u)P_\varepsilon.
  \end{align*}
  Since $P_\varepsilon^T=P_\varepsilon^2=P_\varepsilon$, the above equality implies that
  \begin{align} \label{Pf_LA:Mat_Inn_1}
    \nabla_{\Gamma_\varepsilon}u:(\nabla_{\Gamma_\varepsilon}u)P_\varepsilon = \nabla u:P_\varepsilon(\nabla u)P_\varepsilon.
  \end{align}
  By the same calculations with \eqref{E:Tgrad_Bo} and \eqref{E:PW_Bo} we have
  \begin{align} \label{Pf_LA:Mat_Inn_2}
    \begin{aligned}
      \nabla_{\Gamma_\varepsilon}(W_\varepsilon u):(\nabla_{\Gamma_\varepsilon}u)P_\varepsilon &= \{\nabla_{\Gamma_\varepsilon}W_\varepsilon\cdot u+(\nabla_{\Gamma_\varepsilon}u)W_\varepsilon\}:(\nabla_{\Gamma_\varepsilon}u)P_\varepsilon \\
      &= \{\nabla_{\Gamma_\varepsilon}W_\varepsilon\cdot u+(\nabla u)W_\varepsilon\}:P_\varepsilon(\nabla u)P_\varepsilon,
    \end{aligned}
  \end{align}
  where the matrix $\nabla_{\Gamma_\varepsilon}W_\varepsilon\cdot u$ is given by \eqref{Pf_LA:DW_u_Bo}, and
  \begin{align} \label{Pf_LA:Mat_Inn_3}
    W_\varepsilon(\nabla_{\Gamma_\varepsilon}u):(\nabla_{\Gamma_\varepsilon}u)P_\varepsilon = W_\varepsilon(\nabla u):(\nabla u)P_\varepsilon, \quad \nabla_{\Gamma_\varepsilon}u:W_\varepsilon = \nabla u:W_\varepsilon.
  \end{align}
  Also, it is easy to see that
  \begin{align} \label{Pf_LA:Div_Xi}
    \begin{aligned}
      W_\varepsilon u\cdot\nabla_{\Gamma_\varepsilon}\xi_\varepsilon &= \mathrm{div}_{\Gamma_\varepsilon}(\xi_\varepsilon W_\varepsilon u)-\xi_\varepsilon\mathrm{div}_{\Gamma_\varepsilon}(W_\varepsilon u), \\
      \mathrm{div}_{\Gamma_\varepsilon}(W_\varepsilon u) &= u\cdot\mathrm{div}_{\Gamma_\varepsilon}W_\varepsilon+\nabla_{\Gamma_\varepsilon}u: W_\varepsilon =  u\cdot\mathrm{div}_{\Gamma_\varepsilon}W_\varepsilon+\nabla u: W_\varepsilon,
    \end{aligned}
  \end{align}
  where the vector field $\mathrm{div}_{\Gamma_\varepsilon}W_\varepsilon$ is given by \eqref{Pf_LA:Div_W}.
  From \eqref{Pf_LA:Decom_Intg}, \eqref{Pf_LA:Sum_etai}, \eqref{Pf_LA:eta3_Inn}--\eqref{Pf_LA:Div_Xi}, $W_\varepsilon^T=W_\varepsilon$, and $\xi_\varepsilon=\nabla u:Q_\varepsilon$ we deduce that
  \begin{multline} \label{Pf_LA:Int_01}
    \int_{\Gamma_\varepsilon}\nabla u:\{(n_\varepsilon\cdot\nabla)\nabla u-n_\varepsilon\otimes\Delta u\}\,d\mathcal{H}^2 \\
    = \sum_{i=1,2}\int_{\Gamma_\varepsilon}\left(\overline{\nabla}_i^\varepsilon\overline{\nabla}_i^\varepsilon u-\overline{\nabla}_{\overline{\nabla}_i^\varepsilon\tau_i}^\varepsilon u\right)\cdot(W_\varepsilon u+\tilde{\gamma}_\varepsilon u)\,d\mathcal{H}^2\\
    +\int_{\Gamma_\varepsilon}\left(\frac{1}{2}\varphi_1+\sum_{k=2}^4\varphi_k\right)d\mathcal{H}^2+\int_{\Gamma_\varepsilon}\mathrm{div}_{\Gamma_\varepsilon}(\xi_\varepsilon W_\varepsilon u)\,d\mathcal{H}^2,
  \end{multline}
  where $\varphi_1$, $\dots$, $\varphi_4$ are given by \eqref{Pf_LA:Def_Phi}.
  Moreover, we apply \eqref{E:IbP_Cov} to the first term on the right-hand side and then use \eqref{E:InnP_Cov}, \eqref{Pf_LA:Mat_Inn_1}, and \eqref{Pf_LA:Mat_Inn_2} to get
  \begin{align} \label{Pf_LA:Int_02}
    \sum_{i=1,2}\int_{\Gamma_\varepsilon}\left(\overline{\nabla}_i^\varepsilon\overline{\nabla}_i^\varepsilon u-\overline{\nabla}_{\overline{\nabla}_i^\varepsilon\tau_i}^\varepsilon u\right)\cdot(W_\varepsilon u+\tilde{\gamma}_\varepsilon u)\,d\mathcal{H}^2 = \frac{1}{2}\int_{\Gamma_\varepsilon}\varphi_1\,d\mathcal{H}^2.
  \end{align}
  Also, since $u\in H^3(\Omega_\varepsilon)^3$ and $Q_\varepsilon,W_\varepsilon\in C^2(\Gamma_\varepsilon)^{3\times 3}$ by the $C^4$-regularity of $\Gamma_\varepsilon$,
  \begin{align*}
    \xi_\varepsilon = \nabla u:Q_\varepsilon \in H^1(\Gamma_\varepsilon), \quad W_\varepsilon u \in H^2(\Gamma_\varepsilon,T\Gamma_\varepsilon).
  \end{align*}
  Thus $\xi_\varepsilon W_\varepsilon u\in W^{1,1}(\Gamma_\varepsilon,T\Gamma_\varepsilon)$ and we can apply \eqref{E:IbP_WDivG_T} to $\xi_\varepsilon W_\varepsilon u$ to deduce that
  \begin{align} \label{Pf_LA:Int_03}
    \int_{\Gamma_\varepsilon}\mathrm{div}_{\Gamma_\varepsilon}(\xi_\varepsilon W_\varepsilon u)\,d\mathcal{H}^2 = 0.
  \end{align}
  Hence we obtain \eqref{Pf_LA:Int_Sum} by \eqref{Pf_LA:Int_01}--\eqref{Pf_LA:Int_03}.

  The second step for \eqref{Pf_LA:BD} is to show that
  \begin{align}
    \left|\int_{\Gamma_\varepsilon}\varphi_k\,d\mathcal{H}^2\right| &\leq c\left(\|u\|_{H^1(\Omega_\varepsilon)}^2+\|u\|_{H^1(\Omega_\varepsilon)}\|\nabla^2u\|_{L^2(\Omega_\varepsilon)}\right), \quad k=1,2, \label{Pf_LA:Est_Phi12}\\
    \left|\int_{\Gamma_\varepsilon}\varphi_k\,d\mathcal{H}^2\right| &\leq c\|u\|_{H^1(\Omega_\varepsilon)}^2, \quad k=3,4 \label{Pf_LA:Est_Phi34}
  \end{align}
  with a constant $c>0$ independent of $\varepsilon$.
  The estimate \eqref{Pf_LA:Est_Phi34} for $k=4$ is an easy consequence of \eqref{E:Fric_Upper}, \eqref{E:Poin_Bo}, and the uniform boundedness of $W_\varepsilon$ and $H_\varepsilon$ on $\Gamma_\varepsilon$:
  \begin{align*}
    \left|\int_{\Gamma_\varepsilon}\varphi_4\,d\mathcal{H}^2\right| \leq c\varepsilon\|u\|_{L^2(\Gamma_\varepsilon)}^2 \leq c\varepsilon(\varepsilon^{-1}\|u\|_{L^2(\Omega_\varepsilon)}^2+\varepsilon\|\partial_nu\|_{L^2(\Omega_\varepsilon)}^2) \leq c\|u\|_{H^1(\Omega_\varepsilon)}^2.
  \end{align*}
  Let us prove \eqref{Pf_LA:Est_Phi12} for $k=1$.
  As in the proof of Lemma \ref{L:Int_UgUn}, we interpolate the integrals over $\Gamma_\varepsilon^0$ and $\Gamma_\varepsilon^1$ to produce an integral over $\Omega_\varepsilon$ with a good estimate.
  In what follows, we use the notations \eqref{E:Pull_Dom} and \eqref{E:Pull_Bo} and sometimes suppress the arguments $y$ and $r$.
  For $y\in\Gamma$, $r\in[\varepsilon g_0(y),\varepsilon g_1(y)]$, and $j,k,l=1,2,3$ we set
  \begin{align*}
    F(y,r) &:= \frac{1}{\varepsilon g(y)}\bigl\{\bigl(r-\varepsilon g_0(y)\bigr)W_{\varepsilon,1}^\sharp(y)-\bigl(\varepsilon g_1(y)-r\bigr)W_{\varepsilon,0}^\sharp(y)\bigr\}, \\
    G_{jk}^l(y,r) &:= \frac{1}{\varepsilon g(y)}\bigl\{\bigl(r-\varepsilon g_0(y)\bigr)\bigl(\underline{D}_j^\varepsilon[W_\varepsilon]_{kl}\bigr)_1^\sharp(y)-\bigl(\varepsilon g_1(y)-r\bigr)\bigl(\underline{D}_j^\varepsilon[W_\varepsilon]_{kl}\bigr)_0^\sharp(y)\bigr\}, \\
    \tilde{\gamma}(y,r) &:= \frac{1}{\varepsilon g(y)}\bigl\{\bigl(r-\varepsilon g_0(y)\bigr)\tilde{\gamma}_\varepsilon^1-\bigl(\varepsilon g_1(y)-r\bigr)\tilde{\gamma}_\varepsilon^0\bigr\},
  \end{align*}
  where $\tilde{\gamma}_\varepsilon^i:=\gamma_\varepsilon^i/\nu$, $i=0,1$.
  Then we have
  \begin{multline} \label{Pf_LA:Aux_Eq_1}
    [\nabla_{\Gamma_\varepsilon}W_\varepsilon\cdot u+(\nabla u)W_\varepsilon+\tilde{\gamma}_\varepsilon\nabla u]_i^\sharp(y) \\
    = (-1)^{i+1}[G\cdot u^\sharp+(\nabla u)^\sharp F+\tilde{\gamma}(\nabla u)^\sharp](y,\varepsilon g_i(y)), \quad y\in\Gamma,\,i=0,1,
  \end{multline}
  where $G\cdot u^\sharp$ denotes a $3\times 3$ matrix whose $(j,k)$-entry is given by
  \begin{align*}
    [G\cdot u^\sharp]_{jk} := \sum_{l=1}^3G_{jk}^lu_l^\sharp, \quad j,k=1,2,3.
  \end{align*}
  Moreover, by \eqref{E:G_Inf}, \eqref{E:Fric_Upper}, \eqref{E:Diff_WH_IO} for $W_\varepsilon$ and $\underline{D}_j^\varepsilon W_\varepsilon$ with $j=1,2,3$,
  \begin{align} \label{Pf_LA:Est_Dist}
    |r-\varepsilon g_i(y)| \leq \varepsilon g(y) \leq c\varepsilon, \quad y\in\Gamma,\,r\in[\varepsilon g_0(y),\varepsilon g_1(y)],\,i=0,1,
  \end{align}
  and the uniform boundedness in $\varepsilon$ of $W_\varepsilon$ and $\underline{D}_j^\varepsilon W_\varepsilon$ on $\Gamma_\varepsilon$ we have
  \begin{align} \label{Pf_LA:Aux_Est_1}
    |\eta(y,r)|+\left|\frac{\partial \eta}{\partial r}(y,r)\right| \leq c, \quad \eta = F,G_{jk}^l,\tilde{\gamma}, \quad y\in\Gamma,\,r\in[\varepsilon g_0(y),\varepsilon g_1(y)]
  \end{align}
  with a constant $c>0$ independent of $\varepsilon$.
  We also define
  \begin{align*}
    R(y,r) := \frac{1}{\varepsilon g(y)}\bigl\{\bigl(r-\varepsilon g_0(y)\bigr)P_{\varepsilon,1}^\sharp(y)+\bigl(\varepsilon g_1(y)-r\bigr)P_{\varepsilon,0}^\sharp(y)\bigr\}
  \end{align*}
  for $y\in\Gamma$ and $r\in[\varepsilon g_0(y),\varepsilon g_1(y)]$, and
  \begin{align*}
    S_i(y) &:= \sqrt{1+\varepsilon^2|\tau_\varepsilon^i(y)|^2}\,P_{\varepsilon,i}^\sharp(y), \quad i=0,1, \\
    S(y,r) &:= \frac{1}{\varepsilon g(y)}\bigl\{\bigl(r-\varepsilon g_0(y)\bigr)S_1(y)+\bigl(\varepsilon g_1(y)-r\bigr)S_0(y)\bigr\},
  \end{align*}
  where $\tau_\varepsilon^0$ and $\tau_\varepsilon^1$ are given by \eqref{E:Def_NB_Aux}.
  Then
  \begin{align} \label{Pf_LA:Aux_Eq_2}
    \sqrt{1+\varepsilon^2|\tau_\varepsilon^i(y)|^2}\,[P_\varepsilon(\nabla u)P_\varepsilon]_i^\sharp(y) = [R(\nabla u)^\sharp S](y,\varepsilon g_i(y)), \quad y\in\Gamma,\,i=0,1.
  \end{align}
  Moreover, from \eqref{E:Diff_PQ_IO} for $P_\varepsilon$, \eqref{Pf_IU:Sqrt}, and \eqref{Pf_LA:Est_Dist} we deduce that
  \begin{align} \label{Pf_LA:Aux_Est_2}
    |\eta(y,r)|+\left|\frac{\partial\eta}{\partial r}(y,r)\right| \leq c, \quad \eta = R,S, \quad y\in\Gamma,\, r\in[\varepsilon g_0(y),\varepsilon g_1(y)].
  \end{align}
  Now we define a function $\Phi_1=\Phi_1(y,r)$ for $y\in\Gamma$ and $r\in[\varepsilon g_0(y),\varepsilon g_1(y)]$ by
  \begin{align*}
    \Phi_1(y,r) := -2[\{G\cdot u^\sharp+(\nabla u)^\sharp F+\tilde{\gamma}(\nabla u)^\sharp\}:R(\nabla u)^\sharp S](y,r)J(y,r),
  \end{align*}
  where $J$ is given by \eqref{E:Def_Jac}.
  Then by \eqref{E:CoV_Surf}, \eqref{Pf_LA:Aux_Eq_1}, and \eqref{Pf_LA:Aux_Eq_2} we have
  \begin{align*}
    \int_{\Gamma_\varepsilon}\varphi_1(x)\,d\mathcal{H}^2(x) &= \sum_{i=0,1}\int_{\Gamma_\varepsilon^i}\varphi_1(x)\,d\mathcal{H}^2(x) \\
    &= \int_\Gamma\{\Phi_1(y,\varepsilon g_1(y))-\Phi_1(y,\varepsilon g_0(y))\}\,d\mathcal{H}^2(y) \\
    &= \int_\Gamma\int_{\varepsilon g_0(y)}^{\varepsilon g_1(y)}\frac{\partial\Phi_1}{\partial r}(y,r)\,dr\,d\mathcal{H}^2(y).
  \end{align*}
  Furthermore, the inequalities \eqref{E:Jac_Bound_01}, \eqref{Pf_LA:Aux_Est_1}, and \eqref{Pf_LA:Aux_Est_2} imply that
  \begin{align*}
    \left|\frac{\partial\Phi_1}{\partial r}\right| \leq c\{|u^\sharp|^2+|(\nabla u)^\sharp|^2+(|u^\sharp|+|(\nabla u)^\sharp|)|(\nabla^2u)^\sharp|\}
  \end{align*}
  with some constant $c>0$ independent of $\varepsilon$ (here we also used Young's inequality).
  From the above relations, \eqref{E:CoV_Equiv}, and H\"{o}lder's inequality it follows that
  \begin{align*}
    \left|\int_{\Gamma_\varepsilon}\varphi_1\,d\mathcal{H}^2\right| &\leq c\int_\Gamma\int_{\varepsilon g_0}^{\varepsilon g_1}\{|u^\sharp|^2+|(\nabla u)^\sharp|^2+(|u^\sharp|+|(\nabla u)^\sharp|)|(\nabla^2u)^\sharp|\}\,dr\,d\mathcal{H}^2 \\
    &\leq c\left(\|u\|_{H^1(\Omega_\varepsilon)}^2+\|u\|_{H^1(\Omega_\varepsilon)}\|\nabla^2u\|_{L^2(\Omega_\varepsilon)}\right).
  \end{align*}
  Thus the inequality \eqref{Pf_LA:Est_Phi12} for $k=1$ is valid.
  By the same arguments we can prove \eqref{Pf_LA:Est_Phi12} for $k=2$ and \eqref{Pf_LA:Est_Phi34} for $k=3$.

  Finally, we obtain \eqref{Pf_LA:BD} by \eqref{Pf_LA:Int_Sum}, \eqref{Pf_LA:Est_Phi12}, and \eqref{Pf_LA:Est_Phi34}, and we apply \eqref{Pf_LA:BD} to \eqref{Pf_LA:IbP} and then use Young's inequality to get
  \begin{align*}
    \|\nabla^2u\|_{L^2(\Omega_\varepsilon)}^2 &\leq \|\Delta u\|_{L^2(\Omega_\varepsilon)}^2+c\left(\|u\|_{H^1(\Omega_\varepsilon)}^2+\|u\|_{H^1(\Omega_\varepsilon)}\|\nabla^2u\|_{L^2(\Omega_\varepsilon)}\right) \\
    &\leq \|\Delta u\|_{L^2(\Omega_\varepsilon)}^2+\frac{1}{2}\|\nabla^2u\|_{L^2(\Omega_\varepsilon)}^2+c\|u\|_{H^1(\Omega_\varepsilon)}^2,
  \end{align*}
  which yields \eqref{Pf_LA:Goal}.
  Hence the inequality \eqref{E:Lap_Apri} is valid.
\end{proof}

\section{Proofs of the main results} \label{S:Pf_Main}
In this section we establish Theorems \ref{T:Uni_aeps}, \ref{T:Comp_Sto_Lap}, and \ref{T:Stokes_H2}.
First we give an integration by parts formula related to the slip boundary conditions \eqref{E:Bo_Slip}.

\begin{lemma} \label{L:IbP_St}
  For $u_1\in H^2(\Omega_\varepsilon)^3$ and $u_2\in H^1(\Omega_\varepsilon)^3$ we have
  \begin{multline} \label{E:IbP_St}
    \int_{\Omega_\varepsilon}\{\Delta u_1+\nabla(\mathrm{div}\,u_1)\}\cdot u_2\,dx \\
    = -2\int_{\Omega_\varepsilon}D(u_1):D(u_2)\,dx+2\int_{\Gamma_\varepsilon}[D(u_1)n_\varepsilon]\cdot u_2\,d\mathcal{H}^2.
  \end{multline}
  In particular, if $u_1$ satisfies
  \begin{align*}
    \mathrm{div}\,u_1 = 0 \quad\text{in}\quad \Omega_\varepsilon, \quad u_1\cdot n_\varepsilon = 0, \quad 2\nu P_\varepsilon D(u_1)n_\varepsilon+\gamma_\varepsilon u_1 = 0 \quad\text{on}\quad \Gamma_\varepsilon
  \end{align*}
  and $u_2$ satisfies $u_2\cdot n_\varepsilon=0$ on $\Gamma_\varepsilon$, then
  \begin{align*}
    \nu\int_{\Omega_\varepsilon}\Delta u_1\cdot u_2\,dx = -2\nu\int_{\Omega_\varepsilon}D(u_1):D(u_2)\,dx-\sum_{i=0,1}\gamma_\varepsilon^i\int_{\Gamma_\varepsilon^i}u_1\cdot u_2\,d\mathcal{H}^2.
  \end{align*}
\end{lemma}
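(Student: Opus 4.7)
The plan is to obtain the first identity directly from the divergence theorem after rewriting $\Delta u_1+\nabla(\mathrm{div}\,u_1)$ as the divergence of a symmetric matrix, and then to specialize to the slip boundary conditions via a pointwise decomposition of $D(u_1)n_\varepsilon$ along and across the tangent plane of $\Gamma_\varepsilon$.

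First I would verify the componentwise pointwise identity
\begin{align*}
  [\Delta u_1+\nabla(\mathrm{div}\,u_1)]_j = \sum_{i=1}^3\partial_i\bigl(\partial_iu_1^j+\partial_ju_1^i\bigr) = \sum_{i=1}^3\partial_i[2D(u_1)]_{ij}, \quad j=1,2,3,
\end{align*}
so that $\Delta u_1+\nabla(\mathrm{div}\,u_1)$ is the (column-wise) divergence of $2D(u_1)$. Since $u_1\in H^2(\Omega_\varepsilon)^3$ and $\Gamma_\varepsilon$ is of class $C^4$, the trace $D(u_1)n_\varepsilon$ is well-defined in $L^2(\Gamma_\varepsilon)^3$, and a standard density argument (approximating $u_1$ in $H^2$ and $u_2$ in $H^1$ by smooth vector fields on $\overline{\Omega}_\varepsilon$) reduces the identity to the classical divergence theorem applied to $F:=2D(u_1)$, yielding
\begin{align*}
  \int_{\Omega_\varepsilon}2\,\mathrm{div}\,D(u_1)\cdot u_2\,dx = -2\int_{\Omega_\varepsilon}D(u_1):\nabla u_2\,dx+2\int_{\Gamma_\varepsilon}[D(u_1)n_\varepsilon]\cdot u_2\,d\mathcal{H}^2.
\end{align*}
Because $D(u_1)$ is symmetric, $D(u_1):\nabla u_2=D(u_1):(\nabla u_2)^T$ and hence $D(u_1):\nabla u_2=D(u_1):D(u_2)$, which gives \eqref{E:IbP_St}.

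For the particularized formula, multiply \eqref{E:IbP_St} by $\nu$. The divergence-free condition $\mathrm{div}\,u_1=0$ kills the $\nabla(\mathrm{div}\,u_1)$ contribution on the left. On the boundary I would decompose
\begin{align*}
  D(u_1)n_\varepsilon = P_\varepsilon D(u_1)n_\varepsilon+\bigl(n_\varepsilon\cdot D(u_1)n_\varepsilon\bigr)n_\varepsilon \quad\text{on}\quad \Gamma_\varepsilon;
\end{align*}
the normal component is annihilated by $u_2$ via $u_2\cdot n_\varepsilon=0$, and on the tangential component I substitute the slip boundary condition $2\nu P_\varepsilon D(u_1)n_\varepsilon=-\gamma_\varepsilon u_1$. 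Splitting the boundary integral as $\int_{\Gamma_\varepsilon}=\int_{\Gamma_\varepsilon^0}+\int_{\Gamma_\varepsilon^1}$ and using the piecewise definition $\gamma_\varepsilon=\gamma_\varepsilon^i$ on $\Gamma_\varepsilon^i$ gives the claimed identity.

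There is no genuine obstacle: the only thing to be careful about is the interpretation of traces for $u_1\in H^2$ (which is covered by standard trace theory on the $C^4$ domain $\Omega_\varepsilon$) and the matrix/vector contraction conventions matching those fixed in Section \ref{SS:Pre_Surf} and Appendix \ref{S:Ap_Vec}. Both identities are essentially algebraic once the correct form of the divergence theorem is in place.
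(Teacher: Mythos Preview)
Your proposal is correct and follows essentially the same route as the paper: rewrite $\Delta u_1+\nabla(\mathrm{div}\,u_1)=2\,\mathrm{div}\,D(u_1)$, integrate by parts, and use the symmetry of $D(u_1)$ to turn $D(u_1):\nabla u_2$ into $D(u_1):D(u_2)$. Your treatment of the specialized identity via the tangential/normal decomposition of $D(u_1)n_\varepsilon$ is exactly what the statement intends, and the paper's proof leaves that part implicit.
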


\begin{proof}
  Since $\Delta u_1+\nabla(\mathrm{div}\,u_1)=2\mathrm{div}[D(u_1)]$ in $\Omega_\varepsilon$ for $u_1\in H^2(\Omega_\varepsilon)^3$,
  \begin{align} \label{Pf_ISt:dU1U2}
    \int_{\Omega_\varepsilon}\{\Delta u_1+\nabla(\mathrm{div}\,u_1)\}\cdot u_2\,dx &= 2\int_{\Omega_\varepsilon}\mathrm{div}[D(u_1)]\cdot u_2\,dx.
  \end{align}
  Moreover, for $A=(A_{ij})_{i,j}\in H^1(\Omega_\varepsilon)^{3\times 3}$ and $u=(u^1,u^2,u^3)^T\in H^1(\Omega_\varepsilon)^3$, we have
  \begin{align*}
    \int_{\Omega_\varepsilon}\mathrm{div}\,A\cdot u\,dx &= \sum_{i,j=1}^3\int_{\Omega_\varepsilon}(\partial_iA_{ij})u^j\,dx = \sum_{i,j=1}^3\left(\int_{\Gamma_\varepsilon}n_\varepsilon^iA_{ij}u^j\,d\mathcal{H}^2-\int_{\Omega_\varepsilon}A_{ij}\partial_iu^j\,dx\right) \\
    &= \int_{\Gamma_\varepsilon}(A^Tn_\varepsilon)\cdot u\,d\mathcal{H}^2-\int_{\Omega_\varepsilon}A:\nabla u\,dx
  \end{align*}
  by integration by parts, where $n_\varepsilon^i$, $i=1,2,3$ is the $i$-th component of $n_\varepsilon$.
  Applying this formula with $A=D(u_1)$ and $u=u_2$ to \eqref{Pf_ISt:dU1U2} and using
  \begin{align*}
    D(u_1)^T = D(u_1), \quad D(u_1):\nabla u_2 = D(u_1):(\nabla u_2)^T = D(u_1):D(u_2)
  \end{align*}
  we obtain \eqref{E:IbP_St}.
\end{proof}

By Lemma \ref{L:IbP_St} we see that the bilinear form for the Stokes problem \eqref{E:Stokes_CTD} is given by \eqref{E:Def_aeps}, i.e.
\begin{align*}
  a_\varepsilon(u_1,u_2) = 2\nu\bigl(D(u_1),D(u_2)\bigr)_{L^2(\Omega_\varepsilon)}+\sum_{i=0,1}\gamma_\varepsilon^i(u_1,u_2)_{L^2(\Gamma_\varepsilon^i)}, \quad u_1,u_2\in H^1(\Omega_\varepsilon)^3.
\end{align*}
Now we impose Assumptions \ref{Assump_1} and \ref{Assump_2} and define the function spaces $\mathcal{H}_\varepsilon$ and $\mathcal{V}_\varepsilon$ by \eqref{E:Def_Heps}.
Let us show that $a_\varepsilon$ is uniformly bounded and coercive on $\mathcal{V}_\varepsilon$.

\begin{proof}[Proof of Theorem \ref{T:Uni_aeps}]
  Let $u\in \mathcal{V}_\varepsilon$.
  By \eqref{E:Fric_Upper} in Assumption \ref{Assump_1} and \eqref{E:Poin_Bo},
  \begin{align*}
    \gamma_\varepsilon^i\|u\|_{L^2(\Gamma_\varepsilon^i)}^2 \leq c\varepsilon\left(\varepsilon^{-1}\|u\|_{L^2(\Omega_\varepsilon)}^2+\varepsilon\|\partial_nu\|_{L^2(\Omega_\varepsilon)}^2\right) \leq c\|u\|_{H^1(\Omega_\varepsilon)}^2
  \end{align*}
  for $i=0,1$.
  Combining this inequality with $\|D(u)\|_{L^2(\Omega_\varepsilon)}\leq \|\nabla u\|_{L^2(\Omega_\varepsilon)}$ we obtain the right-hand inequality of \eqref{E:Uni_aeps}.

  Let us prove the left-hand inequality of \eqref{E:Uni_aeps}.
  First we suppose that the condition (A1) of Assumption \ref{Assump_2} is satisfied.
  Without loss of generality, we may assume $\gamma_\varepsilon^0\geq c\varepsilon$ for all $\varepsilon\in(0,1]$.
  For $u\in\mathcal{V}_\varepsilon$ we use \eqref{E:Poin_Dom} with $i=0$ to get
  \begin{align*}
    \|u\|_{L^2(\Omega_\varepsilon)}^2 \leq c\left(\varepsilon\|u\|_{L^2(\Gamma_\varepsilon^0)}^2+\varepsilon^2\|\nabla u\|_{L^2(\Omega_\varepsilon)}^2\right).
  \end{align*}
  Moreover, by $\gamma_\varepsilon^0\geq c\varepsilon$ and \eqref{E:Korn_Grad} (note that $u\in\mathcal{V}_\varepsilon$ satisfies \eqref{E:Bo_Imp}),
  \begin{align*}
    \|u\|_{L^2(\Omega_\varepsilon)}^2 &\leq c\left(\gamma_\varepsilon^0\|u\|_{L^2(\Gamma_\varepsilon^0)}^2+\varepsilon^2\|D(u)\|_{L^2(\Omega_\varepsilon)}^2+\varepsilon^2\|u\|_{L^2(\Omega_\varepsilon)}^2\right) \\
    &\leq c_1a_\varepsilon(u,u)+c_2\varepsilon^2\|u\|_{L^2(\Omega_\varepsilon)}^2
  \end{align*}
  with positive constants $c_1$ and $c_2$ independent of $\varepsilon$.
  We set $\varepsilon_1:=1/\sqrt{2c_2}$ and take $\varepsilon\in(0,\varepsilon_1]$ in the above inequality to get
  \begin{align*}
    \|u\|_{L^2(\Omega_\varepsilon)}^2 \leq 2c_1a_\varepsilon(u,u).
  \end{align*}
  From this inequality and \eqref{E:Korn_Grad} we also deduce that
  \begin{align*}
    \|\nabla u\|_{L^2(\Omega_\varepsilon)}^2 \leq ca_\varepsilon(u,u).
  \end{align*}
  These two inequalities imply the left-hand inequality of \eqref{E:Uni_aeps}.

  Next we suppose that the condition (A2) or (A3) of Assumption \ref{Assump_2} is satisfied.
  Then $u\in\mathcal{V}_\varepsilon$ satisfies \eqref{E:Bo_Imp} and \eqref{E:Korn_Orth} (resp. \eqref{E:KoRg_Orth}) with $\beta=0$ under the condition (A2) (resp. (A3)).
  Hence Lemmas \ref{L:Korn_H1} and \ref{L:Korn_Rg} imply that there exist $\varepsilon_{K,0}\in(0,1]$ and $c_{K,0}>0$ such that
  \begin{align*}
    \|u\|_{H^1(\Omega_\varepsilon)}^2 \leq c_{K,0}\|D(u)\|_{L^2(\Omega_\varepsilon)}^2 \leq c_{K,0}a_\varepsilon(u,u)
  \end{align*}
  for all $\varepsilon\in(0,\varepsilon_{K,0}]$ and $u\in\mathcal{V}_\varepsilon$, i.e. the left-hand inequality of \eqref{E:Uni_aeps} holds.

  Therefore, we conclude that the theorem is valid with $\varepsilon_0:=\min\{\varepsilon_1,\varepsilon_{K,0}\}$.
\end{proof}

As in Section \ref{S:Main} we fix the constant $\varepsilon_0$ given in Theorem \ref{T:Uni_aeps} and denote by $A_\varepsilon$ the Stokes operator for $\Omega_\varepsilon$ under the slip boundary conditions for $\varepsilon\in(0,\varepsilon_0]$.

Next we derive the uniform difference estimate \eqref{E:Comp_Sto_Lap} for $A_\varepsilon$ and $-\nu\Delta$.
For this purpose, we give an integration by parts formula for the curl of a vector field on $\Omega_\varepsilon$.
Let $n_\varepsilon^0$ and $n_\varepsilon^1$ be the vector fields on $\Gamma$ given by \eqref{E:Def_NB} and
\begin{align} \label{E:Def_Wieps}
  W_\varepsilon^i(x) := -\{I_3-\bar{n}_\varepsilon^i(x)\otimes\bar{n}_\varepsilon^i(x)\}\nabla\bar{n}_\varepsilon^i(x), \quad x\in N,\,i=0,1.
\end{align}
Here $\bar{n}_\varepsilon^i=n_\varepsilon^i\circ\pi$, $i=0,1$ is the constant extension of $n_\varepsilon^i$.
For $x\in N$ we set
\begin{align} \label{E:Def_Tilde}
  \begin{aligned}
    \tilde{n}_1(x) &:= \frac{1}{\varepsilon\bar{g}(x)}\bigl\{\bigl(d(x)-\varepsilon\bar{g}_0(x)\bigr)\bar{n}_\varepsilon^1(x)-\bigl(\varepsilon\bar{g}_1(x)-d(x)\bigr)\bar{n}_\varepsilon^0(x)\bigr\}, \\
    \tilde{n}_2(x) &:= \frac{1}{\varepsilon\bar{g}(x)}\left\{\bigl(d(x)-\varepsilon\bar{g}_0(x)\bigr)\frac{\gamma_\varepsilon^1}{\nu}\bar{n}_\varepsilon^1(x)+\bigl(\varepsilon\bar{g}_1(x)-d(x)\bigr)\frac{\gamma_\varepsilon^0}{\nu}\bar{n}_\varepsilon^0(x)\right\}, \\
    \widetilde{W}(x) &:= \frac{1}{\varepsilon\bar{g}(x)}\bigl\{\bigl(d(x)-\varepsilon\bar{g}_0(x)\bigr)W_\varepsilon^1(x)-\bigl(\varepsilon\bar{g}_1(x)-d(x)\bigr)W_\varepsilon^0(x)\bigr\}.
  \end{aligned}
\end{align}
From these definitions and Lemma \ref{L:Nor_Bo} it follows that
\begin{align} \label{E:Tilde_Bo}
  \tilde{n}_1 = (-1)^{i+1}n_\varepsilon, \quad \tilde{n}_2 = \frac{\gamma_\varepsilon}{\nu}n_\varepsilon, \quad \widetilde{W} = (-1)^{i+1}W_\varepsilon \quad\text{on}\quad \Gamma_\varepsilon^i,\, i=0,1.
\end{align}
For a vector field $u\colon\Omega_\varepsilon\to\mathbb{R}^3$ we define $G(u)\colon\Omega_\varepsilon\to\mathbb{R}^3$ by
\begin{align} \label{E:Def_Gu}
  G(u) := G_1(u)+G_2(u), \quad G_1(u) := 2\tilde{n}_1\times \widetilde{W}u, \quad G_2(u) := \tilde{n}_2\times u.
\end{align}

\begin{lemma} \label{L:G_Bound}
  Suppose that the inequalities \eqref{E:Fric_Upper} are valid.
  Then there exists a constant $c>0$ independent of $\varepsilon$ such that
  \begin{align} \label{E:G_Bound}
    |G(u)| \leq c|u|, \quad |\nabla G(u)| \leq c(|u|+|\nabla u|) \quad\text{in}\quad \Omega_\varepsilon
  \end{align}
  for all $u\in C^1(\Omega_\varepsilon)^3$, where $G(u)$ is the vector field on $\Omega_\varepsilon$ given by \eqref{E:Def_Gu}.
\end{lemma}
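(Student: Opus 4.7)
The plan is to reduce the pointwise estimates \eqref{E:G_Bound} to uniform bounds on the three coefficient fields $\tilde{n}_1,\tilde{n}_2,\widetilde{W}$ defined in \eqref{E:Def_Tilde}. Specifically, I aim to establish
\begin{equation*}
  |\tilde{n}_1|+|\widetilde{W}|\le c,\qquad |\tilde{n}_2|\le c\varepsilon,\qquad |\nabla\tilde{n}_1|+|\nabla\widetilde{W}|+|\nabla\tilde{n}_2|\le c \quad\text{on}\quad \Omega_\varepsilon,
\end{equation*}
after which both inequalities of \eqref{E:G_Bound} follow from the Leibniz rule applied to $G(u)=2\tilde{n}_1\times\widetilde{W}u+\tilde{n}_2\times u$ and the triangle/Cauchy--Schwarz inequalities.

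The zeroth-order bounds are nearly immediate. Each coefficient is a convex combination, with weights $(d-\varepsilon\bar{g}_0)/(\varepsilon\bar{g})$ and $(\varepsilon\bar{g}_1-d)/(\varepsilon\bar{g})$ summing to one, of boundary data. Thus $|\tilde{n}_1|\le1$ from $|n_\varepsilon^i|=1$; $|\widetilde{W}|\le c$ from \eqref{E:N_Bound} together with $|\nabla\bar{n}_\varepsilon^i|\le c$ via \eqref{E:ConDer_Bound}; and $|\tilde{n}_2|\le c\varepsilon$ from $\gamma_\varepsilon^i/\nu\le c\varepsilon$ by \eqref{E:Fric_Upper}.

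The first-order bounds form the main obstacle, since differentiating $1/(\varepsilon\bar{g})$ naively produces terms of order $1/\varepsilon$. The key device is to rewrite each interpolant so the dangerous $1/\varepsilon$ derivative always acts against an $O(\varepsilon)$ quantity. For instance,
\begin{equation*}
  \tilde{n}_1=\bar{n}_\varepsilon^1-\frac{\varepsilon\bar{g}_1-d}{\varepsilon\bar{g}}\bigl(\bar{n}_\varepsilon^0+\bar{n}_\varepsilon^1\bigr),
\end{equation*}
and the only place a $1/\varepsilon$ appears after differentiation is in a factor multiplying $\bar{n}_\varepsilon^0+\bar{n}_\varepsilon^1$, which is $O(\varepsilon)$ by \eqref{E:N_Diff}; the remaining term $\beta\nabla(\bar{n}_\varepsilon^0+\bar{n}_\varepsilon^1)$ is bounded using the second bound in \eqref{E:N_Diff} together with \eqref{E:ConDer_Bound}. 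The analogous rewriting for $\widetilde{W}$ requires the companion pointwise estimate $|W_\varepsilon^0+W_\varepsilon^1|\le c\varepsilon$ and $|\nabla(W_\varepsilon^0+W_\varepsilon^1)|\le c$ on all of $\Omega_\varepsilon$, not merely on $\Gamma_\varepsilon$ as supplied by \eqref{E:Diff_WH_IO}. To obtain this globally I would expand
\begin{equation*}
  W_\varepsilon^i=-\nabla\bar{n}_\varepsilon^i+\bar{n}_\varepsilon^i\otimes\bigl[(\bar{n}_\varepsilon^i\cdot\nabla)\bar{n}_\varepsilon^i\bigr]
\end{equation*}
and exploit two cancellations: the gradient of $\bar{n}_\varepsilon^0+\bar{n}_\varepsilon^1$ is $O(\varepsilon)$ (as above); and $|(\bar{n}_\varepsilon^i\cdot\nabla)\bar{n}_\varepsilon^i|\le c\varepsilon$, which follows by writing $\bar{n}_\varepsilon^i\cdot\nabla=(\bar{n}_\varepsilon^i-(-1)^{i+1}\bar{n})\cdot\nabla$ using $\partial_n\bar{n}_\varepsilon^i=0$ from \eqref{E:NorDer_Con}, the $O(\varepsilon)$ proximity of $\bar{n}_\varepsilon^i$ to $(-1)^{i+1}\bar{n}$ (a direct consequence of \eqref{E:Def_NB} and \eqref{E:Tau_Bound}), and $|\nabla\bar{n}_\varepsilon^i|\le c$. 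For $\tilde{n}_2$ the compensating smallness is supplied directly by $\gamma_\varepsilon^i/\nu=O(\varepsilon)$, so the same Leibniz-rule accounting yields $|\nabla\tilde{n}_2|\le c$.

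With these bounds in hand, the gradient estimate reduces to the pointwise expansion
\begin{equation*}
  |\nabla G(u)|\le c\bigl(|\nabla\tilde{n}_1||\widetilde{W}||u|+|\tilde{n}_1||\nabla\widetilde{W}||u|+|\tilde{n}_1||\widetilde{W}||\nabla u|+|\nabla\tilde{n}_2||u|+|\tilde{n}_2||\nabla u|\bigr),
\end{equation*}
and the claimed inequality follows. The principal difficulty is entirely in the decomposition and the verification that the near-cancellations in $\bar{n}_\varepsilon^0+\bar{n}_\varepsilon^1$, $W_\varepsilon^0+W_\varepsilon^1$, and $(\bar{n}_\varepsilon^i\cdot\nabla)\bar{n}_\varepsilon^i$ absorb every $1/\varepsilon$ factor produced by differentiating the interpolation weights. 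Apart from that, the remainder is uniform boundedness of $d,\bar{g}_i,\bar{n},\bar{n}_\varepsilon^i$ and their first derivatives from Lemmas \ref{L:Pi_Der} and \ref{L:NB_Aux}; this elementary but lengthy bookkeeping is consistent with the paper's deferral of the full verification to Appendix~\ref{S:Ap_Proof}.
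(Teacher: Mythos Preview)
Your proposal is correct and follows essentially the same approach as the paper: reduce \eqref{E:G_Bound} to uniform $C^0$ and $C^1$ bounds on $\tilde{n}_1,\tilde{n}_2,\widetilde{W}$, and absorb the $1/\varepsilon$ from differentiating the interpolation weights into the $O(\varepsilon)$ quantities $\bar{n}_\varepsilon^0+\bar{n}_\varepsilon^1$, $W_\varepsilon^0+W_\varepsilon^1$, and $\gamma_\varepsilon^i$. The only noteworthy difference is in the verification of $|W_\varepsilon^0+W_\varepsilon^1|\le c\varepsilon$ on $N$: the paper telescopes directly via \eqref{E:ConDer_Dom} to extract the factors $\bar{n}_\varepsilon^0+\bar{n}_\varepsilon^1$ and $\overline{\nabla_\Gamma n_\varepsilon^0}+\overline{\nabla_\Gamma n_\varepsilon^1}$, whereas you split $W_\varepsilon^i=-\nabla\bar{n}_\varepsilon^i+\bar{n}_\varepsilon^i\otimes[(\bar{n}_\varepsilon^i\cdot\nabla)\bar{n}_\varepsilon^i]$ and use $\partial_n\bar{n}_\varepsilon^i=0$ to show the second term is $O(\varepsilon)$---both arguments are valid and equally short.
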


Lemma \ref{L:G_Bound} is proved just by direct calculations and the application of the results given in Section \ref{S:Pre}.
We give its proof in Appendix \ref{S:Ap_Proof}.

\begin{lemma} \label{L:IbP_Curl}
  The integration by parts formula
  \begin{multline} \label{E:IbP_Curl}
    \int_{\Omega_\varepsilon}\mathrm{curl}\,\mathrm{curl}\,u\cdot\Phi\,dx \\
    = -\int_{\Omega_\varepsilon}\mathrm{curl}\,G(u)\cdot\Phi\,dx+\int_{\Omega_\varepsilon}\{\mathrm{curl}\,u+G(u)\}\cdot\mathrm{curl}\,\Phi\,dx
  \end{multline}
  holds for all $u\in H^2(\Omega_\varepsilon)^3$ satisfying \eqref{E:Bo_Slip} and $\Phi\in L^2(\Omega_\varepsilon)^3$ with $\mathrm{curl}\,\Phi\in L^2(\Omega_\varepsilon)^3$, where $G(u)$ is the vector field on $\Omega_\varepsilon$ given by \eqref{E:Def_Gu}.
\end{lemma}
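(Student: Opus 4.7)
The plan is to reduce \eqref{E:IbP_Curl} to a pointwise identity on $\Gamma_\varepsilon$ via the classical Green's formula for the curl operator. The standard identity
\begin{align*}
  \int_{\Omega_\varepsilon}\mathrm{curl}\,F\cdot\Phi\,dx = \int_{\Omega_\varepsilon}F\cdot\mathrm{curl}\,\Phi\,dx+\int_{\Gamma_\varepsilon}(n_\varepsilon\times F)\cdot\Phi\,d\mathcal{H}^2
\end{align*}
will be applied first with $F=\mathrm{curl}\,u$, which lies in $H^1(\Omega_\varepsilon)^3$ since $u\in H^2(\Omega_\varepsilon)^3$, and then with $F=G(u)$, which lies in $H^1(\Omega_\varepsilon)^3$ by Lemma \ref{L:G_Bound} (noting that $\tilde n_1,\tilde n_2,\widetilde W$ are obtained from $C^3$ surface quantities via the linear interpolation \eqref{E:Def_Tilde}). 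Adding these two identities, after rearranging, shows that \eqref{E:IbP_Curl} is equivalent to the boundary identity
\begin{align*}
  n_\varepsilon\times\mathrm{curl}\,u = -n_\varepsilon\times G(u) \quad\text{on}\quad \Gamma_\varepsilon.
\end{align*}

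To establish this boundary identity I would use the traces \eqref{E:Tilde_Bo}. On $\Gamma_\varepsilon^i$ the two factors $(-1)^{i+1}$ appearing in $\tilde n_1$ and $\widetilde W$ enter the product $\tilde n_1\times\widetilde W u$ together and therefore cancel, and $\tilde n_2=(\gamma_\varepsilon/\nu)n_\varepsilon$, so \eqref{E:Def_Gu} gives
\begin{align*}
  G(u) = 2n_\varepsilon\times W_\varepsilon u+\frac{\gamma_\varepsilon}{\nu}n_\varepsilon\times u = n_\varepsilon\times\left(2W_\varepsilon u+\frac{\gamma_\varepsilon}{\nu}u\right) \quad\text{on}\quad \Gamma_\varepsilon.
\end{align*}
Taking $n_\varepsilon\times$ of this expression and comparing with \eqref{E:NSl_Curl} from Lemma \ref{L:NSl} yields exactly $n_\varepsilon\times G(u)=-n_\varepsilon\times\mathrm{curl}\,u$ on $\Gamma_\varepsilon$, which is the desired identity. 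This is the only place where the slip boundary condition for $u$ is actually used, and it also explains the design of $G(u)$: the signs $(-1)^{i+1}$ and the coefficient $\gamma_\varepsilon/\nu$ in \eqref{E:Def_Tilde} are chosen precisely so that $n_\varepsilon\times G(u)$ reproduces the right-hand side of \eqref{E:NSl_Curl} on both components $\Gamma_\varepsilon^0$ and $\Gamma_\varepsilon^1$ simultaneously.

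The main technical obstacle is justifying Green's formula under the hypothesis that $\Phi$ only belongs to $L^2(\Omega_\varepsilon)^3$ with $\mathrm{curl}\,\Phi\in L^2(\Omega_\varepsilon)^3$. Since $\mathrm{curl}\,u$ and $G(u)$ both lie in $H^1(\Omega_\varepsilon)^3$, their tangential traces are well defined elements of $H^{1/2}(\Gamma_\varepsilon)^3$, while the tangential trace of $\Phi$ exists in $H^{-1/2}(\Gamma_\varepsilon)^3$ by standard trace theory for $H(\mathrm{curl};\Omega_\varepsilon)$; the two boundary integrals above should therefore be read as $H^{-1/2}$–$H^{1/2}$ duality pairings, and the extension of the classical Green's formula to this setting is obtained by approximating $\Phi$ by smooth vector fields in $H(\mathrm{curl};\Omega_\varepsilon)$. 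Once this is in place, the computation is purely algebraic and the proof is complete.
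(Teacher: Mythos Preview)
Your proposal is correct and follows essentially the same approach as the paper: apply Green's formula for the curl, reduce everything to the boundary identity $n_\varepsilon\times\mathrm{curl}\,u=-n_\varepsilon\times G(u)$ on $\Gamma_\varepsilon$, and verify this via \eqref{E:Tilde_Bo} and \eqref{E:NSl_Curl}. The only cosmetic difference is that the paper handles the low regularity of $\Phi$ by approximating $\Phi$ with smooth vector fields in $H(\mathrm{curl};\Omega_\varepsilon)$ at the outset (rather than invoking $H^{-1/2}$--$H^{1/2}$ duality of tangential traces), which is slightly cleaner since it lets all boundary terms be ordinary integrals.
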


The proof of \eqref{E:IbP_Curl} is the same as in the case of a flat thin domain (see the proofs of \cite{Ho08}*{Lemma 2.3} and \cite{Ho10}*{Lemma 5.2}).
Here we give it for the completeness.

\begin{proof}
  By standard cut-off, dilatation, and mollification arguments, we can show as in the proof of \cite{Te79}*{Chapter 1, Theorem 1.1} that for $\Phi\in L^2(\Omega_\varepsilon)^3$ with $\mathrm{curl}\,\Phi\in L^2(\Omega_\varepsilon)^3$ there exists a sequence $\{\Phi_k\}_{k=1}^\infty$ in $C^\infty(\overline{\Omega}_\varepsilon)^3$ such that
  \begin{align*}
    \lim_{k\to\infty}\|\Phi-\Phi_k\|_{L^2(\Omega_\varepsilon)} = \lim_{k\to\infty}\|\mathrm{curl}\,\Phi-\mathrm{curl}\,\Phi_k\|_{L^2(\Omega_\varepsilon)} = 0.
  \end{align*}
  Thus, by a density argument, it is sufficient to prove \eqref{E:IbP_Curl} for all $\Phi\in C^\infty(\overline{\Omega}_\varepsilon)^3$.

  Let $u\in H^2(\Omega_\varepsilon)^3$ satisfy \eqref{E:Bo_Slip} and $\Phi\in C^\infty(\overline{\Omega}_\varepsilon)^3$.
  Then
  \begin{align} \label{Pf_IC:IBP}
    \int_{\Omega_\varepsilon}\mathrm{curl}\,\mathrm{curl}\,u\cdot\Phi\,dx = \int_{\Gamma_\varepsilon}(n_\varepsilon\times\mathrm{curl}\,u)\cdot\Phi\,d\mathcal{H}^2+\int_{\Omega_\varepsilon}\mathrm{curl}\,u\cdot\mathrm{curl}\,\Phi\,dx
  \end{align}
  by integration by parts.
  Since $u$ satisfies \eqref{E:Bo_Slip},
  \begin{align*}
    n_\varepsilon\times\mathrm{curl}\,u &= -n_\varepsilon\times\left\{n_\varepsilon\times\left(2W_\varepsilon u+\frac{\gamma_\varepsilon}{\nu}u\right)\right\} \\
    &= -n_\varepsilon\times\left(2\tilde{n}_1\times\widetilde{W}u+\tilde{n}_2\times u\right) = -n_\varepsilon\times G(u)
  \end{align*}
  on $\Gamma_\varepsilon$ by \eqref{E:NSl_Curl}, \eqref{E:Tilde_Bo}, and \eqref{E:Def_Gu}.
  Hence integration by parts yields
  \begin{align*}
    \int_{\Gamma_\varepsilon}(n_\varepsilon\times\mathrm{curl}\,u)\cdot\Phi\,d\mathcal{H}^2 &= -\int_{\Gamma_\varepsilon}\{n_\varepsilon\times G(u)\}\cdot\Phi\,d\mathcal{H}^2 \\
    &= \int_{\Omega_\varepsilon}\{G(u)\cdot\mathrm{curl}\,\Phi-\mathrm{curl}\,G(u)\cdot\Phi\}\,dx.
  \end{align*}
  Substituting this for \eqref{Pf_IC:IBP} we obtain \eqref{E:IbP_Curl}.
\end{proof}

Now let us prove \eqref{E:Comp_Sto_Lap}.
We follow the idea of the proof of a similar estimate for a flat thin domain given in \cite{Ho08}*{Theorem 2.1} and \cite{Ho10}*{Corollary 5.3}.
Main tools are the integration by parts formula \eqref{E:IbP_Curl} and the standard Helmholtz--Leray projection from $L^2(\Omega_\varepsilon)^3$ onto $L_\sigma^2(\Omega_\varepsilon)$ which we denote by $\mathbb{L}_\varepsilon$.
It is well known (see \cites{BoFa13,CoFo88,So01,Te79}) that the Helmholtz--Leray decomposition
\begin{align*}
  u = \mathbb{L}_\varepsilon u+\nabla q \quad\text{in}\quad L^2(\Omega_\varepsilon)^3, \quad \mathbb{L}_\varepsilon u \in L_\sigma^2(\Omega_\varepsilon), \quad \nabla q\in L_\sigma^2(\Omega_\varepsilon)^\perp
\end{align*}
holds for each $u\in L^2(\Omega_\varepsilon)^3$, where $q\in H^1(\Omega_\varepsilon)$ is a weak solution to the Neumann problem of Poisson's equation
\begin{align*}
  \Delta q = \mathrm{div}\,u \quad\text{in}\quad \Omega_\varepsilon, \quad \frac{\partial q}{\partial n_\varepsilon} = u\cdot n_\varepsilon \quad\text{on}\quad \Gamma_\varepsilon.
\end{align*}
Note that $\mathbb{L}_\varepsilon$ may differ from the orthogonal projection $\mathbb{P}_\varepsilon$ from $L^2(\Omega_\varepsilon)^3$ onto the closed subspace $\mathcal{H}_\varepsilon$ given by \eqref{E:Def_Heps} under the condition (A3) of Assumption \ref{Assump_2}.
In this case we require a little more discussions to establish \eqref{E:Comp_Sto_Lap}.

\begin{proof}[Proof of Theorem \ref{T:Comp_Sto_Lap}]
  We first show that there exists a constant $c>0$ such that
  \begin{align} \label{Pf_CSL:Diff_HL}
    \|\nu\Delta u-\nu\mathbb{L}_\varepsilon\Delta u\|_{L^2(\Omega_\varepsilon)} \leq c\|u\|_{H^1(\Omega_\varepsilon)}
  \end{align}
  for all $\varepsilon\in(0,\varepsilon_0]$ and $u\in D(A_\varepsilon)$.
  By the Helmholtz--Leray decomposition
  \begin{align*}
    \nu\Delta u = \nu\mathbb{L}_\varepsilon\Delta u+\nabla q \quad\text{in}\quad L^2(\Omega_\varepsilon)^3, \quad q\in H^1(\Omega_\varepsilon), \quad (\nu\mathbb{L}_\varepsilon\Delta u,\nabla q)_{L^2(\Omega_\varepsilon)} = 0
  \end{align*}
  for $u\in D(A_\varepsilon)$ and $\Delta u=-\mathrm{curl}\,\mathrm{curl}\,u$ in $\Omega_\varepsilon$ (note that $\mathrm{div}\,u=0$ in $\Omega_\varepsilon$) we have
  \begin{align*}
    \|\nu\Delta u-\nu\mathbb{L}_\varepsilon\Delta u\|_{L^2(\Omega_\varepsilon)}^2 = (\nu\Delta u-\nu\mathbb{L}_\varepsilon\Delta u,\nabla q)_{L^2(\Omega_\varepsilon)} = -\nu(\mathrm{curl}\,\mathrm{curl}\,u,\nabla q)_{L^2(\Omega_\varepsilon)}.
  \end{align*}
  Noting that $\mathrm{curl}\,\nabla q=0$ in $\Omega_\varepsilon$, we apply \eqref{E:IbP_Curl} with $\Phi=\nabla q$ to the last term to get
  \begin{align*}
    -\nu(\mathrm{curl}\,\mathrm{curl}\,u,\nabla q)_{L^2(\Omega_\varepsilon)} = \nu(\mathrm{curl}\,G(u),\nabla q)_{L^2(\Omega_\varepsilon)} \leq c\|\nabla G(u)\|_{L^2(\Omega_\varepsilon)}\|\nabla q\|_{L^2(\Omega_\varepsilon)}
  \end{align*}
  with $G(u)$ given by \eqref{E:Def_Gu}.
  Since the inequalities \eqref{E:Fric_Upper} hold by Assumption \ref{Assump_1}, we can use \eqref{E:G_Bound} to the right-hand side of this inequality.
  Hence
  \begin{align*}
    \|\nu\Delta u-\nu\mathbb{L}_\varepsilon\Delta u\|_{L^2(\Omega_\varepsilon)}^2 &\leq c\|\nabla G(u)\|_{L^2(\Omega_\varepsilon)}\|\nabla q\|_{L^2(\Omega_\varepsilon)} \\
    &\leq c\|u\|_{H^1(\Omega_\varepsilon)}\|\nu\Delta u-\nu\mathbb{L}_\varepsilon\Delta u\|_{L^2(\Omega_\varepsilon)}
  \end{align*}
  and \eqref{Pf_CSL:Diff_HL} follows (note that $\nabla q=\nu\Delta u-\nu\mathbb{L}_\varepsilon\Delta u$ in $L^2(\Omega_\varepsilon)^3$).
  When the condition (A1) or (A2) of Assumption \ref{Assump_2} is satisfied, we have
  \begin{align} \label{Pf_CSL:Au}
    A_\varepsilon u = -\nu\mathbb{P}_\varepsilon\Delta u = -\nu\mathbb{L}_\varepsilon\Delta u \quad\text{in}\quad L^2(\Omega_\varepsilon)^3, \quad u \in D(A_\varepsilon)
  \end{align}
  since $\mathbb{L}_\varepsilon$ agrees with the orthogonal projection $\mathbb{P}_\varepsilon$ onto $\mathcal{H}_\varepsilon=L_\sigma^2(\Omega_\varepsilon)$.
  Hence \eqref{E:Comp_Sto_Lap} is an immediate consequence of \eqref{Pf_CSL:Diff_HL}.

  Next we suppose that the condition (A3) of Assumption \ref{Assump_2} is satisfied.
  Then
  \begin{align*}
    A_\varepsilon u = -\nu\mathbb{P}_\varepsilon\Delta u \in \mathcal{H}_\varepsilon = L_\sigma^2(\Omega_\varepsilon)\cap\mathcal{R}_g^\perp, \quad u\in D(A_\varepsilon),
  \end{align*}
  where $\mathcal{R}_g$ is the space of infinitesimal rigid displacements of $\mathbb{R}^3$ given by \eqref{E:Def_Rg}.
  In this case, however, we still have \eqref{Pf_CSL:Au}.
  To see this, let $w\in\mathcal{R}_g$.
  Then $w$ belongs to $L_\sigma^2(\Omega_\varepsilon)$ by the assumption $\mathcal{R}_g=\mathcal{R}_0\cap\mathcal{R}_1$ and Lemma \ref{L:IR_Sole} and thus
  \begin{align*}
    (\mathbb{L}_\varepsilon\Delta u,w)_{L^2(\Omega_\varepsilon)} = (\Delta u,w)_{L^2(\Omega_\varepsilon)},
  \end{align*}
  since $\mathbb{L}_\varepsilon$ is the orthogonal projection from $L^2(\Omega_\varepsilon)^3$ onto $L_\sigma^2(\Omega_\varepsilon)$.
  Moreover, under the assumptions $\mathcal{R}_g=\mathcal{R}_0\cap\mathcal{R}_1$ and $\gamma_\varepsilon^0=\gamma_\varepsilon^1=0$, the vector fields $u\in D(A_\varepsilon)$ and $w\in\mathcal{R}_g$ satisfy (note that $w$ is of the form $w(x)=a\times x+b$)
  \begin{gather*}
    \mathrm{div}\,u = 0, \quad D(w) = 0 \quad\text{in}\quad \Omega_\varepsilon, \\
    u\cdot n_\varepsilon = 0, \quad P_\varepsilon D(u)n_\varepsilon = 0, \quad w\cdot n_\varepsilon = 0 \quad\text{on}\quad \Gamma_\varepsilon
  \end{gather*}
  by \eqref{E:Dom_St} and Lemma \ref{L:IR_CTD}.
  These equalities and the formula \eqref{E:IbP_St} yield
  \begin{align*}
    (\Delta u,w)_{L^2(\Omega_\varepsilon)} = -2\bigl(D(u),D(w)\bigr)_{L^2(\Omega_\varepsilon)}+2(D(u)n_\varepsilon,w)_{L^2(\Gamma_\varepsilon)} = 0.
  \end{align*}
  Hence $(\mathbb{L}_\varepsilon\Delta u,w)_{L^2(\Omega_\varepsilon)}=0$ for all $w\in\mathcal{R}_g$, i.e.
  \begin{align*}
    \mathbb{L}_\varepsilon\Delta u \in L_\sigma^2(\Omega_\varepsilon)\cap \mathcal{R}_g^\perp = \mathcal{H}_\varepsilon.
  \end{align*}
  Now since the Helmholtz--Leray decomposition
  \begin{align*}
    \Delta u = \mathbb{L}_\varepsilon\Delta u+\nabla\tilde{q} \quad\text{in}\quad L^2(\Omega_\varepsilon)^3, \quad \mathbb{L}_\varepsilon\Delta u \in \mathcal{H}_\varepsilon, \quad \nabla\tilde{q}\in L_\sigma^2(\Omega_\varepsilon)^\perp \subset \mathcal{H}_\varepsilon^\perp
  \end{align*}
  holds and $\mathbb{P}_\varepsilon$ is the orthogonal projection from $L^2(\Omega_\varepsilon)^3$ onto $\mathcal{H}_\varepsilon$, we have
  \begin{align*}
    \mathbb{P}_\varepsilon\Delta u = \mathbb{P}_\varepsilon\mathbb{L}_\varepsilon\Delta u = \mathbb{L}_\varepsilon\Delta u \quad\text{in}\quad L^2(\Omega_\varepsilon)^3, \quad u \in D(A_\varepsilon).
  \end{align*}
  Thus the relation \eqref{Pf_CSL:Au} holds and \eqref{E:Comp_Sto_Lap} follows from \eqref{Pf_CSL:Diff_HL}.
\end{proof}

Finally, we prove the uniform norm equivalence \eqref{E:Stokes_H2} for $A_\varepsilon$ by using the uniform estimates \eqref{E:Comp_Sto_Lap} and \eqref{E:Lap_Apri}.

\begin{proof}[Proof of Theorem \ref{T:Stokes_H2}]
  Let $u\in D(A_\varepsilon)$.
  Since $u$ satisfies the slip boundary conditions \eqref{E:Bo_Slip} by \eqref{E:Dom_St}, we can apply \eqref{E:Comp_Sto_Lap} and \eqref{E:Lap_Apri} to $u$ to get
  \begin{align*}
    \|u\|_{H^2(\Omega_\varepsilon)} &\leq c\left(\|\Delta u\|_{L^2(\Omega_\varepsilon)}+\|u\|_{H^1(\Omega_\varepsilon)}\right) \\
    &\leq c\left(\|A_\varepsilon u\|_{L^2(\Omega_\varepsilon)}+\|A_\varepsilon u+\nu\Delta u\|_{L^2(\Omega_\varepsilon)}+\|u\|_{H^1(\Omega_\varepsilon)}\right) \\
    &\leq c\left(\|A_\varepsilon u\|_{L^2(\Omega_\varepsilon)}+\|u\|_{H^1(\Omega_\varepsilon)}\right).
  \end{align*}
  Applying \eqref{E:Stokes_H1} and \eqref{E:Stokes_Po} to the second term on the last line we obtain the left-hand inequality of \eqref{E:Stokes_H2}.
  Also, by \eqref{E:Comp_Sto_Lap} and $\|u\|_{H^1(\Omega_\varepsilon)}\leq\|u\|_{H^2(\Omega_\varepsilon)}$,
  \begin{align*}
    \|A_\varepsilon u\|_{L^2(\Omega_\varepsilon)} \leq \|A_\varepsilon u+\nu\Delta u\|_{L^2(\Omega_\varepsilon)}+\|\nu\Delta u\|_{L^2(\Omega_\varepsilon)} \leq c\|u\|_{H^2(\Omega_\varepsilon)}.
  \end{align*}
  Hence the right-hand inequality of \eqref{E:Stokes_H2} holds.
\end{proof}

\appendix
\section{Notations on vectors and matrices} \label{S:Ap_Vec}
In this appendix we fix notations on vectors and matrices.
For $m\in\mathbb{N}$ we consider a vector $a\in\mathbb{R}^m$ as a column vector
\begin{align*}
  a =
  \begin{pmatrix}
    a_1 \\ \vdots \\ a_m
  \end{pmatrix}
  = (a_1, \cdots, a_m)^T
\end{align*}
and denote the $i$-th component of $a$ by $a_i$ or sometimes by $a^i$ or $[a]_i$ for $i=1,\dots,m$.
A matrix $A\in\mathbb{R}^{l\times m}$ with $l,m\in\mathbb{N}$ is expressed as
\begin{align*}
  A = (A_{ij})_{i,j} =
  \begin{pmatrix}
    A_{11} & \cdots & A_{1m} \\
    \vdots & & \vdots \\
    A_{l1} & \cdots & A_{lm}
  \end{pmatrix}.
\end{align*}
For $i=1,\dots,l$ and $j=1,\dots,m$ we write $A_{ij}$ or sometimes $[A]_{ij}$ for the $(i,j)$-entry of $A$.
Also, we denote the transpose of $A$ by $A^T$ and, when $l=m$, the symmetric part of $A$ by $A_S:=(A+A^T)/2$ and the $m\times m$ identity matrix by $I_m$.
We define the tensor product of vectors $a\in\mathbb{R}^l$ and $b\in\mathbb{R}^m$ with $l,m\in\mathbb{N}$ by
  \begin{align*}
    a\otimes b :=
    \begin{pmatrix}
      a_1b_1 & \cdots & a_1b_m \\
      \vdots & & \vdots \\
      a_lb_1 & \cdots & a_lb_m
    \end{pmatrix}, \quad
    a =
    \begin{pmatrix}
      a_1 \\ \vdots \\ a_l
    \end{pmatrix}, \quad
    b =
    \begin{pmatrix}
      b_1 \\ \vdots \\ b_m
    \end{pmatrix}.
  \end{align*}
For three-dimensional vector fields $u=(u_1,u_2,u_3)^T$ and $\varphi$ on an open set in $\mathbb{R}^3$ let
\begin{gather*}
  \nabla u :=
  \begin{pmatrix}
    \partial_1u_1 & \partial_1u_2 & \partial_1u_3 \\
    \partial_2u_1 & \partial_2u_2 & \partial_2u_3 \\
    \partial_3u_1 & \partial_3u_2 & \partial_3u_3
  \end{pmatrix}, \quad
  |\nabla^2u|^2 := \sum_{i,j,k=1}^3|\partial_i\partial_ju_k|^2 \quad\left(\partial_i := \frac{\partial}{\partial x_i}\right), \\
  (\varphi\cdot\nabla)u :=
  \begin{pmatrix}
    \varphi\cdot\nabla u_1 \\
    \varphi\cdot\nabla u_2 \\
    \varphi\cdot\nabla u_3
  \end{pmatrix}
  = (\nabla u)^T\varphi.
\end{gather*}
Also, for a $3\times3$ matrix-valued function $A=(A_{ij})_{i,j}$ on an open set in $\mathbb{R}^3$ we set
\begin{align*}
  \mathrm{div}\,A :=
  \begin{pmatrix}
    [\mathrm{div}\,A]_1 \\
    [\mathrm{div}\,A]_2 \\
    [\mathrm{div}\,A]_3
  \end{pmatrix}, \quad
  [\mathrm{div}\,A]_j := \sum_{i=1}^3\partial_iA_{ij}, \quad j=1,2,3.
\end{align*}
We define the inner product of matrices $A,B\in\mathbb{R}^{3\times3}$ and the norm of $A$ as
\begin{align*}
  A: B := \mathrm{tr}[A^TB] = \sum_{i=1}^3AE_i\cdot BE_i, \quad |A| := \sqrt{A:A},
\end{align*}
where $\{E_1,E_2,E_3\}$ is an orthonormal basis of $\mathbb{R}^3$.
Note that $A:B$ does not depend on a choice of $\{E_1,E_2,E_3\}$.
In particular, taking the standard basis of $\mathbb{R}^3$ we get
\begin{align*}
  A:B = \sum_{i,j=1}^3A_{ij}B_{ij} = B:A = A^T:B^T, \quad AB:C = A:CB^T = B:A^TC
\end{align*}
for $A,B,C\in\mathbb{R}^{3\times3}$.
Also, for $a,b\in\mathbb{R}^3$ we have $|a\otimes b|=|a||b|$.

\section{Auxiliary results related to a closes surface} \label{S:Ap_RCS}
This appendix presents some auxiliary results related to a closed surface.

Let $\Gamma$ be a closed, connected, and oriented surface in $\mathbb{R}^3$ of class $C^\ell$ with $\ell\geq2$.
We use the notations given in Section \ref{SS:Pre_Surf}.
First we provide some properties of the Riemannian metric of $\Gamma$ used in this and the next appendices.

\begin{lemma} \label{L:Metric}
  Let $U$ be an open set in $\mathbb{R}^2$, $\mu\colon U\to\Gamma$ a $C^\ell$ local parametrization of $\Gamma$, and $\mathcal{K}$ a compact subset of $U$.
  Then there exists a constant $c>0$ such that
  \begin{align} \label{E:Mu_Bound}
    |\partial_{s_i}\mu(s)| \leq c, \quad |\partial_{s_i}\partial_{s_j}\mu(s)| \leq c \quad\text{for all}\quad s\in\mathcal{K},\,i,j=1,2.
  \end{align}
  We define the Riemannian metric $\theta=(\theta_{ij})_{i,j}$ of $\Gamma$ by
  \begin{align} \label{E:Def_Met}
    \theta(s) := \nabla_s\mu(s)\{\nabla_s\mu(s)\}^T, \quad s\in U, \quad \nabla_s\mu :=
    \begin{pmatrix}
      \partial_{s_1}\mu_1 & \partial_{s_1}\mu_2 & \partial_{s_1}\mu_3 \\
      \partial_{s_2}\mu_1 & \partial_{s_2}\mu_2 & \partial_{s_2}\mu_3
    \end{pmatrix}
  \end{align}
  and denote by $\theta^{-1}=(\theta^{ij})_{i,j}$ the inverse matrix of $\theta$.
  Then
  \begin{align} \label{E:Metric}
      |\theta^k(s)| \leq c, \quad |\partial_{s_i}\theta^k(s)| \leq c, \quad c^{-1} \leq \det\theta(s) \leq c
  \end{align}
  for all $s\in\mathcal{K}$, $i=1,2$, and $k=\pm1$.
  Moreover,
  \begin{align} \label{E:Met_Inv}
    c^{-1}|a|^2 \leq \theta^{-1}(s)a\cdot a \leq c|a|^2
  \end{align}
  for all $s\in\mathcal{K}$ and $a\in\mathbb{R}^2$.
\end{lemma}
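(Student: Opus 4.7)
The plan is to prove \eqref{E:Mu_Bound}--\eqref{E:Met_Inv} by a standard continuity-plus-compactness argument, exploiting that $\mu$ is a $C^\ell$ immersion with $\ell\geq 2$ so that all quantities in sight are continuous functions of $s$, and the compact set $\mathcal{K}\subset U$ will deliver uniform bounds.

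First I would dispatch \eqref{E:Mu_Bound}. Since $\mu\in C^\ell(U)^3$ with $\ell\geq 2$, each component $\mu_k$ and its first and second partials are continuous on $U$, hence bounded on the compact set $\mathcal{K}$. Taking $c$ to be the maximum over $i,j,k$ of $\max_{\mathcal{K}}|\partial_{s_i}\mu_k|$ and $\max_{\mathcal{K}}|\partial_{s_i}\partial_{s_j}\mu_k|$ immediately yields \eqref{E:Mu_Bound}. Next, for $k=1$ in \eqref{E:Metric}, the entries of $\theta$ are sums of products of the form $\partial_{s_i}\mu_l\,\partial_{s_j}\mu_l$, so \eqref{E:Mu_Bound} gives $|\theta|\leq c$ on $\mathcal{K}$, and the product rule gives $|\partial_{s_i}\theta|\leq c$ on $\mathcal{K}$ using both bounds in \eqref{E:Mu_Bound}.

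The key point of the lemma is that $\det\theta$ has a positive lower bound on $\mathcal{K}$. Since $\mu$ is a local parametrization of the surface $\Gamma$, the differential $\nabla_s\mu(s)\colon\mathbb{R}^2\to\mathbb{R}^3$ has rank $2$ at every $s\in U$, so the $2\times 2$ Gram matrix $\theta(s)=\nabla_s\mu(s)(\nabla_s\mu(s))^T$ is symmetric positive definite, and in particular $\det\theta(s)>0$ for every $s\in U$. The function $s\mapsto\det\theta(s)$ is continuous on $U$ (it is a polynomial in the bounded quantities $\partial_{s_i}\mu_k$), hence attains a positive minimum and finite maximum on the compact set $\mathcal{K}$, giving the bounds $c^{-1}\leq\det\theta(s)\leq c$. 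From here the statements for $k=-1$ in \eqref{E:Metric} follow from Cramer's rule $\theta^{-1}=(\det\theta)^{-1}\mathrm{adj}(\theta)$ together with the already established bounds on $\theta$ and on $(\det\theta)^{-1}$; the derivative bound $|\partial_{s_i}\theta^{-1}|\leq c$ follows by differentiating the identity $\theta^{-1}\theta=I_2$ to obtain $\partial_{s_i}\theta^{-1}=-\theta^{-1}(\partial_{s_i}\theta)\theta^{-1}$ and applying the bounds already in hand.

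Finally, for \eqref{E:Met_Inv}, since $\theta(s)$ is symmetric positive definite with eigenvalues $\lambda_1(s),\lambda_2(s)>0$, the Rayleigh quotient gives $\lambda_{\min}(s)|a|^2\leq \theta(s)a\cdot a\leq\lambda_{\max}(s)|a|^2$, and a dual estimate for $\theta^{-1}$ with reciprocal eigenvalues. The eigenvalues are continuous functions of $s$ on $\mathcal{K}$ (they are roots of the characteristic polynomial, whose coefficients are continuous and bounded) and they are strictly positive everywhere because $\theta(s)$ is positive definite. Compactness of $\mathcal{K}$ therefore supplies a uniform positive lower bound for $\lambda_{\min}$ and a uniform finite upper bound for $\lambda_{\max}$, which translates into \eqref{E:Met_Inv} after taking $c$ large enough. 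There is no real obstacle in this proof; the only nontrivial ingredient is the immersion property $\mathrm{rank}\,\nabla_s\mu=2$ that ensures positive definiteness of $\theta$, which is built into the definition of a local parametrization of a $C^\ell$ surface.
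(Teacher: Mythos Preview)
Your proof is correct and follows essentially the same continuity-plus-compactness strategy as the paper, including the same differentiation identity $\partial_{s_i}\theta^{-1}=-\theta^{-1}(\partial_{s_i}\theta)\theta^{-1}$ for the inverse. The only cosmetic difference is in \eqref{E:Met_Inv}: you argue via continuity of the eigenvalues of $\theta$, whereas the paper writes $\theta^{-1}(s)a\cdot a=|X(s,a)|^2$ for a suitable tangent vector $X(s,a)$ and applies compactness directly on $\mathcal{K}\times S^1$; both arguments are standard and interchangeable.
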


\begin{proof}
  The inequalities \eqref{E:Mu_Bound} follow from the $C^\ell$-regularity of $\mu$ on $U$ and the compactness of $\mathcal{K}$ in $U$.
  Using them and the relation
  \begin{align*}
    \partial_{s_i}\theta^{-1} = -\theta^{-1}(\partial_{s_i}\theta)\theta^{-1} \quad\text{in}\quad U
  \end{align*}
  we get the first and second inequalities of \eqref{E:Metric}.
  Also, the third inequality is valid since $\det\theta$ is continuous and strictly positive on $U$ and $\mathcal{K}$ is compact in $U$.

  Let us show \eqref{E:Met_Inv}.
  For $s\in U$ and $a=(a_1,a_2)^T\in\mathbb{R}^2$ we set
  \begin{align*}
    X(s,a) := \sum_{i,j=1}^2\theta^{ij}(s)a_i\partial_{s_j}\mu(s).
  \end{align*}
  Since $\partial_{s_1}\mu(s)$ and $\partial_{s_2}\mu(s)$ are linearly independent, $X(s,a)$ vanishes if and only if
  \begin{align*}
    \sum_{i=1,2}\theta^{ij}(s)a_i = \sum_{i=1,2}\theta^{ji}(s)a_i = 0 \quad\text{for}\quad j=1,2, \quad\text{i.e.}\quad \theta^{-1}(s)a = 0,
  \end{align*}
  which is equivalent to $a=0$ (note that $\theta^{-1}$ is symmetric since $\theta$ is so).
  From this fact it follows that the continuous and nonnegative function
  \begin{align*}
    |X(s,a)|^2 = \sum_{i,j=1}^2\theta^{ij}(s)a_ia_j = \theta^{-1}(s)a\cdot a, \quad (s,a)\in U\times\mathbb{R}^2
  \end{align*}
  does not vanish for $a\neq0$, and thus it is bounded from above and below by positive constants on the compact set $\mathcal{K}\times S^1$, where $S^1$ is the unit circle in $\mathbb{R}^2$.
  Hence \eqref{E:Met_Inv} follows.
\end{proof}

Hereafter we always write $\theta=(\theta_{ij})_{i,j}$ and $\theta^{-1}=(\theta^{ij})_{i,j}$ for the Riemannian metric of $\Gamma$ given by \eqref{E:Def_Met} and its inverse.
We show that the differential operators on $\Gamma$ given in Section \ref{SS:Pre_Surf} agree with those defined in differential geometry.

\begin{lemma} \label{L:TGr_DG}
  Let $U$ be an open set in $\mathbb{R}^2$ and $\mu\colon U\to\Gamma$ a $C^\ell$ local parametrization of $\Gamma$.
  For $\eta\in C^1(\Gamma)$ let $\eta^\flat:=\eta\circ\mu$ on $U$.
  Then
  \begin{align} \label{E:TGr_DG}
    \nabla_\Gamma\eta(\mu(s)) = \sum_{i,j=1}^2\theta^{ij}(s)\partial_{s_i}\eta^\flat(s)\partial_{s_j}\mu(s), \quad s\in U,
  \end{align}
  where $\nabla_\Gamma\eta$ is the tangential gradient of $\eta$ defined by \eqref{E:Def_TGr}.
\end{lemma}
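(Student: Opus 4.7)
The plan is to verify \eqref{E:TGr_DG} by showing that both sides are tangential to $\Gamma$ and have the same inner product with each coordinate tangent vector $\partial_{s_k}\mu(s)$, $k=1,2$, which form a basis of the tangent plane $T_{\mu(s)}\Gamma$.

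First I would observe that the right-hand side of \eqref{E:TGr_DG} is manifestly tangential to $\Gamma$ at $\mu(s)$, since it is a linear combination of the tangent vectors $\partial_{s_j}\mu(s)$. For the left-hand side, the identity $\nabla_\Gamma\eta=P\nabla_\Gamma\eta$ from \eqref{E:P_TGr} shows that $\nabla_\Gamma\eta(\mu(s))$ also lies in $T_{\mu(s)}\Gamma$. Hence it suffices to compare the two sides after taking the inner product with $\partial_{s_k}\mu(s)$ for $k=1,2$.

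For the left-hand side, fix a $C^1$-extension $\tilde\eta$ of $\eta$ to the tubular neighborhood $N$, so that $\nabla_\Gamma\eta(\mu(s))=P(\mu(s))\nabla\tilde\eta(\mu(s))$. Since $\partial_{s_k}\mu(s)$ is tangential, $P(\mu(s))\partial_{s_k}\mu(s)=\partial_{s_k}\mu(s)$, and therefore
\begin{align*}
  \nabla_\Gamma\eta(\mu(s))\cdot\partial_{s_k}\mu(s) = \nabla\tilde\eta(\mu(s))\cdot\partial_{s_k}\mu(s) = \partial_{s_k}(\tilde\eta\circ\mu)(s) = \partial_{s_k}\eta^\flat(s)
\end{align*}
by the chain rule (noting $\tilde\eta\circ\mu=\eta\circ\mu=\eta^\flat$). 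For the right-hand side, using $\partial_{s_j}\mu(s)\cdot\partial_{s_k}\mu(s)=\theta_{jk}(s)$ from \eqref{E:Def_Met} and the identity $\sum_{j=1}^2\theta^{ij}\theta_{jk}=\delta_{ik}$, I compute
\begin{align*}
  \sum_{i,j=1}^2\theta^{ij}(s)\partial_{s_i}\eta^\flat(s)\partial_{s_j}\mu(s)\cdot\partial_{s_k}\mu(s) = \sum_{i=1}^2\left(\sum_{j=1}^2\theta^{ij}(s)\theta_{jk}(s)\right)\partial_{s_i}\eta^\flat(s) = \partial_{s_k}\eta^\flat(s).
\end{align*}

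Both sides of \eqref{E:TGr_DG} therefore have identical inner products with $\partial_{s_1}\mu(s)$ and $\partial_{s_2}\mu(s)$, and both are tangential; hence they coincide. There is no genuine obstacle here: the argument is a direct unpacking of the definitions and the chain rule. One minor point to be careful about is that the identity $P(\mu(s))\partial_{s_k}\mu(s)=\partial_{s_k}\mu(s)$ uses that $\{\partial_{s_1}\mu(s),\partial_{s_2}\mu(s)\}$ spans $T_{\mu(s)}\Gamma$, which in turn rests on $\mu$ being a local parametrization (so that $\nabla_s\mu$ has full rank); this is implicit in the hypotheses.
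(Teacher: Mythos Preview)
Your proof is correct and follows essentially the same approach as the paper: both arguments use that each side of \eqref{E:TGr_DG} is tangential and compute the inner product with $\partial_{s_k}\mu(s)$ via the chain rule, obtaining $\partial_{s_k}\eta^\flat(s)$. The only cosmetic difference is that the paper expands $\nabla_\Gamma\eta(\mu(s))$ in the basis $\{\partial_{s_1}\mu,\partial_{s_2}\mu\}$ with unknown coefficients and then solves the resulting linear system $\partial_{s_i}\eta^\flat=\sum_j\theta_{ij}a_j$, whereas you verify equality directly by checking that both sides pair identically with the basis vectors; these are two phrasings of the same linear-algebra step.
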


\begin{proof}
  Let $s\in U$.
  Since $\nabla_\Gamma\eta$ is tangential on $\Gamma$ and $\{\partial_{s_1}\mu(s),\partial_{s_2}\mu(s)\}$ is a basis of the tangent plane of $\Gamma$ at $\mu(s)$, we have
  \begin{align*}
    \nabla_\Gamma\eta(\mu(s)) = \sum_{j=1}^2a_j\partial_{s_j}\mu(s)
  \end{align*}
  with some $a_1,a_2\in\mathbb{R}$ and thus
  \begin{align} \label{Pf_TDG:Inner}
    \partial_{s_i}\mu(s)\cdot\nabla_\Gamma\eta(\mu(s)) = \sum_{j=1}^2a_j\theta_{ij}(s), \quad i=1,2.
  \end{align}
  Let $\tilde{\eta}$ be an arbitrary $C^1$-extension of $\eta$ to $N$ with $\tilde{\eta}|_\Gamma=\eta$.
  Then
  \begin{align} \label{Pf_TDG:Flat}
    \partial_{s_i}\eta^\flat(s) = \partial_{s_i}\Bigl(\tilde{\eta}(\mu(s))\Bigr) = \partial_{s_i}\mu(s)\cdot\nabla\tilde{\eta}(\mu(s)) = \partial_{s_i}\mu(s)\cdot\nabla_\Gamma\eta(\mu(s))
  \end{align}
  for $i=1,2$ since $\partial_{s_i}\mu(s)$ is tangent to $\Gamma$ at $\mu(s)$.
  By \eqref{Pf_TDG:Inner} and \eqref{Pf_TDG:Flat} we have
  \begin{align*}
    \partial_{s_i}\eta^\flat(s) = \sum_{j=1}^2\theta_{ij}(s)a_j, \quad i=1,2, \quad\text{i.e.}\quad
    \begin{pmatrix}
      \partial_{s_1}\eta^\flat(s) \\
      \partial_{s_2}\eta^\flat(s)
    \end{pmatrix}
    = \theta(s)
    \begin{pmatrix}
      a_1 \\
      a_2
    \end{pmatrix}.
  \end{align*}
  Therefore,
  \begin{align*}
    \begin{pmatrix}
      a_1 \\
      a_2
    \end{pmatrix}
    = \theta^{-1}(s)
    \begin{pmatrix}
      \partial_{s_1}\eta^\flat(s) \\
      \partial_{s_2}\eta^\flat(s)
    \end{pmatrix}, \quad\text{i.e.}\quad
    a_j = \sum_{i=1}^2\theta^{ji}(s)\partial_{s_i}\eta^\flat(s), \quad j=1,2
  \end{align*}
  and the equality \eqref{E:TGr_DG} holds (note that $\theta^{ji}=\theta^{ij}$).
\end{proof}

\begin{lemma} \label{L:DivG_DG}
  Let $U$ be an open set in $\mathbb{R}^2$ and $\mu\colon U\to\Gamma$ a $C^\ell$ local parametrization of $\Gamma$.
  For $X\in C^1(\Gamma,T\Gamma)$ let
  \begin{align*}
    X^i(s) := \sum_{j=1}^2\theta^{ij}(s)\partial_{s_j}\mu(s)\cdot X(\mu(s)), \quad s\in U, \, i=1,2.
  \end{align*}
  Then the surface divergence of $X$ defined by \eqref{E:Def_DivG} is locally of the form
  \begin{align} \label{E:DivG_DG}
    \mathrm{div}_\Gamma X(\mu(s)) = \frac{1}{\sqrt{\det\theta(s)}}\sum_{i=1}^2\partial_{s_i}\Bigl(X^i(s)\sqrt{\det\theta(s)}\Bigr), \quad s\in U.
  \end{align}
\end{lemma}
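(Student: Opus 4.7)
The plan is to reduce the divergence defined via the Euclidean tangential derivatives to the classical local-coordinate expression by plugging in the local parametrization and exploiting Lemma \ref{L:TGr_DG} together with Jacobi's formula for the derivative of a determinant. Fix $s\in U$ and set $y=\mu(s)$. First I would verify that the scalars $X^i(s)$ defined in the statement are exactly the components of $X(y)$ in the coordinate basis $\{\partial_{s_1}\mu(s),\partial_{s_2}\mu(s)\}$ of the tangent plane, i.e.\ $X(\mu(s))=\sum_{l=1}^2 X^l(s)\partial_{s_l}\mu(s)$; this is a one-line consequence of the definitions of $\theta$ and $\theta^{-1}$. Consequently $X_i^\flat(s):=X_i(\mu(s))=\sum_l X^l(s)\partial_{s_l}\mu_i(s)$ for $i=1,2,3$.

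Next I would apply Lemma \ref{L:TGr_DG} componentwise to obtain
\begin{align*}
  \underline{D}_iX_i(\mu(s)) = \sum_{j,k=1}^2\theta^{jk}(s)\,\partial_{s_j}X_i^\flat(s)\,\partial_{s_k}\mu_i(s),
\end{align*}
sum over $i=1,2,3$, and expand $\partial_{s_j}X_i^\flat$ by the product rule. Using $\sum_{i=1}^3\partial_{s_l}\mu_i\,\partial_{s_k}\mu_i=\theta_{lk}$ and $\sum_{j,k}\theta^{jk}\theta_{lk}=\delta_{jl}$, the term carrying derivatives of $X^l$ collapses to $\sum_l\partial_{s_l}X^l(s)$. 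The remaining term is
\begin{align*}
  \sum_{l=1}^2 X^l(s)\sum_{j,k=1}^2\theta^{jk}(s)\bigl(\partial_{s_j}\partial_{s_l}\mu(s)\cdot\partial_{s_k}\mu(s)\bigr),
\end{align*}
so everything reduces to identifying this inner sum with a logarithmic derivative of $\sqrt{\det\theta}$.

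This is where the only substantive calculation lies, and it is the main (but classical) obstacle. Writing $[jl,k]:=\partial_{s_j}\partial_{s_l}\mu\cdot\partial_{s_k}\mu$, the identity $\partial_{s_l}\theta_{jk}=[lj,k]+[lk,j]$ together with the symmetry of mixed partials and of $\theta^{jk}$ yields $\sum_{j,k}\theta^{jk}[jl,k]=\tfrac12\sum_{j,k}\theta^{jk}\partial_{s_l}\theta_{jk}$, which by Jacobi's formula equals $(\partial_{s_l}\sqrt{\det\theta})/\sqrt{\det\theta}$. Combining this with the first piece gives
\begin{align*}
  \mathrm{div}_\Gamma X(\mu(s))
  = \sum_{l=1}^2\partial_{s_l}X^l(s)+\sum_{l=1}^2 X^l(s)\,\frac{\partial_{s_l}\sqrt{\det\theta(s)}}{\sqrt{\det\theta(s)}},
\end{align*}
which rearranges to the desired formula \eqref{E:DivG_DG}. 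Note that the argument only uses the explicit Euclidean definition \eqref{E:Def_DivG} and the parametrization, so no choice of extension of $X$ to a neighborhood of $\Gamma$ enters.
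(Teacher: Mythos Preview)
Your proposal is correct and follows essentially the same route as the paper: express $X$ in the coordinate basis, apply Lemma~\ref{L:TGr_DG} componentwise, split via the product rule, collapse the $\partial_{s_l}X^l$ term using $\theta^{jk}\theta_{lk}=\delta_{jl}$, and identify the remaining Christoffel-type sum with $(\partial_{s_l}\sqrt{\det\theta})/\sqrt{\det\theta}$ via Jacobi's formula. The only cosmetic difference is that the paper computes both sides of \eqref{E:DivG_DG} separately and matches them, whereas you manipulate the left-hand side directly into the right-hand side.
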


\begin{proof}
  Since $X$ is tangential on $\Gamma$, we can show
  \begin{align} \label{Pf_DDG:X_Loc}
    X(\mu(s)) = \sum_{i=1}^2X^i(s)\partial_{s_i}\mu(s), \quad s \in U
  \end{align}
  as in the proof of Lemma \ref{L:TGr_DG}.
  In what follows, we write $\eta^\flat(s):=\eta(\mu(s))$, $s\in U$ for a function $\eta$ on $\Gamma$ and suppress the argument $s\in U$.
  Since
  \begin{align*}
    (\underline{D}_mX_m)^\flat = \sum_{i,j=1}^2\theta^{ij}(\partial_{s_i}X_m^\flat)\partial_{s_j}\mu_m, \quad m=1,2,3
  \end{align*}
  by \eqref{E:TGr_DG}, where $X=(X_1,X_2,X_3)^T$, the surface divergence of $X$ is of the form
  \begin{align*}
    (\mathrm{div}_\Gamma X)^\flat = \sum_{m=1}^3(\underline{D}_mX_m)^\flat = \sum_{m=1}^3\sum_{i,j=1}^2\theta^{ij}(\partial_{s_i}X_m^\flat)\partial_{s_j}\mu_m = \sum_{i,j=1}^2\theta^{ij}\partial_{s_i}X^\flat\cdot\partial_{s_j}\mu.
  \end{align*}
  To the last term we apply \eqref{Pf_DDG:X_Loc}.
  Then since
  \begin{align*}
    \partial_{s_i}X^\flat\cdot\partial_{s_j}\mu &= \sum_{k=1}^2\{(\partial_{s_i}X^k)\partial_{s_k}\mu\cdot\partial_{s_j}\mu+X^k\partial_{s_i}\partial_{s_k}\mu\cdot\partial_{s_j}\mu\} \\
    &= \sum_{k=1}^2\{\theta^{kj}\partial_{s_i}X^k+X^k\partial_{s_i}\partial_{s_k}\mu\cdot\partial_{s_j}\mu\}
  \end{align*}
  and $\sum_{j=1}^2\theta^{ij}\theta^{kj}=\sum_{j=1}^2\theta^{ij}\theta^{jk}=\delta_{ik}$, where $\delta_{ik}$ is the Kronecker delta, we have
  \begin{align} \label{Pf_DDG:Left}
    \begin{aligned}
      (\mathrm{div}_\Gamma X)^\flat &= \sum_{i=1}^2\partial_{s_i}X^i+\sum_{i,j,k=1}^2\theta^{ij}X^k\partial_{s_i}\partial_{s_k}\mu\cdot\partial_{s_j}\mu \\
      &= \sum_{i=1}^2\partial_{s_i}X^i+\sum_{i,j,k=1}^2X^i\theta^{jk}\partial_{s_i}\partial_{s_k}\mu\cdot\partial_{s_j}\mu.
    \end{aligned}
  \end{align}
  Here we rewrote the second term on the right-hand side by exchanging the indices $i$ and $k$ and using $\theta^{kj}=\theta^{jk}$ and $\partial_{s_i}\partial_{s_k}\mu=\partial_{s_k}\partial_{s_i}\mu$.
  On the other hand,
  \begin{align} \label{Pf_DDG:Right}
    \frac{1}{\sqrt{\det\theta}}\sum_{i=1}^2\partial_{s_i}\Bigl(X^i\sqrt{\det\theta}\Bigr) = \sum_{i=1}^2\partial_{s_i}X^i+\frac{1}{\sqrt{\det\theta}}\sum_{i=1}^2X^i\partial_{s_i}\Bigl(\sqrt{\det\theta}\Bigr).
  \end{align}
  Moreover, by Jacobi's formula
  \begin{align*}
    \partial_{s_i}(\det\theta) = \mathrm{tr}(\theta^{-1}\partial_{s_i}\theta)\det\theta, \quad i=1,2
  \end{align*}
  and $\theta^{kj}=\theta^{jk}$ for $j,k=1,2$ we observe that
  \begin{align*}
    \frac{1}{\sqrt{\det\theta}}\partial_{s_i}\Bigl(\sqrt{\det\theta}\Bigr) &= \frac{1}{2}\mathrm{tr}(\theta^{-1}\partial_{s_i}\theta) = \frac{1}{2}\sum_{j,k=1}^2\theta^{jk}\partial_{s_i}\theta_{kj} \\
    &= \frac{1}{2}\sum_{j,k=1}^2\theta^{jk}(\partial_{s_i}\partial_{s_k}\mu\cdot\partial_{s_j}\mu+\partial_{s_k}\mu\cdot\partial_{s_i}\partial_{s_j}\mu) \\
    &= \sum_{j,k=1}^2\theta^{jk}\partial_{s_i}\partial_{s_k}\mu\cdot\partial_{s_j}\mu.
  \end{align*}
  Thus the right-hand side of \eqref{Pf_DDG:Right} is equal to that of \eqref{Pf_DDG:Left} and we get \eqref{E:DivG_DG}.
\end{proof}

Next we consider the local expression of a function in $L^p(\Gamma)$ or $W^{1,p}(\Gamma)$.

\begin{lemma} \label{L:Lp_Loc}
  Let $U$ be an open set in $\mathbb{R}^2$, $\mu\colon U\to\Gamma$ a $C^\ell$ local parametrization of $\Gamma$, and $\mathcal{K}$ a compact subset of $U$.
  For $p\in[1,\infty]$ if $\eta\in L^p(\Gamma)$ is supported in $\mu(\mathcal{K})$, then $\eta^\flat:=\eta\circ\mu\in L^p(U)$ and
  \begin{align} \label{E:Lp_Loc}
    c^{-1}\|\eta^\flat\|_{L^p(U)} \leq \|\eta\|_{L^p(\Gamma)} \leq c\|\eta^\flat\|_{L^p(U)}.
  \end{align}
  If in addition $\eta\in W^{1,p}(\Gamma)$, then $\eta^\flat\in W^{1,p}(U)$, \eqref{E:TGr_DG} holds in $L^p(U)^3$, and
  \begin{align} \label{E:W1p_Loc}
    c^{-1}\|\nabla_s\eta^\flat\|_{L^p(U)} \leq \|\nabla_\Gamma\eta\|_{L^p(\Gamma)} \leq c\|\nabla_s\eta^\flat\|_{L^p(U)}.
  \end{align}
  Here $\nabla_s\eta^\flat=(\partial_{s_1}\eta^\flat,\partial_{s_2}\eta^\flat)^T$ is the gradient of $\eta^\flat$ in $s\in\mathbb{R}^2$.
\end{lemma}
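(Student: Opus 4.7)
The plan is to reduce both statements to the standard surface change-of-variables formula under the local parametrization $\mu$, combined with the uniform bounds on $\mu$ and on the Riemannian metric $\theta$ furnished by Lemma \ref{L:Metric} on the compact set $\mathcal{K}$.

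First I would prove \eqref{E:Lp_Loc}. Since $\mu$ is a $C^\ell$ diffeomorphism from $U$ onto its image and the area element of $\Gamma$ under $\mu$ is $\sqrt{\det\theta(s)}\,ds$, the standard change of variables gives, for $p\in[1,\infty)$,
\begin{align*}
  \int_\Gamma|\eta(y)|^p\,d\mathcal{H}^2(y) = \int_U|\eta^\flat(s)|^p\sqrt{\det\theta(s)}\,ds,
\end{align*}
where the integrands vanish outside $\mathcal{K}$ by the support assumption on $\eta$. The third inequality of \eqref{E:Metric} gives $c^{-1}\leq\sqrt{\det\theta(s)}\leq c$ on $\mathcal{K}$, which yields the two-sided bound. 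The case $p=\infty$ follows at once from the bijectivity of $\mu$ on $U$, since $\|\eta\|_{L^\infty(\Gamma)}=\|\eta^\flat\|_{L^\infty(U)}$ up to $\mathcal{H}^2$-null sets.

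Next I would handle \eqref{E:W1p_Loc}. For $p<\infty$, by Lemma \ref{L:Wmp_Appr} I approximate $\eta\in W^{1,p}(\Gamma)$ by a sequence $\{\eta_k\}\subset C^\ell(\Gamma)$ with $\eta_k\to\eta$ in $W^{1,p}(\Gamma)$; multiplying by a fixed cut-off in $C_c^\ell(\mu(U))$ that equals $1$ on $\mu(\mathcal{K})$ (or localizing first via a partition of unity), I may arrange that $\eta_k^\flat$ is supported in a slightly enlarged compact subset of $U$. For each smooth $\eta_k$, Lemma \ref{L:TGr_DG} gives the pointwise identity \eqref{E:TGr_DG}, while differentiating $\eta_k^\flat=\eta_k\circ\mu$ yields the pointwise relation $\partial_{s_i}\eta_k^\flat(s)=\partial_{s_i}\mu(s)\cdot\nabla_\Gamma\eta_k(\mu(s))$ used already in the proof of Lemma \ref{L:TGr_DG}. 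Using \eqref{E:Mu_Bound} and the first two bounds of \eqref{E:Metric} pointwise on $\mathcal{K}$, these two formulas give
\begin{align*}
  |\nabla_s\eta_k^\flat(s)|\leq c\,|\nabla_\Gamma\eta_k(\mu(s))|, \quad |\nabla_\Gamma\eta_k(\mu(s))|\leq c\,|\nabla_s\eta_k^\flat(s)|
\end{align*}
for $s\in\mathcal{K}$. Integrating the $p$-th powers and applying the already-established $L^p$ equivalence to both $\eta_k$ and $|\nabla_\Gamma\eta_k|$ versus $|\nabla_s\eta_k^\flat|$ yields \eqref{E:W1p_Loc} for $\eta_k$. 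Passing to the limit $k\to\infty$ in $L^p$ transfers \eqref{E:TGr_DG} and \eqref{E:W1p_Loc} to $\eta$.

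The case $p=\infty$ is the only genuinely delicate point, since $C^\ell(\Gamma)$ is not dense in $W^{1,\infty}(\Gamma)$ in norm. I would handle it by applying the preceding argument with $p$ finite but arbitrarily large: for every $q\in[1,\infty)$ we have $W^{1,\infty}(\Gamma)\hookrightarrow W^{1,q}(\Gamma)$, so \eqref{E:TGr_DG} holds almost everywhere on $U$, and the pointwise bounds above, which depend only on the $L^\infty(\mathcal{K})$ norms of $\mu$, $\nabla_s\mu$, and $\theta^{-1}$ supplied by Lemma \ref{L:Metric}, hold a.e. on $\mathcal{K}$ with constants independent of $q$. Taking essential suprema then gives \eqref{E:W1p_Loc} for $p=\infty$. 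The main (mild) obstacle is this last step, namely checking that the pointwise identities survive the limit process without loss of the constants; everything else is bookkeeping with Lemma \ref{L:Metric}.
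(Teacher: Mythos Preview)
Your argument for \eqref{E:Lp_Loc} is correct and essentially identical to the paper's. The problem is the $W^{1,p}$ part: you invoke Lemma \ref{L:Wmp_Appr} to approximate $\eta$ by $C^\ell$ functions, but in this paper Lemma \ref{L:Wmp_Appr} is \emph{proved using} Lemma \ref{L:Lp_Loc} (see Remark \ref{R:Proof_WA} and the proof of Lemma \ref{L:Wmp_Appr}, which appeals to \eqref{E:Lp_Loc} and \eqref{E:W1p_Loc} explicitly). So your argument is circular as written.

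The paper avoids this by working directly at the level of weak derivatives, never using density. For each $\varphi\in C_c^1(U)$ one builds the tangential vector field
\[
  X(\mu(s)) := \frac{\varphi(s)}{\sqrt{\det\theta(s)}}\,\partial_{s_i}\mu(s),
\]
computes $\mathrm{div}_\Gamma X$ via Lemma \ref{L:DivG_DG}, and applies the weak integration-by-parts formula \eqref{E:IbP_WTGr} (which is available for any $\eta\in W^{1,p}(\Gamma)$ by definition of the weak tangential derivative) to obtain
\[
  \int_U\eta^\flat\,\partial_{s_i}\varphi\,ds = -\int_U\{\partial_{s_i}\mu\cdot(\nabla_\Gamma\eta)^\flat\}\varphi\,ds.
\]
This identifies $\partial_{s_i}\eta^\flat$ weakly and shows $\eta^\flat\in W^{1,p}(U)$. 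A second application of \eqref{E:IbP_WTGr} with $v=\xi n$ gives $\nabla_\Gamma\eta\cdot n=0$ a.e., after which \eqref{E:TGr_DG} follows for $\eta$ exactly as in the smooth case, and \eqref{E:W1p_Loc} comes from \eqref{E:Met_Inv}. This route works uniformly for all $p\in[1,\infty]$ without any limiting argument, so your separate treatment of $p=\infty$ also becomes unnecessary.
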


\begin{proof}
  Let $\eta\in L^p(\Gamma)$, $p\in[1,\infty]$ be supported in $\mu(\mathcal{K})$ and $\eta^\flat:=\eta\circ\mu$ on $U$.
  When $p\neq\infty$, by the definition of an integral over a surface we have
  \begin{align*}
    \|\eta\|_{L^p(\Gamma)}^p = \int_\Gamma|\eta(y)|^p\,d\mathcal{H}^2(y) = \int_U|\eta^\flat(s)|^p\sqrt{\det\theta(s)}\,ds.
  \end{align*}
  Since $\eta^\flat$ is supported in $\mathcal{K}$, we can apply \eqref{E:Metric} to the right-hand side of this equality to get $\eta^\flat\in L^p(U)$ and \eqref{E:Lp_Loc}.
  Also, if $p=\infty$, then
  \begin{align*}
    \|\eta\|_{L^\infty(\Gamma)} = \|\eta\|_{L^\infty(\mu(U))} = \|\eta\circ\mu\|_{L^\infty(U)} = \|\eta^\flat\|_{L^\infty(U)}
  \end{align*}
  since $\eta$ is supported in $\mu(\mathcal{K})\subset\mu(U)$.
  Thus $\eta^\flat\in L^\infty(U)$ and \eqref{E:Lp_Loc} holds.

  Next let $\eta\in W^{1,p}(\Gamma)$, $p\in[1,\infty]$ be supported in $\mu(\mathcal{K})$.
  Then $\eta^\flat\in L^p(U)$ by the first part of the proof.
  Let us show $\partial_{s_i}\eta^\flat\in L^p(U)$ for $i=1,2$.
  Hereafter we write $\xi^\flat:=\xi\circ\mu$ on $U$ for a function $\xi$ on $\Gamma$.
  We prove that
  \begin{align} \label{Pf_LpL:D_Flat}
    \partial_{s_i}\eta^\flat = \partial_{s_i}\mu\cdot(\nabla_\Gamma\eta)^\flat \in L^p(U), \quad i=1,2.
  \end{align}
  Note that the right-hand side is in $L^p(U)$.
  Indeed, since $\nabla_\Gamma\eta\in L^p(\Gamma)^3$ is supported in $\mu(\mathcal{K})$, we have $(\nabla_\Gamma\eta)^\flat\in L^p(U)^3$ by the first part of the proof.
  Moreover,
  \begin{align*}
    |\partial_{s_i}\mu\cdot(\nabla_\Gamma\eta)^\flat| \leq c|(\nabla_\Gamma\eta)^\flat| \quad\text{on}\quad \mathcal{K}
  \end{align*}
  by \eqref{E:Mu_Bound} and $(\nabla_\Gamma\eta)^\flat$ is supported in $\mathcal{K}$.
  Hence $\partial_{s_i}\mu\cdot(\nabla_\Gamma\eta)^\flat\in L^p(U)$.
  Let us show the equality \eqref{Pf_LpL:D_Flat}.
  Fix $i=1,2$.
  For $\varphi\in C_c^1(U)$ we define
  \begin{align*}
    X(\mu(s)) := \frac{\varphi(s)}{\sqrt{\det\theta(s)}}\partial_{s_i}\mu(s), \quad s\in U
  \end{align*}
  and extend $X$ to $\Gamma$ by setting zero outside $\mu(U)$.
  Then $X\in C^1(\Gamma,T\Gamma)$ and
  \begin{align*}
    \mathrm{div}_\Gamma X(\mu(s)) = \frac{\partial_{s_i}\varphi(s)}{\sqrt{\det\theta(s)}}, \quad s\in U
  \end{align*}
  by Lemma \ref{L:DivG_DG}.
  Since $\eta^\flat$ is supported in $U$, we deduce from this equality that
  \begin{align*}
    \int_U\eta^\flat\partial_{s_i}\varphi\,ds &= \int_U\eta^\flat(\mathrm{div}_\Gamma X)^\flat\sqrt{\det\theta}\,ds = \int_\Gamma\eta\,\mathrm{div}_\Gamma X\,d\mathcal{H}^2.
  \end{align*}
  Moreover, we apply \eqref{E:IbP_WTGr} with $v=X$ and $X\cdot n=0$ on $\Gamma$ to the last term to get
  \begin{align*}
    \int_U\eta^\flat\partial_{s_i}\varphi\,ds &= -\int_\Gamma\nabla_\Gamma\eta\cdot X\,d\mathcal{H}^2 = -\int_U(\nabla_\Gamma\eta)^\flat\cdot X^\flat\sqrt{\det\theta}\,ds \\
    &= -\int_U\{\partial_{s_i}\mu\cdot(\nabla_\Gamma\eta)^\flat\}\varphi\,ds
  \end{align*}
  for all $\varphi\in C_c^1(U)$.
  Hence \eqref{Pf_LpL:D_Flat} is valid and $\eta^\flat\in W^{1,p}(U)$.
  Now we observe that
  \begin{align*}
    \int_\Gamma(\nabla_\Gamma\eta\cdot n)\xi\,d\mathcal{H}^2 = -\int_\Gamma\eta\{\mathrm{div}_\Gamma(\xi n)+H\xi|n|^2\}\,d\mathcal{H}^2 = 0
  \end{align*}
  for all $\xi\in C^1(\Gamma)$ by \eqref{E:IbP_WTGr} with $v=\xi n\in C^1(\Gamma)^3$ and
  \begin{align*}
    |n|^2 = 1, \quad \mathrm{div}_\Gamma(\xi n) = \nabla_\Gamma\xi\cdot n+\xi\,\mathrm{div}_\Gamma n = -\xi H \quad\text{on}\quad \Gamma,
  \end{align*}
  where the last equality follows from \eqref{E:P_TGr} for $\xi\in C^1(\Gamma)$.
  Hence
  \begin{align} \label{Pf_LpL:N_WTGr}
    \nabla_\Gamma\eta\cdot n = 0 \quad\text{on}\quad \Gamma, \quad\text{i.e.}\quad (\nabla_\Gamma\eta)^\flat\cdot n^\flat = 0 \quad\text{on}\quad U.
  \end{align}
  Since $(\nabla_\Gamma\eta)^\flat\in L^p(U)^3$, $\nabla_s\eta^\flat\in L^p(U)^2$, and $\{\partial_{s_1}\mu,\partial_{s_2}\mu,n^\flat\}$ is a basis of $\mathbb{R}^3$ on $U$, we see by \eqref{Pf_LpL:D_Flat} and \eqref{Pf_LpL:N_WTGr} that \eqref{E:TGr_DG} holds in $L^p(U)^3$ as in the proof of Lemma \ref{L:TGr_DG} (note that here we do not use a density argument).
  Thus
  \begin{align*}
    |(\nabla_\Gamma\eta)^\flat|^2 = \sum_{i,j=1}^2\theta^{ij}(\partial_{s_i}\eta^\flat)(\partial_{s_j}\eta^\flat) = (\theta^{-1}\nabla_s\eta^\flat)\cdot\nabla_s\eta^\flat \quad\text{on}\quad U
  \end{align*}
  and it follows from \eqref{E:Met_Inv} that
  \begin{align*}
    c^{-1}|\nabla_s\eta^\flat| \leq |(\nabla_\Gamma\eta)^\flat| \leq c|\nabla_s\eta^\flat| \quad\text{on}\quad \mathcal{K}.
  \end{align*}
  Noting that $(\nabla_\Gamma\eta)^\flat$ is supported in $\mathcal{K}$, we apply this inequality and \eqref{E:Metric} to
  \begin{align*}
    \|\nabla_\Gamma\eta\|_{L^p(\Gamma)}^p &= \int_U|(\nabla_\Gamma\eta)^\flat|^p\sqrt{\det\theta}\,ds, \quad p\neq\infty, \\
    \|\nabla_\Gamma\eta\|_{L^\infty(\Gamma)} &= \|(\nabla_\Gamma\eta)^\flat\|_{L^\infty(U)}
  \end{align*}
  to obtain \eqref{E:W1p_Loc}.
\end{proof}

Let us prove two lemmas related to a parametrized surface used in the proofs of Lemmas \ref{L:Nor_Bo} and \ref{L:CoV_Surf}.
For $h\in C^1(\Gamma)$ satisfying $|h|<\delta$ on $\Gamma$ we set
\begin{align} \label{E:Def_Para_Surf}
  \Gamma_h := \{y+h(y)n(y)\mid y\in\Gamma\} \subset \mathbb{R}^3.
\end{align}
Note that $\Gamma_h\subset N$ by $|h|<\delta$ on $\Gamma$ (see Section \ref{SS:Pre_Surf}).
We also define
\begin{align} \label{E:N_Para}
  \tau_h(y) := \{I_3-h(y)W(y)\}^{-1}\nabla_\Gamma h(y), \quad n_h(y) := \frac{n(y)-\tau_h(y)}{\sqrt{1+|\tau_h(y)|^2}}
\end{align}
for $y\in\Gamma$.
Note that $\tau_h$ is tangential on $\Gamma$.
We assume that the orientation of $\Gamma_h$ is the same as that of $\Gamma$.

\begin{lemma} \label{L:Para_Nor}
  The constant extension of $n_h$ in the normal direction of $\Gamma$ gives the unit outward normal vector field of $\Gamma_h$.
\end{lemma}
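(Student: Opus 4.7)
The plan is to reduce the claim to a direct orthogonality computation by exploiting a local parametrization of $\Gamma_h$ induced from one of $\Gamma$, and to use the symmetry of $W$ (Lemma \ref{L:Form_W}) together with the precise form of $\tau_h$ given in \eqref{E:N_Para}.

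First I would verify that $n_h$ is actually a unit vector. Because $\tau_h=(I_3-hW)^{-1}\nabla_\Gamma h$ is tangential on $\Gamma$ (this uses $P^2=P$, \eqref{E:P_TGr}, \eqref{E:WReso_P}, and the symmetry of $W$), the vectors $n$ and $\tau_h$ are orthogonal, so $|n-\tau_h|^2=1+|\tau_h|^2$ and hence $|n_h|=1$ is immediate from \eqref{E:N_Para}.

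Next, take any $C^\ell$ local parametrization $\mu\colon U\to\Gamma$ and form the induced map $\mu_h(s):=\mu(s)+h(\mu(s))\,n(\mu(s))$, which locally parametrizes $\Gamma_h$. Differentiating and using \eqref{E:ConDer_Surf} together with $W=-\nabla_\Gamma n$, one obtains
\begin{align*}
  \partial_{s_i}\mu_h = \{I_3-h(\mu)W(\mu)\}\partial_{s_i}\mu + \bigl(\nabla_\Gamma h(\mu)\cdot\partial_{s_i}\mu\bigr)\,n(\mu),\qquad i=1,2.
\end{align*}
Regularity of $\mu_h$, i.e.\ that the right-hand side spans the tangent plane of $\Gamma_h$, follows from \eqref{E:Curv_Bound} which guarantees that $I_3-hW$ is invertible for $|h|<\delta$. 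Since $\pi\circ\mu_h=\mu$, the constant extension satisfies $\bar n_h(\mu_h(s))=n_h(\mu(s))$, so it suffices to verify $\partial_{s_i}\mu_h\cdot n_h(\mu(s))=0$.

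The heart of the argument is this single orthogonality check. The component of $\partial_{s_i}\mu_h$ along $n$ contributes nothing against $n-\tau_h$ (the first term uses $\partial_{s_i}\mu\cdot n=0$ and $Wn=0$; the second uses that $\tau_h$ is tangential to $\Gamma$). The two remaining terms are exactly $\nabla_\Gamma h\cdot\partial_{s_i}\mu$ and $-(I_3-hW)\partial_{s_i}\mu\cdot\tau_h$, and the key cancellation is that, by the symmetry of $I_3-hW$ and the defining identity $(I_3-hW)\tau_h=\nabla_\Gamma h$, the second quantity equals $\partial_{s_i}\mu\cdot\nabla_\Gamma h$. Dividing by $\sqrt{1+|\tau_h|^2}$ yields $\partial_{s_i}\mu_h\cdot n_h=0$, so $\bar n_h$ is normal to $\Gamma_h$. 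The sign (outward orientation consistent with that of $\Gamma$) follows by continuity in $h$: at $h\equiv 0$ one has $\tau_h=0$ and $n_h=n$, and the formula is continuous in $h$, so the prescribed orientation of $\Gamma_h$ is preserved. The only real subtlety — more bookkeeping than obstacle — is making sure to move $(I_3-hW)$ across the inner product via its symmetry before invoking the definition of $\tau_h$; once this is done, no further calculation is needed.
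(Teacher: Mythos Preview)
Your proposal is correct and follows essentially the same approach as the paper: both pass to the induced local parametrization $\mu_h(s)=\mu(s)+h(\mu(s))n(\mu(s))$, compute $\partial_{s_i}\mu_h=(I_3-hW)\partial_{s_i}\mu+(\nabla_\Gamma h\cdot\partial_{s_i}\mu)n$, and verify orthogonality to $n-\tau_h$ via the symmetry of $I_3-hW$ and the defining relation $(I_3-hW)\tau_h=\nabla_\Gamma h$. The only cosmetic differences are that the paper writes the orthogonality as $n\cdot\partial_{s_k}\mu_h=\tau_h\cdot\partial_{s_k}\mu_h$ rather than grouping all four cross terms at once, and it handles the outward orientation by noting directly that $n_h\cdot n=(1+|\tau_h|^2)^{-1/2}>0$ rather than by your continuity-in-$h$ argument.
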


\begin{proof}
  Let $\bar{n}_h=n_h\circ\pi$ be the constant extension of $n_h$ in the normal direction of $\Gamma$.
  Since $|n_h|=1$ on $\Gamma$ and the direction of $n_h$ is the same as that of $n$, it is sufficient to show that $\bar{n}_h$ is perpendicular to the tangent plane of $\Gamma_h$.

  Let $\mu\colon U\to\Gamma$ be a local parametrization of $\Gamma$ with an open set $U$ of $\mathbb{R}^2$ and
  \begin{align} \label{Pf_PaNo:LoPa}
    \mu_h(s) := \mu(s)+h(\mu(s))n(\mu(s)), \quad s\in U.
  \end{align}
  Then $\mu_h$ is a local parametrization of $\Gamma_h$ and $\{\partial_{s_1}\mu_h(s),\partial_{s_2}\mu_h(s)\}$ is a basis of the tangent plane of $\Gamma_h$ at $\mu_h(s)$.
  Hence to show that $\bar{n}_h$ is perpendicular to the tangent plane of $\Gamma_h$ it suffices to prove
  \begin{align} \label{Pf_PaNo:Perp}
    \bar{n}_h(\mu_h(s))\cdot\partial_{s_k}\mu_h(s) = 0, \quad s\in U,\,k=1,2.
  \end{align}
  Moreover, $\bar{n}_h(\mu_h(s))=n_h(\mu(s))$ for $s\in U$ since $\pi(\mu_h(s))=\mu(s)\in\Gamma$.
  By this fact and \eqref{E:N_Para} the condition \eqref{Pf_PaNo:Perp} reduces to
  \begin{align} \label{Pf_PaNo:Goal}
    n(\mu(s))\cdot\partial_{s_k}\mu_h(s) = \tau_h(\mu(s))\cdot\partial_{s_k}\mu_h(s), \quad s\in U,\,k=1,2.
  \end{align}
  Let us prove \eqref{Pf_PaNo:Goal}.
  Hereafter we write $\eta^\flat(s):=\eta(\mu(s))$, $s\in U$ for a function $\eta$ on $\Gamma$ and suppress the argument $s\in U$.
  For $k=1,2$ we differentiate $\mu_h=\mu+h^\flat n^\flat$ with respect to $s_k$ and apply \eqref{Pf_TDG:Flat} and $-\nabla_\Gamma n=W=W^T$ on $\Gamma$ to get
  \begin{align*}
    \partial_{s_k}\mu_h = (I_3-h^\flat W^\flat)\partial_{s_k}\mu+\{\partial_{s_k}\mu\cdot(\nabla_\Gamma h)^\flat\}n^\flat.
  \end{align*}
  Since $\partial_{s_k}\mu(s)$ is tangent to $\Gamma$ at $\mu(s)$ and $W^Tn=Wn=0$ on $\Gamma$, we deduce from the above equality that
  \begin{align*}
    n^\flat\cdot\partial_{s_k}\mu_h = \partial_{s_k}\mu\cdot(\nabla_\Gamma h)^\flat.
  \end{align*}
  Also, since $\tau_h=(I_3-hW)^{-1}\nabla_\Gamma h$ is tangential and $W$ is symmetric on $\Gamma$,
  \begin{align*}
    \tau_h^\flat\cdot\partial_{s_k}\mu_h = (I_3-h^\flat W^\flat)^{-1}(\nabla_\Gamma h)^\flat\cdot(I_3-h^\flat W^\flat)\partial_{s_k}\mu = (\nabla_\Gamma h)^\flat\cdot\partial_{s_k}\mu.
  \end{align*}
  The above two equalities imply \eqref{Pf_PaNo:Goal} and thus the claim is valid.
\end{proof}

\begin{lemma} \label{L:CoV_Para}
  For $\varphi\in L^1(\Gamma_h)$ we have the change of variables formula
  \begin{align} \label{E:CoV_Para}
    \int_{\Gamma_h}\varphi(x)\,d\mathcal{H}^2(x) = \int_\Gamma\varphi_h^\sharp(y)J(y,h(y))\sqrt{1+|\tau_h(y)|^2}\,d\mathcal{H}^2(y),
  \end{align}
  where $J$ and $\tau_h$ are the functions given by \eqref{E:Def_Jac} and \eqref{E:N_Para} and
  \begin{align} \label{E:Pull_Para}
    \varphi_h^\sharp(y) := \varphi(y+h(y)n(y)), \quad y\in\Gamma.
  \end{align}
\end{lemma}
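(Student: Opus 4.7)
The plan is to establish the formula locally first, using a $C^\ell$ local parametrization $\mu\colon U\to\Gamma$ of $\Gamma$, and then to patch local formulas together by a partition of unity subordinated to a finite atlas of $\Gamma$. The induced local parametrization of $\Gamma_h$ is
\begin{align*}
  \mu_h(s) := \mu(s)+h(\mu(s))n(\mu(s)), \quad s\in U,
\end{align*}
as already used in the proof of Lemma \ref{L:Para_Nor}. With the notation $\eta^\flat:=\eta\circ\mu$ for a function $\eta$ on $\Gamma$, that proof gives the decomposition
\begin{align*}
  \partial_{s_k}\mu_h = v_k+\alpha_k n^\flat, \quad v_k := (I_3-h^\flat W^\flat)\partial_{s_k}\mu, \quad \alpha_k := \partial_{s_k}\mu\cdot(\nabla_\Gamma h)^\flat
\end{align*}
for $k=1,2$, and since $\nabla_\Gamma h=(I_3-hW)\tau_h$ with $W$ symmetric we further obtain $\alpha_k=v_k\cdot\tau_h^\flat$. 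Using the area element formula $d\mathcal{H}^2=|\partial_{s_1}\mu_h\times\partial_{s_2}\mu_h|\,ds$ for $\Gamma_h$, the task is to compute this Jacobian explicitly and identify the factors claimed in \eqref{E:CoV_Para}.

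First I compute the normal part of the cross product. Since $v_1,v_2$ are tangent to $\Gamma$ at $\mu(s)$ (the matrix $I_3-hW$ preserves $T_y\Gamma$), the vector $v_1\times v_2$ is parallel to $n^\flat$. The general identity $Au_1\times Au_2=(\det A)A^{-T}(u_1\times u_2)$, applied with the symmetric matrix $A=I_3-h^\flat W^\flat$ whose determinant is $J(\mu(s),h^\flat(s))$ (since $Wn=0$ so that the eigenvalues of $A$ are $1,1-h\kappa_1,1-h\kappa_2$), together with $A^{-1}n=n$, gives
\begin{align*}
  v_1\times v_2 = J(\mu(s),h^\flat(s))\bigl(\partial_{s_1}\mu\times\partial_{s_2}\mu\bigr) = J(\mu(s),h^\flat(s))\sqrt{\det\theta(s)}\,n^\flat(s).
\end{align*}
Writing $\beta:=J(\mu(s),h^\flat(s))\sqrt{\det\theta(s)}$ and expanding, the cross product splits into a piece parallel to $n^\flat$ and a piece tangent to $\Gamma$:
\begin{align*}
  \partial_{s_1}\mu_h\times\partial_{s_2}\mu_h = \beta n^\flat+\alpha_2(v_1\times n^\flat)-\alpha_1(v_2\times n^\flat) = \beta n^\flat+w\times n^\flat,
\end{align*}
where $w:=\alpha_2v_1-\alpha_1v_2$ is tangent to $\Gamma$. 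Now comes the key identity. Using $(v_1\times v_2)\times\tau_h^\flat=(v_1\cdot\tau_h^\flat)v_2-(v_2\cdot\tau_h^\flat)v_1=\alpha_1v_2-\alpha_2v_1=-w$ and $v_1\times v_2=\beta n^\flat$, we obtain $w=\beta(\tau_h^\flat\times n^\flat)$. Since $\tau_h^\flat$ is tangential, $|\tau_h^\flat\times n^\flat|=|\tau_h^\flat|$, and hence $|w\times n^\flat|=|w|=\beta|\tau_h^\flat|$. The normal and tangential parts are orthogonal, so the Pythagorean theorem yields
\begin{align*}
  |\partial_{s_1}\mu_h\times\partial_{s_2}\mu_h|^2 = \beta^2+\beta^2|\tau_h^\flat|^2, \quad\text{i.e.}\quad |\partial_{s_1}\mu_h\times\partial_{s_2}\mu_h| = \beta\sqrt{1+|\tau_h^\flat|^2}.
\end{align*}

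With this local Jacobian in hand, for any $\varphi\in L^1(\Gamma_h)$ supported in $\mu_h(U)$ the standard local change of variables formula combined with $d\mathcal{H}^2(y)=\sqrt{\det\theta(s)}\,ds$ on $\Gamma$ gives
\begin{align*}
  \int_{\Gamma_h}\varphi\,d\mathcal{H}^2 = \int_U\varphi(\mu_h(s))J(\mu(s),h^\flat(s))\sqrt{1+|\tau_h^\flat(s)|^2}\sqrt{\det\theta(s)}\,ds,
\end{align*}
which, on pushing forward through $\mu$, is exactly the right-hand side of \eqref{E:CoV_Para} localized to $\mu(U)$. Finally, cover the compact surface $\Gamma$ by finitely many coordinate patches $\{\mu^{(\alpha)}(U^{(\alpha)})\}$ and choose a subordinate partition of unity $\{\chi^{(\alpha)}\}$ on $\Gamma$. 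Since the map $y\mapsto y+h(y)n(y)$ is a $C^1$ diffeomorphism from $\Gamma$ onto $\Gamma_h$, pushing each $\chi^{(\alpha)}$ along this diffeomorphism yields a partition of unity on $\Gamma_h$ subordinate to $\{\mu_h^{(\alpha)}(U^{(\alpha)})\}$, and summing the localized formulas gives \eqref{E:CoV_Para}. The only substantive point is the Jacobian computation; the main obstacle to avoid is getting the signs and orientations right in the cross-product identity, which is taken care of by the orientation assumption on $\Gamma_h$ inherited from that of $\Gamma$ stated before Lemma \ref{L:Para_Nor}.
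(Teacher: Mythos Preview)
Your proof is correct, and it takes a genuinely different route from the paper's argument. Both proofs localize via a partition of unity and reduce to the identity $\sqrt{\det\theta_h}=J(\mu,h^\flat)\sqrt{(1+|\tau_h^\flat|^2)\det\theta}$, but they establish it very differently. The paper works through the Riemannian metric: it writes $\theta_h=\nabla_s\mu(I_3-h^\flat W^\flat)^2(\nabla_s\mu)^T+\nabla_sh^\flat\otimes\nabla_sh^\flat$, applies the rank-one determinant formula $\det(\theta_h-\nabla_sh^\flat\otimes\nabla_sh^\flat)=(1-\theta_h^{-1}\nabla_sh^\flat\cdot\nabla_sh^\flat)\det\theta_h$, identifies the correction factor as $1-|\nabla_{\Gamma_h}\bar h|^2$, and then separately computes $1-|\nabla_{\Gamma_h}\bar h|^2=(1+|\tau_h|^2)^{-1}$ via \eqref{E:ConDer_Dom} and the explicit form of $n_h$; a further $3\times 3$ block-matrix trick is needed for $\det[\nabla_s\mu(I_3-h^\flat W^\flat)^2(\nabla_s\mu)^T]=J^2\det\theta$. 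Your argument bypasses all of this by computing $|\partial_{s_1}\mu_h\times\partial_{s_2}\mu_h|$ directly with vector algebra in $\mathbb{R}^3$: the cofactor identity $Au\times Av=(\det A)A^{-T}(u\times v)$ immediately gives $v_1\times v_2=\beta n^\flat$, and the triple-product manipulation $(v_1\times v_2)\times\tau_h^\flat=\alpha_1v_2-\alpha_2v_1$ (made possible by your observation $\alpha_k=v_k\cdot\tau_h^\flat$) reduces the tangential remainder to $-\beta\tau_h^\flat$. This is shorter and more elementary, exploiting that $\Gamma\subset\mathbb{R}^3$; the paper's route is more intrinsic in flavor and would adapt better to higher codimension or to settings where one already has $\nabla_{\Gamma_h}\bar h$ in hand.
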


To prove Lemma \ref{L:CoV_Para} we use the tangential gradient of $\varphi\in C^1(\Gamma_h)$ given by
\begin{align} \label{E:Para_TGr}
  \nabla_{\Gamma_h}\varphi(x) := \{I_3-\bar{n}_h(x)\otimes\bar{n}_h(x)\}\nabla\tilde{\varphi}(x), \quad x\in\Gamma_h,
\end{align}
where $\tilde{\varphi}$ is an arbitrary extension of $\varphi$ to $N$ satisfying $\tilde{\varphi}|_{\Gamma_h}=\varphi$.

\begin{proof}
  Since $\Gamma$ is compact and without boundary, we can take a finite number of open sets in $\mathbb{R}^2$ and local parametrizations of $\Gamma$
  \begin{align*}
    U_k \subset \mathbb{R}^2, \quad \mu^k\colon U_k\to\Gamma, \quad k=1,\dots,k_0
  \end{align*}
  such that $\{\mu^k(U_k)\}_{k=1}^{k_0}$ is an open covering of $\Gamma$.
  Let $\{\eta^k\}_{k=1}^{k_0}$ be a partition of unity on $\Gamma$ subordinate to $\{\mu^k(U_k)\}_{k=1}^{k_0}$.
  For $k=1,\dots,k_0$ and $s\in U_k$ we define
  \begin{align*}
  \mu_h^k(s) := \mu^k(s)+h(\mu^k(s))n(\mu^k(s)), \quad \eta_h^k(\mu_h^k(s)) := \eta^k(\mu^k(s)).
  \end{align*}
  Then $\mu_h^k\colon U_k\to\Gamma_h$, $k=1,\dots,k_0$ are local parametrizations of $\Gamma_h$, $\{\mu_h^k(U_k)\}_{k=1}^{k_0}$ is an open covering of $\Gamma_h$, and $\{\eta_h^k\}_{k=1}^{k_0}$ is a partition of unity on $\Gamma_h$ subordinate to $\{\mu_h^k(U_k)\}_{k=1}^{k_0}$.
  Moreover, for $k=1,\dots,k_0$ we observe by \eqref{E:Pull_Para} that
  \begin{gather*}
    \eta_h^k(\mu_h^k(s))\varphi(\mu_h^k(s)) = \eta^k(\mu^k(s))\varphi_h^\sharp(\mu^k(s)), \quad s\in U_k, \\
    \text{i.e.} \quad (\eta_h^k\varphi)_h^\sharp = \eta^k\varphi_h^\sharp \quad\text{on}\quad \mu^k(U_k) \subset \Gamma.
  \end{gather*}
  Hence it suffices to prove \eqref{E:CoV_Para} for $\varphi^k:=\eta_h^k\varphi$ instead of $\varphi$.

  From now on, we fix and suppress $k$.
  Let $\mu\colon U\to\Gamma$ be a local parametrization of $\Gamma$ with an open set $U$ in $\mathbb{R}^2$, $\nabla_s\mu$ and $\theta$ the gradient matrix of $\mu$ and the Riemannian metric of $\Gamma$ given by \eqref{E:Def_Met}, $\mu_h$ the local parametrization of $\Gamma_h$ given by \eqref{Pf_PaNo:LoPa}, and $\nabla_s\mu_h$ and $\theta_h$ the gradient matrix of $\mu_h$ and the Riemannian metric of $\Gamma_h$ defined similarly as in \eqref{E:Def_Met}.
  By the first part of the proof we may assume that $\varphi\circ\mu_h$ is compactly supported in $U$.
  Then since $\varphi\circ\mu_h=\varphi_h^\sharp\circ\mu$ on $U$ and
  \begin{align*}
    \int_{\Gamma_h}\varphi\,d\mathcal{H}^2 &= \int_U\varphi\circ\mu_h\sqrt{\det\theta_h}\,ds, \\
    \int_\Gamma\varphi_h^\sharp J(\cdot,h)\sqrt{1+|\tau_h|^2}\,d\mathcal{H}^2 &= \int_U(\varphi_h^\sharp\circ\mu)J(\mu,h\circ\mu)\sqrt{(1+|\tau_h\circ\mu|^2)\det\theta}\,ds,
  \end{align*}
  it is sufficient for \eqref{E:CoV_Para} to show that
  \begin{align} \label{Pf_CoVP:Goal}
    \sqrt{\det\theta_h} = J(\mu,h\circ\mu)\sqrt{(1+|\tau_h\circ\mu|^2)\det\theta} \quad\text{on}\quad U.
  \end{align}
  Hereafter we write $\eta^\flat(s):=\eta(\mu(s))$, $s\in U$ for a function $\eta$ on $\Gamma$ and suppress the argument $s\in U$.
  Let $\bar{h}=h\circ\pi$ be the constant extension of $h$.
  First we prove
  \begin{align} \label{Pf_CoVP:First}
    (1-|(\nabla_{\Gamma_h}\bar{h})\circ\mu_h|^2)\det\theta_h = J(\mu,h^\flat)^2\det\theta.
  \end{align}
  We differentiate $\mu_h=\mu+h^\flat n^\flat$ and use \eqref{Pf_TDG:Flat} and $-\nabla_\Gamma n=W=W^T$ on $\Gamma$ to get
  \begin{align*}
    \nabla_s\mu_h=\nabla_s\mu(I_3-h^\flat W^\flat)+\nabla_sh^\flat\otimes n^\flat.
  \end{align*}
  Since $\partial_{s_1}\mu$ and $\partial_{s_2}\mu$ are tangent to $\Gamma$ at $\mu(s)$, it follows that
  \begin{align*}
    (\nabla_s\mu)n^\flat =
    \begin{pmatrix}
      \partial_{s_1}\mu\cdot n^\flat \\
      \partial_{s_2}\mu\cdot n^\flat
    \end{pmatrix}
    = 0.
  \end{align*}
  We also have $W^\flat n^\flat=0$ and
  \begin{align*}
    (\nabla_sh^\flat\otimes n^\flat)(n^\flat\otimes\nabla_s h^\flat) = |n^\flat|^2\nabla_sh^\flat\otimes\nabla_sh^\flat = \nabla_sh^\flat\otimes\nabla_sh^\flat.
  \end{align*}
  Noting that $W^\flat$ is symmetric, we deduce from these equalities that
  \begin{align*}
    \theta_h=\nabla_s\mu_h(\nabla_s\mu_h)^T=\nabla_s\mu(I_3-h^\flat W^\flat)^2(\nabla_s\mu)^T+\nabla_sh^\flat\otimes\nabla_sh^\flat
  \end{align*}
  and thus
  \begin{align*}
    \det(\theta_h-\nabla_sh^\flat\otimes\nabla_sh^\flat) = \det[\nabla_s\mu(I_3-h^\flat W^\flat)^2(\nabla_s\mu)^T].
  \end{align*}
  Let $\theta_h^{-1}=(\theta_h^{ij})_{i,j}$ be the inverse matrix of $\theta_h$.
  To the above equality we apply
  \begin{align*}
    \det(\theta_h-\nabla_sh^\flat\otimes\nabla_sh^\flat) &= \det[I_2-(\theta_h^{-1}\nabla_sh^\flat)\otimes\nabla_sh^\flat]\det\theta_h \\
    &= \{1-(\theta_h^{-1}\nabla_sh^\flat)\cdot\nabla_sh^\flat\}\det\theta_h
  \end{align*}
  by $\det(I_2+a\otimes b)=1+a\cdot b$ for $a,b\in\mathbb{R}^2$ and
  \begin{align} \label{Pf_CoVP:Later}
    \det[\nabla_s\mu(I_3-h^\flat W^\flat)^2(\nabla_s\mu)^T] = J(\mu,h^\flat)^2\det\theta
  \end{align}
  which we prove at the end of the proof.
  Then we obtain
  \begin{align} \label{Pf_CoVP:Det}
    \{1-(\theta_h^{-1}\nabla_sh^\flat)\cdot\nabla_sh^\flat\}\det\theta_h = J(\mu,h^\flat)^2\det\theta.
  \end{align}
  Now we recall that the tangential gradient of $\bar{h}=h\circ\pi$ on $\Gamma_h$ is expressed as
  \begin{align*}
    \nabla_{\Gamma_h}\bar{h}(\mu_h(s)) = \sum_{i,j=1}^2\theta_h^{ij}(s)\frac{\partial(\bar{h}\circ\mu_h)}{\partial s_i}(s)\partial_{s_j}\mu_h(s), \quad s\in U.
  \end{align*}
  Since $\pi(\mu_h(s))=\mu(s)$ for $s\in U$, we have $\bar{h}(\mu_h(s))=h(\mu(s))=h^\flat(s)$ and thus
  \begin{align*}
    (\nabla_{\Gamma_h}\bar{h})\circ\mu_h &= \sum_{i,j=1}^2\theta_h^{ij}(\partial_{s_i}h^\flat)\partial_{s_j}\mu_h, \\
    \left|(\nabla_{\Gamma_h}\bar{h})\circ\mu_h\right|^2 &= \sum_{i,j=1}^2\theta_h^{ij}(\partial_{s_i}h^\flat)(\partial_{s_j}h^\flat) = (\theta_h^{-1}\nabla_sh^\flat)\cdot\nabla_sh^\flat.
  \end{align*}
  Applying this equality to the left-hand side of \eqref{Pf_CoVP:Det} we obtain \eqref{Pf_CoVP:First}.

  Let $\tau_h$ and $n_h$ be given by \eqref{E:N_Para} and $\alpha:=(1+|\tau_h|^2)^{1/2}$ on $\Gamma$.
  We next show
  \begin{align} \label{Pf_CoVP:Second}
    1-\left|\nabla_{\Gamma_h}\bar{h}(y+h(y)n(y))\right|^2 = \frac{1}{\alpha(y)^2} = \frac{1}{1+|\tau_h(y)|^2}, \quad y\in\Gamma.
  \end{align}
  For $y\in\Gamma$ we see by \eqref{E:ConDer_Dom}, $d(y+h(y)n(y))=h(y)$, and $\pi(y+h(y)n(y))=y$ that
  \begin{align*}
    \nabla\bar{h}(y+h(y)n(y)) = \{I_3-h(y)W(y)\}^{-1}\nabla_\Gamma h(y) = \tau_h(y).
  \end{align*}
  From this equality, \eqref{E:Para_TGr}, $\bar{n}_h(y+h(y)n(y))=n_h(y)$, and
  \begin{align} \label{PF_CoVP:Tau}
    n_h = \alpha^{-1}(n-\tau_h), \quad \tau_h\cdot n = 0, \quad \alpha^2 = 1+|\tau_h|^2 \quad\text{on}\quad \Gamma
  \end{align}
  we deduce that (here we suppress the argument $y\in\Gamma$ of functions on $\Gamma$)
  \begin{align*}
    \nabla_{\Gamma_h}\bar{h}(y+hn) = (I_3-n_h\otimes n_h)\nabla\bar{h}(y+hn)= \alpha^{-2}(|\tau_h|^2n+\tau_h).
  \end{align*}
  Hence we again use the second and third equalities of \eqref{PF_CoVP:Tau} to obtain
  \begin{align*}
    1-\left|\nabla_{\Gamma_h}\bar{h}(y+hn)\right|^2 = 1-\alpha^{-4}(|\tau_h|^4+|\tau_h|^2) = \alpha^{-2}.
  \end{align*}
  This shows \eqref{Pf_CoVP:Second} and we obtain \eqref{Pf_CoVP:Goal} by \eqref{Pf_CoVP:First} and \eqref{Pf_CoVP:Second}.

  It remains to prove \eqref{Pf_CoVP:Later}.
  We define $3\times 3$ matrices
  \begin{align*}
    A :=
    \begin{pmatrix}
      \nabla_s\mu \\
      (n^\flat)^T
    \end{pmatrix}, \quad
    A_h :=
    \begin{pmatrix}
      \nabla_s\mu(I_3-h^\flat W^\flat) \\
      (n^\flat)^T
    \end{pmatrix}.
  \end{align*}
  Recall that we consider $n^\flat\in\mathbb{R}^3$ as a column vector.
  Then by the symmetry of $W^\flat$ and the equalities $(\nabla_s\mu)n^\flat=0$ and $W^\flat n^\flat=0$ we have
  \begin{gather*}
    A_h = A(I_3-h^\flat W^\flat), \\
    AA^T =
    \begin{pmatrix}
      \theta & 0 \\
      0 & 1
    \end{pmatrix}, \quad
    A_hA_h^T =
    \begin{pmatrix}
      \nabla_s\mu(I_3-h^\flat W^\flat)^2(\nabla_s\mu)^T & 0 \\
      0 & 1
    \end{pmatrix}.
  \end{gather*}
  From these equalities and $\det(I_3-h^\flat W^\flat)=J(\mu,h^\flat)$ we deduce that
  \begin{align*}
    \begin{aligned}
      \det[\nabla_s\mu(I_3-h^\flat W^\flat)^2(\nabla_s\mu)^T] &= \det[A_hA_h^T] = \det[A(I_3-h^\flat W^\flat)^2A^T] \\
      &= \det[(I_3-h^\flat W^\flat)^2]\det[AA^T] \\
      &= J(\mu,h^\flat)^2\det\theta.
    \end{aligned}
  \end{align*}
  Note that $A$ and $I_3-h^\flat W^\flat$ are $3\times3$ matrices.
  Hence \eqref{Pf_CoVP:Later} is valid.
\end{proof}

\begin{remark} \label{R:Cov_Para}
  In the proof of Lemma \ref{L:CoV_Para} we used \eqref{E:ConDer_Dom} which we will prove in Appendix \ref{S:Ap_Proof}.
  We note that we do not apply Lemma \ref{L:CoV_Para} to show \eqref{E:ConDer_Dom}.
\end{remark}

Let us give a regularity result for a Killing vector field on $\Gamma$, i.e. a tangential vector field $v$ on $\Gamma$ satisfying
\begin{align*}
  D_\Gamma(v) = P(\nabla_\Gamma v)_SP = 0 \quad\text{on}\quad \Gamma, \quad (\nabla_\Gamma v)_S = \frac{\nabla_\Gamma v+(\nabla_\Gamma v)^T}{2}.
\end{align*}

\begin{lemma} \label{L:Kil_Reg}
  If $\Gamma$ is of class $C^\ell$ with $\ell\geq3$ and $v\in H^1(\Gamma,T\Gamma)$ satisfies $D_\Gamma(v)=0$ on $\Gamma$, then $v$ is of class $C^{\ell-3}$ on $\Gamma$.
  In particular, $v$ is smooth if $\Gamma$ is smooth.
\end{lemma}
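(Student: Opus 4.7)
The plan is to reduce to a local analysis via $C^\ell$ parametrizations and then perform a classical Sobolev bootstrap based on the second-order consequences of the Killing equation, with the iteration terminating at the regularity allowed by the $C^\ell$-smoothness of $\Gamma$. Covering $\Gamma$ by a finite atlas of $C^\ell$ charts $\mu\colon U\to\Gamma$ and using a subordinate partition of unity, it suffices to prove local regularity near an arbitrary point. Writing $v\circ\mu=\sum_i v^i\,\partial_{s_i}\mu$ with $v^i\in H^1_{\mathrm{loc}}(U)$ (possible since $\{\partial_{s_1}\mu,\partial_{s_2}\mu\}$ is a $C^{\ell-1}$ basis of the tangent plane and $v$ is tangential), and passing to the covariant components $v_j:=\sum_k \theta_{jk}v^k\in H^1_{\mathrm{loc}}(U)$, Lemma~\ref{L:Metric} together with the $C^\ell$-regularity of $\mu$ gives $\theta,\theta^{-1}\in C^{\ell-1}(U)$ and hence Christoffel symbols $\Gamma^k_{ij}\in C^{\ell-2}(U)$.

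Next I would translate the condition $D_\Gamma(v)=0$ into the distributional first-order system
\begin{equation*}
  \partial_{s_i}v_j+\partial_{s_j}v_i=2\sum_{k=1}^{2}\Gamma^k_{ij}v_k\quad\text{in }\mathcal{D}'(U),\quad i,j=1,2.
\end{equation*}
The crucial step is a purely algebraic manipulation at the distributional level: differentiating this relation and forming the cyclic combination (index shift $ij\mapsto jl\mapsto li$ with signs $+,+,-$) produces, after a single use of the commutativity of partial derivatives, the identity
\begin{equation*}
  \partial_{s_l}\partial_{s_i}v_j=\partial_{s_l}\!\Bigl(\textstyle\sum_k\Gamma^k_{ij}v_k\Bigr)+\partial_{s_i}\!\Bigl(\textstyle\sum_k\Gamma^k_{jl}v_k\Bigr)-\partial_{s_j}\!\Bigl(\textstyle\sum_k\Gamma^k_{li}v_k\Bigr),
\end{equation*}
which is the familiar local Bochner identity for Killing fields and holds in $\mathcal{D}'(U)$.

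With this identity in hand the bootstrap is routine: whenever $v_k\in H^m_{\mathrm{loc}}(U)$ and $\Gamma^k_{ij}\in C^m(U)$ (so in particular $m\leq\ell-2$), the right-hand side lies in $H^{m-1}_{\mathrm{loc}}(U)$, so $\partial_{s_l}\partial_{s_i}v_j\in H^{m-1}_{\mathrm{loc}}(U)$ and therefore $v_j\in H^{m+1}_{\mathrm{loc}}(U)$. Starting from $v_j\in H^1_{\mathrm{loc}}$ and iterating up to the maximal admissible $m=\ell-2$ yields $v_j\in H^{\ell-1}_{\mathrm{loc}}(U)$. The Sobolev embedding $H^{\ell-1}\hookrightarrow C^{\ell-3}$ in dimension two (valid since $(\ell-1)-(\ell-3)=2>n/2=1$) then gives $v_j\in C^{\ell-3}_{\mathrm{loc}}(U)$, and inverting $v_j=\theta_{jk}v^k$ via $\theta^{-1}\in C^{\ell-1}$ returns $v^i\in C^{\ell-3}_{\mathrm{loc}}(U)$, hence $v\in C^{\ell-3}(\Gamma,T\Gamma)$ after patching charts.

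The main technical obstacle will be rigorously justifying the cyclic-permutation identity when $v$ is only $H^1$, since the formal derivation uses second derivatives of $v$. This is in fact harmless: the original Killing relation is a genuine identity in $\mathcal{D}'(U)$, distributional derivatives commute, and the products $\Gamma^k_{ij}v_k$ make sense in $H^1_{\mathrm{loc}}(U)$ because the coefficients $\Gamma^k_{ij}$ are Lipschitz once $\ell\geq 3$; so the entire rearrangement is an identity between distributions and no mollification or duality argument is strictly required. Once this point is dispatched, the remaining bootstrap is completely standard.
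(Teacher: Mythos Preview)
Your argument is correct and reaches the same terminal regularity $v\in H^{\ell-1}_{\mathrm{loc}}\hookrightarrow C^{\ell-3}$ as the paper, but by a genuinely different mechanism. The paper does not use the cyclic Killing identity at all: instead it stays in ambient coordinates, first observes $\mathrm{div}_\Gamma v=0$, and then shows that each \emph{ambient} component $v_i$ is a weak solution of the scalar Poisson equation $\Delta_\Gamma v_i=\nabla_\Gamma(n_iH)\cdot v$ on $\Gamma$. After localization this becomes a uniformly elliptic equation with $C^{\ell-1}$ principal coefficients and $C^{\ell-3}$ lower-order coefficients, and the conclusion follows from standard interior elliptic regularity plus a bootstrap.

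Your route avoids elliptic theory entirely: the cyclic combination of differentiated Killing equations expresses \emph{every} second partial derivative of the covariant components $v_j$ in terms of first derivatives of $\Gamma^k_{ij}v_k$, so the bootstrap needs only Sobolev product rules and commutativity of distributional derivatives. This is more elementary and arguably sharper (you control the full Hessian, not just its trace), and it highlights the finite-dimensional rigidity of Killing fields directly. The paper's approach, on the other hand, fits its overall philosophy of computing in the fixed ambient frame of $\mathbb{R}^3$ rather than in intrinsic charts, and reuses machinery (Laplace--Beltrami, elliptic regularity) that the reader is assumed to have at hand.
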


\begin{proof}
  Let $v\in H^1(\Gamma,T\Gamma)$ satisfy $D_\Gamma(v)=0$ on $\Gamma$.
  Then since
  \begin{align} \label{Pf_KiRe:Sym_TGr}
    \begin{aligned}
      \nabla_\Gamma v+(\nabla_\Gamma v)^T &= 2D_\Gamma(v)+(Wv)\otimes n+n\otimes(Wv) \\
      &= (Wv)\otimes n+n\otimes(Wv)
    \end{aligned}
  \end{align}
  on $\Gamma$ by \eqref{E:Grad_W} and $P^T=P$ on $\Gamma$, we have
  \begin{align*}
    \mathrm{div}_\Gamma v = \frac{1}{2}\mathrm{tr}\Bigl[\nabla_\Gamma v+(\nabla_\Gamma v)^T\Bigr] = (Wv)\cdot n = 0 \quad\text{on}\quad \Gamma.
  \end{align*}
  For $i=1,2,3$ we show that the $i$-th component of $v$ satisfies Poisson's equation
  \begin{align} \label{Pf_KiRe:Poi}
    \Delta_\Gamma v_i = \nabla_\Gamma(n_iH)\cdot v \quad\text{on}\quad \Gamma
  \end{align}
  in the weak sense.
  By \eqref{Pf_KiRe:Sym_TGr} we have
  \begin{align*}
    \nabla_\Gamma v_i = -\underline{D}_iv+n_iWv+[Wv]_in \quad\text{on}\quad \Gamma, \quad \underline{D}_iv=(\underline{D}_iv_1,\underline{D}_iv_2,\underline{D}_iv_3)^T.
  \end{align*}
  Let $\xi\in C^2(\Gamma)$.
  By the above equality, \eqref{E:P_TGr}, and $W^T=W$ on $\Gamma$ we have
  \begin{align} \label{Pf_KiRe:TGr_Vi}
    (\nabla_\Gamma v_i,\nabla_\Gamma\xi)_{L^2(\Gamma)} = -(\underline{D}_iv,\nabla_\Gamma\xi)_{L^2(\Gamma)}+(n_iv,W\nabla_\Gamma\xi)_{L^2(\Gamma)}.
  \end{align}
  Let us compute $J_1:=-(\underline{D}_iv,\nabla_\Gamma\xi)_{L^2(\Gamma)}$.
  By \eqref{E:Def_WTD} with $\eta=v_k$ and $\eta_i=\underline{D}_iv_k$,
  \begin{align*}
    J_1 = -\sum_{k=1}^3(\underline{D}_iv_k,\underline{D}_k\xi)_{L^2(\Gamma)} = \sum_{k=1}^3\{(v_k,\underline{D}_i\underline{D}_k\xi)_{L^2(\Gamma)}+(v_kHn_i,\underline{D}_k\xi)_{L^2(\Gamma)}\}.
  \end{align*}
  To the first term on the right-hand side we apply \eqref{E:TD_Exc}.
  Then we get
  \begin{align*}
    \sum_{k=1}^3(v_k,\underline{D}_i\underline{D}_k\xi)_{L^2(\Gamma)} &= \sum_{k=1}^3(v_k,\underline{D}_k\underline{D}_i\xi+[W\nabla_\Gamma\xi]_in_k-[W\nabla_\Gamma\xi]_kn_i)_{L^2(\Gamma)}.
  \end{align*}
  Moreover, by \eqref{E:Def_WTD}, $\mathrm{div}_\Gamma v=0$, and $v\cdot n=0$ on $\Gamma$,
  \begin{align*}
    \sum_{k=1}^3(v_k,\underline{D}_k\underline{D}_i\xi)_{L^2(\Gamma)} &= -\sum_{k=1}^3(\underline{D}_kv_k+v_kHn_k,\underline{D}_i\xi)_{L^2(\Gamma)} \\
    &= -(\mathrm{div}_\Gamma v+(v\cdot n)H,\underline{D}_i\xi)_{L^2(\Gamma)} = 0, \\
    \sum_{k=1}^3(v_k,[W\nabla_\Gamma\xi]_in_k)_{L^2(\Gamma)} &= (v\cdot n,[W\nabla_\Gamma\xi]_i)_{L^2(\Gamma)} = 0.
  \end{align*}
  By these equalities and $\sum_{k=1}^3(v_k,[W_\Gamma\xi]_kn_i)_{L^2(\Gamma)}=(n_iv,W\nabla_\Gamma\xi)_{L^2(\Gamma)}$ we get
  \begin{align} \label{Pf_KiRe:J1}
    J_1 = -(n_iv,W\nabla_\Gamma\xi)_{L^2(\Gamma)}+\sum_{k=1}^3(v_kHn_i,\underline{D}_k\xi)_{L^2(\Gamma)}.
  \end{align}
  To the last term we again use \eqref{E:Def_WTD}, $\mathrm{div}_\Gamma v=0$, and $v\cdot n=0$ on $\Gamma$.
  Then
  \begin{align*}
    \sum_{k=1}^3(v_kHn_i,\underline{D}_k\xi)_{L^2(\Gamma)} &= -\sum_{k=1}^3(\underline{D}_k(v_kHn_i)+(v_kHn_i)Hn_k,\xi)_{L^2(\Gamma)} \\
    &= -(\mathrm{div}_\Gamma(n_iHv)+n_iH^2(v\cdot n),\xi)_{L^2(\Gamma)} \\
    &= -(\nabla_\Gamma(n_iH)\cdot v,\xi)_{L^2(\Gamma)}.
  \end{align*}
  Combining this equality with \eqref{Pf_KiRe:TGr_Vi} and \eqref{Pf_KiRe:J1} we obtain
  \begin{align*}
    (\nabla_\Gamma v_i,\nabla_\Gamma\xi)_{L^2(\Gamma)} = -(\nabla_\Gamma(n_iH)\cdot v,\xi)_{L^2(\Gamma)} \quad\text{for all}\quad \xi \in C^2(\Gamma).
  \end{align*}
  Since $C^2(\Gamma)$ is dense in $H^1(\Gamma)$ by Lemma \ref{L:Wmp_Appr}, this equality is valid for all $\xi\in H^1(\Gamma)$.
  Hence $v_i$ satisfies \eqref{Pf_KiRe:Poi} in the weak sense for $i=1,2,3$.

  Now we recall that $\Gamma$ is assumed to be of class $C^\ell$ with $\ell\geq3$.
  Let us prove the $C^{\ell-3}$-regularity of $v$ on $\Gamma$.
  By a localization argument with a partition of unity on $\Gamma$ we may assume that $v$ is compactly supported in a relatively open subset $\mu(U)$ of $\Gamma$, where $U$ is an open set in $\mathbb{R}^2$ and $\mu\colon U\to\Gamma$ is a $C^\ell$ local parametrization of $\Gamma$.
  Let $\theta$ be the Riemannian metric of $\Gamma$ on $U$, $\theta^{-1}=(\theta^{kl})_{k,l}$ its inverse, and
  \begin{align*}
    v_i^\flat(s) := v_i(\mu(s)), \quad b_i^j(s) := [\underline{D}_j(n_iH)](\mu(s)), \quad s\in U,\,i,j=1,2,3.
  \end{align*}
  For $i=1,2,3$ the function $v_i^\flat$ is compactly supported in $U$ and belongs to $H^1(U)$ by Lemma \ref{L:Lp_Loc}.
  Moreover, by \eqref{Pf_KiRe:Poi} it is a weak solution to the elliptic equation
  \begin{align*}
    \frac{1}{\sqrt{\det\theta}}\sum_{k,l=1}^2\partial_{s_k}\Bigl(\theta^{kl}(\partial_{s_l}v_i^\flat)\sqrt{\det\theta}\Bigr) = \sum_{j=1}^3b_i^jv_j^\flat \quad\text{on}\quad U.
  \end{align*}
  Noting that $\theta,\theta^{-1}\in C^{\ell-1}(U)^{2\times2}$ and $b_i^j\in C^{\ell-3}(U)$ by the $C^\ell$-regularity of $\Gamma$ (see Section \ref{SS:Pre_Surf}) and $v_i^\flat$ is compactly supported in $U$, we apply the elliptic regularity theorem (see \cites{Ev10,GiTr01}) and a bootstrap argument to the above equation to get $v_i^\flat\in H^{\ell-1}(U)$.
  Hence the Sobolev embedding theorem (see \cite{AdFo03}) yields $v_i^\flat\in C^{\ell-3}(U)$ for $i=1,2,3$, which implies the $C^{\ell-3}$-regularity of $v=(v_1,v_2,v_3)^T$ on $\Gamma$ (note that $v$ is compactly supported in $\mu(U)$ and $\mu$ is of class $C^\ell$).
\end{proof}

\begin{remark} \label{R:Kil_Reg}
  We used \eqref{E:TD_Exc} and Lemma \ref{L:Wmp_Appr} to prove Lemma \ref{L:Kil_Reg}.
  In Appendix \ref{S:Ap_Proof} below we show \eqref{E:TD_Exc} and Lemma \ref{L:Wmp_Appr} without applying Lemma \ref{L:Kil_Reg}.
\end{remark}

Finally, we show that the perfect slip boundary conditions
\begin{align*}
  u\cdot n = 0, \quad 2PD(u)n = 0 \quad\text{on}\quad \Gamma
\end{align*}
are different by the curvatures of $\Gamma$ from the Hodge boundary conditions
\begin{align*}
  u\cdot n = 0, \quad \mathrm{curl}\,u\times n = 0 \quad\text{on}\quad \Gamma
\end{align*}
for a vector field $u\colon\Omega\to\mathbb{R}^3$, where $\Omega$ is a bounded domain in $\mathbb{R}^3$ with $\partial\Omega=\Gamma$.

\begin{lemma} \label{L:Diff_SH}
  Suppose that $u\in C^1(\overline{\Omega})^3\cup H^2(\Omega)^3$ satisfies $u\cdot n=0$ on $\Gamma$.
  Then
  \begin{align} \label{E:Diff_SH}
    2PD(u)n-\mathrm{curl}\,u\times n = 2Wu \quad\text{on}\quad \Gamma.
  \end{align}
  Here $W=-\nabla_\Gamma n$ is the Weingarten map of $\Gamma$.
\end{lemma}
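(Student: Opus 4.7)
The plan is to derive the identity by a purely algebraic computation on $\Gamma$, reducing everything to the relation $P(\nabla u)n = Wu$ that comes from differentiating the impermeability condition $u\cdot n = 0$. By a density argument (approximating $u\in H^2(\Omega)^3$ by smooth vector fields satisfying the trace condition) it suffices to work with $u\in C^1(\overline{\Omega})^3$, so that all terms in \eqref{E:Diff_SH} make sense pointwise on $\Gamma$.

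First I would take the tangential gradient of $u\cdot n = 0$ on $\Gamma$ and use $W^T = W = -\nabla_\Gamma n$ to obtain
\begin{align*}
  (\nabla_\Gamma u)n = -(\nabla_\Gamma n)u = Wu \quad\text{on}\quad \Gamma,
\end{align*}
exactly as in the proof of Lemma \ref{L:NSl}. Since $\nabla_\Gamma u = P\nabla u$ on $\Gamma$ by the analogue of \eqref{E:Tgrad_Surf} applied to an extension of $u|_\Gamma$, this reads $P(\nabla u)n = Wu$.

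Next I would decompose the two boundary terms in \eqref{E:Diff_SH} into their ``symmetric'' and ``antisymmetric'' parts against $n$. Writing
\begin{align*}
  2D(u)n = (\nabla u)n + (\nabla u)^Tn, \quad (\nabla u)^Tn = (n\cdot\nabla)u
\end{align*}
and applying $P$ gives $2PD(u)n = Wu + P(n\cdot\nabla)u$. For the curl term, a direct $\epsilon$-tensor computation (or the standard identity $\mathrm{curl}\,u\times n = (n\cdot\nabla)u - (\nabla u)n$, written component-wise via $\epsilon_{lmk}\epsilon_{mij} = \delta_{lj}\delta_{ki} - \delta_{li}\delta_{kj}$) yields
\begin{align*}
  \mathrm{curl}\,u\times n = (\nabla u)^Tn - (\nabla u)n = (n\cdot\nabla)u - (\nabla u)n.
\end{align*}
Since the vector field $\mathrm{curl}\,u\times n$ is automatically tangential on $\Gamma$, applying $P$ preserves it, so $\mathrm{curl}\,u\times n = P(n\cdot\nabla)u - Wu$ by the first step.

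Subtracting the two expressions, the term $P(n\cdot\nabla)u$ cancels and the two copies of $Wu$ add, giving $2PD(u)n - \mathrm{curl}\,u\times n = 2Wu$, which is \eqref{E:Diff_SH}. There is no real obstacle here: the only point requiring a tiny bit of care is the sign in the $\epsilon$-tensor identity for $\mathrm{curl}\,u\times n$, and the observation that $P$ fixes $\mathrm{curl}\,u\times n$ (a consequence of $n\cdot(\mathrm{curl}\,u\times n) = 0$) which is what lets the derivation close with $Wu$ on the right-hand side rather than $P(n\cdot\nabla)u$.
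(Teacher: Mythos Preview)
Your proof is correct and follows essentially the same route as the paper: both use the identity $\mathrm{curl}\,u\times n=(\nabla u)^Tn-(\nabla u)n$, the tangentiality of $\mathrm{curl}\,u\times n$, and $P(\nabla u)n=(\nabla_\Gamma u)n=Wu$ from differentiating $u\cdot n=0$. The only cosmetic difference is that the paper first simplifies $2D(u)n-\mathrm{curl}\,u\times n=2(\nabla u)n$ and then applies $P$ once, whereas you apply $P$ to each piece separately and subtract; the content is the same.
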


\begin{proof}
  First note that $\mathrm{curl}\,u\times n$ is tangential on $\Gamma$.
  By this fact,
  \begin{align*}
    2D(u) = \nabla u+(\nabla u)^T, \quad \mathrm{curl}\,u\times n = (\nabla u)^Tn-(\nabla u)n
  \end{align*}
  under our notation for $\nabla u$ given in Appendix \ref{S:Ap_Vec}, and \eqref{E:Tgrad_Surf} we have
  \begin{align*}
    2PD(u)n-\mathrm{curl}\,u\times n = P\{2D(u)n-\mathrm{curl}\,u\times n\} = 2P(\nabla u)n = 2(\nabla_\Gamma u)n
  \end{align*}
  on $\Gamma$.
  Noting that $u\cdot n=0$ on $\Gamma$, we apply \eqref{E:Grad_W} to the last term to get \eqref{E:Diff_SH}.
\end{proof}

\section{Proofs of auxiliary lemmas} \label{S:Ap_Proof}
The purpose of this appendix is to give the proofs of the lemmas in Section \ref{S:Pre} and Lemmas \ref{L:CoV_Fixed}, \ref{L:KAux_Du}, and \ref{L:G_Bound} involving elementary but long calculations of differential geometry of the surfaces $\Gamma$, $\Gamma_\varepsilon^0$, and $\Gamma_\varepsilon^1$.

As in Appendix \ref{S:Ap_RCS}, let $\Gamma$ be a closed, connected, and oriented surface in $\mathbb{R}^3$ of class $C^\ell$ with $\ell\geq2$.
First we prove the lemmas in Section \ref{SS:Pre_Surf}.

\begin{proof}[Proof of Lemma \ref{L:TD_Exc}]
  For $\eta\in C^2(\Gamma)$ let $\tilde{\eta}$ be an arbitrary $C^2$-extension of $\eta$ to $N$ with $\tilde{\eta}|_\Gamma=\eta$ and
  \begin{align*}
    \varphi_j(x) := \sum_{l=1}^3\{\delta_{jl}-\partial_jd(x)\partial_ld(x)\}\partial_l\tilde{\eta}(x), \quad x\in N,\, j=1,2,3,
  \end{align*}
  where $\delta_{jl}$ is the Kronecker delta.
  Then since
  \begin{align} \label{Pf_TDE:P}
    \delta_{jl}-\partial_jd(y)\partial_ld(y) = \delta_{jl}-n_j(y)n_l(y) = P_{jl}(y), \quad y\in\Gamma
  \end{align}
  by \eqref{E:Nor_Coord}, we have $\varphi_j|_\Gamma=\underline{D}_j\eta$, i.e. $\varphi_j$ is a $C^1$-extension of $\underline{D}_j\eta$ to $N$.
  Therefore,
  \begin{align*}
    \underline{D}_i\underline{D}_j\eta(y) = \sum_{k=1}^3\{\delta_{ik}-\partial_id(y)\partial_kd(y)\}\partial_k\varphi_j(y) = \sum_{m=1}^3\Phi_m(y)
  \end{align*}
  for $y\in\Gamma$ and $i,j=1,2,3$, where
  \begin{align*}
    \Phi_1(y) &:= \sum_{k,l=1}^3\{\delta_{ik}-\partial_id(y)\partial_kd(y)\}\{\delta_{jl}-\partial_jd(y)\partial_ld(y)\}\partial_k\partial_l\tilde{\eta}(y), \\
    \Phi_2(y) &:= -\sum_{k,l=1}^3\{\delta_{ik}-\partial_id(y)\partial_kd(y)\}\partial_k\partial_jd(y)\partial_ld(y)\partial_l\tilde{\eta}(y), \\
    \Phi_3(y) &:= -\sum_{k,l=1}^3\{\delta_{ik}-\partial_id(y)\partial_kd(y)\}\partial_jd(y)\partial_k\partial_ld(y)\partial_l\tilde{\eta}(y).
  \end{align*}
  Similarly, $\underline{D}_j\underline{D}_i\eta(y)=\sum_{m=1}^3\Psi_m(y)$ for $y\in\Gamma$, where
  \begin{align*}
    \Psi_1(y) &:= \sum_{k,l=1}^3\{\delta_{jl}-\partial_jd(y)\partial_ld(y)\}\{\delta_{ik}-\partial_id(y)\partial_kd(y)\}\partial_l\partial_k\tilde{\eta}(y), \\
    \Psi_2(y) &:= -\sum_{k,l=1}^3\{\delta_{jl}-\partial_jd(y)\partial_ld(y)\}\partial_l\partial_id(y)\partial_kd(y)\partial_k\tilde{\eta}(y), \\
    \Psi_3(y) &:= -\sum_{k,l=1}^3\{\delta_{jl}-\partial_jd(y)\partial_ld(y)\}\partial_id(y)\partial_l\partial_kd(y)\partial_k\tilde{\eta}(y).
  \end{align*}
  In what follows, we fix and suppress the argument $y\in\Gamma$.
  Since $\partial_k\partial_l\tilde{\eta}=\partial_l\partial_k\tilde{\eta}$, we have $\Phi_1=\Psi_1$.
  By \eqref{E:Nor_Coord} and \eqref{E:ConDer_Surf} we observe that
  \begin{align} \label{Pf_TDE:W}
    \nabla^2d = \nabla\bar{n} = -W, \quad\text{i.e.}\quad \partial_k\partial_jd = -W_{kj}.
  \end{align}
  Hence it follows from \eqref{E:Nor_Coord}, \eqref{E:Form_W}, \eqref{Pf_TDE:P}, and \eqref{Pf_TDE:W} that
  \begin{align*}
    \Phi_2 = \sum_{k,l=1}^3P_{ik}W_{kj}n_l\partial_l\tilde{\eta} = (n\cdot\nabla\tilde{\eta})[PW]_{ij} = (n\cdot\nabla\tilde{\eta})W_{ij}, \quad \Psi_2 = (n\cdot\nabla\tilde{\eta})W_{ji}
  \end{align*}
  and thus $\Phi_2=\Psi_2$ since $W$ is symmetric by Lemma \ref{L:Form_W}.
  We also have
  \begin{align*}
    \Phi_3 = \sum_{k,l=1}^3P_{ik}n_jW_{kl}\partial_l\tilde{\eta} = [PW\nabla\tilde{\eta}]_in_j = [W\nabla_\Gamma\eta]_in_j, \quad \Psi_3 = [W\nabla_\Gamma\eta]_jn_i
  \end{align*}
  by \eqref{E:Nor_Coord}, \eqref{Pf_TDE:P}, \eqref{Pf_TDE:W}, and $PW\nabla\tilde{\eta}=WP\nabla\tilde{\eta} = W\nabla_\Gamma\eta$.
  Therefore,
  \begin{align*}
    \underline{D}_i\underline{D}_j\eta-\underline{D}_j\underline{D}_i\eta = \sum_{m=1}^3\Phi_m-\sum_{m=1}^3\Psi_m = [W\nabla_\Gamma\eta]_in_j-[W\nabla_\Gamma\eta]_jn_i
  \end{align*}
  and the equality \eqref{E:TD_Exc} is valid.
\end{proof}

\begin{proof}[Proof of Lemma \ref{L:Wein}]
  Since $W$ has the eigenvalues zero, $\kappa_1$, and $\kappa_2$,
  \begin{align*}
    \det[I_3-rW(y)] = \{1-r\kappa_1(y)\}\{1-r\kappa_2(y)\} > 0, \quad y\in\Gamma,\,r\in(-\delta,\delta)
  \end{align*}
  by \eqref{E:Curv_Bound}.
  Hence $I_3-rW(y)$ is invertible.
  Also, \eqref{E:WReso_P} follows from \eqref{E:Form_W}.

  Let us prove \eqref{E:Wein_Bound} and \eqref{E:Wein_Diff}.
  We fix and suppress $y\in\Gamma$.
  Since $W$ is real and symmetric by Lemma \ref{L:Form_W} and has the eigenvalues $\kappa_1$, $\kappa_2$, and zero with $Wn=0$, we can take an orthonormal basis $\{\tau_1,\tau_2,n\}$ of $\mathbb{R}^3$ such that $W\tau_i=\kappa_i\tau_i$, $i=1,2$.
  Then for $r\in(-\delta,\delta)$, $i=1,2$, and $k=\pm1$ we have
  \begin{align} \label{Pf_We:Inv}
    (I_3-rW)^k\tau_i = (1-r\kappa_i)^k\tau_i, \quad (I_3-rW)^kn = n.
  \end{align}
  Since $\{\tau_1,\tau_2,n\}$ is an orthonormal basis of $\mathbb{R}^3$, these formulas imply that
  \begin{align*}
    (I_3-rW)^ka &= \sum_{i=1,2}(a\cdot\tau_i)(I_3-rW)^k\tau_i+(a\cdot n)(I_3-rW)^kn \\
    &= \sum_{i=1,2}(a\cdot\tau_i)(1-r\kappa_i)^k\tau_i+(a\cdot n)n
  \end{align*}
  for all $a\in\mathbb{R}^3$ and $k=\pm1$.
  Hence
  \begin{align*}
    \bigl|(I_3-rW)^ka\bigr|^2 = \sum_{i=1,2}(a\cdot\tau_i)^2(1-r\kappa_i)^{2k}+(a\cdot n)^2
  \end{align*}
  and \eqref{E:Wein_Bound} follows from \eqref{E:Curv_Bound} and $|a|^2=(a\cdot\tau_1)^2+(a\cdot\tau_2)^2+(a\cdot n)^2$.
  Also,
  \begin{align*}
    \bigl|I_3-(I_3-rW)^{-1}\bigr|^2 = \sum_{i=1,2}|1-(1-r\kappa_i)^{-1}|^2 = \sum_{i=1,2}|r\kappa_i(1-r\kappa_i)^{-1}|^2 \leq c|r|^2
  \end{align*}
  by \eqref{Pf_We:Inv}, $|\tau_1|=|\tau_2|=1$, and \eqref{E:Curv_Bound}.
  Hence \eqref{E:Wein_Diff} is valid.
\end{proof}

\begin{proof}[Proof of Lemma \ref{L:Pi_Der}]
  Let $x\in N$.
  Since $\pi(x)\in\Gamma$ we have
  \begin{gather*}
    \pi(x) = x-d(x)n(\pi(x)) = x-d(x)\bar{n}(\pi(x)), \\
    -\nabla\bar{n}(\pi(x)) = -\overline{\nabla_\Gamma n}(\pi(x)) = \overline{W}(\pi(x)) = \overline{W}(x)
  \end{gather*}
  by \eqref{E:Nor_Coord} and \eqref{E:ConDer_Surf}.
  We differentiate both sides of the first equality with respect to $x$ and use the second equality and $\nabla d(x)=\bar{n}(x)=\bar{n}(\pi(x))$ to get
  \begin{align*}
    \nabla\pi(x) = I_3-\bar{n}(x)\otimes\bar{n}(x)-d(x)\nabla\pi(x)\nabla\bar{n}(\pi(x)) = \overline{P}(x)+d(x)\nabla\pi(x)\overline{W}(x)
  \end{align*}
  for $x\in N$ and thus
  \begin{align*}
    \nabla\pi(x)\left\{I_3-d(x)\overline{W}(x)\right\} = \overline{P}(x), \quad x\in N.
  \end{align*}
  Since $I_3-d\overline{W}$ is invertible in $N$ by Lemma \ref{L:Wein}, we obtain \eqref{E:Pi_Der} by the above equality and \eqref{E:WReso_P}.

  Let $\eta\in C^1(\Gamma)$ and $\bar{\eta}=\eta\circ\pi$ be its constant extension.
  Since $\bar{\eta}(x)=\bar{\eta}(\pi(x))$ and $\pi(x)\in\Gamma$ for $x\in N$, we observe by \eqref{E:ConDer_Surf} and \eqref{E:Pi_Der} that
  \begin{align*}
    \nabla\bar{\eta}(x) &= \nabla\pi(x)\nabla\bar{\eta}(\pi(x)) = \left\{I_3-d(x)\overline{W}(x)\right\}^{-1}\overline{P}(x)\overline{\nabla_\Gamma\eta}(\pi(x)) \\
    &= \left\{I_3-d(x)\overline{W}(x)\right\}^{-1}\overline{P}(x)\overline{\nabla_\Gamma\eta}(x).
  \end{align*}
  Hence we obtain \eqref{E:ConDer_Dom} by applying \eqref{E:P_TGr} to the last line of the above equality.
  We also have \eqref{E:ConDer_Bound} and \eqref{E:ConDer_Diff} by \eqref{E:Wein_Bound}, \eqref{E:Wein_Diff}, and \eqref{E:ConDer_Dom}.

  Now let $\Gamma$ be of class $C^3$ and $\eta\in C^2(\Gamma)$.
  For $i=1,2,3$ we differentiate both sides of \eqref{E:ConDer_Dom} with respect to $x_i$ to get
  \begin{align} \label{Pf_PD:Second}
    \partial_i\nabla\bar{\eta} = \left\{\partial_i\Bigl(I_3-d\overline{W}\Bigr)^{-1}\right\}\overline{\nabla_\Gamma\eta}+\Bigl(I_3-d\overline{W}\Bigr)^{-1}\partial_i\Bigl(\overline{\nabla_\Gamma\eta}\Bigr) \quad\text{in}\quad N.
  \end{align}
  To estimate the right-hand side we differentiate both sides of
  \begin{align*}
    \left\{I_3-d(x)\overline{W}(x)\right\}^{-1}\left\{I_3-d(x)\overline{W}(x)\right\} = I_3, \quad x\in N
  \end{align*}
  with respect to $x_i$ and use $\nabla d(x)=\bar{n}(x)$ to get
  \begin{align} \label{Pf_PD:Der_Res}
    \partial_i\Bigl(I_3-d\overline{W}\Bigr)^{-1} = \Bigl(I_3-d\overline{W}\Bigr)^{-1}\Bigl(\bar{n}_i\overline{W}+d\partial_i\overline{W}\Bigr)\Bigl(I_3-d\overline{W}\Bigr)^{-1} \quad\text{in}\quad N.
  \end{align}
  The right-hand side of \eqref{Pf_PD:Der_Res} is bounded on $N$ by \eqref{E:Wein_Bound} and \eqref{E:ConDer_Bound} since $W$ is of class $C^1$ on $\Gamma$ by the $C^3$-regularity of $\Gamma$.
  By this fact, \eqref{E:Wein_Bound}, \eqref{E:ConDer_Bound}, and \eqref{Pf_PD:Second},
  \begin{align*}
    |\partial_i\nabla\bar{\eta}| \leq c\left(\left|\overline{\nabla_\Gamma\eta}\right|+\left|\overline{\nabla_\Gamma^2\eta}\right|\right) \quad\text{in}\quad N,\,i=1,2,3,
  \end{align*}
  which shows \eqref{E:Con_Hess}.
\end{proof}

\begin{proof}[Proof of Lemma \ref{L:Wmp_Appr}]
  Here we only show the density of $C^\ell(\Gamma)$ in $W^{m,p}(\Gamma)$ for $\ell=m=2$ and $p\in[1,\infty)$.
  The assertion in other cases are proved similarly.

  Let $\eta\in W^{2,p}(\Gamma)$.
  Since $\Gamma$ is closed and of class $C^2$, by a localization argument with a partition of unity on $\Gamma$ we may assume that there exist an open set $U$ in $\mathbb{R}^2$, a compact subset $\mathcal{K}$ of $U$, and a $C^2$ local parametrization $\mu\colon U\to\Gamma$ of $\Gamma$ such that $\eta$ is supported in $\mu(\mathcal{K})$.
  Then $\eta^\flat:=\eta\circ\mu$ is supported in $\mathcal{K}$ and belongs to $W^{1,p}(U)$ by Lemma \ref{L:Lp_Loc}.
  Let us prove $\partial_{s_i}\partial_{s_j}\eta^\flat\in L^p(U)$ for $i,j=1,2$.
  In what follows, we write $\xi^\flat:=\xi\circ\mu$ on $U$ for a function $\xi$ on $\Gamma$.
  We show that
  \begin{align} \label{Pf_WA:D2_EF}
    \partial_{s_i}\partial_{s_j}\eta^\flat = \partial_{s_i}\mu\cdot\{(\nabla_\Gamma^2\eta)^\flat\partial_{s_j}\mu\}+\partial_{s_i}\partial_{s_j}\mu\cdot(\nabla_\Gamma\eta)^\flat \in L^p(U), \, i,j=1,2.
  \end{align}
  First note that the right-hand side is in $L^p(U)$.
  Indeed, since
  \begin{align*}
    |\partial_{s_i}\mu\cdot\{(\nabla_\Gamma^2\eta)^\flat\partial_{s_j}\mu\}+\partial_{s_i}\partial_{s_j}\mu\cdot(\nabla_\Gamma\eta)^\flat| \leq c(|(\nabla_\Gamma\eta)^\flat|+|(\nabla_\Gamma^2\eta)^\flat|) \quad\text{on}\quad \mathcal{K}
  \end{align*}
  by \eqref{E:Mu_Bound} and $(\nabla_\Gamma\eta)^\flat$ and $(\nabla_\Gamma^2\eta)^\flat$ are supported in $\mathcal{K}$, we see by \eqref{E:Metric} that
  \begin{align*}
    &\int_U|\partial_{s_i}\mu\cdot\{(\nabla_\Gamma^2\eta)^\flat\partial_{s_j}\mu\}+\partial_{s_i}\partial_{s_j}\mu\cdot(\nabla_\Gamma\eta)^\flat|^p\,ds \\
    &\qquad = \int_{\mathcal{K}}|\partial_{s_i}\mu\cdot\{(\nabla_\Gamma^2\eta)^\flat\partial_{s_j}\mu\}+\partial_{s_i}\partial_{s_j}\mu\cdot(\nabla_\Gamma\eta)^\flat|^p\,ds \\
    &\qquad \leq c\int_{\mathcal{K}}(|(\nabla_\Gamma\eta)^\flat|^p+|(\nabla_\Gamma^2\eta)^\flat|^p)\sqrt{\det\theta}\,ds \\
    &\qquad = c\left(\|\nabla_\Gamma\eta\|_{L^p(\Gamma)}^p+\|\nabla_\Gamma^2\eta\|_{L^p(\Gamma)}^p\right).
  \end{align*}
  Fix $i,j=1,2$.
  Let $\varphi\in C_c^1(U)$ and $X_m\colon\mu(U)\to\mathbb{R}^3$ be given by
  \begin{align*}
    X_m(\mu(s)) := \frac{\varphi(s)\partial_{s_j}\mu_m(s)}{\sqrt{\det\theta(s)}}\partial_{s_i}\mu(s), \quad s\in U, \, m=1,2,3.
  \end{align*}
  Note that here $X_m$ does not stand for the $m$-th component of a vector field $X$.
  We extend $X_m$ to $\Gamma$ by setting zero outside $\mu(U)$.
  Then $X_m\in C^1(\Gamma,T\Gamma)$ and
  \begin{align*}
    \mathrm{div}_\Gamma X_m(\mu(s)) = \frac{\partial_{s_i}\bigl(\varphi(s)\partial_{s_j}\mu_m(s)\bigr)}{\sqrt{\det\theta(s)}} = \frac{\partial_{s_i}\varphi(s)\partial_{s_j}\mu_m(s)}{\sqrt{\det\theta(s)}}+\frac{\varphi(s)\partial_{s_i}\partial_{s_j}\mu_m(s)}{\sqrt{\det\theta(s)}}
  \end{align*}
  for $s\in U$ and $m=1,2,3$ by Lemma \ref{L:DivG_DG}.
  By this equality and \eqref{Pf_LpL:D_Flat} we have
  \begin{align*}
    \int_U(\partial_{s_j}\eta^\flat)\partial_{s_i}\varphi\,ds &= \int_U\{\partial_{s_j}\mu\cdot(\nabla_\Gamma\eta)^\flat\}\partial_{s_i}\varphi\,ds \\
    &= \sum_{m=1}^3\int_U(\underline{D}_m\eta)^\flat(\partial_{s_i}\varphi)\partial_{s_j}\mu_m\,ds = J_1+J_2,
  \end{align*}
  where
  \begin{align*}
    J_1 &:= \sum_{m=1}^3\int_U(\underline{D}_m\eta)^\flat(\mathrm{div}_\Gamma X_m)^\flat\sqrt{\det\theta}\,ds = \sum_{m=1}^3\int_\Gamma(\underline{D}_m\eta)\,\mathrm{div}_\Gamma X_m\,d\mathcal{H}^2, \\
    J_2 &:= -\sum_{m=1}^3\int_U(\underline{D}_m\eta)^\flat(\partial_{s_i}\partial_{s_j}\mu_m)\varphi\,ds = -\int_U\{\partial_{s_i}\partial_{s_j}\mu\cdot(\nabla_\Gamma\eta)^\flat\}\varphi\,ds.
  \end{align*}
  We apply \eqref{E:IbP_WTGr} with $v=X_m$ and $X_m\cdot n=0$ on $\Gamma$ to $J_1$.
  Then
  \begin{align*}
    J_1 &= -\sum_{m=1}^3\int_\Gamma\nabla_\Gamma(\underline{D}_m\eta)\cdot X_m\,d\mathcal{H}^2 = -\sum_{m=1}^3\int_U[\nabla_\Gamma(\underline{D}_m\eta)]^\flat\cdot X_m^\flat\sqrt{\det\theta}\,ds \\
    &= -\sum_{l,m=1}^3\int_U(\underline{D}_l\underline{D}_m\eta)^\flat(\partial_{s_i}\mu_l)(\partial_{s_j}\mu_m)\varphi\,ds = -\int_U[\partial_{s_i}\mu\cdot\{(\nabla_\Gamma^2\eta)^\flat\partial_{s_j}\mu\}]\varphi\,ds.
  \end{align*}
  Therefore,
  \begin{align*}
    \int_U(\partial_{s_j}\eta^\flat)\partial_{s_i}\varphi\,ds = J_1+J_2 = -\int_U[\partial_{s_i}\mu\cdot\{(\nabla_\Gamma^2\eta)^\flat\partial_{s_j}\mu\}+\partial_{s_i}\partial_{s_j}\mu\cdot(\nabla_\Gamma\eta)^\flat]\varphi\,ds
  \end{align*}
  for all $\varphi\in C_c^1(U)$ and we obtain \eqref{Pf_WA:D2_EF} and $\eta^\flat\in W^{2,p}(U)$.
  Now we prove
  \begin{align} \label{Pf_WA:W2p}
    \|\eta\|_{W^{2,p}(\Gamma)} \leq c\|\eta^\flat\|_{W^{2,p}(U)}.
  \end{align}
  Since $\eta\in W^{2,p}(\Gamma)$ and \eqref{E:TGr_DG} holds for a function in $W^{1,p}(\Gamma)$ by Lemma \ref{L:Lp_Loc},
  \begin{align*}
    (\underline{D}_m\eta)^\flat = \sum_{i,j=1}^2\theta^{ij}(\partial_{s_i}\eta^\flat)\partial_{s_j}\mu_m \quad\text{on}\quad U, \, m=1,2,3.
  \end{align*}
  Here both sides belong to $W^{1,p}(U)$ by \eqref{E:Mu_Bound}, \eqref{E:Metric}, and the fact that $\eta\in W^{2,p}(\Gamma)$ and $\eta^\flat\in W^{2,p}(U)$ are supported in $\mu(\mathcal{K})$ and $\mathcal{K}$.
  Hence we can differentiate both sides with respect to $s\in U$ in the weak sense and apply \eqref{E:Mu_Bound} and \eqref{E:Metric} to get
  \begin{align*}
    |\nabla_s(\underline{D}_m\eta)^\flat| \leq c(|\nabla_s\eta^\flat|+|\nabla_s^2\eta^\flat|) \quad\text{on}\quad \mathcal{K},
  \end{align*}
  where $\nabla_s^2\eta^\flat:=(\partial_{s_i}\partial_{s_j}\eta^\flat)_{i,j}$ is the Hessian matrix of $\eta^\flat$.
  Noting that $(\underline{D}_m\eta)^\flat$ and $\eta^\flat$ are supported in $\mathcal{K}$, we deduce from the above inequality that
  \begin{align*}
    \|\nabla_s(\underline{D}_m\eta)^\flat\|_{L^p(U)} \leq c\left(\|\nabla_s\eta^\flat\|_{L^p(U)}+\|\nabla_s^2\eta^\flat\|_{L^p(U)}\right) \leq c\|\eta^\flat\|_{W^{2,p}(U)}.
  \end{align*}
  We use \eqref{E:W1p_Loc} with $\eta$ replaced by $\underline{D}_m\eta\in W^{1,p}(\Gamma)$ and the above inequality to get
  \begin{align*}
    \|\nabla_\Gamma(\underline{D}_m\eta)\|_{L^p(\Gamma)} \leq c\|\nabla_s(\underline{D}_m\eta)^\flat\|_{L^p(U)} \leq c\|\eta^\flat\|_{W^{2,p}(U)}, \quad m=1,2,3.
  \end{align*}
  Hence $\|\nabla_\Gamma^2\eta\|_{L^p(\Gamma)}\leq c\|\eta^\flat\|_{W^{2,p}(U)}$ and \eqref{Pf_WA:W2p} follows from this inequality, \eqref{E:Lp_Loc}, and \eqref{E:W1p_Loc}.
  Now since $\eta^\flat$ belongs to $W^{2,p}(U)$ and is compactly supported in $U$, we can take a sequence $\{\eta_k^\flat\}_{k=1}^\infty$ in $C_c^\infty(U)$ that converges to $\eta^\flat$ strongly in $W^{2,p}(U)$ by a standard mollification argument.
  For each $k\in\mathbb{N}$ we set $\eta_k(\mu(s)):=\eta_k^\flat(s)$, $s\in U$ and extend $\eta_k$ to $\Gamma$ by setting zero outside $\mu(U)$.
  Then $\eta_k\in C^2(\Gamma)$ (note that $\mu$ is of class $C^2$ on $U$) and
  \begin{align*}
    \|\eta-\eta_k\|_{W^{2,p}(\Gamma)} \leq c\|\eta^\flat-\eta_k^\flat\|_{W^{2,p}(U)} \to 0 \quad\text{as}\quad k\to\infty
  \end{align*}
  by \eqref{Pf_WA:W2p}.
  Hence $C^2(\Gamma)$ is dense in $W^{2,p}(\Gamma)$.
\end{proof}

\begin{remark} \label{R:Proof_WA}
  In the proof of Lemma \ref{L:Wmp_Appr} we applied Lemma \ref{L:Lp_Loc}.
  We note that we did not use the density of $C^m(\Gamma)$ in $W^{m,p}(\Gamma)$ with $m=1,2$ to prove Lemma \ref{L:Lp_Loc}, and of course to prove Lemma \ref{L:Wmp_Appr}.
\end{remark}

Next we assume that $\Gamma$ is of class $C^5$ and prove the formulas and inequalities in Section \ref{SS:Pre_Dom} for the surface quantities of the boundaries $\Gamma_\varepsilon^0$ and $\Gamma_\varepsilon^1$ of $\Omega_\varepsilon$.

\begin{proof}[Proof of Lemma \ref{L:NB_Aux}]
  First note that, since $W\in C^3(\Gamma)^{3\times3}$ by the $C^5$-regularity of $\Gamma$ and $g_0,g_1\in C^4(\Gamma)$, they are bounded on $\Gamma$ along with their first and second order tangential derivatives.

  Let $\tau_\varepsilon^i$ and $n_\varepsilon^i$, $i=0,1$ be the vector fields on $\Gamma$ given by \eqref{E:Def_NB_Aux} and \eqref{E:Def_NB}.
  Then the first inequalities of \eqref{E:Tau_Bound} and \eqref{E:Tau_Diff} immediately follow from \eqref{E:Wein_Bound} and \eqref{E:Wein_Diff}.
  To show the second inequalities of \eqref{E:Tau_Bound} and \eqref{E:Tau_Diff} we set
  \begin{align} \label{Pf_NBA:Def_R}
    R_\varepsilon^i(y) := \{I_3-\varepsilon g_i(y)W(y)\}^{-1}, \quad y\in\Gamma
  \end{align}
  and apply $\underline{D}_k$, $k=1,2,3$ to both sides of $R_\varepsilon^i(I_3-\varepsilon g_iW)=I_3$ on $\Gamma$ to get
  \begin{align} \label{Pf_NBA:D_Inv}
    \underline{D}_kR_\varepsilon^i = \varepsilon R_\varepsilon^i\{(\underline{D}_kg_i)W+g_i\underline{D}_kW\}R_\varepsilon^i \quad\text{on}\quad \Gamma.
  \end{align}
  Hence by \eqref{E:Wein_Bound} there exists a constant $c>0$ independent of $\varepsilon$ such that
  \begin{align} \label{Pf_NBA:Est_D_Inv}
    |\underline{D}_kR_\varepsilon^i| \leq c\varepsilon \quad\text{on}\quad \Gamma.
  \end{align}
  Applying \eqref{E:Wein_Bound}, \eqref{E:Wein_Diff}, and \eqref{Pf_NBA:Est_D_Inv} to $\underline{D}_k\tau_\varepsilon^i=(\underline{D}_kR_\varepsilon^i)\nabla_\Gamma g_i+R_\varepsilon^i(\underline{D}_k\nabla_\Gamma g)$ we obtain
  \begin{align*}
    |\underline{D}_k\tau_\varepsilon^i| \leq c, \quad |\underline{D}_k\tau_\varepsilon^i-\underline{D}_k\nabla_\Gamma g| &\leq |(\underline{D}_kR_\varepsilon^i)\nabla_\Gamma g_i|+|(R_\varepsilon^i-I_3)(\underline{D}_k\nabla_\Gamma g)| \leq c\varepsilon
  \end{align*}
  on $\Gamma$ for $k=1,2,3$.
  Hence the second inequalities of \eqref{E:Tau_Bound} and \eqref{E:Tau_Diff} are valid.
  We further apply $\underline{D}_l$, $l=1,2,3$ to both sides of \eqref{Pf_NBA:D_Inv} and use \eqref{E:Wein_Bound} and \eqref{Pf_NBA:Est_D_Inv} to obtain $|\underline{D}_l\underline{D}_kR_\varepsilon^i|\leq c\varepsilon$ on $\Gamma$.
  Using this inequality, \eqref{E:Wein_Bound}, and \eqref{Pf_NBA:Est_D_Inv} to
  \begin{multline*}
    \underline{D}_l\underline{D}_k\tau_\varepsilon^i = (\underline{D}_l\underline{D}_kR_\varepsilon^i)\nabla_\Gamma g_i+(\underline{D}_kR_\varepsilon^i)(\underline{D}_l\nabla_\Gamma g_i) \\+(\underline{D}_lR_\varepsilon^i)(\underline{D}_k\nabla_\Gamma g_i)+R_\varepsilon^i(\underline{D}_l\underline{D}_k\nabla_\Gamma g_i)
  \end{multline*}
  we get $|\underline{D}_l\underline{D}_k\tau_\varepsilon^i|\leq c$ on $\Gamma$ for $k,l=1,2,3$, i.e. the third inequality of \eqref{E:Tau_Bound} holds.

  Next we show \eqref{E:N_Bound} and \eqref{E:N_Diff}.
  The first equality of \eqref{E:N_Bound} is due to \eqref{E:Def_NB}.
  We also have the other inequalities of \eqref{E:N_Bound} by \eqref{E:Tau_Bound}.
  To prove \eqref{E:N_Diff} let
  \begin{align*}
    \varphi_\varepsilon := \frac{1}{\sqrt{1+\varepsilon^2|\tau_\varepsilon^1|^2}}-\frac{1}{\sqrt{1+\varepsilon^2|\tau_\varepsilon^0|^2}}, \quad \tau_\varepsilon := -\frac{\tau_\varepsilon^1}{\sqrt{1+\varepsilon^2|\tau_\varepsilon^1|^2}}+\frac{\tau_\varepsilon^0}{\sqrt{1+\varepsilon^2|\tau_\varepsilon^0|^2}}
  \end{align*}
  so that $n_\varepsilon^0+n_\varepsilon^1=\varphi_\varepsilon n+\varepsilon\tau_\varepsilon$ on $\Gamma$.
  From \eqref{E:Tau_Bound} we deduce that
  \begin{align} \label{Pf_NBA:Est_Tau}
    |\tau_\varepsilon| \leq c, \quad |\nabla_\Gamma\tau_\varepsilon| \leq c \quad\text{on}\quad \Gamma
  \end{align}
  with a constant $c>0$ independent of $\varepsilon$.
  Also,
  \begin{align} \label{Pf_NBA:Est_Phi}
    |\varphi_\varepsilon| \leq \frac{\varepsilon^2}{2}\bigl||\tau_\varepsilon^1|^2-|\tau_\varepsilon^0|^2\bigr| \leq c\varepsilon^2 \quad\text{on}\quad \Gamma
  \end{align}
  by the mean value theorem for $(1+s)^{-1/2}$, $s\geq 0$ and \eqref{E:Tau_Bound}.
  Since
  \begin{align*}
    \nabla_\Gamma\left(\frac{1}{\sqrt{1+\varepsilon^2|\tau_\varepsilon^i|^2}}\right) = -\frac{\varepsilon^2(\nabla_\Gamma\tau_\varepsilon^i)\tau_\varepsilon^i}{(1+\varepsilon^2|\tau_\varepsilon^i|^2)^{3/2}} \quad\text{on}\quad \Gamma,\, i=0,1,
  \end{align*}
  we have $|\nabla_\Gamma\varphi_\varepsilon|\leq c\varepsilon^2$ on $\Gamma$ by \eqref{E:Tau_Bound}.
  Applying this inequality, \eqref{Pf_NBA:Est_Tau}, and \eqref{Pf_NBA:Est_Phi} to $n_\varepsilon^0+n_\varepsilon^1=\varphi_\varepsilon n+\varepsilon\tau_\varepsilon$ and its tangential gradient matrix we obtain \eqref{E:N_Diff}.
\end{proof}

\begin{proof}[Proof of Lemma \ref{L:Comp_Nor}]
  Throughout the proof we write $c$ for a general positive constant independent of $\varepsilon$ and denote by $\bar{\eta}=\eta\circ\pi$ the constant extension of a function $\eta$ on $\Gamma$.
  First note that $\bar{n}$, $\overline{P}$, and $\overline{W}$ are bounded on $N$ independently of $\varepsilon$ along their first and second order derivatives by \eqref{E:ConDer_Bound}, \eqref{E:Con_Hess}, and the $C^5$-regularity of $\Gamma$.
  In the sequel we use this fact without mention.

  For $i=0,1$ let $\tau_\varepsilon^i$ and $n_\varepsilon^i$ be given by \eqref{E:Def_NB_Aux} and \eqref{E:Def_NB}, and
  \begin{align*}
    \varphi_\varepsilon^i(x) := \frac{1}{\sqrt{1+\varepsilon^2|\bar{\tau}_\varepsilon^i(x)|^2}}-1, \quad x\in N.
  \end{align*}
  By the mean value theorem for $(1+s)^{-1/2}$, $s\geq0$ and \eqref{E:Tau_Bound} we have
  \begin{align} \label{Pf_CN:Aux_1}
    |\varphi_\varepsilon^i(x)| \leq \frac{\varepsilon^2}{2}|\bar{\tau}_\varepsilon^i(x)|^2 \leq c\varepsilon^2, \quad x\in N.
  \end{align}
  We also differentiate $\varphi_\varepsilon^i$ once or twice and use \eqref{E:ConDer_Bound}, \eqref{E:Con_Hess}, and \eqref{E:Tau_Bound} to get
  \begin{align} \label{Pf_CN:Aux_1_Der}
    |\partial_x^\alpha\varphi_\varepsilon^i(x)| \leq c\varepsilon^2, \quad x\in N,\,|\alpha|=1,2,
  \end{align}
  where $\partial_x^\alpha=\partial_1^{\alpha_1}\partial_2^{\alpha_2}\partial_3^{\alpha_3}$ for $\alpha=(\alpha_1,\alpha_2,\alpha_3)\in\mathbb{Z}^3$ with $\alpha_j\geq0$, $j=1,2,3$.
  Since
  \begin{align*}
    n_\varepsilon-(-1)^{i+1}\Bigl(\bar{n}-\varepsilon\overline{\nabla_\Gamma g_i}\Bigr) = (-1)^{i+1}\varphi_\varepsilon^i(\bar{n}-\varepsilon\bar{\tau}_\varepsilon^i)-(-1)^{i+1}\varepsilon\Bigl(\bar{\tau}_\varepsilon^i-\overline{\nabla_\Gamma g_i}\Bigr)
  \end{align*}
  on $\Gamma_\varepsilon^i$ by \eqref{E:Def_NB} and \eqref{E:Nor_Bo}, the inequality \eqref{E:Comp_N} follows from \eqref{E:Tau_Bound}, \eqref{E:Tau_Diff}, and \eqref{Pf_CN:Aux_1}.
  We also have \eqref{E:Comp_P} by \eqref{E:Comp_N} and the definitions of $P$, $Q$, $P_\varepsilon$, and $Q_\varepsilon$.

  Next we prove \eqref{E:Comp_W}.
  For $x\in N$ we set
  \begin{align*}
    \Phi_\varepsilon^i(x) := (-1)^{i+1}\left\{\varphi_\varepsilon^i(x)\bar{n}(x)-\frac{\varepsilon\bar{\tau}_\varepsilon^i(x)}{\sqrt{1+\varepsilon^2|\bar{\tau}_\varepsilon^i(x)|^2}}\right\}.
  \end{align*}
  Then it follows from \eqref{E:ConDer_Bound}, \eqref{E:Con_Hess}, \eqref{E:Tau_Bound}, \eqref{Pf_CN:Aux_1}, and \eqref{Pf_CN:Aux_1_Der} that
  \begin{align} \label{Pf_CN:Aux_2}
    |\partial_x^\alpha\Phi_\varepsilon^i(x)| \leq c\varepsilon, \quad x\in N,\,|\alpha|=0,1,2.
  \end{align}
  Since $\bar{n}_\varepsilon^i(x)=(-1)^{i+1}\bar{n}(x)+\Phi_\varepsilon^i(x)$ for $x\in N$, we observe by \eqref{E:Nor_Grad} that
  \begin{align*}
    \nabla\bar{n}_\varepsilon^i(x) = (-1)^i\left\{I_3-d(x)\overline{W}(x)\right\}^{-1}\overline{W}(x)+\nabla\Phi_\varepsilon^i(x), \quad x\in N.
  \end{align*}
  Moreover, since $\bar{n}_\varepsilon^i$ is an extension of $n_\varepsilon|_{\Gamma_\varepsilon^i}$ to $N$, the Weingarten map of $\Gamma_\varepsilon^i$ is given by $W_\varepsilon=-P_\varepsilon\nabla\bar{n}_\varepsilon^i$ on $\Gamma_\varepsilon^i$.
  Thus the above equality yields
  \begin{align} \label{Pf_CN:Wein}
    W_\varepsilon(x) = P_\varepsilon(x)\left\{(-1)^{i+1}\overline{R}_\varepsilon^i(x)\overline{W}(x)-\nabla\Phi_\varepsilon^i(x)\right\}, \quad x\in\Gamma_\varepsilon^i,
  \end{align}
  where $R_\varepsilon^i$ is given by \eqref{Pf_NBA:Def_R}, and it follows from \eqref{E:Form_W} that
  \begin{align*}
    \left|W_\varepsilon-(-1)^{i+1}\overline{W}\right| &\leq \left|\Bigl(P_\varepsilon-\overline{P}\Bigr)\overline{R}_\varepsilon^i\overline{W}\right|+\left|\overline{P}\Bigl(\overline{R}_\varepsilon^i-I_3\Bigr)\overline{W}\right|+|P_\varepsilon\nabla\Phi_\varepsilon^i| \quad\text{on}\quad \Gamma_\varepsilon^i.
  \end{align*}
  Hence we obtain the first inequality of \eqref{E:Comp_W} by applying \eqref{E:Wein_Bound}, \eqref{E:Wein_Diff}, \eqref{E:Comp_P}, and \eqref{Pf_CN:Aux_2} to the above inequality.
  Also, the second inequality of \eqref{E:Comp_W} follows from the first one since $H=\mathrm{tr}[W]$ and $H_\varepsilon=\mathrm{tr}[W_\varepsilon]$.

  Let us show \eqref{E:Comp_DW}.
  Based on \eqref{Pf_CN:Wein} we define an extension of $W_\varepsilon|_{\Gamma_\varepsilon^i}$ to $N$ by
  \begin{align*}
    \widetilde{W}_\varepsilon^i(x) := \overline{P}_\varepsilon^i(x)\left\{(-1)^{i+1}\overline{R}_\varepsilon^i(x)\overline{W}(x)-\nabla\Phi_\varepsilon^i(x)\right\}, \quad x\in N,
  \end{align*}
  where $P_\varepsilon^i:=I_3-n_\varepsilon^i\otimes n_\varepsilon^i$ on $\Gamma$.
  For $x\in N$ let
  \begin{align*}
    E_\varepsilon^i(x) &:= (-1)^{i+1}\left\{\overline{P}_\varepsilon^i(x)-\overline{P}(x)\right\}\overline{R}_\varepsilon^i(x)\overline{W}(x), \\
    F_\varepsilon^i(x) &:= (-1)^{i+1}\overline{P}(x)\left\{\overline{R}_\varepsilon^i(x)-I_3\right\}\overline{W}(x), \\
    G_\varepsilon^i(x) &:= -\overline{P}_\varepsilon^i(x)\nabla\Phi_\varepsilon^i(x)
  \end{align*}
  so that $\widetilde{W}_\varepsilon^i=(-1)^{i+1}\overline{W}+E_\varepsilon^i+F_\varepsilon^i+G_\varepsilon^i$ in $N$ by \eqref{E:Form_W}.
  Then by \eqref{E:ConDer_Dom} we get
  \begin{align} \label{Pf_CN:Der_Wein}
    \partial_j\widetilde{W}_\varepsilon^i = \sum_{k=1}^3(-1)^{i+1}\left[\Bigl(I_3-d\overline{W}\Bigr)^{-1}\right]_{jk}\overline{\underline{D}_kW}+\partial_jE_\varepsilon^i+\partial_jF_\varepsilon^i+\partial_jG_\varepsilon^i
  \end{align}
  in $N$ for $j=1,2,3$.
  To estimate the last three terms we see that
  \begin{align*}
    \overline{P}_\varepsilon^i-\overline{P} = (-1)^i(\bar{n}\otimes\Phi_\varepsilon^i+\Phi_\varepsilon^i\otimes\bar{n})-\Phi_\varepsilon^i\otimes\Phi_\varepsilon^i \quad\text{in}\quad N
  \end{align*}
  by $\bar{n}_\varepsilon^i=(-1)^{i+1}\bar{n}+\Phi_\varepsilon^i$ in $N$ and the definitions of $P$ and $P_\varepsilon^i$.
  Hence
  \begin{align*}
    \left|\overline{P}_\varepsilon^i-\overline{P}\right| \leq c\varepsilon, \quad \left|\partial_j\overline{P}_\varepsilon^i-\partial_j\overline{P}\right| \leq c\varepsilon \quad\text{in}\quad N,\, j=1,2,3
  \end{align*}
  by \eqref{Pf_CN:Aux_2}.
  These inequalities, \eqref{E:Wein_Bound}, \eqref{E:Wein_Diff}, \eqref{E:ConDer_Bound}, \eqref{Pf_NBA:Est_D_Inv}, and \eqref{Pf_CN:Aux_2} show that
  \begin{align*}
    |\partial_jE_\varepsilon^i| \leq c\varepsilon, \quad |\partial_jF_\varepsilon^i| \leq c\varepsilon, \quad |\partial_jG_\varepsilon^i| \leq c\varepsilon \quad\text{in}\quad N.
  \end{align*}
  Applying \eqref{E:Wein_Diff} and the above inequalities to \eqref{Pf_CN:Der_Wein} we get
  \begin{align} \label{Pf_CN:Diff_DW}
    \left|\partial_j\widetilde{W}_\varepsilon^i(x)-(-1)^{i+1}\overline{\underline{D}_jW}(x)\right| \leq c(|d(x)|+\varepsilon), \quad x\in N,\, j=1,2,3.
  \end{align}
  Now we observe that $\underline{D}_j^\varepsilon W_\varepsilon=\sum_{k=1}^3[P_\varepsilon]_{jk}\partial_k\widetilde{W}_\varepsilon^i$ on $\Gamma_\varepsilon^i$ since $\widetilde{W}_\varepsilon^i$ is an extension of $W_\varepsilon|_{\Gamma_\varepsilon^i}$ to $N$.
  From this fact and $\underline{D}_jW=\sum_{k=1}^3P_{jk}\underline{D}_kW$ on $\Gamma$ by \eqref{E:P_TGr} we have
  \begin{multline*}
    \left|\underline{D}_j^\varepsilon W_\varepsilon-(-1)^{i+1}\overline{\underline{D}_jW}\right| \\
    \leq \sum_{k=1}^3\left(\left|\Bigl[P_\varepsilon-\overline{P}\Bigr]_{jk}\partial_k\widetilde{W}_\varepsilon^i\right|+\left|\overline{P}_{jk}\left\{\partial_k\widetilde{W}_\varepsilon^i-(-1)^{i+1}\overline{\underline{D}_kW}\right\}\right|\right)
  \end{multline*}
  on $\Gamma_\varepsilon^i$ for $j=1,2,3$.
  Applying \eqref{E:Comp_P} and \eqref{Pf_CN:Diff_DW} with $|d|=\varepsilon|\bar{g}_i|\leq c\varepsilon$ on $\Gamma_\varepsilon^i$ to the right-hand side we conclude that \eqref{E:Comp_DW} is valid.
\end{proof}

\begin{proof}[Proof of Lemma \ref{L:Diff_SQ_IO}]
  Let $\overline{P}=P\circ\pi$ be the constant extension of $P$.
  Since
  \begin{align*}
    \overline{P}(y+\varepsilon g_0(y)n(y)) = \overline{P}(y+\varepsilon g_1(y)n(y)) = P(y), \quad y\in\Gamma,
  \end{align*}
  we observe that
  \begin{align*}
    |P_\varepsilon(y+\varepsilon g_1(y)n(y))-P_\varepsilon(y+\varepsilon g_0(y)n(y))| \leq \sum_{i=0,1}\left|\Bigl[P_\varepsilon-\overline{P}\Bigr](y+\varepsilon g_i(y)n(y))\right|.
  \end{align*}
  To the right-hand side we apply \eqref{E:Comp_P} to get \eqref{E:Diff_PQ_IO} for $F_\varepsilon=P_\varepsilon$.
  Using \eqref{E:Comp_P}--\eqref{E:Comp_DW} we can show the other inequalities in the same way.
\end{proof}

Now let us give the proofs of Lemmas \ref{L:CoV_Fixed} and \ref{L:KAux_Du}.

\begin{proof}[Proof of Lemma \ref{L:CoV_Fixed}]
  Let $\Phi_\varepsilon$ be the mapping given by \eqref{E:Def_Bij}, i.e.
  \begin{align*}
    \Phi_\varepsilon(X) := \pi(X)+\varepsilon d(X)\bar{n}(X), \quad X \in \Omega_1.
  \end{align*}
  By \eqref{E:Nor_Coord} we easily see that $\Phi_\varepsilon$ is a bijection from $\Omega_1$ onto $\Omega_\varepsilon$ and its inverse is
  \begin{align*}
    \Phi_\varepsilon^{-1}(x) := \pi(x)+\varepsilon^{-1}d(x)\bar{n}(x), \quad x\in\Omega_\varepsilon.
  \end{align*}
  Moreover, by $\nabla d=\bar{n}$ in $N$, \eqref{E:Form_W}, \eqref{E:Pi_Der}, and \eqref{E:Nor_Grad} we have
  \begin{align*}
    \nabla\Phi_\varepsilon(X) = \left\{I_3-d(X)\overline{W}(X)\right\}^{-1}\left\{I_3-\varepsilon d(X)\overline{W}(X)\right\}\overline{P}(X)+\varepsilon\overline{Q}(X), \quad X\in\Omega_1.
  \end{align*}
  Since $W$ has the eigenvalues zero, $\kappa_1$, and $\kappa_2$ with $Wn=0$ on $\Gamma$, for each $X\in\Omega_1$ we can take an orthonormal basis $\{\tau_1,\tau_2,\bar{n}(X)\}$ such that
  \begin{align*}
    \overline{W}(X)\tau_i = \bar{\kappa}_i(X)\tau_i, \quad \overline{P}(X)\tau_i = \tau_i, \quad \overline{Q}(X)\tau_i = 0, \quad i=1,2.
  \end{align*}
  Then for $i=1,2$ we have
  \begin{align*}
    [\nabla\Phi_\varepsilon(X)]\tau_i &= \{1-d(X)\bar{\kappa}_i(X)\}^{-1}\{1-\varepsilon d(X)\bar{\kappa}_i(X)\}\tau_i \\
    &= \{1-d(X)\kappa_i(\pi(X))\}^{-1}\{1-\varepsilon d(X)\kappa_i(\pi(X))\}\tau_i.
  \end{align*}
  Also, $[\nabla\Phi_\varepsilon(X)]\bar{n}(X)=\varepsilon\bar{n}(X)$ by $Pn=0$ and $Qn=n$ on $\Gamma$.
  Thus
  \begin{align*}
    \det\nabla\Phi_\varepsilon(X) = \varepsilon J(\pi(X),d(X))^{-1}J(\pi(X),\varepsilon d(X)), \quad X\in\Omega_1
  \end{align*}
  and the change of variables formula \eqref{E:CoV_Fixed} holds.
  Moreover, when $\varphi\in L^2(\Omega_\varepsilon)$,
  \begin{align*}
    \|\varphi\|_{L^2(\Omega_\varepsilon)}^2 = \varepsilon\int_{\Omega_1}|\xi(X)|^2J(\pi(X),d(X))^{-1}J(\pi(X),\varepsilon d(X))\,dX
  \end{align*}
  with $\xi:=\varphi\circ\Phi_\varepsilon$ on $\Omega_1$ by \eqref{E:CoV_Fixed} and thus \eqref{E:L2_Fixed} follows from \eqref{E:Jac_Bound_01}.

  Let $\varphi\in H^1(\Omega_\varepsilon)$.
  Then the right-hand inequality of \eqref{E:L2_Fixed} yields
  \begin{align} \label{Pf_Fix:L2_Gr}
    \varepsilon^{-1}\|\nabla\varphi\|_{L^2(\Omega_\varepsilon)}^2 \geq c\|(\nabla\varphi)\circ\Phi_\varepsilon\|_{L^2(\Omega_1)}^2.
  \end{align}
  To estimate the right-hand side we observe that
  \begin{align*}
    \nabla\Phi_\varepsilon^{-1}(x) = \left\{I_3-d(x)\overline{W}(x)\right\}^{-1}\left\{I_3-\varepsilon^{-1}d(x)\overline{W}(x)\right\}\overline{P}(x)+\varepsilon^{-1}\overline{Q}(x), \quad x\in\Omega_\varepsilon
  \end{align*}
  by $\nabla d=\bar{n}$ in $N$, \eqref{E:Form_W}, \eqref{E:Pi_Der}, and \eqref{E:Nor_Grad}.
  Setting $x=\Phi_\varepsilon(X)$ with $X\in\Omega_1$ in this equality and using $d(\Phi_\varepsilon(X))=\varepsilon d(X)$ and $\pi(\Phi_\varepsilon(X))=\pi(X)$ we get
  \begin{align} \label{Pf_Fix:Gr_Pinv}
    \begin{gathered}
      \nabla\Phi_\varepsilon^{-1}(\Phi_\varepsilon(X))=\Lambda_\varepsilon(X)\overline{P}(X)+\varepsilon^{-1}\overline{Q}(X), \quad X\in\Omega_1, \\
      \Lambda_\varepsilon(X) := \left\{I_3-\varepsilon d(X)\overline{W}(X)\right\}^{-1}\left\{I_3-d(X)\overline{W}(X)\right\}.
    \end{gathered}
  \end{align}
  Let $\xi=\varphi\circ\Phi_\varepsilon$ on $\Omega_1$.
  Then since $\varphi=\xi\circ\Phi_\varepsilon^{-1}$ on $\Omega_\varepsilon$,
  \begin{align*}
    \nabla\varphi(x) = \nabla\Phi_\varepsilon^{-1}(x)\nabla\xi(\Phi_\varepsilon^{-1}(x)), \quad x\in\Omega_\varepsilon
  \end{align*}
  and by setting $x=\Phi_\varepsilon(X)$ we get
  \begin{align} \label{Pf_Fix:GrP_Phi}
    \nabla\varphi(\Phi_\varepsilon(X)) = [\nabla\Phi_\varepsilon^{-1}(\Phi_\varepsilon(X))]\nabla\xi(X), \quad X\in\Omega_1.
  \end{align}
  We apply \eqref{Pf_Fix:Gr_Pinv} to this equality and use
  \begin{align} \label{Pf_Fix:LPPL}
    \Lambda_\varepsilon(X)\overline{P}(X) = \overline{P}(X)\Lambda_\varepsilon(X),
  \end{align}
  which follows from \eqref{E:Form_W} and \eqref{E:WReso_P}, and
  \begin{align*}
    \overline{Q}(X)\nabla\xi(X) = \{\bar{n}(X)\cdot\nabla\xi(X)\}\bar{n}(X) = \partial_n\xi(X)\bar{n}(X)
  \end{align*}
  to obtain
  \begin{align*}
    [(\nabla\varphi)\circ\Phi_\varepsilon](X) = \overline{P}(X)\Lambda_\varepsilon(X)\nabla\xi(X)+\varepsilon^{-1}\partial_n\xi(X)\bar{n}(X), \quad X\in\Omega_1.
  \end{align*}
  Here the two terms on the right-hand side are orthogonal to each other.
  Moreover,
  \begin{align*}
    \left|\overline{P}(X)\Lambda_\varepsilon(X)\nabla\xi(X)\right| = \left|\Lambda_\varepsilon(X)\overline{P}(X)\nabla\xi(X)\right| \geq c\left|\overline{P}(X)\nabla\xi(X)\right|
  \end{align*}
  by \eqref{E:Wein_Bound} and \eqref{Pf_Fix:LPPL}.
  Hence
  \begin{align*}
    |[(\nabla\varphi)\circ\Phi_\varepsilon](X)|^2 &= \left|\overline{P}(X)\Lambda_\varepsilon(X)\nabla\xi(X)\right|^2+\varepsilon^{-2}|\partial_n\xi(X)\bar{n}(X)|^2 \\
    &\geq c\left|\overline{P}(X)\nabla\xi(X)\right|^2+\varepsilon^{-2}|\partial_n\xi(X)|^2
  \end{align*}
  for $X\in\Omega_1$.
  Combining this inequality and \eqref{Pf_Fix:L2_Gr} we obtain
  \begin{align*}
    \varepsilon^{-1}\|\nabla\varphi\|_{L^2(\Omega_\varepsilon)}^2 \geq c\|(\nabla\varphi)\circ\Phi_\varepsilon\|_{L^2(\Omega_1)}^2 \geq c\left(\left\|\overline{P}\nabla\xi\right\|_{L^2(\Omega_1)}^2+\varepsilon^{-2}\|\partial_n\xi\|_{L^2(\Omega_1)}^2\right).
  \end{align*}
  Thus \eqref{E:H1_Fixed} is valid.
  Also, noting that
  \begin{align*}
    \|\nabla\xi\|_{L^2(\Omega_1)}^2 = \left\|\overline{P}\nabla\xi\right\|_{L^2(\Omega_1)}^2+\left\|\overline{Q}\nabla\xi\right\|_{L^2(\Omega_1)}^2 = \left\|\overline{P}\nabla\xi\right\|_{L^2(\Omega_1)}^2+\|\partial_n\xi\|_{L^2(\Omega_1)}^2,
  \end{align*}
  we have $\xi\in H^1(\Omega_1)$ by \eqref{E:L2_Fixed} and \eqref{E:H1_Fixed}.
\end{proof}

To prove Lemma \ref{L:KAux_Du} we present an auxiliary result.

\begin{lemma} \label{L:Norm_Mat}
  Let $\tau_1,\tau_2,n_0\in\mathbb{R}^3$ and $A\in\mathbb{R}^{3\times3}$ satisfy
  \begin{align} \label{E:Mat_Cd}
    |n_0| = 1, \quad n_0\cdot\tau_1 = n_0\cdot\tau_2 = 0, \quad An_0 = A^Tn_0 = 0.
  \end{align}
  Then for $B:=A+\tau_1\otimes n_0+n_0\otimes\tau_2+cn_0\otimes n_0$ with $c\in\mathbb{R}^3$ we have
  \begin{align} \label{E:Norm_Mat}
    |B|^2 = |A|^2+|\tau_1|^2+|\tau_2|^2+|c|^2.
  \end{align}
\end{lemma}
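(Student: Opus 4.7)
The plan is to expand $|B|^2 = B:B$ directly using the Frobenius inner product and exploit the orthogonality hypotheses \eqref{E:Mat_Cd} to eliminate every cross term. Writing $B$ as a sum of four matrices $A$, $\tau_1\otimes n_0$, $n_0\otimes\tau_2$, and $c\,n_0\otimes n_0$, the quantity $|B|^2$ expands into sixteen pairings, four of which are the diagonal squares and twelve of which are cross terms (in six symmetric pairs).

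First I would compute the diagonal contributions. By $|n_0|=1$ and the identity $(a\otimes b):(a\otimes b) = |a|^2|b|^2$, one has
\begin{align*}
(\tau_1\otimes n_0):(\tau_1\otimes n_0) &= |\tau_1|^2, \\
(n_0\otimes\tau_2):(n_0\otimes\tau_2) &= |\tau_2|^2, \\
(c\,n_0\otimes n_0):(c\,n_0\otimes n_0) &= |c|^2,
\end{align*}
while $A:A=|A|^2$ by definition. Summed, these four terms give the right-hand side of \eqref{E:Norm_Mat}.

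It remains to verify that the six cross terms vanish. Using $M:(a\otimes b) = a\cdot Mb$ and $(a\otimes b):(c\otimes d) = (a\cdot c)(b\cdot d)$, the three cross terms involving $A$ become
\begin{align*}
A:(\tau_1\otimes n_0) &= \tau_1\cdot An_0 = 0, \\
A:(n_0\otimes\tau_2) &= n_0\cdot A\tau_2 = (A^T n_0)\cdot\tau_2 = 0, \\
A:(c\,n_0\otimes n_0) &= c\,(n_0\cdot An_0) = 0,
\end{align*}
all by the hypothesis $An_0 = A^T n_0 = 0$. The remaining three cross terms among $\tau_1\otimes n_0$, $n_0\otimes\tau_2$, and $c\,n_0\otimes n_0$ each contain at least one factor of $n_0\cdot\tau_1$ or $n_0\cdot\tau_2$, which vanish by \eqref{E:Mat_Cd}; for instance $(\tau_1\otimes n_0):(n_0\otimes\tau_2) = (\tau_1\cdot n_0)(n_0\cdot\tau_2) = 0$. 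Collecting these computations yields \eqref{E:Norm_Mat}. There is no real obstacle here — the statement is essentially an orthogonal decomposition of $B$ in the Frobenius inner product, with $A$ mapping $n_0^\perp$ into $n_0^\perp$ and the three rank-one pieces supplying the complementary directions $\tau_1\otimes n_0$, $n_0\otimes\tau_2$, $n_0\otimes n_0$, each of which is Frobenius-orthogonal to $A$ and to the others.
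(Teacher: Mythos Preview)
Your proof is correct and follows essentially the same approach as the paper: both expand $|B|^2$ and use the hypotheses \eqref{E:Mat_Cd} to kill the cross terms, with the only cosmetic difference that the paper first computes the matrix $B^TB$ and then takes its trace, whereas you work directly with the bilinear expansion of $B:B$.
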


\begin{proof}
  By direct calculations and $|n_0|=1$, $n_0\cdot\tau_1=0$, and $A^Tn_0=0$ we have
  \begin{multline*}
    B^TB = A^TA+\tau_2\otimes\tau_2+(|\tau_1|^2+|c|^2)n_0\otimes n_0 \\
    +(A^T\tau_1)\otimes n_0+n_0\otimes(A^T\tau_1)+c(\tau_2\otimes n_0+n_0\otimes\tau_2).
  \end{multline*}
  Hence $|B|^2=\mathrm{tr}[B^TB]$ is of the form
  \begin{align*}
    |B|^2 = |A|^2+|\tau_2|^2+(|\tau_1|^2+|c|^2)|n_0|^2+2(A^T\tau_1)\cdot n_0+2c(\tau_2\cdot n_0).
  \end{align*}
  Since $|n_0|=1$, $\tau_2\cdot n_0=0$, and $(A^T\tau_1)\cdot n_0=\tau_1\cdot(An_0)=0$ by $An_0=0$, we conclude by the above equality that \eqref{E:Norm_Mat} is valid.
\end{proof}

\begin{proof}[Proof of Lemma \ref{L:KAux_Du}]
  Let $\Phi_\varepsilon$ be the bijection from $\Omega_1$ onto $\Omega_\varepsilon$ given by \eqref{E:Def_Bij}.
  For $u\in H^1(\Omega_\varepsilon)^3$ we have $U:=u\circ\Phi_\varepsilon\in H^1(\Omega_1)^3$ by Lemma \ref{L:CoV_Fixed}.
  Also, denoting by $u_i$ and $U_i$ the $i$-th components of $u$ and $U$ for $i=1,2,3$ we see by \eqref{E:H1_Fixed} that
  \begin{align*}
    \varepsilon^{-1}\|\nabla u_i\|_{L^2(\Omega_\varepsilon)}^2 \geq c\left(\left\|\overline{P}\nabla U_i\right\|_{L^2(\Omega_1)}^2+\varepsilon^{-2}\|\partial_nU_i\|_{L^2(\Omega_1)}^2\right), \quad i=1,2,3.
  \end{align*}
  Since $\|\nabla u\|_{L^2(\Omega_\varepsilon)}^2 = \sum_{i=1}^3\|\nabla u_i\|_{L^2(\Omega_\varepsilon)}^2$, $\|\partial_nU\|_{L^2(\Omega_1)}^2 = \sum_{i=1}^3\|\partial_nU_i\|_{L^2(\Omega_1)}^2$, and
  \begin{align*}
    \left\|\overline{P}\nabla U\right\|_{L^2(\Omega_1)}^2 = \sum_{i,j,k=1}^3\left\|\overline{P}_{ij}\partial_jU_k\right\|_{L^2(\Omega_1)^2}^2 = \sum_{k=1}^3\left\|\overline{P}\nabla U_k\right\|_{L^2(\Omega_1)}^2,
  \end{align*}
  by the above inequality we get \eqref{E:H1_Fixed} with $\varphi$ and $\xi$ replaced by $u$ and $U$.

  Let us prove \eqref{E:KAux_Du}.
  Hereafter we carry out all calculations in $\Omega_1$ and suppress the argument $X\in\Omega_1$ unless otherwise stated.
  From \eqref{Pf_Fix:Gr_Pinv}--\eqref{Pf_Fix:LPPL} and
  \begin{align*}
    \overline{Q}\nabla U &= \bar{n}\otimes[(\nabla U)^T\bar{n}], \quad (\nabla U)^T\bar{n} = (\bar{n}\cdot\nabla)U = \partial_nU
  \end{align*}
  we deduce that
  \begin{align*}
    (\nabla u)\circ\Phi_\varepsilon = \overline{P}F_\varepsilon(U)+\varepsilon^{-1}\bar{n}\otimes\partial_nU, \quad F_\varepsilon(U):=\Lambda_\varepsilon\nabla U.
  \end{align*}
  Moreover, by $I_3=P+Q$ on $\Gamma$ and \eqref{E:NorDer_Con} with $\eta=n,P$ we see that
  \begin{align*}
    \overline{P}F_\varepsilon(U) &= \overline{P}F_\varepsilon(U)\overline{P}+\overline{P}F_\varepsilon(U)\overline{Q} = \overline{P}F_\varepsilon(U)\overline{P}+\Bigl[\overline{P}F_\varepsilon(U)\bar{n}\Bigr]\otimes\bar{n}, \\
    \partial_nU &= \partial_n\Bigl[\overline{P}U+(U\cdot\bar{n})\bar{n}\Bigr] = \overline{P}\partial_nU+\{\partial_n(U\cdot\bar{n})\}\bar{n}.
  \end{align*}
  By the above equalities we get (note that $B_S=(B+B^T)/2$ for $B\in\mathbb{R}^{3\times3}$)
  \begin{gather*}
    D(u)\circ\Phi_\varepsilon = (\nabla u)_S\circ\Phi_\varepsilon = A+\tau_1\otimes n_0+n_0\otimes\tau_2+\varepsilon^{-1}\{\partial_n(U\cdot\bar{n})\}n_0\otimes n_0, \\
    A := \overline{P}F_\varepsilon(U)_S\overline{P}, \quad \tau_1 = \tau_2 := \frac{1}{2}\overline{P}\{F_\varepsilon(U)\bar{n}+\varepsilon^{-1}\partial_nU\}, \quad n_0 := \bar{n}.
  \end{gather*}
  Moreover, since $Pn=P^Tn=0$ on $\Gamma$, we see that $A$, $\tau_1$, $\tau_2$, and $n_0$ satisfy \eqref{E:Mat_Cd}.
  Hence we can apply \eqref{E:Norm_Mat} to $B=D(u)\circ\Phi_\varepsilon$ to get
  \begin{align*}
    |D(u)\circ\Phi_\varepsilon|^2 = |A|^2+|\tau_1|^2+|\tau_2|^2+\varepsilon^{-2}|\partial_n(U\cdot\bar{n})|^2 \geq |A|^2+\varepsilon^{-2}|\partial_n(U\cdot\bar{n})|^2
  \end{align*}
  in $\Omega_1$.
  From this inequality and \eqref{E:L2_Fixed} we deduce that
  \begin{align*}
    \varepsilon^{-1}\|D(u)\|_{L^2(\Omega_\varepsilon)}^2 \geq c\|D(u)\circ\Phi_\varepsilon\|_{L^2(\Omega_1)}^2 \geq c\left(\|A\|_{L^2(\Omega_1)}^2+\varepsilon^{-2}\|\partial_n(U\cdot\bar{n})\|_{L^2(\Omega_1)}^2\right).
  \end{align*}
  Since $A=\overline{P}F_\varepsilon(U)_S\overline{P}$ and $F_\varepsilon(U)=\Lambda_\varepsilon\nabla U$ is of the form \eqref{E:KAux_Matrix}, we obtain \eqref{E:KAux_Du} by the above inequality.
\end{proof}

Finally, we prove Lemma \ref{L:G_Bound} for the vector field $G(u)$ given by \eqref{E:Def_Gu}.

\begin{proof}[Proof of Lemma \ref{L:G_Bound}]
  For a function $\eta$ on $\Gamma$ we denote by $\bar{\eta}=\eta\circ\pi$ its constant extension in the normal direction of $\Gamma$.
  Let $n_\varepsilon^0$ and $n_\varepsilon^1$ be the vector fields on $\Gamma$ given by \eqref{E:Def_NB} and $W_\varepsilon^0$, $W_\varepsilon^1$, $\tilde{n}_1$, $\tilde{n}_2$, and $\widetilde{W}$ the functions on $N$ given by \eqref{E:Def_Wieps}--\eqref{E:Def_Tilde}.
  By \eqref{E:ConDer_Bound}, \eqref{E:Con_Hess}, \eqref{E:N_Bound}, and $g_0,g_1\in C^4(\Gamma)$ we see that
  \begin{align} \label{Pf_GB:Est_N_G}
    |\partial_x^\alpha\bar{n}_\varepsilon^i(x)| \leq c, \quad |\partial_x^\alpha\bar{g}_i(x)| \leq c, \quad x \in N,\,i=0,1,\,|\alpha|=0,1,2,
  \end{align}
  where $\partial_x^\alpha=\partial_1^{\alpha_1}\partial_2^{\alpha_2}\partial_3^{\alpha_3}$ for $\alpha=(\alpha_1,\alpha_2,\alpha_3)^T\in\mathbb{Z}^3$ with $\alpha_j\geq0$, $j=1,2,3$ and $c>0$ is a constant independent of $\varepsilon$.
  From \eqref{Pf_GB:Est_N_G} it also follows that
  \begin{align} \label{Pf_GB:Est_W}
    |W_\varepsilon^i(x)| \leq c, \quad |\partial_kW_\varepsilon^i(x)| \leq c, \quad x \in N, \,i=0,1,\,k=1,2,3.
  \end{align}
  By \eqref{E:Fric_Upper}, \eqref{Pf_GB:Est_N_G}, \eqref{Pf_GB:Est_W} and
  \begin{align} \label{Pf_GB:Width}
    0 \leq d(x)-\varepsilon\bar{g}_0(x) \leq \varepsilon\bar{g}(x), \quad 0 \leq \varepsilon\bar{g}_1(x)-d(x) \leq \varepsilon\bar{g}(x), \quad x\in \Omega_\varepsilon
  \end{align}
  we have
  \begin{align} \label{Pf_GB:nW_Bound}
    |\tilde{n}_1| \leq c, \quad |\tilde{n}_2| \leq c\varepsilon, \quad \left|\widetilde{W}\right| \leq c \quad\text{in}\quad \Omega_\varepsilon.
  \end{align}
  Applying \eqref{Pf_GB:nW_Bound} to \eqref{E:Def_Gu} we obtain the first inequality of \eqref{E:G_Bound}.

  To prove the second inequality of \eqref{E:G_Bound} we estimate the first order derivatives of $\tilde{n}_1$, $\tilde{n}_2$, and $\widetilde{W}$.
  We differentiate $\tilde{n}_1$ and use $\nabla d=\bar{n}$ in $N$ to get
  \begin{align*}
    \nabla\tilde{n}_1 = \frac{1}{\varepsilon\bar{g}}\bar{n}\otimes(\bar{n}_\varepsilon^0+\bar{n}_\varepsilon^1)+A_1 \quad\text{in}\quad N,
  \end{align*}
  where $A_1$ is a $3\times 3$ matrix-valued function on $N$ defined by
  \begin{multline*}
    A_1 := -\frac{\nabla\bar{g}}{\bar{g}}\otimes\tilde{n}_1-\frac{1}{\bar{g}}(\nabla\bar{g}_0\otimes\bar{n}_\varepsilon^1+\nabla\bar{g}_1\otimes\bar{n}_\varepsilon^0) \\
    +\frac{1}{\varepsilon\bar{g}}\{(d-\varepsilon\bar{g}_0)\nabla\bar{n}_\varepsilon^1-(\varepsilon\bar{g}_1-d)\nabla\bar{n}_\varepsilon^0\}.
  \end{multline*}
  By \eqref{E:G_Inf}, \eqref{Pf_GB:Est_N_G}, \eqref{Pf_GB:Width}, and \eqref{Pf_GB:nW_Bound} we observe that $A_1$ is bounded on $\Omega_\varepsilon$ uniformly in $\varepsilon$.
  From this fact, \eqref{E:G_Inf}, and \eqref{E:N_Diff} it follows that
  \begin{align} \label{Pf_GB:n1_Grad}
    |\nabla\tilde{n}_1| \leq \frac{1}{\varepsilon\bar{g}}|\bar{n}_\varepsilon^0+\bar{n}_\varepsilon^1|+|A_1| \leq c \quad\text{in}\quad \Omega_\varepsilon.
  \end{align}
  Similarly, we differentiate $\tilde{n}_2$ and $\widetilde{W}$ and use $\nabla d=\bar{n}$ in $N$, \eqref{E:G_Inf}, and \eqref{Pf_GB:Est_N_G}--\eqref{Pf_GB:nW_Bound} to deduce that
  \begin{align*}
    \nabla\tilde{n}_2 &= \frac{1}{\varepsilon\bar{g}}\bar{n}\otimes\left(\frac{\gamma_\varepsilon^1}{\nu}\bar{n}_\varepsilon^1-\frac{\gamma_\varepsilon^0}{\nu}\bar{n}_\varepsilon^0\right)+A_2, \\
    \partial_k\widetilde{W} &= \frac{1}{\varepsilon\bar{g}}\bar{n}_k(W_\varepsilon^0+W_\varepsilon^1)+B_k, \quad k=1,2,3
  \end{align*}
  in $N$, where the matrix-valued functions $A_2$ and $B_k$, $k=1,2,3$ are bounded on $\Omega_\varepsilon$ uniformly in $\varepsilon$.
  We apply \eqref{E:G_Inf}, \eqref{E:Fric_Upper}, and \eqref{E:N_Bound} to $\nabla\tilde{n}_2$.
  Then we have
  \begin{align} \label{Pf_GB:n2_Grad}
    |\nabla\tilde{n}_2| \leq \frac{c(\gamma_\varepsilon^0+\gamma_\varepsilon^1)}{\varepsilon\bar{g}}+|A_2| \leq c \quad\text{in}\quad \Omega_\varepsilon.
  \end{align}
  To estimate the first order derivatives of $\widetilde{W}$ we see by \eqref{E:ConDer_Dom} that
  \begin{multline*}
    W_\varepsilon^0+W_\varepsilon^1 = \{(\bar{n}_\varepsilon^0+\bar{n}_\varepsilon^1)\otimes\bar{n}_\varepsilon^0-\bar{n}_\varepsilon^1\otimes(\bar{n}_\varepsilon^0+\bar{n}_\varepsilon^1)\}\Bigl(I_3-d\overline{W}\Bigr)^{-1}\overline{\nabla_\Gamma n_\varepsilon^0} \\
    -(I_3-\bar{n}_\varepsilon^1\otimes\bar{n}_\varepsilon^1)\Bigl(I_3-d\overline{W}\Bigr)^{-1}\left(\overline{\nabla_\Gamma n_\varepsilon^0}+\overline{\nabla_\Gamma n_\varepsilon^1}\right)
  \end{multline*}
  in $N$.
  Hence $|W_\varepsilon^0+W_\varepsilon^1|\leq c\varepsilon$ in $N$ by \eqref{E:Wein_Bound}, \eqref{E:N_Bound}, and \eqref{E:N_Diff} and we get
  \begin{align} \label{Pf_GB:W1_Bound}
    \left|\partial_k\widetilde{W}\right| \leq \frac{1}{\varepsilon\bar{g}}|W_\varepsilon^0+W_\varepsilon^1|+|B_k| \leq c \quad\text{in}\quad \Omega_\varepsilon,\,k=1,2,3.
  \end{align}
  Here we also used \eqref{E:G_Inf} and the uniform in $\varepsilon$ boundedness of $B_k$ on $\Omega_\varepsilon$ in the second inequality.
  Noting that $G(u)$ is given by \eqref{E:Def_Gu}, we apply \eqref{Pf_GB:nW_Bound}--\eqref{Pf_GB:W1_Bound} to $\nabla G(u)$ to obtain the second inequality of \eqref{E:G_Bound}.
\end{proof}

\section{Formulas for the covariant derivatives} \label{S:Ap_Cov}
In this appendix we present formulas for the covariant derivatives of tangential vector fields on an embedded surface in $\mathbb{R}^3$ used in the proof of Lemma \ref{L:Lap_Apri}.

Let $\Gamma$ be a closed, connected, and oriented surface in $\mathbb{R}^3$ of class $C^3$.
We use the notations given in Section \ref{SS:Pre_Surf}.
For $X\in C^1(\Gamma,T\Gamma)$ and $Y\in C(\Gamma,T\Gamma)$ we define the covariant derivative of $X$ along $Y$ by
\begin{align} \label{E:Def_Covari}
  \overline{\nabla}_YX := P(Y\cdot\nabla)\widetilde{X} \quad\text{on}\quad \Gamma,
\end{align}
where $\widetilde{X}$ is a $C^1$-extension of $X$ to an open neighborhood of $\Gamma$ with $\widetilde{X}|_\Gamma=X$.
Since $Y$ is tangential on $\Gamma$, we observe by \eqref{E:Tgrad_Surf} that
\begin{align*}
  (Y\cdot\nabla)\widetilde{X} = (Y\cdot\nabla_\Gamma)X \quad\text{on}\quad \Gamma.
\end{align*}
Thus the value of $\overline{\nabla}_YX$ does not depend on the choice of an extension of $X$.
The directional derivative $(Y\cdot\nabla_\Gamma)X$ is expressed by $\overline{\nabla}_YX$ and the Weingarten map $W$.

\begin{lemma} \label{L:Gauss}
  For $X\in C^1(\Gamma,T\Gamma)$ and $Y\in C(\Gamma,T\Gamma)$ we have
  \begin{align} \label{E:Gauss}
    (Y\cdot\nabla)\widetilde{X} = (Y\cdot\nabla_\Gamma)X = \overline{\nabla}_YX+(WX\cdot Y)n \quad\text{on}\quad \Gamma,
  \end{align}
  where $\widetilde{X}$ is any $C^1$-extension of $X$ to an open neighborhood of $\Gamma$ with $\widetilde{X}|_\Gamma=X$.
\end{lemma}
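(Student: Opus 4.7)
The plan is to derive the Gauss formula by combining the tangential orthogonal decomposition $I_3 = P + n\otimes n$ with the identity $\nabla_\Gamma n = -W$ and the symmetry of $W$.

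First I would handle the equality $(Y\cdot\nabla)\widetilde{X} = (Y\cdot\nabla_\Gamma)X$. Since $Y$ is tangential on $\Gamma$, we have $PY = Y$, and then the second identity in \eqref{E:Tgrad_Surf} applied to $u = X$ and $\varphi = Y$ gives $(Y\cdot\nabla_\Gamma)X = ((PY)\cdot\nabla)\widetilde{X} = (Y\cdot\nabla)\widetilde{X}$ on $\Gamma$, for any $C^1$-extension $\widetilde{X}$. In particular this shows the value does not depend on the choice of extension.

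Next I would decompose $(Y\cdot\nabla)\widetilde{X}$ using $I_3 = P + n\otimes n$ on $\Gamma$:
\begin{align*}
(Y\cdot\nabla)\widetilde{X} = P(Y\cdot\nabla)\widetilde{X} + \bigl\{[(Y\cdot\nabla)\widetilde{X}]\cdot n\bigr\}n = \overline{\nabla}_YX + \bigl\{[(Y\cdot\nabla)\widetilde{X}]\cdot n\bigr\}n
\end{align*}
by the definition \eqref{E:Def_Covari} of the covariant derivative. It remains to show the normal component equals $WX\cdot Y$.

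For this, I would use the tangentiality of $X$: since $X\cdot n = 0$ on $\Gamma$, and $Y$ is tangential so that $Y\cdot\nabla_\Gamma$ acts meaningfully on functions on $\Gamma$, differentiating yields
\begin{align*}
0 = (Y\cdot\nabla_\Gamma)(X\cdot n) = [(Y\cdot\nabla_\Gamma)X]\cdot n + X\cdot[(Y\cdot\nabla_\Gamma)n] \quad\text{on}\quad \Gamma.
\end{align*}
By the first step and the definition $W = -\nabla_\Gamma n$, together with the symmetry of $W$ established in Lemma \ref{L:Form_W}, we have $(Y\cdot\nabla_\Gamma)n = (\nabla_\Gamma n)^T Y = -WY$, so $[(Y\cdot\nabla)\widetilde{X}]\cdot n = X\cdot WY = WX\cdot Y$. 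Substituting this into the decomposition above gives \eqref{E:Gauss}.

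There is no serious obstacle here; the only point requiring care is that $n$ is intrinsically defined only on $\Gamma$, so one must apply the product rule through the tangential derivative $Y\cdot\nabla_\Gamma$ (which depends only on boundary values), not through a naive extension of $n$ to $N$. Once that is respected, the calculation is a two-line consequence of $\nabla_\Gamma n = -W$ and $W^T = W$.
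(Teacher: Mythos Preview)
Your proof is correct and follows essentially the same approach as the paper: decompose $(Y\cdot\nabla_\Gamma)X$ into its tangential part $\overline{\nabla}_YX$ and normal part, then compute the normal component by differentiating $X\cdot n=0$ along $Y$ and using $-\nabla_\Gamma n=W=W^T$. The paper's proof is slightly more terse (it omits the explicit $P+n\otimes n$ decomposition and the verification of $(Y\cdot\nabla)\widetilde{X}=(Y\cdot\nabla_\Gamma)X$, which it already noted before stating the lemma), but the argument is the same.
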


\begin{proof}
  Since $X\cdot n=0$ and $-\nabla_\Gamma n=W$ on $\Gamma$,
  \begin{align*}
    (Y\cdot\nabla_\Gamma)X\cdot n &= Y\cdot\nabla_\Gamma(X\cdot n)-X\cdot(Y\cdot\nabla_\Gamma)n \\
    &= X\cdot(-\nabla_\Gamma n)^TY = X\cdot W^TY = WX\cdot Y
  \end{align*}
  on $\Gamma$.
  Combining this with \eqref{E:Def_Covari} we obtain \eqref{E:Gauss}.
\end{proof}

The formula \eqref{E:Gauss} is called the Gauss formula (see e.g. \cites{Ch15,Lee18}).
Let us show fundamental properties of the covariant derivative.

\begin{lemma} \label{L:RiCo}
  The following equalities hold on $\Gamma$:
  \begin{itemize}
    \item For $X\in C^1(\Gamma,T\Gamma)$, $Y,Z\in C(\Gamma,T\Gamma)$, and $\eta,\xi\in C(\Gamma)$,
    \begin{align} \label{E:RiCo_AfY}
      \overline{\nabla}_{\eta Y+\xi Z}X = \eta\overline{\nabla}_YX+\xi\overline{\nabla}_ZX.
    \end{align}
    \item For $X\in C^1(\Gamma,T\Gamma)$, $Y\in C(\Gamma,T\Gamma)$, and $\eta\in C^1(\Gamma)$,
    \begin{align} \label{E:RiCo_AfX}
      \overline{\nabla}_Y(\eta X) = (Y\cdot\nabla_\Gamma\eta)X+\eta\overline{\nabla}_YX.
    \end{align}
    \item For $X,Y\in C^1(\Gamma,T\Gamma)$ and $Z\in C(\Gamma,T\Gamma)$,
    \begin{align} \label{E:RiCo_Met}
      Z\cdot\nabla_\Gamma(X\cdot Y) = \overline{\nabla}_ZX\cdot Y+X\cdot\overline{\nabla}_ZY.
    \end{align}
    \item For $X,Y\in C^1(\Gamma,T\Gamma)$ and $\eta\in C^2(\Gamma)$,
    \begin{align} \label{E:RiCo_Tor}
      X\cdot\nabla_\Gamma(Y\cdot\nabla_\Gamma\eta)-Y\cdot\nabla_\Gamma(X\cdot\nabla_\Gamma\eta) = \Bigl(\overline{\nabla}_XY-\overline{\nabla}_YX\Bigr)\cdot\nabla_\Gamma\eta.
    \end{align}
  \end{itemize}
\end{lemma}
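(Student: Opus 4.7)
The plan is to prove the four identities in order, leveraging the defining formula $\overline{\nabla}_YX = P(Y\cdot\nabla)\widetilde{X}$ together with the Gauss formula \eqref{E:Gauss} and elementary calculus identities in $\mathbb{R}^3$.

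First, \eqref{E:RiCo_AfY} is immediate: the directional derivative $(\eta Y+\xi Z)\cdot\nabla$ is $\mathbb{R}$-linear in the direction with function coefficients, and $P$ is linear, so applying $P$ to $(\eta Y+\xi Z\cdot\nabla)\widetilde X=\eta(Y\cdot\nabla)\widetilde X+\xi(Z\cdot\nabla)\widetilde X$ gives \eqref{E:RiCo_AfY}. For \eqref{E:RiCo_AfX} I would take an extension $\widetilde X$ of $X$ and, noting that $\eta$ already lives on $\Gamma$, extend $\eta$ to $\bar\eta=\eta\circ\pi$. Then $(Y\cdot\nabla)(\bar\eta\widetilde X)=((Y\cdot\nabla)\bar\eta)\widetilde X+\bar\eta\,(Y\cdot\nabla)\widetilde X$, and since $Y$ is tangential we have $(Y\cdot\nabla)\bar\eta|_\Gamma=Y\cdot\nabla_\Gamma\eta$ by \eqref{E:Tgrad_Surf}; applying $P$ and using $PX=X$ yields \eqref{E:RiCo_AfX}.

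For the metric compatibility \eqref{E:RiCo_Met}, I would start from the ordinary product rule
\[
Z\cdot\nabla_\Gamma(X\cdot Y)=(Z\cdot\nabla_\Gamma)X\cdot Y+X\cdot(Z\cdot\nabla_\Gamma)Y\quad\text{on}\quad\Gamma,
\]
and then invoke the Gauss formula \eqref{E:Gauss} to write $(Z\cdot\nabla_\Gamma)X=\overline{\nabla}_ZX+(WX\cdot Z)n$ and similarly for $Y$. Because $X$ and $Y$ are tangential, the terms $(WX\cdot Z)n\cdot Y$ and $X\cdot(WY\cdot Z)n$ vanish, and \eqref{E:RiCo_Met} drops out.

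The main (and most delicate) step is the torsion-free identity \eqref{E:RiCo_Tor}. Here I would extend $\eta$ to a $C^2$ function $\tilde\eta$ on a neighborhood of $\Gamma$ and use the tangentiality of $X$ to write $X\cdot\nabla_\Gamma\eta=\widetilde X\cdot\nabla\tilde\eta$ on $\Gamma$, with $\widetilde X$ an extension of $X$. Then
\[
Y\cdot\nabla_\Gamma(X\cdot\nabla_\Gamma\eta)=(Y\cdot\nabla)\bigl(\widetilde X\cdot\nabla\tilde\eta\bigr)\Big|_\Gamma=\bigl((Y\cdot\nabla)\widetilde X\bigr)\cdot\nabla\tilde\eta+\widetilde X\cdot(Y\cdot\nabla)\nabla\tilde\eta
\]
on $\Gamma$, and symmetrically for the other term. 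Symmetry of the Hessian $\nabla^2\tilde\eta$ makes the two second-order pieces cancel in the difference, leaving
\[
X\cdot\nabla_\Gamma(Y\cdot\nabla_\Gamma\eta)-Y\cdot\nabla_\Gamma(X\cdot\nabla_\Gamma\eta)=\bigl((X\cdot\nabla)\widetilde Y-(Y\cdot\nabla)\widetilde X\bigr)\cdot\nabla\tilde\eta.
\]
At this point the subtlety is that $(X\cdot\nabla)\widetilde Y$ and $(Y\cdot\nabla)\widetilde X$ each have normal components, so I would apply Gauss \eqref{E:Gauss} to both and use the symmetry of $W$ (Lemma \ref{L:Form_W}) to get $WY\cdot X=WX\cdot Y$, which causes the normal parts $(WY\cdot X)n$ and $(WX\cdot Y)n$ to cancel. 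The remaining tangential vector $\overline{\nabla}_XY-\overline{\nabla}_YX$ may then be dotted with either $\nabla\tilde\eta$ or, equivalently since it is tangential, with $P\nabla\tilde\eta=\nabla_\Gamma\eta$, giving \eqref{E:RiCo_Tor}. The hardest step is this cancellation of normal components, which crucially relies on the symmetry of the second fundamental form.
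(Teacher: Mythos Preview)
Your proof is correct. For \eqref{E:RiCo_AfY}, \eqref{E:RiCo_AfX}, and \eqref{E:RiCo_Met} your argument is essentially identical to the paper's. For the torsion-free identity \eqref{E:RiCo_Tor}, however, you take a genuinely different route. The paper works intrinsically on $\Gamma$: it expands the left-hand side in the tangential derivatives $\underline{D}_i$, splits it into a first-order piece $J_1=\sum_{i,j}(X_i\underline{D}_iY_j-Y_i\underline{D}_iX_j)\underline{D}_j\eta$ and a second-order piece $J_2=\sum_{i,j}(X_iY_j-X_jY_i)\underline{D}_i\underline{D}_j\eta$, identifies $J_1$ with the right-hand side via the Gauss formula and $n\cdot\nabla_\Gamma\eta=0$, and kills $J_2$ using the commutator formula \eqref{E:TD_Exc} together with $X\cdot n=Y\cdot n=0$. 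You instead pass to ambient extensions and let the symmetry of the Euclidean Hessian $\nabla^2\tilde\eta$ cancel the second-order terms directly, then use the symmetry of $W$ (via the Gauss formula) to cancel the normal components of $(X\cdot\nabla)\widetilde Y-(Y\cdot\nabla)\widetilde X$. Your approach is slightly more elementary in that it avoids invoking Lemma~\ref{L:TD_Exc}; the paper's approach, on the other hand, stays entirely within the surface calculus and makes explicit where the curvature enters (through \eqref{E:TD_Exc}). Both ultimately rest on the symmetry of the second fundamental form, just packaged differently.
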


\begin{proof}
  The equalities \eqref{E:RiCo_AfY} and \eqref{E:RiCo_AfX} immediately follow from \eqref{E:Def_Covari}.
  We also apply \eqref{E:Gauss} and $X\cdot n=Y\cdot n=0$ on $\Gamma$ to the right-hand side of
  \begin{align*}
    Z\cdot\nabla_\Gamma(X\cdot Y) = (Z\cdot\nabla_\Gamma)X\cdot Y+X\cdot(Z\cdot\nabla_\Gamma)Y \quad\text{on}\quad \Gamma
  \end{align*}
  to get \eqref{E:RiCo_Met}.
  Let us prove \eqref{E:RiCo_Tor}.
  The left-hand side of \eqref{E:RiCo_Tor} is of the form
  \begin{gather*}
    \sum_{i,j=1}^3\{X_i\underline{D}_i(Y_j\underline{D}_j\eta)-Y_i\underline{D}_i(X_j\underline{D}_j\eta)\} = J_1+J_2, \\
    J_1 := \sum_{i,j=1}^3(X_i\underline{D}_iY_j-Y_i\underline{D}_iX_j)\underline{D}_j\eta, \quad J_2 := \sum_{i,j=1}^3(X_iY_j-X_jY_i)\underline{D}_i\underline{D}_j\eta.
  \end{gather*}
  By \eqref{E:Gauss} and $\nabla_\Gamma\eta\cdot n=0$ on $\Gamma$ we have
  \begin{align*}
    J_1 = \{(X\cdot\nabla_\Gamma)Y-(Y\cdot\nabla_\Gamma)X\}\cdot\nabla_\Gamma\eta = \Bigl(\overline{\nabla}_XY-\overline{\nabla}_YX\Bigr)\cdot\nabla_\Gamma\eta.
  \end{align*}
  Also, using \eqref{E:TD_Exc} and $X\cdot n=Y\cdot n=0$ on $\Gamma$ we observe that
  \begin{align*}
    J_2 &= \sum_{i,j=1}^3X_iY_j(\underline{D}_i\underline{D}_j\eta-\underline{D}_j\underline{D}_i\eta) = \sum_{i,j=1}^3X_iY_j([W\nabla\eta]_in_j-[W\nabla_\Gamma\eta]_jn_i) \\
    &= (X\cdot W\nabla_\Gamma\eta)(Y\cdot n)-(X\cdot n)(Y\cdot W\nabla_\Gamma\eta) = 0.
  \end{align*}
  Combining the above three equalities we obtain \eqref{E:RiCo_Tor}.
\end{proof}

Lemma \ref{L:RiCo} shows that the mapping
\begin{align*}
  \overline{\nabla}\colon C^1(\Gamma,T\Gamma)\times C(\Gamma,T\Gamma)\to C(\Gamma), \quad (X,Y) \mapsto \overline{\nabla}_YX
\end{align*}
is the Riemannian (or Levi-Civita) connection on $\Gamma$ (see e.g. \cites{Ch15,Lee18}).
Note that the formula \eqref{E:RiCo_Tor} stands for the torsion-free condition
\begin{align*}
  [X,Y] := XY-YX = \overline{\nabla}_XY-\overline{\nabla}_YX,
\end{align*}
where $[X,Y]$ is the Lie bracket of $X$ and $Y$.

Let $O$ be a relatively open subset of $\Gamma$.
If $O$ is sufficiently small, then by the $C^3$-regularity of $\Gamma$ we can take $C^2$ vector fields $\tau_1$ and $\tau_2$ on $O$ such that $\{\tau_1(y),\tau_2(y)\}$ is an orthonormal basis of the tangent plane of $\Gamma$ at each $y\in\Gamma$.
We call the pair $\{\tau_1,\tau_2\}$ of such vector fields a local orthonormal frame for the tangent bundle of $\Gamma$ on $O$, or simply a local orthonormal frame on $O$.
Note that
\begin{align} \label{E:MC_Local}
  H = \mathrm{tr}[W] = W\tau_1\cdot\tau_1+W\tau_2\cdot\tau_2 \quad\text{on}\quad O
\end{align}
since $\{\tau_1,\tau_2,n\}$ is an orthonormal basis of $\mathbb{R}^3$ and $Wn=0$ on $\Gamma$.
We express several quantities related to the tangential gradient matrix of tangential vector fields on $\Gamma$ in terms of the covariant derivatives and the local orthonormal frame.

\begin{lemma} \label{L:Form_Cov}
  Let $\{\tau_1,\tau_2\}$ be a local orthonormal frame for the tangent bundle of $\Gamma$ on a relatively open subset $O$ of $\Gamma$.
  For $X,Y\in C^1(\Gamma,T\Gamma)$ we have
  \begin{align}
    \mathrm{div}_\Gamma X &= \sum_{i=1,2}\overline{\nabla}_iX\cdot\tau_i, \label{E:Sdiv_Cov} \\
    \nabla_\Gamma X:W &= \sum_{i=1,2}\overline{\nabla}_iX\cdot W\tau_i = \sum_{i=1,2}W\overline{\nabla}_iX\cdot\tau_i, \label{E:Wtr_Cov} \\
    \nabla_\Gamma X:(\nabla_\Gamma Y)P &= \sum_{i=1,2}\overline{\nabla}_iX\cdot\overline{\nabla}_iY, \label{E:InnP_Cov} \\
    W\nabla_\Gamma X:(\nabla_\Gamma Y)P &= \sum_{i=1,2}\overline{\nabla}_{W\tau_i}X\cdot\overline{\nabla}_iY \label{E:Winn_Cov}
  \end{align}
  on $O$, where $\overline{\nabla}_i:=\overline{\nabla}_{\tau_i}$ for $i=1,2$.
\end{lemma}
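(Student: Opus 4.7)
The plan is to prove all four identities in Lemma \ref{L:Form_Cov} by one unified mechanism: expand each Frobenius inner product in the orthonormal basis $\{\tau_1,\tau_2,n\}$ of $\mathbb{R}^3$, use the vanishing of $W n$, $P n$, and $n\cdot\nabla_\Gamma(\,\cdot\,)$ to discard the normal contribution, and then invoke the Gauss formula \eqref{E:Gauss} to replace Euclidean directional derivatives by covariant derivatives on $\Gamma$.

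For \eqref{E:Sdiv_Cov}, I would start from $\mathrm{div}_\Gamma X=\mathrm{tr}[\nabla_\Gamma X]$, which in any orthonormal basis equals $\sum_k (\nabla_\Gamma X)E_k\cdot E_k$. Choosing $E_k\in\{\tau_1,\tau_2,n\}$ and rewriting $(\nabla_\Gamma X)E_k\cdot E_k=(\nabla_\Gamma X)^TE_k\cdot E_k=(E_k\cdot\nabla_\Gamma)X\cdot E_k$, the term with $E_k=n$ vanishes because of the identity $n\cdot\nabla_\Gamma\eta=0$ in \eqref{E:P_TGr} (applied componentwise to $X$). For each $i=1,2$, the Gauss formula gives $(\tau_i\cdot\nabla_\Gamma)X=\overline{\nabla}_iX+(WX\cdot\tau_i)n$, whose inner product with $\tau_i$ is $\overline{\nabla}_iX\cdot\tau_i$ since $n\perp\tau_i$. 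Summing produces \eqref{E:Sdiv_Cov}.

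For \eqref{E:Wtr_Cov}, I would use the identity $A:B=\sum_k(E_k\cdot\nabla_\Gamma\text{-analogue of }A^T)E_k\cdot BE_k$; concretely $\nabla_\Gamma X:W=\sum_k(\tau_k\cdot\nabla_\Gamma)X\cdot W\tau_k$ (after noting $Wn=0$ from \eqref{E:Form_W}, and that $(n\cdot\nabla_\Gamma)X=0$). Applying Gauss and then dotting with $W\tau_i$, the normal part disappears since $n\cdot W\tau_i=W^Tn\cdot\tau_i=Wn\cdot\tau_i=0$, leaving $\sum_i\overline{\nabla}_iX\cdot W\tau_i$; the alternative form $\sum_iW\overline{\nabla}_iX\cdot\tau_i$ follows from the symmetry of $W$. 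The formulas \eqref{E:InnP_Cov} and \eqref{E:Winn_Cov} go the same way: write the Frobenius product as $\sum_k(\nabla_\Gamma X)^TE_k\cdot P(\nabla_\Gamma Y)^TE_k$ (resp.\ with $W^TE_k$ in the first argument), use $P(\nabla_\Gamma Y)^Tn=0$ to discard $k=n$, and observe that since $\overline{\nabla}_iY$ is tangential by construction, $P(\tau_i\cdot\nabla_\Gamma)Y=\overline{\nabla}_iY$; the normal term produced by Gauss on the $X$-side is then killed because $n\cdot\overline{\nabla}_iY=0$. For \eqref{E:Winn_Cov} the only extra point is that $W\tau_i$ is tangential (by $WP=W$ in \eqref{E:Form_W}), so the Gauss formula is legitimately applicable with direction $W\tau_i$.

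The only real piece of care, rather than an obstacle, is the bookkeeping between the several equivalent forms of the Frobenius inner product $A:B=\mathrm{tr}[A^TB]=\sum_kAE_k\cdot BE_k=\sum_kA^TE_k\cdot B^TE_k$ and keeping straight whether the basis vector is being fed into $\nabla_\Gamma X$ from the left (yielding $(E_k\cdot\nabla_\Gamma)X$ after transposition) or the right; once the correct form is chosen for each of the four identities, the proof is purely mechanical and uses only \eqref{E:Form_W}, \eqref{E:P_TGr}, \eqref{E:Gauss}, and the symmetry of $W$.
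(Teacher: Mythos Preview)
Your proposal is correct and follows essentially the same route as the paper's proof: expand each Frobenius product in the orthonormal basis $\{\tau_1,\tau_2,n\}$, kill the normal contribution via $(\nabla_\Gamma X)^Tn=0$ and $Wn=0$, and reduce the tangential terms using the Gauss formula \eqref{E:Gauss} together with $W^T=W$. The paper organizes the same computations around the key formulas $(\nabla_\Gamma X)^T\tau_i=\overline\nabla_iX+(WX\cdot\tau_i)n$ and $[(\nabla_\Gamma Y)P]^T\tau_i=\overline\nabla_iY$, which is exactly what your outline amounts to.
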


\begin{proof}
  We carry out calculations on $O$.
  By \eqref{E:P_TGr} and \eqref{E:Gauss} we have
  \begin{align} \label{Pf_FC:Form}
    \begin{aligned}
      (\nabla_\Gamma X)^T\tau_i &= (\tau_i\cdot\nabla_\Gamma)X = \overline{\nabla}_iX+(WX\cdot\tau_i)n, \quad i=1,2, \\
      (\nabla_\Gamma X)^Tn &= (n\cdot\nabla_\Gamma)X = 0.
    \end{aligned}
  \end{align}
  Since $\{\tau_1,\tau_2,n\}$ forms an orthonormal basis of $\mathbb{R}^3$,
  \begin{align*}
    \mathrm{div}_\Gamma X = \mathrm{tr}[\nabla_\Gamma X] = \sum_{i=1,2}(\nabla_\Gamma X)^T\tau_i\cdot\tau_i+(\nabla_\Gamma X)^Tn\cdot n.
  \end{align*}
  The equality \eqref{E:Sdiv_Cov} follows from this equality and \eqref{Pf_FC:Form}.
  We also obtain \eqref{E:Wtr_Cov} by applying \eqref{Pf_FC:Form}, $W^T=W$, and $Wn=0$ to the right-hand side of
  \begin{align*}
    \nabla_\Gamma X:W = (\nabla_\Gamma X)^T:W^T = \sum_{i=1,2}(\nabla_\Gamma X)^T\tau_i\cdot W^T\tau_i+(\nabla_\Gamma X)^Tn\cdot W^Tn.
  \end{align*}
  Let us prove \eqref{E:InnP_Cov}.
  By \eqref{Pf_FC:Form}, $P^T=P$, $Pn=0$, and $P\overline{\nabla}_iY=\overline{\nabla}_iY$,
  \begin{align*}
    [(\nabla_\Gamma Y)P]^T\tau_i = P[(\nabla_\Gamma Y)^T\tau_i] = P\Bigl\{\overline{\nabla}_iY+(WY\cdot\tau_i)n\Bigr\} = \overline{\nabla}_iY
  \end{align*}
  for $i=1,2$.
  From this equality, \eqref{Pf_FC:Form}, and $\overline{\nabla}_iY\cdot n=0$ it follows that
  \begin{align*}
    \nabla_\Gamma X:(\nabla_\Gamma Y)P &= (\nabla_\Gamma X)^T:[(\nabla_\Gamma Y)P]^T \\
    &= \sum_{i=1,2}(\nabla_\Gamma X)^T\tau_i\cdot[(\nabla_\Gamma Y)P]^T\tau_i+(\nabla_\Gamma X)^Tn\cdot[(\nabla_\Gamma Y)P]^Tn \\
    &= \sum_{i=1,2}\Bigl\{\overline{\nabla}_iX+(WX\cdot\tau_i)n\Bigr\}\cdot\overline{\nabla}_iY = \sum_{i=1,2}\overline{\nabla}_iX\cdot\overline{\nabla}_iY.
  \end{align*}
  Thus \eqref{E:InnP_Cov} is valid.
  Similarly, we can prove \eqref{E:Winn_Cov} by using the formulas
  \begin{align*}
  [W(\nabla_\Gamma X)]^T\tau_i &= (\nabla_\Gamma X)^TW\tau_i = (W\tau_i\cdot\nabla_\Gamma)X = \overline{\nabla}_{W\tau_i}X+(WX\cdot W\tau_i)n, \\
  [W(\nabla_\Gamma X)]^Tn &= (\nabla_\Gamma X)^TWn = 0
  \end{align*}
  by $W^T=W$, $Wn=0$, and \eqref{E:Gauss} (note that $W\tau_i$ is tangential on $\Gamma$).
\end{proof}

Next we give an integration by parts formula for integrals over $\Gamma$ of the covariant derivatives along vector fields of a local orthonormal frame.

\begin{lemma} \label{L:IbP_Cov}
  Let $\{\tau_1,\tau_2\}$ be a local orthonormal frame for the tangent bundle of $\Gamma$ on a relatively open subset $O$ of $\Gamma$ and $\overline{\nabla}_i:=\overline{\nabla}_{\tau_i}$ for $i=1,2$.
  Suppose that $X\in C^2(\Gamma,T\Gamma)$ and $Y\in C^1(\Gamma,T\Gamma)$ are compactly supported in $O$.
  Then we have
  \begin{align} \label{E:IbP_Cov}
    \sum_{i=1,2}\int_\Gamma\Bigl(\overline{\nabla}_i\overline{\nabla}_iX-\overline{\nabla}_{\overline{\nabla}_i\tau_i}X\Bigr)\cdot Y\,d\mathcal{H}^2 = -\sum_{i=1,2}\int_\Gamma\overline{\nabla}_iX\cdot\overline{\nabla}_iY\,d\mathcal{H}^2.
  \end{align}
\end{lemma}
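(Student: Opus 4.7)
The plan is to reduce \eqref{E:IbP_Cov} to the surface divergence theorem applied to suitably chosen tangential vector fields, using the metric-compatibility identity \eqref{E:RiCo_Met} to move one covariant derivative off $X$ and onto $Y$. Since $X\in C^2(\Gamma,T\Gamma)$ and $Y\in C^1(\Gamma,T\Gamma)$, the fields $\overline{\nabla}_iX$ lie in $C^1(\Gamma,T\Gamma)$, and \eqref{E:RiCo_Met} applied with $Z=\tau_i$ and first argument $\overline{\nabla}_iX$ yields the pointwise identity
\[
  \overline{\nabla}_i\overline{\nabla}_iX\cdot Y = \tau_i\cdot\nabla_\Gamma\bigl(\overline{\nabla}_iX\cdot Y\bigr) - \overline{\nabla}_iX\cdot\overline{\nabla}_iY.
\]
Substituting this into the left-hand side of \eqref{E:IbP_Cov} and summing over $i=1,2$, the target identity reduces to
\[
  \sum_{i=1,2}\int_\Gamma \tau_i\cdot\nabla_\Gamma\bigl(\overline{\nabla}_iX\cdot Y\bigr)\,d\mathcal{H}^2 = \sum_{i=1,2}\int_\Gamma \overline{\nabla}_{\overline{\nabla}_i\tau_i}X\cdot Y\,d\mathcal{H}^2.
\]

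Next I would introduce the tangential vector fields $v_i := (\overline{\nabla}_iX\cdot Y)\,\tau_i\in C^1(\Gamma,T\Gamma)$, which inherit compact support in $O$ from $X$ and $Y$ and therefore extend by zero to global $C^1$ tangential fields on $\Gamma$. Applying the surface divergence formula \eqref{E:IbP_WDivG_T} to each $v_i$ and using \eqref{E:Sdiv_Cov} to expand $\mathrm{div}_\Gamma\tau_i$ in the frame gives
\[
  \int_\Gamma \tau_i\cdot\nabla_\Gamma\bigl(\overline{\nabla}_iX\cdot Y\bigr)\,d\mathcal{H}^2 = -\sum_{j=1,2}\int_\Gamma \bigl(\overline{\nabla}_iX\cdot Y\bigr)\bigl(\overline{\nabla}_j\tau_i\cdot\tau_j\bigr)\,d\mathcal{H}^2,
\]
so that the whole problem reduces to verifying the pointwise algebraic identity
\[
  -\sum_{i,j=1,2}\bigl(\overline{\nabla}_iX\cdot Y\bigr)\bigl(\overline{\nabla}_j\tau_i\cdot\tau_j\bigr) = \sum_{i=1,2}\overline{\nabla}_{\overline{\nabla}_i\tau_i}X\cdot Y.
\]

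To close the argument I would expand $\overline{\nabla}_i\tau_i = \sum_k(\overline{\nabla}_i\tau_i\cdot\tau_k)\tau_k$ in the frame and invoke the tensoriality \eqref{E:RiCo_AfY} to rewrite $\overline{\nabla}_{\overline{\nabla}_i\tau_i}X = \sum_k(\overline{\nabla}_i\tau_i\cdot\tau_k)\overline{\nabla}_kX$; after relabelling, the right-hand side above becomes $\sum_{i,j}(\overline{\nabla}_j\tau_j\cdot\tau_i)(\overline{\nabla}_iX\cdot Y)$. The remaining identity
\[
  \overline{\nabla}_j\tau_i\cdot\tau_j = -\tau_i\cdot\overline{\nabla}_j\tau_j
\]
comes from applying \eqref{E:RiCo_Met} to the constant function $\tau_i\cdot\tau_j=\delta_{ij}$ with $Z=\tau_j$; it expresses the standard skew-symmetry of the frame connection coefficients built into the orthonormality of $\{\tau_1,\tau_2\}$. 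The main obstacle is purely administrative rather than analytic: one has to keep the two summation indices disciplined and recognise that the combination produced by the divergence theorem, once $\mathrm{div}_\Gamma\tau_i$ is written in frame coordinates, matches $\overline{\nabla}_{\overline{\nabla}_i\tau_i}X\cdot Y$ only after using metric compatibility and an index swap. No approximation procedure is needed, since the compact support of $X$ and $Y$ in $O$ lets \eqref{E:IbP_WDivG_T} be applied directly to $v_1$ and $v_2$ without any cut-off argument.
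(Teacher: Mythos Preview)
Your proof is correct and follows essentially the same strategy as the paper: both apply the surface divergence theorem to the tangential field(s) $(\overline{\nabla}_iX\cdot Y)\,\tau_i$ and use metric compatibility \eqref{E:RiCo_Met} to shift a derivative. The paper streamlines the bookkeeping by bundling your $v_1,v_2$ into a single field $Z=\sum_i(\overline{\nabla}_iX\cdot Y)\tau_i$ and observing the tensorial identity $Z\cdot V=\overline{\nabla}_VX\cdot Y$, which yields $Z\cdot\overline{\nabla}_i\tau_i=\overline{\nabla}_{\overline{\nabla}_i\tau_i}X\cdot Y$ directly and so bypasses your final index reshuffle via the connection-coefficient skew-symmetry.
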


\begin{proof}
  The proof is basically the same as that of \cite{Pe06}*{Proposition 34}.
  We set
  \begin{align*}
    Z := \sum_{i=1,2}\Bigl(\overline{\nabla}_iX\cdot Y\Bigr)\tau_i \quad\text{on}\quad O
  \end{align*}
  and extend $Z$ to $\Gamma$ by setting zero outside of $O$.
  Then $Z\in C^1(\Gamma,T\Gamma)$ since $\tau_1$ and $\tau_2$ are of class $C^2$ on $O$ and $X\in C^2(\Gamma,T\Gamma)$ and $Y\in C^1(\Gamma,T\Gamma)$ are compactly supported in $O$.
  Moreover, since $\{\tau_1,\tau_2\}$ is a local orthonormal frame, we have
  \begin{align*}
    Z\cdot V = \sum_{i=1,2}\overline{\nabla}_{(V\cdot\tau_i)\tau_i}X\cdot Y = \overline{\nabla}_VX\cdot Y \quad\text{on}\quad O
  \end{align*}
  for all $V\in C(\Gamma,T\Gamma)$ by \eqref{E:RiCo_AfY}.
  From this fact and \eqref{E:RiCo_Met} we deduce that
  \begin{align*}
    \overline{\nabla}_iZ\cdot\tau_i = \tau_i\cdot\nabla_\Gamma(Z\cdot\tau_i)-Z\cdot\overline{\nabla}_i\tau_i = \tau_i\cdot\nabla_\Gamma\Bigl(\overline{\nabla}_iX\cdot Y\Bigr)-\overline{\nabla}_{\overline{\nabla}_i\tau_i}X\cdot Y \quad\text{on}\quad O
  \end{align*}
  for $i=1,2$.
  Applying \eqref{E:RiCo_Met} again to the first term on the right-hand side we get
  \begin{align*}
    \overline{\nabla}_iZ\cdot\tau_i = \Bigl(\overline{\nabla}_i\overline{\nabla}_iX-\overline{\nabla}_{\overline{\nabla}_i\tau_i}X\Bigr)\cdot Y+\overline{\nabla}_iX\cdot\overline{\nabla}_iY \quad\text{on}\quad O.
  \end{align*}
  By this equality and \eqref{E:Sdiv_Cov} we see that
  \begin{align} \label{Pf_ICov:Div}
    \mathrm{div}_\Gamma Z = \sum_{i=1,2}\left\{\Bigl(\overline{\nabla}_i\overline{\nabla}_iX-\overline{\nabla}_{\overline{\nabla}_i\tau_i}X\Bigr)\cdot Y+\overline{\nabla}_iX\cdot\overline{\nabla}_iY\right\} \quad\text{on}\quad O.
  \end{align}
  Since $X$, $Y$, and $Z$ are compactly supported in $O$, we may assume that \eqref{Pf_ICov:Div} holds on the whole surface $\Gamma$.
  Hence we obtain \eqref{E:IbP_Cov} by integrating both sides of \eqref{Pf_ICov:Div} over $\Gamma$ and using \eqref{Pf_ITD:SD_Thm} with $X$ replaced by $Z\in C^1(\Gamma,T\Gamma)$.
\end{proof}

\begin{remark} \label{R:RC}
  Since $\Gamma$ is of class $C^3$, the space $C^2(\Gamma,T\Gamma)$ is dense in $H^m(\Gamma,T\Gamma)$ for $m=0,1,2$ by Lemma \ref{L:Wmp_Tan_Appr}.
  Hence the formulas given in this appendix are also valid (a.e. on $\Gamma$) if we replace $C^m(\Gamma,T\Gamma)$, $m=0,1,2$ with $H^m(\Gamma,T\Gamma)$.
\end{remark}

\section{Infinitesimal rigid displacements on a closed surface} \label{S:Ap_Rig}
In this appendix we show several results on infinitesimal rigid displacements of $\mathbb{R}^3$ related to the axial symmetry of a closed surface and a curved thin domain.

Let $\Gamma$ be a $C^2$ closed, connected, and oriented surface in $\mathbb{R}^3$ and $\mathcal{R}$ the set of the form \eqref{E:Def_R} which consists of infinitesimal rigid displacements of $\mathbb{R}^3$ with tangential restrictions on $\Gamma$.

\begin{lemma} \label{L:IR_Surf}
  Let $w(x)=a\times x+b\in\mathcal{R}$.
  If $w\not\equiv0$, then $a\neq0$, $a\cdot b=0$, and $\Gamma$ is axially symmetric around the line parallel to the vector $a$ and passing through the point $b_a:=|a|^{-2}(a\times b)$.
  Conversely, if $\Gamma$ is axially symmetric around the line parallel to $a\neq0$ and passing through $\tilde{b}\in\mathbb{R}^3$, then $\tilde{w}(x)=a\times(x-\tilde{b})\in\mathcal{R}\setminus\{0\}$.
\end{lemma}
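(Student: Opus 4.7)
The plan is to prove the first (harder) direction via a flow argument, extracting $a\neq 0$ and $a\cdot b=0$ from the compactness of $\Gamma$, and to handle the converse essentially directly from the definitions.

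First I would dispose of the degenerate case. If $a=0$ but $w\not\equiv 0$, then $w\equiv b$ with $b\neq 0$, and the tangency condition becomes $b\cdot n=0$ on $\Gamma$. Consider the linear function $f(x):=b\cdot x$ restricted to $\Gamma$; by $b\cdot n=0$ and the definition of the tangential gradient, $\nabla_\Gamma f = Pb = b$ everywhere on $\Gamma$. Compactness of $\Gamma$ forces $f$ to attain a maximum, at which $\nabla_\Gamma f=0$, giving $b=0$, a contradiction. Hence $a\neq 0$.

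Next, decompose $b=b_\parallel+b_\perp$ with $b_\parallel:=|a|^{-2}(a\cdot b)a$ and $b_\perp\cdot a=0$. The identity $a\times(a\times b)=(a\cdot b)a-|a|^2b$ gives $a\times b_a=b_\parallel-b$, whence
\begin{align*}
  w(x) = a\times(x-b_a)+b_\parallel.
\end{align*}
The associated linear ODE $\dot{x}=w(x)$ on $\mathbb{R}^3$ has the explicit flow
\begin{align*}
  \Phi_t(x) = R_t(x-b_a)+b_a+tb_\parallel,
\end{align*}
where $R_t$ is the rotation by angle $t|a|$ about the axis through the origin parallel to $a$. The core step is to use $w|_\Gamma\cdot n=0$ to conclude $\Phi_t(\Gamma)\subset\Gamma$ for every $t\in\mathbb{R}$. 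This is a standard consequence of the tangency of a smooth vector field to a $C^2$ closed submanifold: locally $\Gamma$ is the zero set of $d$, with $\nabla d=\bar{n}$, and $\frac{d}{dt}d(\Phi_t(x_0))=w(\Phi_t(x_0))\cdot\bar{n}(\Phi_t(x_0))$ vanishes whenever $\Phi_t(x_0)\in\Gamma$, so the integral curve cannot leave $\Gamma$. Granted this invariance, if $b_\parallel\neq 0$ then $|\Phi_t(x_0)|\to\infty$ as $t\to\infty$ for any $x_0\in\Gamma$, contradicting the boundedness of $\Gamma$. Therefore $a\cdot b=0$, and $\{\Phi_t\}_{t\in\mathbb{R}}$ is precisely the one-parameter group of rotations about the line $\ell$ parallel to $a$ through $b_a$; invariance of $\Gamma$ under this group is, by definition, axial symmetry around $\ell$.

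For the converse, assume $\Gamma$ is invariant under every rotation $\rho_t$ about the line parallel to $a\neq 0$ through $\tilde{b}$. The infinitesimal generator of $\{\rho_t\}_{t\in\mathbb{R}}$ is $\tilde{w}(x)=a\times(x-\tilde{b})$, which is therefore tangent to $\Gamma$, i.e.\ $\tilde{w}\cdot n=0$ on $\Gamma$. Writing $\tilde{w}(x)=a\times x+b$ with $b:=-a\times\tilde{b}$ places $\tilde{w}$ in $\mathcal{R}$, and $\tilde{w}\not\equiv 0$ since $a\neq 0$. The only nontrivial point in the entire argument is the invariance of $\Gamma$ under the flow of a tangent vector field, which I expect to be the main (but minor) obstacle; everything else is a direct algebraic computation together with compactness of $\Gamma$.
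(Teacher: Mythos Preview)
Your proposal is correct and follows essentially the same approach as the paper: both argue via the flow of $w$, use $w\cdot n=0$ to get invariance of $\Gamma$, and exploit compactness to force $a\cdot b=0$, then identify the flow as rotation about the line through $b_a$ parallel to $a$. The only differences are cosmetic: the paper carries out the flow computation in an explicit orthonormal basis adapted to $a$, whereas you write the flow abstractly as $\Phi_t(x)=R_t(x-b_a)+b_a+tb_\parallel$; and for the $a=0$ case the paper simply says $b\cdot n=0$ on a closed surface forces $b=0$, while your critical-point argument for $f(x)=b\cdot x$ makes the same point more explicitly.
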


\begin{proof}
  Suppose that $w(x)=a\times x+b\in\mathcal{R}$ does not identically vanish.
  If $a=0$, then $w\cdot n=b\cdot n=0$ on $\Gamma$, which yields $b=0$ since $\Gamma$ is closed.
  Hence $a\neq0$ when $w\not\equiv0$.
  Now we consider the flow map $x(\cdot,t)\colon\mathbb{R}^3\to\mathbb{R}^3$ of $w$ (here $\dot{x}=\partial x/\partial t$)
  \begin{align} \label{Pf_IRS:Flow}
    x(X,0) = X \in \mathbb{R}^3, \quad \dot{x}(X,t) = w(x(X,t),t) = a\times x(X,t)+b, \quad t>0.
  \end{align}
  Since $a\neq0$, we can take an orthonormal basis $\{E_1,E_2,E_3\}$ of $\mathbb{R}^3$ such that
  \begin{align} \label{Pf_IRS:ONB}
    E_3 = |a|^{-1}a, \quad E_1\times E_2 = E_3, \quad E_2\times E_3 = E_1, \quad E_3\times E_1 = E_2.
  \end{align}
  In what follows, we write $x_E^i:=x\cdot E_i$ for $x\in\mathbb{R}^3$ and $i=1,2,3$.
  Then since
  \begin{align*}
    a\times x = |a|E_3\times(x_E^1E_1+x_E^2E_2+x_E^3E_3) = -|a|x_E^2E_1+|a|x_E^1E_2,
  \end{align*}
  the system \eqref{Pf_IRS:Flow} for $x(t)=\sum_{i=1}^3x_E^i(t)E_i$ is equivalent to
  \begin{align*}
    \left\{
    \begin{aligned}
      \dot{x}_E^1(t) &= -|a|x_E^2(t)+b_E^1, & & x_E^1(0) = X_E^1, \\
      \dot{x}_E^2(t) &= |a|x_E^1(t)+b_E^2, & & x_E^2(0) = X_E^2, \\
      \dot{x}_E^3(t) &= b_E^3, & & x_E^3(0) = X_E^3.
    \end{aligned}
    \right.
  \end{align*}
  Here we suppressed the argument $X$.
  We solve this system to get
  \begin{align} \label{Pf_IRS:Sol}
    \left\{
    \begin{aligned}
      x_E^1(t)+|a|^{-1}b_E^2 &= (X_E^1+|a|^{-1}b_E^2)\cos(|a|t)-(X_E^2-|a|^{-1}b_E^1)\sin(|a|t), \\
      x_E^2(t)-|a|^{-1}b_E^1 &= (X_E^1+|a|^{-1}b_E^2)\sin(|a|t)+(X_E^2-|a|^{-1}b_E^1)\cos(|a|t), \\
      x_E^3(t) &= X_E^3+b_E^3t.
    \end{aligned}
    \right.
  \end{align}
  Now we observe by $w\cdot n=0$ on $\Gamma$ that $x(\cdot,t)$ maps $\Gamma$ into itself for all $t>0$.
  This fact and the compactness of $\Gamma$ imply that $x_E^3(t)=X_E^3+b_E^3t$ remains bounded for $X\in\Gamma$, which yields $b_E^3=b\cdot E_3=0$, i.e. $a\cdot b=0$.
  Also, setting
  \begin{align} \label{Pf_IRS:Ba}
    b_a := -|a|^{-1}b_E^2E_1+|a|^{-1}b_E^1E_2 = |a|^{-2}a\times b,
  \end{align}
  where the second equality follows from $b=b_E^1E_1+b_E^2E_2$ and \eqref{Pf_IRS:ONB}, we get
  \begin{align*}
    x(X,t)-b_a &= \{x_E^1(X,t)+|a|^{-1}b_E^2\}E_1+\{x_E^2(X,t)-|a|^{-1}b_E^1\}E_2+x_E^3(X,t)E_3.
  \end{align*}
  Hence by \eqref{Pf_IRS:Sol} with $b_E^3=0$ we obtain
  \begin{align} \label{Pf_IRS:Rot}
    x(X,t)-b_a = P_aR_a(t)P_a^T(X-b_a), \quad X\in\mathbb{R}^3,\,t\geq0,
  \end{align}
  where $P_a:=(E_1 \, E_2 \, E_3)$ is a $3\times 3$ matrix with $i$-th column $E_i$ for $i=1,2,3$ and
  \begin{align} \label{Pf_IRS:Def_Ra}
    R_a(t) :=
    \begin{pmatrix}
      \cos(|a|t) & -\sin(|a|t) & 0 \\
      \sin(|a|t) & \cos(|a|t) & 0 \\
      0 & 0 & 1
    \end{pmatrix}.
  \end{align}
  By \eqref{Pf_IRS:Rot} we observe that the flow map $x(\cdot,t)$ of $w$ is given by the rotation through the angle $|a|t$ around the axis parallel to $E_3=|a|^{-1}a$ and passing through $b_a$.
  Since $x(\cdot,t)$ maps $\Gamma$ into itself for all $t>0$ by $w\cdot n=0$ on $\Gamma$, we conclude that $\Gamma$ is axially symmetric around the line parallel to $a$ and passing through $b_a$.

  Conversely, suppose that $\Gamma$ is axially symmetric around the line parallel to $a\neq0$ and passing through $\tilde{b}\in\mathbb{R}^3$.
  Let $\{E_1,E_2,E_3\}$ be an orthonormal basis of $\mathbb{R}^3$ satisfying \eqref{Pf_IRS:ONB}.
  Then by the first part of the proof we see that the mapping
  \begin{align*}
    \Phi_t(X) := P_aR_a(t)P_a^T(X-\tilde{b})+\tilde{b}, \quad X\in\mathbb{R}^3
  \end{align*}
  preserves $\Gamma$ for all $t\in\mathbb{R}$, where $P_a=(E_1 \, E_2 \, E_3)$ and $R_a(t)$ is given by \eqref{Pf_IRS:Def_Ra}.
  Hence for each $Y\in\Gamma$ the time derivative of $\Phi_t(Y)$ at $t=0$ gives a tangent vector on $\Gamma$ at $Y$.
  Moreover, setting $Z:=Y-\tilde{b}$ and $Z_E^i:=Z\cdot E_i$, $i=1,2,3$ we have
  \begin{align*}
    \frac{d}{dt}\Phi_t(Y)\Bigl|_{t=0} &= \frac{d}{dt}P_aR_a(t)P_a^TZ\Bigl|_{t=0} = P_a
    \begin{pmatrix}
      0 & -|a| & 0 \\
      |a| & 0 & 0 \\
      0 & 0 & 0
    \end{pmatrix}
    P_a^TZ \\
    &= |a|(-Z_E^2E_1+Z_E^1E_2) = |a|E_3\times(Z_E^1E_1+Z_E^2E_2+Z_E^3E_3) \\
    &= |a|E_3\times Z = a\times(Y-\tilde{b})
  \end{align*}
  by \eqref{Pf_IRS:ONB}, $E_3\times E_3=0$, and $Z=\sum_{i=1}^3Z_E^iE_i$.
  Hence $\tilde{w}(x)=a\times(x-\tilde{b})\in\mathcal{R}\setminus\{0\}$.
\end{proof}

\begin{lemma} \label{L:SR_Eigen}
  Let $w(x)=a\times x+b\in\mathcal{R}$ satisfy $w\not\equiv0$.
  Then
  \begin{align} \label{E:SR_Eigen}
    W(y)w(y) = \lambda(y)w(y), \quad a\times n(y) = -\lambda(y)w(y) \quad\text{for all}\quad y\in\Gamma
  \end{align}
  with some $\lambda(y)\in\mathbb{R}$.
  Here $W=-\nabla_\Gamma n$ is the Weingarten map of $\Gamma$.
\end{lemma}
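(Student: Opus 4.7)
The plan is to derive the eigenvector relation in two steps: first reduce both equalities in \eqref{E:SR_Eigen} to a single identity by differentiating the impermeability condition tangentially, and then establish the parallelism $Ww\parallel w$ at each point using vector-calculus identities together with the constraint $a\cdot b=0$ provided by Lemma \ref{L:IR_Surf}.

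For the first step, I start from $w\cdot n=0$ on $\Gamma$ and take the tangential gradient. Using the product rule $\nabla_\Gamma(w\cdot n)=(\nabla_\Gamma w)n+(\nabla_\Gamma n)w$ together with $-\nabla_\Gamma n=W$, I obtain $(\nabla_\Gamma w)n=Ww$ on $\Gamma$. On the other hand, since $w(x)=a\times x+b$ on $\mathbb{R}^3$, a direct computation of the Euclidean gradient yields $(\nabla w)n=-(a\times n)$. Because $a\times n$ is already tangential to $\Gamma$ (being orthogonal to $n$), the projection $P$ leaves it unchanged, giving $(\nabla_\Gamma w)n=P(\nabla w)n=-(a\times n)$. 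Combining these I get the key identity
\begin{equation*}
  Ww=-(a\times n) \quad\text{on}\quad \Gamma.
\end{equation*}
Once the parallelism $a\times n=-\lambda w$ is established with some scalar $\lambda=\lambda(y)$, the eigenvector equation $Ww=\lambda w$ follows at once.

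For the second step, fix $y\in\Gamma$ with $w(y)\ne 0$. Since $Ww$ is tangential to $\Gamma$, it suffices to show $Ww(y)$ is perpendicular to $n(y)\times w(y)$, because $\{w(y),\,n(y)\times w(y)\}$ is an orthogonal basis of $T_y\Gamma$ in that case. Applying the standard identity $(a\times n)\cdot(n\times w)=(a\cdot n)(n\cdot w)-(a\cdot w)|n|^2$ and using $n\cdot w=0$ (tangentiality of $w$), this inner product reduces to $-a\cdot w$. But $a\cdot w=a\cdot(a\times y+b)=a\cdot b$, which Lemma \ref{L:IR_Surf} guarantees vanishes whenever $w\not\equiv 0$. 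Hence $Ww(y)\cdot(n(y)\times w(y))=0$, so $Ww(y)=\lambda(y)w(y)$ for some $\lambda(y)\in\mathbb{R}$, and the identity $Ww=-(a\times n)$ then gives $a\times n(y)=-\lambda(y)w(y)$. At the (isolated) points where $w(y)=0$, the identity $Ww=-(a\times n)$ forces $a\times n(y)=0$ as well, so both equalities in \eqref{E:SR_Eigen} hold trivially for any choice of $\lambda(y)$.

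I do not expect any serious obstacle: the whole argument is a clean vector-calculus manipulation, and the one potential subtlety, namely handling the points on the rotation axis where $w$ vanishes, is resolved automatically by the identity $Ww=-(a\times n)$, which makes both sides of the claimed eigenvector relation vanish simultaneously there.
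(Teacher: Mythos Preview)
Your argument is correct and is genuinely different from the paper's proof. The paper uses Lemma \ref{L:IR_Surf} to reduce to the situation where $\Gamma$ is a surface of revolution about the $x_3$-axis and $w(x)=e_3\times x$, then carries out an explicit computation in the arc-length parametrization \eqref{Pf_SRE:SoR}: it writes down $n$ by \eqref{Pf_SRE:Nor}, differentiates in $\vartheta$, and reads off $\lambda=\psi'/\varphi$ directly, treating the poles $\varphi(s)=0$ separately. Your route, by contrast, is coordinate-free: you first derive the pointwise identity $Ww=-(a\times n)$ from \eqref{E:Grad_W} and the explicit gradient of $w$, and then obtain the parallelism $Ww\parallel w$ from the Binet--Cauchy identity together with the single scalar fact $a\cdot b=0$ supplied by Lemma \ref{L:IR_Surf}. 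Your approach is shorter and avoids both the change of coordinates and the explicit parametrization; the paper's approach, on the other hand, yields a concrete formula for $\lambda$ and makes the geometric picture (principal curvature along the parallels) visible. One cosmetic point: the parenthetical ``isolated'' for the zeros of $w$ is neither proved nor needed, so you may simply drop it.
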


\begin{proof}
  Since $w(x)=a\times x+b\in\mathcal{R}$ and $w\not\equiv0$, Lemma \ref{L:IR_Surf} implies that $a\neq0$, $a\cdot b=0$, and $\Gamma$ is axially symmetric around the line parallel to $a$ and passing through $b_a=|a|^{-2}(a\times b)$.
  Also, since $a\times b_a=-b$ by \eqref{Pf_IRS:ONB}, \eqref{Pf_IRS:Ba}, and $a\cdot b=0$, we have $w(x)=a\times(x-b_a)$.
  Hence by a translation along $b_a$ and a rotation of coordinates we may assume that $\Gamma$ is axially symmetric around the $x_3$-axis and $w(x)=\alpha(e_3\times x)$, where $\alpha=|a|>0$ and $e_3=(0,0,1)^T$.
  Replacing $w$ by $\alpha^{-1}w$ we may further assume $\alpha=1$, i.e. $a=e_3$ and $w(x)=e_3\times x$.
  Under these assumptions, $\Gamma$ is represented as a surface of revolution
  \begin{align} \label{Pf_SRE:SoR}
    \Gamma = \{\mu(s,\vartheta) = (\varphi(s)\cos\vartheta,\varphi(s)\sin\vartheta,\psi(s)) \mid s\in[0,L],\,\vartheta\in[0,2\pi]\}.
  \end{align}
  Here $\gamma(s)=(\varphi(s),0,\psi(s))$ is a $C^2$ curve parametrized by the arc length $s\in[0,L]$, $L>0$ such that $\varphi(s)>0$ for $s\neq0,L$.
  We may further assume that for $s=0,L$ if $\varphi(s)=0$ then $\psi'(s)=0$, otherwise $\Gamma$ is not of class $C^2$ at the point
  \begin{align*}
    \mu(s,\vartheta) = (\varphi(s)\cos\vartheta,\varphi(s)\sin\vartheta,\psi(s)) = (0,0,\psi(s)), \quad \vartheta\in[0,2\pi].
  \end{align*}
  By the arc length parametrization of $\gamma$ we have
  \begin{align} \label{Pf_SRE:Arc}
    \{\varphi'(s)\}^2+\{\psi'(s)\}^2 = 1, \quad s\in[0,L].
  \end{align}
  Let $y=\mu(s,\vartheta)\in\Gamma$.
  We suppress the arguments of $\mu$ and its derivatives.
  From
  \begin{align} \label{Pf_SRE:Tau}
    \partial_s\mu =
    \begin{pmatrix}
      \varphi'(s)\cos\vartheta \\ \varphi'(s)\sin\vartheta \\ \psi'(s)
    \end{pmatrix}, \quad
    \partial_\vartheta\mu =
    \begin{pmatrix}
      -\varphi(s)\sin\vartheta \\ \varphi(s)\cos\vartheta \\ 0
    \end{pmatrix}
    = e_3\times \mu = w(y)
  \end{align}
  and \eqref{Pf_SRE:Arc} we deduce that
  \begin{align*}
    \partial_s\mu\times\partial_\vartheta\mu = \varphi(s)
    \begin{pmatrix}
      -\psi'(s)\cos\vartheta \\ -\psi'(s)\sin\vartheta \\ \varphi'(s)
    \end{pmatrix}, \quad
    |\partial_s\mu\times\partial_\vartheta\mu| = \varphi(s).
  \end{align*}
  Suppose that $\varphi(s)>0$.
  Without loss of generality, we may assume that the direction of $\partial_s\mu\times\partial_\vartheta\mu$ is the same as that of the unit outward normal $n(y)$.
  Then
  \begin{align} \label{Pf_SRE:Nor}
    n(y) = n(\mu(s,\vartheta)) = \frac{\partial_s\mu\times\partial_\vartheta\mu}{|\partial_s\mu\times\partial_\vartheta\mu|} =
    \begin{pmatrix}
      -\psi'(s)\cos\vartheta \\ -\psi'(s)\sin\vartheta \\ \varphi'(s)
    \end{pmatrix}.
  \end{align}
  We differentiate $n(\mu(s,\vartheta))$ with respect to $\vartheta$ and use \eqref{Pf_SRE:Tau} to get
  \begin{align*}
    \frac{\partial}{\partial \vartheta}\bigl(n(\mu(s,\vartheta))\bigr) =
    \begin{pmatrix}
      \psi'(s)\sin\vartheta \\ -\psi'(s)\cos\vartheta \\ 0
    \end{pmatrix}
    = -\lambda(y)w(y), \quad \lambda(y) := \frac{\psi'(s)}{\varphi(s)}.
  \end{align*}
  Moreover, by \eqref{E:ConDer_Surf} with $y=\mu(s,\vartheta)\in\Gamma$, $-\nabla_\Gamma n=W=W^T$ on $\Gamma$, and \eqref{Pf_SRE:Tau},
  \begin{align*}
    \frac{\partial}{\partial \vartheta}\bigl(n(\mu(s,\vartheta))\bigr) = \frac{\partial}{\partial \vartheta}\bigl(\bar{n}(\mu(s,\vartheta))\bigr) = [\nabla\bar{n}(\mu(s,\vartheta))]^T\partial_\vartheta\mu = -W(y)w(y).
  \end{align*}
  Hence $W(y)w(y)=\lambda(y)w(y)$.
  We also have $e_3\times n(y)=-\lambda(y)w(y)$ by \eqref{Pf_SRE:Tau} and \eqref{Pf_SRE:Nor}.
  Therefore, \eqref{E:SR_Eigen} is valid when $\varphi(s)>0$ (note that we assume $a=e_3$).

  Now for $s=0,L$ suppose that $\varphi(s)=0$.
  Then $\psi'(s)=0$ by our assumption and thus the tangent plane of $\Gamma$ at the point
  \begin{align*}
    y = \mu(s,\vartheta) = (\varphi(s)\cos\vartheta,\varphi(s)\sin\vartheta,\psi(s)) = (0,0,\psi(s)), \quad \vartheta\in[0,2\pi]
  \end{align*}
  is orthogonal to the $x_3$-axis.
  Hence $n(y)=\pm e_3$ and we obtain $a\times n(y)=0$ by the assumption $a=e_3$.
  Moreover, $w(y)=0$ by \eqref{Pf_SRE:Tau} and $\varphi(s)=0$.
  By these facts we conclude that the equalities \eqref{E:SR_Eigen} are valid for any $\lambda(y)\in\mathbb{R}$.
\end{proof}

Let $\mathcal{K}(\Gamma)$ be the space of Killing vector fields on $\Gamma$ given by \eqref{E:Def_Kil}.
We show that $\mathcal{K}(\Gamma)$ agrees with $\mathcal{R}|_\Gamma=\{w|_\Gamma\mid w\in\mathcal{R}\}$ if $\Gamma$ is axially symmetric.

\begin{lemma} \label{L:IR_Kil}
  Suppose that $\Gamma$ is of class $C^5$ and $\mathcal{R}\neq\{0\}$.
  Then $\mathcal{K}(\Gamma)=\mathcal{R}|_\Gamma$.
\end{lemma}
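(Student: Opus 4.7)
The inclusion $\mathcal{R}|_\Gamma\subseteq\mathcal{K}(\Gamma)$ is immediate from Remark \ref{R:Killing}, so the task is the reverse inclusion $\mathcal{K}(\Gamma)\subseteq\mathcal{R}|_\Gamma$. My plan is to classify $\dim\mathcal{R}$ via its Lie algebra structure, reduce to a surface of revolution by Lemma \ref{L:IR_Surf}, and settle the spherical and non-spherical cases separately by a dimension count and by explicit solution of the Killing equations.

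First, the restriction map $w\mapsto w|_\Gamma$ on $\mathcal{R}$ is injective: if $a\times y+b=0$ on $\Gamma$, then $a\times(y_1-y_2)=0$ for all $y_1,y_2\in\Gamma$, and since the closed surface $\Gamma$ is not contained in any affine line the differences span $\mathbb{R}^3$, so $a=0$ and then $b=0$. Hence $\dim\mathcal{R}|_\Gamma=\dim\mathcal{R}$. A direct computation of the vector-field commutator shows that $\mathcal{R}$ is a Lie subalgebra of $\mathfrak{e}(3)$: the bracket of $w_i(x)=a_i\times x+b_i$ has rotational part $\pm(a_1\times a_2)$, and since the bracket of two tangential Killing fields remains tangential, it lies in $\mathcal{R}$. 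A pure translation $w(x)=b\in\mathcal{R}$ would satisfy $b\cdot n\equiv0$ on $\Gamma$; by extremizing the height function $y\mapsto y\cdot v$ in three independent directions $v$ one finds that $n(\Gamma)$ contains three linearly independent normals, forcing $b=0$. And if $\mathcal{R}$ contained two elements whose rotational parts are linearly independent, their bracket would supply a third independent of them. These observations preclude $\dim\mathcal{R}=2$, leaving $\dim\mathcal{R}\in\{1,3\}$.

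In the case $\dim\mathcal{R}=3$, three independent rotation axes preserve $\Gamma$; they must share a common point, for parallel distinct axes or skew axes compose to produce a nontrivial translation which cannot preserve the compact set $\Gamma$. The resulting $SO(3)$-action centered at that point forces $\Gamma$ to be a round sphere, so $\dim\mathcal{R}|_\Gamma=3$ saturates the upper bound $\dim\mathcal{K}(\Gamma)\leq3$ recalled in Section \ref{S:Main}, and the equality $\mathcal{K}(\Gamma)=\mathcal{R}|_\Gamma$ follows.

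In the case $\dim\mathcal{R}=1$, after a rigid motion of $\mathbb{R}^3$ I may assume the axis of symmetry is the $x_3$-axis and parametrize $\Gamma$ as in the proof of Lemma \ref{L:SR_Eigen} by $\mu(s,\vartheta)=(\varphi(s)\cos\vartheta,\varphi(s)\sin\vartheta,\psi(s))$ with induced metric $ds^2+\varphi(s)^2\,d\vartheta^2$ on $\{\varphi>0\}$. For $v=v^s\partial_s+v^\vartheta\partial_\vartheta\in\mathcal{K}(\Gamma)$ the three components of $D_\Gamma(v)=0$ reduce to
\begin{align*}
  \partial_sv^s=0,\quad \partial_\vartheta(\varphi^2v^\vartheta)=-\varphi\varphi'\,v^s,\quad \varphi^2\partial_sv^\vartheta+\partial_\vartheta v^s=0,
\end{align*}
and separation of variables traces every solution back to a constant multiple of $\partial_\vartheta=(e_3\times y)|_\Gamma\in\mathcal{R}|_\Gamma$ unless the two profile functions $s\mapsto\varphi'(s)/\varphi(s)$ and $s\mapsto\int^s dt/\varphi(t)^2$ are linearly dependent on an interval. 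The main obstacle is this last step: such linear dependence, combined with $2\pi$-periodicity in $\vartheta$, smoothness of $\Gamma$ across the poles where $\varphi\to0$, and the closed-surface hypothesis, must be shown to force $\varphi$ into the profile of a round sphere, contradicting $\dim\mathcal{R}=1$. Once this rigidity is established we conclude $\dim\mathcal{K}(\Gamma)=1=\dim\mathcal{R}|_\Gamma$, and the equality $\mathcal{K}(\Gamma)=\mathcal{R}|_\Gamma$ follows.
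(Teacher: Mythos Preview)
Your overall strategy matches the paper's: both reduce to a surface of revolution and analyze the Killing equations in the $(s,\vartheta)$ coordinates, and the spherical case is handled identically by the dimension bound $\dim\mathcal{K}(\Gamma)\leq3$. Your three Killing equations agree with the paper's system. However, you explicitly leave the decisive step open: you write that the rigidity ``must be shown'' but do not show it. Since that step is the entire content of the non-spherical case, the proof as written is incomplete.

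The paper closes this gap cleanly, and the argument is short enough that it is worth recording. Assuming $X^s\not\equiv0$, the first Killing equation gives $X^s=X^s(\vartheta)$, and the remaining two read $\partial_sX^\vartheta=-\partial_\vartheta X^s/\varphi^2$ and $\partial_\vartheta X^\vartheta=-(\varphi'/\varphi)X^s$. Equality of the mixed partials $\partial_s\partial_\vartheta X^\vartheta=\partial_\vartheta\partial_s X^\vartheta$ then forces
\[
  \frac{\partial_\vartheta^2X^s(\vartheta)}{X^s(\vartheta)}=\varphi(s)\varphi''(s)-\{\varphi'(s)\}^2
\]
on any $\vartheta$-interval where $X^s\neq0$, so both sides are a common constant. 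Differentiating the right-hand side in $s$ gives $\varphi\varphi'''-\varphi'\varphi''=0$, and since the Gaussian curvature of the surface of revolution is $K=-\varphi''/\varphi$, this says $\partial_sK\equiv0$; hence $K$ is constant on $\Gamma$. Liebmann's theorem now forces $\Gamma$ to be a round sphere, contradicting the case hypothesis. This replaces your unfinished ``linear dependence of $\varphi'/\varphi$ and $\int\varphi^{-2}$'' analysis with a two-line curvature computation plus a classical rigidity theorem.

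A minor remark: your preliminary Lie-algebra classification showing $\dim\mathcal{R}\in\{1,3\}$ is correct but not needed. The paper simply splits into ``$\Gamma$ is a sphere'' versus ``$\Gamma$ is not a sphere'' without first computing $\dim\mathcal{R}$, which shortens the argument and avoids the somewhat delicate step of showing that three independent rotation axes must be concurrent.
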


\begin{proof}
  For $w(x)=a\times x+b$ with $a=(a_1,a_2,a_3)^T\in\mathbb{R}^3$ and $b\in\mathbb{R}^3$ we have
  \begin{align*}
    \nabla_\Gamma w = P\nabla w = PA \quad\text{on}\quad \Gamma, \quad A :=
    \begin{pmatrix}
      0 & a_3 & -a_2 \\
      -a_3 & 0 & a_1 \\
      a_2 & -a_1 & 0
    \end{pmatrix}
    = -A^T.
  \end{align*}
  From these equalities and $P^T=P^2=P$ on $\Gamma$ it follows that
  \begin{align*}
    D_\Gamma(w) = P\left\{\frac{\nabla_\Gamma w+(\nabla_\Gamma w)^T}{2}\right\}P = P\left(\frac{A+A^T}{2}\right)P = 0 \quad\text{on}\quad \Gamma.
  \end{align*}
  In particular, $w\in\mathcal{K}(\Gamma)$ if it is tangential on $\Gamma$, which shows $\mathcal{R}|_\Gamma\subset\mathcal{K}(\Gamma)$.

  Suppose that $\Gamma$ is a sphere in $\mathbb{R}^3$.
  By a translation we may assume that the sphere $\Gamma$ is centered at the origin.
  Then since
  \begin{align*}
    \mathcal{R}|_\Gamma = \{w(y)=a\times y,\,y\in\Gamma\mid a\in\mathbb{R}^3\}
  \end{align*}
  is a three-dimensional subspace of $\mathcal{K}(\Gamma)$ and the dimension of $\mathcal{K}(\Gamma)$ is at most three (see e.g. \cite{Pe06}*{Theorem 35}), we have $\mathcal{K}(\Gamma)=\mathcal{R}|_\Gamma$.

  Next suppose that $\Gamma$ is not a sphere.
  Since $\Gamma$ is axially symmetric by $\mathcal{R}\neq\{0\}$ and Lemma \ref{L:IR_Surf}, as in the proof of Lemma \ref{L:SR_Eigen} we may assume that $\Gamma$ is axially symmetric around the $x_3$-axis, i.e.
  \begin{align} \label{Pf_IRK:Incl}
    \{w(y) = c(e_3\times y), \, y\in\Gamma \mid c\in\mathbb{R}\} \subset \mathcal{R}|_\Gamma, \quad e_3 = (0,0,1)^T.
  \end{align}
  We may further assume that $\Gamma$ is a surface of revolution of the form \eqref{Pf_SRE:SoR} with $C^5$ functions $\varphi$ and $\psi$ satisfying \eqref{Pf_SRE:Arc} and $\varphi(s)>0$ for $s\neq0,L$.
  Then the Gaussian curvature of $\Gamma$ is given by (see e.g. \cite{ON06}*{Section 5.7})
  \begin{align} \label{Pf_IRK:Gau}
    K(\mu(s,\vartheta)) = -\frac{\varphi''(s)}{\varphi(s)}, \quad s\in(0,L),\,\vartheta\in[0,2\pi].
  \end{align}
  We use this formula later.
  Differentiating both sides of \eqref{Pf_SRE:Arc} we also have
  \begin{align} \label{Pf_IRK:D_Arc}
    \varphi'(s)\varphi''(s)+\psi'(s)\psi''(s) = 0, \quad s\in(0,L).
  \end{align}
  Let $X\in\mathcal{K}(\Gamma)$ be of the form
  \begin{align*}
    X(\mu(s,\vartheta)) = X^s(s,\vartheta)\partial_s\mu(s,\vartheta)+X^\vartheta(s,\vartheta)\partial_\vartheta\mu(s,\vartheta), \quad s\in[0,L],\,\vartheta\in[0,2\pi].
  \end{align*}
  Note that $X\in C^2(\Gamma,T\Gamma)$ by the $C^5$-regularity of $\Gamma$ and Lemma \ref{L:Kil_Reg}.
  Also,
  \begin{align*}
    (Y\cdot\nabla_\Gamma)X\cdot Z+Y\cdot(Z\cdot\nabla_\Gamma)X = 2D_\Gamma(X)Y\cdot Z = 0 \quad\text{on}\quad \Gamma
  \end{align*}
  for all $Y,Z\in C(\Gamma,T\Gamma)$ by $PY=Y$, $PZ=Z$, and $D_\Gamma(X)=0$ on $\Gamma$.
  We substitute $\partial_s\mu$ and $\partial_\vartheta\mu$ for $Y$ and $Z$ and then use \eqref{Pf_SRE:Arc}, \eqref{Pf_SRE:Tau}, \eqref{Pf_IRK:D_Arc},
  \begin{gather*}
    (\partial_s\mu\cdot\nabla_\Gamma)X = \frac{\partial(X\circ\mu)}{\partial s}= (\partial_sX^s)\partial_s\mu+X^s\partial_s^2\mu+(\partial_sX^\vartheta)\partial_\vartheta\mu+X^\vartheta\partial_s\partial_\vartheta\mu, \\
    \partial_s^2\mu =
    \begin{pmatrix}
      \varphi''(s)\cos\vartheta \\ \varphi''(s)\sin\vartheta \\ \psi''(s)
    \end{pmatrix}, \quad
    \partial_s\partial_\vartheta\mu =
    \begin{pmatrix}
      -\varphi'(s)\sin\vartheta \\ \varphi'(s)\cos\vartheta \\ 0
    \end{pmatrix}, \quad
    \partial_\vartheta^2\mu =
    \begin{pmatrix}
      -\varphi(s)\cos\vartheta \\ -\varphi(s)\sin\vartheta \\ 0
    \end{pmatrix},
  \end{gather*}
  and a similar equality for $(\partial_\vartheta\mu\cdot\nabla_\Gamma)X$ to get
  \begin{align} \label{Pf_IRK:Kil_Eq}
    \partial_sX^s = 0, \quad \partial_\vartheta X^s+\varphi^2\partial_sX^\vartheta = 0, \quad \varphi^2\partial_\vartheta X^\vartheta+\varphi\varphi'X^s = 0.
  \end{align}
  If $X^s\equiv0$ then $X^\vartheta\equiv c$ is constant by the second and third equations of \eqref{Pf_IRK:Kil_Eq} (note that $\varphi>0$ on $(0,L)$ and $X$ is of class $C^2$).
  In this case,
  \begin{align*}
    X(y) = c\partial_\vartheta\mu(s,\vartheta) = c(e_3\times y)\in\mathcal{R}|_\Gamma, \quad y=\mu(s,\vartheta)\in\Gamma
  \end{align*}
  by \eqref{Pf_SRE:Tau} and \eqref{Pf_IRK:Incl}.
  Let us show that each $X\in\mathcal{K}(\Gamma)$ is of this form (here the arguments are essentially the same as in \cite{Ei49}*{Section 74}).
  Assume to the contrary that $X^s\not\equiv0$.
  By the first equation of \eqref{Pf_IRK:Kil_Eq}, $X^s=X^s(\vartheta)$ is independent of $s$.
  Since $X^s$ continuous and $X^s\not\equiv0$, it does not vanish on some open interval $I\subset[0,2\pi]$.
  By the second and third equations of \eqref{Pf_IRK:Kil_Eq},
  \begin{align*}
    \partial_sX^\vartheta(s,\vartheta) = -\frac{\partial_\vartheta X^s(\vartheta)}{\{\varphi(s)\}^2}, \quad \partial_\vartheta X^\vartheta(s,\vartheta) = -\frac{\varphi'(s)X^s(\vartheta)}{\varphi(s)}
  \end{align*}
  for $s\in(0,L)$ and $\vartheta\in[0,2\pi]$ (note that $\varphi(s)>0$ for $s\neq0,L$).
  Since $X$ is of class $C^2$, we have $\partial_\vartheta\partial_s X^\vartheta=\partial_s\partial_\vartheta X^\vartheta$.
  Thus it follows from the above equations that
  \begin{align*}
    \frac{\partial_\vartheta^2 X^s(\vartheta)}{X^s(\vartheta)} = \varphi(s)\varphi''(s)-\{\varphi'(s)\}^2, \quad s\in(0,L),\,\vartheta\in I.
  \end{align*}
  Noting that the left-hand side is independent of $s$ and the function $\varphi$ is of class $C^5$, we differentiate both sides of this equality with respect to $s$ to get
  \begin{align*}
    \varphi(s)\varphi'''(s)-\varphi'(s)\varphi''(s) = 0, \quad s\in(0,L).
  \end{align*}
  Now we observe by this equality and \eqref{Pf_IRK:Gau} that
  \begin{align*}
    \frac{\partial}{\partial s}\bigl(K(\mu(s,\vartheta))\bigr) = -\frac{\varphi(s)\varphi'''(s)-\varphi'(s)\varphi''(s)}{\{\varphi(s)\}^2} = 0, \quad s\in(0,L),\,\vartheta\in[0,2\pi].
  \end{align*}
  By this fact and the continuity of $K$ and $\mu$ on $\Gamma$ and $[0,L]\times[0,2\pi]$, the Gaussian curvature $K$ is constant on the whole closed surface $\Gamma$.
  Thus Liebmann's theorem (see e.g. \cite{ON06}*{Section 6.3, Theorem 3.7}) implies that $\Gamma$ is a sphere, which contradicts with our assumption that $\Gamma$ is not a sphere.
  Hence $\mathcal{K}(\Gamma)$ contains only vector fields of the form $w(y)=c(e_3\times y)$, $y\in\Gamma$ with $c\in\mathbb{R}$.
  By this fact and \eqref{Pf_IRK:Incl} we get $\mathcal{K}(\Gamma)\subset\mathcal{R}|_\Gamma$ and, since $\mathcal{R}|_\Gamma$ is a subspace of $\mathcal{K}(\Gamma)$, we conclude that $\mathcal{K}(\Gamma)=\mathcal{R}|_\Gamma$.
\end{proof}

\begin{remark} \label{R:IR_Kil}
  By the proof of Lemma \ref{L:IR_Kil} we see that
  \begin{itemize}
    \item $\mathcal{R}=\{0\}$ if $\Gamma$ is not axially symmetric,
    \item the dimension of $\mathcal{R}$ is one if $\Gamma$ is axially symmetric but not a sphere, and
    \item the dimension of $\mathcal{R}$ is three if $\Gamma$ is a sphere.
  \end{itemize}
  In particular, if $\Gamma$ is axially symmetric around some line and it is not a sphere, then it is not axially symmetric around other lines (see also Lemma \ref{L:IR_Surf}).
\end{remark}

Now we assume again that $\Gamma$ is of class $C^2$ and take $g_0,g_1\in C^1(\Gamma)$ satisfying \eqref{E:G_Inf}.
Let $\mathcal{R}_0$, $\mathcal{R}_1$, and $\mathcal{R}_g$ be the subspaces of $\mathcal{R}$ given by \eqref{E:Def_Rg} and $\Omega_\varepsilon$ the curved thin domain of the form \eqref{E:CTD_Intro} with boundary $\Gamma_\varepsilon$.
As in Section \ref{SS:Pre_Dom} we scale $g_i$ to assume $|g_i|<\delta$ on $\Gamma$ for $i=0,1$, where $\delta$ is the radius of the tubular neighborhood $N$ of $\Gamma$ given in Section \ref{SS:Pre_Surf}, and thus $\overline{\Omega}_\varepsilon\subset N$ for all $\varepsilon\in(0,1]$.

\begin{lemma} \label{L:IR_CTD}
  For an infinitesimal rigid displacement $w(x)=a\times x+b$ of $\mathbb{R}^3$ with $a,b\in\mathbb{R}^3$ the following conditions are equivalent:
  \begin{itemize}
    \item[(a)] For all $\varepsilon\in(0,1]$ the restriction of $w$ on $\Gamma_\varepsilon$ satisfies $w|_{\Gamma_\varepsilon}\cdot n_\varepsilon=0$ on $\Gamma_\varepsilon$.
    \item[(b)] There exists a sequence $\{\varepsilon_k\}_{k=1}^\infty$ of positive numbers such that
    \begin{align*}
      \lim_{k\to\infty}\varepsilon_k = 0, \quad w|_{\Gamma_{\varepsilon_k}}\cdot n_{\varepsilon_k} = 0 \quad\text{on}\quad \Gamma_{\varepsilon_k} \quad\text{for all}\quad k\in\mathbb{N}.
    \end{align*}
    \item[(c)] The vector field $w$ belongs to $\mathcal{R}_0\cap\mathcal{R}_1$.
  \end{itemize}
\end{lemma}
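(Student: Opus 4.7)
The plan is to reduce the boundary identity $w\cdot n_\varepsilon=0$ on $\Gamma_\varepsilon^i$ to a pointwise identity on $\Gamma$ via the parametrization $x=y+\varepsilon g_i(y)n(y)$ together with the explicit formula $n_\varepsilon(x)=\bar{n}_\varepsilon^i(x)$ from Lemma \ref{L:Nor_Bo}. Writing $w(x)=w(y)+\varepsilon g_i(y)(a\times n(y))$, plugging in the expression \eqref{E:Def_NB} for $n_\varepsilon^i$, and using $(a\times n)\cdot n=0$, the condition $w\cdot n_\varepsilon=0$ on $\Gamma_\varepsilon^i$ becomes
\begin{equation} \label{E:IR_CTD_red}
  w(y)\cdot n(y)-\varepsilon\,w(y)\cdot\tau_\varepsilon^i(y)-\varepsilon^2g_i(y)\{a\times n(y)\}\cdot\tau_\varepsilon^i(y)=0,\quad y\in\Gamma,
\end{equation}
for each $i=0,1$. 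The implication (a)$\Rightarrow$(b) is trivial.

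For (b)$\Rightarrow$(c) I would substitute $\varepsilon=\varepsilon_k$ in \eqref{E:IR_CTD_red} and let $k\to\infty$. Since $\tau_{\varepsilon_k}^i$ is uniformly bounded by \eqref{E:Tau_Bound}, the limit yields $w\cdot n=0$ on $\Gamma$, i.e. $w\in\mathcal{R}$. Knowing this, dividing \eqref{E:IR_CTD_red} by $\varepsilon_k$ and passing again to the limit, now using $\tau_{\varepsilon_k}^i\to\nabla_\Gamma g_i$ from \eqref{E:Tau_Diff}, gives $w\cdot\nabla_\Gamma g_i=0$ on $\Gamma$ for both $i=0,1$, so $w\in\mathcal{R}_0\cap\mathcal{R}_1$.

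For (c)$\Rightarrow$(a), the case $w\equiv0$ is trivial, so assume $w\not\equiv0$. Here the key ingredient is Lemma \ref{L:SR_Eigen}, which supplies a scalar function $\lambda$ on $\Gamma$ with $Ww=\lambda w$ and $a\times n=-\lambda w$. Since $W$ is symmetric (Lemma \ref{L:Form_W}), so is $(I_3-\varepsilon g_iW)^{-1}$, and the eigenvalue relation $Ww=\lambda w$ yields $(I_3-\varepsilon g_iW)^{-1}w=(1-\varepsilon g_i\lambda)^{-1}w$ (well-defined by \eqref{E:Curv_Bound} since $\lambda\in\{0,\kappa_1,\kappa_2\}$). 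Therefore
\begin{equation*}
  w\cdot\tau_\varepsilon^i=\{(I_3-\varepsilon g_iW)^{-1}w\}\cdot\nabla_\Gamma g_i=(1-\varepsilon g_i\lambda)^{-1}(w\cdot\nabla_\Gamma g_i)=0
\end{equation*}
on $\Gamma$ because $w\in\mathcal{R}_i$, and likewise $(a\times n)\cdot\tau_\varepsilon^i=-\lambda(w\cdot\tau_\varepsilon^i)=0$. Combined with $w\cdot n=0$ from $w\in\mathcal{R}$, every term on the left-hand side of \eqref{E:IR_CTD_red} vanishes, so (a) holds.

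The only subtle step is (c)$\Rightarrow$(a): the tangentiality conditions $w\cdot n=0$ and $w\cdot\nabla_\Gamma g_i=0$ look too weak to control the $\varepsilon$ and $\varepsilon^2$ terms in \eqref{E:IR_CTD_red} for every $\varepsilon$, but the extra rigidity provided by the axial-symmetry identities $Ww=\lambda w$ and $a\times n=-\lambda w$ of Lemma \ref{L:SR_Eigen} is precisely what collapses those higher-order terms into multiples of $w\cdot\nabla_\Gamma g_i$.
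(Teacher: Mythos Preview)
Your proof is correct and follows essentially the same route as the paper: the same reduction to the scalar identity on $\Gamma$, the same limiting argument for (b)$\Rightarrow$(c), and the same use of Lemma \ref{L:SR_Eigen} for (c)$\Rightarrow$(a). The only cosmetic difference is that the paper splits the argument for (c)$\Rightarrow$(a) into the cases $w(y)=0$ and $w(y)\neq0$ at each point, whereas you handle both at once via the pointwise identities $Ww=\lambda w$ and $a\times n=-\lambda w$; this is fine since at points where $w(y)=0$ both sides of every equation you write are zero regardless of the value of $\lambda(y)$.
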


\begin{proof}
  For $\varepsilon\in(0,1]$ and $i=0,1$ let $\tau_\varepsilon^i$ be given by \eqref{E:Def_NB_Aux}.
  Then
  \begin{align*}
    n_\varepsilon(y+\varepsilon g_i(y)n(y)) = (-1)^{i+1}\frac{n(y)-\varepsilon\tau_\varepsilon^i(y)}{\sqrt{1+\varepsilon^2|\tau_\varepsilon^i(y)|^2}}, \quad y+\varepsilon g_i(y)n(y) \in \Gamma_\varepsilon^i
  \end{align*}
  with $y\in\Gamma$ by Lemma \ref{L:Nor_Bo}.
  Moreover, for $w(x)=a\times x+b$ we have
  \begin{align*}
    w(y+\varepsilon g_i(y)n(y)) = w(y)+\varepsilon g_i(y)\{a\times n(y)\}, \quad \{a\times n(y)\}\cdot n(y) = 0, \quad y\in\Gamma.
  \end{align*}
  Hence the condition $w|_{\Gamma_\varepsilon^i}\cdot n_\varepsilon=0$ on $\Gamma_\varepsilon^i$ is equivalent to
  \begin{align} \label{Pf_IRC:Reduce}
    w|_\Gamma\cdot n-\varepsilon w|_\Gamma\cdot\tau_\varepsilon^i-\varepsilon^2g_i(a\times n)\cdot \tau_\varepsilon^i = 0 \quad\text{on}\quad\Gamma.
  \end{align}
  Now let us prove the lemma.
  The condition (a) clearly implies (b).
  Let us show that (b) yields (c).
  Suppose that (b) is satisfied.
  Then by \eqref{Pf_IRC:Reduce} we have
  \begin{align*}
    w|_\Gamma\cdot n-\varepsilon_kw|_\Gamma\cdot\tau_{\varepsilon_k}^i-\varepsilon_k^2g_i(a\times n)\cdot \tau_{\varepsilon_k}^i = 0 \quad\text{on}\quad\Gamma
  \end{align*}
  for $k\in\mathbb{N}$ and $i=0,1$.
  Letting $k\to\infty$ in this equality we obtain $w|_\Gamma\cdot n=0$ on $\Gamma$ by \eqref{E:Tau_Bound}.
  Hence $w\in\mathcal{R}$.
  Moreover, from the above equalities we deduce that
  \begin{align*}
    w|_\Gamma\cdot\tau_{\varepsilon_k}^i+\varepsilon_kg_i(a\times n)\cdot\tau_{\varepsilon_k}^i = 0 \quad\text{on}\quad \Gamma.
  \end{align*}
  Since $\{\tau_{\varepsilon_k}^i\}_{k=1}^\infty$ converges to $\nabla_\Gamma g_i$ uniformly on $\Gamma$ by \eqref{E:Tau_Diff}, we send $k\to\infty$ in this equality to get $w|_\Gamma\cdot\nabla_\Gamma g_i=0$ on $\Gamma$ for $i=0,1$.
  Thus $w\in\mathcal{R}_0\cap\mathcal{R}_1$, i.e. (c) is valid.

  Let us show that (c) implies (a).
  If $w\equiv0$ then (a) is trivial.
  Suppose that $w\not\equiv0$ belongs to $\mathcal{R}_0\cap\mathcal{R}_1$.
  Let $\varepsilon\in(0,1]$ and $i=0,1$.
  Since the condition $w|_{\Gamma_\varepsilon^i}\cdot n_\varepsilon=0$ on $\Gamma_\varepsilon^i$ is equivalent to \eqref{Pf_IRC:Reduce} and $w\in\mathcal{R}_0\cap\mathcal{R}_1\subset\mathcal{R}$ satisfies $w|_\Gamma\cdot n=0$ on $\Gamma$, it is sufficient for (a) to show that
  \begin{align} \label{Pf_IRC:Goal}
    w(y)\cdot\tau_\varepsilon^i(y) = 0, \quad \{a\times n(y)\}\cdot\tau_\varepsilon^i(y) = 0 \quad\text{for all}\quad y\in\Gamma.
  \end{align}
  Hereafter we fix and suppress the argument $y$.
  If $w=0$, then $a\times n=0$ by \eqref{E:SR_Eigen} and the equalities \eqref{Pf_IRC:Goal} are valid (note that we can apply Lemma \ref{L:SR_Eigen} by $w\not\equiv0$).
  Suppose $w\neq 0$.
  Then $w$ is the eigenvector of $W$ corresponding to the eigenvalue $\lambda$ by \eqref{E:SR_Eigen}.
  Since $W$ has the eigenvalues $\kappa_1$, $\kappa_2$, and zero with $Wn=0$ and $w\neq n$ by $w\cdot n=0$, we have $\lambda=\kappa_1$ or $\lambda=\kappa_2$.
  Without loss of generality, we may assume $\lambda=\kappa_1$, i.e. $Ww=\kappa_1w$.
  Then since
  \begin{align*}
    (I_3-\varepsilon g_iW)w = (1-\varepsilon g_i\kappa_1)w, \quad 1-\varepsilon g_i\kappa_1 > 0,
  \end{align*}
  and $I_3-\varepsilon g_iW$ is invertible by $|g_i|<\delta$ on $\Gamma$, \eqref{E:Curv_Bound}, and Lemma \ref{L:Wein},
  \begin{align} \label{Pf_IRC:WRe_W}
    (I_3-\varepsilon g_iW)^{-1}w = (1-\varepsilon g_i\kappa_1)^{-1}w.
  \end{align}
  We use \eqref{E:Def_NB_Aux}, the symmetry of $W$, \eqref{Pf_IRC:WRe_W}, and $w\cdot\nabla_\Gamma g_i=0$ by $w\in\mathcal{R}_i$ to get
  \begin{align} \label{Pf_IRC:W_Tau}
    w\cdot\tau_\varepsilon^i = (I_3-\varepsilon g_iW)^{-1}w\cdot\nabla_\Gamma g_i = (1-\varepsilon g_i\kappa_1)^{-1}(w\cdot\nabla_\Gamma g_i) = 0.
  \end{align}
  Moreover, by \eqref{E:SR_Eigen} with $\lambda=\kappa_1$ and \eqref{Pf_IRC:WRe_W},
  \begin{align*}
    (I_3-\varepsilon g_iW)^{-1}(a\times n) = -\kappa_1(I_3-\varepsilon g_iW)^{-1}w = -\kappa_1(1-\varepsilon g_i\kappa_1)^{-1}w.
  \end{align*}
  Using this equality we get $(a\times n)\cdot\tau_\varepsilon^i=0$ as in \eqref{Pf_IRC:W_Tau}.
  Thus \eqref{Pf_IRC:Goal} holds and we conclude that $w|_{\Gamma_\varepsilon^i}\cdot n_\varepsilon=0$ on $\Gamma_\varepsilon^i$ for all $\varepsilon\in(0,1]$ and $i=0,1$, i.e. (a) is valid.
\end{proof}

By Lemmas \ref{L:IR_Surf} and \ref{L:IR_CTD} we observe that the nontriviality of $\mathcal{R}_0\cap\mathcal{R}_1$ implies the uniform axial symmetry of $\Omega_\varepsilon$.

\begin{lemma} \label{L:CTD_AS}
  If there exists a vector field $w(x)=a\times x+b\in\mathcal{R}_0\cap\mathcal{R}_1$ such that $w\not\equiv0$, then $a\neq 0$, $a\cdot b=0$, and $\Omega_\varepsilon$ is axially symmetric around the line parallel to $a$ and passing through $b_a=|a|^{-2}(a\times b)$ for all $\varepsilon\in(0,1]$.
\end{lemma}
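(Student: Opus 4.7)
The plan is to reduce the statement to Lemmas \ref{L:IR_Surf} and \ref{L:IR_CTD}, using the flow-map characterization of the rotation generated by $w$ that was established in the proof of Lemma \ref{L:IR_Surf}.

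First, since $\mathcal{R}_0 \cap \mathcal{R}_1 \subset \mathcal{R}$ and $w \not\equiv 0$, Lemma \ref{L:IR_Surf} immediately yields $a \neq 0$, $a \cdot b = 0$, and the axial symmetry of $\Gamma$ itself around the line $\ell_a$ parallel to $a$ and passing through $b_a = |a|^{-2}(a \times b)$. This disposes of the first two claims.

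For the axial symmetry of $\Omega_\varepsilon$, I would consider the flow map $x(\cdot,t)\colon\mathbb{R}^3\to\mathbb{R}^3$ of the ODE $\dot{x} = w(x) = a\times x + b$. As shown in the proof of Lemma \ref{L:IR_Surf} (see \eqref{Pf_IRS:Rot}), this flow has the explicit form
\begin{align*}
  x(X,t) - b_a = P_a R_a(t) P_a^T (X - b_a), \quad X \in \mathbb{R}^3,\,t \geq 0,
\end{align*}
that is, rotation through angle $|a|t$ around $\ell_a$. Now by Lemma \ref{L:IR_CTD}, the assumption $w \in \mathcal{R}_0 \cap \mathcal{R}_1$ (condition (c)) gives condition (a), namely $w|_{\Gamma_\varepsilon} \cdot n_\varepsilon = 0$ on $\Gamma_\varepsilon = \Gamma_\varepsilon^0 \cup \Gamma_\varepsilon^1$ for every $\varepsilon \in (0,1]$. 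Consequently, the rotation $x(\cdot,t)$ maps each of the closed surfaces $\Gamma_\varepsilon^0$ and $\Gamma_\varepsilon^1$ into itself for all $t$.

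The last step is to upgrade this boundary invariance to invariance of the open set $\Omega_\varepsilon$. Since $\Gamma_\varepsilon^0$ and $\Gamma_\varepsilon^1$ are disjoint closed surfaces in $N$ (by $g \geq c > 0$) and $\Omega_\varepsilon$ is the open region enclosed between them, and since $x(\cdot,t)$ is a homeomorphism of $\mathbb{R}^3$ that preserves both components of $\partial\Omega_\varepsilon$, any point $X \in \Omega_\varepsilon$ must be sent to a point in $\Omega_\varepsilon$: the continuous curve $t \mapsto x(X,t)$ starts in $\Omega_\varepsilon$ and cannot cross $\Gamma_\varepsilon$ (otherwise, at the first crossing time $t_0$, $x(X,t_0) \in \Gamma_\varepsilon$ would force $X \in \Gamma_\varepsilon$ by invariance of each boundary component under the invertible map $x(\cdot,-t_0)$, contradicting $X \in \Omega_\varepsilon$). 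Thus $x(\cdot,t)(\Omega_\varepsilon) \subset \Omega_\varepsilon$ for all $t \geq 0$, and applying the same argument to the inverse flow gives equality, so $\Omega_\varepsilon$ is invariant under every rotation around $\ell_a$. This is exactly the axial symmetry claim.

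The argument is essentially a bookkeeping combination of the two preceding lemmas; the only nonroutine point is the connectedness/topological step verifying that boundary-preservation promotes to domain-preservation, but this is handled cleanly by the no-crossing argument above, using continuity of $t \mapsto x(X,t)$ and invertibility of the flow. No further surface-geometric computation is required.
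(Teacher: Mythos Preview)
Your proof is correct and follows essentially the same route as the paper: combine Lemma~\ref{L:IR_CTD} (to get $w|_{\Gamma_\varepsilon}\cdot n_\varepsilon=0$) with Lemma~\ref{L:IR_Surf} (to extract $a\neq0$, $a\cdot b=0$, and the rotational symmetry). The only difference is organizational: the paper applies Lemma~\ref{L:IR_Surf} directly to each closed surface $\Gamma_\varepsilon^i$ (which immediately gives their axial symmetry around $\ell_a$), whereas you apply it once to $\Gamma$ and then re-derive the invariance of $\Gamma_\varepsilon^i$ by hand via the explicit flow-map formula~\eqref{Pf_IRS:Rot}; the paper also leaves the passage from boundary invariance to domain invariance implicit, while you spell out the no-crossing argument.
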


\begin{proof}
  Let $w(x)=a\times x+b\in\mathcal{R}_0\cap\mathcal{R}_1$.
  Then $w|_{\Gamma_\varepsilon^i}\cdot n_\varepsilon=0$ on $\Gamma_\varepsilon^i$ for each $\varepsilon\in(0,1]$ and $i=0,1$ by Lemma \ref{L:IR_CTD}.
  Hence if $w\not\equiv0$ then Lemma \ref{L:IR_Surf} implies that $a\neq0$, $a\cdot b=0$, and both $\Gamma_\varepsilon^0$ and $\Gamma_\varepsilon^1$ are axially symmetric around the line parallel to $a$ and passing through $b_a$, which yields the same axial symmetry of $\Omega_\varepsilon$.
\end{proof}

Next we see that the triviality of $\mathcal{R}_g$ yields the axial asymmetry of $\Omega_\varepsilon$.

\begin{lemma} \label{L:CTD_Rg}
  If $\mathcal{R}_g=\{0\}$, then there exists a constant $\tilde{\varepsilon}\in(0,1]$ such that $\Omega_\varepsilon$ is not axially symmetric around any line for all $\varepsilon\in(0,\tilde{\varepsilon}]$.
\end{lemma}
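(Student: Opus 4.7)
The plan is to argue by contradiction, extracting a limit rigid displacement on $\Gamma$ from a sequence of axial symmetries of $\Omega_{\varepsilon_k}$ and showing that this limit lies in $\mathcal{R}_g$. Assume to the contrary that there exists a sequence $\varepsilon_k\downarrow 0$ such that each $\Omega_{\varepsilon_k}$ is axially symmetric around some line. By Lemma \ref{L:IR_Surf} applied to both boundary components (so by Lemma \ref{L:CTD_AS} in reverse), there exist $a_k,b_k\in\mathbb{R}^3$ with $a_k\neq 0$, $a_k\cdot b_k=0$, such that
\begin{align*}
  w_k(x) := a_k\times x+b_k \quad\text{satisfies}\quad w_k|_{\Gamma_{\varepsilon_k}}\cdot n_{\varepsilon_k}=0 \quad\text{on}\quad \Gamma_{\varepsilon_k}.
\end{align*}
First I would normalize so that $|a_k|=1$; the non-vanishing of $a_k$ is automatic since a nonzero constant vector field cannot be tangent to a closed surface. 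Then I would argue that $\{b_k\}$ is bounded: with $|a_k|=1$ and $a_k\cdot b_k=0$, the axis of symmetry of $\Gamma_{\varepsilon_k}^0$ passes through $b_{a_k}=a_k\times b_k$, and $|b_{a_k}|=|b_k|$. Since $\Gamma_{\varepsilon_k}^0\subset N$ and $N$ is bounded independently of $k$, a compactness argument (projecting $\Gamma_{\varepsilon_k}^0$ onto the plane orthogonal to $a_k$ and observing that the axis projects to the center of this axially symmetric projection, which lies in the bounded projection of $N$) yields a uniform bound on $|b_k|$. After passing to a subsequence, $a_k\to a_*$ with $|a_*|=1$ and $b_k\to b_*$ with $a_*\cdot b_*=0$, so that $w_*(x):=a_*\times x+b_*\not\equiv 0$.

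Next I would show $w_*\in\mathcal{R}_g$ by passing to the limit in the identity \eqref{Pf_IRC:Reduce} established in the proof of Lemma \ref{L:IR_CTD}, namely
\begin{align*}
  w_k|_\Gamma\cdot n-\varepsilon_kw_k|_\Gamma\cdot\tau_{\varepsilon_k}^i-\varepsilon_k^2g_i(a_k\times n)\cdot\tau_{\varepsilon_k}^i=0 \quad\text{on}\quad \Gamma,\quad i=0,1.
\end{align*}
Sending $k\to\infty$ and invoking the uniform convergences $w_k\to w_*$ on $\Gamma$ and $\tau_{\varepsilon_k}^i\to\nabla_\Gamma g_i$ from \eqref{E:Tau_Diff} gives $w_*|_\Gamma\cdot n=0$, i.e. $w_*\in\mathcal{R}$. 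Then I would subtract the $i=0$ and $i=1$ identities, divide by $\varepsilon_k$ to obtain
\begin{align*}
  w_k|_\Gamma\cdot(\tau_{\varepsilon_k}^1-\tau_{\varepsilon_k}^0)+\varepsilon_k\bigl[g_1(a_k\times n)\cdot\tau_{\varepsilon_k}^1-g_0(a_k\times n)\cdot\tau_{\varepsilon_k}^0\bigr]=0,
\end{align*}
and again let $k\to\infty$, using $\tau_{\varepsilon_k}^1-\tau_{\varepsilon_k}^0\to\nabla_\Gamma g$ uniformly, to conclude $w_*|_\Gamma\cdot\nabla_\Gamma g=0$. Hence $w_*\in\mathcal{R}_g$ and $w_*\neq 0$, contradicting the assumption $\mathcal{R}_g=\{0\}$.

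The main obstacle I expect is the boundedness of $\{b_k\}$: while $|a_k|=1$ is a free normalization, the translation part $b_k$ encodes the position of the axis and could a priori drift off to infinity along a sequence of axes of symmetry whose perpendicular offset from $\Omega_{\varepsilon_k}$ grows. Resolving this requires the geometric observation that an axis of rotational symmetry of a compact surface must meet the convex hull of that surface, hence lies within a bounded region determined by $N$. Once this uniform bound is in hand, the remaining extraction of a limit and verification that $w_*\in\mathcal{R}_g$ are routine consequences of the identity \eqref{Pf_IRC:Reduce} and Lemma \ref{L:NB_Aux}.
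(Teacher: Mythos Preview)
Your proposal is correct and follows essentially the same route as the paper: contradiction via a sequence $\varepsilon_k\downarrow 0$, normalization $|a_k|=1$, compactness to extract a limit rigid displacement $w_*$, and then passage to the limit in the identity \eqref{Pf_IRC:Reduce} (first directly to get $w_*|_\Gamma\cdot n=0$, then after subtracting the $i=0,1$ cases and dividing by $\varepsilon_k$ to get $w_*|_\Gamma\cdot\nabla_\Gamma g=0$). The only noteworthy difference is in the boundedness of the axis location: the paper parametrizes $w_k(x)=a_k\times(x-b_k)$ with $b_k$ a point \emph{on} the axis, and argues directly that the axis $l_k$ must intersect a fixed ball $B_R\supset\Omega_{\varepsilon_k}$ (otherwise the rotation by angle $\pi$ about $l_k$ would carry $B_R$ off itself, contradicting invariance of $\Omega_{\varepsilon_k}$), so one may take $b_k\in B_R$; your projection/convex--hull argument achieves the same bound but is a bit more roundabout.
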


\begin{proof}
  We prove the contrapositive statement: if there exists a sequence $\{\varepsilon_k\}_{k=1}^\infty$ convergent to zero such that $\Omega_{\varepsilon_k}$ is (and thus $\Gamma_{\varepsilon_k}^0$ and $\Gamma_{\varepsilon_k}^1$ are) axially symmetric around some line $l_k$ for each $k\in\mathbb{N}$, then $\mathcal{R}_g\neq\{0\}$.

  Suppose that such a sequence $\{\varepsilon_k\}_{k=1}^\infty$ exists and that for each $k\in\mathbb{N}$ the line $l_k$ is of the form $l_k=\{sa_k+b_k\mid s\in\mathbb{R}\}$ with $a_k,b_k\in\mathbb{R}^3$, $a_k\neq0$, i.e. $l_k$ is parallel to $a_k$ and passing through $b_k$.
  Replacing $a_k$ with $a_k/|a_k|$ we may assume $a_k\in S^2$ for all $k\in\mathbb{N}$ without changing $l_k$ (here $S^2$ is the unit sphere in $\mathbb{R}^3$).
  Since $\Omega_\varepsilon$ is contained in the bounded set $N$ for all $\varepsilon\in(0,1]$, there exists an open ball $B_R$ centered at the origin of radius $R>0$ such that $\Omega_{\varepsilon_k}\subset B_R$ for all $k\in\mathbb{N}$.
  Then, by the axial symmetry of $\Omega_{\varepsilon_k}$ around the line $l_k$, the intersection $l_k\cap B_R$ is not empty: otherwise the ball generated by the rotation of $B_R$ through the angle $\pi$ around $l_k$ does not intersect with $B_R$ and thus $\Omega_{\varepsilon_k}\subset B_R$ is not axially symmetric around $l_k$.
  Hence we may assume $b_k\in l_k\cap B_R$ for all $k\in\mathbb{N}$ by replacing $b_k$ with $b_k-sa_k$ for an appropriate $s\in\mathbb{R}$.
  Now $\{a_k\}_{k=1}^\infty$ and $\{b_k\}_{k=1}^\infty$ are bounded in $\mathbb{R}^3$ and thus converge (up to subsequences) to some $a\in S^2$ and $b\in\mathbb{R}^3$, respectively.

  Let us prove $w(x):=a\times(x-b)\in\mathcal{R}_g$.
  For $k\in\mathbb{N}$ and $i=0,1$ let $\tau_{\varepsilon_k}^i$ be the vector field on $\Gamma$ given by \eqref{E:Def_NB_Aux} and $w_k(x):=a_k\times(x-b_k)$, $x\in\mathbb{R}^3$.
  Then
  \begin{align} \label{Pf_CRg:Lim_Wk}
    \lim_{k\to\infty}\tau_{\varepsilon_k}^i(y) = \nabla_\Gamma g_i(y), \quad \lim_{k\to\infty}w_k(y) = w(y) \quad\text{for all}\quad y\in\Gamma
  \end{align}
  by \eqref{E:Tau_Diff}, $\lim_{k\to\infty}a_k=a$, and $\lim_{k\to\infty}b_k=b$.
  For each $k\in\mathbb{N}$ and $i=0,1$, since $\Gamma_{\varepsilon_k}^i$ is axially symmetric around the line $l_k$, Lemma \ref{L:IR_Surf} implies that $w_k|_{\Gamma_{\varepsilon_k}^i}\cdot n_{\varepsilon_k}=0$ on $\Gamma_{\varepsilon_k}^i$.
  By the proof of Lemma \ref{L:IR_CTD} (see \eqref{Pf_IRC:Reduce}) this condition is equivalent to
  \begin{align} \label{Pf_CRg:Reduce}
    w_k|_\Gamma\cdot n-\varepsilon_k w_k|_\Gamma\cdot\tau_{\varepsilon_k}^i-\varepsilon_k^2g_i(a_k\times n)\cdot\tau_{\varepsilon_k}^i = 0 \quad\text{on}\quad \Gamma.
  \end{align}
  Letting $k\to\infty$ in \eqref{Pf_CRg:Reduce} we get $w|_\Gamma\cdot n=0$ on $\Gamma$ by \eqref{Pf_CRg:Lim_Wk} and $\lim_{k\to\infty}a_k=a$.
  Thus $w\in\mathcal{R}$.
  Next we subtract \eqref{Pf_CRg:Reduce} for $i=1$ from that for $i=0$ and divide the resulting equality by $\varepsilon_k$.
  Then since $w_k|_\Gamma\cdot n$ does not depend on $i$ we have
  \begin{align*}
    w_k|_\Gamma\cdot(\tau_{\varepsilon_k}^1-\tau_{\varepsilon_k}^0)+\varepsilon_k(a_k\times n)\cdot(g_1\tau_{\varepsilon_k}^1-g_0\tau_{\varepsilon_k}^0) = 0 \quad\text{on}\quad \Gamma.
  \end{align*}
  We send $k\to\infty$ in this equality and use \eqref{Pf_CRg:Lim_Wk} and $\lim_{k\to\infty}a_k=a$ to obtain
  \begin{align*}
    w|_\Gamma\cdot(\nabla_\Gamma g_1-\nabla_\Gamma g_0) = w|_\Gamma\cdot\nabla_\Gamma g = 0 \quad\text{on}\quad \Gamma.
  \end{align*}
  This shows $w\in\mathcal{R}_g$.
  Since $w\not\equiv0$ by $a\in S^2$, we conclude that $\mathcal{R}_g\neq\{0\}$.
\end{proof}

Finally, we show that the restriction on $\Omega_\varepsilon$ of a vector field in $\mathcal{R}_0\cap\mathcal{R}_1$ belongs to the solenoidal space $L_\sigma^2(\Omega_\varepsilon)=\{u\in L^2(\Omega_\varepsilon)^3\mid\text{$\mathrm{div}\,u=0$ in $\Omega_\varepsilon$, $u\cdot n_\varepsilon=0$ on $\Gamma_\varepsilon$}\}$.

\begin{lemma} \label{L:IR_Sole}
  The inclusion $\mathcal{R}_0\cap\mathcal{R}_1\subset L_\sigma^2(\Omega_\varepsilon)$ holds for each $\varepsilon\in(0,1]$.
\end{lemma}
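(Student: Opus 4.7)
The plan is to verify the three defining properties of $L^2_\sigma(\Omega_\varepsilon)$ directly for an arbitrary $w \in \mathcal{R}_0 \cap \mathcal{R}_1$. Let $w(x) = a \times x + b$ with $a,b \in \mathbb{R}^3$. First, since $\Omega_\varepsilon$ is bounded and $w$ is affine (hence smooth and bounded on $\overline{\Omega}_\varepsilon$), we have $w \in L^2(\Omega_\varepsilon)^3$ trivially.

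Next, I would check that $\mathrm{div}\, w = 0$ in $\Omega_\varepsilon$. The gradient matrix $\nabla w$ equals the antisymmetric matrix
\begin{align*}
A = \begin{pmatrix} 0 & a_3 & -a_2 \\ -a_3 & 0 & a_1 \\ a_2 & -a_1 & 0 \end{pmatrix}
\end{align*}
as already noted in the proof of Lemma~\ref{L:IR_Kil}, so $\mathrm{div}\, w = \mathrm{tr}(\nabla w) = \mathrm{tr}(A) = 0$ pointwise in $\mathbb{R}^3$ and in particular in $\Omega_\varepsilon$.

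Finally, for the boundary condition $w \cdot n_\varepsilon = 0$ on $\Gamma_\varepsilon$, I would apply Lemma~\ref{L:IR_CTD} directly. Since $w \in \mathcal{R}_0 \cap \mathcal{R}_1$, condition (c) of that lemma is satisfied, hence the equivalent condition (a) holds: $w|_{\Gamma_\varepsilon} \cdot n_\varepsilon = 0$ on $\Gamma_\varepsilon$ for every $\varepsilon \in (0,1]$. Combining the three properties gives $w \in L^2_\sigma(\Omega_\varepsilon)$, completing the proof. There is no real obstacle here; the lemma is essentially a repackaging of Lemma~\ref{L:IR_CTD} together with the elementary fact that infinitesimal rigid displacements are divergence-free.
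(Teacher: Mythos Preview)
Your proof is correct and follows essentially the same approach as the paper: verify $\mathrm{div}\,w=0$ by direct calculation and invoke Lemma~\ref{L:IR_CTD} (the implication (c)$\Rightarrow$(a)) for the boundary condition. The paper's proof is simply a terser version of yours.
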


\begin{proof}
  Let $w(x)=a\times x+b\in\mathcal{R}_0\cap\mathcal{R}_1$.
  Then $\mathrm{div}\,w=0$ in $\mathbb{R}^3$ by direct calculations.
  Moreover, $w|_{\Gamma_\varepsilon}\cdot n_\varepsilon=0$ on $\Gamma_\varepsilon$ by Lemma \ref{L:IR_CTD}.
  Hence $w\in L_\sigma^2(\Omega_\varepsilon)$.
\end{proof}

\section*{Acknowledgments}
This work is an expanded version of a part of the doctoral thesis of the author \cite{Miu_DT} completed under the supervision of Professor Yoshikazu Giga at the University of Tokyo.
The author is grateful to him for his valuable comments on this work and also would like to thank Mr. Yuuki Shimizu for fruitful discussions on Killing vector fields on surfaces.

The work of the author was supported by Grant-in-Aid for JSPS Fellows No. 16J02664 and No. 19J00693, and by the Program for Leading Graduate Schools, MEXT, Japan.

\begin{bibdiv}
\begin{biblist}

\bib{AdFo03}{book}{
   author={Adams, Robert A.},
   author={Fournier, John J. F.},
   title={Sobolev spaces},
   series={Pure and Applied Mathematics (Amsterdam)},
   volume={140},
   edition={2},
   publisher={Elsevier/Academic Press, Amsterdam},
   date={2003},
   pages={xiv+305},
}

\bib{AmRe14}{article}{
   author={Amrouche, Ch\'{e}rif},
   author={Rejaiba, Ahmed},
   title={$L^p$-theory for Stokes and Navier-Stokes equations with Navier boundary condition},
   journal={J. Differential Equations},
   volume={256},
   date={2014},
   number={4},
   pages={1515--1547},
}

\bib{Be04}{article}{
   author={Beir\~ao Da Veiga, H.},
   title={Regularity for Stokes and generalized Stokes systems under nonhomogeneous slip-type boundary conditions},
   journal={Adv. Differential Equations},
   volume={9},
   date={2004},
   number={9-10},
   pages={1079--1114},
}

\bib{BoFa13}{book}{
   author={Boyer, Franck},
   author={Fabrie, Pierre},
   title={Mathematical tools for the study of the incompressible
   Navier-Stokes equations and related models},
   series={Applied Mathematical Sciences},
   volume={183},
   publisher={Springer, New York},
   date={2013},
   pages={xiv+525},
}

\bib{ChaCzu13}{article}{
   author={Chan, Chi Hin},
   author={Czubak, Magdalena},
   title={Non-uniqueness of the Leray-Hopf solutions in the hyperbolic setting},
   journal={Dyn. Partial Differ. Equ.},
   volume={10},
   date={2013},
   number={1},
   pages={43--77},
}

\bib{Ch15}{book}{
   author={Chen, Bang-Yen},
   title={Total mean curvature and submanifolds of finite type},
   series={Series in Pure Mathematics},
   volume={27},
   edition={2},
   note={With a foreword by Leopold Verstraelen},
   publisher={World Scientific Publishing Co. Pte. Ltd., Hackensack, NJ},
   date={2015},
   pages={xviii+467},
}

\bib{CoFo88}{book}{
   author={Constantin, Peter},
   author={Foias, Ciprian},
   title={Navier-Stokes equations},
   series={Chicago Lectures in Mathematics},
   publisher={University of Chicago Press, Chicago, IL},
   date={1988},
   pages={x+190},
}

\bib{DinMit04}{article}{
   author={Dindo\v{s}, Martin},
   author={Mitrea, Marius},
   title={The stationary Navier-Stokes system in nonsmooth manifolds: the Poisson problem in Lipschitz and $C^1$ domains},
   journal={Arch. Ration. Mech. Anal.},
   volume={174},
   date={2004},
   number={1},
   pages={1--47},
}

\bib{DzEl13}{article}{
   author={Dziuk, Gerhard},
   author={Elliott, Charles M.},
   title={Finite element methods for surface PDEs},
   journal={Acta Numer.},
   volume={22},
   date={2013},
   pages={289--396},
}

\bib{EbMa70}{article}{
   author={Ebin, David G.},
   author={Marsden, Jerrold},
   title={Groups of diffeomorphisms and the motion of an incompressible fluid. },
   journal={Ann. of Math. (2)},
   volume={92},
   date={1970},
   pages={102--163},
}

\bib{Ei49}{book}{
   author={Eisenhart, Luther Pfahler},
   title={Riemannian Geometry},
   note={2d printing},
   publisher={Princeton University Press, Princeton, N. J.},
   date={1949},
   pages={vii+306},
}

\bib{Ev10}{book}{
   author={Evans, Lawrence C.},
   title={Partial differential equations},
   series={Graduate Studies in Mathematics},
   volume={19},
   edition={2},
   publisher={American Mathematical Society, Providence, RI},
   date={2010},
   pages={xxii+749},
}

\bib{GiTr01}{book}{
   author={Gilbarg, David},
   author={Trudinger, Neil S.},
   title={Elliptic partial differential equations of second order},
   series={Classics in Mathematics},
   note={Reprint of the 1998 edition},
   publisher={Springer-Verlag, Berlin},
   date={2001},
   pages={xiv+517},
}

\bib{Hi16}{article}{
   author={Higaki, Mitsuo},
   title={Navier wall law for nonstationary viscous incompressible flows},
   journal={J. Differential Equations},
   volume={260},
   date={2016},
   number={10},
   pages={7358--7396},
}

\bib{Ho08}{article}{
   author={Hoang, Luan Thach},
   title={A basic inequality for the Stokes operator related to the Navier
   boundary condition},
   journal={J. Differential Equations},
   volume={245},
   date={2008},
   number={9},
   pages={2585--2594},
}

\bib{Ho10}{article}{
   author={Hoang, Luan Thach},
   title={Incompressible fluids in thin domains with Navier friction
   boundary conditions (I)},
   journal={J. Math. Fluid Mech.},
   volume={12},
   date={2010},
   number={3},
   pages={435--472},
}

\bib{HoSe10}{article}{
   author={Hoang, Luan T.},
   author={Sell, George R.},
   title={Navier-Stokes equations with Navier boundary conditions for an oceanic model},
   journal={J. Dynam. Differential Equations},
   volume={22},
   date={2010},
   number={3},
   pages={563--616},
}

\bib{Ho13}{article}{
   author={Hoang, Luan Thach},
   title={Incompressible fluids in thin domains with Navier friction
   boundary conditions (II)},
   journal={J. Math. Fluid Mech.},
   volume={15},
   date={2013},
   number={2},
   pages={361--395},
}

\bib{Hor95}{article}{
   author={Horgan, C. O.},
   title={Korn's inequalities and their applications in continuum mechanics},
   journal={SIAM Rev.},
   volume={37},
   date={1995},
   number={4},
   pages={491--511},
}

\bib{Hu07}{article}{
   author={Hu, Changbing},
   title={Navier-Stokes equations in 3D thin domains with Navier friction
   boundary condition},
   journal={J. Differential Equations},
   volume={236},
   date={2007},
   number={1},
   pages={133--163},
}

\bib{If99}{article}{
   author={Iftimie, Drago\c s},
   title={The 3D Navier-Stokes equations seen as a perturbation of the 2D
   Navier-Stokes equations},
   language={English, with English and French summaries},
   journal={Bull. Soc. Math. France},
   volume={127},
   date={1999},
   number={4},
   pages={473--517},
}

\bib{IfRa01}{article}{
   author={Iftimie, Drago\c s},
   author={Raugel, Genevi\`eve},
   title={Some results on the Navier-Stokes equations in thin 3D domains},
   note={Special issue in celebration of Jack K. Hale's 70th birthday, Part
   4 (Atlanta, GA/Lisbon, 1998)},
   journal={J. Differential Equations},
   volume={169},
   date={2001},
   number={2},
   pages={281--331},
}

\bib{IfRaSe07}{article}{
   author={Iftimie, Drago\c s},
   author={Raugel, Genevi\`eve},
   author={Sell, George R.},
   title={Navier-Stokes equations in thin 3D domains with Navier boundary conditions},
   journal={Indiana Univ. Math. J.},
   volume={56},
   date={2007},
   number={3},
   pages={1083--1156},
}

\bib{JaMi01}{article}{
   author={J\"ager, Willi},
   author={Mikeli\'c, Andro},
   title={On the roughness-induced effective boundary conditions for an
   incompressible viscous flow},
   journal={J. Differential Equations},
   volume={170},
   date={2001},
   number={1},
   pages={96--122},
}

\bib{JaOlRe18}{article}{
   author={Jankuhn, Thomas},
   author={Olshanskii, Maxim A.},
   author={Reusken, Arnold},
   title={Incompressible fluid problems on embedded surfaces: modeling and
   variational formulations},
   journal={Interfaces Free Bound.},
   volume={20},
   date={2018},
   number={3},
   pages={353--377},
}

\bib{JiKu16}{article}{
   author={Jimbo, Shuichi},
   author={Kurata, Kazuhiro},
   title={Asymptotic behavior of eigenvalues of the Laplacian on a thin
   domain under the mixed boundary condition},
   journal={Indiana Univ. Math. J.},
   volume={65},
   date={2016},
   number={3},
   pages={867--898},
}

\bib{Jo11}{book}{
   author={Jost, J\"urgen},
   title={Riemannian geometry and geometric analysis},
   series={Universitext},
   edition={6},
   publisher={Springer, Heidelberg},
   date={2011},
   pages={xiv+611},
}

\bib{KheMis12}{article}{
   author={Khesin, Boris},
   author={Misio\l ek, Gerard},
   title={Euler and Navier-Stokes equations on the hyperbolic plane},
   journal={Proc. Natl. Acad. Sci. USA},
   volume={109},
   date={2012},
   number={45},
   pages={18324--18326},
}

\bib{KoLiGi17}{article}{
   author={Koba, Hajime},
   author={Liu, Chun},
   author={Giga, Yoshikazu},
   title={Energetic variational approaches for incompressible fluid systems on an evolving surface},
   journal={Quart. Appl. Math.},
   volume={75},
   date={2017},
   number={2},
   pages={359--389},
}

\bib{KohWen18}{article}{
   author={Kohr, Mirela},
   author={Wendland, Wolfgang L.},
   title={Variational approach for the Stokes and Navier-Stokes systems with
   nonsmooth coefficients in Lipschitz domains on compact Riemannian
   manifolds},
   journal={Calc. Var. Partial Differential Equations},
   volume={57},
   date={2018},
   number={6},
   pages={Art. 165, 41},
}

\bib{Kr14}{article}{
   author={Krej\v ci\v r\'\i k, David},
   title={Spectrum of the Laplacian in narrow tubular neighbourhoods of
   hypersurfaces with combined Dirichlet and Neumann boundary conditions},
   journal={Math. Bohem.},
   volume={139},
   date={2014},
   number={2},
   pages={185--193},
}

\bib{Lee18}{book}{
   author={Lee, John M.},
   title={Introduction to Riemannian manifolds},
   series={Graduate Texts in Mathematics},
   volume={176},
   note={Second edition of [MR1468735]},
   publisher={Springer, Cham},
   date={2018},
   pages={xiii+437},
}

\bib{LeMu11}{article}{
   author={Lewicka, Marta},
   author={M\"uller, Stefan},
   title={The uniform Korn-Poincar\'e inequality in thin domains},
   language={English, with English and French summaries},
   journal={Ann. Inst. H. Poincar\'e Anal. Non Lin\'eaire},
   volume={28},
   date={2011},
   number={3},
   pages={443--469},
}

\bib{LiTeWa92a}{article}{
   author={Lions, Jacques-Louis},
   author={Temam, Roger},
   author={Wang, Shou Hong},
   title={New formulations of the primitive equations of atmosphere and
   applications},
   journal={Nonlinearity},
   volume={5},
   date={1992},
   number={2},
   pages={237--288},
}

\bib{LiTeWa92b}{article}{
   author={Lions, Jacques-Louis},
   author={Temam, Roger},
   author={Wang, Shou Hong},
   title={On the equations of the large-scale ocean},
   journal={Nonlinearity},
   volume={5},
   date={1992},
   number={5},
   pages={1007--1053},
}

\bib{LiTeWa95}{article}{
   author={Lions, Jacques-Louis},
   author={Temam, Roger},
   author={Wang, Shou Hong},
   title={Mathematical theory for the coupled atmosphere-ocean models. (CAO III)},
   journal={J. Math. Pures Appl. (9)},
   volume={74},
   date={1995},
   number={2},
   pages={105--163},
}

\bib{MitMon09}{article}{
   author={Mitrea, Marius},
   author={Monniaux, Sylvie},
   title={The nonlinear Hodge-Navier-Stokes equations in Lipschitz domains},
   journal={Differential Integral Equations},
   volume={22},
   date={2009},
   number={3-4},
   pages={339--356},
}

\bib{MitTa01}{article}{
   author={Mitrea, Marius},
   author={Taylor, Michael},
   title={Navier-Stokes equations on Lipschitz domains in Riemannian
   manifolds},
   journal={Math. Ann.},
   volume={321},
   date={2001},
   number={4},
   pages={955--987},
}

\bib{MitYa02}{article}{
   author={Mitsumatsu, Yoshihiko},
   author={Yano, Yasuhisa},
   title={Geometry of an incompressible fluid on a Riemannian manifold},
   language={Japanese},
   note={Geometric mechanics (Japanese) (Kyoto, 2002)},
   journal={S\={u}rikaisekikenky\={u}sho K\={o}ky\={u}roku},
   number={1260},
   date={2002},
   pages={33--47},
}

\bib{Miu_DT}{thesis}{
  author={Miura, Tatsu-Hiko},
  title={Mathematical analysis of evolution equations in curved thin domains or on moving surfaces},
  type={Doctoral thesis},
  organization={University of Tokyo},
  date={2018},
}

\bib{Miu_NSCTD_02}{article}{
   author = {Miura, Tatsu-Hiko},
   title = {Navier--Stokes equations in a curved thin domain, Part II: global existence of a strong solution},
   journal = {arXiv:2002.06347},
}

\bib{Miu_NSCTD_03}{article}{
   author = {Miura, Tatsu-Hiko},
   title = {Navier--Stokes equations in a curved thin domain, Part III: thin-film limit},
   journal = {arXiv:2002.06350},
}

\bib{MoTeZi97}{article}{
   author={Moise, I.},
   author={Temam, R.},
   author={Ziane, M.},
   title={Asymptotic analysis of the Navier-Stokes equations in thin
   domains},
   note={Dedicated to Olga Ladyzhenskaya},
   journal={Topol. Methods Nonlinear Anal.},
   volume={10},
   date={1997},
   number={2},
   pages={249--282},
}

\bib{Mo99}{article}{
   author={Montgomery-Smith, Stephen},
   title={Global regularity of the Navier-Stokes equation on thin
   three-dimensional domains with periodic boundary conditions},
   journal={Electron. J. Differential Equations},
   date={1999},
   pages={No. 11, 19},
}

\bib{Nag99}{article}{
   author={Nagasawa, Takeyuki},
   title={Construction of weak solutions of the Navier-Stokes equations on
   Riemannian manifold by minimizing variational functionals},
   journal={Adv. Math. Sci. Appl.},
   volume={9},
   date={1999},
   number={1},
   pages={51--71},
}

\bib{Na1823}{article}{
   author={Navier, C. L. M. H.},
   title={M\'{e}moire sur les lois du mouvement des fluides},
   journal={Mem. Acad. R. Sci. Inst. France},
   volume={6},
   year={1823},
   pages={389--440},
}

\bib{OlQuReYu18}{article}{
   author={Olshanskii, Maxim A.},
   author={Quaini, Annalisa},
   author={Reusken, Arnold},
   author={Yushutin, Vladimir},
   title={A finite element method for the surface Stokes problem},
   journal={SIAM J. Sci. Comput.},
   volume={40},
   date={2018},
   number={4},
   pages={A2492--A2518},
}

\bib{ON06}{book}{
   author={O'Neill, Barrett},
   title={Elementary differential geometry},
   edition={2},
   publisher={Elsevier/Academic Press, Amsterdam},
   date={2006},
   pages={xii+503},
}

\bib{Pe06}{book}{
   author={Petersen, Peter},
   title={Riemannian geometry},
   series={Graduate Texts in Mathematics},
   volume={171},
   edition={2},
   publisher={Springer, New York},
   date={2006},
   pages={xvi+401},
}

\bib{Pi17}{article}{
   author={Pierfelice, Vittoria},
   title={The incompressible Navier-Stokes equations on non-compact manifolds},
   journal={J. Geom. Anal.},
   volume={27},
   date={2017},
   number={1},
   pages={577--617},
}

\bib{Prie94}{article}{
   author={Priebe, Volker},
   title={Solvability of the Navier-Stokes equations on manifolds with boundary},
   journal={Manuscripta Math.},
   volume={83},
   date={1994},
   number={2},
   pages={145--159},
}

\bib{PrRiRy02}{article}{
   author={Prizzi, M.},
   author={Rinaldi, M.},
   author={Rybakowski, K. P.},
   title={Curved thin domains and parabolic equations},
   journal={Studia Math.},
   volume={151},
   date={2002},
   number={2},
   pages={109--140},
}

\bib{PrRy03}{article}{
   author={Prizzi, M.},
   author={Rybakowski, K. P.},
   title={On inertial manifolds for reaction-diffusion equations on
   genuinely high-dimensional thin domains},
   journal={Studia Math.},
   volume={154},
   date={2003},
   number={3},
   pages={253--275},
}

\bib{RaSe93}{article}{
   author={Raugel, Genevi\`eve},
   author={Sell, George R.},
   title={Navier-Stokes equations on thin $3$D domains. I. Global attractors and global regularity of solutions},
   journal={J. Amer. Math. Soc.},
   volume={6},
   date={1993},
   number={3},
   pages={503--568},
}

\bib{Ra95}{article}{
   author={Raugel, Genevi\`eve},
   title={Dynamics of partial differential equations on thin domains},
   conference={
      title={Dynamical systems},
      address={Montecatini Terme},
      date={1994},
   },
   book={
      series={Lecture Notes in Math.},
      volume={1609},
      publisher={Springer, Berlin},
   },
   date={1995},
   pages={208--315},
}

\bib{Reus20}{article}{
   author={Reusken, Arnold},
   title={Stream function formulation of surface Stokes equations},
   journal={IMA J. Numer. Anal.},
   volume={40},
   date={2020},
   number={1},
   pages={109--139},
}

\bib{Sch96}{article}{
   author={Schatzman, Michelle},
   title={On the eigenvalues of the Laplace operator on a thin set with
   Neumann boundary conditions},
   journal={Appl. Anal.},
   volume={61},
   date={1996},
   number={3-4},
   pages={293--306},
}

\bib{Sh_18pre}{article}{
   author = {Shimizu, Yuuki},
   title = {Green's function for the Laplace--Beltrami operator on surfaces with a non-trivial Killing vector field and its application to potential flows},
   journal = {arXiv:1810.09523},
}

\bib{SoSc73}{article}{
  author={Solonnikov, V. A.},
  author={\v{S}\v{c}adilov, V. E.},
  title={On a boundary value problem for a stationary system of {N}avier-{S}tokes equations},
  journal={Proc. Steklov Inst. Math.},
  volume={125},
  date={1973},
  pages={186--199},
}

\bib{So01}{book}{
   author={Sohr, Hermann},
   title={The Navier-Stokes equations},
   series={Modern Birkh\"auser Classics},
   note={An elementary functional analytic approach;
   [2013 reprint of the 2001 original] [MR1928881]},
   publisher={Birkh\"auser/Springer Basel AG, Basel},
   date={2001},
   pages={x+367},
}

\bib{Sp79}{book}{
   author={Spivak, Michael},
   title={A comprehensive introduction to differential geometry. Vol. V},
   edition={2},
   publisher={Publish or Perish, Inc., Wilmington, Del.},
   date={1979},
   pages={viii+661},
}

\bib{Ta92}{article}{
   author={Taylor, Michael E.},
   title={Analysis on Morrey spaces and applications to Navier-Stokes and
   other evolution equations},
   journal={Comm. Partial Differential Equations},
   volume={17},
   date={1992},
   number={9-10},
   pages={1407--1456},
}

\bib{Te79}{book}{
   author={Temam, Roger},
   title={Navier-Stokes equations},
   series={Studies in Mathematics and its Applications},
   volume={2},
   edition={Revised edition},
   note={Theory and numerical analysis;
   With an appendix by F. Thomasset},
   publisher={North-Holland Publishing Co., Amsterdam-New York},
   date={1979},
   pages={x+519},
}

\bib{TeZi96}{article}{
   author={Temam, R.},
   author={Ziane, M.},
   title={Navier-Stokes equations in three-dimensional thin domains with various boundary conditions},
   journal={Adv. Differential Equations},
   volume={1},
   date={1996},
   number={4},
   pages={499--546},
}

\bib{TeZi97}{article}{
   author={Temam, R.},
   author={Ziane, M.},
   title={Navier-Stokes equations in thin spherical domains},
   conference={
      title={Optimization methods in partial differential equations},
      address={South Hadley, MA},
      date={1996},
   },
   book={
      series={Contemp. Math.},
      volume={209},
      publisher={Amer. Math. Soc., Providence, RI},
   },
   date={1997},
   pages={281--314},
}

\bib{Ve87}{article}{
   author={Verf\"{u}rth, R\"{u}diger},
   title={Finite element approximation of incompressible Navier-Stokes
   equations with slip boundary condition},
   journal={Numer. Math.},
   volume={50},
   date={1987},
   number={6},
   pages={697--721},
}

\bib{Yac18}{article}{
   author={Yachimura, Toshiaki},
   title={Two-phase eigenvalue problem on thin domains with Neumann boundary condition},
   journal={Differential Integral Equations},
   volume={31},
   date={2018},
   number={9-10},
   pages={735--760},
}

\bib{Yan90}{article}{
   author={Yanagida, Eiji},
   title={Existence of stable stationary solutions of scalar reaction-diffusion equations in thin tubular domains},
   journal={Appl. Anal.},
   volume={36},
   date={1990},
   number={3-4},
   pages={171--188},
}

\end{biblist}
\end{bibdiv}

\end{document}